\newcommand{\mylabel}[2]{#2\def\@currentlabel{#2}\label{#1}}
\newtheorem{theorem}{Theorem}[section]
\newtheorem{lemma}[theorem]{Lemma}
\newtheorem{proposition}[theorem]{Proposition}
\newtheorem{corollary}[theorem]{Corollary}
\newtheorem{conjecture}[theorem]{Conjecture}
\theoremstyle{definition}
\newtheorem{definition}[theorem]{Definition}
\newtheorem{question}[theorem]{Question}
\newtheorem{example}[theorem]{Example}
\theoremstyle{remark}
\newtheorem{remark}[theorem]{Remark}
\numberwithin{equation}{section}
\newcommand{\la}{\lambda}
\newcommand{\C}{\mathbb{C}}
\newcommand{\Aff}{\mathrm{Aff}}
\newcommand{\Aut}{\mathrm{Aut}}
\newcommand{\id}{\mathrm{Id}}
\newcommand{\Z}{\mathbb{Z}}
\newcommand{\Q}{\mathbb{Q}}
\newcommand{\R}{\mathbb{R}}
\newcommand{\N}{\mathbb{N}}
\newcommand{\mc}{\mathcal}
\newcommand{\set}[1]{\left\lbrace #1 \right\rbrace}
\newcommand{\mf}{\mathfrak}
\newcommand{\abs}[1]{\left| #1 \right|}
\newcommand{\norm}[1]{\left|\left| #1 \right|\right|}
\newcommand{\ve}{\varepsilon}
\newcommand{\of}{\circ}
\newcommand{\leftN}{{}^{\lambda}\hspace{-1.5pt}N}
\newcommand{\holder}{H\"{o}lder }
\definecolor{darkcyan}{rgb}{0. 0.65, 0.65}
\newlength{\wdth}
\newcommand{\G}{G}
\newtheorem{Structural Stability Theorem}[theorem]{Structural Stability Theorem}
\def\bt{\begin{theorem}}
\def\et{\end{theorem}}
\def\bd{\begin{definition}}
\def\ed{\end{definition}}
\def\bl{\begin{lemma}}
\def\el{\end{lemma}}
\def\bp{\begin{proof}}
\def\ep{\end{proof}}
\def\holder{ H\"{o}lder }
\def\Diff{\operatorname{Diff}}
\def\Homeo{\operatorname{Homeo}}
\def\Lie{\operatorname{Lie}}
\def\Stab{\operatorname{Stab}}
\def\loc{\operatorname{loc}}
\def\rank{\operatorname{rank}}
\def\ad{\operatorname{ad}}
\def\Ad{\operatorname{Ad}}
\def\diag{\operatorname{diag}}
\def\Diag{\operatorname{Diag}}
\def\Isom{\operatorname{Isom}}
\def\Lip{\operatorname{Lip}}
\newenvironment{dedication}
  {
   \vspace*{\stretch{1}}% some space at the top 
   \itshape             % the text is in italics
   \raggedleft          % flush to the right margin

  }
\begin{document}

\thanks{$^\dagger$ Supported in part by Swedish Research Council grant VR 2019-67250 and VR 2023-03596}
\thanks{$^\ast$ Supported in part by NSF grants DMS 1607260 and DMS 2003712}
\thanks{$^{\ddag}$ Supported VY in part by NSF grant DMS  1604796}
\thanks{$^\ast {^\ast}$ Supported by National Key R$\&$D Program of China No. 2024YFA1015100 and NSFC grant 1209001, 12090015}
\title{The Zimmer Program for partially Hyperbolic Actions}
\author{ Danijela Damjanovi\'c $^\dagger$}
\address{DEPARTMENT OF MATHEMATICS, KUNGLIGA TEKNISKA H\"{O}GSKOLAN, LINDSTEDTSV\"{A}GEN 25, SE-100 44 STOCKHOLM, SWEDEN.}
\email{ddam@kth.se}

\author{Ralf Spatzier $^\ast$}
\address{DEPARTMENT OF MATHEMATICS, UNIVERSITY OF MICHIGAN, ANN ARBOR, MI 48109, USA.}
\email{spatzier@umich.edu}

\author{Kurt Vinhage $^{\ddag}$}
\address{DEPARTMENT OF MATHEMATICS, UNIVERSITY OF UTAH,
 SALT LAKE CITY, UT 84112, USA.}
\email{vinhage@math.utah.edu}

\author{Disheng Xu $^\ast {^\ast}$}
\address{DEPARTMENT OF SCIENCE, GREAT BAY UNIVERSITY AND GREAT BAY INSTITUTE FOR ADVANCED STUDY, SONGSHAN LAKE INTERNATIONAL INNOVATION ENTREPRENEURSHIP XOMMUNITY A5, DONGGUAN, GUANGDONG, 523000, CHINA.}
\email{xudisheng@gbu.edu.cn}
\hspace{4cm}

%Consider a volume preserving  Anosov $C^{\infty}$ action $\alpha$ on a compact manifold $X$ by a semisimple Lie group with all simple factors of real rank at least 2.  More precisely we assume that some Cartan subgroup $A$ of $G$ {%\color{cyan}
%(or equivalently $G$)} contains a dense set of elements which act normally hyperbolically    on $M$ with respect to the orbit foliation of $A$. We show that $\alpha$ is $C^{\infty}$-conjugate to  an action by left translations of a bi-homogeneous space $M\backslash H/\Lambda$, where $M$ is a compact subgroup of a Lie group $H$ and $\Lambda$ is a uniform lattice in $H$.  {\color{red} We extend these results to partially hyperbolic accessible $G$-actions.}

\begin{abstract}

{\color{black} Zimmer's superrigidity theorems on higher rank Lie groups and their lattices launched a program of study aiming to classify actions of semisimple Lie groups and their lattices, known as the {\it Zimmer program}. When the group is too large relative to the dimension of the phase space, the Zimmer conjecture predicts that the actions are all virtually trivial. At the other extreme, when the actions exhibit enough regular behavior, the actions should all be of algebraic origin.

We make progress in the program by showing smooth conjugacy to a bi-homogeneous model (up to a finite cover) for volume-preserving actions of semisimple Lie groups without compact or rank one factors, which have two key assumptions: partial hyperbolicity for a large class of elements ({\it totally partial hyperbolicity}) and accessibility, a condition on the webs generated by dynamically-defined foliations. We also obtain classification for actions of higher-rank abelian groups satisfying stronger assumptions.}
\end{abstract}

\maketitle

\begin{dedication}
\begin{center}
{  In memory of Robert Zimmer who started these investigations}
\end{center}
\end{dedication}
\tableofcontents

%\newpage

%{\cb
\section{Introduction}
This paper is a contribution to the Zimmer Program of studying actions of higher rank semisimple Lie groups $G$ and their lattices $\Gamma$ on compact manifolds.  More precisely one assumes that $G$ has real rank at least 2, and that $\Gamma$ is an irreducible lattice in $G$.  This program was inspired by Margulis' superrigidity theorem which classifies finite dimensional representations of such $\Gamma$ \cite{M91}.   A classification of  such %actions
is impossible in general as one can construct such actions starting with any flow  via the induction procedure. However, classification or at least a detailed structural understanding  might be possible under suitable geometric or dynamical assumptions on the action or underlying manifold.  Zimmer formulated this program in his ICM address in 1986, and in other papers from the early 1980s, e.g. \cite{Z-ICM,Z-1983,Z-IHES}.
{ Margulis further cemented this   in his list of problems for the new century \cite[Problem 11]{Margulis-Problems}.}

Much progress has been made in recent years \cite{MR2807830,MR4186267}.  On the one hand, Brown, Fisher and  Hurtado made major progress on  one of the main conjectures of the Zimmer program, the Zimmer Conjecture,  that lattices in higher rank semisimple Lie groups cannot act (except via finite groups) on compact manifolds of  dimension at most $d(G)$ where $d(G)$ can be calculated explicitly in terms of $G$ and the structure of its roots \cite{BFH2020,BFH2022,BFH2021}.   For $G= SL(n,\R)$, this dimension is simply the optimal $n-1$.  For other groups though  much work is left to be done. {\color{black} For current progress, extending the range of groups for which the Zimmer conjecture holds, see \cite{AnBrZh}.} One key aspect of the proof is that such actions preserve a Riemannian metric, and hence are rather tame from point of view of dynamics.  

{\color{black}When the manifold is high dimensional, the situation is much more complicated.  Indeed, Katok and Lewis constructed some exotic examples of volume preserving $\Gamma$-actions using an algebraic blow-up procedure at a common fixed point of an action by automorphisms of a torus \cite{KL96}. {\color{black} Benveniste constructed blow-up  examples for $G$-actions
\cite{Benveniste-thesis, Benveniste-Fisher}. These constructions disturb} certain invariant foliations and distributions which always exist for algebraic actions. Therefore, to obtain a complete classification, one should make some dynamical or geometric assumptions about the action. A natural dynamical assumption is the existence of certain invariant distributions which exhibit uniform expansion and contraction.}
%On the other end of the dynamical spectrum lie hyperbolic actions,  especially actions with 

For instance, one may ask that the action exhibits an Anosov element.  By this one usually means that some element $g \in G$  acts normally hyperbolically with respect to the orbit foliation of an associated subgroup of $G$ (e.g. the centralizer of an $\R$-split Cartan subgroup in a semisimple Lie group). That is, $g$ uniformly expands and contracts complementary subbundles transverse to the orbit foliation of the associated subgroup. This is what we commonly see in algebraic examples.  For lattices $\Gamma$, an action is Anosov simply means  that some element $\gamma \in 
\Gamma$ acts via an Anosov diffeomorphism. 
%of $M$.  
Examples of such actions are known to exist on certain nilmanifolds $N/\Lambda$ and  finite quotients of such where $\Lambda $ is a lattice in a (simply connected) nilpotent group $N$. Conversely, if the underlying manifold is a nilmanifold, 
Brown, Rodriguez Hertz and Wang prove the beautiful global rigidity result that all actions of higher rank cocompact lattices  are $C^{\infty}$-conjugate to one by automorphisms \cite{BRHW2}. For non-uniform $\Gamma$, they need to assume additional conditions such as being able to lift the action to the universal cover of $N/\Lambda$. In addition,   a well-known 50 year old question of Anosov and Smale asks whether Anosov diffeomorphisms only exist on nilmanifolds and their finite quotients \cite{Smale66}.  A positive answer to this question would immediately imply the classification of Anosov actions on arbitrary compact manifolds, at least for irreducible higher rank uniform lattices.  However, at this point this question remains wide open.  
%\color{black} One important conclusion of this paper is the classification of actions of {\it connected higher rank semisimple Lie groups} with many Anosov elements. 
\color{black}

Understanding the structure  of Anosov actions of {\it connected} higher rank semisimple Lie groups $G$ on arbitrary compact manifolds is similarly intriguing, and global classification of such actions is one of the the main conclusions of this paper. {\it Anosov elements} of $G$ actions are actually never Anosov  diffeomorphisms.  Indeed, they will commute with their centralizers, typically non-discrete,  and thus cannot act hyperbolically in the orbit directions of their centralizer. Really, Anosov $G$ actions are ones which give the maximal amount of hyperbolicity possible for a $G$-action.  However, our techniques apply to a  much more general class of actions which we will now discuss.   

% We  relax the notion of an Anosov action by requiring invariant distributions with uniform expansion and contraction are the partially hyperbolic systems. 
\color{black}
We relax the notion of an Anosov action to a {\it partially hyperbolic action}. In this case, one does not ask that $g \in G$ is normally hyperbolic with respect to some orbit foliation of some subgroup of $G$, but only that there is a complementary $g$-invariant distribution,  which $g$ expands or contracts at a rate {\it weaker} than the uniformly expanding and contracting ones. The uniformly expanding and contracting distributions then  integrate to foliations called unstable and stable, respectively. Clearly, a product of Anosov dynamics and any relatively tame dynamics, gives partial hyperbolicity. The class of partially hyperbolic dynamics is however much more rich. For general partially hyperbolic systems the center distribution may not even be integrable. Even when it is integrable, the foliation may be wild and dynamics on it may be difficult to access. We restrict here to partially hyperbolic systems which have the {\it accessibility} property. This means that one can reach any point on the manifold by traveling along leaves of unstable and stable foliations.
In fact, partially hyperbolic {\it accessible} dynamics is typically what we see in the world of  {\it homogeneous} partially hyperbolic dynamics, where actions are given by left translations on homogeneous spaces. Simplest example is the time-1 map of the geodesic flow on a compact hyperbolic surface.

\color{black}

Some examples of  $G$-actions arise from Anosov and partially hyperbolic actions of uniform lattices $\Gamma$ in $G$ by automorphisms of tori and nilmanifolds by the suspension construction (see Example \ref{ex:suspension} and Section \ref{sec:lattice-q}). 
Other examples of Anosov actions %more specific to connected Lie groups
include the action by left translations   of $SO(p,q)$ on $SO(p,q+1)/ \Lambda$ for suitable $p,q$. These examples were introduced in \cite{Goetze-Spatzier}, and we give a detailed description in Example \ref{ex:SO22-on-SO2n}. These examples can be extended to {\it bi-homogeneous actions}, which occur when $G$ embeds in a group $H$, $K$ is a compact subgroup of the centralizer of $G$ in $H$, and $G$ acts by translations on $K \backslash H / \Lambda$. 
%\color{black} 
%We note that these actions preserve natural affine connections on these spaces, themselves of algebraic nature. bi-homogeneous .
\color{black} These bi-homogeneous actions will be Anosov exactly when $K$ is  the entire centralizer of $G$, which is an extremely rare property for an embedding of  $G$. Whenever $H$ is simple, and $G$ acts by translations, the action is {\it always} partially hyperbolic, and exhibits key dynamical properties (ergodicity and accessibility, which we discuss in Section \ref{sec:results}). These examples are all included in Example \ref{ex:general-headache}. This class of examples is much larger than those which are Anosov. 

\color{black}The main result of this paper   is that bi-homogeneous $G$-actions in fact smoothly model {\it any} smooth volume preserving $G$-action with sufficiently many partially hyperbolic elements and some accessibility.  Namely in Theorem \ref{G-action} we show that all such $G$-actions essentially are  bi-homogeneous actions as described above, up to a smooth conjugacy 
%(i.e. smooth change of coordinates) 
and up to a finite cover. Along the way, this result also answers the question which manifolds support such $G$-actions. 

The accessibility assumption is always satisfied by Anosov $G$-actions, which implies directly global classification for volume preserving Anosov $G$-actions with sufficiently many Anosov elements, in Corollary \ref{G-Anosov}.
\color{black}%Indeed, to preserve the Anosov property, if $G\subset H$ is acting on a compact $H$-homogeneous space by translations, where $H$ is another Lie group, the action will only be Anosov when $\Lie(H) = \Lie(G) \oplus E$, and a regular element of $G$ preserves and acts hyperbolically on $E$ as a vector space. This situation is extremely rare. One may instead build {\it bi}-homogeneous actions on $K \backslash H / \Lambda$ when there is a splitting $\Lie(H) = \Lie(G) \oplus \Lie(K) \oplus E$ for some compact Lie group $K$ commuting with $G$ and $g \in G$ preserves and acts hyperbolically on $E$. Which this broadens the examples slightly, it is still very restrictive since $K$ must be compact.

Prior works on Anosov (or sufficiently hyperbolic) actions in the Zimmer program have been deeply interwoven with  understanding  Anosov actions of higher rank abelian groups.  Indeed,   the  split Cartan subgroup of any higher rank semisimple Lie group $G$ (without compact factors) is isomorphic to  $\R^k$, $k \geq 2$. If it has sufficiently many Anosov elements, one can then  hope to use their dynamics, to classify such $\R^k$-actions, and from this  obtain classification of $G$-actions. This is precisely what we do. At the heart of our approach lies  a classification of certain partially hyperbolic actions of higher rank abelian groups, and building additional invariant structures which leads to classification.

 Connecting actions of semisimple groups and their lattices with those of higher rank {\it abelian} subgroups goes all the way back to Hurder's proof of local rigidity of the action of $SL(n,\Z)$ on the $n$-torus $\mathbb T^n$ \cite{Hurder}.  It was used again by Katok, Lewis and Zimmer in various works \cite{KLZ, KL91, KL96}, then by Katok and Spatzier in their work on local rigidity \cite{KatSpat}. As we will discuss below, these ideas were further developed by Goetze and Spatzier in \cite{Goetze-Spatzier}.

%This is precisely what we do.  Indeed, {\color{black} we formulate in a completely topological setting, a classification of topological  actions of $\R^k $  which \color{green} \todo{Comment (1), fixed the spelling}intertwine \color{black} certain homogeneous structures on certain dynamical foliations, via bi-homogeneous actions.  
%First we use Zimmer's cocycle superrigidity theorem and automatic accessibility properties of the  action of the Cartan subgroup to find equivariant geometric structures on dynamical foliations.   In a second, independent part of the paper, we analyze  $\R ^k$-actions with a dense set of Anosov elements which trans

%Thereby at the heart of our approach lies  a classification of certain partially hyperbolic actions of higher rank abelian groups, and building additional invariant structures which leads to classification. 

\color{black}
  Higher rank abelian subgroups inside (bi-)homogeneous models often have two distinctive properties: if they contain one partially hyperbolic element, then almost \color{black} all  elements are partially hyperbolic; and if they contain an accessible partially hyperbolic element, then almost \color{black} all the elements are accessible. A nice illustration for this is the action by any subgroup of the diagonal group in $SL(n, \mathbb R)$, $n\ge 3$, on  $SL(n, \mathbb R)/\Gamma$, where $\Gamma$ is a co-compact lattice in $SL(n, \mathbb R)$. These two {\it purely dynamical} properties are the crucial ones that we will require 
  %(in fact in a weaker form), 
  in order to obtain global classification of general partially hyperbolic actions of higher rank abelian groups.  

  \color{black}
 
 Crucial to our approach is a deep understanding of actions of higher rank abelian  groups with a {\it dense set} of partially hyperbolic elements (these we call {\it totally partially hyperbolic} actions).  While a classification of such is outstanding in general, and likely extremely difficult, we manage to do this here in Theorem \ref{abelian} assuming \color{black} the accessibility property for many action elements (we label this property {\it super accessiblity}) \color{black} and existence of certain measurable leafwise structures and measurable solutions to coboundary equations. %strong
%\color{black}  Super accessibility property is a natural assumption for our purpose since it typically occurs for (bi-)homogeneous abelian actions with at least one accessible partially hyperbolic element. \color{black} 
Super accessibility gives us two crucial properties for general abelian totally partially hyperbolic actions on any manifold: the first one is that the actions are {\it genuinely} higher rank, meaning that they do not reduce to products for example, and the second one is improved regularity of the measurable leafwise invariant structures (here, the crucial mechanism we employ is the {\it invariance principle} from partially hyperbolic dynamics). Once in this set-up,  we apply a refinement of  the powerful techniques  \color{black} developed in the recent work by Spatzier and Vinhage in  \cite{{Spatzier-Vinhage}}. 
 \color{black} The work in  \cite{{Spatzier-Vinhage}} gives a classification/structure theorem  of Anosov $\mathbb R^k$-actions with a dense set of Anosov elements and a special property that maximal nontrivial intersections of stable foliations of distinct elements are {\it one-dimensional}. ($\mathbb R^k$-actions which have one-dimensional intersections of various stable foliations are called {\it Cartan} $\mathbb R^k$-actions). %and the set of Anosov elements in $\R^k$ is dense.  
 In that case, one can solve the relevant cohomology problem directly by using, in an essential way, the one-dimensionality of these intersections (cf. \cite{KSp,Spatzier-Vinhage}). 
  \color{black} In our case here, intersections of stable foliations for various elements may be higher dimensional which makes the construction of the homogeneous space considerably  more intricate. Namely, we construct a lift to a principal bundle extension of the given action, whereby we actually loose accessibility property for the lift, while we gain the existence of (global) continuous group actions intertwining the lifted action. The fact that we obtain these {\it continuous} group actions on the extended space demonstrates the far reaching applications of the invariance principle in partially hyperbolic dynamics, which in usual applications gives continuity of certain measurable structures in presence of   accessibility.  We then formulate and prove a purely {\it topological} result which gives topological classification via homogeneous models for a class of  $\mathbb R^k$-actions that come together with a collection of continuous group actions intertwining the $\mathbb R^k$-dynamics. The proof of this topological result demonstrates the power of the method of building a homogeneous structure on a manifold,  developed in \cite{Spatzier-Vinhage}. Theorem \ref{abelian} actually provides a framework for classification of higher rank actions of abelian groups which hopefully will prove useful in other problems, see Section \ref{sec:PH-KS}.   
  \color{black}
 
 In our application of Theorem \ref{abelian} to actions of  higher rank semisimple Lie groups and Theorem \ref{G-action}, \color{black} 
 we first derive super accessibility property for the abelian partially hyperbolic subgroup action. By a simple argument, volume preservation extends from such a sub-action to the whole $G-$action. After that
 \color{black} we  invoke Zimmer's superrigidity theorem for cocycles to get the measurable solutions to cohomological equations which give us measurable leafwise structures.

 %The totally Cartan assumption is needed as was shown  by Vinhage in his  construction of $\R^k$ Cartan actions which are not homogeneous and don't have a rank 1 factor. {\color{cyan}Kurt:  is this correct?}
 
% For a general Anosov $\R ^k$ action, even with a dense set of Anosov elements,  we do not yet know how to solve the relevant cocycle problem.  

\color{black} Let us now describe this connection with Zimmer's work and what we do here in more detail. 
\color{black} Ultimately it is based on Zimmer's deep insight that a classification of actions of higher rank semsimple Lie groups and their lattices may be possible, at least if they preserve geometric structures or have strong dynamical properties.  This overarching vision  was certainly based on Zimmer's superrigidity theorem for cocycles \cite{Zimmer-notes}.  As already mentioned,  we use it very fruitfully in our work here. Zimmer's result was measurable.  He himself already realized in the early 1990s that versions with higher regularity could prove important, and formulated and proved a topological superrigidity theorem to that effect, in unpublished notes \cite{Zimmer-notes}. Later, Feres and Labourie pursued similar ideas in \cite{Feres95}, and used them to prove various rigidity statements.  

In Zimmer's approach to topological superrigidity,  he assumed existence of  a H\"older section of a suitable bundle (with a bundle action by $G$) invariant under a parabolic subgroup of $G$.  This fits well with   Anosov dynamics  as contracting bundles will furnish such objects.  Goetze and Spatzier developed these ideas in \cite{Goetze-Spatzier-Duke} and used them to classify Cartan actions of  semisimple Lie groups of real rank at least 3 \cite{Goetze-Spatzier}. 
Under various technical assumptions, %in particular real rank at least 3, 
they used this to prove existence of \holder metrics along suitable foliations conformally invariant under some Cartan subgroup of $G$. Then they get homogeneous structures along these one-dimensional foliations, which allowed them to prove smoothness of foliations and  metrics. To be clear, this approach required  that the acting group has real rank at least 3 and the superrigidity representation from Zimmer's cocycle rigidity theorem is multiplicity-free, rather strong conditions indeed.  

We overcome all these restrictions and more in our current work.  While we use a radically different approach, we  incorporate  some of the prior ideas. 
In particular, finding homogeneous structures along suitable foliations is key, for us and  for a variety of other rigidity problems, such as proving measure rigidity and local rigidity of higher rank abelian actions \cite{KatSpat}.  
%The key is to find subgroups which act isometrically along certain foliations and are still transitive. This comes from the action of a Cartan subgroup that has strong transitivity properties. Using returns to a given leaf by suitable hyperplanes (Lyapunov hyperplanes) then provides a homogeneous structure on the leaves of the foliations. One then  

%However, these constructions are only measurable.  

In the setting of actions of higher rank semisimple Lie groups, we get these {\it measurable} leafwise homogeneous structures from Zimmer's cocycle superrigidity theorem.  Moreover, the action of a Cartan subgroup of $G$ gives an $\mathbb R^k$ action which due to accessibility assumption and the higher-rank assumption on $G$, is {\it super accessible} (which amounts to having many more accessible elements of the action).

\color{black}

In the final stage, we apply  Theorem \ref{abelian} that $\R^k$ partially hyperbolic super accessible actions which preserve %\holder
measurable leafwise homogenous structures are smoothly modelled by  bi-homogeneous actions. We still face the problem of combining different conjugacies for different Cartan subgroups to get a conjugacy for all of the  $G$-action.  To resolve it we use work by Zeghib on centralizers of homogeneous flows \cite{Zeghib}. Then the conjugacy will extend to the centralizer of any Weyl chamber wall, which is good enough to control all of $G$.

%This allows us to use one and the same conjugacy for two Cartan subgroups which intersect in a hyperplane.As we show, such are modelled on bi-homogeneous actions. We stitch these models together for    different Cartan subgroups of $G$, to arrive at the desired we can stitch 

So in the core of our global classification result for $G$-actions lies the proof of smooth classification (via bi-homogeneous models) for partially hyperbolic $\mathbb R^k$-actions, i.e. Theorem \ref{abelian}. We comment now some more on some of the key features of the proof of this result.

While one might hope for a model by global homogeneous structures, as we already noted, natural examples (Section \ref{examples}) show that  one can only  get  bi-homogeneous  models.  We introduce a new construction to resolve this problem, by building  a suitable principal bundle extension of the $\R^k$ action, naturally built from the leafwise invariant conformal structures using frames.  To our knowledge, this is the first time global rigidity with bi-homogeneous models was achieved in either the Zimmer program or the classification of actions of higher rank abelian groups. Even in rank one, such results are extremely rare and require significant additional structure. The few examples in rank one include entropy or exponent rigidity results for geodesic or contact flows (e.g. \cite{BFL} or \cite{Butler}). Our new methods work without this additional geometric data to produce a principal bundle extension on which the action is actually homogeneous.

\color{black}
One of the key ingredients is to build  {\it continuous} leafwise homogeneous structures on the principal bundle extension from measurable stuctures of the base action. The initial step in this direction is to show that the measurable structures, due to super accessibility, are in fact continuous and moreover H\"older along dynamical foliations. To this end we appeal to the \textit{invariance Principle} and the Livsic theory from partially hyperbolic dynamics. It gives extra regularity  
%and the theory of cohomological equations over partially hyperbolic systems that guarantees  extra regularity 
of measurable invariant structures under certain accessibility assumptions \cite{Ledrappier1986,AV, ASV,KalSad,W}. The invariance principle had been applied before in the context of Anosov $\Z ^k$-group actions by Kalinin and Sadovskaya \cite{KalSad07} as well as Damjanovi\'c and Xu \cite{DX1}.  

Continuity of the structures allows us to use the second important consequence of super accessibility, which is that the action is {\it genuinely} higher rank. This, similarly as in \cite{Spatzier-Vinhage}, gives the existence of transitive Lie group actions on leaves of dynamical foliations. As explained before, and as the examples show, these actions need not give { global} actions. In order to construct global actions we lift this to a suitable principal fiber bundle over the given manifold. For the lifted action and for each   lifted foliation we get a Lie group which acts transitively on the leaves, and does so in a manner which preserves dynamical information within the leaves (which is  important  for the next step in the proof). The lifted action however is not accessible. So we actually need to adapt some of the key arguments in \cite{ASV} to the setting of principal bundle extensions of accessible group actions in order to obtain continuity properties of the transitive actions that intertwine the lifted dynamics.

%Our second main theorem, Theorem \ref{abelian},  then classifies \color{black} accessible \color{black} partially hyperbolic actions of $\R^k$ which preserve the leafwise conformal structures. 
%Such turn out to be homogeneous.  
Once we have continuous homogeneous leafwise structures in place (on the principal fiber bundle) we combine them in a global homogeneous structure. 
\color{black}
This is  inspired by Spatzier and Vinhage's classification of Cartan actions \cite{Spatzier-Vinhage} though is considerably more complicated here.  The main idea is to take the isometry groups of the leafwise conformal structures to  build a transitive  action   of a free product of the Lie groups. Then we show that this free product action actually factors through an actual Lie group, yielding our desired  global homogeneous structure. 

\color{black}
Lastly we remark that in parallel to Theorem \ref{abelian} we also obtain a global classification result for Anosov $\mathbb R^k$-actions in Theorem \ref{basic abelian}. This is not a consequence of Theorem \ref{abelian} because Anosov $\mathbb R^k$-actions in general need not be accessible. Dropping (super) accessibility has a cost in that we assume {\it continuity} of measurable invariant structures. However, the intersections of various stable manifolds may still be multidimensional, which makes this currently the most general global rigidity result for Anosov $\mathbb R^k$-actions on general smooth manifolds. 
\color{black}

%\medskip

%{\color{black} shouldthis medium skip be here?  Is there something to add or delete?}

\noindent {\bf Acknowledgements.} The authors would first and foremost like to thank \color{black}
the anonymous referee.  Thanks to the questions raised by the referee we vastly improved the initial version of this paper and extended results from the Anosov to the partially hyperbolic setting. We owe huge thanks to Clark Butler who
\color{black} contributed significantly to the early discussions on the topics of this paper. His ideas promise an approach which may allow the method to be applied more broadly in the future.

The authors would also like to thank Aaron Brown, David Fisher, Livio Flaminio, Boris Kalinin and Homin Lee for useful comments and discussions.    We also thank Ekaterina Shchetka for pointing out an error in \cite{Spatzier-Vinhage} which led to corrections in the initial version of this paper. \color{black} The idea for this paper originated from discussions and interactions at the 2018 Semi-annual Workshop in Dynamical Systems and Related Topics hosted at Penn State University.  

%\color{teal}More names, Homin Lee, Aaron Brown?\color{black}

%The key idea then is to define the action of a free product of the natural nilpotent groups 

%Another key issue is how to find the bi-quotient on which the bi-homogeneous action lives.  To this end, we consider suitable isometric extensions, essentially comng from sutiable framings, refine them and classify them.  

%This adapted an idea from Katok and Spatzier  \cite{MR1858547} where local rigidity of certain homogeneous  actions is proved. 
%

\section{Results}

\label{sec:results} 

\subsection{Totally partially hyperbolic actions\color{black}}
In this section we introduce the basic notions needed to state the main results. \color{black}Let $X$ denote a smooth compact connected manifold.  \color{black}
Let $a: X\to X$ be a diffeomorphism of a smooth compact manifold $X$. 
If \color{black}$E^c\subset TX$ is a continuous distribution invariant under the action of $a$, we say that $a$ is \emph{partially hyperbolic} with respect to \color{black}$E^c$ (or $E^c$-partially hyperbolic) %(\color{black}F a foliation?  Do we want to assume the center is integrable? Probably no need ) 
if there exists a continuous $a$-invariant splitting $TX= E_a^s\bigoplus E^c\bigoplus E_a^u$, where $E_a^s$  (resp. $E_a^u$) are non-trivial and uniformly contracted (resp. uniformly expanded) by the action of $a$.  Furthermore, we assume that 
the contractions on $E^s_a$ and $E^u_a$ dominate the dynamics on $E^c$ uniformly (for a precise definition see Section \ref{sec:PH-prelims}). 

%More precisely, one can choose a Riemannian metric on $X$ and continuous positive functions $\nu< 1,\hat \nu < 1, \gamma, \hat\gamma$ on $X$ such that for any $x\in X$ and unit vectors $v_s\in E^s(x), v_c\in  E^c(x), v_u\in E^u(x)$, \begin{equation}\label{def: ph dif def}
    % \|Da(v_s)\| < \nu(x) < \gamma(x) < \|Da(v_c)\| < \hat\gamma(x)^{-1} < \hat\nu(x)^{-1} < \|Da(v_u)\|
%\end{equation}

 The {\it stable} distribution $E^s_a$ (resp. the {\it unstable} distribution $E^u_a$) integrates to stable foliation $W^s_a$ (resp. unstable foliation $W^u_a$). The diffeomorphism $a$ is called {\it accessible} (with respect to $E^c$) if any two points in $X$ can be connected by a broken path whose legs lie in leaves of foliations $W^s_a, W^u_a$ (see Section \ref{sec:PH-prelims}).

%\color{black}\st{An element $a \in A$ will be called  partially hyperbolic if it acts partially hyperbolically   with respect to a fixed distribution $E^c$. }
Group actions  containing at least one partially hyperbolic  element are called {\it partially hyperbolic}. In what follows we will require more partially hyperbolic elements. We will work here with two cases: when the acting group is semisimple and when the acting group is $\mathbb R^k$. 
\color{black}
\begin{definition}\label{totally}
A partially hyperbolic $\mathbb R^k$-action is {\it totally partially hyperbolic} if there is a distribution $E^c$ and a dense set of elements in $\mathbb R^k$ which are all partially hyperbolic with respect to the same $E^c$. In particular, if $E^c$ is just the orbit direction of a totally partially hyperbolic $\R^k$ action, then we say the action is totally Anosov. \footnote{We remark that not every $\R^k$-action with a dense set of individually partially hyperbolic elements is partially hyperbolic in this sense, as the elements may not be partially hyperbolic with respect to the same $E^c$.} 
\end{definition}
%\color{olive}

\begin{definition}
\label{def: totallyPH} 
    {\color{black}Recall that if $G$ is a real semisimple Lie group, 
    then its Lie algebra  $\frak g$ admits a Cartan decomposition
$\frak g = \mf k\oplus  \frak p$, where $\frak k$ is a maximal ad-compact subalgebra of $\frak g$ and $\frak p$ is its orthogonal complement as determined by the %Killing form [?].
%$\mf g = \mf k \oplus \mf p$, where $\mf k$ is a maximal $\ad$-compact subalgebra of $\mf g$ and $\mf p$ is orthogonal complement as determined by the 
Killing form \cite{Hel}. The {\it real rank} of
$G$ is the maximal dimension of an abelian subalgebra
$\mf a \subset \mathfrak p$. Such subalgebras are called $\R$-split Cartan subalgebras, and their corresponding subgroups $A = \exp(\mf a)$ are called {\it $\R$-split Cartan subgroups}. % in the Cartan decomposition of its Lie algebra 
%$\mathfrak g: \mathfrak g=\mathfrak{k}\oplus \mathfrak{p}$ where $\mathfrak k$ is the Lie algebra of a maximal compact subgroup 
%$K\subset G$, and $\mathfrak p$ is the orthogonal complement of 
%$\mathfrak k$ with respect to the Killing form.
Equivalently, the real rank and $\R$-split Cartan subalgebra can be defined using algebras of maximal dimension among those  which are $\ad$-diagonalizable over $\R$.  }

If $\G$ is a real semisimple Lie group, and $A \subset \G$ an $\mathbb R$-split Cartan subgroup, an action $G \curvearrowright X$ is called {\it totally partially hyperbolic} if the $A$-action is totally partially hyperbolic (with respect to some $E^c$). Moreover, if there is a partially hyperbolic (with respect to the same $E^c$) element $a\in A$ which is accessible, then we say $A \curvearrowright X$ and $G \curvearrowright X$ are {\it accessible actions}. 
\color{black}
\end{definition}

An important example of (totally) partially hyperbolic $G$ action is the (totally) Anosov $G$-action defined as follows. 

\begin{definition}\label{def: tot Ans}Let $A\subset G$ be an $\R$-split Cartan subgroup, $C_{\G}(A)$ be the centralizer of $A$ in $\G$. The centralizer always decomposes as a direct product of a compact group $K$ with $A$, $C_\G(A) = K \cdot A$. Then a (totally) Anosov $\G$ action is a (totally) partially hyperbolic $\G$ action for which  $E^c$ is the distribution tangent to the $C_\G(A)$-orbit.
\end{definition}
%\begin{definition}\label{def: tot Ans} \color{black}ADD DEFINITION OF ANOSOV ACTION (WITH SINGLE ANOSOV ELEMENT)\color{black}
 %If $\G$ is a real semisimple Lie group, and $A \subset \G$ a split Cartan subgroup, a $C^\infty$ action $\G \curvearrowright X$ is called totally Anosov (with respect to $A$) if for a dense set of $a\in A$, $a$ is \st{ normally hyperbolic with respect to the homogeneous foliation given by  $C_{\G}(A)$, i.e. } partially hyperbolic with respect to the distribution tangent to the $C_\G(A)$-orbit.
%\end{definition}

\color{black} Note that for a (totally) Anosov action $G\curvearrowright X$, $G$ semisimple, the action's restriction to $A\curvearrowright X$  where  $A\cong \mathbb R^k$ is a maximal $\mathbb R$-split Cartan subgroup of $G$, is {\it not} a (totally) Anosov $\mathbb R^k$ action according to the definitions above when $K$ is non-trivial. Rather the $A$ action is an $\R^k$-totally partially hyperbolic action with center distribution tangent to the $K\cdot A$ -orbits. 
\color{black}

\subsection{Bi-homogeneous actions}

Let $G$ be an arbitrary Lie group. There is a large class of $\G$-actions, built from certain algebraic data, which we call {\it algebraic}. This class will be the base for our main classification result. Generally, algebraic actions may be defined via group automorphisms and group multiplication. Actions defined via group multiplications are {\it homogeneous actions}. \color{black}Here we define {\it bi-homogeneous } actions as follows: Let $H$ be a Lie group, and $q : G \to H$ be an embedding of $G$ into $H$. Suppose that $K \subset H$ is a compact subgroup commuting with $q(G)$ and $\Gamma 
\subset H$ is a cocompact lattice. Then the bi-homogeneous $\G$-action $(q,H,K,\Gamma)$ is the action on $X = K \backslash H /\Gamma$ defined by:

\[ g \cdot (Kh\Gamma) := K(q(g)h)\Gamma . \]

The fact that $K$ commutes with $q(G)$ ensures the action is well-defined, and the fact that $K$ is compact will ensure that any right-invariant metric on $H$ which is bi-invariant under $K$ is well-defined on the quotient.  Moreover,  Haar measure on $H$ will project to a well-defined $G$-invariant volume on $X$.

%\color{black}
%Suppose that every simple factor of a real semisimple group $\G$ has real rank at least 2, and let  $\G \curvearrowright X$ be a $C^\infty$ \color{teal} totally partially hyperbolic accessible action\color{black}.  Assume further that the restriction of the action to $A$ preserves an invariant volume. Then

\color{black} See Section \ref{examples} for a variety of examples featuring Anosov and partially hyperbolic algebraic actions.

%\color{teal}Similarly we could define bi-homogeneous $\R^k$ action by viewing $\R^k$ as a Lie group $G$. \color{black}

Now we define actions which are {\it smoothly} modeled on bi-homogeneous actions.
\begin{definition}\label{bi hom models} Let $G$ be a group. 
 We say an action $G \curvearrowright X$ is {\it finitely covered by a bi-homogeneous action} if there exists a finite cover of $X$ and a lift of the $G$ action to the finite cover, which is { $C^\infty-$conjugate} to a bi-homogeneous $G$-action.   
\end{definition}

 \color{black}

\subsection{Rigidity for semisimple group actions} \label{sec: G-action}

 %\color{olive} 

%{We make the same definitions for  real reductive Lie groups, and in particular for $\R^k$ actions.}
%\end{definition}

%If $a$ is normally hyperbolic with respect to the orbit foliation of the action we call it \emph{Anosov}.

\color{black} We have the following classification theorem. 
\begin{theorem}\label{G-action}
Suppose that every simple factor of a real semisimple group $G$ has real rank at least 2, and let  $G \curvearrowright X$ be a $C^\infty$ \color{black} totally partially hyperbolic accessible action\color{black}.  Assume further that the restriction of the action to a maximal $\mathbb R-$split Cartan subgroup $A$ preserves an invariant volume. Then the $G$-action
%\color{teal} there exists a finite cover of $X$ and a lift of the  $G$ action which is  $C^\infty$ 
%\color{black} conjugate to 
is \color{black} finitely covered by 
a bi-homogeneous $G$-action.
\end{theorem}

\color{black}

In particular, a totally Anosov  $G$-action is accessible so we have 

\begin{corollary}\label{G-Anosov}
    Suppose that every simple factor of a real semisimple group $\G$ has real rank at least 2, and let  $G \curvearrowright X$ be a $C^\infty$  totally Anosov $G$-action\color{black}. 
    Assume further that the restriction of the action to a maximal $\mathbb R-$split Cartan subgroup $A$ preserves an invariant volume. Then the action is finitely covered by 
    %smoothly conjugate to 
    a bi-homogeneous $G$-action.
    %\color{teal} there exists a finite cover of $X$ and a lift of the $\G$ \color{black} action which is smoothly conjugate to a bi-homogeneous $\G$-action.
\end{corollary}

\begin{remark}
These actions are automatically bi-homogeneous under natural assumptions on orientability of suitable distributions.  Thus we do not have to pass to finite covers  in such cases.  Same remark applies to all our results which involve passing to a finite cover. 
\end{remark}

\color{black}
\begin{remark}\label{one element volume}
The assumption that the action of $A$ preserves an invariant volume can be weakened to assuming that there is $a\in A$ which acts as a topologically transitive and volume preserving diffeomorphism. Then any diffeomorphism commuting with $a$  also preserves the volume \cite[Lemma 11]{DWX}, so the whole action of $A$ preserves the volume. Moreover, the {\it totally} partially hyperbolic accessible assumption for the action in Theorem \ref{G-action} implies that the accessible partially hyperbolic diffeomorphism $a\in A$ (assumed to exist by definition of accessible $G$-action)  is {\it center bunched} (see Section \ref{sec:PH-prelims}, Lemma \ref{Ec center bunching}). Then by \cite[Theorem 0.1]{BW}, if $a$ is additionally volume preserving, then   it is also ergodic with respect to volume, thus $a$ is topologicaly transitive.    
\end{remark}

\color{black}

\color{black}

%{\color{black}\color{black} Cor 2.6 works for partially hyperbolic case? \color{black}
The totally Anosov condition appearing in \color{black} Corollary \ref{G-Anosov} \color{black} relies on a distinguished abelian subgroup $A$ in which to find hyperbolic elements. We also get a formulation which is independent of such a subgroup. To do so, we make two important definitions:

 The first is that of a hyperbolic element of an action $G \curvearrowright X$. Recall that if $F : V \to V$ is a linear transformation, $V$ splits as a sum of {\it generalized eigenspaces}. Each such space corresponds to the sum of the blocks in the Jordan normal form of $F$ for a fixed eigenvalue. If $G$ is a Lie group with Lie algebra $\mf g$, and $g \in G$, consider the splitting $\mf g = \mf g^+_g \oplus \mf g^-_g \oplus \mf g^0_g$, where $\mf g^+_g$ denotes the {\color{black} the sum of the} generalized eigenspaces of $\Ad(g)$ whose eigenvalues have modulus greater than 1, $\mf g^-_g$ is the sum of the generalized eigenspaces whose eigenvalues modulus less than 1, and $\mf g^0_g$ is the {\color{black} the sum of the} generalized eigenspace for eigenvalues of modulus 1. Note that $\mf g^\pm_g$ and $\mf g^0_g$ are subalgebras and have corresponding connected Lie subgroups. 
 
 \begin{definition}\label{def: hyp elem}
 Let $G \curvearrowright X$ be a locally free $C^r$ group action, $r \ge 1$. We say that $g \in G$ is {\it hyperbolic for $G \curvearrowright X$} if there is a splitting $TX = E^s_g \oplus E^0_g \oplus E^u_g$, where $E^0_g = \mf g^0_g$ and $E^s_g, E^u_g$ are subbundles of $TX$ which contract uniformly under forward and backwards iterates of $g$, respectively.
 \end{definition}
 
 Notice that $g$ always exponentially contracts $\mf g^-_g$ and $\mf g^+_g$ under forward and backward iterates, respectively. That $g$ is hyperbolic asks that these bundles can be extended to bundles $TX$ with the same property. Let $\mathcal{H}$ be the set of hyperbolic elements of $G \curvearrowright X$. %That is, the set of regular elements of $G$ which act normally hyperbolically with respect to the orbit foliation of their centralizer in $G$. 
Note that $\mathcal{H}$ is invariant under conjugation in $G$.

 The other definition required is the Jordan-Chevalley projection. If $G$ is a semisimple Lie group, and $g \in G$, there exists a decomposition of $g$ called the {\it Jordan-Chevalley decomposition} as $g = kan$, where  $k$ is $\Ad$-compact, $a$ is $\R$-semisimple, $n$ is $\Ad$-unipotent, and $k$, $a$ and $n$ pairwise commute. \color{black} Given a split Cartan subgroup $A \subset \G$, the {\it Jordan projection} of $g$ is defined by $\mc J(g) = a' \pmod W$, where $a' \in A$ is conjugate to $a$ and $\mod W$ is taken to mean modulo the action of the Weyl group (here and in the rest of the paper by {\it the Weyl group} we will always mean the \emph{restricted} Weyl group, i.e.  the group generated by the reflections corresponding to restricted roots). 
 \color{black} The Jordan\color{black}-Chevalley \color{black} projection takes values in a Weyl chamber of $\G$, which can be chosen with respect to any $\R$-split Cartan subgroup. For a thorough treatment of this topic, see \cite[Section 4.2]{humphreys}.

\begin{corollary} \label{cor:semisimple}
Suppose that every simple factor of a real semisimple group $G$ has real rank at least 2, and let  $G \curvearrowright X$ be a $C^\infty$ \color{black} volume preserving \color{black} action.  Let $\mathcal{H}$ be the set of hyperbolic elements for $G \curvearrowright X$.  Assume either 
\begin{itemize}
\item that  $\mc J(\mathcal{H})$ has dense image, or%intersects some positive Weyl chamber (of the group $G$) in a dense set, or
\item that  $\mathcal{H}$ intersects the set of $\R$-semisimple elements in a dense set. %{\color {cyan}open} set.
\end{itemize}
Then the $\G$ action is finitely covered by
%\color{black}there exists a finite cover of $X$ and a lift of the $\G$ \color{black} action which is smoothly conjugate to 
a bi-homogeneous $\G$-action.
\end{corollary}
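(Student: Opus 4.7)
The overall strategy is to reduce Corollary \ref{cor:semisimple} to Theorem \ref{G-action}: under either hypothesis, I would exhibit a split Cartan subgroup $A \subset \G$ with respect to which the action is totally Anosov, and then quote the theorem.

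The first step is a structural lemma: if $g \in \mathcal{H}$ has Jordan--Chevalley decomposition $g = kan$, then the $\R$-semisimple part $a$ also lies in $\mathcal{H}$, with $\mc J(a) = \mc J(g)$. The idea is that $k, a, n$ pairwise commute, so each preserves the splitting $E^s_g \oplus E^0_g \oplus E^u_g$; the eigenvalues of $\Ad(g)$ and $\Ad(a)$ agree in modulus, so $\mf g^0_a = \mf g^0_g$. It remains to show that $a$ uniformly contracts $E^s_g$ and expands $E^u_g$: the element $k$ lies in a compact subgroup, hence its iterated derivative cocycle is uniformly bounded, while the $\ad$-unipotent $n$ contributes only subexponentially along the invariant splitting, so the exponential contraction of $g = kan$ on $E^s_g$ is inherited by $a$.

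With this in hand, I produce a hyperbolic $\R$-semisimple element: directly under hypothesis (2), and via the Jordan decomposition under (1), where density of $\mc J(\mathcal H)$ provides such elements after passing to $\R$-semisimple parts. Every $\R$-semisimple element lies in some split Cartan $A$, and using openness of $\mathcal H$ together with density of regular elements of $A$, I refine to a regular element $a_0 \in A \cap \mathcal H$. For such regular $a_0$, $\mf g^0_{a_0}$ coincides with $\mf c = \Lie(C_\G(A))$, so $a_0$ is normally hyperbolic with respect to the $C_\G(A)$-orbit foliation.

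To upgrade this single normally hyperbolic element to a dense set in $A$, I decompose $E^s_{a_0} \oplus E^u_{a_0}$ into coarse Lyapunov bundles under the commuting $A$-action. Each bundle carries a linear Lyapunov functional $\chi : A \to \R$, and $a \in A$ is normally hyperbolic with respect to the $C_\G(A)$-orbit foliation if and only if $a$ is regular and $\chi(a) \neq 0$ for every such $\chi$. The excluded set is a finite union of hyperplanes, so the normally hyperbolic elements form an open dense subset of $A$, and the action is totally Anosov with respect to $A$, at which point Theorem \ref{G-action} applies (with the $A$-invariant volume regarded as implicit in the setup). The main obstacle is the first step, namely transferring hyperbolicity from $g$ to $a$ at the level of the derivative cocycle on $TM$ rather than merely for $\Ad$ on $\mf g$; one must rule out that the $\ad$-unipotent factor $n$ contributes exponentially to the $TM$-dynamics in a way that cancels with $a$, which requires care with the multiplicative ergodic theorem for smooth Lie group actions and uses that $n$ preserves the Anosov splitting.
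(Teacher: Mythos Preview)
Your reduction to Theorem \ref{G-action} via the structural lemma (hyperbolic $g = kan$ implies hyperbolic $a$) is exactly the paper's strategy, and you correctly identify the main obstacle as controlling the derivative of the unipotent part $n$ on $TX$ rather than just on $\mf g$. The paper handles this with a clean trick you should know: by Jacobson--Morozov there is $b \in G$ with $b^{-1}nb = n^2$, and then the upper Lyapunov exponent $\chi(v) = \limsup \tfrac{1}{k}\log\norm{Dn^k(v)}$ satisfies $\chi(Db(v)) = 2\chi(v)$; since $\chi$ is bounded this forces $\chi \equiv 0$, giving subexponential growth of $\norm{Dn^k}$ on all of $TX$. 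Your sketch (``$n$ contributes only subexponentially along the invariant splitting'') is on the right track but does not supply this argument.

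Where your route genuinely diverges from the paper is in obtaining \emph{density} of Anosov elements in $A$. The paper never bootstraps from a single Anosov element: it uses directly that $\mathcal H$ is conjugation-invariant, so density of $\mc J(\mathcal H)$ in $A/W$ immediately gives a dense set of $a \in A$ lying in $\mathcal H$ (each such $a$ is conjugate to the semisimple part of some $g \in \mathcal H$, hence itself in $\mathcal H$), and intersecting with the open dense set of regular elements yields dense Anosov elements. Your approach instead produces one regular $a_0 \in A \cap \mathcal H$ and then invokes a coarse Lyapunov decomposition to argue that the non-Anosov locus is a finite union of hyperplanes. This last step is where there is a gap: the characterization ``$a$ is Anosov iff $\chi(a) \ne 0$ for finitely many linear functionals $\chi$'' is what one proves \emph{after} establishing the totally Anosov condition (cf.\ Lemma \ref{sub-exp growth}), or alternatively via Oseledets with respect to an invariant measure you have not yet produced. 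From a single Anosov element one gets openness of the Anosov set, but promoting this to density in $A$ without further input is nontrivial and you have not justified it. The paper's direct route sidesteps this entirely.

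For hypothesis (2), the paper simply observes that $\mc J$ restricted to $\R$-semisimple elements is onto, so density of $\mathcal H$ in that set forces density of $\mc J(\mathcal H)$, reducing (2) to (1).
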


\begin{remark}
In fact, each of the conditions of Corollary \ref{cor:semisimple} are equivalent to the totally  Anosov assumption of  Corollary \ref{G-Anosov}  (see Section \ref{sec:cor-proof}).
\end{remark}

{\color{black}
We state one more theorem, which is our most general for a semisimple Lie group action.

\begin{definition}
\label{def:gen-HR}
    If $G$ is a semisimple Lie group, $A$ is an $\R$-split Cartan subgroup, and $B \subset A$ is a connected Lie subgroup of $A$, we say that $B$ is {\it genuinely higher rank} if whenever $\pi_i : G \to G_i$ is a projection onto a simple factor of $G$, $\rank_{\R}(\pi_i(B)) \ge 2$. Call an action $G \curvearrowright X$ {\it $B$-totally partially hyperbolic} if the restriction of the action to $B$ is totally partially hyperbolic (similarly for $B$-accessible, $B$-volume preserving).
\end{definition}

\begin{theorem}
\label{thm:genuinely-HR}
    Let $G$ be a higher-rank semisimple Lie group, $G \curvearrowright X$ be a $C^\infty$ action, and $a \in A$ be a volume-preserving, partially hyperbolic, accessible element with central distribution $E^c$. Let $\mc{PH}$ denote the set of elements of $A$ which are partially hyperbolic with respect to $E^c$. If $\mc{PH} \cap B$ is dense for some genuinely higher-rank subgroup $B$ of $A$, then $G\curvearrowright X$ is finitely covered by a bi-homogeneous action.
\end{theorem}

%\begin{theorem}
%    If $G$ is a semisimple Lie group, $B \subset G$ is genuinely higher-rank, and $G \curvearrowright X$ is a $B$-totally partially hyperbolic, $B$-accessible, $B$-volume preserving action, then the $G$-action is finitely covered by a bi-homogeneous $G$-action.
%\end{theorem}}

%\begin{theorem}
 %   Let $G$ be a semisimple Lie group, $G \curvearrowright X$ be a $C^\infty$ action, and $a \in A$ be an  volume-preserving, partially hyperbolic, accessible element with central distribution $E^c$. Let $\mc{PH}$ denote the elements of $A$ which are partially hyperbolic with respect to $E^c$, and assume that $\mc{PH} \cap B$ is dense inside of a higher-rank subgroup $B\subset A$. Furthermore, assume that for every root $\beta$ of $G$ with respect to $A$, there exists $b \in B$ such that $\beta(b) = 0$ and $b$ acts partially hyperbolically and accessibly with respect to a distribution $E^c + E_\beta + E_{-\beta}$. 
    
    %Then, the action $G \curvearrowright X$ is finitely covered by a bi-homogeneous action.
%\end{theorem}

\color{black}
%For $\Gamma<G$, $G$ as in the previous section, we take a $\Gamma$ action on $M$ such that the maximal free abelian subgroup $\mathbb Z^k$ has a set of Anosov elements  $A$, which projects to a dense set on the sphere $S^k$. Then the previous theorem applies to the suspension action. 

\color{black}

\subsection{Rigidity for  \ensuremath{\mathbb R^k}-actions}

Given a volume preserving $\mathbb R^k$-action, the Oseledets theorem for actions \cite[Theorem 2.4]{BRHW} implies the existence of finitely many linear functionals $\chi\colon \mathbb R^k\to \mathbb R$, called \emph{Lyapunov functionals}, and an action invariant measurable splitting \color{black}$TX=\oplus_\chi E^\chi$\color{black} \,  of the tangent bundle on a full volume set,  such that the Lyapunov exponent of $a$ in the direction of $v\in E^\chi(x)$ is $\chi(a)$. 
\color{black}All the Lyapunov functionals positively proportional to non-zero $\chi$ constitute a class which is commonly called a \emph{coarse Lyapunov functional}. \color{black}
The sum of Oseledets distributions \color{black} $E_\chi(x):=\bigoplus_{c_i>0}E^{c_i\chi}(x)$ \color{black} corresponding to %\color{black} non-zero 
\color{black} positively proportional {\it non-zero} Lyapunov functionals is called a \emph{ coarse Lyapunov distribution}.  Denote by $\Delta$ the set of finitely many coarse Lyapunov functionals 
%\todo{Comment (2) removed ?}
with respect to the volume. \color{black} To avoid confusion between Oseledets spaces and the coarse Lyapunov spaces we will usually use index $\lambda$ for the coarse Lyapunov spaces and denote them by $E_\lambda$, $\lambda\in \Delta$. For an $\mathbb R^k$ {\it totally} $E^c$-partially hyperbolic action the Lyapunov functionals in the $E^c$ direction are 0 (see Section \ref{PHactions}), so we have the splitting $TX= \bigoplus_{\lambda\in \Delta}E_{\lambda}\oplus E^c$ .\color{black}

%Two nonzero Lyapunov functionals $\chi_i$ and $\chi_j$ are \emph{coarsely equivalent} if they are positively proportional: there exists $c>0$ such that $\chi_i=c\cdot \chi_j$. This is an equivalence relation on the set of Lyapunov functionals, and a \emph{coarse Lyapunov functional} is an equivalence class under this relation.

\color{black}
Unlike $G$-actions, $\mathbb R^k$-actions can be quite non-rigid (an example is a product of two Anosov flows). So we surely need an additional assumption in order to characterize the rigid actions. Actions satisfying this assumption belong to the class of  {\it genuinely higher rank abelian actions}. The key genuinely higher rank assumption for an $\mathbb R^k$ totally partially hyperbolic action in this paper will be the following:   %(\color{black} Do we really need them for partially hyperbolic action or just Anosov action?)
%They follow from METHOD 1 or METHOD 2 and will be the only assumptions used in subsequent sections.

\begin{itemize}
%[label=\textnormal{(\arabic*)}]
\item [\mylabel{FA1}{\rm (GHR)}] 
 %\color{olive} $\R^k$ has a dense orbit AND???} 
\color{black} For every $\lambda \in \Delta$, $\ker \lambda$ %\times M
  has a dense orbit.
\end{itemize}

\color{black}
A connection between rank-one factors of an abelian action and condition \ref{FA1}, justifying the "genuinely higher rank" terminology we use here, has been established for a class of Cartan $\mathbb R^k$-actions in \cite[Section 2.2, Theorem 2.1]{Spatzier-Vinhage}, also see Section \ref{sec:questions}.

We introduce now additional assumptions we will need for the global rigidity results of  abelian actions.

%the Oseledets splitting is continuous, and for 
\begin{definition}
\label{def:hoc}
For an $\mathbb R^k$ partially hyperbolic action we say that the action is {\it (measurably) Oseledets conformal} if  there is an $\R^k$-invariant \emph{measurable} conformal structure on each Oseledets space.
By this we mean that there  exists a \emph{measurable} family of metrics $\norm{\cdot}_\chi$ on the corresponding Oseledets spaces $E^\chi$ such that $\norm{a_*v}_\chi = e^{\chi(a)}\norm{v}_\chi$. If in addition the Oseledets spaces and the metrics on them are continuous then we say that the action is {\it continuously Oseledets conformal}.
\end{definition}

\begin{remark}\label{Rem: 0 exp expl}
    We include the Oseledets space corresponding the Lyapunov exponent $\chi = 0$ in Definition \ref{def:hoc}. In particular, we require a measurable (resp. continuous) metric for which the dynamics is always an isometry along $E^c$. 
\end{remark}

%each Oseledets space $E^{\chi}$, there exists a measurable (resp. continuous) metric $\langle \, \cdot\, ,  \,\cdot \, \rangle_{\chi}$ on the bundle $E^{\chi}$ such that for every \color{black} $a \in \R^k$ \color{black} and $v \in E^{\chi}$, \[ \norm{a_*v}_{\chi} = e^{\chi(a)}\norm{v}_{\chi}. \]

\color{black}

\subsubsection{Rigidity for Anosov \ensuremath{\mathbb R^k} actions}
% \st{Let $M$ be a compact Lie group and Consider a locally free, volume preserving $C^\infty$ action of  $C=A {\times M}$ $A=\mathbb R^k$  on a smooth compact manifold $X$. where $A=\mathbb R^k$.
%If $\mathcal F$ is a foliation invariant under action of an element $a$, we say that $a$ is \emph{normally hyperbolic} with respect $\mathcal F$, if there exists an invariant splitting $TX= E_a^s\bigoplus T\mathcal {F}\bigoplus E_a^u$, where $E_a^s$  (resp. $E_a^u$) are uniformly contracted (resp. expanded) by the action of $a$.
%If $a\in A$ is   normally partially   hyperbolic with respect to  the distribution  $E^c=T\R^k$ tangent to the $A$-orbit foliation of the action we say that $a$ is Anosov. An  $\R^k$  action with a dense set of Anosov elements in $A$ is called totally Anosov.}
\color{black}
%Any action obtained by taking compositions of translations and automorphisms of a homogeneous space $G/ \Gamma$ where $G$ is a Lie group and $\Gamma$ a uniform lattice in $G$ is an \emph{algebraic action}. For examples see Section \ref{examples}.

%\emph{Generalities on $\mathbb R^k$ Anosov actions}.

\color{black} For $\mathbb R^k$-totally Anosov actions (see Definition \ref{totally}) we have the following classification result, via the bi-homogeneous models (see Definition \ref{bi hom models}).\color{black}

\begin{theorem}\label{basic abelian}
If  $\R^k% \times M
\curvearrowright X$\color{black} is a volume preserving totally Anosov $C^\infty$-action on a $C^\infty$-manifold $X$ satisfying  \ref{FA1} \color{black} and which is continuously Oseledets conformal \footnote{For $\mathbb R^k$-totally Anosov actions the condition for Oseledets splitting and metrics is needed for non-zero $\chi$, and for $\chi=0$ it holds trivially.}, %\ref{FP2}
then the action is finitely covered by a bi-homogeneous action.
%on some $K \backslash \HH/\Gamma$.

\end{theorem}
%\color{black}there exists a finite cover of $X$ %a double-homogeneous space $K \backslash \HH/\Gamma$ and such that 
%and a lift of the $\R^k$-action \color{black} 
%\todo{Comment (3) No change} 
% such that \color{black}the action\color{black} \, 
%which is $C^\infty$ conjugate to a bi-homogeneous action on $K \backslash \HH/\Gamma$.  
%\end{theorem}
%\color{black}SHALL WE SAY A FINITE COVER OF THE ACTION IS $C^\infty$ CONJUGATE TO A BI-HOMOGENEOUS ACTION?\color{black}

\color{black}
We remark that Theorem \ref{basic abelian} has also a low regularity version for {$C^{2}$}, see Theorem \ref{regularity of conjugacies}. 
%In fact the assumptions in the theorem imply certain structure along the leaves of coarse Lyapunov foliations which lead to existence of topological conjugacy even for only continuous actions with such leafwise structures, see Section \ref{sec:top-anosov}.

%Distributions $E_\chi$ are not necessarily integrable, but the sum of those corresponding to positively proportional Lyapunov functionals is integrable in the context of actions we consider here. We summarise these facts below and we refer the reader to Section \ref{abelian-prel} for precise statements and references. 

\subsubsection{Rigidity for \color{black}partially hyperbolic \color{black} $\mathbb R^k$-actions.}

Next, we state a theorem for \color{black} volume preserving totally partially hyperbolic \color{black} $\mathbb R^k$-actions. Here we also need some assumption that will make sure to eliminate non-rigid  examples (e.g. products). However, we will not assume \ref{FA1}, instead we assume certain {\it accessibility property}, namely that for every $\lambda\in \Delta$, $\ker\lambda$ contains an accessible partially hyperbolic diffeomorphism. Such accessibility holds for our main models,  left translation actions by  Cartan subgroups of semisimple higher-rank Lie groups, which motivates the definition of super accessibility. 

 %(\color{teal}The definition does not use $a$, could we just say the manifold or the action is $\Delta(\hat{\lambda})$-accessible? And then for generic singular $a$, $a$ is accessible in the usual sense.\color{black})

\begin{definition}
\label{def:strongly}
    If \color{black}$\R^k  \curvearrowright { X}$ is totally partially hyperbolic  \color{black} action, we say that it is {\it super accessible} if for every $\lambda \in \Delta$, there exists $a \in \ker \lambda$ such that $a$ is \color{black} accessible.\color{black}
\end{definition}

\begin{theorem}\label{abelian}
Let \color{black} $\R^k \curvearrowright { X}$ \color{black}  be a volume preserving, (measurably) Oseledets conformal, super accessible totally \color{black} partially hyperbolic \color{black} $C^\infty$ action.  
Then 
\color{black} then the action is finitely covered by a bi-homogeneous action.
%there exists a finite cover of $X$ %a double-homogeneous space $K \backslash \HH/\Gamma$ and such that 
%and a lift of the $\R^k$-action \color{black} 
%there exists a double-homogeneous space $K \backslash \HH/\Gamma$ such that the action  
%is $C^\infty$ conjugate to a bi-homogeneous action. %on some finite cover of $K \backslash \HH/\Gamma$.  
\end{theorem}

\color{black}
\begin{remark}\label{rema: FA1, 2 PH}
    In Section \ref{strong accessibility implies FA12} we will show that for partially hyperbolic actions as in Theorem \ref{abelian} the strong accessibility assumption in fact implies the key higher rank property \ref{FA1}. 
    So Theorem \ref {basic abelian} for super accessible Anosov actions is a consequence of Theorem \ref{abelian}. Otherwise, for general Anosov actions Theorem \ref {basic abelian} does not follow from Theorem \ref{abelian}. 
    %The proof of Theorem \ref{abelian}, just like the proof of Theorem \ref {basic abelian}, relies on the constructions in Section \ref{sec:top-anosov} which fundamentally relies on properties \ref{FP1} and \ref{FP2}.
\end{remark}

\color{black}
\begin{remark} The assumption that the actions are {\it totally} partially hyperbolic, is natural. In the context of bi-homogeneous actions, there is no difference between the notions of partially hyperbolic, and {\it totally} partially hyperbolic actions.
%The "totally partially hyperbolic" assumption is not overly restrictive in the context of homogeneous actions. 
%Indeed, for any bi-homogeneous action of a semisimple Lie group $G$ with all simple factors of real rank at least 2, the action is automatically totally partially hyperbolic. More generally, for homogeneous actions, the existence of a single partially hyperbolic element is sufficient to guarantee that the action is totally partially hyperbolic. This suggests that such dynamics are quite common in higher-rank settings.
In contrast, for non-homogeneous $\mathbb R^k$-actions, the situation can differ. For example, in the case of $\R^k$-actions, the third author constructs in \cite{VinhageExample} an Anosov action that fails to be totally Anosov. However, for actions of semisimple Lie groups 
$G$, it remains unclear whether a similar phenomenon occurs. We refer the reader to Section \ref{subsub:single-anosov} for further discussion.

%\color{black} REMOVE?? We refer the reader to Section \ref{subsub:single-anosov} for a discussion of ongoing work aimed at removing the "totally" assumption for $G$-Anosov actions.\color{black}
%Among homogeneous actions any partially hyperbolic action (positive entropy? or any homogeneous actions of higher rank groups is totally partially hyperbolic) is totally partially hyperbolic. In fact for homogeneous actions it suffices to assume existence of one partially hyperbolic element in order to have a totally partially hyperbolic action.

%In the non-homogeneous setting, an example of Anosov but not totally Anosov action is constructed in \cite{VinhageExample}.
\end{remark}

\begin{remark}
We note that, even though in the beginning of this section we assumed $X$ is connected, in fact the assumptions in all our results (both for $G$-actions and for $\mathbb R^k$-actions) imply directly  connectedness of the manifold $X$.
  
\end{remark}

\color{black}

\subsection{Questions and conjectures\color{black}}
%\label{conjectures}
\label{sec:questions}
\color{black}
There are several questions that are natural next steps after the results of this paper. 

\subsubsection{Rank one factors for $\mathbb{R}^k$-actions, Oseledets conformality, and a partially hyperbolic Katok-Spatzier conjecture}
\label{sec:PH-KS}

In this paper, topological transitivity of hyperplane actions is called the genuinely higher rank assumption \ref{FA1}. Indeed, in many settings, an action has a rank one factor if and only if there exists a hyperplane action which is not transitive. It was shown for a special class of totally Anosov actions, the so-called totally Cartan actions, in \cite[Theorem 2.1]{Spatzier-Vinhage}, and is easy to verify for homogeneous actions. We therefore formulate the following

\begin{conjecture}
\label{q:rank1-cond}
    Let $\R^k \curvearrowright X$ be an (essentially) accessible, partially hyperbolic action. If $H\subset \R^k$ is a hyperplane such that $H$ does not have a dense orbit, there exists a ($C^\infty$, $C^r$ or $C^0$) flow $\psi_t : Y \to Y$, a homomorphism $\sigma : \R^k \to \R$ such that $\ker \sigma = H$ and a submersion $\pi : X \to Y$ such that $\pi(a \cdot x) = \psi_{\sigma(a)}(x)$ for all $x \in X$.
\end{conjecture}

Partial results can be made in this direction by considering only Anosov actions, or actions with a dense set of regular elements. A positive answer to Conjecture \ref{q:rank1-cond} could lead to a version of Theorem \ref{basic abelian} for $\Z^k$-actions, replacing \ref{FA1} with a ``no rank one factor'' condition (note that there is no clean adaptation of \ref{FA1} to the setting of $\Z^k$-actions).  A related problem, which was shown for totally Anosov actions in \cite{KSp}, is

\begin{conjecture}
\label{conj:PH-cartan}
    Let $\R^k \curvearrowright X$ be an accessible, totally partially hyperbolic action such that every non-central coarse Lyapunov foliation is 1-dimensional. If the action satisfies \ref{FA1}, then the action is continuously Oseledets conformal.
\end{conjecture}

A reasonable first step towards proving Conjecture \ref{conj:PH-cartan} would be to strengthen the condition \ref{FA1} to super-accessibility (Definition \ref{def:strongly}). Note that the 1-dimensionality of the coarse Lyapunov foliations is required in Conjecture \ref{conj:PH-cartan}, since in this case the non-central leaves are modeled by $\R$, and every automorphism of $\R$ is semisimple. When the dimension is higher, Jordan blocks can appear, and a (bi-)homogeneous action may fail to be continuously Oseledets conformal. Furthermore, one may not relax accessibility to essential accessibility. If one takes $X = (SL(2,\R) \times SL(2,\R) \times SL(2,\R))/\Gamma$, with $\Gamma$ irreducible, and the action of the subgroup
\[B = \set{\begin{pmatrix}
    e^t & 0 \\ 0 & e^{-t}
\end{pmatrix} \times \begin{pmatrix}
    e^s & 0 \\ 0 & e^{-s}
\end{pmatrix} \times \begin{pmatrix}
    1 & t \\ 0 & 1
\end{pmatrix} : t,s \in \R},\]
then this action is essentially accessible and has 1-dimensional non-central coarse Lyapunov foliations, but is not continuously Oseledets conformal, since the center has a Jordan block.

Proving Conjectures \ref{q:rank1-cond} and \ref{conj:PH-cartan} would be a step toward proving the following

\begin{conjecture}[Partially hyperbolic Katok-Spatzier conjecture]
    Let $\mathbb{R}^k \curvearrowright X$ be an (essentially) accessible, totally partially hyperbolic action without rank one factors. Then the action is finitely covered by a bi-homogeneous action.
\end{conjecture}

We note that the adverb ``totally'' cannot be totally omitted due to the examples constructed by the third author in \cite{VinhageExample}.

\subsubsection{Actions with one regular element}
\label{subsub:single-anosov}

When $G$ is a simple Lie group of real rank at least 2, there are two key assumptions in the partially hyperbolic setting: a dense set of partially hyperbolic elements and accessibility. In the setting of abelian actions, the third author constructed actions with one regular element, but not a dense set of regular elements \cite{VinhageExample}.

\begin{conjecture}
    If $G \curvearrowright X$ is an action of $G$ and there exists $g \in G$ which is partially hyperbolic and accessible, then there exists an $\R$-split Cartan subgroup $A \subset G$ such that a dense set of $a \in A$ are partially hyperbolic and accessible with a common central distribution.
\end{conjecture}

The authors are pursuing this question for volume preserving Anosov $G$-actions.   The answer in the non-Anosov, partially hyperbolic case is more nuanced, as the center distribution may not have subexponential growth (one no longer has Lemma \ref{lem:sub-exp-center}). One may pose an easier version of this question by assuming the central distribution of $g$ to have subexponential growth.

\subsubsection{Essential accessibility}
\label{sec:ess-acc}
The other key assumption in our results is accessibility. This assumption seems natural, since if one does not assume accessibility, one may build new totally partially hyperbolic $G$-actions by taking product with trivial actions. While one may conjecture that  the trivial actions are also rigid, the methods used here are not immediately applicable. One may therefore ask about an intermediate case in which %every accessibility class is dense (or more strongly, that 
the partition into accessibility classes is trivial mod volume. This condition is called {\it essential accessibility}, see \cite{BW} for a precise definition, and fundamental ergodic consequences. For a purely topological, weaker condition, one may ask that the every accessibility class is dense.

Examples of essentially accessible $G$-actions which are not accessible are not obvious but not difficult to construct. Indeed, take a simple Lie group $G$, and consider an irreducible cocompact lattice $\Gamma \subset G^2=G\times G$. Then consider the action of $G$ which only acts non-trivially  on the first factor of $G^2$. Then every partially hyperbolic element has its accessibility classes equal to the $G$-orbits, which are dense and partition trivially mod volume since the lattice is irreducible. In fact, whenever one builds a bi-homogeneous action $G\curvearrowright K \backslash H / \Gamma$ which is accessible and for which $K$ is nontrivial and contains no normal subgroups of $H$, the translation action on $H /\Gamma$ is essentially accessible but not accessible (see Remark \ref{rem:essentially-acc}). We therefore ask:

\begin{conjecture}
    Let $G$ be a semisimple Lie group such that  all simple factors of $G$ have real rank at least 2.
    If $G\curvearrowright X$ is a (totally) partially hyperbolic, essentially accessible action, then the action finitely covered by a bi-homogeneous action.
\end{conjecture}

\color{black} Another question is whether one can prove this conjecture for actions which have say a dense set of accessible partially hyperbolic elements in a maximal split Cartan subgroup, but which are not totally accessible. \color{black}

\subsubsection{Systems with dominated splittings}

In its most radical form, the questions and conjectures about systems with dynamically-defined continuous splittings require no assumptions to eliminate counterexamples. In particular, we formulate the following

\begin{conjecture} Let $G$ be a semisimple Lie group such that  all simple factors of $G$ have real rank at least 2. 
    Suppose $G \curvearrowright X$ is a locally free action of $G$, and that there exists $g \in G$ and a continuous splitting $TX = E \oplus F$ such that for all unit vectors $v \in E$ and $w \in F$, $\norm{dg(v)} < \norm{dg(w)}$. Then the action is smoothly conjugated to a bi-homogeneous action.
\end{conjecture}

Note that we seem to have lost all structures assumed in the previous two conjectures, and there are several examples of actions satisfying these assumptions which are not topologically transitive (take, e.g., the product of a trivial action and an Anosov action). However, the property of preserving a continuous dominated splitting allows one to apply a continuous version of Zimmer's superrigidity theorem, which may have a noncompact ``noise'' group. In particular, one obtains many elements which preserve this splitting, and it will be difficult for extra hyperbolicity to appear.

\subsubsection{Products of groups of real rank 1}

Let $G$ be a semisimple Lie group such that the real rank of $G$ is at least 2, but $G$ has rank one factors (for instance, when $G = SL(2,\R) \times SL(2,\R)$. While several aspects of the proofs in this paper carry through, many use the assumption that every simple factor of $G$ has higher rank in a crucial way. Furthermore, one may no longer verify \ref{FA1} from the higher-rank assumption directly. It is therefore natural to ask:

\begin{conjecture}
    Assume $G$ is a semisimple Lie group and $G \curvearrowright X$ is a $C^\infty$, volume-preserving, totally partially hyperbolic action such that the restriction to an $\R$-split Cartan subgroup satisfies \ref{FA1}. Then the action of $G$ finitely covered by a bi-homogeneous action.
\end{conjecture}

\subsubsection{Invariant Volumes}

If $G$ is a higher-rank semisimple Lie group, it is not difficult to construct actions of $G$ which do not preserve a volume (for instance, the projectivized action of $SL(d,\mathbb{R})$ on $S^{d-1}$, or more generally $G \curvearrowright G / Q$ for some parabolic subgroup $Q \subset G$). However, all (bi-)homogeneous actions of $G$ which are Anosov always preserve a volume. We formulate the following

\begin{conjecture}
    If $G\curvearrowright X$ is a $C^\infty$ partially hyperbolic action, then $G$ preserves an invariant volume.
\end{conjecture}

This question can be made easier or more difficult by changing the assumptions to Anosov or essentially accessible partially hyperbolic, respectively.

\subsubsection{Actions of lattices}
\label{sec:lattice-q}

While we are able to obtain results for actions of semisimple Lie groups $G$, our assumptions make it difficult to obtain similar classifications for lattices. This is in contrast to the situation for abelian group actions, where classification of actions $\mathbb{R}^k \curvearrowright X$ usually leads to a classification for actions $\mathbb{Z}^k \curvearrowright X$ satisfying similar assumptions. The main difference is the following: when suspending an $\mathbb{R}^k$ action to a $\mathbb{Z}^k$ action, the suspension over a torus $\mathbb{T}^k$. If $a \in \mathbb{Z}^k$ is regular, and $v \in [0,1)^k$, then the monodromy of $v + ta$ differs from integer multiple of $a$ by at most a vector in $\mathbb{T}^k$ of $L^\infty$ norm 1. When suspending a $\Gamma$-action for some lattice $\Gamma \in G$, instead of covering a translation action on $\mathbb{T}^k$, the action covers the translation action on $G/\Gamma$. This action has hyperbolicity, and when moving along a one-parameter subgroup passing through a regular element $\gamma \in \Gamma$, the monodromy is no longer powers of $\gamma$.

One therefore obtains several new theorems by any answer to the following open ended-question:

\begin{question}
\label{q:lattices}
    Assume $\Gamma \subset G$ is a (uniform) lattice in a semisimple Lie group $G$, and that $\Gamma \curvearrowright$ is a $C^\infty$ action. Let $g_1 = \gamma \in \Gamma$ be an element of $\gamma$ which belongs to a 1-parameter subgroup $\set{g_t}$, and assume $\gamma : X \to X$ is partially hyperbolic. Under what conditions on the action $\Gamma \curvearrowright X$ is the action of the one-parameter subgroup $g_t$ acting on the suspension $G/\Gamma$ partially hyperbolic? Under what conditions is it accessible?
\end{question}
\color{black}
\subsubsection{Actions preserving affine connections}
We note that  bi-homogeneous actions   preserve natural affine connections themselves of algebraic nature.  

\begin{question}
Can one classify affine structures on compact manifolds whose automorphism group contains a higher-rank semisimple group or a higher-rank  lattice?
\end{question}

As one can take products of bi-homogeneous examples  with arbitrary affine manifolds, this will require additional conditions.  More generally one can consider actions with Gromov rigid structures \cite{gromov-rigid}. 

%\subsubsection{Local rigidity of bi-homogeneous actions}
%Fisher 

%Essentially accessible in place of accessible (example: product of Anosov and parabolic)

%One PH + center with 0-exponents??

\color{black}

\section{Outline of the arguments}\label{outline}
{%\color{cyan}{}

We will now describe the arguments in our work in more detail.  

In Section \ref{examples}, we describe  examples that exhibit  the various difficulties we encounter in our classification.  In particular, they explain the necessity to consider bi-homogeneous actions as models, both in the semisimple and also the higher rank abelian cases.  
%We also explain how Anosov and paritally hyperbolic automorphism actions by lattices give rise to Anosov and partially hyperbolic actions by semisimple groups, respectively.  

In Section \ref{abelian-prel} we collected some preparatory material. We introduce the needed  background  from smooth dynamics, especially on partially hyperbolic systems. We derive basic properties of totally partially hyperbolic abelian actions. Finally, a part of this section is dedicated to the   
%Critical for our work will be the notion of accessibility by   distributions $D_i$ of the tangent bundle.  Roughly this means that any two points can be reached from each other by broken paths with legs tangent to some $D_i$.  For a partially hyperbolic map the $D_i$ typically are  stable and unstable distributions of some partially hyperbolic diffeomorphism. 
powerful tool,  the \emph{invariance principle}, originally introduced by Ledrappier \cite{Ledrappier1986}. It was  further developed by Avila, Viana and Santamaria \cite{AV, ASV}.
%This was further developed by Avila, Viana and Santamaria \cite{ASV},
 Kalinin and Sadovskaya developed a version \cite{KalSad} which allows for an application to the partially hyperbolic setting with center-bunching and accessibility conditions. \color{black}Here we derive (with essentially the same proofs as in earlier works) the versions which are adapted to our purpose, i.e. to the setting of abelian group actions (where several foliations are involved), and to non-accessible situations we encounter such as principal fiber bundles over accessible systems and Anosov $\mathbb R^k$-actions.\color{black}  %More precisely, the principle shows H\"olderness of measurable conformal structures invariant under a partially hyperbolic system with suitable accessibility properties.  

 \color{black}
 %At the end of this section, we explain how to apply the Invariance Principle to $\R ^k$ partially hyperbolic actions.  For this, we introduce Lyapunov functionals and joint  Oseledets splitting of the $\R^k$ action as well as a common refinement,  the coarse Lyapunov spaces and  foliations they integrate to. These are simply sums of Oseledets space for positively proportional Lyapunov functionals. We later apply the Invariance  Principle to the action of an element of $\R^k$ which belongs to the kernel of a Lyapunov functional, a so called Lyapunov hyperplane. 
 
 The invariance principle has proved  very useful in the rigidity of group actions before, especially as used by   Damjanovic and Xu \cite{DX1}. They overcame one principal difficulty,  the regularity of Oseledets spaces within coarse Lyapunov foliations and related structures. The invariance principle could be circumvented in the works of Kalinin and Spatzier \cite{KSp}, and Spatzier and Vinhage \cite{Spatzier-Vinhage}, where the coarse Lyapunov foliations are one-dimensional and metric properties follow much easier. In this current work,   the invariance principle and accessibility feature prominently again due to the multidimensionality of the coarse Lyapunov foliations.  
 Naturally, we need to prove the needed accessibility properties using that we have an action of a semisimple Lie group. 
 
 Section \ref{sec:group-prelims} reviews background material from group theory.  Most important are the topological free product constructions for topological groups.  Crucial will be various   criteria  when a topological group is actually a Lie group,   most importantly for us one by Gleason and Palais \cite{gleason-palais}.  However, later on in the final proofs,  we will also employ the no small subgroups property of Montgomery and Zippin \cite{Montgomery-Zippin} and its application to inverse limits of Lie groups. {\color{black}At this point, we conclude Part \ref{part:background} of the paper, which focused on collecting previously established tools to be used later.
 
  Part \ref{part:reduction}  uses many of the tools from partial hyperbolicity and superrigidity first to prove results for $G$-actions from results for $\mathbb R^k$-actions (Section  \ref{sec:semisimple-proof}), and then to extract a common set of consequences of the assumptions made for $\mathbb R^k$-actions that will serve as a starting point for proving classification results for $\mathbb R^k$-actions (Section  \ref{strong accessibility implies FA12}).  
  
  Section \ref{sec:semisimple-proof} contains proofs of all of the results for $G$ actions in Section \ref{sec: G-action}, assuming Theorem \ref{abelian}. Here is a brief outline of the proof of Theorem \ref{G-action}.
  %To state our main result in minimal terms of the $G$-action, 
The basic step is to use a Howe-Moore type argument to get invariance of a volume form by $G$ from that of a suitable one-parameter subgroup of the split Cartan.
 %To state our main result in minimal terms of the $G$-action, we first a We remark that for an action of a semisimple Lie group $G$ of the noncompact type on a compact manifold $M$ and a volume form $\omega$ on $M$, $G$ invariance of $\omega$  follows from  invariance under a regular one-parameter subgroup of a split Cartan.  This allows us 
 Once the $G$-action preserves volume,  we use Zimmer's measurable cocycle rigidity theorem to get measurable  conformal structures along the coarse Lyapunov foliations, invariant under the Cartan subgroup $A$. This is the first place where we use higher rankness assumption on $G$. 
We also use higher rankness of $G$ in the next key step which shows that the accessibility assumption on the $G$-action implies super accessibility for the relevant Cartan subgroup $A$. To prove this, we employ the structure of the acting semisimple group, in particular  special facts about how to write elements of the Weyl group by products of unipotent elements.  Then the $A$ action satisfies assumptions of Theorem \ref{abelian}, which gives us the bi-homogeneous model for the $A$-action. By taking conjugates of $A$ within $G$, we can conclude that the restrictions of the $G$-action to Cartan subgroups are smoothly conjugate to bi-homogeneous actions on the same bi-homogeneous model space. Applying work by Zeghib on centralizers (see Appendix \ref{sec:centralizer-zhegib}) allows us to combine these conjugacies to get one for the whole $G$ action.

%The remaining arguments make sure that this actually models the whole $G$-action as well, the necessary tools for this are contained in Appendix \ref{sec:centralizer-zhegib}.
 %To prove that these conformal structures are \color{black} continuous we invoke the Invariance Principle.  For this, we need to prove that a suitable sub-collection of coarse Lyapunov foliations is still accessible (Theorem \ref{thm: sub acc}). %In our case this becomes the sum of strong stable and strong unstable distributions for an element in a Lyapunov hyperplane. 
 % To prove accessibility for it, 

% Now the split Cartan subgroup $A$  preserves a \color{black}partially \color{black} \holder conformal structure on each coarse Lyapunov foliation, has the right accessibility properties and has a dense set of \color{black}partially hyperbolic \color{black} elements by assumption.
% Thus we are ready to use the classification of such %\color{black}partially hyperbolic \color{black} actions, the second main result of this paper (Theorem \ref{abelian}). We can conclude that the restriction of the $G$-actions to Cartan subgroups are smoothly conjugate to bi-homogeneous actions. Applying work by Zeghib (see Appendix \ref{sec:centralizer-zhegib}) allows us to combine these conjugacies to get one for the whole $G$ action.
 
% \color{black}
In Section \ref{strong accessibility implies FA12}, we start working towards the two theorems on rigidity of abelian actions, Theorems \ref{abelian} and \ref{basic abelian}. From each collection of assumptions, we deduce two fundamental properties: genuinely higher-rank \ref{FP1} and H\"older Oseledets conformal \ref{FP2}. We arrive at these properties in different ways for each theorem, but in both settings the main tool used is the invariance principle (more specifically the results derived in Section \ref{abelian-prel}). This is a natural ``intermission'' of the paper, from that point forward we use these assumptions as a starting point.

{\color{black} After our first intermission, we turn to Part \ref{part:build-bundle}. The key subtlety to overcome before beginning topological group arguments is constructing nilpotent group actions parameterizing the coarse Lyapunov leaves. However, for many partially hyperbolic and Anosov bi-homogeneous actions, no such actions exist on the bi-homogeneous space. Instead, they only live on the homogeneous space which is a principal bundle with compact fibers. See Remark \ref{rem:no-param} for a precise example. Aim of Part \ref{part:build-bundle} is to construct such a bundle,  together with global nilpotent group actions that provide leafwise homogeneous structures, from purely dynamical assumptions.}
 
% In the remainder of the paper, Sections 9 though 12 we classify certain Anosov $\R^k \times K$ actions, and prove Theorems \ref{basic abelian} and \ref{abelian}.
 
 In Section \ref{extension}, we construct leafwise homogeneous structures which intertwine with the given $\R ^k$ action.  The idea is simple:  the Lyapunov hyperplanes act by isometries on the associated coarse Lyapunov foliations.  Each such hyperplane has a dense orbit by \ref{FP1}, so by taking limits they act transitively on coarse Lyapunov leaves to provide the homogeneous structures. 
 
 From this we would like to get a simply transitive action of a Lie group on the coarse Lyapunov leaves.
 There is a complication however.  When recurring to the initial point of a leaf, we may rotate by isometries.
 
 We resolve this problem in Section \ref{fibration}, where we construct a compact extension of the given $\R^k$-action to an action of $\R^k \times K$ for some suitable compact group $K$, essentially by passing to a suitable orthonormal frame bundle. For the lifted action we get group actions parameterizing the coarse Lyapunov foliations which interwine with the $\R^k$-action in Theorem \ref{thm:lifted-action}. In the subsequent section \ref{sec:top-anosov} we axiomatize such ``leafwise homogeneous'' actions  (see also Proposition \ref{prop:smooth-is-top}). Equipped with Theorem \ref{thm:lifted-action}, we take our second intermission.

 %This part  largely extends the work two of the authors \cite{Spatzier-Vinhage}. The classification of totally Anosov $\R ^k$  actions, for $k \geq 2$ under strong assumptions on the derivative cocycle along coarse Lyapunov foliations, e.g. measurable Oseledets  conformal actions.
 
 Part \ref{part:top-groups} begins with Section \ref{sec:top-anosov}, which introduces the notion of
  harnessed abstract partially hyperbolic actions (HAPHAs) and their classification in Theorem \ref{thm:technical}. The main goal is to once again recollect the structures studied before the intermission, and use only the abstract assumptions laid out, in a completely topological setting. Most of the axioms for smooth partially hyperbolic actions were established in the previous parts, and we delay checking them formally until Section \ref{sec:smooth-top}.
  
  HAPHAs are a vast generalization of  {\it topological Anosov actions} introduced in the work by Spatzier and Vinhage \cite[Definition 14.4]{Spatzier-Vinhage}.  The key idea for the proof of the central global classification result in Theorem \ref{thm:technical} is that we have a natural transitive action of an infinite dimensional topological group, a free product of the groups defining the homogeneous structures on the coarse Lyapunov leaves.    We show that this transitive action actually factors through a finite dimensional Lie group. 
  
  We think of elements fixing a given point $p$ as  cycles and need to show that they are independent of $p$.  When the cycles belong to coarse Lyapunov spaces coming from opposite Lyapunov functionals, this is done at the end of Section \ref{sec:top-anosov}.
  
  The other key case is handled in Section \ref{sec:fibers}, through a careful study of the so-called geometric commutators which correspond to taking Lie brackets in a Lie algebra.  One main point to remember here is that we do not have the necessary regularity to take brackets of vector fields as our objects are only topological. \color{black}
  
  %\color{black} partially \color{black} H\"older.
  
  One key lemma is Lemma \ref{eq:cocycle-like-bb} which shows that geometric commutators satisfy a cocycle like property with a polynomial correction term where the latter is independent of the base point.  This is crucial for proving that the cycles are constant in the base point. 
  
  In Section \ref{sec:pairwise-sufficient} we consider arbitrary paths,  show that they form  well-defined group relations  modulo the cycles, and thus   can be put in a canonical presentation. In particular, we may associate to an arbitrary path an equivalent one from a finite-dimensional family of presentations. The techniques and results of this section are similar to that of \cite{Spatzier-Vinhage}, with extra complications due to multidimensionality of the coarse Lyapunov foliations.
  
  Finally, in Section \ref{sec:smooth-top}, we verify the assumptions of a HAPHA for a smooth partially hyperbolic action. %The proof of regularity of the conjugacies (Theorem \ref{regularity of conjugacies}). %study arbitrary cycles and show that they can be put in canonical form. 

 %Next we want to use the action of a Cartan in G to get homogeneous structures.  However, as discussed by examples above, we also need to resolve the issue of bi-quotients which naturally occur in our setting.  That leads us to invent a natural compact extension for which an action of a compact by $\R^k$ group acts in an Anosov like way.  To get the extension, we think of orthonormal frame bundles in the metrics we constructed on the course Lyapunov spaces. 
 
 %As we show, everything miraculously  fits together beautifully: 
% we can prove the needed accessibility for an action by the Cartan subgroup using properties of actions of  semisimple group actions.   Zimmer cocycle superrigidity gives measurable solutions for the conformal structures.
 %By accessibility the invariance principle they 
}
\section{Examples}\label{examples}

Throughout this section, $G$ denotes a semisimple Lie group, and $\Gamma \subset G$ is a lattice.

\subsection{Suspension construction}

\begin{example}[Suspensions]
\label{ex:suspension}
Many $G$-actions come from a standard procedure called {\it suspension} or {\it induction}. Let $\Gamma \subset G$ be a (cocompact) lattice, and $\Gamma \curvearrowright X_0$ be a $C^\infty$ action of $\Gamma$ on $X_0$. The corresponding {\it suspension space} is the set 
%\todo{comment(4) Fixed typo}
$X = (G \times \color{black} X_0 \color{black}) / \sim$, where $\sim$ denotes the relation in which $(g_1,x_1) \sim (g_2,x_2)$ if and only if there exists $\gamma \in \Gamma$ such that $g_2 = g_1\gamma^{-1}$ and $x_2 = \gamma \cdot x_1$.

Notice that $G$ acts naturally on $X$ by $g \cdot (g',x) = (gg',x)$, which preserves equivalence classes. Furthermore,  $G / \Gamma$ is a factor of $X$ under the projection $\pi(g,x) = g\Gamma$, and the restriction of the $G$-action to $\Gamma$ preserves $\pi^{-1}(e)$. The action of $\Gamma$ on $\pi^{-1}(e)$ is clearly $C^\infty$ conjugated to the %\todo{comment(5)  "inverse" write a comment to the referee} 
\color{black} action of $\Gamma$ on $X_0$. {\color{black} When $\Gamma$ acts by automorphisms of a torus or nilmanifolds, and there exists an Anosov or partially hyperbolic $\gamma \in \Gamma$, then the suspended action is totally Anosov or totally partially hyperbolic, respectively (see Example \ref{ex:alg-suspension}).}
\end{example}

\begin{remark}
In general, it is not clear how to conclude that a suspension action constructed as in Example \ref{ex:suspension} is totally Anosov. Indeed, the difficulty lies in concluding that if the action of $\gamma$ on $X_0$ is Anosov, then the action of a 1-parameter subgroup passing through $\gamma$ on $X$ is normally hyperbolic with respect to its centralizer in $G$. Even for very regular 1-parameter subgroup, this relationship is complex and nontrivial.
\end{remark}

\subsection{Homogenenous examples}

We begin by describing an alternate construction to the suspension when the $\Gamma$-action is algebraic.

\begin{example}[Algebraic suspensions]
\label{ex:alg-suspension}    
When the $\Gamma$-action used to construct the suspended action in Example \ref{ex:suspension} is algebraic, another equivalent construction shows that the suspended action is totally Anosov. Indeed, suppose that $X_0$ is a nilmanifold $X_0 = 
N / \Lambda$, and that there is a representation $\rho : G \to \Aut(N)$ without zero weights such that $\rho(\Gamma)$ preserves $\Lambda$. In the case when $N = \R^d$ and $\Lambda = \Z^d$, this data corresponds to a homomorphism $\rho : G \to SL(d,\R)$ such that $\rho(\Gamma) \subset SL(d,\Z)$.

Let $H$ denote the semidirect product $H = G \ltimes_\rho N$, and $\hat{\Gamma}$ denote the group $\Gamma \ltimes_\rho \Lambda$. Then suspension space $X$ constructed in Example \ref{ex:suspension} is diffeomorphic to $H / \hat{\Gamma}$ and the $G$-action is conjugate the homogeneous (left-translation) action. The diffeomorphism can be constructed immediately by writing an element of $H$ as $(g,n)$ where $g \in G$ and $n \in N$. Furthermore, since the representation $\rho$ has no zero weights, the action of $G$ is totally Anosov.
\end{example}

\begin{remark}
The construction described in Example \ref{ex:alg-suspension} seems general, but is actually quite restrictive. Indeed, the difficulty lies in finding the representation $\rho$. Such representations seem to be guaranteed to exist from the Margulis superrigidity theorem. However, given a representation $\rho_0 : \Gamma \to \Aut(N)$ which preserves $\Lambda$, the extension of $\rho_0$ to $G$ is usually only guaranteed to exist {\it up to compact noise}. This can be resolved by considering the lattice $\Lambda$ not in $G$, but in the product of $G$ with a compact group $K$. The construction can proceed with these additional structures, but describes the suspension  of the action $\Gamma \curvearrowright X_0$ as a {\it bi-homogeneous} action, rather than a homogeneous action. See Example \ref{ex:SL3-SU3}.
\end{remark}

\begin{example}[Embedding $\R$-split orthogonal groups]
\label{ex:embedded-SO}
All actions described previously come from suspending a $\Gamma$-action. Here, we describe another class of actions which do not come from actions of lattices. Such actions were first described in \cite{Goetze-Spatzier}. Consider an embedding of the group $SO(n,n)$ into $SO(n,n+1)$. Notice that both groups have the same real rank, $n$, and hence that an $\R$-split Cartan of $SO(n,n)$ is automatically an $\R$-split Cartan subgroup of $SO(n,n+1)$. Furthermore, each group is $\R$-split, so the centralizer of an $\R$-split Cartan subgroup is discrete in both groups. Therefore, the translation action of $SO(n,n)$ on a compact quotient of $SO(n,n+1)$ will be a totally Cartan action. %or by considering an abelian action which does not extend to the action of a semismiple group (Example \ref{ex:SLnC}).
\end{example}

\begin{remark}
The special feature of the groups appearing in Example \ref{ex:embedded-SO} is that both groups are $\R$-split. This procedure can be adapted to produce bi-homogeneous actions when the smaller group is still $\R$-split, and the centralizer of the smaller group in the larger group is compact. This happens for the group $SO(n,n)$ sitting inside $SO(n,m)$, $m \ge n$. See Example \ref{ex:SO22-on-SO2n} for a precise description of this phenomena.
\end{remark}

\subsection{Bi-homogeneous examples}

{\color{black}
The first example of a bi-homogeneous action is an $\R^k$-action which does not extend to a semisimple group action.

\begin{example}[Weyl chamber flows on non-split groups]
\label{ex:SLnC}
Fix a cocompact lattice $\Gamma \subset SL(n,\C)$. Then let $X$ denote the double quotient space $\Diag_U \backslash SL(n,\C) / \Gamma$, where 
$$\Diag_U = \set{ \diag(e^{i\theta_1},\dots,e^{i\theta_n}) : \theta_1,\dots,\theta_n \in \R, \sum \theta_i = 0} \cong \mathbb{T}^{n-1}$$ is the group of unitary diagonal matrices. 

Then the left-translation action of $\Diag_\R = \set{ (e^{t_1},\dots,e^{t_n}) : t_1,\dots,t_n \in \R, \sum t_i = 0}$ is a totally Cartan $\R^{n-1}$ action.

\begin{remark}
\label{rem:no-param}
    Since $U$ is compact, the space $X$ has the structure of an orbifold, and if $\Diag_U \cap \Gamma = \set{e}$, then it is a manifold. Since $\Diag_\R$ commutes with $U$, the left-translation action of $\Diag_\R$ is well-defined. However, the groups $U_{ij}$ which consist of matrices with 1's on the diagonal, any complex number in the $(i,j)$th position, and 0's elsewhere, do not commute with $\Diag_U$. Hence while the coarse Lyapunov foliations have a canonical metric and each leaf has a fixed Euclidean structure, there does {\it not} exist a group action of $\C$ parameterizing the leaves. 
\end{remark}
\end{example}

\begin{example}[Suspensions of actions with ``compact noise'']
\label{ex:SL3-SU3}
Let $G = SL(3,\R) \times SU(3)$ and $\Gamma \subset G$ be the $\Z$-points of some $\Q$-rational embedding of $\rho : SL(3,\R) \times SU(3) \to SL(d,\R)$, $\Gamma = \rho^{-1}(SL(d,\Z))$. Then $\Gamma$ is a lattice in $G$ and $\rho$ is a representation of $G$. Define the semidirect product $H = G \ltimes_\rho \R^d$ in the usual way:

\[ (g_1,v_1) * (g_2,v_2) = (g_1g_2,\rho(g_2)^{-1}v_1+ v_2) \]

Let $\Lambda = \Gamma \ltimes \Z^d$ be the semidirect product of the corresponding lattices. We consider the space $X = SU(3) \backslash H / \Lambda$, and the action of $SL(3,\R)$ on $X$ by left translations. As in Example \ref{ex:SLnC}, the action is well-defined since $SU(3)$ commutes with $SL(3,\R)$. Furthermore, while $G$ splits as a direct product of $SL(3,\R)$ and $SU(3)$ and the representation $\rho$ may be restricted to $SL(3,\R)$,  this restriction does {\it not} give the corresponding action of $\Gamma$ on $\R^d$. Therefore, $X$ is {\it not} a $SL(3,\R) \ltimes \R^d$ homogeneous space.

If the representation $\rho$ does not have zero weights then this action of $SL(3,\R)$ is totally Anosov. Here, while the root spaces are parameterized by group actions, the weight spaces in $\R^d$ will {\it not} be parameterized by group actions of the corresponding weight spaces, exactly because the group $SU(3)$ rotates each weight space.

This example can be easily generalized to any $\R$-split Lie group $H$ defined over $\Q$, its compact real form $K$, and a $\Q$-representation $\rho : H \times K \to SL(d,\R)$.
\end{example}

\begin{example}[Embedding split groups in non-split groups]
\label{ex:SO22-on-SO2n}
Let $H = SO(2,n)^\circ$, $n \ge 3$. We write $\Lie(H)$ as the set of matrices 

\[
\left(
\begin{array}{cc|cc|cccc}
t_1 & u & 0 & a & r_1 & r_2 & \dots & r_{n - 2} \\
\hat{u} & t_2 & -a & 0 & s_1 & s_2 & \dots & s_{n-2} \\
\hline
0 & \hat{a} & -t_1 & -\hat{u} & \hat{r}_1 & \hat{r}_2 & \dots & \hat{r}_{n-2} \\
- \hat{a} & 0 & -u & -t_2 & \hat{s}_1 & \hat{s}_2 & \dots & \hat{s}_{n-2} \\
\hline
-\hat{r}_1 & -\hat{s}_1 & -r_1 & -s_1 & 0 & \theta_{12} & \dots  &\theta_{1(n-2)} \\
-\hat{r}_2 & -\hat{s}_2 & -r_2 & -s_2 & -\theta_{12} & 0 & \dots & \theta_{2(n-2)} \\
\vdots & \vdots & \vdots & \vdots & \vdots &  \vdots & \ddots & \vdots \\ 
-\hat{r}_{n-2} & -\hat{s}_{n-2} & -r_{n-2} & -s_{n-2} & -\theta_{1(n-2)} & -\theta_{2(n-2)} & \dots & 0
\end{array}
\right)
\]

Notice that $\mf{so}(2,2)$ sits inside $\Lie(H)$ with this presentation canonically as the upper left $4 \times 4$-block, and $\mf{so}(n-2)$ sits inside as the bottom right $(n-2) \times (n-2)$-block.  Furthermore, $\mf{so}(2,2)$ commutes with $\mf{so}(n-2)$. Therefore, if $G = \exp(\mf{so}(2,2)) \subset H$ and $K = \exp(\mf{so}(n-2)) \subset H$, we may construct an action of $G$ on $X = K \backslash H / \Lambda$, where $\Lambda$ is some fixed cocompact lattice in $H$. One may notice similarities with the previous examples: the action is Anosov and is well-defined because $G$ commutes with $K$. Furthermore, while the roots of $G$ act on $X$, the roots of $H$ do not act on the double quotient space. For instance, the subalgebra $\mf u$ spanned by the coordinates $r_1,\dots,r_{n-2}$ is a coarse Lyapunov foliation, normalized by $K$ and $K$ preserves an invariant metric on it. However, the action of $K$ is not trivial, so while $\exp(\mf u)$ acts on the homogeneous space $H / \Lambda$, it will not act on $K \backslash H / \Lambda$.

This is also an example of an abelian totally Anosov action by considering the action of the split Cartan subgroup. It can be further generalized to $SO(m,m)$ acting on $SO(m,n)$ quite easily or $SU(m,m)$ acting on $SU(m,n)$, respectively. However, the action of the Cartan subgroup of $SU(m,m)$ is not an abelian totally Anosov action: one must additionally quotient by $\Diag_U \subset SU(m,m)$ on the left.
\end{example}

\begin{example}[Combining phenomena]
We combine ideas in the last two examples to show one last feature. Let $H$ and $\Lambda$ be as in Example \ref{ex:SO22-on-SO2n}. $\Lambda$ is often obtained by taking a $\Q$-algebraic representation $\rho : H \times SO(n+2) \to SL(d,\R)$, and letting $\Gamma = \rho^{-1}(SL(d,\Z))$. This construction, called restriction of scalars, requires the group $SO(n+2)$ to be there. One may proceed as in Example \ref{ex:SL3-SU3} and construct and example of $H$ on $SO(n+2) \backslash (H \times SO(n+2))\ltimes \R^d / \Gamma \ltimes \Z^d$. The action of $H$ is Anosov in the sense of semisimple group actions, however, the restriction to the Cartan subalgebra of $H$ is {\it not} Anosov. Instead, the action of the {\it centralizer of the split Cartan}, $\R^2 \times SO(n-2)$ is Anosov. If one instead considers the quotient by $SO(n-2) \times SO(n+2)$ on the left, then one obtains a totally Anosov action of $\R^2$.
\end{example}

{\color{black}
\begin{proposition}
\label{prop:anosov-ex}
    Let $G$ be a semisimple Lie group without compact factors and $G \hookrightarrow H$ be an embedding into a Lie group $H$. Then there exists a bi-homogeneous space $K \backslash H / \Lambda$ such that the translation action $G \curvearrowright K \backslash H / \Gamma$ is Anosov if and only if $Z_H(G)$ is compact. In this case, $K^\circ = Z_H(G)^\circ$.
\end{proposition}

\begin{proof}
    Note that the derivative of the $G$-action on a bi-homogeneous space $K \backslash H / \Lambda$ is determined by the adjoint representation of $G$ on $\Lie(H) / \Lie(K)$. Since $G$ is semisimple, $\Lie(H)$ decomposes as a sum of irreducible representations of $G$. Note that the adjoint representation always appears as a subrepresentation since $\Lie(G)$ is a subalgebra. If the action is Anosov, no other representation of $G$ on $\Lie(H) / \Lie(K)$ can have zero weights. Thus, $\Lie(K)$, which must be invariant under $\Ad(G)$ for the action of $G$ to be well-defined, must contain all of the zero weights of $\Ad(G) \curvearrowright \Lie(H)$. Since the automorphisms of a compact group are compact, it follows that $\Ad(G)|_{\Lie(K)}$ is trivial. Thus, $K$ commutes with $G$, and since all other representations are nontrivial, $\Lie(K)$ is exactly the centralizer of $\Lie(G)$.
\end{proof}
}
%{  From this example, I think it makes sense to consider actions of abelian x compact groups. It makes the application for actions of non-split semisimple groups much easier. of course, one may always obtain a double-quotient space structure for the action of the split cartan first, then remember this afterwards to obtain the original semisimple group action was algebraic. not sure what the best course of action is....}

}

{\color{black}

\subsection{Partially hyperbolic examples\color{black}}

All examples discussed above are totally Anosov. In the abelian setting, one may consider restrictions of such actions to obtain partially hyperbolic examples. Here we describe a few more examples of partially hyperbolic, but not Anosov actions.

\begin{proposition}[Embedding groups]
\label{prop:PH-ex}
    Assume that $G$ and $L$ are simple Lie groups, $G \subset L$, $G$ has real rank at least two, and $\Lambda \subset L$ is a cocompact lattice in $L$. Then the action $G \curvearrowright L/\Lambda$ by left translations is $E^c$-totally partially hyperbolic for some homogeneous distribution $E^c$ and accessible.
\end{proposition}

\begin{remark}
    The difference between Proposition \ref{prop:anosov-ex} and \ref{prop:PH-ex} illustrates that the partially hyperbolic accessible actions are vastly more general in both number and variety. Indeed, it is not difficult to check that no  action on a bi-homogeneous space of an $\R$-split group $H$ (e.g., $SL(d,\R)$) will ever satisfy the assumptions of Proposition \ref{prop:anosov-ex}, since if $G \subset H$ is a proper semisimple subgroup, it has strictly smaller $\R$-rank, and there will be an $\R$-semisimple element of $H$ which commutes with $G$. 
    
    On the other hand, for $G$-actions, any accessible action is also super accessible, see Theorem \ref{thm: sub acc}. Note also that there is no assumption on the codimension of $G$ inside $L$, so potential examples include $SL(3,\R)$ embedding in $SL(8675309,\R)$ through any representation $\rho$.
\end{remark}

\begin{proof}
    Since $G \subset L$, the real rank of $L$ is at least the real rank of $G$. Furthermore, we may choose an $\R$-split Cartan subgroup $A_L \subset L$ such that $A_G := A_L \cap G$ is a Cartan subgroup of $G$. Let $\Delta_L$ denote the roots of $L$ with respect to $A_L$, and $D(\Delta_L)$ denote the subset of {\it detected roots}, ie, those roots $\beta \in \Delta_L$ such that $\beta|_{A_G} \not= 0$. Then the nonzero Lyapunov functionals of the $A_G$-action on $L / \Lambda$ are exactly the functionals $\beta|_{A_G}$, $\beta \in D(\Delta_L)$.

    Define $E^c = \Lie(A_L) \oplus \bigoplus_{\Delta_L \setminus D(\Delta_L)} E^\beta$ be the sum of the Lie algebra of $A_L$ and the undetected root spaces. Then if $a \in A_G$ satisfies $\beta(a) \not= 0$ for all detected roots $\beta$, the action of $a$ is $E^c$-partially hyperbolic, so the $A_G$-action is $E^c$-totally partially hyperbolic.

    We now show that the action is accessible. It suffices to show that the subalgebra $\mf h$ generated by the detected roots is all of $\Lie(L)$. To do this, we show that $\mf h$ is an ideal of $\Lie(L)$. Since the root space of $\Lie(L)$ generate $\Lie(L)$, we only need to show that if $\beta$ is an undetected root and $\gamma$ is a detected root, then $[E^\beta,E^\gamma]$ is contained in $\mf h$. Indeed, since $[E^\beta,E^\gamma] \subset E^{\beta + \gamma}$, and $(\beta+\gamma)|_{A_G} = \gamma|_{A_G}$ by assumption, $\beta+\gamma$ is detected. Therefore, the action is accessible.
\end{proof}

\begin{example}[Biquotients of semisimple Lie groups]
\label{ex:biquotient}
    Consider the example as above, but assume that some compact subgroup $K \subset L$ commutes with $G$ such that $\ell K\ell^{-1} \cap \Lambda = \set{e}$ for all $\ell \in L$. Then the action $G \curvearrowright K \backslash L / \Lambda$ is $E^c / \Lie(K)$-totally partially hyperbolic and accessible, where $E^c$ is as in the previous example.
\end{example}

\begin{example}[Partially hyperbolic algebraic suspensions]
\label{ex:general-headache}
One may consider semidirect product examples as in Examples \ref{ex:alg-suspension} and \ref{ex:SL3-SU3}, but remove the restriction that the representation $\rho$ has no zero weights. Instead, one insists that there are no trivial subrepresentations of $\rho$. This gives us totally partially hyperbolic accessible examples that are not Anosov. For each root $\beta$ such that $-\beta$ is a nontrivial weight of $\rho$, let $E^\beta$ be the $\beta$-root space and $V^{-\beta}$ be the $-\beta$-weight space of $\rho$. Then the sum of the distributions $[E^\beta,V^{-\beta}]$ form the 0-weight space of $\rho$, which makes the action accessible. 
    
    In summary, we describe (one of) the most general classes of examples we may address. Consider the following data as input:

    \begin{itemize}
        \item $G \subset L$ simple Lie groups such that $G$ has real rank at least 2.
        \item $M \subset L$ is a (possibly trivial) compact Lie subgroup commuting with $G$.
        \item $K$ is a compact Lie group.
        \item $\Lambda \subset K \times L$ is an irreducible cocompact lattice such that all conjugates of $\Lambda$ intersect $M \times K$ trivially.
        \item $\rho : K \times L \to SL(N,\R)$ is a representation with no trivial subrepresentations.
        \item $\rho(\Lambda) \subset SL(N,\Z)$.
    \end{itemize}

    Then the action $G \curvearrowright (K \times M) \backslash ((K \times L) \ltimes_\rho \R^N) / (\Lambda \ltimes_\rho \Z^N)$ defined by left translation is $E^c$-partially hyperbolic and accessible, where $E^c$ is the sum of the central distribution in Example \ref{ex:biquotient} and the zero weight spaces of $\rho$.
\end{example}

\subsection{Essentially accessible examples\color{black}}\label{rem:essentially-acc}

There are a handful of model actions which do not satisfy the accessibility assumptions of our theorem. In particular, they satisfy that they are partially hyperbolic and ergodic, but not accessible.

In Example \ref{ex:general-headache}, while one may take $L$ to be a trivial group, we may not omit the left quotient by the group $K$. Indeed, the accessibility classes will be the $L \ltimes_\rho \R^N$ orbits. By the irreducibility condition on $\Lambda$, these are dense in $((K \times L) \ltimes_\rho \R^N) / (\Lambda \ltimes_\rho \Z^N)$. Thus, while the action is {\it essentially} accessible (recall the notion of essential accessibility from  Section \ref{sec:ess-acc}), it is not accessible.

We conclude this examples section with another standard way of constructing an essentially accessible but not accessible actions.

\begin{example}
    Let $d \ge 3$, $n \ge 2$, and $G = SL(d,\R)^n$. Fix an irreducible lattice $\Gamma \subset G$, and some $3 \le k \le n$. Then $SL(k,\R)$ embeds into the first $SL(d,\R)$ factor of $G$ and acts on the homogeneous space $G / \Gamma$ by left translations. Then the action is partially hyperbolic, volume preserving, and ergodic, but not accessible. Instead, these examples are only {\it essentially} accessible. Indeed, regardless of $k$ and $d$ (among the restrictions given able), the accessibility classes of the $SL(k,\R)$-action are cosets of the first $SL(d,\R)$-factor. Unlike Example \ref{ex:SL3-SU3}, we cannot quotient by the factors other than the first factor since their foliations are minimal.

    This construction can be generalized in several ways. For example, one can form semidirect products as in Example \ref{ex:alg-suspension}, or use an $n$-fold product of semisimple Lie groups which include different real forms of the same group (by the lattice rigidity theorems of Margulis \cite[IX.4.5]{M91}, a product of groups has an irreducible lattice if and only if each group in the product is a real form of the same complex Lie algebra).
    
    Finally, not that this does not happen in the Anosov setting, since it comes from building a large central distribution from ``undetected'' simple factors in a semisimple Lie group.
\end{example}
}

\part{Background and preliminaries}
\label{part:background}

\section{Dynamical Preliminaries }\label{abelian-prel}

\subsection{Dominated splittings, partially hyperbolic diffeomorphisms and center bunching}
\label{sec:PH-prelims}
A $C^1$ diffeomorphism $f$ on a compact smooth Riemannian manifold $X$ is \emph{($E^c_f$-)partially hyperbolic} if there is a nontrivial $Df-$invariant splitting $E_f^s\oplus E_f^c\oplus E_f^u$ of the tangent bundle  $TX$ \color{black} and a Riemannian metric on { $X$} such that there exists continuous positive functions $\nu<1, \hat\nu<1, \gamma, \hat\gamma$ for which the following inequalities hold for any $x\in X$, $v^s\in E_f^s$, $v^u\in E_f^u$, $v^c\in E_f^c$: 
\begin{equation}\label{def: ph dif def}
\|Df(v^s)\|<\nu(x)<\gamma( x\color{black})<\|Df(v^c)\|<\hat\gamma(x)^{-1}<\hat\nu(x)^{-1}<\|Df(v^u)\|.
\end{equation}
%\begin{equation}\label{def: ph dif def}
%\|Df^k(v^s)\|<\nu(x)<\gamma( x\color{black})<\|Df^k(v^c)\|<\hat\gamma(x)^{-1}<\hat\nu(x)^{-1}<\|Df^k(v^u)\|.
%\end{equation}
%We always assume the bundles $E^{s}$ and $E^{u}$ are nontrivial. 
%If $E^c$ is trivial then $f$ is  \emph{Anosov}.
The subbundles $E_f^s, E^u_f$ and $E^c_f$ are continuous and they are called {\it stable, unstable and center}, respectively. 
The bundles $E_f^{s}$ and $E_f^{u}$ are integrable to (typically only) H\"older foliations $W^u_f$ and $W^s_f$. If $f$ is smooth than the leaves of  $W^u_f$ and $W^s_f$ are smooth. In general, the center bundle need not be integrable. 

\color{black} More generally, a {\it dominated splitting} for a diffeomorphism $f$ is a $Df$-invariant decomposition $TX=E_1\oplus \dots \oplus E_k$ such that $Df|_{E_i}$ dominates $Df|_{E_{i+1}}$ in the sense that for some $N\ge 1$ and all $x\in X$, $\|D_{x}f^N(u)\|\le \frac{1}{2} \|D_{x}f^N(v)\|$, for all unit vectors $u\in E_{i+1}$ and $v\in E_i$. (In particular,  $f$ with a dominated splitting $E_f^s\oplus E_f^c\oplus E_f^u $ is partially hyperbolic.)
\color{black}

Homogeneous actions examples in Section \ref{examples} contain many partially hyperbolic elements which all have a common center distribution which integrates to a foliation, and on which the action is isometric. In general we may not have such nice behavior in the center. For general partially hyperbolic diffeomorphisms we need to understand the extent to which non-conformality of $E^c$ is dominated by transversal contraction and expansion. Such domination is called \emph{center bunching}.  A diffeomorphism $f$ is {\it center bunched} if the functions $\nu, \hat\nu, \gamma, \hat\gamma$ can be chosen to satisfy 
$\nu<\gamma\hat\gamma \,\,\, \mbox{and}\,\,\, \hat\nu<\gamma\hat\gamma.$

%If $f$ is $C^{1+\beta}$ then we use a stronger version of the center bunching condition which can imply stronger results such as ergodicity and H\"older regularity of distributions $E_f^u\oplus E_f^c$,  $E_f^s\oplus E_f^c$ and $E_f^c$ (see \cite{KalSad} or \cite{BW} for more details).  A $C^{1+\beta}$ diffeomorphism $f$ is {\it strong center bunched} if there is $\theta\in (0, \delta)$ and functions $\kappa, \hat\kappa$ such that 
%$ \max\{\nu^\theta, \hat\nu^{\theta}\}<\gamma\hat\gamma$, 
%$\nu\gamma^{-1}<\kappa^{\theta}, \, \, \, \, \hat\nu\hat\gamma^{-1}<\hat\kappa^\theta,$ and for all $x\in M$, for all $v^s\in E^s_f(x)$ and  $v^u\in E^u_f(x)$:
%$$\kappa(x)<\|Df(v)\|, \, \, \, \,\,\, \|Df(v)\|<\hat\kappa(x)^{-1}.$$

%{\color{olive}
%\subsection{Partially hyperbolic abelian actions with common central distributions}

\subsection{Partially hyperbolic abelian actions with common center distribution}\label{PHactions}
\color{black}

%\begin{definition}
  %  Let $\alpha : \R^k \curvearrowright X$ be a smooth action, and $E^0 \subset TX$ be a continuous distribution on $X$. We say that $a \in \R^k$ is {\it $E^c$-partially hyperbolic} if there exists a splitting $TX = E^s_a \oplus E^c \oplus E^u_a$ which endows $a$ with the structure of a partially hyperbolic diffeomorphism. We say that the action $\alpha$ is {\it totally partially hyperbolic (with respect to a distribution $E^c$)} if for a dense set of $a \in \R^k$, $a$ is $E^c$-partially hyperbolic.
%\end{definition}

%We emphasize that this is a somewhat restrictive condition as not every $\R^k$-action with a dense set of individually partially hyperbolic elements is partially hyperbolic in this sense.  
\color{black}
\subsubsection{Oseledets decomposition} Let $\rho : \R^k \curvearrowright X$ be an action with an ergodic invariant measure $\mu$. (In this paper, $\mu$ is going to be the volume.)  The Oseledets  theorem for cocycles over abelian actions (\cite[Theorem 2.4]{BRHW}) applied to the derivative cocycle of $\rho$,  implies the existence of finitely many linear functionals $\chi\colon \R^k\to \R$ ({\it the Lyapunov functionals}), and a $\rho$-invariant measurable splitting $\oplus E^{ \chi}$  of { $TX$} ({\it the Oseledets decomposition}), on a full $\mu$-measure set,  such that for $a\in\R^k$
and $v\in E^{ \chi}(x)$: \[\lim_{a\to \infty}\frac{\log \|D_x\rho(a)v\|-\chi(a)}{\|a\|}=0.\]

The hyperplanes $\ker \chi\subset \R^k$ are 
 \emph{Weyl chamber walls}, and the connected components of $\R^k-\cup_\chi \ker \chi$ are the \emph{Weyl chambers} for the action (with respect to $\mu$). 
 
 Two nonzero Lyapunov functionals $\chi_i$ and $\chi_j$ are \emph{coarsely equivalent} if they are positively proportional: there exists $c>0$ such that $\chi_i=c\cdot \chi_j$. This is an equivalence relation on the set of Lyapunov functionals, and a \emph{coarse Lyapunov functional} is an equivalence class $[\chi]$ under this relation. We will often write just $\lambda=[\chi]$ for a coarse Lyapunov functional. 
\color{black}

\subsubsection{Coarse Lyapunov foliations}
\color{black} 
Let $\R^k \curvearrowright X$ be a partially hyperbolic action. Given an $E^c$-partially hyperbolic element $a \in \R^k$, the stable and unstable distributions integrate to foliations $W^s_a$ and $W^u_a$ which are in general only H\"older with smooth leaves. Given another $E^c$-partially hyperbolic element $b \in \R^k$, $Db|_{E^s_a}$ must admit a hyperbolic splitting, since  $a$ and $b$ share a common distribution. This allows us to subfoliate $W^s_a$ by considering points which are contracted under both $a$ {\it and} $b$. For a precise treatment of this construction see \cite[Corollary 4.6, Lemma 4.7]{Spatzier-Vinhage}, which is written with an Anosov assumption, but whose proofs work verbatim under a partial hyperbolicity assumption. Assume now that $\R^k \curvearrowright X$ is ($E^c$-) totally partially hyperbolic (Definition \ref{totally}). Proposition below summarizes the main features and structures which such actions have. 

\begin{proposition}\label{prop: fundm PH prop}
    Let $\R^k \curvearrowright X$ be a $C^r$, $E^c$-totally partially hyperbolic action, $r \ge 1$. Then 
    
    \begin{itemize}
        \item[(1)] there exists a set $\Delta \subset (\R^k)^*/ \sim$ (where $\chi \sim c\chi$, $c > 0$), of equivalence classes of linear functionals, 
        %linear functionals 
        %defined up to positive scalar multiple: 
        %$\Delta \subset (\R^k)^*/ \sim$ (where $\chi \sim c\chi$, $c > 0$),
        \item[(2)] for each $\lambda \in \Delta$ there exists a H\"older foliation $W^{\lambda}$ with $C^r$ leaves,
        \item[(3)] if $\lambda(a) > 0$, then $W^{\lambda}$ subfoliates $W^s_a$, 
        \item[(4)] $TX = E^c \oplus \bigoplus_{\lambda \in \Delta} TW^{\lambda}$, 
        \item[(5)] the set of $E^c$-\color{black}partially hyperbolic elements of $\R^k \curvearrowright X$ is exactly $\R^k \setminus \displaystyle\bigcup_{\lambda \in \Delta} \ker \lambda$. 
        \item[(6)] For each $a\in \R^k \setminus \displaystyle\bigcup_{\lambda \in \Delta} \ker \lambda$, $E^s_a=\bigoplus_{\lambda(a)<0} E_\lambda, \, E^u_a=\bigoplus_{\lambda(a)>0} E_\lambda$.
        %\color{black}(did we ever define $E^\beta$?)\color{black}
   \item[(7)] In addition, if $\R^k \curvearrowright X$ preserves an ergodic invariant measure, then $TW^{\lambda}=E_\lambda=\bigoplus_{c_i\chi \in \lambda} E^{c_i\chi}$, where $E^{c_i\chi}$ are Oseledets distributions.\color{black}
    \end{itemize}

    The set $\Delta$ is called the set of {\normalfont coarse Lyapunov functionals} of the action.
\end{proposition}

%{\color{olive}
We recall the following definitions from \cite[Section 5.7]{Spatzier-Vinhage}. We call elements of $\Delta$ {\it coarse weights} (or interchangeably {\it coarse Lyapunov exponents, coarse Lyapunov functionals}), and for $a \in \R^k$, let $\Phi\subset \{\gamma\in \Delta: \gamma (a)<0\}$ be a stable subset of coarse weights for $a$. 
%(ie, a set of weights such that $\gamma(a) < 0$ for all $\gamma \in \Delta$). % and distinct for all $\beta \in \Phi$. If $a$ is ultra-regular (Definition \ref{def:hyper-reg}), one may take $\Phi$ to be the set of {\it all} weights $\beta$ for which $\beta(a) < 0$. %Order the roots $\beta_n(a) < \beta_{n-1}(a) < \dots < \beta_1(a) < 0$ which have negative evaluation on $a$. Such a choice can be made since $\beta_1$ is the least negative root.
We introduce  a (not necessarily unique) order on the set $\Phi$. Choose $\R^2 \cong V \subset \R^k$ which contains $a$ and for which $\gamma_1|_V$ is proportional to $\gamma_2|_V$ if and only if $\gamma_1$ is proportional to $\gamma_2$ for all $\gamma_1,\gamma_2 \in \Phi$ (such choices of $V$ are open and dense). Fix some nonzero $\chi \in V^*$ such that $\chi(a) = 0$ ($\chi$ is not necessarily a coarse weight). Then $\beta|_V \in V^* \cong \R^2$ for every $\beta \in \Phi$ and $\Phi|_V = \set{\beta|_V : \beta \in \Phi}$ is contained completely on one side of the line spanned by $\chi$. We may introduce the angle $\angle(\gamma,\chi) := \arccos\dfrac{\left\langle\gamma,\chi\right\rangle}{\norm{\gamma}\norm{\chi}} \in [0,\pi)$, using the canonical inner product on $\R^2$.

\begin{definition}
\label{def:circular-ordering}
The ordering defined by: $\beta < \gamma$ if and only if %\todo{comment(28) indicate the range of angle values}
$\angle(\chi,\beta|_V) < \angle(\chi,\gamma|_V)$, is called the {\it circular ordering of $\Phi$ (induced by $\chi$ and $V$)} and is a total order on $\Phi$. Denote by $ |\alpha,\beta|_{\Phi}$ the set of coarse weights $\gamma \in \Phi$ such that $\alpha \le \gamma \le \beta$. If the set $\Phi$ is understood, we shorten the notation to $|\alpha,\beta|$.
%The order  on the weights in $\Phi$  the angle it makes with $\chi$, where $\chi$ itself having an angle of 0 will be the first element (while the angles may depend on the a choice of metric in $V^*$, the ordering does not). Such an ordering is called a {\it circular ordering (determined by $\chi$)}.
\end{definition}

 While each $\beta \in \Delta$ is only defined up to positive scalar multiple, this is still well-defined since the circular ordering on $\R^2$ is invariant under orientation-preserving linear maps.  
 
\begin{definition}
\label{def:canonical-order}
If $\alpha,\beta \in \Delta$, let $\Sigma(\alpha,\beta) \subset \Delta$ (called the {\em $\alpha,\beta$-cone}) be the set of $\gamma \in \Delta$ such that $\gamma = \sigma \alpha + \tau \beta$ for some $\sigma,\tau > 0$. We may identify $\Sigma(\alpha,\beta)$ as a subset of the first quadrant of $\R^2$ by using the coordinates $(\sigma,\tau)$. The {\em canonical circular ordering} on $\Sigma(\alpha,\beta) \cup \set{\alpha,\beta}$ is the counterclockwise order in the first quadrant. 
\end{definition}

Next proposition implies a local product structure of coarse Lyapunov foliations within a common stable foliation for several action elements.
\begin{proposition}
\label{prop:circ}
    Let $W^s_{a_1,\dots,a_\ell}$ be a common stable manifold for a collection of partially hyperbolic elements $a_i \in \R^k$. Let $\Phi = \set{\gamma \in \Delta : \gamma(a_i) < 0 \mbox{ for all }i=1,\dots,\ell}$, and $\Phi = \set{\gamma_1,\dots,\gamma_m}$ denote a circular ordering of $\Phi$. Let $W^{[1,j]}$ denote the foliation whose tangent distribution is $E_{\gamma_1} \oplus \dots \oplus E_{\gamma_j}$. Then for any $x \in X$, and collection of points $y_j \in W^{\gamma_j}(x)$, there is a unique sequence of points $(x_0,\dots,x_m)$ such that $x_0 = x$, $y_1 = x_1$, and $x_{j+1} \in W^{\gamma_{j+1}}(x_j) \cap W^{[1,j]}(y_j)$ when $j > 1$. Furthermore, the map $(y_1,\dots,y_m) \mapsto x_m$ is a homeomorphism between $\prod_{i=1}^m W^{\gamma_i}(x)$ and $W^s_{a_1,\dots,a_\ell}(x)$.
\end{proposition}

\begin{proof}
    We proceed by induction on $m$. When $m = 1$, the point $y_1 = x_1$ is uniquely defined, and certainly belongs to $W^{[1,1]}(x) = W^1(x)$.

    Suppose that we have the lemma for $m$, so that $(y_1,\dots,y_m) \mapsto x_m$ is a homeomorphism. Since the coarse weights $\gamma_i$ are listed in a circular ordering, we may find an element $a \in \R^k$ such that $-1 \ll \gamma_{m+1}(a) < 0$ and $\gamma_j(a) < -1$ for $j = 1,\dots,m$ (choose an element in $\ker \gamma_{m+1}$ and perturb). Then the splitting $TW^{[1,m+1]} = TW^{[1,m]} \oplus TW^{\gamma_{m+1}}$ is  dominated, and the leaf $W^{[1,m+1]}(x)$ is subfoliated by $W^{[1,m]}$-leaves which determine global holonomies $h_{y,z} : W^{\gamma_{m+1}}(y) \to W^{\gamma_{m+1}}(z)$ defined by $h_{y,z}(w) = W^{[1,m]}(w) \cap W^{\gamma_{m+1}}(z)$ between $W^{\gamma_{m+1}}$-leaves whenever $y \in W^{[1,m]}(z)$. See \cite[Section 4.3]{Spatzier-Vinhage} (which uses \cite{PSW04} to establish the regularity needed to leverage transversality of $W^{[1,m]}$ and $W^{\gamma_{m+1}}$).

    Therefore, we may define $x_{m+1} = h_{x,y_{m+1}}(x_m)$. This is well-defined and unique in a neighborhood, and using the intertwining property $a h_{y,z} = h_{ay,az} a$, we can extend the definition to the global leaf.    
\end{proof}
}

Notice that (5) of Proposition \ref{prop: fundm PH prop} shows that for an action $\R^k \curvearrowright X$ with a dense set of $E^c$-partially hyperbolic elements actually every element inside any Weyl chamber is $E^c$-partially hyperbolic. In the next section we show that the elements in the Weyl chamber walls $\ker \lambda$, $\lambda\in \Delta$, are also partially hyperbolic, moreover with zero Lyapunov exponent in the center direction.

\subsubsection{Vanishing of exponents on center distributions}

Statements of this section are a consequence of existence of a {\it dense set} of partially hyperbolic elements in an abelian action. They show {\it subexponential} growth along the center distribution for various action elements. This has been known and used for totally Anosov actions in various special contexts very early on, for example already in \cite{FKS}. From this we derive stronger properties for action elements, most importantly for our applications, we get partial hyperbolicity and center bunching for elements in the walls  $\ker \lambda$. 

\begin{definition}
    Let $\R^k \curvearrowright X$ be an action by diffeomorphisms of a Riemannian manifold $X$ and $E \subset TX$ be an action-invariant continuous distribution. We say that $E$ has {\it subexponential growth (for the action)} if for every $\ve > 0$, there exists some $C > 0$ such that for all $a \in \R^k$,

    \[ C^{-1}e^{-\ve\norm{a}} \le \norm{Da|_E} \le Ce^{\ve \norm{a}} .\]
\end{definition}

\bl \label{sub-exp growth}[Lemma 4.1 in \cite{DX1}] Let $\R^k \curvearrowright X$ be a totally partially hyperbolic action. For any $\lambda\in \Delta$,  if $a\in \ker \lambda$ then $a$ has subexponential growth along $W^\lambda$. %If $a\notin \ker \lambda$ then $a$ either contracts or expands $W^\lambda$ exponentially fast.
\el 

\begin{proof}
    \color{black}
    Fix a coarse exponent $\lambda=[\chi]\in \Delta$, and $\ve > 0$. Given $a \in \R^k$ and $R > 0$ consider the quantity

    \[ M_\chi(a,R) = \sup\set{\log \norm{D(ra)|_{E^\chi}} : 0 \le r \le R}. \]

    Note that $M_\chi(a,R)$ is continuous and subadditive in $a$ and $R$, so that
    \[M_\chi(a+b,R) \le M_\chi(a,R) + M_\chi(b,R) \qquad \mbox{and} \qquad M_\chi(a,R_1+R_2) \le M_\chi(a,R_1) + M_\chi(a,R_2).\] 
    %Furthermore, we know that if $a \in \ker \chi$, $M(a,R)$ is uniformly bounded above and below in $R$ by (FA-2).
    Define \[L_\chi(a) = \inf\set{ c : M_\chi(a,R) \le Rc \mbox{ for sufficiently large } R > 0} = \limsup_{R \to \infty} \frac{1}{R}M_\chi(a,R).\]
    Then if $\chi(a) > 0$, $L_\chi(a) >0$ and if $\chi(a) <0$, then $L_\chi(a) < 0$, since $E^\chi$ is part of the stable or unstable distribution of $a$, respectively. Hence, if we show that $L_\chi$ is continuous, we conclude that $L_\chi(a) = 0$ for all $a \in \ker \chi$.

    To show continuity of $L_\chi$, we show convexity. Notice that since $M_\chi$ is subadditive in $a$, $L_\chi$ is also subadditive: $L_\chi(a+b) \le L_\chi(a) + L_\chi(b)$. Furthermore, $L_\chi(ta) = tL_\chi(a)$ for all $t \in \R$. This implies that $L_\chi$ is convex and hence continuous in $a$.

    Now, for any $a \in \ker \chi$, we know that $L_\chi(a) = 0$. \color{black}\end{proof}
\color{black}
\begin{lemma}\label{Ec center bunching}
\label{lem:sub-exp-center}
    Let $ \R^k \curvearrowright X$ be a totally partially hyperbolic action with at least two independent coarse Lyapunov exponents with respect to some distribution $E^c$. Then $E^c$ has subexponential growth for the action. In particular, every partially hyperbolic element is center bunched.
\end{lemma}

\begin{proof}
    By Lemma \ref{sub-exp growth}, the action of $\ker \chi$ has uniformly 0 exponents with respect to $E^\chi$.
    Define the functions $M_0$ and $L_0$ as in the previous lemma, replacing the distribution $E^\chi$ by the distribution $E^c$. Then again, $L_0$ is a convex function, and for any partially hyperbolic element $a \in \R^k$, we know that $L_0(a) \le L_\chi(a)$. Then, since $\ker \chi$ is accumulated by partially hyperbolic elements, and both $L_0$ and $L_\chi$ are continuous, $L_0(a) \le 0$ for all $a \in \ker \chi$. Since $L_0(0) = 0$, by convexity we can conclude that $L_0(a) = L_0(-a) = 0$ for all $a \in \ker \chi$. This is exactly the conclusion that an element in $\ker \chi$ has uniformly 0 exponents on $E^c$. %Choose a sequence of elements $a_\ell$ such that $a_\ell$ is $E^0$-partially hyperbolic and expands $E^\chi$ but $a_\ell \to a_0 \in \ker \alpha \setminus\set{0}$. Then $\lambda(a_\ell)$ and converges to 0. Since $\norm{d\alpha(a_\ell)|_{E^\chi}}^{-1} \le \norm{d\alpha(a_\ell)|_{E^0}} \le \norm{d\alpha(a_\ell)|_{E^\chi}}$, it follows that $\norm{d\alpha(ra_\ell)|_{E^0}} \le Ce^{\lambda(a_\ell)r}$ for sufficiently large $r$. Taking limits as $\ell \to \infty$, it follows that $\norm{d\alpha(ra)|_{E^0}} \le Ce^{\ve r}$ for any $\ve > 0$

    Since we have assumed that there are at least two independent exponents, this implies the lemma. Indeed, if $\ker \chi_1 \not= \ker \chi_2$, then $\ker \chi_1 + \ker \chi_2 = \R^k$.

\end{proof}

\color{black}

\color{black}
Any $a$ which lies in $\ker \lambda$ for exactly one $\lambda\in \Delta$ is called \emph{generic singular}. In particular, in our setting for any $\lambda\in \Delta$ every generic singular element $a$ in $\ker \lambda$, $a$ can be viewed as a partially hyperbolic diffeomorphism acting on $X$, with center distribution \color{black} $E^c\bigoplus \oplus_{\lambda(a)=0} E_\lambda$
and from the above lemma we have
\color{black}

%\todo{Comment (6) about notation O for the orbit: we don't use this notation any more.}

\bl \label{lem: a bunch}  Let $\R^k \curvearrowright X$ be a totally partially hyperbolic action. For any $\lambda\in \Delta$, any  $a\in \ker \lambda$ is a center bunched partially hyperbolic diffeomorphism with respect to the center distribution  \color{black}$E^c\bigoplus \oplus_{\lambda(a)=0} E_{ \lambda}$\color{black}. In particular any generic singular element $a\in \ker \lambda$ is a center bunched partially hyperbolic diffeomorphism with respect to the center distribution \color{black} $E^c \oplus E_{ \lambda}$ if $-\lambda\notin \Delta$ or $E^c \oplus E_{ \lambda}\oplus E_{-\lambda}$ if $-\lambda\in \Delta$\color{black}. 
\el

%\subsection{ Normal forms for contracting foliations}

\subsection{Accessibility}\label{sec: acc} A finite collection of topological foliations $\mathcal F=\{\mathcal F_1$, \dots,  $\mathcal F_r\}$ %with $C^1$ leaves 
defines an accessibility relation on a topological manifold $X$. We say $x,y\in X$ are in the same {\em accessibility class} if they can be joined by a \color{black} continuous \color{black} path $\gamma : [0,n] \to X$ such that $\gamma|_{[i,i+1]}$ is contained in a single local leaf of one of the foliations $\mc F_{r_i}$. Each restriction $\gamma|_{[i,i+1]}$ is called a {\it leg} of the path $\gamma$. \color{black}In some cases, we are primarily interested the corner points $\gamma(i)$ of the path $\gamma$, which motivates the following definition.
\begin{definition}\label{def: F_i path}Given a finite collection of topological foliations $\mathcal F=\{\mathcal F_1$, \dots,  $\mathcal F_r\}$ of a topological manifold $X$, an $\{\mathcal F_1,\dots \mc F_r\}$-path connecting $x$ to $y$ is a sequence $[y_0=x, y_1, \dots, y_{N-1}, y_N=y]$ such that if for each $i$, $y_i, y_{i+1}$ lie in the same local $\mc F_{r_i}$-leaf for some $1\leq r_i\leq r$.
\end{definition}

\color{black}  

If there is a single accessibility class in $X$ we say that collection $\mathcal F$ is accessible. Given a partially hyperbolic diffeomorphism $f$ we say that $f$ is accessible if $\{W^s_f, W^u_f\}$ is accessible. A $\{W^s_f, W^u_f\}$-path $[y_0,\dots, y_N]$ will also be referred to as an $su$-path.
\color{black}
For a totally partially hyperbolic $\mathbb R^k$ action the coarse Lyapunov distributions $E_\la$, $\la\in \Delta$, are integrable to coarse Lyapunov foliations $W^\lambda$ (see Section \ref{PHactions}) and this action is accessible iff the collection of coarse Lyapunov foliations is accessible. Recall that in Definition \ref{def:strongly} we defined such an action to be super accessible if every $\ker\lambda$,  $\lambda \in \Delta$, contains an accessible partially hyperbolic element. Moreover we have:  
%by appealing to Lemma \ref{lem: a bunch} we have

%As before $\Delta$ denotes the set of coarse Lyapunov functionals.   Let  $\Delta(\hat \lambda)$ denote the set of coarse Lyapunov functionals \emph{not proportional} to $\lambda$.  A generic singular element $a \in \ker \lambda$ is \emph{$\Delta(\hat{\lambda})$-accessible} if any two points in $X$ can be connected by a broken path whose legs lie in leaves of foliations $W^\nu$ where $\nu\in \Delta(\hat \lambda)$.

%\begin{definition}
%\label{def:strongly}
 %   If \color{black}$\R^k  \curvearrowright { X}$ is a partially hyperbolic  \color{black} action, we say that it is {\it strongly accessible} if for every $\lambda \in \Delta$, there exists $a \in \ker \lambda$ such that $a$ is $\Delta(\hat \lambda)$-accessible.
%\end{definition}

\begin{lemma} \label{gen-sing-acc} If the totally partially hyperbolic $\R^k$-action is super accessible then for any $\lambda\in \Delta$ any generic singular element $a\in \ker \lambda$ is accessible (with respect to the center distribution  $E^c \oplus E_{ \lambda}$ if $-\lambda\notin \Delta$ or $E^c \oplus E_{ \lambda}\oplus E_{-\lambda}$ if $-\lambda\in \Delta$).
\end{lemma} 
\begin{proof}
By the definition of super accessibility, for any   $\lambda\in \Delta$ and for any generic singular element $a\in \ker \lambda$, there exists $a'\in \ker \lambda$ which is accessible with respect to $E^c_{a'}=E^c\bigoplus \oplus_{\lambda'(a')=0} E_{ \lambda'}$ (by Lemma \ref{lem: a bunch}). It is not hard to see that $E^c_{a'}\supset E^c_a$, $E^u_a\supset E^u_{a'}, E^s_a\supset E^s_{a'}$, so the accessibility of $a$ follows from the accessibility of $a'$. 
\end{proof}
\color{black}

\subsection{Some regularity theorems}

The following commonly used result will play an important role in proving the regularity of the conjugacies in Section \ref{sec:regularity}. We will rely on both the $C^\infty$ and $C^{1,\theta}$ versions in the corresponding settings.

\begin{theorem}[{Journ\'{e}, \cite{Journe}}]
\label{thm:journe}
Let $F : X \to Y$ be a continuous function between $C^\infty$ manifolds, $r \in \Z_+ \cup \set{\infty}$, $\theta > 0$, and assume $X$ has complementary transverse foliations $\mc F_1$ and $\mc F_2$ with uniformly $C^{r,\theta}$-leaves. Then if the restriction of $F$ to the leaves of the foliations $\mc F_i$ is uniformly $C^{r,\theta}$, then $F$ is $C^{r,\theta}$. 
\end{theorem}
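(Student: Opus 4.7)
The plan is to argue locally and inductively on $r$, using the Campanato-type characterization that a continuous function $G$ on a Euclidean ball is $C^{r,\theta}$ if and only if for every base point $x$ there exists a polynomial $P_x$ of degree at most $r$ with $|G(y) - P_x(y)| \le C|x-y|^{r+\theta}$ uniformly in $x$ and $y$. Since the statement is local and the leaves are uniformly $C^{r,\theta}$ and transverse, I would set up at each point a local ``product box'' parameterized by $(t,s) \mapsto \Phi_2^s(\Phi_1^t(x))$, where $\Phi_i^u$ denotes displacement by $u$ along an $\mc F_i$-leaf; transversality together with uniform leafwise regularity makes this parameterization bi-$C^{r,\theta}$ and ensures that intrinsic leafwise distances are comparable to the ambient distance.

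For the base case $r = 0$, given nearby points $x$ and $y$ I would use transversality to produce a ``corner'' point $z$ with $x, z$ on a common $\mc F_1$-leaf and $z, y$ on a common $\mc F_2$-leaf, where the leafwise distances $d_{\mc F_1}(x,z)$ and $d_{\mc F_2}(z,y)$ are both comparable to $|x-y|$. The triangle inequality combined with the uniform leafwise H\"older hypothesis immediately gives $|F(x) - F(y)| \le C|x-y|^\theta$.

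For the inductive step ($r \ge 1$), the uniform leafwise $C^{r,\theta}$-regularity supplies, at each $x$, polynomials $P^1_x(t)$ and $P^2_x(s)$ of degree $\le r$ approximating $F$ along the respective leaves through $x$ to order $r+\theta$. The core step is to assemble these into a single polynomial $P_x(t,s)$ of total degree $\le r$ satisfying $|F(\Phi_2^s \Phi_1^t(x)) - P_x(t,s)| \le C(|t|+|s|)^{r+\theta}$. The natural candidate is obtained by identifying the coefficients of $P^1_x$ as leafwise Taylor coefficients of $F$ (recoverable as finite-difference expressions in $F$) and then Taylor-expanding these coefficient functions in the $s$-direction. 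The $C^\infty$ case follows by applying the $C^{r,\theta}$ conclusion for every $r$ and $\theta$.

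The main obstacle, and the heart of Journ\'e's original argument, is verifying that the leafwise coefficient functions of $P^1_x$ are themselves $C^{r,\theta}$ in the transverse $\mc F_2$-direction, so that the assembled polynomial $P_x$ actually approximates $F$ with the claimed exponent. This is addressed by a bootstrap: realizing the coefficients as finite differences of $F$ along $\mc F_1$, one uses the leafwise $C^{r,\theta}$ regularity along $\mc F_2$ to control their $s$-dependence, and combines this with the leafwise bound along $\mc F_1$ via the corner argument of the base case applied to the remainder $F - P_x$. This interplay between the two leafwise regularities is precisely what forces the conclusion into the joint H\"older class $C^{r,\theta}$.
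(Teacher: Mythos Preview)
The paper does not prove this theorem at all: it is stated with attribution to Journ\'e \cite{Journe} and used as a black box (in the proof of Theorem \ref{regularity of conjugacies}). So there is nothing to compare against---your sketch is an outline of Journ\'e's original argument, not something the paper supplies or requires you to reproduce.
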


A version of the following result was first proved by Calabi and Hartman. Concerns about its proof were resolved when a new proof was provided by M. Taylor, whose formulation we use.

\begin{theorem}[Taylor, \cite{Taylor} Theorem 2.1]
\label{thm:taylor}
Let $\mc O$, $\Omega$ be open subsets of $\R^n$ and carry metric tensors $g = (g_{jk})$ and $h = (h_{jk})$, respectively. Assume $r \in \Z_+ \cup \set{0}$, $\theta \in [0,1)$, $r+\theta > 0$, and $g_{jk} \in C^{r,\theta}(\mc O)$, $h_{jk} \in C^{r,\theta}(\Omega)$. Let $\varphi : \mc O \to \Omega$. The following are equivalent:

\begin{itemize}
\item[(1)] $\varphi$ is a distance-preserving homeomorphism,
\item[(2)] $\varphi$ is bi-Lipschitz and $\varphi^*h(x) = g(x)$, for almost every $x \in \mc O$,
\item[(3)] $\varphi$ is a $C^1$ diffeomorphism and $\varphi^*h = g$,
\item $\varphi$ is a diffeomorphism of class $C^{r+1,\theta}$ and $\varphi^*h = g$.
\end{itemize}
\end{theorem}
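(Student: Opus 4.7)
The plan is to prove the theorem via the implication cycle $(4) \Rightarrow (1) \Rightarrow (2) \Rightarrow (3) \Rightarrow (4)$. The first implication $(4) \Rightarrow (1)$ follows immediately from the standard fact that the Riemannian distance is the infimum over lengths of $C^1$ curves, and lengths are preserved under a diffeomorphic isometry. For $(1) \Rightarrow (2)$, the comparability of the Riemannian distance with the Euclidean distance (which holds because $g_{jk}$ is at least continuous, since $r + \theta > 0$) gives that $\varphi$ is bi-Lipschitz. Rademacher's theorem then yields differentiability almost everywhere, and distance-preservation forces each such differential to send $g$-unit vectors to $h$-unit vectors; polarisation gives $\varphi^\ast h = g$ almost everywhere.

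For $(2) \Rightarrow (3)$, I would approximate $\varphi$ by mollifications $\varphi_\varepsilon = \varphi \ast \eta_\varepsilon$ on interior subsets. The bi-Lipschitz hypothesis gives uniform $W^{1,\infty}$ bounds, so $D\varphi_\varepsilon$ converges to $D\varphi$ in $L^p_{\loc}$ for every $p < \infty$. The pointwise identity $D\varphi^\top h(\varphi) D\varphi = g$, together with continuity of $g$ and $h$, then upgrades this convergence to uniform convergence on compact sets, producing a continuous derivative for $\varphi$. Hence $\varphi$ is a $C^1$ diffeomorphism and $\varphi^\ast h = g$ pointwise.

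The substantive step is $(3) \Rightarrow (4)$, and the plan is the standard harmonic-coordinate bootstrap. For any $p \in \mc O$, I would construct local harmonic coordinates on a neighbourhood of $p$ with respect to $g$ (and similarly on a neighbourhood of $\varphi(p)$ with respect to $h$) by solving the Dirichlet problem $\Delta_g u^a = 0$ with linear boundary data; classical Schauder theory shows that these coordinates are $C^{r+1,\theta}$ whenever $g_{jk} \in C^{r,\theta}$. In such coordinates the metric components $g_{jk}$ themselves satisfy a quasilinear elliptic system $\Delta_g g_{jk} = Q_{jk}(g, \partial g)$, so their regularity is already compatible with the coordinate regularity. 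Expressed in these compatible harmonic coordinates on both sides, the isometry $\varphi$ is a harmonic map and solves
\[
\Delta_g \varphi^a + g^{ij}\,\Gamma^a_{bc}(h) \circ \varphi\, \partial_i \varphi^b\, \partial_j \varphi^c = 0.
\]
Since $\varphi$ is $C^1$, the coefficients of this elliptic system are Hölder of class $C^{r,\theta}$, and iterated Schauder interior estimates yield $\varphi \in C^{r+1,\theta}$ in harmonic coordinates. Composing with the $C^{r+1,\theta}$ change of coordinates recovers the regularity statement in the original Euclidean coordinates.

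The main obstacle is the low-regularity endpoint, namely the case $r=0$ with $\theta \in (0,1)$, where $g$ is merely Hölder. Two points require care: first, one must check that harmonic coordinates actually exist for such metrics (this is standard but requires using the weak formulation of $\Delta_g u = 0$ and De Giorgi--Nash--Moser rather than purely classical Schauder theory), and second, the Schauder estimate applied to the harmonic-map equation must be invoked in its divergence form so that Hölder coefficients suffice to improve $\varphi$ from $C^1$ to $C^{1,\theta}$. A secondary technical point is the compatible choice of harmonic coordinates near $p$ and near $\varphi(p)$; since $\varphi$ is already a homeomorphism, one may simply choose coordinates on $\Omega$ first and then on $\mc O$ so that their domains match under $\varphi$.
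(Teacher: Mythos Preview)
The paper does not give a proof of this statement: it is quoted verbatim as Theorem 2.1 of Taylor \cite{Taylor} and used as a black box. So there is no ``paper's own proof'' to compare against; I compare instead to Taylor's actual argument.

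Your outline of $(3)\Rightarrow(4)$ via harmonic coordinates and the harmonic-map equation is exactly Taylor's route, including the care needed at the endpoint $r=0$ where one must use the divergence-form Schauder theory. That part is fine.

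The gap is in $(2)\Rightarrow(3)$. You mollify $\varphi$ and assert that the almost-everywhere identity $D\varphi^\top\, h(\varphi)\, D\varphi = g$ ``upgrades'' the $L^p$ convergence of $D\varphi_\varepsilon$ to uniform convergence. This is not justified: the mollified maps $\varphi_\varepsilon$ do not satisfy the isometry equation, so the identity gives you no control over $D\varphi_\varepsilon$ beyond the uniform Lipschitz bound you already had, and there is no mechanism here producing equicontinuity of the derivatives. This step is precisely where the original Calabi--Hartman argument was found to be incomplete, and it is the reason Taylor's paper exists. Taylor does not separate $(2)\Rightarrow(3)$ from $(3)\Rightarrow(4)$ in the way you do; instead he observes that a bi-Lipschitz map satisfying $\varphi^*h=g$ a.e.\ is already a \emph{weak} solution of the harmonic-map system in harmonic coordinates, and then invokes elliptic regularity for $W^{1,\infty}$ weak solutions to jump directly to $C^{1,\theta}$ (and then bootstrap). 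Your plan should merge these two steps and argue via the weak formulation rather than via mollification.
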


\begin{theorem}[Chernoff-Marsden, \cite{CM}]\label{MZ}
Let $r \in \Z$, $r \ge 1$, $X$ be a $C^r$ manifold, $G$ be a Lie group, and $\rho : G \curvearrowright X$  be a group action such that for every $g \in G$, $\rho(g)$ is a $C^r$ diffeomorphism. Then $\rho : G \times X \to X$ is a $C^r$ action.
\end{theorem}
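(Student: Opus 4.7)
The plan is to reduce joint $C^r$ regularity of $\alpha$ to regularity along one-parameter subgroups, and then combine these via Journ\'e's theorem (Theorem~\ref{thm:journe}). Near the identity $e \in G$, fix a basis $\xi_1,\dots,\xi_n$ of $\mf g := \Lie(G)$ and use the chart $(t_1,\dots,t_n) \mapsto \exp(t_1\xi_1)\cdots\exp(t_n\xi_n)$. The action then decomposes as $\alpha(g,x) = \Phi^{\xi_1}_{t_1} \circ \cdots \circ \Phi^{\xi_n}_{t_n}(x)$, where $\Phi^{\xi}_t := \alpha(\exp(t\xi))$. If each $(t,x) \mapsto \Phi^{\xi}_t(x)$ is jointly $C^r$, then $\alpha(g,x)$ is separately $C^r$ in each $t_i$ and (by hypothesis) $C^r$ in $x$; applying Theorem~\ref{thm:journe} iteratively across these transverse directions gives joint $C^r$ regularity on the chart, and translating yields $C^r$ regularity globally.

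The central task is therefore joint $C^r$ regularity of a single one-parameter subgroup $\phi_t := \alpha(\exp(t\xi))$. These are $C^r$ diffeomorphisms satisfying $\phi_{t+s} = \phi_t \circ \phi_s$ and (by the standard joint continuity built into a Lie group action) depend continuously on $t$. I would proceed by showing that the infinitesimal generator
\[ X_\xi(x) := \lim_{t \to 0} \frac{\phi_t(x) - x}{t} \]
(computed in local charts on $X$) exists and is a $C^{r-1}$ vector field. Once this is in hand, the orbits $t \mapsto \phi_t(x)$ are the unique integral curves of $X_\xi$, and classical ODE theory gives joint $C^r$ dependence of $\phi_t(x)$ on $(t,x)$.

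To produce $X_\xi$, I would use a smoothing argument. For $\rho \in C^\infty_c(\R)$ with $\int \rho = 1$ and small support, define $F_\rho(x) := \int \rho(t)\phi_t(x)\,dt$ in a local chart. Since $\phi_t$ is $C^r$ in $x$ with bounds uniform for $t$ in compact sets, $F_\rho$ is $C^r$ in $x$. The semigroup identity $\phi_h \circ \phi_t = \phi_{t+h}$ converts time derivatives into spatial ones: formally differentiating in $h$ at $h = 0$ yields $\partial_t \phi_t(x) = X_\xi(\phi_t(x)) = (d\phi_t)_x X_\xi(x)$. Choosing approximations $\rho_\varepsilon$ to the Dirac mass and using uniform $C^r$-in-$x$ control, one shows the limit defining $X_\xi$ exists uniformly on compacts, and the cocycle relation $X_\xi \circ \phi_t = d\phi_t \cdot X_\xi$ bootstraps continuity of $X_\xi$ to $C^{r-1}$ regularity through the $C^{r-1}$ dependence of $d\phi_t$ on $x$.

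The principal obstacle is extracting $t$-differentiability from merely continuous $t$-dependence together with $C^r$ dependence in $x$; this is the heart of the Bochner--Montgomery/Chernoff--Marsden circle of ideas. Once $X_\xi$ is shown to be a $C^{r-1}$ vector field, the remainder is standard ODE regularity combined with the gluing provided by Journ\'e's theorem.
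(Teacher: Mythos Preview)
The paper does not prove this statement; it is quoted as a black-box result from Chernoff--Marsden \cite{CM}, listed alongside the other imported regularity tools (Journ\'e, Taylor) in Section~5. So there is no in-paper argument to compare against.

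On your sketch itself: the overall architecture (reduce to one-parameter subgroups, produce a generator by mollifying in $t$, then glue) is in the spirit of Bochner--Montgomery/Chernoff--Marsden, but the key step is not carried out. The decisive identity is that if $F_\rho(x)=\int\rho(t)\phi_t(x)\,dt$ then $\phi_s\circ F_\rho = F_{\tau_s\rho}$, where $\tau_s\rho(t)=\rho(t-s)$; once you know $F_\rho$ is a $C^r$ \emph{diffeomorphism} for $\rho$ close to $\delta_0$ (it is $C^r$ by differentiating under the integral and $C^1$-close to the identity), you get $\phi_s = F_{\tau_s\rho}\circ F_\rho^{-1}$, which is manifestly jointly $C^r$ in $(s,x)$. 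Your paragraph instead tries to bootstrap $C^{r-1}$ regularity of $X_\xi$ from the cocycle relation $X_\xi\circ\phi_t = d\phi_t\cdot X_\xi$, but that relation only moves $X_\xi$ along orbits and does not by itself upgrade transverse regularity; without the diffeomorphism trick above, the argument stalls at producing a merely continuous $X_\xi$.

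A secondary issue: invoking Journ\'e (Theorem~\ref{thm:journe}) to pass from separate to joint regularity requires $C^{r,\theta}$ control with some $\theta>0$, not bare $C^r$. In the present setting the product structure of $G\times X$ already gives honest $C^r$ coordinates once each factor is handled, so Journ\'e is both overkill and, as stated, not quite applicable.
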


\subsection{Invariance principle and deriving continuity properties \color{black}}\label{InvPrinI} 
In the course of the proof of our main results we will need to \color{black}
 derive continuity of certain objects from the leafwise continuity properties along different foliations.  
    It is an easy fact that a two variable function $\phi(x,y)$ which is continuous in $x$ and $y$ may not be a continuous function, e.g. a function $\phi(x,y)= \frac{xy}{x^2+y^2}$,  $(x,y)\neq (0,0)$, defined to be $0$ at $(0,0)$.
    
    %$\phi(x,y)=\begin{cases}\frac{xy}{x^2+y^2}, (x,y)\neq (0,0); \\ 0, (x,y)=(0,0).\end{cases}$
    In order to derive global continuity, we will need a stronger continuity property along a given foliation, as in \cite[Definition 2.13]{ASV}

 \begin{definition}\label{def: F-cont}
 Let $\mathcal F$ be a topological foliation of a topological manifold $X$ and let $\sigma$ be a %measurable 
 section %$\sigma: X \to \mc L(T\mc F)$ 
 of a continuous fiber bundle over $X$. Then $\sigma$ is called $\mathcal F$-{\it continuous} if the map $(x, y, \sigma(x))\mapsto \sigma(y)$ is continuous on the set of pairs of points $(x,y)$ such that $y\in \mathcal F_{loc}(x)$. More explicitly, $\sigma$ is $\mathcal F$-continuous if for every $\epsilon>0$ and
every $(x,y)$ with $y\in \mathcal F_{loc}(x)$ there exists $\delta>0$ such that $d(\sigma(y), \sigma(y')) < \epsilon$
for every $(x',y')$ with $$y'\in  \mathcal F_{loc}(x'),~~ d(x,x') < \delta,~~ d(y,y') < \delta,~~ d(\sigma(x), \sigma(x')) < \delta.$$ 

It is implicit in this formulation that the fiber bundle has been
trivialized in the neighborhoods of the fibers.
\end{definition}

The rest of this section is motivated by the following fundamental result: 

\begin{theorem}[Theorem E of \cite{ASV}]\label{ASV}
Let $f$ be a $C^1$ partially hyperbolic, accessible diffeomorphism with stable and unstable foliations $W^s$ and $W^u$, respectively.  Then every $W^s$- and $W^u$-continuous section of a continuous fiber bundle, is continuous. % is continuous in
\end{theorem}

The proof of the above Theorem in \cite{ASV} relies crucially on the following

\begin{proposition}[Proposition 7.2 of \cite{ASV}]\label{prop: 7.2 ASV}Suppose $f$ is an accessible partially hyperbolic diffeomorphism on a compact manifold $X$. Given $x_0\in X$, there is $w\in X$ and an $su$-path %with legs 
$[y_0(w)=x_0,\dots, y_N(w)=w]$ connecting $x_0$ to $w$ and satisfying the following property: for any
$\epsilon > 0$ there exist $\delta > 0$ and $L > 0$ such that for every $z \in B(w,\delta)$ there exists an $su$-path 
%with legs 
$[y_0(z),... , y_N(z)]$ connecting $x_0$ to $z$ and such that
$$d(y_j(z),y_j(w)) < \epsilon, \text{ and } d_{W^\ast}(y_{j-1}(z), y_j(z)) < L, j = 1,\dots N,$$
where $d_{W^{\ast}}$ denotes the distance along the stable or unstable leaf
common to the two points. 
\end{proposition}

In what follows we obtain different versions of Theorem \ref{ASV} suitable for application to our setting. Proposition \ref{prop: fancy inv prin} is for lifted dynamics on a principal bundle, Proposition \ref{prop: strong ASV thm E} is suitable for application in the context of group actions where there are several invariant foliations in place of stable and unstable ones, finally Corollaries \ref{coro: Rk Anosov cont ASV},  \ref{coro: Rk Anosov with bundle ASV} are suitable for application to Anosov actions. Proofs of  these results follow verbatim the proof of Theorem E in \cite{ASV} modulo Proposition \ref{prop: 7.2 ASV} of which we produce corresponding versions below. 

%\color{black} SHOULD we denote $M$ by $X$ instead? We used $M$ for the compact group..\color{black}
\begin{proposition}\label{prop: fancy inv prin} Let $\hat X$ be a principal bundle over a smooth manifold $X$ with a compact structure group $P$. 
Let $f$ be a partially hyperbolic accessible diffeomorphism on $X$ which lifts to a principal bundle morphism $\hat f$ on $\hat X$. Assume that $\hat f$ preserves continuous foliations $\tilde{W}^{u,s}$ which project to $W^{u,s}_f$ of $X$ respectively. Suppose $\sigma$ is a %measurable 
map $\hat X\to \hat X$ which is 
$\mathcal F$-continuous for each  $\mathcal F\in \{P, \tilde W^u, \tilde W^s\}$. Then $\sigma$ is continuous. \end{proposition}
\begin{remark} In our  application of this Proposition in Section \ref{BPsubbundle} the continuous foliations $\tilde W^{u,s}$ will come from the holonomies of automorphims of certain principal bundle extensions of partially hyperbolic systems,  continuity  of which is obtained in Appendix \ref{Appendix-holonomies}.
\end{remark}
\begin{proof}
Using Proposition \ref{prop: 7.2 ASV}, \cite{ASV} showed Theorem E, i.e. that any %measurable 
section of a continuous fiber bundle which is $W^s$-continuous and $W^u$-continuous is actually a continuous section. By exactly the same argument, to show our Proposition \ref{prop: fancy inv prin} we only need to show the following ``$\tilde s\tilde u P$'' version of Proposition \ref{prop: 7.2 ASV}. We define an $\tilde s\tilde uP$-path %with legs 
$[\hat y_0,\dots, \hat y_N]$ in $\hat {X}$ similar to that of an $su$-path in $X$, but we allow $\hat y_{j}, \hat y_{j-1}$ to be in the same local $\tilde W^s$, $\tilde W^u$ or $P$-leaves.
\begin{lemma}\label{lem: sup eff acc} Let $f,\hat f, X, \hat X, \tilde W^{u,s}, P$ as in Proposition \ref{prop: fancy inv prin}. Given $\hat x_0\in \hat X$, there is $\hat w\in \hat X$ and an $\tilde s\tilde uP$-path %with legs 
$[\hat y_0(\hat w)=\hat x_0,\dots, y_N(\hat w)=\hat w]$ connecting $\hat x_0$ to $\hat w$ and satisfying the following property: for any
$\epsilon > 0$ there exist $\delta > 0$ and $L > 0$ such that for every $\hat z \in B(\hat w,\delta)$ there exists an $\tilde s\tilde uP$-path
 %with legs 
$[\hat y_0(\hat z),... , y_N(\hat z)]$ connecting $\hat x_0$ to $\hat z$ and such that
$$d(\hat y_j(z),\hat y_j(w)) < \epsilon, \text{ and } d_{W^\ast}(\hat y_{j-1}(\hat z), \hat y_j(\hat z)) < L, j = 1,\dots N,$$
where $d_{W^{\ast}}$ denotes the distance along the $\tilde W^s, \tilde W^u, P$-leaf
common to the two points.
\end{lemma}
\begin{proof} We fix $\hat x_0\in \hat X$ and let $x_0:=\pi(\hat x_0)$. By Proposition \ref{prop: 7.2 ASV}, there is $w\in X$ and an $su$-path %with legs 
$[y_0(w)=x_0,\dots, y_N(w)=w]$ connecting $x_0$ to $w$ satisfying properties in Proposition \ref{prop: 7.2 ASV}. Take an arbitrary lift $\hat w\in \hat X$ of $w$, then by the lifting properties of foliations $\tilde W^{u,s}$ we get an $\tilde{s}\tilde{u}P$-path %with legs 
$$[\hat y_0(\hat w)=\hat x_0,\dots, \hat y_N(\hat w), \hat y_{N+1}(\hat w)=\hat w],$$such that $$\pi(\hat y_i(\hat w))=y_i(w), 0\leq i\leq N, \text{ and }\hat y_i, \hat y_{i+1} \text{ are in the same }\tilde W^s \text{ or }\tilde W^u \text{ leaf for }0\leq i\leq N-1.$$
(i.e. only the last segment in the path is a  $P$-path). We claim this $\hat s\hat u P$-path we picked satisfies Lemma \ref{lem: sup eff acc}. In fact for any $\hat z$ close to $\hat w$, let $\pi(\hat z)=z$, then we apply Proposition \ref{prop: 7.2 ASV} to get a $su$-path $[y_0(z)=x_0,\dots, y_N(z)=z]$ on $X$ connecting $x_0$ to $z$ such that $y_i(z)$ is close to $y_i(w)$, $0\leq i\leq N$. Again by the lifting property of $\tilde W^{u,s}$ there is %$[y_0(z),\dots, y_N(z)]$ to 
an $\hat s\hat u$-path $[\hat y_0(\hat z)=\hat x_0,\dots, \hat y_N(\hat z)]$ in $\hat X$ connecting $\hat x_0$ to some $\hat y_N(\hat z)$ in the same $P$-fiber as $\hat z$, and $\pi(\hat y_i(\hat z))=y_i(z)$. By continuity  of $\tilde W^{u,s}$ foliations and induction, each $\hat y_i(\hat z)$ is close to $\hat y_i(\hat w)$ for $0\leq i\leq N$, then letting $\hat y_{N+1}(\hat z)=\hat z$, we get the $\hat s\hat u P$-path we need, i.e. $$[\hat y_0(\hat z)=\hat x_0,\dots, \hat y_N(\hat z), \hat y_{N+1}(\hat z)=\hat z].$$
(if necessary we may let $L$ be much greater than the diameter of $P$.)
\end{proof}
Proposition \ref{prop: fancy inv prin} now follows directly from the part of the proof of Theorem E in  \cite{ASV} which  assumes Proposition \ref{prop: 7.2 ASV}, where instead of Proposition \ref{prop: 7.2 ASV} we use Lemma \ref{lem: sup eff acc}. 
\end{proof}

\begin{proposition}\label{prop: strong ASV thm E}Let $f$ be a $C^1$ partially hyperbolic, accessible diffeomorphism on a compact manifold $X$ with stable and unstable foliations $W^s$ and $W^u$, respectively. Assume that $E^s=\oplus_{i=1}^p E^s_i$, $E^u=\oplus_{j=1}^q E^u_j$ and each of $E^s_i$ and $E^u_j$ is tangent to a topological foliation $\mathcal F^s_i$ or $\mathcal F^u_j$ respectively. Suppose $\sigma$ is a %measurable
section of a continuous fiber bundle over $X$ which is $\mathcal F^s_i$- and $\mathcal F^u_j$-continuous. Then $\sigma$ is continuous.
\end{proposition}
\begin{proof} Again, we use  the part of the proof of Theorem E, \cite{ASV} which assumes Proposition \ref{prop: 7.2 ASV}, where Proposition \ref{prop: 7.2 ASV} is substituted by 
%The proof is very similar to that of Proposition  \ref{prop: fancy inv prin}, i.e. we only need to show the following variant of Proposition \ref{prop: 7.2 ASV}. The remaining proof of Proposition \ref{prop: strong ASV thm E} follows the part of the proof of Theorem E, \cite{ASV} when assuming Proposition \ref{prop: 7.2 ASV}.
\begin{lemma}\label{lem: multi fol ver 7.2 ASV}Let $f, X, \mathcal F^s_i, \mathcal F^u_j,  W^{u,s}$ as in Proposition \ref{prop: strong ASV thm E}. Given $x_0\in X$, there is $w\in  X$ and an $\{\mathcal F^s_i, \mathcal F^u_j\}$-path %with legs 
$[y_0(w)=x_0,\dots, y_N(w)=w]$ connecting $x_0$ to $w$ and satisfying the following property: for any
$\epsilon > 0$ there exist $\delta > 0$ and $L > 0$ such that for every $z \in B(w,\delta)$ there exists an $\{\mathcal F^s_i, \mathcal F^u_j\}$-path
%with legs 
$[y_0(z),... , y_N(z)]$ connecting $x_0$ to $z$ and such that
$$d(y_j(z),y_j(w)) < \epsilon, \text{ and } d_{W^\ast}(y_{j-1}(z), y_j( z)) < L, j = 1,\dots N,$$
where $d_{W^{\ast}}$ denotes the distance along the $\mathcal F^s_i, \mathcal F^u_j$-leaf common to the two points.
\end{lemma}
\begin{proof}Given $x_0\in X$, first apply Proposition \ref{prop: 7.2 ASV} we get $w\in X$ and an $su$-path %with legs 
$[y_0(w)=x_0,\dots, y_N(w)=w]$ connecting $x_0$ to $w$ and $\delta$, $L$ satisfying the property listed in Proposition \ref{prop: 7.2 ASV}. Without loss of generality we may assume $L$ is very small. By transversality of $E_i^s$ and $E_j^u$ we know locally $W^s$ is a product of $\mathcal F^s_i$ foliations, and $W^u$ is a product of $\mathcal F^u_j$ foliations. By the smallness of $L$ and the local product structure, we know each pair $y_{j-1}(w), y_j(w)$ can be connected by an $\{\mathcal F^s_i, \mathcal F^u_j\}$ path $[y'_{j-1,0}(w)=y_{j-1}(w), y'_{j-1, 1}(w),\dots, y'_{j-1, m_{j}-1}(w), y'_{j-1, m_{j}}(w)=y_{j,m_j}]$, with bounded length and bounded number of legs. Then the 
$\{\mathcal F^s_i, \mathcal F^u_j\}$-path $$[y'_{0,0}(w)=x_0,... , y'_{0,m_1}(w)=y_1(w), y'_{1,1}(w),\dots, y'_{1,m_2}(w)=y_2(w),\dots, y'_{N-1, m_N}(w)=y_N(w)=w]$$ satisfies Lemma \ref{lem: multi fol ver 7.2 ASV}, due to the local product structure and Proposition \ref{prop: 7.2 ASV}.
\end{proof}
\end{proof}
By using similar arguments, we get parallel results for $\R^k$-Anosov action, and the corresponding bundle version for a lift of an Anosov action to a principal fiber bundle. 
\begin{corollary}\label{coro: Rk Anosov cont ASV}Let $\R^k \curvearrowright X$ be an Anosov action on a compact manifold $X$ with a regular element $a\in \mathbb R^k$ whose stable and unstable foliations are $W^s$ and $W^u$, respectively. Assume that $E^s=\oplus_{i=1}^p E^s_i$, $E^u=\oplus_{j=1}^q E^u_j$ and each of $E^s_i$ and $E^u_j$ is tangent to a topological foliation $\mathcal F^s_i$ or $\mathcal F^u_j$ respectively. Suppose $\sigma$ is an  %measurable
$\R^k$-invariant section of a continuous fiber bundle over $X$ which is $\mathcal F^s_i$- and $\mathcal F^u_j$-continuous. Then $\sigma$ is continuous.
\end{corollary}
\begin{proof}$\R^k$-invariance of $\sigma$ implies that $\sigma$ is ($\R^k$-orbit foliation)-continuous in the sense of Definition \ref{def: F-cont}. Therefore the claim is an easy corollary of the proof of Proposition \ref{prop: strong ASV thm E} and the local product structure of $W^s, W^u$ and $\R^k$-orbit of the Anosov action.
\end{proof}

\begin{corollary}\label{coro: Rk Anosov with bundle ASV}Let $\R^k \curvearrowright X, a, X, W^s, W^u, \mathcal F_i^s, \mathcal F_j^u$ are defined as in Corollary \ref{coro: Rk Anosov cont ASV}. Assume that  $a$ lifts to a principal bundle morphism $\hat a$ on $\hat X$ over $X$ with fiber (group) $P$, such that $\hat a$ preserves continuous foliations $\hat {\mathcal F}_i^s, \hat {\mathcal F}_j^u$ which project to $\mathcal F_i^s, \mathcal F_j^u$ of $X$ respectively. Suppose $\sigma$ is a %measurable
map $\hat X\to \hat X$ which is $\mathcal F$-continuous for each $\mathcal F\in \{P,\hat{\mathcal F}_i^s, \hat {\mathcal F}_j^u, \mathbb R^k-\mathrm{orbit} \}$. Then $\sigma$ is continuous.
\end{corollary}
\begin{proof}
The proof is essentially the same as Corollary \ref{coro: Rk Anosov cont ASV}, using the local product structure of $P,\hat{\mathcal F}_i^s, \hat {\mathcal F}_j^u$, and the $\mathbb R^k-\mathrm{orbit}$.    
\end{proof}
%Then every $W^s$- and $W^u$-continuous section of a continuous fiber bundle, is continuous.
\color{black}
%UNIFY NOTATION FOR ACC PATH SINCE WE HAVE SU PATH SUP PATH F PATH BLAHBLAH, WE COULD DO IT AT SECTION 5.2.   

\subsection{Invariance principle and deriving additional regularity\color{black}}\label{InvPrinII}

\color{black}
We first define the additional regularity we will work with.
\begin{definition}\label{def: partial Hld}

\begin{itemize}
    \item[(1)] Let $f$ be a partially hyperbolic diffeomorphism on a compact manifold $X$. A function (or a map, a section, etc.) on $X$ is called {\it partially H\"older}  if it is continuous, and also uniformly H\"older continuous along $W^s_f$ and $W^u_f$.
    \item[(2)] Let $\R^k \curvearrowright X$ be a partially hyperbolic $\R^k$-action on a compact manifold $X$. A function (or a map, a section, etc.) on $X$ is called {\it H\"older 
    along 
    coarses} %foliations 
    if it is continuous, and also uniformly H\"older continuous along all coarse Lyapunov foliations $W^\lambda$. 
\end{itemize}
\end{definition}
\begin{remark}
By local product structure of coarse Lyapunov foliations (Proposition \ref{prop:circ}) within stable and unstable foliations of partially hyperbolic elements, we know that a map is H\"older along coarses if and only if it is \textit{partially H\"older} (in the sense of (1) of Definition \ref{def: partial Hld}). 
%for all partially hyperbolic elements $a\in \R^k$.
\end{remark}
%Define  (partially H\"older continuous).... 
\color{black}

The following is a variant  of  results based on the invariance principle as they appear in \color{black} \cite{ASV} and \color{black} \cite{KalSad}, but similar results can also be found in the initial works on the invariance principle \cite{AV}. \color{black}Recall that a $\beta$-H\"older continuous linear cocycle $F:E\to E$ of a $\beta$-H\"older vector bundle $E$ over a partially hyperbolic diffeomorphism $f:X\to X$ is called \emph{fiber bunched}, if 
$$\|F(x)\|\cdot \|F(x)\|^{-1}\cdot \nu(x)^\beta <1, ~  \|F(x)\|\cdot \|F(x)\|^{-1}\cdot \hat\nu(x)^\beta <1.$$
where $\nu, \hat \nu$ are defined in \eqref{def: ph dif def}.
\color{black}

\begin{theorem}\label{thm: acc imp hder} %\cite{KalSad}
Let $f$ be a \color{black} $C^{2}$ \color{black}  partially hyperbolic volume preserving,  %strongly 
center-bunched, accessible diffeomorphism of a closed manifold $X$. Let $\pi : E \to X$ be a H\"older vector bundle over $X$, and let $\phi$ be a fiber bunched H\"older linear cocycle over $f$, $\phi : E \to E$, with one Lyapunov exponent with respect to volume. Then the following hold:
\begin{itemize}
\item[(1)] Any almost-everywhere defined $\phi$-invariant measurable sub-bundle $V \subset E$ coincides almost everywhere with a \color{black} partially \color{black} H\"older one.
\item[(2)] Any almost-everywhere defined $\phi$-invariant conformal structure \color{black} of a measurable invariant sub-bundle $V\subset E$ \color{black} coincides almost everywhere with a \color{black} partially \color{black} H\"older one.
\end{itemize}
\end{theorem}
\color{black}
\begin{proof}For part (1), by Theorem 3.3 in \cite{KalSad}, any almost-everywhere defined $\phi$-invariant measurable sub-bundle $V$ coincides almost everywhere with a continuous bundle $V'$. By continuity $V'$ is $\phi$-invariant. But exactly the same proof as that in \cite{KalSad} (or using a version of Avila-Viana invariance principle, Theorem C. in \cite{ASV}), $V'$ is actually invariant under stable and unstable holonomy everywhere. %\footnote{DO WE NEED TO ADD THE DEFINITION OF FIBERED BUNCHED COCYCLE OVER PARTIALLY HYPERBOLIC SYSTEMS? or put all these fundamental materials in the appendix? or we directly put the whole proof of Theorem 8.2 to the appendix?}. 
By H\"older continuity of stable and unstable holonomies (see \cite{ASV} or \cite{KalSad}), $V'$ is partially H\"older. 

The proof of part (2) is not a direct consequence of Theorem 3.1 in \cite{KalSad}, due to lack of H\"older continuity of $V'$. By part (1) we know $V$ coincides with a continuous invariant and holonomy-invariant sub-bundle $V'$. 
Therefore we may consider a new cocycle  $\phi|_{V'}$ which is only partially H\"older. The point is that the stable and unstable Holonomies of $\phi|_{V'}$  are well-defined, given by the restriction of the canonical stable and unstable holonomies of $\phi$ to $V'$. Then by Proposition 4.4 of \cite{KalSad}, any almost-everywhere defined $\phi$-invariant conformal structure $\eta$ within $V$ is essentially invariant under stable and unstable holonomies of $\phi|_{V'}$. Notice that here the  H\"older continuity of $\phi|_{V'}$ is not needed (see Proposition 4.4 in \cite{KalSad}). Therefore by the same argument as the last paragraph of the proof of Proposition 4.4, that is by applying Theorem E of \cite{ASV}, we get that the conformal structure $\eta$ coincides with a continuous conformal structure $\eta'$ of $V'$ which is holonomy (that of $\phi|_{V'}$)-invariant. Since the stable and unstable holonomies of $\phi|_{V'}$ are partially H\"older, %(here we use part (1).), 
$\eta'$ is also partially H\"older.
\end{proof}

 \color{black}

\section{Algebraic preliminaries}
\label{sec:group-prelims}

%\subsection{Some Lie group theory facts}

%\begin{definition}  The Weyl group with respect to $A$ is defined by $N_G(A)/C_G(A)$, where The Weyl group with respect to $A$ is defined by $N_G(A)/C_G(A)$ where $N_G(A)$ is the normalizer of $A$. Denote by $w_\beta$ the reflection about the hyperplane perpendicular to the root $\beta$.
%\end{definition}

\color{black}

\subsection{Free products of topological groups}
\label{sec:free-prods}
Let $U_1,\dots,U_r$ be topological groups. The {\it topological free product} of the $U_i$, denoted $\mc P = U_1 * \dots * U_r$ is a topological group whose underlying group structure is exactly the usual free product of groups. That is, elements of $\mc P$ are given by

\[ u_1 * \dots * u_N \]

\noindent where each $u_k \in U_{i_k}$ for some associated sequence of symbols $i_k \in \set{1,\dots,r}$. We call the sequence $(i_1,\dots,i_k)$ the {\it combinatorial pattern} of the word. Each term $u_k$ is also called a {\it leg} and each word is also called a {\it path}. This is because in the case of a free product of connected Lie groups, the word can be represented by a path beginning at $e$, moving to $u_N$, then to $u_{N-1} * u_N$, and so on through the truncations of the word. The multiplication is given by concatenation of words, and the only group relations are given by

\begin{eqnarray}
\label{eq:freep-rel1} u * v & = & uv, \mbox{ if }u,v \mbox{ belong to the same }U_i \mbox{ and }\\
\label{eq:freep-rel2} e^{(i)} & = & e \in \mc P.
\end{eqnarray}

Notice that the relations \eqref{eq:freep-rel1} and \eqref{eq:freep-rel2} give rise to canonical embeddings of each $U_i$ into $\mc P$. We therefore identify each $U_i$ with its embedded copy in $\mc P$. The usual free product is characterized by a universal property: given a group $H$ and any collection of homomorphisms $\varphi_i : U_i \to H$, there exists a unique homomorphism $\Phi : \mc P \to H$ such that $\Phi|_{U_i} = \varphi_i$. The group topology on $\mc P$ may be similarly defined by a universal property, as first proved by Graev \cite{graev1950}:

\begin{proposition}\label{prop: gr}
There exists a unique topology $\tau$ on $\mc P$ (called the {\normalfont free product topology}) such that

\begin{itemize}
\item[(1)] each inclusion $U_i \hookrightarrow \mc P$ is a homeomorphism onto its image, and
\item[(2)] if $\varphi_i : U_i \to H$ are continuous group homomorphisms to a topological group $H$, then the unique extension $\Phi$ is continuous with respect to $\tau$.
\end{itemize}
\end{proposition}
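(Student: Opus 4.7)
The plan is to construct $\tau$ as the finest group topology on the abstract group $\mc P$ for which each inclusion $\iota_i : U_i \hookrightarrow \mc P$ is continuous, and then to verify properties (1) and (2) against this construction.

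First I would dispatch uniqueness: if $\tau_1$ and $\tau_2$ both satisfy (1) and (2), applying property (2) for $\tau_1$ to the continuous inclusions $U_i \hookrightarrow (\mc P, \tau_2)$ yields a continuous homomorphism $(\mc P, \tau_1) \to (\mc P, \tau_2)$ extending each $\iota_i$, which must be the identity since the $U_i$ generate $\mc P$ as an abstract group. The symmetric argument then gives $\tau_1 = \tau_2$.

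For existence, let $\mathcal T$ denote the collection of all group topologies $\sigma$ on $\mc P$ for which every $\iota_i$ is $\sigma$-continuous; this family is nonempty (it contains the indiscrete topology). I would define $\tau$ as the supremum of $\mathcal T$ in the lattice of group topologies, constructed concretely by taking as a neighborhood basis of $e$ all finite products $V_1 \cdots V_n$ where each $V_k$ is an $e$-neighborhood for some $\sigma \in \mathcal T$, then closing up under conjugation and inversion to force joint continuity of the group operations at $e$. With this explicit basis one verifies directly that $\tau$ is a group topology and is the finest element of $\mathcal T$.

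The essential remaining point for (1) is that each $\iota_i$ is not merely continuous but a homeomorphism onto its image. For this I would construct a continuous retraction $\pi_i : (\mc P, \tau) \to U_i$: abstractly, $\pi_i$ sends $U_i$ to itself by the identity and every $U_j$ with $j \neq i$ to $e$, which defines a group homomorphism by the universal property of the abstract free product. To see $\pi_i$ is $\tau$-continuous, pull back the original topology on $U_i$ along $\pi_i$: the resulting group topology on $\mc P$ makes each composition $\pi_i \circ \iota_j$ (either the identity or a constant map) continuous, hence lies in $\mathcal T$, hence is coarser than $\tau$. Since $\pi_i \circ \iota_i = \mathrm{id}_{U_i}$, the continuous map $\iota_i$ admits a continuous left inverse and is thus a topological embedding. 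The universal property (2) is then handled analogously: given continuous $\varphi_i : U_i \to H$, the unique abstract extension $\Phi : \mc P \to H$ pulls back the topology of $H$ to a group topology on $\mc P$ lying in $\mathcal T$, which is therefore coarser than $\tau$, so $\Phi$ is $\tau$-continuous.

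The main obstacle will be the construction of $\tau$ itself. The naive common refinement of a family of group topologies need not be a group topology, since continuity of multiplication at $e$ demands that each $e$-neighborhood contain a ``square root,'' a property which is not preserved under arbitrary intersections. The product-of-basis-elements construction above is designed precisely to circumvent this, but it requires a careful check that the proposed basis is actually filtering and behaves well under inversion, conjugation, and squaring. Once $\tau$ is in place, the retraction trick and pullback argument make the verifications of (1) and (2) nearly automatic.
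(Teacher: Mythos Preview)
The paper does not prove this proposition; it simply attributes the result to Graev \cite{graev1950} and moves on. So there is no proof in the paper to compare yours against.

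Your overall strategy is correct and is one standard route to the result: define $\tau$ as the finest group topology making each $\iota_i$ continuous, then use the retractions $\pi_i$ (and more generally pullbacks along the abstract extension $\Phi$) to verify (1) and (2). The uniqueness argument, the retraction trick for showing $\iota_i$ is an embedding, and the pullback argument for (2) are all clean and correct.

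However, you have misdiagnosed the ``main obstacle.'' The supremum in the lattice of all topologies of a family $\{\sigma_\alpha\}$ of group topologies on a fixed group \emph{is} automatically a group topology: if $U$ is $\sigma_\alpha$-open then $m^{-1}(U)$ is $\sigma_\alpha\times\sigma_\alpha$-open (since $\sigma_\alpha$ is a group topology), hence $\tau\times\tau$-open; similarly for inversion. Moreover, a map into $(\mc P,\sup_\alpha\sigma_\alpha)$ is continuous iff it is continuous into each $(\mc P,\sigma_\alpha)$, so each $\iota_i$ remains continuous for the supremum. Thus $\tau:=\sup\mathcal T$ is immediately the maximum element of $\mathcal T$, and your ``product-of-basis-elements'' construction and the attendant worries about square-roots and filtering are unnecessary. (You may be thinking of the \emph{infimum} of group topologies, which indeed need not be a group topology.) With this simplification your argument goes through without difficulty.
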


In the case when each $U_i$ is a Lie group (or more generally, a CW-complex), Ordman found a more constructive description of the topology \cite{ordman1974}. Indeed, the free product of Lie groups is covered by a disjoint union of {\it combinatorial cells}. 

\begin{definition}
\label{def:comb-cells}
Let $\mc P$ be the free products of groups $U_{\beta}$, where the $\beta$ ranges over some indexing set $\Delta$. A {\normalfont combinatorial pattern} in $\Delta$ is a finite sequence $\bar{\beta} = (\beta_1,\dots,\beta_n)$ such that $\beta_i \in \Delta$ for $i=1,\dots,n$. For each combinatorial pattern $\bar{\beta}$, there is an associated {\normalfont combinatorial cell} $C_{\bar{\beta}} = U_{\beta_1} \times \dots \times U_{\beta_n}$. If each $U_\beta$ is a topological group, $C_{\bar{\beta}}$ carries the product topology from the topologies on $U_{\beta_i}$. Notice that each $C_{\bar{\beta}}$ has a map $\pi_{\bar{\beta}} : C_{\bar{\beta}} \to \mc P$ given by $(u_1,\dots,u_N) \mapsto u_1 * \dots * u_N$. Furthermore, if $C = \bigsqcup_{\bar{\beta}} C_{\bar{\beta}}$, $C$ carries a canonical topology in which each $C_{\bar{\beta}}$ is a connected component. Finally, $\pi : C \to \mc P$ is defined by setting $\pi(x) = \pi_{\bar{\beta}}(x)$ when $x \in C_{\bar{\beta}}$ (note that $\pi$ is onto).
%For each finite word $\bar{i} = (i_1,\dots,i_N)$, let $C_{\bar{i}} = U_{i_1} \times \dots \times U_{i_N}$ be the combinatorial cell for $\bar{i}$, with its usual product topology. Notice that each $C_{\bar{i}}$ has a map $\pi_{\bar{i}} : C_{\bar{i}} \to \mc P$ given by $(u_1,\dots,u_N) \mapsto u_1^{(i_1)} * \dots * u_N^{(i_N)}$. Furthermore, if $C = \bigsqcup_{\bar{i}} C_{\bar{i}}$ and $\pi : C \to \mc P$ is defined by setting $\pi(x) = \pi_{\bar{i}}(x)$ when $x \in C_{\bar{i}}$, then $\pi$ is onto.
\end{definition}

\begin{lemma}[\cite{vinhageJMD2015} Proposition 4.2]
\label{lem:continuity-criterion}
If each $U_i$ is a Lie group, $\tau$ is the quotient topology on $\mc P$ induced by $\pi$. In particular, $f : \mc P \to Z$ is a continuous function to a topological space $Z$ if and only if its pullback $f \of \pi_{\bar{\beta}}$ to $C_{\bar{\beta}}$ is continuous for every combinatorial pattern $\bar{\beta}$.
\end{lemma}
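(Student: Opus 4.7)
The plan is to denote by $\tau_q$ the quotient topology on $\mc P$ induced by $\pi : C \to \mc P$, and to prove the equality $\tau = \tau_q$. Once that equality is established, the second (``in particular'') assertion is a direct consequence of the universal property of quotient topologies: a function $f : (\mc P, \tau_q) \to Z$ is continuous iff $f \of \pi : C \to Z$ is, and $f \of \pi$ is continuous on the disjoint union $C$ iff its restriction $f \of \pi_{\bar{\beta}}$ to each component $C_{\bar{\beta}}$ is continuous.

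To establish $\tau = \tau_q$, I would verify that $\tau_q$ enjoys the two defining properties of $\tau$ listed in the preceding proposition, and conclude by the uniqueness of $\tau$. The easier half is the universal property: given continuous group homomorphisms $\varphi_i : U_i \to H$ into a topological group $H$, the unique algebraic extension $\Phi : \mc P \to H$ pulls back along $\pi$ to the map $C_{\bar{\beta}} \ni (u_1,\dots,u_n) \mapsto \varphi_{\beta_1}(u_1)\cdots\varphi_{\beta_n}(u_n) \in H$, which is continuous on each cell since each $\varphi_{\beta_i}$ and the multiplication in $H$ are continuous. Hence $\Phi \of \pi$ is continuous on $C$, so $\Phi$ is continuous with respect to $\tau_q$ by the quotient property.

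The harder half is to verify that each inclusion $\iota_i : U_i \hookrightarrow (\mc P, \tau_q)$ is a homeomorphism onto its image. Continuity is automatic: the preimage under $\iota_i$ of a $\tau_q$-open set $V$ equals $\pi_{(i)}^{-1}(V)$, which is open by the definition of $\tau_q$. For the reverse direction one needs, given an open $W \subset U_i$, to exhibit a $\tau_q$-open $V \subset \mc P$ whose intersection with $\iota_i(U_i)$ equals $\iota_i(W)$. This is the core technical step and I expect it to be the main obstacle. It is accomplished by the inductive Ordman construction: one builds $V$ as the set of all words $u_1 * \dots * u_N$ such that some (necessarily reduced) reduction of the word lies in $W$, with each intermediate letter free to range in a shrinking neighborhood controlled by a chosen sequence $W_1 \supset W_2 \supset \dots$ of open neighborhoods of $e$ in the appropriate $U_j$. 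The verification that $\pi_{\bar{\beta}}^{-1}(V)$ is open in $C_{\bar{\beta}}$ for every combinatorial pattern $\bar{\beta}$ then reduces, letter by letter, to the continuity of multiplication and inversion in the Lie groups $U_j$, together with the fact that Lie groups are locally path-connected, so that the prescribed intersection $V \cap \iota_i(U_i) = \iota_i(W)$ holds.

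Having verified both defining properties for $\tau_q$, uniqueness of the free product topology yields $\tau = \tau_q$, and the lemma follows.
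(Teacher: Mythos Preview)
The paper does not actually prove this lemma; it is quoted verbatim from \cite{vinhageJMD2015}, Proposition 4.2, with the surrounding text also pointing to Ordman's paper \cite{ordman1974} for the underlying construction. So there is no ``paper's own proof'' to compare against, only the cited sources.

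Your outline has the right shape but contains a genuine gap. You propose to verify that $\tau_q$ satisfies the two characterizing properties of $\tau$ and then invoke uniqueness. Verifying property (2) is indeed easy, and your sketch is correct. However, the uniqueness statement in the preceding proposition is really a uniqueness among \emph{group} topologies: the standard proof that two topologies $\tau_1,\tau_2$ satisfying (1) and (2) coincide applies property (2) for $\tau_1$ with target $H=(\mc P,\tau_2)$, which requires $(\mc P,\tau_2)$ to be a topological group. Concretely, from your two verifications you only get one inclusion: property (2) for $\tau_q$ with $H=(\mc P,\tau)$ gives that $\mathrm{id}:(\mc P,\tau_q)\to(\mc P,\tau)$ is continuous, i.e.\ $\tau\subset\tau_q$. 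For the reverse inclusion you would need to apply property (2) for $\tau$ with $H=(\mc P,\tau_q)$, and this is only licensed once you know that multiplication and inversion are continuous for $\tau_q$.

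That continuity of multiplication for the quotient topology is precisely the nontrivial point, and it is where the Lie group hypothesis is used. The difficulty is that a product of quotient maps need not be a quotient map, so continuity of $\mc P\times\mc P\to\mc P$ does not follow formally from the definition of $\tau_q$. Ordman's argument handles this by observing that connected Lie groups (more generally CW-complexes) are $k_\omega$-spaces, and for $k_\omega$-spaces the product of quotient maps \emph{is} a quotient map; this yields that $(\mc P,\tau_q)$ is a topological group, after which your uniqueness argument goes through. Your ``harder half'' (that the inclusions $U_i\hookrightarrow(\mc P,\tau_q)$ are homeomorphisms onto their images) is a separate issue and is not the crux; the Ordman-style neighborhood construction you describe is relevant, but you should redirect it toward establishing the group-topology property rather than only property (1).
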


\begin{corollary}
\label{cor:connected}
If each $U_i$ is a connected Lie group, $\mc P$ is path-connected and locally path-connected.
\end{corollary}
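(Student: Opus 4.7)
The strategy is to use the quotient characterization of the topology on $\mc P$ provided by Lemma \ref{lem:continuity-criterion}: $\mc P$ carries the quotient topology induced by the surjection $\pi : C \to \mc P$, where $C = \bigsqcup_{\bar\beta} C_{\bar\beta}$ is the disjoint union of combinatorial cells. The two conclusions will then reduce to elementary point-set facts about quotients of locally path-connected spaces.

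For path-connectedness, I would show directly that every element of $\mc P$ can be joined to $e$ by a path. Given $p \in \mc P$, choose a combinatorial pattern $\bar\beta = (\beta_1,\dots,\beta_N)$ and an element $(u_1,\dots,u_N) \in C_{\bar\beta}$ with $\pi(u_1,\dots,u_N) = p$. Since each $U_{\beta_i}$ is a connected Lie group, it is path-connected, so we may select continuous paths $\gamma_i : [0,1] \to U_{\beta_i}$ with $\gamma_i(0) = e$ and $\gamma_i(1) = u_i$. Then $t \mapsto \pi(\gamma_1(t),\dots,\gamma_N(t))$ is a continuous path in $\mc P$ (continuous because the restriction of $\pi$ to the cell $C_{\bar\beta}$ is continuous by construction of the quotient topology), which runs from $\pi(e,\dots,e) = e$, using relation \eqref{eq:freep-rel2}, to $p$.

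For local path-connectedness, I would invoke the standard fact that a quotient of a locally path-connected space by a continuous surjection is locally path-connected. Each combinatorial cell $C_{\bar\beta} = U_{\beta_1} \times \dots \times U_{\beta_N}$ is a finite product of Lie groups, hence a manifold, hence locally path-connected; a disjoint union of locally path-connected spaces is locally path-connected, so $C$ itself is locally path-connected. Passing this through the quotient map $\pi$ then gives local path-connectedness of $\mc P$.

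The only substantive point is verifying (or citing) the quotient fact, whose proof I would include for completeness: let $V \subseteq \mc P$ be open and $P$ a path-component of $V$; it suffices to show $\pi^{-1}(P)$ is open in $C$. For each $x \in \pi^{-1}(P)$, local path-connectedness of $C$ produces a path-connected open neighborhood $W$ of $x$ contained in the open set $\pi^{-1}(V)$. Then $\pi(W)$ is a path-connected subset of $V$ meeting $P$, so $\pi(W) \subseteq P$, giving $W \subseteq \pi^{-1}(P)$. I do not anticipate any serious obstacle; this is standard point-set topology once the quotient description of $\tau$ from Lemma \ref{lem:continuity-criterion} is in hand, and it is exactly where the hypothesis that each $U_i$ is a (connected) Lie group enters.
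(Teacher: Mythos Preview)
Your proof is correct and follows precisely the approach the paper intends: the corollary is stated without proof in the paper, as an immediate consequence of the quotient description in Lemma~\ref{lem:continuity-criterion}, and you have simply written out the standard details (path-connectedness via paths in a single cell, local path-connectedness via the general quotient lemma).
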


Fix an indexing set $\Delta$ (which, in our applications, will be the set of coarse Lyapunov exponents), and for each $\alpha \in \Delta$, let $U_\alpha$ be an associated Lie group. Define $\mc P = \mc P_\Delta = U_{\alpha_1} * \dots * U_{\alpha_n}$ be the free product of the groups $U_\alpha$, $\alpha \in \Delta$. Given continuous actions $U_\alpha \curvearrowright X$, we may induce a continuous action of $\mc P$ on $X$ by setting:

\[ u_1 * u_2 * \dots * u_N \cdot x = u_1(u_2(\dots(u_N(x))\dots)) \]

This can be observed to be an action of $\mc P$ immediately, and continuity can be checked with either the universal property (considering each action $U_\alpha \curvearrowright X$ is a homomorphism from $U_\alpha$ to $\Homeo(X)$) or directly using the criterion of Lemma \ref{lem:continuity-criterion}. 

Suppose now that all groups $U_\alpha$ are nilpotent and simply connected. Given a word $u_1 * \dots * u_m$ (which we often call a path as discussed above), we may associate a path in $X$ defined by:

\[ \gamma\left(\frac{s+k-1}{m}\right) = u_{i_{m-k+1}}^t (x_{k-1}), \; s \in [0,1], \; k = 1,\dots,m, \]

\noindent where $u^t$ is the one parameter subgroup passing through $u$, $x_0$ is a base point and $x_k = u_{i_{m-k+1}}(x_{k-1})$. This gives more justification for calling each term $u_k$ a leg. The points $x_k$ are called the {\it break points} or {\it switches} of the path.

%\begin{remark}
%\label{rem:paths-no-action}
%Given a collection of oriented one-dimensional foliations $\mc W^{\beta_1},\dots,W^{\beta_N}$ and some Riemannian metric on $X$, one can define flows $\eta^i_t$ according to unit speed flow along $\mc W^{\beta_i}$. Therefore, even if there is no distinguished Riemannian metric to use, we will often use the terms paths, legs and break points in the foliations $\mc W^{\beta_i}$ (especially in Part I). The combinatorial patterns of such paths still make sense even without reference to a fixed parameterization.
%\end{remark}
%\begin{corollary}
%\label{cor:lie-from-const}
%If $\mc P$ is the free product of Lie groups, and $\eta : \mc P \curvearrowright X$ is an action of $\mc P$ on a manifold $X$ such that $\Stab_\eta(x)$ is independent of $x$, then $\eta$ factors through the action of a Lie group $H$.
%\end{corollary}

%\begin{proof}
%Let $\mc C = \Stab_\eta(x)$ be the common stabilizer of every point in $X$, and $H = \mc P / \mc C$.
%\end{proof}

Let $K$ be a compact group. We will work in the context of $\mathbb R^k\times K$ actions (see Section \ref{sec:top-anosov}). We assume that for each \color{black}$g \in \R^k\times K $\color{black}, there is an associated family of automorphisms $g_* : U_\alpha \to U_\alpha$ for every $\alpha \in \Delta$, and that the map $g \mapsto g_*$ is a homomorphism from \color{black} $\R^k\times K$ \color{black} to $\prod_{\alpha \in \Delta} \Aut(U_\alpha)$. Suppose that $u_1 * \dots * u_m \in \mc P$ is an element of combinatorial length $m$ with combinatorial pattern $(\alpha_1,\dots, \alpha_m)$. Then define $g_* : \mc P \to \mc P$ by:

\begin{equation}
\label{eq:induced-auto}
    u_1 * \dots * u_m \mapsto (g_*u_1) * \dots * (g_* u_m). 
\end{equation}

One may check that $g_*$ is a well-defined automorphism of $\mc P$ using relations \eqref{eq:freep-rel1} and  \eqref{eq:freep-rel2}, and noting that its inverse is $(g^{-1})_*$.

\begin{definition}
\label{def:P-groups}
Let \color{black} $\hat{\mc P} = (\R^k \times K )\ltimes \mc P$ \color{black}, with the semidirect product structure given by

\[ (g_1,\rho_1) \cdot (g_2,\rho_2) = (g_1g_2,({g_2^{-1}})_*(\rho_1)* \rho_2). \]

{\color{black}If $\Delta' \subset \Delta$ is a subset, let $\mc P_{\Delta'} \subset \mc P$ denote the subgroup of $\mc P$ generated by the groups $U_\alpha$, $\alpha \in \Delta$, and $\hat{\mc P}_{\Delta'}$ denote the semidirect product $(\R^k \times K) \ltimes \mc P_{\Delta'}$.}
%\noindent where the group operation of $\R^k$ is written multiplicatively.
\end{definition}

The following proposition follows almost immediately from the definitions. A proof of the case when each $U_\alpha \cong \R$ can be found in \cite[Proposition 4.7]{Spatzier-Vinhage}.

\begin{proposition}
\label{prop:integrality}
%Let $\psi_a$ denote the automorphism of $\mc P$ as defined in Definition \ref{def:P-groups}. Let $\mc C$ be any closed normal subgroup of $\mc P$, and
Let $\mc C$ be a closed, normal subgroup of $\mc P$, and $H = \mc P / \mc C$ be the corresponding topological group factor of $\mc P$. If $g_*\mc C = \mc C$ for all $g \in \color{black}\R^k\times K \color{black}$, then $g_*$ descends to a continuous homomorphism $g_*$ of $H$. Furthermore, if $H$ is a Lie group with Lie algebra $\mf h$, and $\pi_\alpha : U_\alpha \to H$ denotes the composition of inclusion into $\mc P$ and projection onto $H$, then

\begin{itemize}
\item[(1)] each generating group $U_\alpha \subset \mc P$, $d\pi_\alpha(\Lie(U_\alpha))$ is an invariant subspace of $dg_*$ for every $g \in \R^k\times K \color{black}$,
\item[(2)] if $X \in \Lie(U_\alpha)$ and $Y \in \Lie(U_\beta)$ are eigenvectors of $dg_*$ with eigenvalues of modulus $\lambda_1$ and $\lambda_2$, respectively, then $[d\pi_\alpha X,d\pi_\beta Y] \in \Lie(H)$ is an eigenspace of $dg_*$ with eigenvalue of modulus $\lambda_1\lambda_2$, and
\item[(3)] if $Z = [Z_1,[Z_2,\dots,[Z_N,Z_0]\dots]]$, with $Z_k = X$ or $Y$ for every $k$, then $Z$ is an eigenvector of $dg_*$ with eigenvalue of modulus $\lambda_1^u\lambda_2^v$ for some $u,v \in \Z_+$.
\end{itemize}
\end{proposition}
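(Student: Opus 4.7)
First, I would verify that $g_* : \mc P \to \mc P$ is continuous, so that it descends continuously to $H$. By hypothesis, the restriction of $g_*$ to each generator $U_\alpha$ is a Lie group automorphism, hence continuous; composing with the inclusion $\iota_\alpha : U_\alpha \hookrightarrow \mc P$ gives continuous homomorphisms $U_\alpha \to \mc P$, and the universal property of the free product topology from Section \ref{sec:free-prods} produces a unique continuous extension to $\mc P$, which by uniqueness agrees with the algebraically defined $g_*$. Since $g_*\mc C = \mc C$, this descends to a set-theoretic homomorphism of $H = \mc P/\mc C$; continuity follows from the quotient topology, because the preimage of any open $V \subset H$ under the composition of the descent with $p : \mc P \to H$ equals $g_*^{-1}(p^{-1}(V))$, which is open in $\mc P$.

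Next, for item (1), I would use that $\pi_\alpha = p \circ \iota_\alpha$ is $g_*$-equivariant by construction. Assuming $H$ is a Lie group, the continuous homomorphism $g_*$ of $H$ is automatically smooth (a continuous homomorphism between Lie groups is analytic). Differentiating the identity $\pi_\alpha \circ g_*|_{U_\alpha} = g_* \circ \pi_\alpha$ at the identity of $U_\alpha$ gives
\[
d\pi_\alpha \circ dg_*|_{\Lie U_\alpha} \;=\; dg_*|_{\Lie H} \circ d\pi_\alpha,
\]
so $dg_*$ preserves $d\pi_\alpha(\Lie U_\alpha) \subset \Lie H$.

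For items (2) and (3), I would exploit that $dg_*$, as the derivative of a Lie group automorphism, is a Lie algebra automorphism of $\Lie H$ and therefore respects brackets. Set $X' = d\pi_\alpha X$ and $Y' = d\pi_\beta Y$; by item (1) and the equivariance (passing to the complexification if necessary), $X'$ and $Y'$ are eigenvectors of $dg_*|_{\Lie H}$ with eigenvalues $\mu_1,\mu_2$ of moduli $\lambda_1,\lambda_2$. Then
\[
dg_*[X', Y'] \;=\; [dg_*X', dg_*Y'] \;=\; \mu_1 \mu_2\, [X', Y'],
\]
which proves (2). Item (3) then follows by induction on $N$: the nested bracket is a single bracket of one $Z_k \in \set{X', Y'}$ with a shorter nested bracket, which by induction is an eigenvector of modulus $\lambda_1^{u'}\lambda_2^{v'}$; applying the same computation raises one of the exponents by one, yielding modulus $\lambda_1^u \lambda_2^v$ where $u$ and $v$ count the occurrences of $X$ and $Y$ among $(Z_0,\dots,Z_N)$.

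I do not expect a serious obstacle here. The only subtle point is checking continuity of $g_*$ on the free product through the universal property; once $H$ is granted to be a Lie group, the descended $g_*$ is automatically smooth, and the remaining statements are routine Lie-algebraic computations leveraging that eigenvalues multiply under brackets of joint eigenvectors of an automorphism.
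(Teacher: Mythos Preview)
Your proposal is correct and follows the natural line the paper has in mind: the paper itself does not give a detailed proof, noting only that the proposition ``follows almost immediately from the definitions'' and pointing to \cite[Proposition 4.7]{Spatzier-Vinhage} for the one-dimensional case. Your argument---using the universal property of the free product topology for continuity of $g_*$, automatic smoothness of continuous Lie group homomorphisms for the descent to $H$, and the multiplicativity of eigenvalues under brackets for a Lie algebra automorphism---is exactly the routine verification that justifies this claim.
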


\subsection{Lie Criteria}

In this subsection, we recall two deep results for Lie criteria of topological groups. The first criterion was obtained by Gleason and Palais \cite[Corollary 7.4]{gleason-palais}:

\begin{theorem}[Gleason-Palais]
\label{lem:gleason-palais}
If $G$ is a locally path-connected topological group which admits an injective continuous map from a neighborhood of ${  e \in }\, G$ into a finite-dimensional topological space, then $G$ is a Lie group.
\end{theorem}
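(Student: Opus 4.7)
The plan is to reduce the statement to the classical Gleason--Yamabe theorem, which asserts that every locally compact topological group with no small subgroups (NSS) is a Lie group. The strategy therefore has two main steps: first, use the injective continuous map $f : V \to Y$ together with local path-connectedness to promote $G$ to a locally compact group of finite local topological dimension at $e$; second, exploit the finite-dimensionality of $Y$ to verify the NSS property.

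For the first step, I would shrink the given neighborhood $V$ of $e$ to a path-connected open neighborhood $W$ with $\overline{W} \subset V$ and $f(\overline{W})$ contained in a compact subset of $Y$. Since $f$ is a continuous injection, if one can show $\overline{W}$ is compact then $f|_{\overline{W}}$ is automatically a homeomorphism onto its image. The delicate direction is producing compactness of $\overline{W}$ from pre-compactness of $f(\overline{W})$: local path-connectedness provides rigidity here, as paths in $W$ map to paths in $f(W) \subset Y$, and convergent subsequences in $Y$ can be lifted back to $G$ by concatenating short path-connected translates of previously chosen tail segments using the group operation. This yields a compact neighborhood of $e$ and simultaneously bounds the topological dimension of $G$ near $e$ by $\dim Y$.

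The main obstacle is the second step, verifying NSS. Suppose instead that arbitrarily small neighborhoods of $e$ contain nontrivial subgroups $H_n$ shrinking to $\{e\}$. Each $H_n$ sits inside $V$ for $n$ large and injects continuously into the finite-dimensional space $Y$ via $f$. Using local path-connectedness, each nontrivial element $h \in H_n$ can be joined to $e$ by a path inside $W$, and iterating the group operation on such paths produces continuous one-parameter-like families accumulating at $e$. One then argues that the existence of independent such families in arbitrarily small neighborhoods of $e$ would force $f(W)$ to contain arbitrarily many topologically independent arcs through $f(e)$, contradicting $\dim Y < \infty$ via standard dimension-theoretic bounds on sums of subspaces. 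Once NSS is established alongside local compactness, the Gleason--Yamabe theorem yields that $G$ is a Lie group. The essential subtlety throughout is that $f$ is not assumed to be a homeomorphism onto its image, so topological properties of $Y$ transfer to $G$ only when simultaneously harnessed with the group structure and the local path-connectedness hypothesis.
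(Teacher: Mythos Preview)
The paper does not prove this theorem; it is quoted verbatim as Corollary 7.4 of Gleason and Palais and used as a black box. So there is no ``paper's proof'' to compare against, and your proposal should be judged on its own merits.

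Your overall plan---reduce to Gleason--Yamabe by establishing local compactness and NSS---is a reasonable heuristic, but the execution has a genuine gap at the local compactness step. You write that convergent subsequences in $Y$ ``can be lifted back to $G$ by concatenating short path-connected translates of previously chosen tail segments using the group operation.'' This is not a valid argument: a continuous injection $f$ need not be proper or have continuous inverse, so convergence of $f(g_n)$ in $Y$ gives no control on $g_n$ in $G$. Concatenating paths does not help---the endpoints of those paths are exactly the elements $g_n$ whose convergence you are trying to establish, and nothing in the hypotheses forces the path-lengths or the group-translates to shrink. Without local compactness you cannot invoke Gleason--Yamabe, and the NSS argument (which is also only sketched) becomes moot.

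The actual Gleason--Palais proof does not proceed by first proving local compactness. It goes through their structure theory for locally compact transformation groups acting with finite-dimensional orbits: one lets $G$ act on itself by translation, uses the injective map to a finite-dimensional space to bound the dimension of orbits, and then appeals to their earlier results (Theorems 7.1--7.3 of their paper) which characterize Lie groups among such transformation groups. Local compactness emerges as a \emph{consequence} of the Lie structure, not as an input. If you want to reconstruct a proof, that is the route to follow; the shortcut through Gleason--Yamabe does not close.
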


Theorem \ref{lem:gleason-palais} has an immediate corollary for actions of the path group $\mc P$ defined in Section \ref{sec:free-prods}:

\begin{corollary}
\label{cor:lie-from-const}
If $\eta : \mc P \curvearrowright X$ is a group action on a topological space $X$, and there exists $x_0 \in X$ such that $\mc C := \Stab_\eta(x_0) \subset \Stab_\eta(x)$ for every $x \in X$, then $\mc C$ is normal and the $\eta$ action descends to $\mc P / \mc C$.  If there is an injective continuous map from $\mc P /\mc C$ to a finite-dimensional space $Y$, then $\mc P / \mc C$ is a Lie group.
\end{corollary}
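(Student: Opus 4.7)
The plan is to first verify normality of $\mc C$ and that $\eta$ descends, then apply the Gleason--Palais criterion (Theorem \ref{lem:gleason-palais}) to the quotient $\mc P/\mc C$.

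The hypothesis $\mc C = \Stab_\eta(x_0) \subset \Stab_\eta(x)$ for every $x \in X$ says exactly that every element of $\mc C$ acts trivially on all of $X$, so $\mc C \subset \bigcap_{x \in X}\Stab_\eta(x) = \ker(\eta)$; the reverse inclusion is obvious since $x_0 \in X$. Hence $\mc C$ coincides with the kernel of $\eta$, which is automatically normal in $\mc P$, and the action descends to a well-defined (faithful) action of $\mc P/\mc C$ on $X$. Assuming $X$ is Hausdorff (as is standard here), each orbit map $g \mapsto g \cdot x$ is continuous, so each $\Stab_\eta(x)$ is closed; thus $\mc C$ is closed and $\mc P/\mc C$ is a Hausdorff topological group in the quotient topology.

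Next, I would verify the hypotheses of Theorem \ref{lem:gleason-palais} for $\mc P/\mc C$. By Corollary \ref{cor:connected}, $\mc P$ is locally path-connected. The quotient homomorphism $q : \mc P \to \mc P/\mc C$ is open (as all quotient maps by normal subgroups of topological groups are), and open continuous surjections carry local path-connectedness forward: any neighborhood $V$ of $q(g)$ pulls back to an open set containing $g$, inside which one finds a path-connected neighborhood of $g$, and then the open map $q$ pushes that forward into $V$. Hence $\mc P/\mc C$ is locally path-connected. The given continuous injection $\iota : \mc P/\mc C \hookrightarrow Y$ into the finite-dimensional space $Y$, restricted to a neighborhood of the identity coset, supplies exactly the local injective map into a finite-dimensional space required by Gleason--Palais. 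Invoking Theorem \ref{lem:gleason-palais} then concludes that $\mc P/\mc C$ is a Lie group.

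No serious obstacle is expected: both steps are light bookkeeping on top of the deep input already supplied by Gleason--Palais. The only point to handle with a little care is the propagation of local path-connectedness through the open quotient map $q$, which is standard once openness of $q$ is noted.
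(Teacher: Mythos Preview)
Your proposal is correct and follows essentially the same route as the paper: establish normality of $\mc C$, pass local path-connectedness of $\mc P$ (Corollary~\ref{cor:connected}) to the quotient, and invoke Gleason--Palais. Your argument is in fact a bit more careful than the paper's, which simply asserts that $\mc C$ is closed and that ``all of its factors'' are locally path-connected; you justify closedness via stabilizers and spell out why open quotient maps preserve local path-connectedness, and your observation that $\mc C = \ker\eta$ is a slightly slicker route to normality than the paper's direct conjugation check.
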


\begin{proof}
We first show that $\mc C$ is normal. Let $\sigma \in \mc C$ and $\rho \in \mc P$. Then $\sigma\cdot(\rho \cdot x_0) = \rho \cdot x_0$, since $\sigma$ stabilizes every point of $X$. Therefore, $\rho^{-1}\sigma\rho \cdot x_0 = x_0$, and $\rho^{-1}\sigma \rho \in \mc C$, and $\mc C$ is a closed normal subgroup. By Corollary \ref{cor:connected}, $\mc P$, and hence all of its factors, are locally path-connected. Therefore, by Theorem \ref{lem:gleason-palais}, $\mc P / \mc C$ is a Lie group if it admits an injective continuous map to a finite-dimensional space.
\end{proof}

The second, more well-known, criterion was obtained by Gleason and Yamabe \cite[Proposition 6.0.11]{hilbert5-2014}, and plays a crucial role in the general solution of Hilbert's fifth problem:

%\begin{theorem}[{\cite[Theorem 4.6]{MontZipp1955}}]
%\label{thm:mont-zipp}
%Let $G$ be a locally compact group such that $G / G^\circ$ is compact, where $G^\circ$ is the identity component of $G$. Let $U$ be an arbitrary neighborhood of $e$. Then there exists in $U$ a compact normal subgroup $H$ such that $G/H$ has no small subgroups (and is hence a Lie group).
%\end{theorem}

\bt[Gleason-Yamabe]  \label{thm:gleason-yamabe}
Let $G$ be a locally compact group. Then there exists an open subgroup $G' \subset G$ such that, for any open neighborhood $U$ of the identity in $G '$ there exists a compact normal subgroup $K \subset U \subset G'$  such that $G'/K$ is isomorphic to a Lie group. Furthermore, if $G$ is connected, $G'=G$.
\et

Recall that a locally compact group $G$ has the {\em no small subgroups property} if for $G'$ as in Theorem \ref{thm:gleason-yamabe}, there exists a neighborhood $U \subset G'$ such $U$ does not contain any compact normal subgroup besides $\{e\}$.  Such a group $G$ then is automatically a Lie group, by  Theorem \ref{thm:gleason-yamabe}.  
%Finally, we will require some  structure theory for path-connected, locally compact groups.

The following corollary is often used, but a citation is not available. We provide a proof for completeness:

\begin{corollary}
\label{cor:lc-lpc-structure}
If $G$ is a separable, Hausdorff, locally compact, locally path-connected group, then $G$ is an inverse limit of Lie groups with compact kernels. That is, we may construct the following commutative diagram describing $G$:

\begin{center}
% https://tikzcd.yichuanshen.de/#N4Igdg9gJgpgziAXAbVABwnAlgFyxMJZAJgBoAGAXVJADcBDAGwFcYkQAdDgC3p2ADiAXxBDS6TLnyEUAFlIBGanSat2AgPrlR4kBmx4CRAMyLlDFm0QhNCnRIPSiZJTQtrrm4vb2TDM5AUzN1UrGw1jH30pIxRyYJVLdi4AYygIHAQhZRgoAHN4IlAAMwAnCABbJABWGhwIJFkxEvKqxHkQeqRTRI89CJ8yyu66hsQyXrC0DW9mkCG2ia7EIMn2abs5haR4zrGOxnoAIxhGAAU-J2tSrDzuHBAQpOsARwGt1p3RkbXXmceQIcTudLrEQDc7g8PsNELtlhN3GE3nYaEDThdHGCIfdBp9Yd8VqjjujQTJATBig8nn03tpskIgA
\begin{tikzcd}
                 &                      & G \arrow[ld, "q_3"'] \arrow[d, "q_2"'] \arrow[rd, "q_1"'] \arrow[rrd, "q_0"] &                      &     \\
\cdots \arrow[r] & G_3 \arrow[r, "p_3"'] & G_2 \arrow[r, "p_2"']                                                               & G_1 \arrow[r, "p_1"'] & G_0
\end{tikzcd}
\end{center}
Here, each $G_i$ is a Lie group, $p_i : G_i \to G_{i-1}$ and $q_i : G \to G_i$ are surjective homomorphisms satisfying $q_n = p_{n+1} \of q_{n+1}$, $\ker p_n$ is compact, and $\bigcap_{n=0}^\infty \ker q_n = \set{e} \subset G$.
\end{corollary}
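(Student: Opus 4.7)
The plan is to construct a descending sequence of compact normal subgroups $K_1 \supset K_2 \supset \cdots$ of $G$ with $\bigcap_n K_n = \{e\}$ and each $G/K_n$ a Lie group; setting $G_n := G/K_n$, with quotient map $q_n : G \to G_n$ and the induced surjection $p_n : G_n \to G_{n-1}$, then yields the required diagram with $\ker p_n = K_{n-1}/K_n$ compact and $\bigcap_n \ker q_n = \{e\}$.

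First, since $G$ is separable, locally compact, and Hausdorff, it is second countable and hence admits a countable decreasing open neighborhood basis $V_1 \supset V_2 \supset \cdots$ of the identity with $\bigcap_n V_n = \{e\}$. Local path-connectedness implies that the identity component $G_0$ is open in $G$; after shrinking, each $V_n$ may be assumed to lie in $G_0$.

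Next, I would apply Theorem \ref{thm:gleason-yamabe} to the connected group $G_0$: for each $n$ this produces a compact normal subgroup $L_n \trianglelefteq G_0$ with $L_n \subset V_n$ and $G_0/L_n$ a Lie group. To upgrade $L_n$ to a subgroup normal in all of $G$, I would take $K_n := \bigcap_{g \in G} g L_n g^{-1}$, which is compact (as a closed subset of $L_n$), contained in $V_n$, and $G$-normal. Because $G$ is separable, $G/G_0$ is countable and discrete, so this intersection reduces to a countable one. To conclude that $G/K_n$ is Lie, I would show first that its identity component $G_0/K_n$ is Lie, by noting that $G_0/K_n$ surjects with compact kernel $L_n/K_n$ onto the Lie group $G_0/L_n$: the no-small-subgroups characterization of Lie groups from Theorem \ref{thm:gleason-yamabe}, together with a choice of $L_n$ taken from the canonical (characteristic) Gleason--Yamabe subgroups so that $L_n/K_n$ inherits NSS, forces $G_0/K_n$ itself to have no small subgroups and hence to be Lie; since $G_0/K_n$ is an open subgroup of $G/K_n$, the Lie structure extends locally to all of $G/K_n$.

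Finally, to obtain a descending chain I would replace $K_n$ by $K_1 \cap \cdots \cap K_n$; this is still compact and normal in $G$, contained in $V_n$, and $G/(K_1 \cap \cdots \cap K_n)$ embeds as a closed subgroup of the Lie group $\prod_{i \leq n} G/K_i$, hence is itself Lie. Commutativity of the diagram, surjectivity of $q_n$ and $p_n$, compactness of $\ker p_n = K_{n-1}/K_n$, and $\bigcap_n K_n \subset \bigcap_n V_n = \{e\}$ are then routine. The main obstacle in the argument is the middle step: promoting the $G_0$-normal subgroup $L_n$ to a $G$-normal subgroup $K_n$ without losing the Lie property of the quotient. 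This is immediate when $G$ is connected (one takes $K_n = L_n$ directly), and in the general locally path-connected case is controlled by the discreteness of $G/G_0$ together with the canonical nature of the Gleason--Yamabe subgroups, so that the conjugation action on $G_0$ permutes a controlled family of candidate Lie quotients and the NSS property descends through the compact extension by $L_n/K_n$.
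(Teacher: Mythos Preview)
Your overall strategy matches the paper's: apply Gleason--Yamabe to produce compact normal subgroups inside a shrinking neighborhood basis, pass to finite intersections to obtain a descending chain, and verify that each quotient is Lie. Your product-embedding argument in the final step (that $G/(K_1\cap\cdots\cap K_n)$ sits as a closed subgroup of the finite product $\prod_{i\le n} G/K_i$, hence is Lie) is a clean variant of the paper's direct no-small-subgroups computation; the paper instead pulls back NSS neighborhoods $V_i$ from each $G/\tilde K_i$, intersects their preimages in $G$, and checks that the image in $G/K_n$ contains no nontrivial subgroup. These two arguments are essentially the same fact in different clothing.

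The substantive divergence is in how connectedness is handled. The paper simply takes $G'=G$ in Gleason--Yamabe by asserting $G$ is connected (this is not among the stated hypotheses, but holds in the paper's application), so the compact subgroups are automatically normal in $G$. You instead work in $G_0$ and then normalize via $K_n=\bigcap_{g\in G} gL_ng^{-1}$. This is where your argument becomes genuinely incomplete: you need $G_0/K_n$ to be Lie, which amounts to $L_n/K_n$ having NSS, and your appeal to ``canonical (characteristic) Gleason--Yamabe subgroups'' is not a real justification---Gleason--Yamabe does not hand you characteristic subgroups. Moreover, since the intersection defining $K_n$ runs over the (possibly countably infinite) set of $G_0$-cosets, your finite-product embedding trick does not apply here. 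In the connected case this entire detour disappears and your proof coincides with the paper's; in the general locally path-connected case you have identified the right obstacle but not resolved it.
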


\begin{proof}
Notice that since $G$ is connected, $G^\circ = G$, and $G / G^\circ$ is compact. Choose a sequence $U_n$ of neighborhoods of $e$ such that $\bigcap_{n=1}^\infty U_n = \set{e}$ (the existence of the sequence follows from separability). By Theorem \ref{thm:gleason-yamabe}, there exists a compact normal subgroup $\tilde{K}_n \subset U_n$ such that $G_n := G / \tilde{K}_n$ is a Lie group.  Let $K_n = \bigcap_{i=1}^n \tilde{K}_i$.

We claim that $G_n' := G / K_n$ is a Lie group. Indeed, since $G_n$ is a Lie group, there is an open set $V_n \subset G_n$ such that the only subgroup of $G_n$ contained in $V_n$ is $\set{e}$. Consequently, the only subgroups contained in $V_n' \subset G$, the preimage of $V_n$ in $G_n$, must be contained in $\tilde{K_n}$. Let $W_n' = \bigcap_{i=1}^n V_i' \subset G$, and notice that since each $V_i'$ is saturated by $\tilde{K_i}$, $W_n'$ is saturated by $K_n = \bigcap_{i=1} \tilde{K}_i$. Then $W_n'$ is a $K_n$-saturated neighborhood of $K_n \subset G$. If $W_n$ denotes the image of $W_n'$ in $G_n'$, then $W_n$ is open in $G_n$. This implies that if $L$ is a subgroup of $G_n'$ contained in $W_n$, then its preimage in $G$ is a subgroup contained in $W_n'$. By construction, it must be contained in each $V_n'$ and therefore, must lie inside $\bigcap_{i=1}^n \tilde{K_n} = K_n$. Therefore, $L = \set{e}$ and $G_n'$ is a Lie group, since it is locally compact and has no small subgroups.

%Let $K_1 = \tilde{K_1}$ and $K_n =  \tilde{K_n} \cdot K_{n-1}$ for $n \ge 2$. Inductively, since $\tilde{K_n}$ and $K_{n-1}$ are compact normal subgroups, $K_n$ is again a compact normal subgroup contained in $U_{n-1}$. By construction, each $K_n$ is a compact normal subgroup, $K_{n+1} \subset K_n$ and $\bigcap_{n=1}^\infty K_n = \set{e}$. Since $K_n \supset \tilde{K}_n$, $G_n := G / K_n$ is a factor of $G_n'$ and $G_n$ is a Lie group as well.

 By construction, $G$ is exactly the projective limit of the groups $G_n'$. % (\color{black}For the arguments of this section, did we ever use the local product structure of stable, unstable and center?\color{black})
\end{proof}
%\begin{lemma}
%If $\R^k \curvearrowright X$ be a continuous action of $\R^k$ on a compact metric space $X$ with a dense $\R^k$-orbit, and $C \subset \R^k$ be any nontrivial open cone. Then the set of points $x$ such that $\overline{C \cdot x} = X$ is a dense $G_\delta$ subset of $X$.
%\end{lemma}

%\begin{proof}
%If $C$ contains a half-space, then the result follows from Lemma \ref{lem:dense-halfspace}. Otherwise, $\set{a \in \R^k : a + C \subset C}$ contains a half-space.
%\end{proof}

%\begin{corollary}
%If a totally Anosov action $\R^k$-action has a dense $\R^k$-orbit, then the set of periodic $\R^k$-orbits is dense.
%\end{corollary}
\part{Proofs for $G$-actions and deriving fundamental properties for $\mathbb R^k$-actions}
\label{part:reduction}

\section{Proof of the results for semisimple Lie group actions}
\label{sec:semisimple-proof}

\color{black}
From assumptions in Theorem \ref{G-action} there exists a totally \color{black}partially hyperbolic $A$-action of a split Cartan subgroup of $G$,  such that the $A$-action \color{black} contains an accessible element. We show below that the volume preservation assumption for a maximal split Cartan subgroup implies volume preservation for the whole $G$-action. In particular, $A$-action preserves volume. We show in this section that the $A$-action with these properties satisfies all the assumptions of Theorem \ref{abelian}. Applying Theorem \ref{abelian} gives us that the $A$ action is $C^\infty$ conjugate to a bi-homogeneous action (up to finite cover). Lastly we show that this conjugacy works for the whole $G$-action. In the rest of this section we derive all the other statements concerning global rigidity of $G$-actions. % on a finite cover of a double-homogeneous space $K \backslash \HH / \Gamma$. 
%By Lemma \ref{conjugacy} we get the proof of Theorem \ref{G-action}. 

%\begin{lemma} \label{conjugacy} Let $G$ and $A$ be as in the assumptions of Theorem \ref{G-action}. If $A$ is  conjugate to an algebraic action on $K \backslash H / \Gamma$ by a   $C^\infty$ diffeomorphism $\phi$, then the $G-$action is conjugate to an algebraic action  $K \backslash H / \Gamma$ by the same map $\phi$.
%\end{lemma}

\color{black}

 \subsection{The $\G$ action in Theorem \ref{G-action} preserves volume}\label{volume}

\begin{lemma}    \label{lem:ax+b}
Let  $X$ \color{black} be a manifold with a $C^1$-action of the  subgroup $ A \color{black} \ltimes \color{black} U$ on  $X$\color{black}, where $A$ is the maximal split Cartan subgroup in $\G$ and $U$ is a one parameter unipotent subgroup $U$ in $G$.  Suppose $A$ preserves a volume form $\omega$, and that $A$ has a dense set of recurrent orbits.  Then  $ A \color{black} \ltimes \color{black} U$ preserves $\omega$.
\end{lemma}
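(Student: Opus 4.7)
The plan is to show that the one-parameter subgroup $U$ already preserves $\omega$, which combined with the hypothesis on $A$ gives preservation by the semidirect product $A \rtimes U$. Since $u^{t*}\omega$ is again a volume form, write $u^{t*}\omega = f_t\,\omega$ for a continuous positive function $f_t : X \to \R_+$, so that the task reduces to showing $f_t \equiv 1$. Letting $V_U$ denote the smooth vector field generating $U$ and $\phi := \mathrm{div}_\omega V_U$, one has $\tfrac{d}{dt}\big|_{t=0} f_t = \phi$; via the cocycle relation $f_{t+s}(x) = f_t(u^s x) f_s(x)$, it suffices to show $\phi \equiv 0$.

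The first step is to extract a twisted equivariance of $\phi$ under $A$. Because $A \rtimes U \subset \G$, the conjugation action of $A$ on $U$ is given by $a u^t a^{-1} = u^{\chi(a) t}$ for a positive character $\chi : A \to \R_+$ determined by the root along which $U$ lies; since $U$ is nontrivial, $\chi$ is a nontrivial character and hence takes values both arbitrarily small and arbitrarily large. Applying $a^*$ to the identity $u^{t*}\omega = f_t\omega$ and using $a^*\omega = \omega$ yields $f_t(ax) = f_{\chi(a)^{-1} t}(x)$; differentiating in $t$ at the origin gives
\[ \phi(ax) = \chi(a)^{-1}\phi(x), \qquad a \in A,\ x \in X. \]

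The second step is a sup/inf argument on compact orbit closures, where recurrence enters. Fix an $A$-recurrent $x \in X$ and let $K := \overline{A \cdot x}$, an $A$-invariant subset of $X$ containing $x$ that is compact in the setting of the applications (otherwise one replaces maxima by suprema and argues with limiting sequences). Let $M = \max_K \phi$, attained at some $y \in K$. For any $a \in A$ with $\chi(a) < 1$, the point $ay$ lies in $K$ and satisfies $\phi(ay) = \chi(a)^{-1} M$, which would exceed $M$ whenever $M > 0$, contradicting maximality; hence $M \leq 0$. The symmetric argument applied to $\min_K \phi$ gives $\min_K \phi \geq 0$, so $\phi \equiv 0$ on $K$ and in particular $\phi(x) = 0$. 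Running this over the dense set of $A$-recurrent points and invoking continuity of $\phi$ yields $\phi \equiv 0$ on all of $X$.

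The main obstacle, such as it is, is really just the bookkeeping that ties the equivariance $\phi(ax) = \chi(a)^{-1}\phi(x)$ to the recurrence hypothesis: one must ensure that $\chi$ takes values on both sides of $1$ (automatic from the root-theoretic structure of $A \rtimes U$) and that the orbit closure of a recurrent point is genuinely an $A$-invariant compact set to which the extremal argument can be applied. Once $\phi \equiv 0$, the cocycle relation and $f_0 \equiv 1$ force $f_t \equiv 1$ for all $t$, so $u^{t*}\omega = \omega$, completing the proof that $A \rtimes U$ preserves $\omega$.
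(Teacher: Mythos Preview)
Your proof is correct and follows the same strategy as the paper's: introduce the divergence $\phi=\mathrm{div}_\omega V_U$ (the paper's $h=\partial_s\log\mathrm{Jac}\,u_s$), derive the twisted equivariance $\phi(ax)=\chi(a)^{-1}\phi(x)$ from the commutation relation $au^ta^{-1}=u^{\chi(a)t}$, and conclude $\phi\equiv 0$ on a dense set. The only difference is in the last step: the paper argues directly that at a recurrent point $x$, taking $a_n x\to x$ gives $\phi(x)=\lim\phi(a_nx)=\lim\chi(a_n)^{-1}\phi(x)=0$, whereas your max/min argument on $\overline{A\cdot x}$ effectively trades the recurrence hypothesis for compactness of the orbit closure. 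In the intended application $X$ is compact, so the two are interchangeable; the paper's version is slightly more economical since it uses recurrence as stated rather than invoking compactness.
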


\begin{proof}
Let $u_s$ be the one-parameter group giving $U$, and $a_t$ the one generating $A$.  Then we have the commutation relation $  u_{s e^t} \circ a_t  =a_t \circ u_s$ - possibly after reparametrization of $a_t$. 
Let $g^s (x) = \log Jac \cdot u_s$.  Then $g:   X \color{black} \times \R\rightarrow \R$ is $C^1$, and satisfies the relation 
$$     g^{s \cdot e^t} \circ a_t =  g^s.$$
Now consider the derivative $h := \frac{\partial}{\partial s} g^s$.   Then
$$ e^t \cdot \left( h  \circ a_t \right)= h .$$
Then $h = 0$ along any recurrent orbit.  As the latter form a dense set in  $X$ \color{black}, $h \equiv 0$ on  $X$ \color{black}.  
Since $h = \frac{\partial}{\partial s} g^s$, the fundamental theorem of calculus implies that $g^s \equiv g^0 = 1$.  Hence the $u_s$ preserve $\omega$.
\end{proof}

\begin{proposition}   \label{prop:vol preserved}
Let $\G$ be a semisimple Lie group of the noncompact type, and $\rho: \G\to \Diff( X \color{black})$ an action on a manifold $X$.  Suppose that a regular one-parameter subgroup $A = \{a_t\}$ of the maximal split Cartan preserves a volume form $\omega$ on  $X$ \color{black} and has a dense set of recurrent orbits.  Then $\G$ preserves $\omega$.
\end{proposition}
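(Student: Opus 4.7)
The plan is to bootstrap from the previous lemma: since Lemma~\ref{lem:ax+b} already shows that $A \rtimes U$ preserves $\omega$ for every one-parameter unipotent $U$ on which $A$ acts nontrivially by scaling, it suffices to find enough such $U$ to generate $G$. The regularity assumption on $A$ is exactly what guarantees that $A$ scales every restricted root space nontrivially, and a semisimple Lie group of noncompact type is generated by the exponentials of its restricted root spaces.

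In detail, let $A_{\max}$ be the maximal split Cartan of $\G$ containing $A$, let $\mf a$ and $\mf a_{\max}$ be the corresponding Lie algebras, and let $\Sigma$ be the set of restricted roots of $\mf g$ with respect to $\mf a_{\max}$. Fix a generator $H \in \mf a$ of $\{a_t\}$; regularity means $\alpha(H) \neq 0$ for every $\alpha \in \Sigma$. For each $\alpha \in \Sigma$ and each nonzero $X \in \mf g_\alpha$, the one-parameter unipotent subgroup $U_X = \exp(\R X)$ is normalized by $A$, and the conjugation relation is
\begin{equation*}
a_t \exp(sX) a_{-t} \;=\; \exp\!\bigl(s\, e^{\alpha(H)t}\, X\bigr).
\end{equation*}
After reparametrizing $t$ so that $\alpha(H)t$ becomes the new time parameter (and, if $\alpha(H) < 0$, reversing the direction of $A$), this is precisely the commutation $u_{se^t}\circ a_t = a_t \circ u_s$ appearing in Lemma~\ref{lem:ax+b}. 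Since $A$ preserves $\omega$ and still has a dense set of recurrent orbits under this reparametrization, Lemma~\ref{lem:ax+b} applies and $U_X$ preserves $\omega$.

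It remains to assemble these one-parameter subgroups into all of $\G$. Varying $X$ through $\mf g_\alpha$ we obtain that the entire root subgroup $U_\alpha = \exp(\mf g_\alpha \oplus \mf g_{2\alpha})$ preserves $\omega$, since it is nilpotent and generated by the one-parameter subgroups $U_X$. The Lie subalgebra of $\mf g$ generated by $\bigcup_{\alpha \in \Sigma} \mf g_\alpha$ contains each $[\mf g_\alpha, \mf g_{-\alpha}]$, and for $\G$ semisimple of noncompact type these brackets together with the root spaces span all of $\mf g$. Consequently the subgroup generated by $\{U_\alpha : \alpha \in \Sigma\}$ contains a neighborhood of the identity, and hence equals the identity component of $\G$. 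Since each $U_\alpha$ preserves $\omega$, so does $\G$.

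The only substantive obstacle I anticipate is the Lie-algebraic fact that the restricted root spaces generate $\mf g$ for semisimple $\G$ of noncompact type; this is standard but should be cited carefully (and is where the noncompact-type hypothesis enters, since a compact simple factor admits no restricted roots and hence cannot be reached from $U_\alpha$'s). The rest of the argument is a direct iteration of Lemma~\ref{lem:ax+b} across the restricted root system, with the regularity of $A$ supplying the nontrivial scaling needed at each step.
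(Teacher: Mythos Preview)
Your proof is correct and follows essentially the same route as the paper: apply Lemma~\ref{lem:ax+b} to each root subgroup $U_\alpha$ (the regularity of $A$ making the skew product $A\rtimes U_\alpha$ nontrivial), then use that the root subgroups generate $\G$. The paper's argument is a three-line version of yours; your only minor looseness is the claim that the brackets $[\mf g_\alpha,\mf g_{-\alpha}]$ together with the root spaces \emph{span} $\mf g$ (they may miss $\mf m$), but the subalgebra they \emph{generate} is an ideal meeting every noncompact simple factor and hence equals $\mf g$, which is the fact actually needed.
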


\begin{proof}
It is well-known that  $\G$ as above is generated by finitely many unipotent root subgroups $U_i$ which form   skew product $A \color{black} \ltimes \color{black} U_i$.  Since $A$ is regular, the skew products are non-trivial.  Hence the  $A \color{black} \ltimes \color{black} U_i$ preserve $\omega$ by Lemma \ref{lem:ax+b}.
Therefore  $\G$ preserves $\omega$.
\end{proof}

%\todo{Comment(7) Fix semidirect products throughout}

%\noindent {\em Note:}  A similar argument should work if we know that some measure is quasi-invariant under the unipotent flow or flows. 

%We check that the abelian subaction of G satisfies conditions 1) and 2) of Theorem \ref{abelian}.

{\color{black}
\subsection{Measurable conformal structure}\label{cond2}

As in Theorem \ref{G-action} we let $\G$ be a real semisimple group  such that every simple factor of $\G$ has real rank at least 2. \color{black} In particular, $G$ has no compact factors.\color{black} 

The following is a foundational theorem in the study of higher rank Lie group actions, and actions of their lattices. 
%It will give that the action of a split Cartan subgroup is  Oseledets \color{black} conformal.  %(\color{teal}measurably conformal or  conformal? measurably conformal is really confusing, see also Lemma 7.4).\color{black} 

\color{black}
While this fundamental result is originally due to Zimmer, we state and use here the version which is due to Fisher and Hitchman %(\cite{FM}, Theorem 1.4).
(\cite{FH}, Theorem 1.3). Their result is for when the group $G$ is a semisimple Lie group with no compact factors and property (T) of Kazhdan. If $G$ is as in Theorem  \ref{G-action} then $G$ has Kazhdan's property (T) \cite[Section 1.6]{Bekka-Harpe}, and has no compact factors, hence it satisfies the assumptions of Theorem \ref{Zimmer}.

Let $\mu$ be a probability measure on the space $X$ and let $\rho$ be a $G$-action on $X$, preserving $\mu$. 
Following \cite{FH}, we say a cocycle $\beta: \G\times X\to GL(n, \mathbb R)$ over $\rho$ is $L^2$ if for any compact subset $K\subset G$, the function $\sup _{g\in K}\ln ^+\|\beta(g, x)\|$, $x\in X$, is in $L^2(X, \mu)$.\color{black}\,  Note that the derivative cocycle of a smooth $\mu$-preserving action $\rho$ on a compact smooth manifold $X$ is in $L^2(X, \mu)$.

\begin{theorem}[\cite{Z}, \cite{FM}, \cite{FH}]  \label{Zimmer}(Zimmer Cocycle Superrigidity Theorem)  Let $G$  be a real semisimple Lie group
with no compact factors and with property (T) of Kazhdan. Given an $L^2(X, \mu)$ cocycle $\beta: \G\times X\to GL(n, \mathbb R)$ over a measure preserving ergodic  $\G$-action $\rho$ on $X$, there exists:
\begin{itemize}
\item a measurable map $\psi: X\to GL(n, \mathbb R)$,
\item a continuous homomorphism $\pi: \G\to  GL(n, \mathbb R)$, and
\item a cocycle $c: G\times X\to SO(n)$ such that $c(G, \cdot)$ centralizes $\pi(G)$,
\end{itemize}
such that:
$$\beta(g, x)= \psi(\rho(g, x))\pi(g)c(g,x)\psi(x)^{-1}.$$
\end{theorem}

%\color{teal}
%Using of representation theory for real semisimple Lie group, another way to state Theorem \ref{Zimmer} is that . As a consequence we have, 
%\color{black}

\bl \label{lem: super}
Given the $\G$-action $\rho$ on $X$ as in Theorem \ref{G-action}, the $A$ action $\rho|_A$ is continuously Oseledets  conformal.
\el

%\todo{comment(8) added explanation for why the matrices are of the block-diagonal form instead of Jordan blocks}
\begin{proof}
Given the $\G$-action $\rho$ on $X$ as in Theorem \ref{G-action}, by applying Theorem \ref{Zimmer}, we obtain a measurable frame in which the derivative cocycle $D_x\rho(g)$ of the action $\rho$ is constant \color{black} up to a compact noise. The representation $\pi$ given by Theorem \ref{Zimmer} is a finite dimensional representation, it takes semisimple elements to semisimple elements. So we can decompose $\R^n$ as $\pi|_A$ (common) invariant eigenspaces, and this decomposition is preserved by the compact-group valued cocycle $c$ (since $c$ commutes with $\pi$). This gives a block-wise diagonal form for $\pi (g) c(g, \cdot)$ for all $g\in A$, which by application of $\psi$  gives that the derivative cocycle when restricted to $A$ has {\it measurably} 
%Then the Lyapunov spectrum  and Oseledets spaces for the totally \color{black} partially hyperbolic\color{black}\,  action $\rho|_A: A\times X\to X$ are (measurably) identified with those of \st{the representation $\pi|_A$. This implies that} the representation  
a block-wise diagonal form $\diag( e^{\chi_1}O(d_1), \dots, e^{\chi_r}O(d_r))$ where $O(d_i)$s are orthogonal groups  of dimension $d_i$. This gives the Lyapunov spectrum  and Oseledets spaces for the $A$-action.  The invariant subspaces $V_i$  carry a metric which  via $\psi$ defines a measurable metric $\|\cdot\|_i$ on each $E^{\chi_i}$ such that for $v\in E^{\chi_i}$ and $a\in A$ we have: $\|a_*v\|_i=e^{\chi_i(a)} \|v\|_i$. \color{black}
%\color{black} explain more \color{black}
\end{proof}
%\bl \label{lem: super}
%Superrigidity for cocycles, in particular we could identify weights with coarse Lypunov functionals. Moreover there is an $A$ action invariant measurable conformal structure on each Oseledec space.
%\el
\color{black}

\subsection{Super accessibility\color{black}}\label{cond1}
%(\color{teal}the third time we define accessibility in the paper...\color{black})
%Recall that by Definition \ref{def: totallyPH}

By assumptions of Theorem \ref{G-action} we have a maximal Cartan subgroup $A\subset G$ which acts as a \color{black} totally partially hyperbolic action on $X$ with respect to some $E^c$.  The structure for such actions is described in Section \ref{PHactions}. In particular, Lyapunov functionals in the $E^c$ direction are 0. As before   $\Delta$ denotes the set of non-zero coarse Lyapunov functionals.  Moreover, the $G$-action in Theorem  \ref{G-action} is accessible, which by Definition \ref{def: totallyPH} means that some $a\in A$ is an accessible partially hyperbolic map.   
%\begin{definition}
%Fix some $\Omega \subset \Delta$. If $x, y \in X$, we say $y$ is {\rm $\Omega$-accessible from $x$} if there exists $x = z_0, \dots, z_n = y$ such that for every $i = 0,\dots,n-1$ there exists $\lambda_i \in \Omega$ such that $z_{i+1} \in W^{\lambda_i}(z_i)$. The {\rm $\Omega$-accessibility class of $x$} is the set of all points $\Omega$-accessible from $x$, and is denoted $\Acc_{\Omega}(x)$. The action is called {\rm $\Omega$-accessible} if $\Acc_{\Omega}(x) = X$ for some (equivalently, for every) $x \in X$.
%\end{definition}
The goal of this section is to derive super accessibility  of the $A$-action, from the fact that it comes from the $G$-action. 

\begin{theorem}\label{thm: sub acc}
Let $G \curvearrowright X$ be an action satisfying the assumptions of Theorem \ref{G-action}. Then for any $\la\in \Delta$, there exists an accessible element in $\ker \la$. 
%$$\Delta(\hat{\lambda})$-accessible.
\end{theorem}

\begin{proof}
If the accessible element $a$ of the $G$-action is in $\ker \la$ for some $\la\in \Delta$, then for any $a'$ close to $a$, the stable (resp. unstable) distribution of $a$ is contained in stable (resp. unstable) distribution of $a'$, which implies accessibility of $a'$. So without loss of generality we may assume that $a$ is not in any $\ker \la$. This means that any two points on the manifold can be connected by a broken path whose legs are in leaves of foliations $W^\la$, $\la\in \Delta$.

Now we fix $\la\in \Delta$. Let  $\Delta(\hat \lambda)$ denote the set of coarse Lyapunov functionals \emph{not proportional} to $\lambda$. We want to show that any two points on a leaf of the foliation $W^\la$ can be connected by a broken path whose legs are contained in the leaves of $W^\beta$, where $\beta\in \Delta(\hat \lambda)$. If this holds, than any generic singular element in $\ker \la$ is necessarily accessible since its stable and unstable foliations are subfoliated by $W^\beta$, $\beta\in \Delta(\hat \lambda)$. To prove this we use the structure of $G$.

Recall that the Weyl group $N_{G}(A)/C_{G}(A)$  (where $N_{G}(A)$ is the normalizer of $A$) acts on the weights and roots. Denote by $w_\beta$ the reflection about the hyperplane perpendicular to the root $\beta$. It is well known that the Weyl group action on weights and roots is generated by $w_\beta$'s. Moreover, we have the following fact from the representation theory of semisimple Lie groups. 
\color{black}

\bl \label{lem0}Let $G$ be an $\R$-semisimple Lie group such that every simple factor has $\R$-rank at least 2, and $\rho : G \to E$ be a representation of $G$. Then for any nonzero weight $\lambda$ of $\rho$, there exists a root $\beta$, and weight $\gamma$ not proportional to $\lambda$ such that  $w_\beta(\gamma)=\lambda$.
\el 

\begin{proof}
    Since each simple factor has at least two, and $\ker \lambda$ is a hyperplane in $A$, there exists a simple factor $G_i$ such that if $\pi_i : G \to G_i$ is the factor map, $\pi_i(\ker\lambda)$ is codimension 1 in $A_i := \pi_i(A)$. By Proposition \ref{prop:weyl-standard}, there exists a root $\beta$ of $G_i$ such that $w_\beta(\ker\lambda|_{A_i})\not=  \ker \lambda|_{A_i}$. Since $\beta$ is also a root of $G$, and $w_\beta(\lambda)|_{A_i} = w_\beta(\lambda|_{A_i})$, it follows that $\gamma := w_\beta(\lambda)$ is not proportional to $\lambda$. Since $w_\beta$ is a reflection, $\lambda = w_\beta(\gamma)$
\end{proof}

%We return now to the proof of Theorem \ref{thm: sub acc}. 

Let $\lambda\in \Delta$ be fixed,  and consider $\beta, \gamma$ which satisfy the previous lemma. The following lemma can be found for example in \cite[Lemma 1.3]{Deodhar78}. 

\bl \label{l2} There is a representative of $w_\beta$ in $N_{\G}(A)$ can be written as $u_\beta\cdot v_{-\beta}\cdot u_\beta$, where $u_\beta$ and $v_{-\beta}$ are in the unipotent subgroups corresponding to the root $\beta$ and $-\beta$ respectively.
\el

By using the previous lemma we show the following: 

\bl \label{l1} For any representative $w$ of $w_\beta$, $w(W^\gamma)=W^\lambda$.
\el 
\begin{proof}Replacing $w$ by $w^{-1}$, it is enough to show $wW^\gamma\subset W^\lambda$. By definition of coarse Lyapunov foliation, $W^\gamma=\cap_{a: \gamma(a)<0}W^s_a, ~W^\lambda=\cap_{a: \lambda(a)<0}W^s_a$. Therefore we only need to prove the following properties:
\begin{itemize}
\item[(1)] $w(W^\gamma)$ is an $A-$invariant foliation.
\item[(2)] for any $a$ such that $\lambda(a)<0$, $a$ uniformly contracts $w(W^\gamma)$.
\end{itemize}

Property (1) follows from the fact  that for any $a\in A$, 
\begin{equation}aw(W^\gamma)=w(w^{-1}aw)(W^\gamma)=w(w_\beta(a))(W^\gamma)=w(W^\gamma).
\end{equation} 

To show  (2) observe that by the last equation we know that $a$ contracts $w(W^\gamma)$ if and only if $w_\beta(a)$ contracts $W^\gamma$, i.e. $\gamma(w_\beta(a))<0$, notice that $w_\beta(\lambda)=\gamma$ and $w_\beta$ is an isometry, so $$\gamma(w_\beta(a))=w_\beta(\lambda)(w_\beta(a))=\lambda(a)$$which implies (2).
\end{proof}
By Lemmas \ref{l2} and \ref{l1}, we know that any two points $x,y$ on the same $W^\lambda$ leaf can be linked by finitely many $W^\beta,W^{-\beta}, W^\gamma$ paths. Since $\pm\beta,\gamma\in \Delta(\hat{\lambda})$, we finish the proof.
\end{proof}

\color{black}
\subsection{Conclusion of the proof of Theorem \ref{G-action}}
\label{sec:main-Gproof}

Let  $\rho: G\to \Diff^\infty( X \color{black})$ be the $G$-action as in  Theorem \ref{G-action}. Then there is some split Cartan subgroup $A$ of $\G$ such that $\rho(A)$ is a totally \color{black} partially hyperbolic \color{black} volume preserving action. Due to Section \ref{volume} the $\G$ action $\rho$ preserves volume as well. Then Sections \ref{cond2} and \ref{cond1} imply that the action $\rho(A)$ satisfies the conditions of Theorem \ref{abelian}. Therefore there is a homogeneous space $ K \backslash H/\Gamma$ such that (a finite cover of) $\rho(A)$ is conjugate to a bi-homogeneous action on  $ K \backslash H/\Gamma$. Thus, it is clear that we may lift the $A$-action to a homogeneous action on $H/\Gamma$. By applying the lifting lemma (see Theorem \ref{lifting}), the whole $\rho(G)$ action lifts to a $G$-action on $H/\Gamma$ such that the maximal split Cartan subgroup $A$ of $G$ acts homogeneously, i.e. by left multiplication. 

We wish to show that the $G$ action is homogeneous, not just its restriction to $A$. For this, we use Theorem \ref{thm:affine-centralizer}, noting that since every simple factor of $G$ has rank at least two, for every root $\chi$ of $G$, the action of $\ker \chi \subset A$ is ergodic. Since $A$ is an $\R$-split Cartan subgroup, the action of $\ker \chi$ is $\R$-semisimple. Hence Theorem \ref{thm:affine-centralizer} applies, and since $U_\chi$, the root subgroup of $G$ corresponding to $\chi$, commutes with $\ker \chi$ and is $C^r$. So, by Theorem \ref{thm:affine-centralizer}, $U_\chi$ acts by affine maps.

Finally, observe that since the $U_\chi$ generate $G$ as $\chi$ varies over all roots of $G$, the action of $G$ is affine. Hence any conjugate of $A$ will also be homogeneous, since the conjugation of a homogeneous action by an affine transformation is also homogeneous. Since $G$ is semisimple, the conjugates of $A$ generate $G$ \color{black} and the action of $G$ on $H /\Gamma$ is by translations.

\subsection{Proof of Corollary \ref{G-Anosov}\color{black}} %SHOW ANOSOV G IS ACCESSIBLE, therefore satisfying 
\color{black}It suffices to verify that a totally Anosov $G$-action that satisfies 
assumptions of Corollary \ref{G-Anosov} is accessible as a totally partially hyperbolic $G$-action, then Corollary \ref{G-Anosov} is just a consequence of 
Theorem \ref{G-action}. 

Consider $A$ in Corollary \ref{G-Anosov} which is a split Cartan subgroup, the roots of the Lie algebra $\mathfrak{g}$ decompose into root spaces. The stable and unstable root spaces generate the entire Lie algebra \(\mathfrak{g}\). Consequently, the stable and unstable distributions \(E^s(a)\) and \(E^u(a)\) span the tangent space of the $G$-orbit at every point \(x \in X\), in particular, for any two points $x,y$ such that $y$ is in the $C_\G(A)$-orbit of $x$, $y$ can be connected through an $su$-path (even within $G$-orbit), which completes the proof.

\color{black}

{%\color{black}
\subsection{Proof of Corollary \ref{cor:semisimple}\color{black}}
\label{sec:cor-proof} %\color{black} For Anosov or PH? \color{black}
%\todo{comment(9) Proof needs a fix}
We must show that the assumptions of Corollary \ref{G-Anosov} \color{black} are satisfied when either of the assumptions of Corollary \ref{cor:semisimple} are satisfied. Assume that $\mc J(\mc H)$ has dense image \color{black} in an arbitrary given $\R$-split Cartan subgroup $A$ of $G$. Recall that an element $g\in G$ is called a hyperbolic element for $G\curvearrowright X$ if it satisfies Definition \ref{def: hyp elem}. In particular for $g$ and an arbirary conjugacy $hgh^{-1}$ of $g$, $g$ is a hyperbolic element if and only if of $hgh^{-1}$ is a hyperbolic element. Moreover we have the following useful lemma.
\begin{lemma}\label{lem: dense hyp imp tot Ans}
Let $A\subset G$ be an arbitrary $\R$-split Cartan subgroup, and let $G\curvearrowright X$ be a $C^\infty$ action. If there is a dense subset $J\subset A$ formed by hyperbolic elements, then $G\curvearrowright X$ is a totally Anosov action.
\end{lemma}
\begin{proof}%since for a fixed Cartan subgroup of $G$, there is a dense subset which is conjugate to a hyperbolic element. 
Since an open dense subset $\mc R$ of regular elements $a$ in $A$ satisfies that $\Lie(C_G(A))=Z_\mathfrak{g}(a) = \mf g^0_a$, it follows that any element $a$ in the dense intersection set $\mc R\cap \mc J$ in $A$ is partially hyperbolic respect to the distribution tangent to the $C_G(A)-$orbit. Then Lemma  \ref{lem: dense hyp imp tot Ans} follows Definition \ref{def: tot Ans}.
\end{proof}\color{black}
We claim that if the Jordan\color{black}-Chevalley \color{black} decomposition of $g\in G$ is $g = kan$, and $g$ is a hyperbolic element for $G \curvearrowright  X \color{black}$, then $a$ is a hyperbolic element for $G\curvearrowright  X$ as well. This implies the result, \color{black}since for the Jordan-Chevalley decomposition $g=kan$ of a hyperbolic element $g$, $a$ is also a hyperbolic element. Then  $\mc J(g)$ is a hyperbolic element for $G\curvearrowright X$ as well since it is conjugate to $a$. Then the image of $\mc J(\mc H)$ is a dense subset of a Weyl chamber of $A$ formed by  hyperbolic elements. Modulo the action of the Weyl group, we get a dense subset of $A$ formed by hyperbolic elements of $G\curvearrowright X$. Then  by Lemma \ref{lem: dense hyp imp tot Ans}, $G\curvearrowright X$ is a totally Anosov $C^\infty$ action. Then Corollary \ref{cor:semisimple} follows Corollary \ref{G-Anosov}.
\color{black}
\begin{proof}[Proof of the claim]
Indeed, we may choose a Riemannian metric on  $X$ \color{black} which is invariant under $k$, since $k$ belongs to a compact subgroup. Furthermore, the condition that $g$ is hyperbolic for $G \curvearrowright  X \color{black}$ \color{black} implies $Dg$ preserves a dominated splitting $E^s_g\oplus E^0_g\oplus E^u_g$. Since commuting diffeomorphisms share invariant dominated splitting, \color{black}see e.g. Lemma 13 of \cite{DWX}, %is exactly that it is normally hyperbolic with respect to the orbit foliation of $\exp(E^0_g)$. In particular, the bundles $E^s_g$ and $E^u_g$ are unique
\color{black} and since $k,a$ and $n$ commute with $g$, each of them must preserve the subbundles as well.  Hence, $\norm{Dg|_{E^s_g}(x)} = \norm{D(an)|_{E^s_g}(x)}$. Since $E^s_g$ is uniformly contracting under $g$, there exists $C> 0$ and $0 < \lambda < 1$ such that $\norm{Dg^k|_{E^s_g}(x)} \le C\lambda^k$. We now appeal to the following

\begin{lemma}
\label{lem:nilpotent-exponents}
Let $n \in G$ be an $\ad$-unipotent element. Then for every $\ve >0$, there exists some $C' > 0$ such that $C'e^{-k\ve} \le \norm{Dn^k(x)} \le C'e^{k\ve}$.
\end{lemma}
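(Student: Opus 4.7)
The plan is to use a Jacobson--Morozov conjugation trick to express $n^k$ as a conjugate of the single element $n$ by an element of a suitable one-parameter subgroup, thereby converting the potentially exponential iterated derivative into a polynomial bound via the submultiplicative derivative estimate for one-parameter subgroups.

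If $n = e$ the claim is trivial, so assume $n \neq e$. Let $N := \log n$, which is a nonzero nilpotent element of the real semisimple Lie algebra $\mf g$ since $\Ad(n)$ is unipotent. By the Jacobson--Morozov theorem, there exists an $\mf{sl}_2$-triple $(N, H, F)$ in $\mf g$ with $[H, N] = 2N$. Setting $a_t := \exp(tH) \in \G$ and using $\Ad(a_t) N = e^{2t} N$, we obtain the intertwining identity
\[
a_t\, n\, a_t^{-1} \;=\; \exp\!\bigl(\Ad(a_t) N\bigr) \;=\; \exp(e^{2t} N).
\]
Declaring $n^s := \exp(sN)$ for $s \in \R$, this reads $a_t\, n\, a_t^{-1} = n^{e^{2t}}$. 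In particular, for each positive integer $k$, choosing $t_k := \tfrac12 \log k$ yields $n^k = a_{t_k}\, n\, a_{-t_k}$.

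The chain rule now gives
\[
\|Dn^k(x)\| \;\le\; \|Da_{t_k}(y)\|\cdot \|Dn(z)\| \cdot \|Da_{-t_k}(x)\|,
\]
where $z = a_{-t_k}(x)$ and $y = n(z)$. The middle factor is uniformly bounded by some $L > 0$ since the action is smooth and $X$ is compact. For the outer factors, a standard submultiplicativity argument for smooth actions of Lie groups on compact manifolds (continuity of $g \mapsto \sup_x \|D\alpha(g)(x)\|$ at $e$ combined with $\|D\alpha(g_1 g_2)\| \le \|D\alpha(g_1)\|\cdot\|D\alpha(g_2)\|$) yields $\|Da_s\| \le C_1 e^{C_2 |s|}$; plugging in $s = \pm\tfrac12\log k$ converts this into $C_1\, k^{C_2/2}$, so altogether $\|Dn^k\| \le LC_1^2\, k^{C_2}$. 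This is polynomial in $k$ and hence bounded by $C' e^{k\ve}$ for any $\ve > 0$. For the matching lower bound, the same argument applied to $n^{-1}$ (also $\ad$-unipotent) controls $\|Dn^{-k}\|$, and the identity $Dn^{-k}(n^k(x)) \circ Dn^k(x) = \id$ gives $\|Dn^k(x)\| \ge \|Dn^{-k}(n^k(x))\|^{-1} \ge C' e^{-k\ve}$.

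The only step requiring real care is the Jacobson--Morozov setup, which exploits the semisimplicity of $\G$ crucially; without it, one would be stuck with the naive bound $\|Dn^k\|\le L^k$, which is not subexponential. Once the $\mf{sl}_2$-triple is in place, the remaining estimates are standard facts about smooth Lie group actions on compact manifolds.
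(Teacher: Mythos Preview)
Your proof is correct and rests on the same key idea as the paper's: the Jacobson--Morozov renormalization that conjugates $n$ to a power of itself. The executions differ, however. The paper introduces the upper Lyapunov exponent $\chi(v)=\limsup_k \tfrac1k\log\|Dn^k(v)\|$, observes it is bounded on $TX$, and then uses a single renormalizing element $b$ with $b^{-1}nb=n^2$ to deduce $\chi(Db(v))=2\chi(v)$; boundedness forces $\chi\equiv 0$, and the lower bound follows symmetrically via $\liminf$. You instead use the full one-parameter family $a_t n a_t^{-1}=n^{e^{2t}}$ to write $n^k=a_{t_k}\,n\,a_{-t_k}$ with $t_k=\tfrac12\log k$, and then bound the outer factors by the standard submultiplicative estimate $\|Da_s\|\le C_1 e^{C_2|s|}$, which for $|s|=\tfrac12\log k$ becomes $C_1 k^{C_2/2}$. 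This yields an explicit polynomial bound $\|Dn^k\|\le LC_1^2 k^{C_2}$, which is strictly stronger than the subexponential statement and bypasses the Lyapunov-exponent machinery entirely. Your route is more direct and more quantitative; the paper's route is slightly slicker in that it avoids tracking any constants. One cosmetic point: your lower bound gives $(C')^{-1}e^{-k\ve}$ rather than $C'e^{-k\ve}$, so strictly speaking the two inequalities in the lemma need separate constants, but this is a harmless relabeling and the paper's formulation has the same issue.
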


Let us apply the lemma before proceeding with the proof. Notice that

%\todo{comment(10) provide more explanation for the first equality in equation below}

\[ \norm{Da^k|_{E^s_g}(x)} = \norm{Dn^{-k} \of Dg^k|_{E^s_g}(x)} \le C'e^{k\ve} \cdot Ce^{k\lambda} = CC' e^{(\lambda + \ve)k}. \]

Since $\ve$ is arbitrary, we may choose it to be $-\lambda/2$, so that $\lambda + \ve$ is still negative. Therefore, $a$ contracts  the bundle $E^s_g$ exponentially. A symmetric argument works for $E^u_g$ \color{black}, therefore $E^{s/u}_g=E^{s/u}_a$. Since $\mf g_g^0=\mf g_a^0$ (by Lemma \ref{lem:nilpotent-exponents} $g=kan$ and $Dn$ has subexponential growth along $\mf g_g^0$, so $\mf g_g^0=\mf g_a^0$), we have $E^g_0=E^a_0$, which completes the proof of Corollary \ref{cor:semisimple} under the first assumption.\color{black}
\end{proof}
\begin{proof}[Proof of Lemma \ref{lem:nilpotent-exponents}]
Fix $v \in T X \color{black}$, and let

\[ \chi(v) = \limsup_{k\to \infty} \frac{1}{k} \log \norm{Dn^k(v)} \]

denote the (upper) Lyapunov exponent of $n$ on the vector $v$. Observe that \[\chi(v) \le \sup_{v \in T X \color{black}} \norm{Dn(v)}/\norm{v} < \infty,\] so $\chi$ is a bounded function on $T X \color{black}$. Furthermore, since $n$ is $\ad$-unipotent and belongs to a semisimple Lie group, if it is nontrivial, there exists a renormalizing element $b \in G$ such that $b^{-1}nb = n^2$ (this follows from the Jacobson-Morozov theorem). Then direct computation shows that

\begin{multline*}
    \chi(Db(v)) = \limsup_{k \to \infty} \frac{1}{k} \log \norm{Dn^k(Db(v))} = \limsup_{k \to\infty} \frac{1}{k} \log \norm{D(b^{-1}n^kb)(v)} \\ = 2\cdot \limsup_{k\to\infty} \frac{1}{2k} \log\norm{D(n^{2k})(v)} = 2\chi(v).
\end{multline*} 

Even though this is taken along the subsequence of even iterates, notice that $D(n^{2k+1}) = Dn^{2k} \of Dn = Dn^{2(k+1)} \of Dn^{-1}$. Since $Dn$ is uniformly bounded, the $\limsup$ along even terms and odd terms coincides. Since $\chi$ must remain bounded, we conclude that $\chi(v) = 0$ for all $v \in T X-\{0\} \color{black}$. 
\color{black}
%Thus we have that  for the cocycle $Dn$ over the dynamics $n$ on $X$,  $\chi(v)=0$ for all $v\in TX-\{0\}$, 
which  implies that $Dn$ has $0$ Lyapunov exponents for all invariant measures. 
By applying exactly the same arguments as in \cite{Kalinin}\, (which uses   subadditive sequences  \cite{Schr}), if a linear cocycle over a dynamical system  has $0$ Lyapunov exponent for all the invariant measures, then it has sub exponential growth. Hence $\|Dn^k\|$ has sub-exponential growth\color{black}. The other inequality follows from an identical analysis using $\liminf$ and the lower exponent.
\end{proof}

\color{black}
Now we turn to the second assumption, that $\mc H$ intersects the set of $\R$-semisimple elements in a dense set \color{black} $\mc S$. If $\mc J(\mc H)$ has dense image then we get the proof by the first assumption. Otherwise, assume that $\mc J(\mc H)$ does not intersect an open set $U\subset A$. By shrinking $U$ if necessary we may assume $U$ is formed by regular elements. Then by the definition of Jordan-Chevalley decomposition and the assumption that $\mc J(\mc H)\cap U$ is empty,  we know that any semisimple element $g\in G$ such that $\mc J(g)=a$ for some $a\in U$ cannot be in $\mc S$. 
%which shares the same spectrum (for $Ad_G$) as an element $a\in U$, cannot be in $\mc S$, since $\mc J(g)=a$. 
The set of such $g$ forms an open set in the $\R$-semisimple elements (since any element $g$ that shares the same spectrum, for $Ad_G$, as an element $a\in U$, satisfies $\mc J(g)=a$), which contradicts the second assumption of the Corollary. \color{black}

%We would like to prove it using 

%Let $\mc S$ denote the set of all such elements. Fix a Cartan subgroup $A \subset G$. Any $\R$-semisimple element of $G$ is conjugate to an element of $A$, and the map $\mc J|_{\mc S}$ is onto. In particular,  $\mc J(\mc H)$ must be dense if $\mc H \cap \mc S$ is dense in $\mc S$.
%(\color{black}Revise later use eigenvalues characterise semisimple elements.\color{black})
}
%\color{green} Now we use the result of Zeghib \cite{...} (see  Section...in (Kurt-Ralf paper) \cite{} )  to conclude that  
 %$\rho(\G)\subset Aff(\HH/\Gamma)$. It is an easy observation that a conjugate of a homogeneous action by an affine map is also a homogeneous action. By conjugating the maximal split Cartan subgroup $A$ in $\G$ by an automorphism of $\G$ we can get that any other Cartan subgroup of $\G$ also acts homogeneously i.e. by multiplication. Since Cartan subgroups generate $\G$, it follows that the lift of $\rho(\G)$ to $\HH/\Gamma$ is a homogeneous action.  
 
% \color{green} in Appendix?
 
% Step 1: Use the Zeghib result to conclude that an element of a coarse Lyapunov has a homogeneous graph, since it commutes with its corresponding kernel

%Step 2: Translate the homogeneous graph in the product of H/Lambda with
%itself so that it intersects the identity coset. Use the tangent space
%of the graph to define an automorphism of the Lie algebra of H (and
%hence of H). Conclude that the action is affine in H/Lambda

\color{black}

\subsection{Proof of Theorem \ref{thm:genuinely-HR}\color{black}}

Note that the assumptions of Theorem \ref{thm:genuinely-HR} are almost the same as those of Theorem \ref{G-action}. The key difference is that we assume that only some subgroup of an $\R$-split Cartan subgroup $A$ acts totally partially hyperbolically. We therefore need to show that under the genuinely higher-rank assumption, we still have the assumptions of Theorem \ref{abelian}.

We can still apply Zimmer's cocycle superrigidity theorem to obtain that the action of $B$ is measurably Oseledets conformal. To see that the action is super-accessible, we follow the same strategy as before. We first show that for every weight $\lambda$, there exists $a \in \ker \lambda|_B$ such that the accessibility classes of $a$ are saturated by $G$-orbits. Note that since $B$ is genuinely higher rank, for every weight $\lambda \in \Delta$, $\ker \lambda$ projects non-trivially onto each simple factor (since by the genuinely higher-rank condition, $\pi_i(B)$ has dimension at least 2, so $\pi_i(\ker \lambda|_B)$ has dimension at least 1). This means that if $U_a^+,U_a^- \subset G$ denote the expanding and contracting subgroups of some $a \in \ker \lambda|_B$, $U_a^{\pm} \cap G_i$ is a nontrivial subgroup. Since $G_i$ is simple, and $\langle U_a^+,U_a^-\rangle$, the group generated by $U_a^\pm$ must be all of $G$. It follows that $a$-accessibility classes are saturated by $G$-orbits.

Once $a$-accessibility classes are saturated by $G$-orbits, one may apply Lemmas \ref{lem0}-\ref{l1} to obtain that $G$-actions satisfying the assumptions of Theorem \ref{thm:genuinely-HR} are super accessible, replacing Lemma \ref{lem0} with Corollary \ref{cor:detected-weyl}.

\color{black}

\section{Consequences of assumptions in Theorem \ref{basic abelian} and Theorem \ref{abelian}%Strong accessibility implies \ref{FP1} and \ref{FP2}
}
\label{strong accessibility implies FA12}
%We show that strong accessibility assumption in Theorem \ref{abelian}, implies  the fundamental assumptions \ref{FP1} and \ref{fundamental2}}.

%The remaining sections are then dedicated to the description and study of leafwise homogeneous Anosov actions.

 \subsection{Assumptions of Theorem \ref{abelian} imply property \ref{FA1}}
 %ergodicity and topological transitivity of $\ker \lambda$ actions
 
%\todo{comment(11) Fixed the inconsistent use of Oseledets vs Oseledec}
% Now we show that the accessibility assumption in Theorem \ref{abelian}
%implies condition \ref{FP1}. %\st{in Theorem \ref{basic abelian}}. \color{black}
\color{black} We use here the result by Burns-Wilkinson \cite{BW} that a partially hyperbolic, volume preserving center bunched diffeomorphism is ergodic. Namely, by the super accessibility assumption, for any $\lambda\in \Delta$ there exists $a\in \ker\lambda$ such that $a$ acts on $X$ as an accessible partially hyperbolic diffeomorphism.  By Lemma \ref{Ec center bunching}, it is also center bunched. Thus by \cite{BW} we get ergodicity of this element with respect to invariant volume,  and therefore  topological transitivity of $\ker \lambda$ actions i.e. the property \ref{FA1} holds.

\color{black}

%\color{black} (UNIFY NOTATIONS $ker \lambda$ and $Ker \lambda$. DONE, Disheng) \color{black}

\subsection{Assumptions of Theorem \ref{abelian} imply additional regularity of Oseledets objects\color{black}}\label{regularity-metrics}

\color{black}

In this section we show that the regularity of the  measurable invariant objects (distributions  and metrics) in the assumptions of Theorem \ref{abelian}, can be improved to continuous, and even H\"older along invariant foliations. 
%given the assumptions in Theorem \ref{abelian}. 

We first do this for the Oseledets spaces corresponding to non-zero Lyapunov functionals. The final statement is Proposition \ref{Hoelder metric}.

%The following theorem is a version of results obtained in many papers by several authors, including ....  Our proof is functionally a collection of references to proofs of each aspect as stated.

Let $\mathbb R^k\curvearrowright X$ be a $C^\infty$  action 
 satisfying assumptions of Theorem \ref{abelian}. Recall that in Sections \ref{PHactions} and \ref{sec: acc} we established that totally partially hyperbolic actions with super accessibility property actually have {\it all} elements outside the walls $\ker \lambda$, $\lambda \in \Delta$, as well as all generic singular elements in the walls,  partially hyperbolic and accessible. Since the action preserves volume, all these elements are necessarily ergodic with respect to volume, and topologically transitive.  
 
The other  assumption is that the action is measurably Oseledets conformal, i.e. Oseledets spaces carry measurable conformal structure and measurable metric invariant under the action. Recall that for any structure over the action,  partial H\"olderness means that the structure is continuous on $X$ and in addition H\"older along the coarse Lyapunov foliations. Recall also that every coarse Lyapunov distribution $E_\la$ is a direct sum $\bigoplus_i E^{c_i\la}$ of Oseledets distributions, where $c_i$ are positive constants. 
%Then by applying Theorem \ref{thm: acc imp hder} to generic singular elements in $\rho$ we obtain the following corollary:
\color{black}
\bl \label{lem: hder conf str} \color{black} Every Oseledets distribution $E^{c_i\lambda}$  coincides with a partially H\"older one almost everywhere. \color{black} Within $E^{c_i\lambda}$ there is an $\mathbb R^k-$invariant \color{black} partially \color{black} H\"older 
invariant conformal structure \color{black}almost everywhere\color{black}.
\el
\begin{proof}
 In our setting for any coarse Lyapunov foliation $W^\lambda$, a generic singular element $a$ in $\ker \lambda$ is by assumption a partially hyperbolic diffeomorphism with center distribution \color{black} $E^c\oplus E_\lambda \oplus E_{-\lambda}$\color{black}, or \color{black}  $E^c\oplus E_\lambda$\color{black}. In the rest of the argument we assume the former, since it is the more general case. Moreover, the generic singular $a$ is accessible (by Lemma \ref{gen-sing-acc}, under the assumptions of Theorem \ref{abelian}) and by Lemma \ref{lem: a bunch}
 %\marginpar{or refer to Lemma 5.4 instead} 
 it is center bunched partially hyperbolic. Since it is a generic singular element, it has zero Lyapunov exponents in the direction of the center distribution \color{black} $E^c\oplus E_\lambda \oplus E_{-\lambda}$\color{black}  \, 
and non-zero exponents in all other directions.
%\bl \label{lem: a bunch}For any coarse Lyapunov foliation $W^\lambda$, any generic singular $a\in \ker \lambda$, $a$ is center bunched partially hyperbolic diffeomorphism.
%\el 

  Consider the H\"older continuous linear cocycle $Da|_{E_\lambda\oplus E_{-\lambda}}$ over $a$. Since $a\in \ker \lambda$, $Da|_{E_\lambda\oplus E_{-\lambda}}$ has zero Lyapunov exponents. By Oseledets theorem and by assumption on Oseledets conformality in Theorem \ref{abelian}, the Oseledets subspaces within $E_{\lambda}$ and $E_{-\lambda}$ are measurable $\mathbb R^k-$invariant subbundles, and within each of them there is an $\mathbb R^k-$invariant measurable conformal structure. Therefore by Theorem \ref{thm: acc imp hder} all the measurable invariant objects (Oseledets spaces and conformal structures on them) for the cocycle $Da|_{E_\lambda\oplus E_{-\lambda}}$ coincide with \color{black} partially \color{black} H\"older 
  invariant ones almost everywhere.
\end{proof}

\color{black}

We will use the following result from \cite{W} to obtain \color{black} partial H\"older regularity \color{black} of the measurable metric in the assumption of the main Theorem \ref{abelian}. 
\begin{theorem}\label{thm: livsic ph}\cite{W} 
Let $f:X\to X$ be a \color{black} $C^{2\color{black}}$ %strongly 
center bunched conservative partially hyperbolic diffeomorphism.  Assume that $f$ is accessible. Let $\phi : X \to \mathbb{R}$ be a H\"older continuous \color{black} (resp. partially H\"older) \color{black} function, then any measurable solution of the cohomological equation $$\phi=\Phi\circ f-\Phi+c$$ coincides with a H\"older continuous \color{black} (resp. partially H\"older) \color{black} solution almost everywhere. \end{theorem}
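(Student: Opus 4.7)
The plan is to construct a H\"older candidate $\tilde\Phi$ from $\phi$ directly, using the partially hyperbolic dynamics, and then show that the measurable solution $\Phi$ must coincide with $\tilde\Phi$ almost everywhere. First, on a single stable leaf $W^s(x)$, define formally
\[ \Phi^s_x(y) := -\sum_{n=0}^{\infty} \bigl(\phi(f^n y) - \phi(f^n x)\bigr). \]
Because $\phi$ is $\theta$-H\"older and $d(f^n y, f^n x)$ decays exponentially along stable leaves, this sum converges, is H\"older on $W^s(x)$, and satisfies the leafwise cohomological equation (up to choice of base point). The analogous construction on unstable leaves yields $\Phi^u_x$. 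Strong center bunching is exactly what is needed to guarantee that the resulting leafwise functions vary H\"older continuously under stable and unstable holonomies; this is where the strong form of the bunching hypothesis is essential rather than cosmetic.

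Second, I would show the measurable solution $\Phi$ agrees with $\Phi^s_x$ along a.e.\ stable leaf through a.e.\ point $x$. Fix $x$ and consider the difference $D(y) := \Phi(y) - \Phi(x) - \Phi^s_x(y)$ on $W^s(x)$. A telescoping computation using the cohomology equation $\phi = \Phi \of f - \Phi + c$ shows $D(y) = \lim_{n \to \infty} (\Phi(f^n y) - \Phi(f^n x))$; standard Lusin/Birkhoff arguments combined with the exponential convergence of orbits on stable leaves then force $D \equiv 0$ on a full-measure subset of $W^s(x)$, for a.e.\ $x$. The unstable case is symmetric. Thus $\Phi$ is H\"older on a.e.\ stable and a.e.\ unstable leaf, with explicit H\"older seminorms coming from the convergent series.

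Third, I would use accessibility to promote this leafwise regularity to global regularity. Fix a base point $x_0$ in a typical leaf and define $\tilde\Phi(x)$ for an arbitrary $x$ by choosing an $su$-path from $x_0$ to $x$ and adding the appropriate $\Phi^s$ / $\Phi^u$ increments along each leg. Accessibility ensures such a path exists. That $\tilde\Phi$ is well-defined requires an invariance argument: any closed $su$-cycle must contribute zero, which follows because the leafwise sums are cocycles and measurably agree with the honest solution $\Phi$. To get H\"older regularity of $\tilde\Phi$ on $X$, one shows that nearby points can be connected by $su$-paths whose total length is controlled polynomially in $d(x,y)$; this is the quantitative/uniform accessibility property that the strong center bunching hypothesis is calibrated to deliver, via the H\"older Juli\'enne-type argument (and can be concluded by a Journ\'e-style regularity lemma once H\"older bounds are known on $W^s$ and $W^u$ leaves separately through every point).

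The main obstacle, as in Wilkinson's original argument, will be the quantitative accessibility step: converting pointwise accessibility into a uniform H\"older estimate along $su$-paths. This is where the strong center-bunching hypothesis is indispensable, since it is precisely the condition that makes holonomies H\"older of a definite exponent, and in turn makes the accessibility sequences shrink at a controlled rate. Once that estimate is in hand, the equality $\tilde\Phi = \Phi$ a.e.\ follows from the agreement on stable and unstable leaves together with a Fubini-type argument along local $su$-product structure.
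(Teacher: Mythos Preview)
The paper does not provide its own proof of this statement; it is quoted verbatim as a result of Wilkinson \cite{W} and used as a black box in the proof of Lemma~\ref{Hoelder metric}. So there is no in-paper argument to compare against.

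That said, your sketch does broadly follow the architecture of Wilkinson's proof: build leafwise transfer functions via the convergent telescoping series on $W^s$ and $W^u$, show a.e.\ agreement with the measurable solution using Lusin plus exponential approach, and then globalize via accessibility. Two points are worth flagging. First, the Journ\'e-type endgame you suggest is not what is actually used: Journ\'e requires transverse $C^{1,\theta}$ foliations, whereas here one has $W^s$, $W^u$ and a center direction with no a priori leafwise control of $\Phi$; Wilkinson instead works directly with $su$-paths and the H\"older regularity of stable/unstable holonomies (this is where strong center bunching enters) to propagate the value of $\tilde\Phi$ H\"older-continuously. Second, the assertion that nearby points are joined by $su$-paths of length polynomially controlled by $d(x,y)$ is not available in general and is not how the estimate is obtained; rather, one fixes a finite template of $su$-paths (local accessibility) and shows that the increments along each leg vary H\"older in the endpoints because the holonomy maps themselves are H\"older. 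Your well-definedness argument for $\tilde\Phi$ over closed $su$-cycles is correct in spirit, and the a.e.\ identification with $\Phi$ then follows from ergodicity (which comes from accessibility and center bunching via Burns--Wilkinson).
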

\color{black}
\begin{remark}
The version of Theorem \ref{thm: livsic ph} with partially H\"older regularity is not stated in \cite{W}, but it is a direct consequence of the construction of solution to coboundary equation using periodic cycle functionals as in \cite{W}. The partial H\"olderness of solutions has also been noticed for accessible systems in \cite{KatKon} where the periodic cycle functionals were used for the first time. \color{black}
\end{remark}

%\color{black}
%The following proposition will confirm that \ref{FP2} holds for actions in Theorem \ref{abelian}. %\footnote{Definition of H\"older continuous along all coarse foliations, we need uniformity assumption.}
%\color(black}

\color{black}

\begin{proposition} 
\label{Hoelder metric}\color{black} For the action as in Theorem  \ref{abelian} each Oseledets subspace is H\"older continuous and   \color{black} the metric \color{black}within each Oseledets subspace \color{black} is partially H\"older. %foliations. %\color{black}(within each Oseledets subspace)EXPLAIN\color{black}.
\end{proposition} 
\begin{proof}
\color{black}

 The strategy of the proof is that first we will use the ``partially H\"older'' version of Theorem \ref{thm: livsic ph} to show that the measurable metric is $C^0$. The main step for this is Lemma \ref{coboundary lemma}.  Then we will use this together with the higher rank assumption to improve the regularity of Oseledets splitting from continuous to H\"older. After that we use the H\"older version of Theorem \ref{thm: livsic ph} and Lemma \ref{coboundary lemma} 
 to improve the regularity of the metric from continuous to partially H\"older.

%\begin{lemma}\label{metric is C0}
%The measurable metric on an Oseledets subspace for actions as in Theorem  \ref{abelian} is continuous. 
%\end{lemma}
%\begin{proof}

\color{black}
By Lemma  \ref{lem: hder conf str} the Oseledets subspaces %and measurable conformal structures 
in the assumption of Theorem  \ref{abelian} \color{black}  actually coincide with partially H\"older ones (which we still denote by $E^\lambda$) almost everywhere\color{black}. %As a consequence, to show our proposition we have to solve some cocycle equations. 
We fix a smooth background metric on $X$. It induces a \color{black}partially \color{black} H\"older metric $g_\lambda$ within each Oseledets subspace $E^\lambda$. Then $g_\lambda$ induces a \color{black}partially \color{black}H\"older volume form $\nu_{\lambda}$ within $E^\lambda$. For any \color{black}$a\in \mathbb R^k$\color{black}, if we let %\todo{comment(12) fixed} $q(a,x):=\rm{Jac}_{\nu_\lambda}(Da|_{E^\lambda})$, then $q$ is \color{black}partially \color{black} H\"older cocycle over the $\mathbb R^k$-action. Moreover we have

\begin{lemma}\label{coboundary lemma}
There exists a \color{black}partially \color{black} H\"older function $\phi: X\to \mathbb{R}$ such that for any \color{black}$a\in \mathbb R^k$ \color{black}\color{black} \begin{equation} \label{eqn: def phi solv cobry}
\phi(x)\cdot \phi(ax)^{-1}=e^{-\dim(E^{\lambda})\lambda(a)}\cdot q(a,x)
\end{equation}
\end{lemma}\color{black}
\begin{proof}First let $a$ be a generic singular element in $\ker \lambda$. Assumption of Theorem  \ref{abelian} that the action is measurably Oseledets conformal implies the existence of a measurable invariant metric, which then induces a measurable invariant volume form. By defining $\phi$ is the Radon-Nikodym derivative of the invariant measurable volume form with respect to the volume form $\nu_\lambda$ we obtain the  existence of a measurable  function satisfying our lemma for $a$. (Notice that $\lambda (a)=0$ in this case.) Then by Theorem \ref{thm: livsic ph}, we can upgrade regularity of  $\phi$ to partially H\"older function, since $a$ is (at least) $C^2$, volume preserving, accessible and center-bunched (from Lemma \ref{lem: a bunch}), and $q$ is partially H\"older.

\color{black}  Now let $b$ be a general element in $\mathbb R^k$. The fact that  \eqref{eqn: def phi solv cobry} holds for $b$ as well is a direct consequence of transitivity of elements in $\ker \la$ and commutativity of the action. This is a general fact: for any ($\R$-valued multiplicative) cocycle $\beta$ over an $\mathbb R$-action, if $\beta(a,x)$ is cohomologous to 1 (i.e. if $\beta(a,x)=\psi(x)\psi(ax)^{-1}$), and $a$ is transitive,  then $\beta(b,x)$ is cohomologous to a constant for any other $b\in \mathbb R$ via the same transfer map $\psi$. The reason is that commutativity of $a$ and $b$ and the cocycle property for $\beta$ imply $\beta(a, bx)\beta(b,x)=\beta(b, ax)\beta(a,x)$ hence $\psi(bx)\psi(abx)^{-1}\beta (b,x)=\beta(b,ax)\psi(x)\psi(ax)^{-1}$
which implies 
$\psi(bx)\psi(x)^{-1}\beta (b,x)=\psi(bax)\psi(ax)^{-1}\beta(b,ax)$ which means that $\psi(bx)\psi(x)^{-1}\beta (b,x)$ is $a$-invariant and therefore (since $a$ is transitive), it is constant. 

By applying this general reasoning to the cocycle $q$ and the fact that  that $b$ acts on $E^\lambda$ with Lyapunov exponents equal to $\lambda(b)$, we get \eqref{eqn: def phi solv cobry} for any $b$ as well. \color{black}
%Define a \color{black}partially \color{black} H\"older volume form on $E^\lambda$ by  $\nu_\lambda':=\phi\cdot \nu_\lambda$. By our construction of $\phi$, $\nu_\lambda'$ is invariant under the action of $b$ for $b\in \ker\lambda$. By commutativity of $a$ and $b$, \,  $a_\ast \nu_\lambda'$ is also invariant under $b$ and there exists a continuous function $\psi$ such that $a_\ast\nu_\lambda'=\psi\cdot  \nu_\lambda'$. 
%is also a $b-$invariant volume form on $E^\lambda$. 
%Since  $b$ acts transitively on $X$,  $\psi$ is a constant function. Notice that $a$ acts on $E^\lambda$ with Lyapunov exponents equal to $\lambda(a)$. Therefore because $\psi$ is constant, we complete the proof of the lemma.
\end{proof}

\color{black}
As a consequence, the measurable volume form induced by measurable metric in the assumption of Theorem  \ref{abelian} within $E^\lambda$ coincides with a continuous volume form almost everywhere. %within the continuous bundle $E^\lambda$ such that it behaves well under the $\R^k$ action. 

By Lemma  \ref{lem: hder conf str} again, the measurable conformal structure induced by measurable metric in the assumption of Theorem  \ref{abelian} also coincides with a partially H\"older one almost everywhere within $E^\lambda$. Therefore by using the continuity of the volume form, the measurable metric on $E^\lambda$ subspace for actions as in Theorem  \ref{abelian} coincides with a continuous metric everywhere. 
%By continuity this metric still satisfy the equation in \ref{FP2}. 

The key fact which follows from the continuity of the metric and the higher-rankness of the action is that if we take a generic element $b\in \R^k$ such that $\chi(b)\neq \chi'(b)$ for any two different Lyapunov functionals $\chi, \chi'$, then the Oseledets splitting of $\R^k$ 
%by  \ref{FP2} 
is actually a dominated splitting of the cocycle $Db$ over $b$. Then it is a classical fact that the splitting is H\"older continuous (see for example Theorem 4.11 of \cite{CP}). So the Oseledets splitting for the $\R^k-$action is H\"older continuous. 

%\begin{lemma}\label{Holder Oseledets}Last step: The Oseledets splitting for actions as in Theorem  \ref{abelian} is H\"older continuous. 
%\end{lemma}

To complete the proof of Proposition \ref{Hoelder metric}, it suffices to verify that the measurable  metric is  H\"older along coarses. Therefore we need to show that the volume form and the conformal structure induced by the metric are partially H\"older.

For the volume form, since the cocycle $q$ now is actually H\"older continuous due to the H\"older continuity of $E^\lambda$, by the H\"older part of Theorem \ref{thm: livsic ph} we get the H\"older continuous analogue of Lemma \ref{coboundary lemma}, i.e. any continuous solution $\phi$ of \eqref{eqn: def phi solv cobry} is actually H\"older continuous. Hence the volume form induced by the metric is also H\"older continuous (since it corresponds to the solution $\phi$ of \eqref{eqn: def phi solv cobry}).

For the conformal structure, we take a generic element $b$ that does not belong to any $\ker \chi$. Then $Db|_{E^\lambda}$ is a fibered bunched H\"older linear cocycle with coinciding Lyapunov exponents over a $C^2$ partially hyperbolic volume preserving, center-bunched accessible diffeomorphism $b$, therefore by Theorem \ref{thm: acc imp hder} we know the conformal structure induced by the metric on $E^\lambda$ is H\"older along $W^s_b$ and $W^u_b$, hence by genericity of $b$ it is partially H\"older.

%Notice that in Lemma \ref{coboundary lemma} when $E^\lambda$ is H\"older all the arguments go through verbatim with "H\"older" in place "partially H\"older". 

%Last Last step: Oseledets spaces Holder---> Conformal structure is H\"older (by considering new cocycle: projectification of $Db$ for some $b$ not in the kernel acting on projectification of a H\"older continuous bundle $E^\lambda$, then by ASV we know it is holonomy-invariant hence Holder ). \color{black}

%Combining the H\"older continuous volume form $\nu_\lambda'=\phi\cdot \nu_\lambda$ and the H\"older continuous conformal structure on $E^\lambda$ we easily get the H\"older continuous metric as claimed in the \color{black} Proposition \color{black} \ref{Hoelder metric}.

Now we consider the $\R^k$-invariant measurable metric on $E^c$, see Remark \ref{Rem: 0 exp expl}. First $E^c$ is a H\"older continuous bundle since $E^c$ is a sub-bundle of a dominated splitting. Then by the same proof as Lemma \ref{lem: hder conf str}, the conformal structure induced by the measurable metric within $E^c$ is partially  H\"older. By the same proof as Lemma \ref{coboundary lemma}, the volume form induced by the metric within $E^c$ is actually H\"older. So the measurable metric within $E^c$ is partially H\"older.
\end{proof}
\subsubsection{Consequences on dynamical coherence} \label{DynCoh}

\color{black}
As a corollary of the discussions above, %of Section \ref{regularity-metrics}, 
for any generic singular element $a\in \ker \lambda$ for some Lyapunov functional $\lambda$, $Da$ preserves a continuous metric in $E^c_a=E^c\oplus E_\lambda$ if $-\lambda\notin \Delta$ or $E^c \oplus E_\lambda\oplus E_{-\lambda}$ if $-\lambda\in \Delta$, hence by Proposition 6 of \cite{B03}, $E^c_a$ and $$ E^{cs}_a:=E^c\oplus E_\lambda\oplus E_{-\lambda}\oplus \bigoplus_{\lambda'(a)<0, \lambda'\ne \lambda} E_{\lambda'}$$ are uniquely integrable. Similarly by taking $a$ not in $\ker \lambda$,  but close to $\ker \lambda$, we know $E^c$ is also uniquely integrable.

\subsection{Assumptions of Theorem \ref{basic abelian} imply additional regularity of Oseledets objects\color{black}} \label{Oseledets regularity Anosov}
\begin{proposition}\label{Hoelder Anosov}
 For the action as in Theorem  \ref{basic abelian} each Oseledets subspace and  the metric within each Oseledets subspace  are H\"older continuous.
 \begin{proof}The proof of Proposition \ref{Hoelder Anosov} is essentially the same as the proof of Proposition \ref{Hoelder metric}, for completeness we sketch the proof here. First we consider a generic element $b\in \R^k$ not in any $\ker\lambda$. Using the $C^0$ metric, the Oseledets splitting is a dominated splitting of $Db$, hence it is H\"older continuous. 
 
 Second we show the metric is partially H\"older. For the volume form, it is a corollary of Livsic theorem for Anosov $\R^k$-actions (by this we mean that every measurable solution of a coboundary equation over the action is H\"older.) It is  contained in Theorem 2.1 in \cite{Goetze-Spatzier-Duke}, or can be argued as in \cite{W} (or as in  the last paragraph of this section.) For the conformal structure, by Avila-Viana invariance principle \cite{AV}, it is essentially holonomy-invariant, but since holonomies and the conformal structures are uniformly continuous, the conformal structure is holonomy-invariant everywhere. Hence by H\"older continuity of holonomies along each leaf of each coarse foliation, we get the conformal structure is partially H\"older.

% Then by the same arguement as that in Proposition \ref{Hoelder metric} we know 
 
 By the scaling equation in Oseledets conformal assumption,
 %\ref{FP2},
 the metric is also H\"older continuous along the $\R^k$-orbit direction. By transversality of the sum of all coarse Lyapunov distributions and the orbit direction, any two nearby points $x,y$ can be connected by a finite (with uniformly bounded number of legs) local broken path such that each step is either lying in a local leaf of a coarse Lyapunov foliation or $\R^k$-orbit, and the length of each step is bounded (up to a global constant) by some uniform power of $d(x,y)$, which implies that the metric is globally H\"older continuous. \end{proof}
%along %all 
%coarse. 

%For $\R^k\curvearrowright X$ action that satisfies the assumptions in Theorem \ref{basic abelian}
    
\end{proposition}

\color{black}
\subsection{Summary and preview of Part 3.\color{black} }
%Sections \ref{extension} and \ref{fibration}} %\color{black} MAKE CHANGES IN THIS SECTION\color{black}
\label{sec:intermission1}

\color{black}
We summarize now the conclusions of this section.  Let $\rho$ be an action satisfying assumptions of Theorem \ref{abelian} or those of Theorem \ref{basic abelian}. What we proved in this section is that $\rho$ has the following {\it fundamental properties}:

\begin{itemize}%[label=\textnormal{(\arabic*)}]
\item [\mylabel{FP1}{\rm (GHR)}]   For every $\lambda \in \Delta$, $\ker \lambda$ has a dense orbit.
\item [\mylabel{FP2}{\rm (HOC)}] 
Each nonzero coarse Lyapunov distribution $E_\lambda$ of $\rho$ decomposes into Oseledets spaces with exponents positively proportional to $\lambda$: $E_{\lambda}=\oplus _{i=1} ^{n_{\lambda}} E^{c_i \lambda} $ and the following hold: 
\begin{itemize}
\item[(a)]  There exists a H\"older continuous Oseledets decomposition. 
$$ TX = E^c \oplus\bigoplus _{\lambda \in \Delta} (\oplus_{I=1} ^{n_{\lambda}} E^{c_i \lambda} ). $$
\item[(b)] There exist  %H\"older 
continuous Riemannian metrics (inner products) $\langle \:, \: \rangle _{c_i \lambda}$ on $E^{c_i \lambda} $ \color{black}which are partially H\"older and satisfy that for every $v \in E^{c_i\lambda}$ and $a \in \R^k$: %H\"older continuous along coarse \color{black} such that for all $ v \in E^{c_i \lambda}$ 
$$ \| a_* v\| = e^{c_i \lambda (a)} \|v\| .$$
\item[(c)] There exists a continuous, partially H\"older Riemannian metric $\langle \;,\;\rangle_0$ on $E^c$ invariant under the $\R^k$-action.
\item[(d)] Let $a \in \ker \lambda$, and $y \in W^s _a (x)$.  Let $H^{s,a} _{x,y}: {E} _{\lambda}(x)  \to {E} _{\lambda} (y)$  be the stable holonomy map for $a$.  Then $H^{s,a} _{x,y} (E^{c_i \lambda} _x) = E^{c_i \lambda} _y$ and 
$$ H^{s,a} _{x,y} :  E^{c_i \lambda} _x \to E^{c_i \lambda} _y$$
is an isometry with respect to the inner products above constructed from \\ $a_y ^{-n} I_{a^n x, a^n y} a_x ^n | _{{E} _{\lambda} (x) } \rightarrow H^{s, a} _{x,y} $  where for $x, y$ two nearby points we let $I_{xy} : E_\lambda(x) \to  E_\lambda(y)$ be a linear identification which is H\"older close to the identity.\color{black}
\end{itemize}
\end{itemize}
\color{black}

\color{black} Claims (a), (b) and (c) are proved in Sections \ref{regularity-metrics} and \ref{Oseledets regularity Anosov}. \color{black} We note that the last claim  is an application of the invariance principle \cite{AV} or Theorem \ref{thm: acc imp hder} above. Since $a$ has $0$ Lyapunov exponents on $E_{\lambda}$, by Theorem \ref{thm: acc imp hder}, the stable holonomy $H^{s,a}$ preserves each invariant sub-bundle almost everywhere. And by H\"older continuity of $H^{s,a}$ and $E^{c_i\lambda}$, we know the stable holonomy preserves each $E^{c_i\lambda}$ everywhere. By \cite{KalSad}, the stable holonomy preserves the conformal structure within each $E^{c_i\lambda}$ everywhere. To show (d) we only need to prove $H^{s,a}$ preserves the volume form induced by the %H\"older continuous 
metric within each $ E^{c_i\lambda}$. But it is not hard to see that the Jacobian of $H^{s,a}_{x,y}|_{E^{c_i\lambda}}$ is exactly the holonomy of the one-dimensional cocycle $\mathrm{Jac}(Da|_{E^{c_i\lambda}})$, the stable holonomy preserves the invariant volume form within each $E^{c_i\lambda}$, thus the stable holonomy is an isometry.

\color{black}

%\st{Moreover,  since the $M$-action in the assumptions of Theorem \ref{abelian} commutes with the $\R^k$ action, we may assume without loss of generality that the $\norm{\cdot}_{\alpha,i}$ are invariant under the $M$-action by averaging its pushforwards under $M$ with respect to the Haar measure on $M$. } 
%All this implies that   \ref{FP2}  holds for actions satisfying assumptions in Theorem \ref{abelian}. %\footnote{It seems now we could make our FA-2 weaker, make it only $C^0$, which actually implies H\"older continuous along coarse, be our FA-2' that will heavily use.}. 

%\begin{enumerate}[label={\rm (FA-\arabic*)}]
%\item \label{fundamental1} For every $\alpha \in \Delta$, $\ker \alpha {\color{black} \times M?}$ has a dense orbit.
%\item \label{fundamental2} The Oseledets splitting is H\"older continuous, and for each Oseledets space $E^{c_i^\alpha\alpha}$, there exists a H\"older norm $\norm{\cdot}_{\alpha,i}$ on the bundle $E^{c_i^\alpha\alpha}$ such that for every $a \in \R^k \times M$ and $v \in E^{c_i^\alpha\alpha}$,

%\[ \norm{a_*v}_{\alpha,i} = e^{c_i^\alpha\alpha(a)}\norm{v}_{\alpha,i}. \]
%\end{enumerate}

%\begin{remark}
%Since the $M$-action commutes with the $\R^k$ action, we may assume without loss of generality that the $\norm{\cdot}_{\alpha,i}$ are invariant under the $M$-action by averaging its pushforwards under $M$ with respect to the Haar measure on $M$.
%\end{remark}
\color{black}
In the next section we perform the initial step in the proof of \color{black} Theorems \ref{basic abelian} and \ref{abelian}. From the assumptions in these theorems, in the previous section we derived the fundamental properties \ref{FP1} and \ref{FP2}.
In Section \ref{extension} we use these properties to construct a simply transitive subgroup of isometries of a leaf $W^\lambda(x)$ of coarse Lyapunov foliation, for each $\lambda$. For this purpose we assume that each Oseledets subbundle is %the leaves of the foliations are 
orientable, otherwise we just may lift the action to a suitable finite cover.  
\color{black}

After that, in Section \ref{fibration}, we construct  a principal bundle extension (with a compact structure group) over $X$ which, as it turns out, satisfies all the condition of a genuinely higher-rank \color{black}\emph{harnessed abstract partially hyperbolic action}, (defined in Section \ref{sec:top-anosov})\color{black}. These are the topological actions which we prove are modeled by the homogeneous ones in Part 4. of the paper. 
%Roughly speaking the main feature of these actions is that they come together with groups of isometries acting transitively on the leaves of coarse Lyapunov foliations. 

A key point which motivates the need for the construction in Section \ref{fibration}, is the following: for each $x \in X$, we will be able to construct a simply transitive subgroup of isometries of $W^\lambda(x)$. All such groups will be isomorphic to one another, denoted by $\leftN$. However, this does not immediately give an action of a group on the entire manifold. Indeed, in Example  \ref{ex:SLnC}, each coarse Lyapunov leaf is a copy of $\C$, but there is no global action of $\C$ on $X$ which parameterizes the leaves on $X$. This can be resolved by passing to some compact fiber bundle over X (which is hinted at in Example \ref{ex:SLnC}).
%\todo{comment(13) completed the sentence} 
In case when  $W^\lambda$ are 1-dimensional, as in \cite{Spatzier-Vinhage}, \color{black} this problem (failing to construct a globally-defined group action) \color{black} does not occur. In Section \ref{fibration}, we will carefully choose a compact fiber bundle extension $\hat X$ over $X$ so that the group $N^\lambda$ acts on it in a canonical way. 
Similar problem occurs for higher-rank semisimple Lie group actions such as Example \ref{ex:SL3-SU3}.

\part{Compact group extensions and trivialization of Lyapunov frames}

\label{part:build-bundle}

\section{Construction of a large group of isometries %and construction of the fibration
}\label{extension}

%From this we want to construct an action of one group $N^\lambda$ acting transitively on leaves of $W^\lambda$ by isometries.  But a key point here is that there is no canonical way to model isometries of $W^\lambda$ leaves  $W^\lambda(x)$ for various  $x$ by $N^\lambda$  (see example \ref{ex:SLnC}). In case when  $W^\lambda$ are 1-dimensional, as in \cite{Mega paper} does not occur. To overcome this problem, in the next section, we will carefully choose a compact fiber bundle extension $\hat X$ over $X$ such that $N^\lambda$ acts on it in a canonical way.

% In this section we fix a coarse Lyapunov foliation $W^\lambda$. 

%\begin{definition} \label{metric-W-lambda}
%\subsection{Constructing a large group of isometries} 
%\color{black}%(GO THROUGH THIS SECTION MORE SERIOUSLY)
%\label{sec:isometries}
Throughout this section we assume that \color{black} $\R^k \curvearrowright X$ is a $C^\infty$ totally partially hyperbolic \color{black} action satisfying \ref{FP1} and \ref{FP2}. In particular, for $v,w \in T_x W^{\lambda} = \oplus _{I=1} ^{n_{\lambda}} E^{c_i \lambda} (x)$ we have the metric 

$$\langle v,w \rangle = \sum  _{I=1} ^{n_{\lambda}} \langle v_i,w_i \rangle  _{c_i \lambda},$$
where $v_i, w_i$ are the $E^{c_i \lambda}$ components of $v$ and $w$ respectively. Denote by  $\Isom( W^{\lambda} (x))$  the group of isometries with respect to this metric.

\color{black} From this point on we assume that each Oseledets subbundle is
%the leaves of the coarse Lyapunov foliations are 
orientable, otherwise we lift the action to a finite cover. \color{black}

%\end{definition}

\begin{definition}

Call an isometry $\phi :  W^{\lambda} (x) \to   W^{\lambda} (x)$ {\em harnessed} if $\phi _* $ preserves the Oseledets subbundles and their orientations.  Let $\Isom^{\lambda}_H(x):=\Isom_H (W^{\lambda}(x))$ be the group of harnessed isometries.

\end{definition}

Note that the group of harnessed isometries $\Isom^{\lambda}_H(x)$ is a closed subgroup of  $\Isom( W^{\lambda} (x))$ because the limits in $\Isom^{\lambda}(x)$ preserve Oseledets frame.

The main outcome of this section is the following proposition:

\begin{proposition}
    \label{prop:isometry-construction} Let \color{black}  $\R^k \curvearrowright X$ be a $C^{2}$ (or $C^\infty$ resp.) totally partially hyperbolic \color{black} action satisfying \ref{FP1} and \ref{FP2}.

Fix a coarse Lyapunov foliation $W^{\lambda}$.  Then there exists a nilpotent Lie group $\leftN$  such that for all $x \in X$ there exists a subgroup $\leftN_x \subset \Isom_H( W^{\lambda} (x))$ with the following properties: 

\begin{itemize}

\item[(1)] $\leftN_x$ isomorphic to $\leftN$,

\item[(2)]  for all $h \in \leftN_x$, $h_*$ preserves the Oseledets splitting $T W^{\lambda} = \oplus _{I=1} ^{n_{\lambda}} E^{c_i \lambda}$,

\item[(3)] $\leftN_x$ acts transitively on $W^{\lambda} (x)$,

\item[(4)] $a \, {\leftN_x} \,a^{-1} = \leftN_{ax}$ for all $x \in X$ and for all $a \in \color{black}\R ^k\color{black}$.  

\item[(5)]  $\leftN_x$ acts by  $C^{(1,\theta)}$ (or $C^\infty$ resp.) 
 diffeomorphisms {\color{black}of $W^\lambda(x)$}. 

\end{itemize}

%-such that for all $h \in H_x$, $h_*$ preserves the Oseledets splitting $T W^{\lambda} = \oplus _{I=1} ^{n_{\lambda}} E^{c_i \lambda}$ and $H_x$ acts transitively on $W^{\lambda} (x)$.  

\end{proposition}

%\begin{proof} 

We first easily show the following  property of harnessed isometries:

\bl \label{aHa-inv}
For every $a \in \R^k$,
$\Isom^{\lambda} _H (ax) = a \circ \Isom^{\lambda} _H (x) \circ a^{-1}$.

\el

\bp

Choose any harnessed isometry $\phi\in \Isom^{\lambda} _H (x)$ at $x$. $\phi$ is a harnessed isometry if and only if  $ \phi $ at every point of $W^{\lambda } (x)$ is an orthogonal matrix that preserves the Oseledets subspaces.  Then note that $a \circ \phi \circ a^{-1}$ satisfies the same properties. It clearly preserves the Oseledets splitting, and $a$ undoes what $a^{-1}$ does on each Oseledets space. 

\ep

We will use the following Lemma, the proof of which is essentially the same  as the proof of Theorem 2.8 in \cite{Goetze-Spatzier}, for completeness we sketch the proof here.

%The sketch of the proof of the Lemma below can be found in the Appendix.

\bl \label{GS} Let $x,y\in X$. If there exists a sequence $a_k\in \ker \lambda$ such that $\lim_{k\to \infty}a_k(x)\to y$. Then there exists a subsequence $k_j$ and an isometry map $a_0:W^\lambda(x)\to W^{\lambda}(y)$ such that $a_0=\lim_{j\to\infty} a_{k_j}|_{W^\lambda(x)}, Da_0=\lim_{j\to\infty} Da_{k_j}|_{W^\lambda(x)}$, where $D$ denotes the derivative.

\el

\bp For the first claim, the proof is essentially the same as that of Proposition 2.9 in \cite{Goetze-Spatzier}. For completeness we sketch the proof here, the idea is to use $C^\infty$ metric to approximate original H\"older continuous metric to show that if $a_{k_n}(x_i)\to y_i, i=1,2$ then $d_{W^\lambda}(x_1, x_2)\geq (1-\epsilon) d_{W^\lambda}(y_1,y_2)$ for any small $\epsilon>0$. Notice that by classical diagonal argument, we can find a subsequence $k_j$ such that $a_{k_j}$ converges on a dense set $\{x_i\}\subset W^\lambda(x)$. Then combining with the last inequality we can extend $\lim a_{k_j}$ to a Lipchitz continuous map $a_0: W^\lambda(x)\to W^\lambda(y)$, with Lipchitz constant bounded by $1$.

By similar approximation argument, we show that $a_0$ is invertible and $a_0^{-1}$ is also Lipchitz continuous with Lipchitz constant bounded by $1$, therefore $a_0$ is an isometry on $W^\lambda(x)$.

For the second claim, by the first claim and \ref{FP1}, as the proof of Theorem 2.8 of \cite{Goetze-Spatzier} we may easily build a  homogeneous space structure on every $W^\lambda$ leaf such that for any point $x$ the group $\Isom(W^\lambda(x))$ acts transitively on $W^\lambda(x)$. 
 %\color{black} Since the metric is H\"older continuous, by Theorem \ref{thm:taylor} 
 \color{black}
 By Lemma \ref{lem:coarse-lyap-reglow} we know the action of $\Isom(W^\lambda(x))$ is $C^{1+}$. \color{black} Therefore the $C^{\infty}$ homogeneous space structures of $W^\lambda$ leaves are $C^{1+}$ equivalent to the original differentiable structure. Therefore by the corresponding proof in \cite{Goetze-Spatzier}, we know that although the metric along $W^\lambda$ is only \color{black} H\"older continuous, but locally the uniqueness of length minimizing geodesics holds. As a consequence, if we take the subsequence $a_{k_j}\to a_0$ in the first claim, since the derivative of an isometry is determined by how geodesics are mapped, we get that $Da_{k_j}$ also converges to $Da_0$. For more details see \cite{Goetze-Spatzier}.\ep

Similarly we can show the transitivity of the $\Isom^{\lambda}_H(x)$ action for points with dense $\ker \lambda$ orbit.

\bl \label{transitive-x} If $x$ has a dense $\ker \lambda$ orbit, then $\Isom^{\lambda}_H(x)$ acts transitively on $W^\lambda(x)$.

\el

\bp  If $x$ has a dense $\ker \lambda $ orbit, then for any $x'\in W^\lambda(x)$, we can find a sequence $a_k\in \ker \lambda$ such that $\lim_{k\to \infty}a_kx=x'$. As a consequence of lemma \ref{GS} we can pick a subsequence of $a_k$ such that this subsequence uniformly converges to a harnessed isometry $a_0$ on  $W^\lambda(x)$ such that $a_0(x)=x'$. Since $x'$ was arbitrary, this implies that  $\Isom^{\lambda}_H(x)$ acts transitively on $W^\lambda(x)$.

\ep

The next lemma combines the previous two lemmas.

%\todo{comment(14) Clarify whether the isomorphism is (necessarily) canonical or not}

\bl \label{iso-x-iso-y}
Suppose $x\in X$ has a dense $\ker \lambda-$orbit, then for any $y\in X$
the group  $\mathrm{Isom}^{\lambda}_H(y)$ is isomorphic to $\mathrm{Isom}^{\lambda}_H(x)$, and $\Isom^{\lambda}_H(y)$ acts transitively on $W^\lambda(y)$.
\el

\bp  Let $y\in X$. If $x\in X$ has a dense $\ker \lambda$-orbit, there exists $a_k\in \ker \lambda$ such that $\lim_{k\to \infty} a_kx=y$. By Lemma \ref{GS} there exists a subsequence $k_j$ and an isometry map $a_0:W^\lambda(x)\to W^{\lambda}(y)$ such that $a_0=\lim_{j\to\infty} a_{k_j}|_{W^\lambda(x)}, Da_0=\lim_{j\to\infty} Da_{k_j}|_{W^\lambda(x)}$. Since each $a_{k_j}$ is a harnessed isometry from $W^\lambda(x)$ to $W^\lambda(a_{k_j}x)$, $a_0$ is a harnessed isometry from $W^\lambda(x)$ to $W^{\lambda}(y)$ as well. And $a_0$ induces an isomorphism between $\mathrm{Isom}^{\lambda}_H(x)$ and $\mathrm{Isom}^{\lambda}_H(y)$ by Lemma \ref{aHa-inv}. This proves the first claim in the Lemma.

 For the second claim in the lemma, pick any $y'\in W^\lambda(y)$; as in the previous part of the proof we can construct a harnessed isometry $a_0$ from $W^\lambda(x)$ to $W^\lambda(y)$ and $a_0(x)=y$. Let $a_0^{-1}\cdot y':=x'$. By Lemma  \ref{transitive-x} , for any $x'\in W^\lambda(x)$, there exists an $a'\in \Isom^{\lambda}_H(x)$ such that $a'x=x'$, then $a_0\circ a'\circ a_0^{-1}$ is a harnessed isometry in $\Isom^{\lambda}_H(y)$ which maps $y$ to $y'$.

\ep

The remaining part of the argument is to show that the group from the previous lemma has a large nilpotent subgroup $\leftN$ which is the group we need in Proposition \ref{prop:isometry-construction}.

{\color{black}
\bl \label{nilpot.}
If $p$ is a point with a dense $\ker \lambda$-orbit, then $\Isom^{\lambda} _H (p)=K_p^{\lambda} \ltimes \leftN_p$ where $K_p^{\lambda}$ is a compact Lie group and $\leftN_p$ is a normal simply connected nilpotent Lie subgroup, and $\leftN_p$ acts simply transitively on $W^\lambda(p)$. 
\el
%\bl \label{nilpot.}
%At a periodic point $p$, $\Isom^{\lambda} _H (p)=K_p^{\lambda} \ltimes N_p^{\lambda}$ where $K_p^{\lambda}$ is a compact Lie group and $N_p^{\lambda}$ is simply connected nilpotent Lie group, and $\leftN_p$ acts simply transitively on $W^\lambda(p)$. 
%\el
}

 Lemma \ref{nilpot.} is a direct corollary of the following more general proposition.

\bl \label{Independent prop}

Let $({ X},d)$ be a connected complete Riemannian manifold, $G$ be a locally compact topological group acting isometrically and transitively on ${ X}$. If there exists a strictly contracting $a\in \Diff({ X})$ such that $a$ normalizes   the $G$-action\color{black}, then

\begin{itemize}

\item[(1)] there exists a unique nilpotent subgroup $N$ normal in $\Isom ({ X})$ and acting simply and transitively on ${ X}$,

\item[(2)] one can isometrically identify $X$ with $N$, the metric $d$ on ${ X}$ is identified with the left-invariant metric on $N$,

\item[(3)] for any $p_0\in { X}$, $\Isom({ X})$ is the semidirect product of $N$ with the isotropic group $K_{p_0}:=\{g\in \Isom(X), kp_0=p_0\}$,

\item[(4)] for any $p_0\in { X}$, $G$ is the semidirect product of $N$ with the isotropic group $K'_{p_0}:=\{g\in G, kp_0=p_0\}$.
%\color{teal}$K$ or $K'$?\color{black}
\end{itemize}
\el
\bp 
By Theorem \ref{thm:taylor} we know all elements of $G$ are $C^1$, hence by \cite{CM}, $G$ is a locally compact Lie group and the action by $G$ is a $C^1$ Lie group action. By completeness of $({ X},d)$, there exists a unique $a$-fixed point $p\in { X}$. Since $aGa^{-1}=G$, the conjugacy by $a$ induces a Lie group automorphism of $G$, hence a Lie algebra automorphism $\Phi_a$ of $\mathfrak g:=\mathrm{Lie}
(G)$.

%By contracting property of $a$, there is no   non-trivial \color{black} $g\in G$ such that $a^{-n}g a^{n}, n\to \infty$ converges to identity on a compact neighborhood of $p$. Hence the automorphism $\Phi_a:\mathfrak g \to \mathfrak g$ is non-expanding, i.e. no eigenvalue has modulus greater than $1$.

%{\color{teal}I forgot the reason why it is non-expanding. The proof written here at least not true if $g=id$.}

Denote by $V_\mu$ the generalized eigenspace of $\Phi_a$ for eigenvalue $\mu \in \mathbb C$. Using the fact that %\todo{comment(15) May require more justification-see comment} 
$[V_{\mu_1}, V_{\mu_2}]\subset V_{\mu_1\cdot \mu_2}$ we get that $\mathfrak n:=\oplus_{|\mu|<1} V_\mu$ forms a nilpotent Lie subalgebra of $\mathfrak g$. We denote by $N$ the connected nilpotent Lie group $\exp(\mathfrak n)$, then $aNa^{-1}=N$.

\color{black} We show first that $ \oplus_{|\mu|>1} V_\mu \subset \Lie(K_p)$, where $K_p$ is the stablizer of $p$ in $G$. Assume, for a contradiction, that $Z \in \oplus_{|\mu|>1} V_\mu$ has $\exp(tZ)\cdot p \not= p$ for all sufficiently small $t > 0$. Let $\delta_t = d(\exp(tZ)\cdot p,p)$, and note that $\delta_t > 0$ for all $t>0$, and $\delta_t \to 0$ as $t \to 0$. Since $a$ is contracting, for sufficiently small $\ve >0$ and some $0 < \lambda < 1$, 

\begin{equation}
    \label{eq:annuli-move}
    a^{-1}\cdot (B(p,\ve)\setminus B(p,\lambda\ve)) \cap B(p,\lambda\ve) =\emptyset.
\end{equation}
By the intermediate value theorem, we may find, for every $\ve >0$, some $t$ such that $\delta_t \in (\lambda\ve,\ve)$. Finally, without loss of generality, we may assume that $\Ad(a^{-1})$ preserves $B(0,\delta) \subset \Lie(G)$ for every $\delta > 0$ by replacing $a$ with a sufficiently large power. 

Now, observe that $a^{-1} \exp(tZ) \cdot p = \exp(t\Ad(a^{-1}Z))\cdot p$. Since $Z$ is in the sum of positive generalized eigenspaces, 
%\todo{comment(16) clarify wether $a^{-1}$ should be replaced by a larger power}
$\exp(t\Ad(a^{-1})Z)\cdot p \in B(p,\lambda \ve)$. This is a contradiction to \eqref{eq:annuli-move}, so $\oplus_{|\mu|>1} V_\mu \subset \Lie(K_p)$.
\color{black}

%  Let $\ve > 0$ be any sufficiently small positive number, and $O_\ve$ be the annulus $B(p,2\ve) \setminus B(p,\ve)$. Since $a$ is a contraction, it follows that $a\cdot B(p,\ve) \subset B(p,\ve)$.% Fix a small neighborhood $O_p$ of $p$ such that $a(\overline{O_p})\subset \mathrm{Int}(O_p)$.Then by definition By the contracting property of $a$, for any $g=\exp(Z)$ such that $Z$ small enough and $Z\in \oplus_{|\mu|>1} V_\mu-\{0\}$, then $a^{-n}g a^{n}$ converges to the identity on a compact neighborhood of $p$ as $n\to \infty$. On the other hand if $g$ moves $p$, then  $a^{-n}g a^{n}(p)$ can not converge to the identity on a compact neighborhood of $p$ as $n\to \infty$. Hence we show that $ \oplus_{|\mu|>1} V_\mu \subset K_p$. }

Now let $\mathfrak k:=\oplus_{|\mu|=1} V_\mu$.   \color{black} Note that $\mf k$ is a subalgebra since $\Ad(a)$ is an automorphism of the Lie algebra, so whenever $\norm{\Ad(a)^kv}$ and $\norm{\Ad(a)^kw}$ are bounded above and below by polynomials for positive and negative values of $k$, so is $\norm{\Ad(a)^k[v,w]} = \norm{[\Ad(a)^k v,\Ad(a)^kw]}$. This property characterizes the generalized eigenspaces of modulus 1.\color{black}

We claim that $\exp(tZ)\cdot p=p$ for all ${ Z}\in \mathfrak k$. Let $\mathfrak k_0$ be the subalgebra in $\mathfrak k$ such that for all ${ Z}\in \mathfrak k_0,t\in \mathbb R$, $\exp(t{ Z})\cdot p=p$. Assume now  $\mathfrak k_0\ne  \mathfrak k$. Choose $\epsilon$ small enough such that $\exp$ is injective and close to identity at an $\epsilon$ ball around $0\in \mathfrak g$. A useful fact is that for fixed $\epsilon'\ll\epsilon$ small enough, for any $Y$ in $\mathfrak k\setminus \mathfrak k_0$ such that $\|Y\|<\epsilon'$, $d(\exp (Y)\cdot p, p)$ positive and has order $O(\|Y\|\cdot |\angle(Y, \mathfrak k_0)|)$ (if $\mathfrak k_0=\{0\}$, then without loss of generality we could assume the angle to be constant $1$).

We pick now an arbitrary ${ Z}\in \mathfrak k \setminus \mathfrak k_0$ and denote by $Y_n:= \Phi_a^{n}({ Z})$, then $\|Y_n\|$ has order at least $\|{ Z}\|$, \color{black} up to a polynomial factor of $n$\color{black}. Take $t_n=\frac{\epsilon'^2}{\|Y_n\|}$. Then notice that both $t_n$ and the angle $\angle(Y_n, \mathfrak k_0)$ are bounded and either do not decay as $n\to\infty$ or if they decay they do so at most polynomially fast. So we have $d(\exp(t_nY_n))\cdot p, p)$ has the order $O(\epsilon'^2\cdot |\angle(Y_n, \mathfrak k_0)|)$, which cannot decay exponentially fast.

On the other hand:
%\todo{comment(17) fixed}
\begin{equation}
d(\exp(t_nY_n))\cdot p, p)=d(a^{n}\exp(t_n{ Z})a^{-n}\cdot p,p)=d(a^{n} \exp(t_n{ Z})\cdot p,a^{n}\cdot p)\leq O (\color{black} \|Da\|\color{black}^n d(\exp(t_n{ Z})\cdot p, p))
\end{equation}
decays exponentially fast due to our choice of $t_n$. Then we get a contradiction. In summary, $\mathfrak k_0=\mathfrak k$.

Recall that $G$ acting on ${ X}$ transitively, therefore for arbitrary small open neighborhood $B$ of identity in $G$, $B\cdot p$ contains $p$ as an interior point. Since $\exp(\mathfrak k)$ fixes $p$, we get $(B\cap N)\cdot p$ contains an  open neighborhood of $p$. Notice that $N=a^{-n}\circ N\circ a^{n}$, therefore $N\cdot p\supset a^{-n}\cdot (B\cap N) \cdot a^n(p), n\to \infty$ contains exponentially large neighborhood of $p$, hence $N\cdot p={ X}$, i.e. $N$ acting transitively on ${ X}$.

In summary, we showed that $G$ hence $\Isom({ X})$ contains a nilpotent Lie subgroup acting transitively on ${ X}$. {  Thus $X$ is a homogeneous nilmanifold as defined in  \cite{Wilson}.  Then  (1)-(3) of Lemma  \ref{Independent prop} follow from { Theorem 2} in \cite{Wilson}, and (4) is an easy corollary of (3).}
\ep

\bp[\it Proof of Lemma \ref{nilpot.}]

 Let $\Isom^\lambda_H(p)$ and  $W^\lambda(p)$ of Lemma \ref{nilpot.} be $G$ and $X$\color{black} \, respectively, in the Lemma \ref{Independent prop}. \color{black} By Lemma \ref{Independent prop}, to\color{black} \, complete the proof of Lemma \ref{nilpot.} we only need to show the existence of a \color{black} function which contracts $W^\lambda(p)$ and normalizes the $G$-action. Choose some $a_0 \in \R^k$ such that $\lambda(a_0) < 0$. Since $p$ has a dense $\ker \lambda$-orbit, so does $a_0 \cdot p$. Hence there exists a sequence $b_n \in \ker \lambda$ such that $(a_0 + b_n) \cdot p = b_n \cdot (a_0 \cdot p)$ converges to $p$. Then as in Lemma \ref{GS}, we may pass to a convergent subsequence to get a contraction of $W^\lambda(p)$ which fixes $p$ and normalizes the $G$-action, as desired. 
 %\color{black} an element of the $\mathbb R^k$-action which contracts $W^\lambda(p)$ and normalizes the $G$-action. This is guaranteed by the existence of $a\in \mathbb R^k$ such that $\lambda (a) <0$ and by Lemma \ref{aHa-inv}.
 %This is possible since the stabilizer of $p$ contains $\Z^k$  as a cocompact subgroup. Hence the stablizer of $p$ in $C=\mathbb R^k\times M$ is not contained in the kernel of any Lyapunov functional. 
 \ep

Lemma \ref{nilpot.} together with Lemma \ref{iso-x-iso-y} implies that the group $(\Isom ^{\lambda}_H (x))$ for any $x$ is isomorphic to $K^{\lambda} \ltimes \leftN$. We show that the splitting is canonical in the sense that $K^{\lambda}$ and $\leftN$ are isomorphic to $K^{\lambda}_x$ and $\leftN_x$, respectively.  As in Lemma \ref{nilpot.} $K^{\lambda}$ is compact and  $\leftN$ is simply connected nilpotent. Since  $ \Isom ^{\lambda}_H (x)$ acts transitively, so does the subgroup  $\leftN_x$. This completes the proof of Proposition \ref{prop:isometry-construction}.

Now we connect the Lie algebra structure of $\leftN$ to the grading given by the Oseledets splitting. 
\bl
\label{lem:rich-automorphism}
For any $x$, $\Lie(\leftN)$ is canonically isomorphic to $\oplus E^{c_i \lambda} (x)$ as a vector space. For every $r \in \R$, the map scaling each $E^{c_i \lambda} $ by $e^{c_i r}$ is a {\color{black} harnessed} automorphism of $\Lie(\leftN)$.
\el

\bp

As the proof of Lemma \ref{nilpot.} we know that we can identify $\leftN$ harnessed isometrically with any $W^\lambda(x)$. Therefore we can canonically identify (harnessed isometrically) $\Lie(\leftN)$ with the tangent space $\oplus E^{c_i \lambda} (x)$. And it induces a splitting of $\Lie(\leftN)$. As the proof of Lemma \ref{iso-x-iso-y}, this splitting is actually independent of the choice of $x$. As the proof of Lemma \ref{nilpot.} we could take a $p$ with a compact $C$-orbit and $ap=p$ such that $\lambda= \lambda(a)<0$, then without loss of generality we can assume that $\Lie(\leftN)$ is %\todo{comment(18)} 
isomorphic  to $\Lie(\leftN_p)=\oplus E^{c_i \lambda}_p$.

Picking suitable basis elements for the $E^{c_i \lambda}$ to shows the claim we only need to show that $[E^{c_i \lambda}, E^{c_j \lambda}] \subset  E^{(c_i+c_j) \lambda}$. %Since 

Let $X,Y$ are invariant vector fields tangent to $E^{c_i\lambda}, E^{c_j\lambda}$ respectively, with the non-vanished $[X, Y]$. Then we can find a subsequence ${n_j}\to \infty$ such that

$$ \frac {1}{n_j} \log \| a_* ^{n_j} [X , Y ]\| = \frac {1}{n_j} \log \| [a_* ^{n_j}  X  ,a_* ^{n_j}  Y ]\|  =
\frac {1}{n_j} \log e^{(c_i + c_j) \lambda n_j}\| [(k_1  ^{n_j})_*  X  ,(k_2 ^{n_j})_*  Y ]\| \rightarrow  {(c_i + c_j) \lambda}\| [  X  ,  Y ]\|
$$
for suitable $k_i \in SO(E^{c_i \lambda})$ for which $k_i ^{n_j} \rightarrow id$, $i=1,2$. { By \ref{FP2}}, the limit of $\frac {1}{n} \log \| a_* ^{n} [X , Y ]\| $ can be decided by its behavior along a subsequence $n_j$. This implies that $[E^{c_i \lambda} ,  E^{c_j \lambda} ] \subset E^{(c_i + c_j) \lambda}$.

\ep

\begin{lemma}

\label{lem:coarse-lyap-reglow}

If \, \color{black}$\R^k \curvearrowright X$ is a $C^{2}$ totally partially hyperbolic \color{black} action satisfying \ref{FP1} and \ref{FP2}, then for every $\lambda$, each  $g \in \Isom(W^{\lambda}(x))$ 
%\st{(which acts transitively by Lemma} \ref{prop:isometry-construction}) 
is a $C^{1,\beta}$ diffeomorphism. \color{black}

\end{lemma}

\begin{proof} From properties  \ref{FP1} and \ref{FP2} we have that the regularity of the metric in the Oseledets spaces is \color{black} uniformly H\"older along $W^\lambda$ \color{black} with some H\"older exponent $\beta$.  Theorem \ref{thm:taylor} implies that the isometries are $C^{1, \beta}$-transformations on each leaf. 

\color{black}

\end{proof}

If in addition if the given action is $C^\infty$, we obtain two additional results:

\begin{lemma}

If the action is $C^\infty$, both the subspaces $E^{c_i\lambda}$ and the %H\"older 
metric $\langle,\rangle$ are $C^\infty$ along the coarse Lyapunov leaves of $W^\lambda$.

\end{lemma}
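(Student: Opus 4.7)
The plan is to upgrade the H\"older regularity of the Oseledets splitting and the leafwise metric, already established in Section \ref{regularity-metrics} and Proposition \ref{prop:isometry-construction}, to $C^\infty$ regularity along leaves of $W^\lambda$, by exploiting smoothness of the ambient action. First I would choose a generic Anosov element $a_0 \in C$ with $\lambda(a_0) < 0$ such that the nonzero Lyapunov exponents $c_i\lambda(a_0)$ are all distinct from each other and from $\chi(a_0)$ for any $\chi$ not positively proportional to $\lambda$. Since $a_0$ is a $C^\infty$ partially hyperbolic diffeomorphism, the classical Hirsch-Pugh-Shub stable manifold theory applies: the coarse Lyapunov foliation $W^\lambda$, realized as the intersection of stable manifolds, has uniformly $C^\infty$ leaves.

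Next, within the $C^\infty$ leaves of $W^\lambda$, the refined decomposition $E^\lambda = \oplus_i E^{c_i\lambda}$ is a dominated splitting for $a_0$, with the $c_i\lambda(a_0)$ providing the spectral gaps needed to distinguish the summands. Applying the standard smoothness theorem for invariant distributions under dominated splittings of a $C^\infty$ diffeomorphism (HPS), one obtains that each $E^{c_i\lambda}$ varies $C^\infty$ along leaves of $W^\lambda$. Here it is essential to work in the intrinsic $C^\infty$ structure of the leaves of $W^\lambda$ (which the first step provides), rather than in the ambient manifold, since the foliation itself is only H\"older transversally.

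For smoothness of the metric $\langle \cdot , \cdot\rangle_{c_i\lambda}$, I would use the transitive isometry action of $N^\lambda$ on $W^\lambda(x)$ provided by Proposition \ref{prop:isometry-construction} and the conjugation relation $a_0 N^\lambda_x a_0^{-1} = N^\lambda_{a_0 x}$. Writing an arbitrary $\phi \in N^\lambda_x$ as $\phi = a_0^{-n}(a_0^n \phi a_0^{-n})a_0^n$ with the bracketed factor lying in $N^\lambda_{a_0^{-n}x}$, and using that $a_0$ is $C^\infty$ while contracting uniformly on $W^\lambda$, a bootstrap via the normal forms theory of Katok-Spatzier / Guysinsky-Katok (or equivalently the smooth invariance principle) upgrades the $C^{1,\beta}$ regularity of Lemma \ref{lem:coarse-lyap-reglow} to $C^\infty$ regularity of the $N^\lambda$-action along leaves. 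Since the metric on each $E^{c_i\lambda}$ is $N^\lambda$-invariant, and $N^\lambda$ acts transitively, the metric is determined on a leaf by its value at one point together with the $C^\infty$ action, hence is $C^\infty$ along leaves.

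The main obstacle is the bootstrap step for the metric: one must combine the scaling cocycle relation $\|a_* v\|_{c_i\lambda} = e^{c_i\lambda(a)}\|v\|_{c_i\lambda}$ with the smoothness of $a_0$ to improve H\"older regularity to $C^\infty$. The difficulty is that the scaling is not contractive on the fiber but the base contracts, so regularity has to be transferred from a single smooth reference transformation (iterates of $a_0$) to the entire leafwise structure. This is exactly where the normal forms theory for nonstationary linearizations on smooth stable manifolds provides the needed machinery, and also where the distinctness of the exponents $c_i\lambda(a_0)$ chosen at the outset is crucial to avoid resonances.
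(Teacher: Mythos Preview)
Your argument for the smoothness of the Oseledets subspaces $E^{c_i\lambda}$ has a genuine gap. The claim that ``the standard smoothness theorem for invariant distributions under dominated splittings of a $C^\infty$ diffeomorphism (HPS)'' yields $C^\infty$ regularity of each $E^{c_i\lambda}$ along leaves is not correct. HPS-type regularity results for invariant subbundles in a dominated splitting give only $C^r$ for $r$ bounded by a bunching ratio of contraction rates; even with exact conformal scaling on each summand, the ratios $c_i/c_j$ cap the regularity at some finite level. There is no general mechanism by which domination alone upgrades to $C^\infty$. Your remark that distinctness of the $c_i\lambda(a_0)$ ``avoids resonances'' is also misplaced: since all exponents here are proportional to $\lambda(a_0)$, integer resonances $c_i = \sum s_j c_j$ are generically present and cannot be avoided --- the normal form theory accommodates sub-resonances rather than excludes them.

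The paper's proof takes a different route that circumvents this. It uses the normal form coordinates (Appendix~\ref{app: nrml form}) for a contracting element, and then applies property \ref{nf:centralizer}: every commuting diffeomorphism --- in particular every $a\in\ker\lambda$ --- is a sub-resonance polynomial in these coordinates. The harnessed isometries of $W^\lambda(x)$ were constructed in Lemma~\ref{GS} as \emph{limits} of such $a\in\ker\lambda$; since limits of polynomials of bounded degree are again polynomials, the entire transitive group $\Isom_H(W^\lambda(x))$ acts by $C^\infty$ maps. Because this $C^\infty$ transitive group preserves both the splitting and the metric, their graphs over the leaf are $C^\infty$-homogeneous in the sense of \cite{RSS}, which forces them to be $C^\infty$ submanifolds. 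The normal forms you invoke for the metric are exactly the right tool --- the point is that they should be used for the splitting as well, via the polynomial form of the isometries, rather than appealing to dominated-splitting regularity.
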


\begin{proof}

Recall that the action of the harnessed isometry group $\Isom_H(W^\lambda(x))$ is obtained from taking limits of elements $a \in \ker \lambda$. In normal form coordinates (cf. Appendix \ref{app: nrml form}), these maps are given by sub resonance polynomials. Therefore, the limits of such maps are given by polynomials, and are hence $C^\infty$. Therefore, $\Isom_H(W^\lambda(x))$ has a subgroup of $C^\infty$ isometries acting on $W^\lambda(x)$. Since $E^{c_i\lambda}_x = h_*(E^{c_i\lambda}_y)$ for any harnessed isometry $h$ such that $h(x) = y$. Therefore each $E^{c_i\lambda}$ can be viewed a $C^\infty$ homogeneous graph of a continuous mapping from $W^\lambda(x)$ to corresponding Grassmannian space over $W^\lambda(x)$, hence this graph is locally compact, therefore by the result in \cite{RSS} we conclude that the splitting is $C^\infty$ along $W^\lambda$. Similarly, the a priori %H\"older 
continuous metric $<,>$ can be viewed as a $C^\infty$ homogeneous graph of a continuous mapping from $W^\lambda(x)$ to corresponding modular space of quadratic forms over $W^\lambda(x)$, then by the same proof we know it is actually $C^\infty$ along $W^\lambda(x)$.
\end{proof}

As a direct corollary of last lemma and Theorem \ref{thm:taylor}, we have

\begin{lemma}
\label{lem:coarse-lyap-smooth}
If \, \color{black} $\R^k \curvearrowright X$ is a $C^\infty$ totally partially hyperbolic \color{black} action satisfying \ref{FP1} and \ref{FP2}, then for every $\lambda$, each $g \in\Isom(W^\lambda(x))$  is a $C^\infty$ diffeomorphism.
\end{lemma}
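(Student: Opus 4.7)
The plan is to deduce the smoothness of harnessed isometries directly from the preceding regularity lemma combined with the Calabi--Hartman--Taylor theorem (Theorem~\ref{thm:taylor}). The previous lemma establishes that the H\"older metric $\langle\cdot,\cdot\rangle$ on each coarse Lyapunov leaf $W^\lambda(x)$ is in fact $C^\infty$ when viewed as a section over $W^\lambda(x)$. Meanwhile, Lemma~\ref{lem:coarse-lyap-reglow} already provides the \emph{a priori} regularity needed to apply Taylor's theorem: every $g \in \Isom(W^\lambda(x))$ is at least $C^{1,\beta}$.

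With these two ingredients in hand, I would argue as follows. Fix $x \in X$ and $g \in \Isom(W^\lambda(x))$. Work in local charts on $W^\lambda(x)$ near some point $p$ and near $g(p)$, and write $g_{jk}$, $h_{jk}$ for the coordinate expressions of the metric tensor in the source and target charts. By the previous lemma these coefficients are $C^\infty$. Since $g$ is a distance-preserving homeomorphism between the two open sets (by the definition of $\Isom$) and Lemma~\ref{lem:coarse-lyap-reglow} guarantees it is a $C^1$ diffeomorphism with $g^*h = g$, the hypotheses of Theorem~\ref{thm:taylor} are satisfied for every integer $r$. The implication $(1)\Rightarrow (4)$ of that theorem, applied successively with $r = 1, 2, 3, \dots$, upgrades the regularity of $g$ from $C^{r,\theta}$ to $C^{r+1,\theta}$ at each step, yielding $g \in C^\infty$.

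The main potential obstacle is verifying that the smoothness of the leafwise metric supplied by the preceding lemma is genuinely \emph{intrinsic} smoothness on $W^\lambda(x)$ (in its own $C^\infty$ leaf structure), rather than merely smoothness in some transverse or ambient sense. This is addressed by the construction itself: the previous lemma treats the metric as a graph in the appropriate Grassmannian-type bundle over the leaf and invokes \cite{RSS} to conclude smoothness along the leaf. Given this, applying Taylor's bootstrapping is essentially automatic; no further leafwise Journ\'e-type argument is needed since the statement only concerns regularity within a single leaf. The final statement of Lemma~\ref{lem:coarse-lyap-smooth} then follows directly.
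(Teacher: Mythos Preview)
Your proposal is correct and is essentially identical to the paper's own argument: the paper simply states the lemma as ``a direct corollary of last lemma and Rauch--Taylor Theorem~\ref{thm:taylor},'' and you have spelled out exactly those two ingredients and how they combine. One small remark: since Theorem~\ref{thm:taylor} gives the equivalence $(1)\Leftrightarrow(4)$, the distance-preserving homeomorphism property alone already suffices to bootstrap, so your invocation of Lemma~\ref{lem:coarse-lyap-reglow} for a priori $C^{1,\beta}$ regularity is not strictly needed (though harmless).
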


\section{Construction of the fibration}\label{fibration}
%{  need to look over and be careful; make sure v.f.'s integrate}

Throughout this section  we are assuming that we have an action $\mathbb R^k\curvearrowright X$ satisfying assumptions of Theorem \ref{abelian} or Theorem \ref{basic abelian}, which therefore has properties \ref{FP1} and \ref{FP2}. Recall that we assumed in Section \ref{extension} that the coarse Lyapunov foliations of the action are oriented.  
%(see Section \ref{sec:intermission1}).

{\color{black}
    In this section, we combine the groups $\leftN_x$ of leafwise isometries constructed in Section \ref{extension} to form an action of a nilpotent group $N^\lambda$ on some compact fibration of $X$ which does not depend on the basepoint. The construction here is rather subtle, due to two key complexities:
    
    \begin{itemize}
        \item The isometries built in the previous section are obtained as limits of the kernel action. In the setting of homogeneous spaces, the coarse Lyapunov foliations are orbits of a nilpotent group $N^\lambda$. Given a point $g\Gamma \in X$, a sequence $a_k \in \ker \lambda$ and an element $u \in N^\lambda$, we obtain an isometry of $N^\lambda g\Gamma = W^\lambda(g\Gamma) = \leftN_{g\Gamma}\cdot g\Gamma$ as a limit of $a_k$. That is, if $a_k g\Gamma \to u g\Gamma$, and $v \in N^\lambda$, then $a_kvg\Gamma = va_kg\Gamma \to vug\Gamma$. Importantly, note that the isometry action determined by the sequence $a_k$ is a {\it right} translation action on $N^\lambda g\Gamma$, and in fact must be if it is an isometry. These actions can never be extended to a global action on $X$, even in this homogeneous setting.

        Therefore, the strategy is as follows: we convert the right actions on each individual leaf $W^\lambda(x)$ (which we think of as $N^\lambda g\Gamma$) into left actions by considering the action generated by the vector fields invariant under $\leftN_x$. This will indeed yield the desired action. See Lemma \ref{lem:dual-brackets} and Corollary \ref{Chi}.

        \item The other complexity arises from the fact that not every model is a homogeneous space, but only a bi-homogeneous space. In particular, while to define an $\R^k$ action on $K \backslash H / \Lambda$, $\R^k$ must commute with $K$, it is not true that the nilpotent subgroups of $H$ normalized by $\R^k$ must commute with $K$. In particular, no left action can exist on the manifold. Indeed, one must ``undo'' the quotient by $K$. This process amounts to building a principal $K$-bundle over the bi-quotient. To do so, we build a bundle which frames the nilpotent groups correctly, see Lemma \ref{lem:big-bundle}.
    \end{itemize}

    Resolving these two complexities simultaneously is the main achievement of this section. The main difficulty is to make sure that there is a nilpotent Lie algebra $\mf n^\lambda$ such that given a ``good'' framing of the $T_xW^\lambda(x)$, the corresponding invariant vector fields correspond to the Lie algebra $\mf n^\lambda$, see Corollary \ref{cor:lie-const}. These culminate in defining the lifted $\R^k$-action on the principal bundle (Equation \eqref{eq:lift-def}) and the corresponding actions of $N^\lambda$ in Theorem \ref{thm:lifted-action}.
}

\subsection{Lie algebras and harnessed isometries on vector bundles\color{black}}
\label{sec:lie-bundles}

    For a given coarse Lyapunov exponent $\lambda$, let $\leftN$ denote the group constructed in Proposition \ref{prop:isometry-construction}, and $\leftN_x$ denote the simply transitive subgroup of $\Isom_H(W^\lambda(x))$. Then let $\mathfrak{X}_\lambda(x)$ denote the set of all vector fields on $W^\lambda(x)$ invariant under the action of $\leftN_x$. Recall that $W^\lambda$ have {\color{black}$C^r$ leaves, $r = (1,\theta)$ or $r = \infty$. 

    We establish some general structure theory regarding Lie structures on bundles.
    Let $\mc V$ be a continuous vector bundle over a smooth manifold $X$, and $V_x \subset \mc V$ denote the fiber over $x$. A Lie algebra structure on $V_x$ is a bilinear antisymmetric functional $[\cdot,\cdot]_x : V_x \times V_x \to V_x$ satisfying the Jacobi identity. It is hence an element of $L_x(V) = \Lambda^2(V_x^*) \otimes V_x$. Hence the vector bundle $\mc L(\mc V)$ with fibers $L_x(V)$ contains Lie algebra structures on the fibers of $\mc V$ (the Jacobi identity imposes an additional linear relation).

    Now, assume further that $\mc V$ decomposes as a sum of continuous subbundles $V_x = E^1_x \oplus \dots \oplus E^m_x$, and each $E^i_x$ has a Riemannian metric and orientation. We say that a framing $\varphi : \R^{\ell_1} \times \dots \times \R^{\ell_m} \to V_x$ is {\it harnessed} (by the decomposition) if $\varphi(\R^{\ell_i}) = E^i_x$ and $(\varphi(e_1),\dots,\varphi(e_{\ell_1}))$ is positively oriented. $\varphi$ is an {\it orthonormal harnessed} framing if $\varphi$ is harnessed and preserves the metric. A {\it harnessed isometry} of $V$ is a linear isomorphism which is an orientation-preserving isometry of each subspace $E_i$, and we let $SO_H(V_x) := SO(E^1_x) \times \dots \times SO(E^m_x)$ denote the set of harnessed orthogonal transformations. When $V = \R^n$ and $E^i = \set{0} \times \R^{\ell_i} \times \set{0}$, we denote the group by $SO_H(n)$.

%     %Notice that if $\mf f = (v_1,\dots,v_n)$ is an orthonormal harnessed frame, then there is a unique linear map defined by $e_i \mapsto v_i$ which is an isometry between $\R^{\ell_i}$ and $E_i$ for every $i$. 
     Given a Lie algebra structure $[\cdot,\cdot]_x$ on $V_x$ and a harnessed orthonormal framing $\varphi$ on $\R^{\dim(V_x)}$, it pulls back to a unique Lie algebra structure on $\R^n$ which we denote by $[\cdot,\cdot]_\varphi$.

    \begin{lemma}
    \label{lem:dual-brackets}
        Let $H$ be a Lie group with a simply transitive action on a $C^1$ manifold $X$ by $C^1$ diffeomorphisms. Then the set of $H$-invariant vector fields are $C^0$ and uniquely integrable, and generate an action of $H^{\operatorname{op}}$, where $H^{\operatorname{op}}$ is $H$ with the multiplication $*$ defined by

        \[ h_1 * h_2 = h_2h_1.\]
        Furthermore, the $H$-invariant vector fields are only $C^0$, but have a well-defined Lie bracket.
    \end{lemma}

\begin{proof}
    Fix $x_0 \in X$, and let $\phi : H \to X$ be defined by $\phi(h) = h\cdot x_0$. Notice that $\phi \of L_g(h) = gh \cdot x_0 = g \of \phi(h)$. By taking inverses, it follows that $L_g \of \phi^{-1} = \phi^{-1} \of g$.
        
    The map $\phi$ is a $C^1$ diffeomorphism by Theorem \ref{MZ}, since each $g$ is a $C^{1}$ diffeomorphism.  Therefore, if $v$ is an $H$-invariant vector field on $X$, it is $C^0$. Furthermore, $\hat{v} := \phi^{-1}_*v$, the pushforward of $v$ under $\phi^{-1}$, is a left-invariant vector field on $H$, since

        \[ \hat{v}(gh) = d\phi^{-1}(v(gh \cdot x_0)) = d\phi^{-1}dgdh(v(x_0)), \mbox{ and}\]

        \[ (L_g)_*\hat{v}(h) = (L_g)_*d\phi^{-1}(v(h\cdot x_0)) = d\phi^{-1} dgdh(v\cdot x_0).\]

    Since on $H$, the left-invariant vector fields are $C^\infty$, each $\hat{v}$ is uniquely integrable and $[\hat{v},\hat{w}]$ always exists. Unique integrability is preserved under $C^1$-conjugacy, so we conclude that $v$ is uniquely integrable. Furthermore, we can define a Lie bracket on the set of $H$-invariant by

    \[ [v,w] := \phi_*[\hat{v},\hat{w}].\]

    Finally, observe that on $H$, the left-invariant vector fields generate right translations on $H$. Therefore, an $H$-invariant vector field $v$ on $X$ integrates to the flow:

    \[ \psi_t^v(gx_0) = \phi(\psi_t^{\hat{v}}(g)) = \phi(gh_t) = gh_tx_0 \]
    where $h_t$ is the one-parameter subgroup of $H$ generated by $v$. Since the action is by right translations in the group structure, it is an action of the opposite group, as claimed.
\end{proof}

Note that for any group, $H \cong H^{\operatorname{op}}$ via the isomorphism $h \mapsto h^{-1}$.}

\begin{corollary}\label{Chi}
$\mf X_\lambda(x) \cong \Lie(\leftN^{\operatorname{op}})$ for every $x \in X$, and $\mf X_\lambda(x) = \mf X_\lambda(y)$ if $y \in W^\lambda(x)$.
\end{corollary}

\begin{proof}
%This follows from a general fact: if a Lie group $H$ acts simply transitively on a space $Y$ by diffeomorphisms, the space of invariant vector fields under the action can be pulled back to the space of either right- or left-invariant vector fields on $H$, which is by definition $\Lie(H)$. 
{\color{black}Recall that the $\mf X_\lambda(x)$ is the set of $\leftN_x$-invariant vector fields on $W^\lambda(x)$. The first claim follows from the Lemma \ref{lem:dual-brackets} and Proposition \ref{prop:isometry-construction}.} The last claim follows from the fact that $\mf X_\lambda(x)$ depends only on the manifold $W^\lambda(x)$ and does not depend on $x$ itself (recall that the normal subgroup of harnessed isometries acting simply transitively on a leaf is uniquely determined by Lemma \ref{Independent prop}(1)).
\end{proof}

\subsection{Continuity of Lie structures\color{black}}
\color{black}
In this section we assume thet we have an action as in Theorem \ref{basic abelian} or Theorem \ref{abelian} and we fix a coarse foliation $W^\lambda$ of the action. 
Let $\sigma : X \to \mc L(T W^\lambda)$ denote the section defined by $x \mapsto \mf X_\lambda(x)$, noting that each vector $v \in T_xW^\lambda(x)$ induces a vector field on $W^\lambda$ via the action of $\leftN_x$ and vice-versa.

  %  \begin{theorem} \label{thm:holder-lie} $\sigma$ is $W^\beta$-continuous %along every coarse Lyapunov leaf $W^\beta$ for $\beta \not= -\lambda$ and $\R^k$-continuous. If the action is Anosov, it is $W^{-\lambda}$-continuous   as well. (Add more details?)\end{theorem}

    %\todo{UNIFY curly Ws and non curly}

    The main result of this section is

    \begin{theorem}
    \label{cor:cont-lie}
        $\sigma$ is continuous.
    \end{theorem}

    %which we show in the two subsequent lemmas. The first one, Lemma \ref{lem:holder-lie1}, is going to be used both for the partially hyperbolic accessible case and the Anosov case and it shows stronger leafwise continuity as in Definition \ref{def: F-cont} along the foliations $W^\beta$, $\beta \not= \pm \lambda$. The reason this stronger form of continuity is needed is so that we can apply the invariance principle (Theorem E of \cite{ASV}) in the partially hyperbolic accessible case. The second one, Lemma \ref{lem:holder-lie2},  is showing a less strong continuity property and is restricted to (and will be used for) the Anosov case.  

    %We break up the proof of Theorem \ref{thm:holder-lie} into pieces, namely Lemmas \ref{lem:holder-lie1} and \ref{lem:holder-lie2}. 

We will prove continuity of $\sigma$ by using heavily the versions of invariance priniciple from Section \ref{InvPrinI}. The main point is that continuity is obtained as a consequence of strong continuity (see Definition \ref{def: F-cont}) along several foliations.  The starting point is the following lemma which establishes strong continuity along all the coarse Lyapunov foliations, except for $W^\lambda$ and $W^{-\lambda}$.

\color{black}
    \begin{lemma}
    \label{lem:holder-lie1}
        $\sigma$ is $W^\beta$-continuous, $\beta \not= \pm \lambda$.
    \end{lemma}

    \begin{proof}
        Choose an element $a \in \R^k$ very close to $\ker \lambda$, so that $c_i\beta(a) \le -1$ for all $i$ and $-1 \ll c_i\lambda(a) < 0$ for all $i$. With $a$ sufficiently close to $\ker \lambda$, we may guarantee that $E^s_a = E_\lambda \oplus E^{ss}_a$ is a dominated splitting, where $E^{ss}$ is the sum of the stable coarse Lyapunov distributions which are not $E_\lambda$.

        Since $E^s_a = E_\lambda \oplus E^{ss}_a$, there is a well-defined ``fast stable'' foliation $W^{ss}$, and given $y \in W^{ss}(x)$ a holonomy $h_{x,y} : W^\lambda(x) \to  W^\lambda(y)$ defined by $h_{x,y}(x') = W^{ss}(x') \cap W^\lambda(y)$.

        This holonomy map $h_{x,y}$ has very good properties, notably it is a $C^{1}$-diffeomorphism and in fact by \cite[Theorem 1.3]{Sag} $Dh_{x,y}$ depends continuously on pair $(x,y)$ as long as $y\in W^{ss}(x)$. We justify this more precisely. The result \cite[Theorem 1.3]{Sag} is about stable holonomy between center leaves. In our case here the role of the stable leaves is played by $W^{ss}$ and the role of the center leaves is played by $W^c_a$ which is the foliation tangent to $E^c_a=E_\lambda\oplus E^c$ if $-\lambda\notin \Delta$ or $E_\lambda\oplus E^c\oplus E_{-\lambda}$ if $-\lambda\in \Delta$. \cite[Theorem 1.3]{Sag} requires two things: dynamical coherence and enough (stable) bunching. Dynamical coherence in our situation we have from Section \ref{DynCoh}. By our choice of $a$ sufficiently close to $\ker\lambda$, we can make sure to have as much bunching as needed for the application of \cite[Theorem 1.3]{Sag}), which then gives result for the holonomy between $W^c_a$ leaves. Then by just restricting to the holonomy between $W^\lambda$ leaves, we obtain the needed result for $h_{x,y}$.%Now we consider $Dh_{x,y}$, the key is to identify it with the holonomy of certain continuous cocycle, more precisely the holonomy of $Da|_{E^\lambda}$, since by our choice of $a$, as the holonomy of the derivative cocycle}, 
    %{\color{black} USE OBATA or SAGHIN PAPER for $C^1$/$C^0$ version (rather than $C^{1,\theta}$/$C^\theta$)}, as long as $a$ is chosen sufficiently close to $\ker \lambda$ (see \cite{}). 
     
     We claim that the holonomy is a harnessed isometry. Indeed, note that if we choose $b \in \ker \lambda$ such that $E^s_b = E^{ss}_a$, then $b$ is an isometry between $W^\lambda(x)$ and $W^\lambda(bx)$. Then

        \[ h_{x,y}= (tb)^{-1} \of h_{(tb)x,(tb)y} \of (tb)\]
        and the right hand side converges to a harnessed isometry since $b$ is a harnessed isometry, and $h_{(tb)x,(tb)y}$ is  $C^1$  close to $\id$ when $t \to \infty$. Since the left hand side does not change, it must be a harnessed isometry itself. 

        Now, since $h_{x,y}$ is a harnessed isometry, it follows that $h_{x,y} \of \leftN_x \of h_{x,y}^{-1} = \leftN_y$. It follows that the conjugation map $c_{x,y}$ induced by $h_{x,y}$   is an isomorphism between $\leftN_x$ and $\leftN_y$ and is hence $C^\infty$ in the smooth structures of Lie groups. In particular, it induces an isomorphism of the left-invariant vector fields and the corresponding Lie algebra structures. We conclude that the Lie bracket at $x$ is the pullback of the Lie bracket at $y$ via $Dh_{x,y}(x)$, which varies continuously in $(x,y)$ as long as $y$ is in local $W^{ss}_a$ leaf. It follows that the Lie algebra structures are $W^{ss}_a$-continuous, and hence $W^\beta$-continuous, since $W^\beta$ subfoliates $W^{ss}_a$.
    \end{proof}

\color{black}
   \begin{proof}[ Proof of Theorem \ref{cor:cont-lie} for partially hyperbolic super accessible action as in Theorem \ref{abelian}] Let $a$ be a singular element in $\ker \lambda$. Then $a$ is partially hyperbolic and accessible with stable and unstable foliations decomposing into $W^\beta$s where $\beta\ne \pm \lambda$. Then the fact that $\sigma$ is $W^\beta$-continuous from previous lemma, together with  Proposition \ref{prop: strong ASV thm E} directly implies continuity of $\sigma$.
   \end{proof}

   Now we assume that we have an Anosov action as in Theorem \ref{basic abelian}. In this case we don't necessarily have super accessibility, so to prove Theorem \ref{cor:cont-lie} in the Anosov case we need 
two more lemmas. Fix a coarse Lyapunov foliation $W^\lambda$, and let $E^{c,\lambda}$ denote the distribution $T\R^k \oplus E_\lambda$ if $-\lambda\notin\Delta$ or  $T\R^k \oplus E_\lambda \oplus E_{-\lambda}$ if $-\lambda\in \Delta$.

\begin{lemma}
\label{lem:isom-section}
    If the $\R^k$-action is Anosov, the distribution $E^{c,\lambda}$ uniquely integrates to a foliation $W^{c,\lambda}$, and $\Isom_H(W^{c,\lambda}(x))$ is a Lie group acting transitively on each leaf $W^{c,\lambda}(x)$, where

    \[ \Isom_H(W^{c,\lambda}(x)) = \set{ f \in \Isom(W^{c,\lambda}(x)) : Df \mbox{ preserves Oseledets subbundles and their orientations}}.\]
    Furthermore, for every $x \in X$, there exists a neighborhood $x \in U \subset W^{c,\lambda}(x)$ and a continuous local section $\tau : U \to \Isom_H(W^{c,\lambda}(x))$ such that $\tau(y)x = y$ and $\tau(x) = \id$.
\end{lemma}

\begin{proof}
By the discussions in Section \ref{DynCoh}, for any generic singular element $a\in \ker \lambda$, $a$ is partially hyperbolic with isometric center distribution $E^c_a=E^{c,\lambda}$ defined above. And $E^c_a$ is
%            Choose a singular regular element $a \in \ker \lambda$, so $\beta(a) \not= 0$ for all $\beta\not= \pm \lambda$. Then $a$ is partially hyperbolic with isometric center, so by \cite[Proposition 6]{B03}, the distribution $E^c = T\R^k \oplus E_\lambda \oplus E_{-\lambda}$ is 
uniquely integrable to a central foliation $W^c_a$ for $a$.

        Put a metric on each $ W^c_a(x)$ by declaring the decomposition $T\R^k \oplus E_\lambda \oplus E_{-\lambda}$ or $T\R^k \oplus E_\lambda$ to be into orthogonal subspaces, and putting the intertwined metrics of \ref{FP2} on $E_\lambda$ and $E_{-\lambda}$, and the standard inner product on $\R^k$. We claim that the isometry group of $W^c_a(x)$ acts transitively on it. Indeed, the proof goes as in Lemmas \ref{transitive-x} and \ref{iso-x-iso-y}, by noting that the $\ker \lambda$-orbit is dense, and if $b \in \ker \lambda$, then $b : W^c_a(x) \to W^c_a(bx)$ is an isometry. Then if $y \in  W^c_a(x)$, we may choose $b_k \in \ker \lambda$ such that $b_k x \to y$, and in the limit obtain an isometry $f : W^c_a(x) \to W^c_a(x)$ such that $f(x) = y$.

        Notice also that the isometries obtained this way are harnessed (ie, they preserve the splitting into Oseledets sub-bundles and their orientations). Hence, the harnessed isometry $\Isom_H(W^c_a(x))$ group also acts transitively. By construction, the harnessed isometry group acts by {\color{black}$C^1$} diffeomorphisms by Theorem \ref{thm:taylor}, so it is a Lie group by \cite[Theorem 13]{MontZip1}. %We claim that it is a Lie group. Indeed, consider the orthonormal frame bundle $\mc F$ over $\mc W^c(x)$, and note that it is a topological manifold. $\Isom_H(\mc W^c(x))$ also acts on $\mc F$, and the action is free.
        
        %The stabilizer of $x$ is a closed subgroup of $SO(T\mc W^c(x))$. Therefore, $\Isom_H(\mc W^c(x))$ is a Lie group.

        Consider the evaluation map $\Isom_H(W^c_a(x)) \to W^c_a(x)$ defined by $E : f \mapsto f(x)$. This map has constant rank, since it is an evaluation map ($dE(f) = df \of dE(e) \of dR_{f^{-1}}$). Onto maps of constant rank, even in low regularity, are submersions, see \cite{BTV} (note that we cannot directly apply the usual version of Sard's theorem since it requires maps to be $C^r$, where $r$ is a function of the dimension). Since the evaluation is a submersion, there is a local $C^1$ section $\tau$ which associates to each $y \in W^c_a(x)$ sufficiently close to $x$ an element $\tau(y) \in \Isom_H(W^c_a(x))$ such that $\tau(y) \cdot x = y$. 
\end{proof}

    \begin{lemma}
    \label{lem:holder-lie2}
        If the $\R^k$ action is Anosov, $\sigma$ is $W^\lambda$-continuous and $W^{-\lambda}$-continuous.
    \end{lemma}

    \begin{proof}
    Fix $x \in X$ and $y \in W^{-\lambda}(x)$, and consider the section $\tau$ of Lemma \ref{lem:isom-section}. Notice that since each such $\tau(y)$ is harnessed, it is an isometry between $W^\lambda(x)$ and $W^\lambda(y)$. Since  $\tau(y)$ varies continuously in the $C^{1}$ topology, it conjugates the group $\leftN_x$ to the group $\leftN_y$. Again, since the map $y \to \tau(y)_*$ is continuous, it follows that the corresponding Lie algebra structures are related by $\tau(y)_*$ and vary continuously.
    \end{proof}

    \begin{proof}[Proof of Theorem \ref{cor:cont-lie} for Anosov actions as in Theorem \ref{basic abelian}]
    
    When the action is Anosov, by Lemmas \ref{lem:isom-section} and  \ref{lem:holder-lie2} we have strong leafwise continuity of $\sigma$ along all the coarse foliations of the action. Then we may use  Proposition \ref{prop: strong ASV thm E} again Corollary \ref{coro: Rk Anosov cont ASV} to conclude  $\sigma$ is continuous.
\end{proof}

 {\color{black} 
 \subsection{Harnessed orthonormal frames, compact extension $\tilde X$ and lifting of foliations\color{black}}\label{sec: Harn ort frm}We define a fiber bundle $\tilde{X}^\lambda$ over $X$, which will be a principal $K$-bundle, where $K = SO_H(\dim(W^\lambda(x)))$. This bundle will not have a canonical H\"older structure, but will have a H\"older structure along the leaves of the coarse Lyapunov foliations. Such structures are discussed in detail in Appendix \ref{app:brin-pesin}. Even though the material from Appendix \ref{app:brin-pesin} is fairly standard in partially hyperbolic dynamics, we could not find reference for continuous bundles. This is why we provide detailed arguments in Appendix \ref{app:brin-pesin} and advise the reader to consult it as we will refer to it in this section. 
 
 Recall that $\Lie(\leftN)$ has a grading, and let $\R^{\dim(\leftN)}$ have the canonical grading with matching dimensions $\R^{\dim(\leftN)} = \R^{\ell_1} \oplus \dots \oplus \R^{\ell_{n_\lambda}}$ (so $\dim(E^{c_i\lambda}) = \ell_i$). Let $SO_H(n)$ denote the group of harnessed isometries of $\R^{\dim(\leftN)}$ with such a grading, as discussed in Section \ref{sec:lie-bundles}.

\begin{definition}\label{def: X til lambda X til}
    Let $\tilde{X}^\lambda$ denote the bundle over $X$ 

    \[ \tilde{X}^\lambda = \set{ (x,\varphi_\lambda) : x \in X, \,\varphi_\lambda : \R^{\dim(\leftN)} \to T_xW^\lambda \mbox{ is a harnessed isometry}}.\]

Let $\tilde{X}$ denote the product bundle as $\lambda \in \Delta$ varies over all possibilities, so the fiber over $x$ is the tuples of frames $(\mf \varphi_\lambda)_{\lambda \in \Delta}$, and $\tilde{K} = \prod_{\lambda \in \Delta} \tilde{K}^\lambda$ denote the structure group. 
\end{definition}
\color{black}In the following, we first show that $\tilde X$ fits Definition \ref{def:reg-along-fol}. This allows us to use Proposition \ref{prop:holder-holonomies}, and obtain that the action and the invariant foliations lift from $X$ to $\tilde{X}$ in the canonical way.%, and since it is a compact extension, the foliations do as well. %(the proof is identical).
   % Then $\tilde{X}^\lambda$ is a continuous principal $\tilde{K}^\lambda$-bundle which is H\"older along coarses, where $\tilde{K}^\lambda = SO_H(\dim(\leftN))$.
   
A principal bundle over $X$ with a compact structure group is called \textit{partially H\"older} if it is $W^s_a$-H\"older continuous in the sense of Definition \ref{def:reg-along-fol} for every partially hyperbolic $a\in \R^k$.

\begin{proposition}\label{prop: partial Hol prin bd}
 The bundles $\tilde X^\lambda$ 
and $\tilde X$ over $X$ are partially H\"older. %Then by Proposition \ref{prop:holder-holonomies}
\end{proposition}
\begin{proof} This amounts to verifying the two conditions of Definition \ref{def:reg-along-fol} in the two subsequent lemmas.
\begin{lemma}\label{lem: prtl chrt}
The bundles $\tilde X^\lambda$ %in Lemma \ref{lem:big-bundle}, as well as the product bundle
and $\tilde X$ %in Section \ref{sec: Harn ort frm} 
satisfies (1). of 
%are H\"older along every coarse foliation in the sense of 
 Definition \ref{def:reg-along-fol}.
\end{lemma}
\begin{proof}To verify $\tilde X^\lambda$ is partially H\"older,  %H\"older along coarse in the sense of Definition \ref{def:reg-along-fol}, 
it is equivalent to show that for every $p\in X$ there is a continuous local section of $\tilde X^\lambda$ near $p$ which is H\"older along some $W^s_a$.

First by H\"older continuity of Oseledets splittings we know that the \textit{harnessed frame bundle} (not necessary orthonormal) 
\begin{multline}\label{eqn: def hns gen frm bdle}
    {\overline X^\lambda}:=\{(x, \phi_\lambda):x\in X,  \phi_\lambda:\R^{\ell_1}\oplus\cdots \oplus \R^{\ell_n}\to E^{c
_1\lambda}_x\oplus \cdots \oplus E^{c
_n\lambda}_x \\ \text{ preserves the splitting and their orientations}\}.
\end{multline} 
Clearly $\overline X^\lambda$ can be viewed as the product (fix $\lambda$ and let $i$ run over $1$ to $n$) of the frame bundles $GL^+(E^{c_i\lambda})$ of each Oseledets subspace $E^{c_i\lambda}$, therefore it is a H\"older continuous bundle, and contains $\tilde X^\lambda$ as a subbundle.

So for any $p\in X$, there is a local H\"older continuous section $\overline s_p$ of the bundle $\overline X^\lambda$. A priori, $\overline s_p$ may not intersects $\tilde X^\lambda$, i.e. those frames are represented by $\overline s_p$ may not be orthonormal frames, we would like to apply standard Gram-Schmidt process to get orthonormal frames from $\overline s_p$.
By taking a harnessed Gram-Schmidt process (i.e. taking Gram-Schmidt process within the subspaces of the splitting), with respect to the metrics on $E^\lambda$, we can  revise $\overline s_p$ to be a local section $\tilde s_p$ of $\tilde X^\lambda$. Since the metric on $E^\lambda$ is continuous and partially H\"older (Proposition \ref{Hoelder metric}) %along coarse, and by the local product structure of $\mc W^s_a$
the section $\tilde s_p$ is also partially H\"older. %along coarse. 
The proof of $\tilde X$ follows.
\end{proof}

\begin{lemma}\label{lem: loc idfy}
The bundles $\tilde X^\lambda$ and $\tilde X$ satisfies (2). of Definition \ref{def:reg-along-fol},% there exists a continuous family of identifications $\psi_{x,y}: F_x\to F_y$ defined for any $(x,y)$ in a neighborhood of the diagonal of $\tilde X^\lambda\times \tilde X^\lambda$ such that %$x,y \in X$ are sufficiently close and $y \in \mc W(x)$, there exists a map $\psi_{x,y} : F_x \to F_y$ such that for all $k \in K^\lambda$, $k\psi_{x,y}(p) = \psi_{x,y}(kp)$. And $\psi_{x,y}$ varies H\"older continuously as $y \in \mc W(x)$ varies along the local leaf  of coarse foliations passing through $x$, and $\psi_{x,x} = \id$ for all $x \in X$. The same conclusion also holds for $\tilde X$.
\end{lemma}
\begin{proof}The proof of $\tilde X$ directly follows the result of $\tilde X^\lambda$, so we only show Lemma \ref{lem: loc idfy} for $\tilde X^\lambda$. As the proof of Lemma \ref{lem: prtl chrt},  $\tilde X^\lambda$ could be viewed as a continuous subbundle of $\overline X^\lambda$ defined in \eqref{eqn: def hns gen frm bdle}. 

In Section 2.2 of \cite{KalSad}, the authors constructed a family of linear  identifications $I_{x,y}:F_x\to F_y$ which are H\"older continuous in $x,y$ and $I_{x,x}=\id$, for any H\"older continuous vector bundle $F$ over a smooth manifold. We apply this result to the bundle $E^{c_i\lambda}$ we get a H\"older continuous family of linear identifications $I^\lambda_{x,y}:E^{c_i\lambda}_x\to E^{c_i\lambda}_y$. The family $I^\lambda_{x,y}$ naturally induces a H\"older continuous family of  identifications between frame bundles $$\overline{I^{c_i\lambda}_{x,y}}: GL^+(E^{c_i\lambda})_x\to GL^+(E^{c_i\lambda})_y.$$
In particular we could restrict $\overline{I^{c_i\lambda}_{x,y}}$ to $SO(E^{c_i\lambda})_x$. A priori, the image of $\overline{I^\lambda_{x,y}}$ restrict to $SO(E^{c_i\lambda})_x$ may not intersect $SO(E^{c_i\lambda})_y$. Composing with a further Gram-Schmidt ``operator'' (with respect to the metric in $E^{c_i\lambda}$) if necessary, we get a family of identifications 
$$\tilde{I}^{c_i\lambda}_{x,y}: SO(E^{c_i\lambda})_x \to SO(E^{c_i\lambda})_y.$$
Since the metric in $E^{c_i\lambda}$ is continuous and H\"older continuous along coarse foliations (Proposition \ref{Hoelder metric}), $\tilde{I}^{c_i\lambda}_{x,y}$ is continuous and partially H\"older. %continuous along coarses. 
Moreover since Gram-Schmidt operator is just the identity if it acts on an orthonormal frame, we have $\tilde{I}^{c_i\lambda}_{x,x}=\id$. Glue all $\tilde{I}^{c_i\lambda}_{x,y}$ in an obvious way we get the family of identifications in Definition \ref{def:reg-along-fol} (2)  we want.
\end{proof}
\end{proof}
\color{black}

\begin{lemma}
\label{lem:big-bundle}
     The $\R^k$-action on $X$ lifts to an $\R^k$-action on $\tilde{X}$ by bundle automorphisms, and each foliation $W^\beta$ on $X$ lifts to a continuous foliation $\tilde{W}^\beta$ on $\tilde{X}$. 
\end{lemma}

For each $a \in \R^k$, let $\psi_{a,\lambda} : \R^{\dim(\leftN)} \to \R^{\dim(\leftN)}$ denote the linear automorphism uniquely determined by $\psi_{a,\lambda}|_{\R^{\ell_i}} = e^{c_i\lambda(a)} \id_{\R^{\ell_i}}$.

\begin{proof}
    %That $\tilde{X}^\lambda$ is a continuous principal bundle which is H\"older along coarses is clear from the fact that the metrics on $E_\lambda$ are continuous and H\"older along coarses. We just need to verify that the $\R^k$-action lifts to an action by bundle automorphisms.

    Given $(x,\varphi_\lambda) \in \tilde{X}^\lambda$, define:

    \begin{equation}\label{eq:lift-def} a\cdot (x,\varphi_\lambda) = (ax,a_*\of \varphi_\lambda \of \psi_{-a,\lambda}),\end{equation}
 where $a_* : T_xW^\lambda \to T_{ax}W^\lambda$ is the pushforward. Note that since $a_*$ and $\psi_{-a,\lambda}$ both preserve the Oseledets decomposition and are conformal restricted to each Oseledets space with inverse scaling constants, the map $a_* \of \varphi_\lambda \of \psi_{-a,\lambda}$ is a harnessed isometry between $\R^{\dim(\leftN)}$ and $T_{ax}W^\lambda$. The family of maps $a_* \of \varphi_\lambda \of \psi_{-a,\lambda}$ are connected to $\id$, so they must be orientation-preserving in each Oseledets space. Thus, each map is well-defined. 
 
 Since the transformations $\psi_{a,\lambda}$ all commute, it follows that it is an action of $\R^k$. Finally, it is an action by bundle automorphisms, since any harnessed isometry of $\R^{\dim(\leftN)}$ will commute with the maps $\psi_{-a,\lambda}$, and the structure group acts by precomposition on the right.
 
\color{black}
 Finally, observe that by Proposition \ref{prop: partial Hol prin bd} and Proposition \ref{prop:holder-holonomies}, all the (un)stable foliations of $\R^k$-action lift to foliations of $\tilde{X}$, hence all 
 the coarse Lyapunov foliations of the $\R^k$-action lift as well,  which we will denote by $\tilde W^\lambda$.
\end{proof}

\color{black}

\color{black}
\subsection{The Brin-Pesin subbundle\color{black}}\label{BPsubbundle}

Let $\hat{X}$ denote a Brin-Pesin subbundle for the lifted $\R^k$-action on $\tilde{X}$. This construction is outlined in Proposition \ref{prop:brin-pesin-construct}, and has the properties that

\begin{itemize}
    \item $\hat{X}$ contains all $\tilde{W}^\beta$-leaves for all $\beta \in \Delta$.
    \item $\hat{X}$ is $\R^k$-invariant and has a dense $\R^k$-orbit.
    \item $\hat{X}$ is unique up to translation by an element of $\tilde{K}$.
    \item The structure group $\hat{K}$ is unique up to conjugacy by the same element of $\tilde{K}$.
\end{itemize}

Let $I_{x,\lambda} : T_xW^\lambda \to \mf X_\lambda(x)$ denote the map which associates to a vector $v$ the unique $\leftN_x$-invariant vector field on $W^\lambda(x)$. Recall the block homotheties $\psi_{a,\lambda}$ introduced before Lemma \ref{lem:big-bundle}.

\begin{lemma}
\label{lem:psi-auto}
    $\psi_{a,\lambda}$ is an automorphism of $\R^{\dim(\leftN)}$ with respect to the Lie algebra structure \[[\cdot,\cdot]_{\hat{\varphi}_\lambda} := [\cdot,\cdot]_{I_{x,\lambda} \of \varphi_\lambda}.\]
\end{lemma}

\begin{proof}
    Since commutators are bilinear and $\psi_{a,\lambda}$ is a homothety in each $\R^{\ell_i}$ subspace, it suffices to show that for every $v \in \R^{\ell_i}$ and $w \in \R^{\ell_j}$, $[v,w]$ belongs to $\R^{\ell_r}$, where $c_i + c_j = c_r$. This follows as in Lemma \ref{lem:rich-automorphism}, since $\norm{a_*v} = e^{c_i\lambda(a)}\norm{v}$ and $\norm{a_*w} = e^{c_j\lambda(a)}\norm{w}$, by bilinearity $\norm{a_*[v,w]} = e^{(c_i+c_j)\lambda(a)}\norm{[v,w]}$.
\end{proof}

\begin{lemma}
\label{lem:lie-const}
    The map $(\varphi_\lambda)_{\lambda \in \Delta} \mapsto [\cdot,\cdot]_{\hat{\varphi}_\lambda}$ from $\tilde{X}$ to $\mc L(\R^{\dim(\leftN)})$ is continuous, and constant on $\R^k$-orbits.
\end{lemma}

\begin{proof}
    To see the continuity, we obtain the map as a composition of continuous functions. First, note that the map $\sigma$ which assigns the Lie algebra structure to $T_xW^\lambda$ is continuous in $x$  by Corollary \ref{cor:cont-lie}. Furthermore, let $\mc P$ denote the bundles whose fiber at $x$ is $\mc L(T_xW^\lambda) \times \tilde{X}_x$ (ie the product bundle of $\mc L(TW^\lambda)$ and $\tilde{X}$. we may define a map $\psi : \mc P \to \mc L(\R^{\dim(W^\lambda)})$ via

    \[ \psi(\omega,(\varphi_\lambda)_{\lambda \in \Delta}) = \varphi_\lambda^*\omega.\]
Then $\psi$ is clearly continuous, and we obtain the function in the statement as

    \[ ((\varphi_{\lambda})_{\lambda \in \Delta}) \mapsto \psi(\sigma(x),(\varphi_{\lambda})_{\lambda \in \Delta})\]
where $x$ is the basepoint of $(\varphi_\lambda)_{\lambda \in \Delta}$. Hence it is continuous.

    Finally, observe that $a_*$ takes $\leftN_x$-invariant vector fields to $\leftN_{ax}$-invariant vector fields, and since it is a diffeomorphism between the leaves, is an automorphism of the algebra of left-invariant vector fields. Since by Lemma \ref{lem:psi-auto}, $\psi_{a,\lambda}$ is an automorphism as well, it follows from the definition of the lifted action (Equation \eqref{eq:lift-def}) that the Lie algebra sturcture is constant on $\R^k$-orbits.
\end{proof}

\begin{corollary}
\label{cor:lie-const}
    $[\cdot,\cdot]_{\hat{\varphi}_\lambda}$ is constant on $\hat{X}$.%For every $(\varphi_\lambda)_{\lambda \in \Delta}$, $(\varphi_\lambda')_{\lambda \in \Delta} \in \hat{X}$ based at $x,x' \in X$, respectively, 
    
    %\[ I_{x',\lambda} \of \varphi_\lambda' \of {\varphi_\lambda}^{-1} \of I_{x,\lambda}^{-1} : \mf X_\lambda(x) \to \mf X_\lambda(x')\]

    %is a Lie algebra isomorphism for every $\lambda \in \Delta$.
\end{corollary}

\begin{proof}
%    First note that the Lie algebra structure $[\cdot,\cdot]_{\hat{\varphi}_\lambda}$ which is the pullback of the structure on $\mf X_\lambda(x)$ by $\varphi_\lambda \of I_{x,\lambda}$ varies continuously on $\tilde{X}$ by Corollary \ref{cor:cont-lie}. Furthermore, it is constant on $\R^k$-orbits, since...
    Since the $\R^k$-action has a dense orbit on $\hat{X}$, this follows immediately from Lemma \ref{lem:lie-const}.
\end{proof}

\begin{corollary}
\label{cor:compact-auto}
    $\hat{K} \subset SO_H(n)$ acts by automorphisms of the common Lie structure $[\cdot,\cdot]_{\hat{\varphi}_\lambda}$.
\end{corollary}

\begin{proof}
    Let $k \in \hat{K}$. By definition, $[\cdot,\cdot]_{\widehat{k\varphi}_\lambda} = k_*[\cdot,\cdot]_{\hat{\varphi}_\lambda}$. Since $k\varphi_{\lambda}$ must lie in $\hat{X}$ and the Lie algebra structures are the same by Corollary \ref{cor:lie-const}, it follows that $k_*$ preserves the Lie algebra structure.
\end{proof}

\subsection{The lifted actions\color{black}}

 Let $N^\lambda$ denote the simply connected Lie group whose Lie algebra is $\R^{\dim(\leftN)}$ with the bracket $[\cdot,\cdot]_{\hat{\varphi}_\lambda}$. Note that since $\psi_{a,\lambda}$ and $k_*$ are both automorphisms of the Lie algebra, they lift to unique automorphisms of $N^\lambda$, which by abuse of notation we denote by the same symbols.

 The following result is the crucial outcome of this part of the paper. It provides homogeneous structures along leaves of lifted foliations on the Brin-Pesin subbundle constructed in the last section. 
\begin{theorem}
\label{thm:lifted-action}
    For each $\beta \in \Delta$, there exists a continuous action of $N^\beta$ on $\hat{X}$, such that

    \begin{itemize}
        \item[(1)] $N^\beta \cdot (\varphi_\lambda)_{\lambda \in \Delta} = \tilde{W}^\beta((\varphi_\lambda)_{\lambda \in \Delta})$.
        \item[(2)] For every $u \in N^\beta$ and $a \in \R^k$, 

        \[ au = \psi_{a,\beta}(u)a.\]
        \item[(3)] For every $u \in N^\beta$ and $k \in \hat{K}$,

        \[ ku = (k_*u)k.\]
    \end{itemize}
\end{theorem}

\begin{proof}
    We define the actions as follows: given $v \in \R^{\dim(N^\beta)}$, and $(\varphi_\lambda)_{\lambda \in \Delta} \in \hat{X}$ based at $x \in X$, let $\bar{v}$ denote the $\leftN_x$-invariant vector field on $W^\beta(x)$ such that $\bar{v}(x) = \varphi_\beta(v)$. Let $y$ denote the image of $x$ under the time one map of the flow generated by $\bar{v}$, and $(\varphi_\lambda')_{\lambda \in \Delta}$ be the point of the lifted manifold $W^\beta((\varphi_\lambda)_{\lambda \in \Delta})$ which covers $y$. Then define $\exp(v)\cdot (\varphi_\lambda)_{\lambda \in \Delta} := (\varphi_\lambda')_{\lambda \in \Delta}$.

    To see that this is an action, since $\tilde{W}^\beta(x)$ covers $W^\beta(x)$ homeomorphically, and by Corollary \ref{Chi}, the invariant vector fields of the $W^\beta$-leaf do not change. Furthermore, by Corollary \ref{cor:lie-const}, it follows that $\varphi_\lambda : \R^{\dim(N^\beta)} \to \mf X_\beta(x)$ is a Lie algebra isomorphism. The action is the definition of the lift of a Lie algebra homomorphism to the corresponding Lie group (note that this is sufficient since all nilpotent groups are exponential). Properties (1)-(3) follow directly from the construction. 

{\color{black}
    The proof of continuity of $N^\beta$ action requires more care and we break it into several steps as in the proof of Theorem \ref{cor:cont-lie}, except that now we will use the bundle versions of the invariance principle Proposition \ref{prop: fancy inv prin} and Corollary \ref{coro: Rk Anosov with bundle ASV} instead. 

    In what follows, we shorten the notation for tuples $(\varphi_\lambda)_{\lambda \in \Delta}$ to $(\varphi_\lambda)$. }

    %Lemma \ref{lem:action-cont1}, Lemma \ref{lem:action-cont2} and Theorem \ref{thm:action-cont}.}%Finally, we show continuity. We will show each each $\exp(v)$ is a continuous map of $\hat{X}$, utilizing the invariance  principle again. That is, we will show that $\exp(v)$ is continuous along every coarse Lyapunov foliation... Does this work??? We don't have accessibility on the fiber, maybe we need to use the intertwining property with the $\hat{K}$-action? Also worried about $-\beta$, maybe need to split into P.H. and Anosov again.

%Definition of bi-continuous. In \cite{ASV}, the definition of $s$ and $u$ continuity is slightly stronger than continuous along stable and unstable leaf. See Definition 1.3 of \cite{ASV}. For our purpose, each time we apply this version of invariance principle, we basically need to verify the section is subordinate to holonomies. and need continuity of holonomies.

\color{black}
\begin{lemma}
\label{lem:action-cont1}
    The action of $N^\beta$ on $\hat X$ is $\tilde{W}^\mu$-continuous, $\mu \not= - \beta$.
\end{lemma}
\begin{proof}
    Fix $u \in N^\beta$. We will show the following: if $(\varphi_\lambda)_n$ and $(\varphi_\lambda')_n$ are sequences of tuples in $\hat{X}$ such that $(\varphi_\lambda')_n \in \tilde{W}^\mu((\varphi_\lambda)_n)$, which converge to $(\varphi_\lambda)_0$ and $(\varphi_\lambda')_0$, respectively, and $u\cdot (\varphi_\lambda)_n \to u \cdot (\varphi_\lambda)_0$, then $u \cdot (\varphi_\lambda')_n \to u \cdot (\varphi_\lambda')_0$.

    It suffices to show that the convergence happens on $X$. That is, if $\pi : \hat{X} \to X$ is the projection map, then $\pi(u \cdot (\varphi_\lambda')_n) \to \pi(u \cdot (\varphi_\lambda')_0)$. Indeed, assume we have this convergence. We also know that $(\varphi_\lambda')_n \to (\varphi_\lambda')_0$, and the action of $u$ is defined by lifting the $W^\beta$-leaves to $\tilde{W}^\beta$-leaves. Since the $\tilde{W}^\beta$-foliation is continuous by Lemma \ref{lem:big-bundle}, we conclude that the action is continuous on $\hat{X}$ from the convergence criterion on $X$. 

    Let us recall how the action is defined. Assume that $(\varphi_\lambda)_n'$ covers a sequence of points $y_n$, and that $u = \exp(v)$ for some $v \in \Lie(N^\beta)$. The projection of $u\cdot (\varphi_\lambda')_n$ is image of the time-1 map of the flow whose generating vector field is the ${}^{\beta}\hspace{-1.5pt}N_{y_n}$-invariant vector field on $W^\beta(y_n)$, whose evaluation at $y_n$ is $\varphi_{\beta,n}'(v)$. Call this vector field $V_n'$. Similarly, there exists a corresponding sequence of vector fields $V_n$ on $W^\beta(x_n)$, where $x_n$ is the projection of $(\varphi_\lambda)_n$.
    
    We will show that there exist $C^1$-diffeomorphisms $h_n : W^\beta(x_n) \to W^\beta(y_n)$ converging to a diffeomorphism $h_0 : W^\beta(x_0) \to W^\beta(y_0)$ uniformly on compact sets such that $(h_n)_*V_n= V_n'$. It follows that if the time-1 map of $V_n$ converges to a diffeomorphism of $W^\beta(x_0)$, the time-one maps of $V_n'$ converge to the time one map of $(h_0)_*V_0$. With the existence of such diffeomorphism, we conclude the lemma. 

    Of course, the diffeomorphisms $h_n$ are given by stable holonomy. Choose an element $a_0 \in \ker \beta$ such that $\mu(a_0) < 0$ and $\lambda(a_0) \not= 0$ for all $\lambda \not= \pm \beta$. Then choose a nearby partially hyperbolic element $a$ such that $-1 \ll \beta(a) < 0$, so that $E_\beta$ is the slow foliation in $E^s_a$, and the complementary fast distribution in $E^s_a$, $E^{ss}_a$, contains $E_\mu$. %(\color{black}what is $E_\mu$?\color{black}). 
    Then the fast distribution $E^{ss}_a$ integrates to a foliation $W^{ss}_a$ which is smooth in $W^s_a$, and whenever $y \in W^{ss}_a(x)$ there exists a smooth holonomy map $h_{x,y} : W^\beta(x) \to W^\beta(y)$ which varies continuously in $x$ and $y$ in the compact-open topology, and satisfies that

    \begin{equation}\label{eq:holonomy-intertwine} a\of h_{x,y} = h_{ax,ay} \of a.\end{equation}

    So we just need to show that $h_{x,y}$ intertwines the action of ${}^{\beta}\hspace{-1.5pt}N_x$ and ${}^{\beta}\hspace{-1.5pt}N_y$. Indeed, notice that $a_0$ is a harnessed isometry between $W^\beta(x)$ and $W^\beta(a_0x)$, and hence intertwines the actions of ${}^{\beta}\hspace{-1.5pt}N_x$ and ${}^{\beta}\hspace{-1.5pt}N_{a_0x}$ by the uniqueness property in Lemma \ref{Independent prop}. Then by \eqref{eq:holonomy-intertwine},

    \[ h_{x,y} = \lim_{n\to \infty} (-na_0) \of h_{na_0x,na_0y} \of (na_0).\]

    Since $d(na_0x,na_0y) \to 0$, $h_{na_0x,na_0y} \to \id$  as $C^1$-diffeomorphisms by \cite[Theorem 1.3]{Sag}. It follows that $h_{x,y}$ is a harnessed isometry, and has the desired intertwining properties.

    We define $h_n = h_{x_n,y_n}$. To complete the proof, we must show that $(h_n)_*V_n = V_n'$. Since $h_n$ is a harnessed isometry, we know that $(h_n)_*V_n$ is a ${}^{\beta}\hspace{-1.5pt}N_{x_n'}$-invariant vector field, so it suffices to show that $dh_n(\varphi_{\beta,n}(v)) = \varphi_{\beta,n}'(v)$. We will in fact show more, that for any $(\xi_\lambda) \in \hat{X}$ and $(\xi_\lambda)' \in \tilde{W}^\beta((\xi_\lambda))$ covering points $y \in W^\beta(x)$, 
    
    \begin{equation}
    \label{eq:ugh}dh_{x,y} \of \xi_\beta = \xi_\beta'.
    \end{equation}

%    By definition of the lifted action \eqref{eq:lift-def}, $a_0 \cdot \xi_\beta = da_0\of \xi_\beta \of \psi_{-a_0,\beta}$.
    To show \eqref{eq:ugh}, we use the lifted action as defined in \eqref{eq:lift-def}, which we denote by $a \cdot \xi_\beta$. It suffices to show that $d((na_0)\cdot (dh_{x,y} \of \xi_\beta),(na_0)\cdot \xi_\beta) \to 0$, since there is a unique point covering $y$ in $\tilde{W}^\beta(x)$. But by \eqref{eq:holonomy-intertwine},

    \[(na_0)\cdot (dh_{x,y} \of \xi_\beta)= dh_{na_0x,na_0y} \of (na_0)\cdot \xi_\beta\]
    and since $h_{na_0x,na_0y} \to \id$ as $n \to \infty$ (again, as a sequence of $C^1$-diffeomorphisms), we conclude \eqref{eq:ugh}. This concludes the lemma.
\end{proof}

The above lemma will suffice for showing continuity of $N^\beta$- action on the space $\hat X$ when the base action on $X$ is super accessible totally partially hyperbolic as in Theorem \ref{abelian}. When the base action is Anosov as in Theorem \ref{basic abelian}, it need not be accessible, so we need a bit more, namely for Anosov base actions we have the following: 

\begin{lemma}
\label{lem:action-cont2}
    If the base $\R^k$-action is Anosov, then the $N^\beta$-action on $\hat X$ is $\tilde{W}^{-\beta}$-continuous.
\end{lemma}

\begin{proof}
    Consider two nearby tuples $(\varphi_\lambda)$ and $(\varphi_\lambda')$ in the same $\tilde{W}^{-\beta}$-leaf, covering points $x,y$ in the same $W^{-\beta}$-leaf on $X.$ That is, if $\pi : \hat{X} \to X$ is the projection map, then $\pi((\varphi_\lambda)) = x$ and $\pi((\varphi_\lambda'))= y$. It suffices to show that for $u \in N^\beta$, $\pi(u\cdot (\varphi_\lambda))$ and $ \pi(u \cdot (\varphi_\lambda'))$ are close in $X$, since the action is defined by lifting coarse Lyapunov leaves, and the foliation $\tilde{W}^\beta$ is continuous by Lemma \ref{lem:big-bundle}.

    By definition of the action, if $u = \exp(v) \in N^\beta$, then $u\cdot (\varphi_\lambda)$ covers the point $x'$, which is the image of $x$ under the time-one map of the flow generated by vector field $\bar{v}_x$ such that $\bar{v}_x(x) = \varphi_\beta(v)$. 
    
    Recall the construction of a local $C^1$-section $\tau$ in Lemma \ref{lem:isom-section}, $\tau : W^{c,\beta}_{\loc}(x) \to \Isom_H(W^{c,\beta}(x))$ such that $\tau(z)x = z$ for all $z \in W^{c,\beta}_{\loc}(x)$. The section $\tau$ is continuous and each $\tau(y)$ is harnessed isometry, $\tau(y)$ is $C^1$-close to $\id$, takes $W^\beta(x)$ to $W^\beta(y)$ and intertwines the ${}^{\beta}\hspace{-1.5pt}N_x$ - and ${}^{\beta}\hspace{-1.5pt}N_y$ -actions. Therefore, $\tau(y)_*\bar{v}_x$ must be $C^0$-close to $\bar{v}_y$. It follows that since $\sigma(y) \to \id$ as $y \to x$, the action of $N^\beta$ is continuous.
\end{proof}

%\begin{theorem}
%\label{thm:action-cont}
 Now we show that   the action of $N^\beta$ is continuous.
%\end{theorem}
The proof goes along the same lines as the proof of Theorem \ref{cor:cont-lie}, upgraded to use {\color{black} Proposition \ref{prop: fancy inv prin}} for principal bundle extensions, instead of the usual invariance principle. 
    The $N^\beta$ action is automatically $\hat K$-continuous by construction (item (3) of Theorem \ref{thm:lifted-action}). In addition by Lemma \ref{lem:action-cont1}, we have $W^\lambda$-continuity for all $\lambda \in \Delta$, $\lambda \not= \pm \beta$.
    In the case when the base action is partially hyperbolic as in Theorem \ref{abelian}, because of the super accessibility by choosing an element in $\ker \beta$ and applying to it  Proposition \ref{prop: fancy inv prin} we get continuity of the $N^\beta$ action in this case.  In the case the base action is Anosov as in Theorem \ref{basic abelian}, we use in addition  Lemma \ref{lem:action-cont2}, so the result follows from local product structure and Corollary \ref{coro: Rk Anosov with bundle ASV}
\end{proof}

\color{black}
\begin{remark}
    The actions by $N^\beta$ are {\it not} necessarily by isometries of $W^\beta(x)$ and hence do not coincide with the previously constructed isometry group actions. In fact, it may be that no such global actions exist, as this would correspond to a right action on a homogeneous space $H/ \Lambda$.
\end{remark}

\subsection{Preview of Part 4.\color{black}}
In conclusion of the preparatory steps in the proof of Theorems \ref{basic abelian} and \ref{abelian}, given an action $\rho$ as in Theorem \ref{basic abelian} or \ref{abelian} we have constructed an extension of $\rho$ as given by \eqref{eq:lift-def} and corresponding normalized nilpotent group actions in Theorem \ref{thm:lifted-action}. This is the starting point for the rest of the proof of Theorems \ref{abelian} and \ref{basic abelian}. In the next section we describe the class of actions to which the extension constructed in Theorem \ref{thm:lifted-action} belongs. We call them \emph{harnessed abstract partially hyperbolic actions} (see Definition \ref{def:top-anosov}) and we show that, if genuinely higher rank, such actions are essentially homogeneous. This is our main technical result: Theorem \ref{thm:technical} in the subsequent section. \color{black} Sections \ref{sec:top-anosov}, \ref{sec:fibers} and \ref{sec:pairwise-sufficient} are dedicated to its proof. 
\color{black}

\part{Path group, cycle structure and classification of $\mathbb R^k$-actions}
\label{part:top-groups}

\section{Topological partially hyperbolic actions}
\label{sec:top-anosov}

%Here is the technical theorem we should try to prove

%\todo{comment(27) define action and space} 

%\todo{comment(30) see general comment on section 10}

%Let $M$ be a connected compact group. 
In this section, we define and develop properties of certain $\R^k$ %\times M$ 
actions on topological spaces which include the important features of smooth partially hyperbolic actions as axioms (see Proposition \ref{prop:smooth-is-top}). We begin by defining such actions, after developing some definitions and notations:

\subsection{Definitions and notations}%First define a topological variant of abelian Anosov actions 

%Suppose $X$ is a finite dimensional compact metric space.
%(+ maybe other structures, properties?).
%Let $M$  be a compact connected Lie group, and
Let $\Delta$ be a set of nonvanishing real linear functionals on $\R^k$ up to positive scalar multiple (i.e. elements of $(\R^k)^*/ \sim$, where $f \sim g$ if and only if $f = \lambda g$ for some $\lambda > 0$). For each $\alpha \in \Delta$, we fix the subset $[\alpha]$ of linear functionals positively proportional to  $\alpha$. We abusively let $\alpha$ denote the ``smallest'' element of $[\alpha]$. That is, we write:

\[ [\alpha] = \set{ \alpha = c_1 \alpha, c_2\alpha, c_3\alpha, \dots, c_{\ell} \alpha}, \mbox{ where } 1 = c_1 < c_2 < \dots < c_{\ell} \]

\noindent for some $\ell = \ell(a)$. Importantly, each $\alpha$ should not be thought of as a linear functional, but an equivalence class, and we add coefficients $c_i$ when picking specific functionals in the class. {\color{black}Each $c_i \alpha$ is called a {\it weight}, or interchangeably {\it Lyapunov exponent} or {\it  Lyapunov functional}, and each such $\alpha$ is called a {\it coarse weight},  {\it coarse Lyapunov exponent}.} Given such an $[\alpha]$, an element $a \in \R^k$, and a graded vector space $V = \bigoplus_{i=1}^\ell E^i$, there exists a uniquely defined isomorphism $a_* : V \to V$, by letting $a_*|_{E^i}$ be defined by scalar multiplication by $e^{c_i\alpha(a)}$. We call each $E^i$ an {\it Oseledets space} and $a_*$ the {\it graded homothety induced by $a$}.

{\color{black}
\begin{definition}
\label{def:del-harnessed}
    A {\it $\Delta$-harnessed nilpotent Lie group} is a pair $N^\alpha = (N,\alpha)$ such that $N$ is a simply connected nilpotent Lie group, $[\alpha] = \set{c_1\alpha,\dots,c_\ell \alpha}$ is a coarse Lyapunov exponent of some finite subset $\Delta \subset (\R^k)^*/ \sim$, %(\color{black}Since it is up to positively scalar, so we may need to modulo equivalent classes, so maybe here is a good place to define $\Omega$ to be the set of all Lyapunov exponents, note her e we never defined what Lyapunov exponent (functional) is in the whole HAPHA section. Then if we do it, the proofs at Page 65 makes sense. And just let $\Delta$ defined as above. On the other hand we use the word ``weight" really in two ways, sometimes coarse sometimes non-coarse.\color{black}),
    $\Lie(N)$ has a vector space decomposition $\bigoplus_{i=1}^\ell E^i$, and for every $a \in \R^k$, the graded homothety $a_*$ induced by $a$ is an automorphism of $\Lie(N)$.
\end{definition}

\begin{remark}
    We will often denote the group $N$ in Definition \ref{def:del-harnessed} by $N^\alpha$. We use this notation to indicate that we have a nilpotent group paired with a family of automorphisms indexed by $\R^k$ whose eigenvalues are determined by $[\alpha]$. Furthermore, since the nilpotent group is simply connected, the automorphisms $a_*$ will always lift, and we will let $a_*$ denote both the automorphism of the group or algebra, as determined by the context.
\end{remark}

We collect several useful definitions:

\begin{definition}
Let $\Delta$ be set of equivalence classes of functionals as described above.
\begin{itemize}
    \item Given a collection $a_1,\dots,a_n \in \R^k$, let $\Delta^-(\set{a_i}) = \set{\chi \in \Delta :  \chi(a_i) < 0 \mbox{ for every }i=1,\dots,n}$ (we similarly define $\Delta^+$ as the set of coarse weights with positive evaluations on every $\chi \in \Delta$).
    \item A subset $\Phi \subset \Delta$ is called {\it stable} if there exists $a \in \R^k$ such that $\Phi \subset \Delta^-(a)$.
    \item The {\it Weyl chambers} of $\Delta$ are the connected components of $\R^k \setminus \bigcup_{\alpha \in \Delta} \ker \alpha$.
    \item An element $a \in \R^k$ is called {\it regular} if $a$ belongs to a Weyl chamber.
\end{itemize}
\end{definition}} 

 The following lemma first appeared in \cite[Lemma 5.32]{Spatzier-Vinhage}, and the proof is identical (it is purely linear algebra). Recall the definitions and notations for circular ordering (Definition \ref{def:circular-ordering}), $\Sigma(\alpha,\beta)$, and canonical circular ordering (Definition \ref{def:canonical-order}).

\begin{lemma}
\label{lem:root-comb}
Let $\alpha,\beta \in \Delta$ be linearly independent and $\mathfrak C_1,\dots,\mathfrak C_m$ be the Weyl chambers such that $\alpha$ and $\beta$ are both negative on every 
%\todo{comment(29) define $\mathcal W_i$} 
$\mathfrak C_i$. For each such chamber, choose an arbitrary $a_j \in \mathfrak C_j$. Then $\Sigma(\alpha,\beta) \cup \set{\alpha,\beta} = \Delta^-(\set{a_i})$.
\end{lemma}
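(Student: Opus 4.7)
The plan is to establish the two inclusions separately, with the non-trivial direction reducing to a Farkas-type duality statement for the polyhedral cone cut out by $\alpha$ and $\beta$.

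For the forward inclusion $\Sigma(\alpha,\beta) \cup \set{\alpha,\beta} \subseteq \Delta^-(\set{a_i})$, I would argue by direct evaluation. If $\gamma = \sigma\alpha + \tau\beta$ with $\sigma,\tau > 0$ belongs to $\Sigma(\alpha,\beta)$, then for any chosen $a_i \in \mathfrak C_i$ the two summands $\sigma\alpha(a_i)$ and $\tau\beta(a_i)$ are both strictly negative, so $\gamma(a_i)<0$. The endpoint cases $\gamma = \alpha$ and $\gamma = \beta$ are immediate since $\alpha(a_i),\beta(a_i)<0$ by the choice of chambers.

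For the reverse inclusion, let $\gamma \in \Delta^-(\set{a_i})$. Because $\gamma$ is a nonzero linear functional on $\R^k$, its kernel is a hyperplane and cannot contain any open set; consequently $\gamma$ has a single constant sign on each open chamber $\mathfrak C_j$, and the hypothesis $\gamma(a_j)<0$ upgrades to $\gamma<0$ on every $\mathfrak C_j$, hence on $\bigcup_j \mathfrak C_j$. The key geometric observation is that this union is exactly the complement inside the open cone $C = \set{\alpha<0,\beta<0}$ of the finitely many hyperplanes $\ker\chi$ for $\chi\in\Delta\setminus\set{\alpha,\beta}$, so it is dense in $C$. By continuity of $\gamma$, we obtain $\gamma\le 0$ on the closed cone $\bar{C}=\set{\alpha\le 0,\beta\le 0}$.

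Now I invoke Farkas' lemma (equivalently, the separating hyperplane theorem for polyhedral cones): the polar cone of $\bar C$ inside $(\R^k)^*$ is precisely the cone $\set{\sigma\alpha+\tau\beta : \sigma,\tau\ge 0}$ generated by $\alpha$ and $\beta$. Hence $\gamma = \sigma\alpha + \tau\beta$ with $\sigma,\tau\ge 0$, and since $\gamma\ne 0$ at least one coefficient is strictly positive. If both are, then $\gamma\in\Sigma(\alpha,\beta)$ by Definition~\ref{def:canonical-order}; if only $\sigma>0$, then $\gamma$ is a positive multiple of $\alpha$, hence equals $\alpha$ as an element of $\Delta$ (since $\Delta$ is defined up to positive scalar), and symmetrically when only $\tau>0$. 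There is no genuine obstacle here: the entire argument is linear algebra, and the only slightly delicate point is the density step, which is why the argument uses an \emph{arbitrary} representative $a_j$ from each chamber (rather than a specific one) combined with continuity to pass from the chamber representatives to the whole closed cone.
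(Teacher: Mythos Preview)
Your proof is correct. The paper does not actually prove this lemma, noting only that ``it is purely linear algebra'' and deferring to \cite[Lemma 5.32]{Spatzier-Vinhage}; your density-plus-Farkas argument is a clean and standard way to carry this out, and all steps (constant sign of a functional on each chamber, density of the union of chambers in the open cone $\set{\alpha<0,\beta<0}$, and identification of the polar cone) are correctly justified.
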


{\color{black}
We will use one more piece of useful terminology:

\begin{definition}
    Let $\R^k \curvearrowright X$ be a continuous group action on a metric space. We say that the action is {\it totally recurrent} if for every $a \in \R^k$, the set of $a$-recurrent points is dense.
\end{definition}
  Note that by Poincar\'{e} recurrence, actions preserving a measure of full support are totally recurrent.
  
%\begin{definition}
%    Let $X$ be a compact metric space with Lie group actions $H_1,H_2 \curvearrowright X$ with right-invariant metrics. We say that a group action $H_1 \curvearrowright X$ is {\it H\"older along $H_2$} if for every $h \in H$, there exists some $C(h),\theta > 0$ such that if $d(hx,hgx) < C(h)d_{H_2}(e,g)^\theta$ for all sufficiently small $g \in H_2.$
%\end{definition}  
  
  }

  \subsection{Harnessed abstract partially hyperbolic actions (HAPHAs)\color{black}}

  As noted at the start of this section, we axiomatize the key structures obtained from the dynamics of an accessible partially hyperbolic $\R^k \times K$-action. Most of the following properties were deduced in the previous section for smooth partially hyperbolic actions. The reason for formulating the definition in the topological setting is twofold: first, it allows us to identify which properties of smooth systems are important and reference those properties quickly. Second, it highlights  the breadth of the geometric approach we employ. In particular, we will be able to obtain a purely topological rigidity result without the use of derivatives from these conditions.

\begin{definition}
\label{def:top-anosov}
{\color{black}Let $K$ be a compact connected Lie group and $X$ be a compact connected metric space {\color{black} of finite topological dimension}. Consider a continuous, locally free action of $\R ^k \times K$ on $X$, equipped with a nonempty finite subset $\Delta \subset (\R^k)^* / \sim$, which is decomposed into coarse Lyapunov exponents $\Delta = \alpha_1 \cup \dots \cup \alpha_\ell$ with distinguished finite subsets $[\alpha_i] \subset (\R^k)^*/ \sim$. If an action $\R^k\times K \curvearrowright X$ has properties \ref{ta1}-\ref{ta9} as described below, we call it a {\it harnessed abstract partially hyperbolic action (HAPHA)} of $\R^k \times K$.}
\begin{itemize}%[label=(HA-\arabic*)]

%\todo{com(31)}
%\todo{com(32)}
\item [\mylabel{ta1}{(HA-1)}] The $\R^k$ action has a dense orbit.
%\item \label{ta3} For all $\alpha \neq \beta \in \Delta$, $\alpha$ and $\beta$ are not positively proportional.
 \item [\mylabel{ta4}{(HA-2)}] For all $\alpha \in \Delta$, there is a {\color{black}$\Delta$-harnessed} nilpotent Lie group $N^{\alpha}$ with {\color{black}locally free continuous} actions $N^{\alpha}  \curvearrowright X$, with corresponding action of the free product $\mc P$ of the groups $N^\chi$, $\chi \in \Delta$. %{\color{olive} which are H\"older along the $N_\beta$-action for every $\beta \in \Delta$}.%  These actions { have locally bi-Lipschitz evaluation maps} onto their images and are globally H\"older. \color{black} We don't really need the bi-Lipshits contidion? Just need bi-Hölder?  \color{black}
\item [\mylabel{ta5}{(HA-3)}]For all $\alpha \in \Delta$, {\color{black}there exists an automorphism action $\R^k \times K \to \Aut(N^\alpha)$ denoted by $g \mapsto g_*$} such that {for all $u\in N^\alpha$
\[ g ug^{-1} \cdot x = (g_*u)\cdot x.\]

Furthermore for all $a \in \R^k$ and $u \in N^\alpha$, the map $a_*$ is the automorphism coming from the harnessed assumption (see Definition \ref{def:del-harnessed}).% also satisfies

%\[ aua^{-1} \cdot x = (a_*u)\cdot x.\]
%Assume also that the $ \Lie(N_\alpha)$ automorphism induced by $g_*$ is a graded homothety for $g \in \R^k$. 
%In addition, if $g \in \R^k$, we assume that induced automorphism $g_*:\Lie(N_\alpha)\to \Lie(N_\alpha)$ is a graded homothety.
%(Note: we will use the same notation $g_*$ for both the automorphism of the Lie group and Lie algebra. Since these operate on different spaces, the context will make clear which is intended.) }%\color{teal} did we explain semi-simple automorphism? We actually need for any $m\in M$, $Dm$ preserves the Oseldets decomposition.}
%induced by { $g$}. %and for all $g \in \R^k \times M$ { , }:

%{\color{teal}$u\in N_\alpha?$.} %I am a little bit confused here, the action of any element in $\R^k$ is homothety, is the action of an element in $M$ also a homothety? }
%\item \label{ta6}The action of $M$  normalizes all $N_{\alpha}$ actions.
\item [\mylabel{ta8}{(HA-4)}] {\color{black} The $\R^k$-action is totally recurrent.}%For every $a \in \R^k$, the set of $a$-recurrent points is dense.}%The set of $\R^k \times M$-closed orbits is dense. 
%\item the action is accessible (ie, the free product of the groups $N_\beta$ is transitive).
 \item [\mylabel{ta10}{(HA-5)}] If $a_1,\dots,a_m \in \R^k$ is a list of regular elements, and %, $\Phi \subset \Delta$ is the subset of weights such that $\chi(a_i) < 0$ for all $i = 1,\dots,m$, and 
% \todo{comment(33)}
$\set{\chi_1,\dots,\chi_r}$ is a circular ordering of $\Delta^-(\set{a_i})$, the restriction of the evaluation maps $\mc P \to X$ defined by $\rho \mapsto \rho x$ to $C_{(\chi_1,\dots,\chi_r)} = N^{\chi_1} \times \dots \times N^{\chi_r}$ are injective (recall %the definition of $C_{(\chi_1,\dots,\chi_r)}$ from
Definition \ref{def:comb-cells}). Their images are denoted by $W^s_{(a_1,\dots,a_m)}(x)$.%{  can we do it with just $m = 1$? unclear how to make induction work}
 \item [\mylabel{ta3}{(HA-6)}] If $a_1,\dots, a_m \in \R^k$ are regular elements, then $W^s_{(a_1,\dots,a_m)}(x)$ as defined in \ref{ta10} is exactly the set of points $y \in X$ such that $d({a^t_i}x,{a^t_i}y) \to 0$  for every $i = 1,\dots, m$. %If $a_1,\dots,a_m \in \R^k$ are as in \ref{ta10},
 %then for any combinatorial pattern $\bar{\beta}$ whose letters are all from $\Delta^-(\set{a_i})$, $C_{\bar{\beta}} x \subset W^s_{(a_1,\dots,a_m)}(x)$ for every $x \in X$.
\item [\mylabel{ta9}{(HA-7)}] The action satisfies at least one of the following properties:

\begin{itemize}%[label=(\alph*)]
    \item [\mylabel{(HA-7a)}{ (a)}]{\color{black} For every coarse Lyapunov exponent $\alpha \in \Delta$ and $x,y \in X$, there exists an element $\rho \in \mc P_{\hat{\alpha}}$, the free product of the groups $N^\beta$, $\beta \not= \pm \alpha$, such that $\rho \cdot x \in  K \cdot y$}.
    \item [\mylabel{(HA-7b)}{ (b)}] {\color{black}If $w = (\alpha_1,\dots,\alpha_m)$ is a listing of the coarse Lyapunov exponents in which every exponent appears exactly once, then the map 

\[ h_{w,x} : C_w \times (\R^k \times K) \to X\]

defined by $h_{w,x}(\rho,g) = \rho \cdot g \cdot x$ takes arbitrarily small neighborhoods of $((e,e,e,\dots,e),e)$ to open neighborhoods of $x$.}
\end{itemize}%There exists a tuple of weights $\bar{\beta} = (\beta_1,\dots,\beta_n)$ (where we allow each weight to be listed more than once) such that the map from $p_x : C_{\bar{\beta}} \times (\R^k \times M) \to X$ defined by
%\[p_x((u_1,\dots , u_n),g) = u_1\cdot \dots \cdot u_n \cdot g \cdot x\]
%takes arbitrarily small neighborhoods of the trivial path to open neighborhoods of $x$.} 
Actions satisfying \ref{ta9}(a) will be called {\it super accessible} (cf. Definition \ref{def:strongly}). Actions satisfying \ref{ta9}(b) will be called {\it abstract Anosov}. These two cases are reflections of the differing assumptions in Theorems \ref{abelian} and \ref{basic abelian}.

\hspace{1cm}

%For any ordering $\bar{\beta} = (\beta_1,...,\beta_n)$ of $\Delta$ which lists every weight exactly once, there exist arbitrarily small open sets $U\subset C_{\bar{\beta}} \times (\R^k \times M)$ containing the trivial path such that for every $x \in X$, the restriction of the evaluation map at $x$ from $U \to X$ is onto a neighborhood of $x$.\\

\setcounter{enumi}{7}

\noindent We say that the action is {\it genuinely higher rank} if it also satisfies:\\

\item [\mylabel{ta2}{(HA-8)}] %for all hyperplanes $H \subset \R ^k $,
For every $\alpha \in \Delta$,  $(\ker \alpha \times K) \cdot  x$ is dense for some $x \in X$. \\

 \noindent We say that the action {\it has SRB measures} if it also satisfies:\\

\item [\mylabel{ta-srb}{(HA-9)}] for every pair $\alpha,\beta \in \Delta$, there exists a fully supported (not necessarily ergodic) measure $\mu$ which is invariant under $\ker \beta \times K$ and has \color{black}absolutely continuous disintegrations along the $N^\alpha$-orbit foliation. More precisely, this means that for every foliation box $B$ of the $N^\alpha$-orbit foliation, the pull back of the conditional measure of $\mu$ restricted to $B$ is absolutely continuous with respect to the Haar measure on $N^\alpha$, for $\mu$-almost every leaf. }

\color{black}
 %if we parametrize each $N_\alpha$ orbit leaf by the $N_\alpha$ action, then the conditional measure of $\mu$ restricted on $B$ on the intersection of almost every $N_\alpha$-orbit leaf with the foliation box is absolutely continuous with respect to the Haar measure on $N_\alpha$. }

%\item \label{ta7} For every $\alpha$, if $\mc P_{\hat{\alpha}}$ is the group freely generated by $N_\beta$, $\beta \not= \pm \alpha$, then there exists a neighborhood $U \subset (\R^k \times M) \ltimes \mc P_{\hat{\alpha}}$ such that $U \cdot x$ is a neighborhood of $x$ for every $x \in X$.
\end{itemize}

\end{definition}

%\begin{remark}
% Several properties listed here may follow from others. In particular, we believe that properties \ref{ta8}, \ref{ta3} {  and \ref{ta-srb}} can be deduced from the other conditions of the definition under transitivity assumptions. Since the proofs are straightforward when working with smooth systems, we add them as additional conditions here. {  In Section \ref{sec:smooth-top} we deduce that the extension constructed in Section \ref{extension} of a {\it smooth} partially hyperbolic action satisfying conditions \ref{FP1} and \ref{FP2}   is a HAPHA with SRB measures. } See also Section 4.6 and Remark 14.6 of \cite{Spatzier-Vinhage}.
%\end{remark}

\begin{remark}
While many examples of smooth partially hyperbolic $\R^k \times K$ satisfy assumptions \ref{ta1}-\ref{ta-srb}, some do not. In particular, actions with nontrivial Jordan blocks will fail to satisfy \ref{ta5} and actions with rank one factors will fail to satisfy \ref{ta2}. Among homogeneous actions, these are the actions which  fail to satisfy the conditions and we believe them to be the only ways smooth partially hyperbolic $\R^k \times K$-actions fail to satisfy the conditions (although conjecturally they are all still homogeneous). In particular, $\R^k \times K$ actions which are restrictions of actions of semisimple groups are all HAPHAs (see Proposition \ref{prop:smooth-is-top}).
\end{remark}

%\begin{remark}
%\color{black} ....add a remark about (HA-7)a and (HA-7)b , can be substitutet by effective accessibility...\color{black}
%\end{remark}

%\begin{remark}
%\color{olive} The accessibility condition \ref{ta9}(a) can be relaxed to accessibility using all coarse Lyapunov foliations not proportional to some fixed $\alpha$ {\it and} legs along the orbit direction $\R^k$
%\color{black} ....add a remark about "transversally accessible"...\color{black}
%\end{remark}

Note that {\color{black}both the genuinely higher rank and super accessibility assumptions} force that $k \geq 2$ unless we are discussing trivial actions (when $\Delta = \emptyset$ and the $\R^k \times K$ action is transitive). %{ We will see that these generalize the notion of smooth actions in Section \ref{sec:smooth-top}.}% The following shows that our new notion is a direct generalization of smooth actions.
Our main (and most general) technical result of the next few sections follows. {  We require an additional assumption of {\it integral Lyapunov coefficients, defined after Definition \ref{def:rhohat}}. This will always hold for $C^2$-actions by Lemma \ref{lem:brown}.}

\begin{theorem}
\label{thm:technical}
Let $\alpha$ be a  genuinely higher-rank HAPHA {  with integral Lyapunov coefficients and SRB measures}.  Then there exists a Lie group $H$, an embedding of $\R ^k \times K$ into $H$, a cocompact lattice $\Lambda \subset H$, and a homeomorphism $\phi : X \to H/\Lambda$ which conjugates the $\R ^k \times K$-action to the natural actions by left translation on $H/\Lambda$ by $\R ^k \times K$.  
\end{theorem}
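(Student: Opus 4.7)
The plan is to construct $H$ as a quotient of the topological semidirect product $\hat{\mc P} = (\R^k \times M) \ltimes \mc P$ introduced in Definition \ref{def:P-groups}, where $\mc P = *_{\alpha \in \Delta} N_\alpha$ is the free product of the nilpotent groups furnished by \ref{ta4}. By combining the individual $N_\alpha$-actions with the given $\R^k \times M$-action, $\hat{\mc P}$ acts continuously on $X$, and by iterating the local surjectivity from \ref{ta9} together with density of the $\R^k$-orbit \ref{ta1}, this action is transitive. Fix a basepoint $x_0 \in X$ and let $\mc C = \Stab_{\hat{\mc P}}(x_0)$. The goal is to show that $\mc C \subset \Stab_{\hat{\mc P}}(x)$ for every $x \in X$, so that by Corollary \ref{cor:lie-from-const} the action descends to the quotient $H := \hat{\mc P}/\mc C$; this also automatically makes $\mc C$ normal and $H$ a topological group acting freely and transitively on $X$.

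The central difficulty is to identify enough ``universal'' relations in $\mc C$, i.e.\ cycles $\sigma \in \hat{\mc P}$ that stabilize the \emph{entire} space $X$ rather than only $x_0$. I would attack this in three stages according to the types of cycles. First, \emph{abelian cycles} of the form $u \cdot v \cdot u^{-1} \cdot v^{-1}$ where $u, v \in N_\alpha$ lie in the same nilpotent group are trivial by relation \eqref{eq:freep-rel1}. Second, \emph{opposite-weight cycles} $u_\alpha \cdot v_{-\alpha} \cdot u'_\alpha \cdot v'_{-\alpha}$: for any such returning loop, conjugation by $a_t \in \ker \alpha$ via \eqref{eq:renormalization} preserves the cycle pointwise, and using the $\ker\alpha \times M$-dense orbit from \ref{ta2} together with continuity, one should be able to deduce that stability at $x_0$ propagates to every point in the closure of the orbit, hence to all of $X$. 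Third, and most delicate, are \emph{geometric commutator cycles} between non-proportional weights $\alpha, \beta$. Here, the natural expectation from the Lie-algebra picture in Proposition \ref{prop:integrality} is that a commutator $[u_\alpha, u_\beta]$ should equal a product of terms in $N_\gamma$ for $\gamma \in \Sigma(\alpha, \beta)$ arranged in the canonical circular order of Definition \ref{def:canonical-order}. I would prove a cocycle-like identity for the correction term: conjugating the commutator by suitable $a \in \R^k$ with $\alpha(a), \beta(a) < 0$, the left side scales as a graded homothety, and using integrality of the Lyapunov coefficients one identifies the correction with a polynomial expression whose coefficients are \emph{independent} of the base point. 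This base-point independence is exactly what is needed to conclude that these commutator cycles lie in $\mc C$ universally.

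With these three families of relations identified, the next step is to put arbitrary words in $\mc P$ into a canonical presentation. Fix a circular ordering $\bar\beta = (\beta_1, \ldots, \beta_n)$ of $\Delta$. Using the commutator relations and the associativity of $N_\alpha$, any word in $\mc P$ should be reducible modulo $\mc C$ to a word in the combinatorial cell $C_{\bar\beta}$, with exactly one leg per weight. This gives a continuous surjection $C_{\bar\beta} \times (\R^k \times M) \to \hat{\mc P}/\mc C$ near the identity, and by \ref{ta9} the composed evaluation map to $X$ is a homeomorphism on a small neighborhood. Since the target $X$ is finite-dimensional, this yields an injective continuous map from a neighborhood of the identity in $\hat{\mc P}/\mc C$ into a finite-dimensional space; by Theorem \ref{lem:gleason-palais}, $H = \hat{\mc P}/\mc C$ is a Lie group. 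The natural homomorphism $\R^k \times M \to H$ coming from the semidirect factor is injective because the $\R^k \times M$-action on $X$ is locally free. Finally, the stabilizer $\Lambda := \Stab_H(x_0)$ is discrete (since the evaluation map from a small neighborhood of $e \in H$ is a homeomorphism onto its image in $X$), and cocompactness follows from compactness of $X$ via the equivariant homeomorphism $H/\Lambda \to X$.

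The main obstacle I expect is the base-point independence of the geometric commutator corrections. Since we have only H\"older and not $C^2$ regularity, we cannot take Lie brackets of vector fields directly; instead, these commutators must be extracted as uniform limits of conjugation expressions. Controlling the resulting polynomial correction term across different base points requires exploiting the SRB measure from \ref{ta-srb} (to get absolute continuity for measure-rigidity type arguments along $N_\alpha$-leaves) together with integrality of Lyapunov coefficients (to ensure the correction is a genuine polynomial with finitely many degrees of freedom). This step should occupy most of Sections 11 and 12, and is the substantive generalization beyond the one-dimensional leaf case treated in \cite{Spatzier-Vinhage}.
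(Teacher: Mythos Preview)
Your overall architecture is correct and matches the paper: build $H$ as a quotient of $\hat{\mc P}$, identify universal relations, establish a Lie structure, and read off the lattice. Your treatment of the third family --- base-point independence of geometric commutators $\rho^{\alpha,\beta}$ via polynomial corrections, using integral Lyapunov coefficients and the SRB measures --- is the right idea and is essentially what Section~\ref{sec:fibers} does (Theorem~\ref{thm:constant pairwise cycle structure}).

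The genuine gap is in your fourth step, the canonical presentation. You claim that any word in $\mc P$ reduces modulo the universal relations to a word in a single combinatorial cell $C_{\bar\beta}$ with one leg per weight. This fails when $\alpha$ and $-\alpha$ are both in $\Delta$: commuting $u \in N_\alpha$ past $v \in N_{-\alpha}$ does \emph{not} produce only $N_\gamma$-terms with $\gamma \in \Sigma(\alpha,-\alpha)$ (that set is empty). In the Lie group $G_\alpha$ furnished by Lemma~\ref{lem:Galpha}, the commutation produces a residual term in the ``diagonal'' $\mf g_0 \subset \R^k \oplus \Lie(M)$, and moreover the rewriting $u * v = v' * u' * g$ with $g \in \exp(\mf g_0)$ is only valid on an \emph{open} subset of $N_\alpha \times N_{-\alpha}$ (the big Bruhat cell). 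So you cannot reduce to $C_{\bar\beta}$; you must instead aim for a presentation of the form $\rho_+ * \rho_- * \rho_0$ with $\rho_\pm \in \mc G_\pm$ stable/unstable for a fixed regular $a_0$ and $\rho_0 \in \R^k \times M$, and this presentation is only available on a neighborhood of the identity (Lemma~\ref{open dense commutation}, Proposition~\ref{stable unstable cycle decomposition}).

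This has a knock-on effect on the Lie criterion you invoke. Because the presentation is only local and involves the extra $\R^k \times M$ factor coming from the symplectic pairs, the direct Gleason--Palais route via Corollary~\ref{cor:lie-from-const} does not apply cleanly. The paper instead uses the local presentation to deduce that $\hat G$ is locally compact and locally path-connected, then invokes Gleason--Yamabe (Corollary~\ref{cor:lc-lpc-structure}) to write $\hat G$ as an inverse limit of Lie groups, and finishes with a dimension-stabilization argument (Corollary~\ref{cor:hatG-Lie}). After that, one still has to show the stabilizer in the resulting Lie group is discrete; this is done by proving its identity component $S(x)$ is trivial using the Oseledets structure (Lemmas~\ref{lem:dimS-const} and~\ref{lem:S-contains-oseledets}), rather than by directly verifying $\mc C(x_0) \subset \mc C(x)$ at the $\hat{\mc P}$ level.
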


Throughout the remainder of Section \ref{sec:top-anosov}, we assume that $\R^k \times K \curvearrowright X$ is a genuinely higher-rank HAPHA with SRB measures and integral Lyapunov coefficients unless otherwise stated.

\subsection{Basic dynamical properties}% of leafwise homogeneous topological Anosov actions}

\begin{lemma}
\label{lem:coarse-Lyapunov-dynamical}
 {\color{black}For each $\alpha \in \Delta$, the action of $N^\alpha$ is free} and for every $x \in X$, $W^\alpha(x) := N^\alpha x$ consists of the set of points $y \in X$ such that $d(a^tx,a^ty) \to 0$  as $t \to \infty$ for all $a$ such that $\alpha(a) < 0$. Furthermore, $W^\alpha(x) = \bigcap_{\alpha(a) < 0} W^s_a(x)$, and $W^s_{(a_1,\dots,a_m)}(x) = \bigcap W^s_{a_i}(x)$.
\end{lemma}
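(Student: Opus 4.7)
The plan is to reduce everything to axioms \ref{ta10} and \ref{ta3} by choosing regular elements whose joint stable cone picks out only the class $[\alpha]$. Concretely, I would first show the following combinatorial claim: there exist regular elements $a_1,\dots,a_m \in \R^k$ such that $\Delta^-(\{a_i\}) = \{\alpha\}$ (a single class). To produce them, fix for each $\beta \in \Delta \setminus \{\alpha\}$ a regular $a_\beta \in \R^k$ satisfying $\alpha(a_\beta) < 0$ and $\beta(a_\beta) > 0$; this is possible because $\alpha$ and $\beta$ are linearly independent, so $\{\alpha < 0\}\cap \{\beta > 0\}$ is a nonempty open half-space containing regular points. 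The resulting finite list $\{a_\beta\}_{\beta \neq \alpha}$ has $\alpha \in \Delta^-(\{a_\beta\})$ and excludes every other class. Since $|[\alpha]| = 1$ in $\Delta$, any circular ordering of $\Delta^-(\{a_i\})$ is the single symbol $(\alpha)$, so $C_{(\alpha)} = N_\alpha$ and \ref{ta10} gives
\[
W^s_{(a_1,\dots,a_m)}(x) \;=\; N_\alpha \cdot x \;=\; W^\alpha(x).
\]

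Next, I would combine this with \ref{ta3} applied in two different ways. Directly, \ref{ta3} identifies $W^s_{(a_1,\dots,a_m)}(x)$ with $\bigcap_i W^s_{a_i}(x) = \{y : d(a_i^t x,a_i^t y) \to 0 \text{ for all }i\}$, which simultaneously proves the two assertions $W^s_{(a_1,\dots,a_m)}(x) = \bigcap_i W^s_{a_i}(x)$ and gives the ``$\supseteq$'' inclusion for the first claim: any $y$ with $d(a^tx,a^ty)\to 0$ for every $a$ with $\alpha(a)<0$ satisfies the condition for each $a_i$, hence lies in $W^\alpha(x)$.

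The nontrivial direction is the reverse inclusion: every $y \in N_\alpha x$ in fact contracts to $x$ under every $a$ with $\alpha(a)<0$. This is where \ref{ta5} enters. Writing $y = u\cdot x$ with $u = \exp(v) \in N_\alpha$, axiom \ref{ta5} gives
\[
a^t y \;=\; (a^t u a^{-t})\cdot a^t x \;=\; \bigl((a^t)_* u\bigr)\cdot a^t x,
\]
and the induced map on $\Lie(N_\alpha) = \bigoplus_i E_i^\alpha$ is the graded homothety scaling $E_i^\alpha$ by $e^{c_i t \alpha(a)}$. Since $\alpha(a)<0$ and each $c_i>0$, we have $(a^t)_*v \to 0$ in $\Lie(N_\alpha)$, hence $(a^t)_*u \to e$ in $N_\alpha$ (the exponential map is a diffeomorphism for simply connected nilpotent groups). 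Now \ref{ta4} asserts the evaluation map $N_\alpha \times X \to X$ is globally H\"older, so $d\bigl(((a^t)_*u)\cdot a^tx, a^tx\bigr) \to 0$ uniformly in the base point $a^tx$, yielding $d(a^tx,a^ty)\to 0$ as required.

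Finally, the equality $W^\alpha(x) = \bigcap_{\alpha(a)<0} W^s_a(x)$ is an immediate consequence of the dynamical characterization just proved (the left side is contained in every $W^s_a(x)$ with $\alpha(a)<0$ by the above computation, and the right side is contained in $\bigcap_i W^s_{a_i}(x) = W^\alpha(x)$ since the $a_i$ are among the $a$'s with $\alpha(a)<0$). The main obstacle is really only the ``algebra $\Rightarrow$ dynamics'' direction in paragraph three: this is the point where one must upgrade the abstract algebraic identity $a u a^{-1} = (a)_*u$ of \ref{ta5} into a genuine contraction estimate on $X$, and it relies crucially on the graded-homothety hypothesis together with the uniform continuity of the $N_\alpha$-action built into \ref{ta4}.
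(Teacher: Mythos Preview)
Your proof is correct and follows the same approach as the paper: construct finitely many regular $a_i$ with $\Delta^-(\{a_i\})=\{\alpha\}$ and invoke \ref{ta10} and \ref{ta3}; the paper's proof is terser (it declares the result ``immediate from \ref{ta3}''), whereas you additionally spell out the contraction $N_\alpha x \subset W^s_a(x)$ directly via \ref{ta5} and \ref{ta4}, which has the pleasant side effect of covering non-regular $a$ with $\alpha(a)<0$. One minor imprecision: in constructing $a_\beta$ you say ``$\alpha$ and $\beta$ are linearly independent,'' but if $-\alpha\in\Delta$ and $\beta=-\alpha$ they are not---the correct hypothesis is that $\alpha$ and $\beta$ are not positively proportional, which still makes $\{\alpha<0\}\cap\{\beta>0\}$ a nonempty open set containing regular elements.
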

%https://www.overleaf.com/project/62c5c5a2ce05c20f6867eb98
%{  could maybe prove this without TA-6. Do we need the full version?}

\begin{proof}
The proof is immediate from \ref{ta10} and \ref{ta3}. Indeed, notice that given any coarse Lyapunov exponent $\alpha$ we may choose a collection $\set{a_1,\dots,a_m}$ such that $\Delta^-(\set{a_1,\dots,a_m}) = \alpha$. {\color{black} Since the evaluations are injective by \ref{ta10}, the action of $N^\alpha$ is free.} Indeed, given any collection which already contains $\alpha$, if $\chi \not\in \Delta^-(\set{a_1,a_2,\dots,a_m})$ for some $\chi \in \Delta\setminus \set{\alpha}$, we may remove a coarse exponent $\chi$ by adding $a$ such that $\alpha(a) < 0$ and $\chi(a) > 0$. Again applying \ref{ta3} to each $W^s_{a_i}(x)$ independently,  it is clear that their intersection must be $W^\alpha(x)$.
\end{proof}

\color{black}
We now describe a basic operation that is critical to our analysis of the way the group actions of $N^\alpha$ interact with one another: the geometric commutator. This is in contrast to the infinitesimal commutator, which only exist when the space a has a smooth structure and the actions of $N^\alpha$ are known to be at least $C^1$. Even in the smooth setting, our foliations have smooth leaves but are only H\"older transversally, so we instead use a coarser version of the commutator which works even for HAPHAs.

Recall that if $\alpha$ and $\beta$ are coarse Lyapunov exponents, $\Sigma(\alpha,\beta)$ is the set of \color{black} coarse \color{black} exponents which can be written as $\sigma \alpha + \tau \beta$, where $\sigma,\tau > 0$ (Definition \ref{def:canonical-order}). Notice that while $\alpha$ and $\beta$ are only coarse exponents, the set $\Sigma$ is still well-defined since we consider all positive linear combinations. Given a group $\mc G$ and elements $g,h \in \mc G$, we use the following convention for group commutators:

\[ [u,v] = v^{-1}u^{-1}vu. \]

\begin{lemma}
\label{lem:geo-comm}
Fix a HAPHA. Let $\alpha$ and $\beta$ be non-proportional coarse Lyapunov exponents, $u \in N^\alpha$ and $v \in N^\beta$. Then for every $x$, there exists a unique collection of elements $w_i := \rho^{\alpha\beta}_{\gamma_i}(u,v,x) \in N^{\gamma_i}$, where $\gamma_i$ ranges over all coarse Lyapunov exponents in $\Sigma(\alpha,\beta)$ listed so that $(\alpha,\gamma_1,\gamma_2,\dots,\gamma_n,\beta)$ is the canonical circular ordering, satisfying:

\[ w_n * \dots * w_2 * w_1 * [u,v] \cdot x = x. \]

Furthermore, the functions $\rho^{\alpha\beta}_{\gamma_i}$ are continuous in all three variables, and satisfy the following equivariance property for any $g \in \R^k \times K$:

\begin{equation}
\label{eq:rho-equivariance}
 g_*\rho^{\alpha\beta}_{\gamma_i}(u,v,x) = \rho^{\alpha\beta}_{\gamma_i}(g_*u,g_*v,gx).
 \end{equation}
\end{lemma}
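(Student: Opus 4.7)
The plan is to break the proof into three parts: establish a decomposition of $[u,v]\cdot x$ as a product of legs in all stable coarse-Lyapunov directions, use renormalization to kill the extraneous $\alpha$- and $\beta$-legs, and extract equivariance and continuity from uniqueness.

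\emph{Existence of a decomposition.} I first use axiom \ref{ta5} in the form $a\cdot([u,v]\cdot x) = [a_*u, a_*v]\cdot(a\cdot x)$ to locate the point $y := [u,v]\cdot x$ in a common stable manifold. For any $a \in \R^k$ with $\alpha(a), \beta(a) < 0$, the graded-homothety property forces $a^t_*u \to e$ in $N_\alpha$ and $a^t_*v \to e$ in $N_\beta$ as $t \to \infty$; combined with the locally bi-Lipschitz evaluation maps of \ref{ta4}, this yields $d(a^t y, a^t x) \to 0$ and hence $y \in W^s_a(x)$ by \ref{ta3}. Applying Lemma \ref{lem:root-comb} to choose a collection $a_1, \dots, a_m \in \R^k$ with $\Delta^-(\{a_1, \dots, a_m\}) = \{\alpha, \beta\} \cup \Sigma(\alpha,\beta)$ and invoking \ref{ta10}, I obtain a unique decomposition of $y$ as a product of legs in $N_\alpha$, $N_{\gamma_1}, \dots, N_{\gamma_n}$, $N_\beta$ arranged in the canonical circular order; write this as $y = \tilde u \ast w_1 \ast \cdots \ast w_n \ast \tilde v \cdot x$ with $\tilde u \in N_\alpha$, $\tilde v \in N_\beta$, and $w_i \in N_{\gamma_i}$.

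\emph{Vanishing of the $\alpha$- and $\beta$-legs.} This is the main technical step and where I expect the principal obstacle. To eliminate $\tilde u$, I select $a' \in \ker\alpha$ with $\beta(a') < 0$, which exists because $\alpha$ and $\beta$ are non-proportional. By \ref{ta5}, the automorphism $(a')_*$ acts as the identity on $N_\alpha$ and as a contraction on $N_\beta$ and on each $N_{\gamma_i}$ (since $\gamma_i(a') = \tau\beta(a') < 0$ whenever $\gamma_i = \sigma\alpha + \tau\beta$ with $\tau > 0$). Applying $(a')^n$ to the decomposition of $y$ and using uniqueness from \ref{ta10} together with \ref{ta5} on each leg, one finds that the $\alpha$-leg in the unique decomposition of $[u, (a')^n_* v]\cdot (a')^n x$ equals $(a')^n_*\tilde u = \tilde u$ for every $n$. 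Since $(a')^n_* v \to e$ in $N_\beta$, the commutator $[u, (a')^n_*v]$ tends to the identity, and by local bi-Lipschitzness of the evaluation maps in \ref{ta4} every leg of its decomposition must tend to the identity uniformly in the base point. This forces $\tilde u = e$. The argument for $\tilde v = e$ is symmetric, using $a \in \ker\beta$ with $\alpha(a) < 0$.

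\emph{Equivariance and continuity.} The equivariance identity \eqref{eq:rho-equivariance} is then immediate: applying $g \in \R^k \times M$ to the defining equation and using \ref{ta5} on each leg re-expresses $[g_*u, g_*v]\cdot(g\cdot x)$ as a product of legs $g_*w_i$ in the correct circular order, and uniqueness identifies the components. Joint continuity of $(u, v, x) \mapsto w_i$ follows because \ref{ta10} makes the restriction of the evaluation map to an appropriate compact combinatorial cell a continuous bijection onto its compact image, hence a homeomorphism. The principal subtlety I foresee is the uniform-in-$x$ control needed in the vanishing step; this should be supplied by the compactness of $X$ together with the uniformity built into \ref{ta4} and \ref{ta9}.
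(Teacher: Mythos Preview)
Your proposal is correct and follows essentially the same approach as the paper: locate $y=[u,v]\cdot x$ in the common stable manifold $W^s_{(a_1,\dots,a_m)}(x)$ via Lemma~\ref{lem:root-comb} and \ref{ta3}, invoke \ref{ta10} to obtain the unique circularly ordered decomposition, and then kill the extremal $\alpha$- and $\beta$-legs by renormalizing with elements of $\ker\alpha$ and $\ker\beta$. The only cosmetic difference is that the paper writes the decomposition in the reverse circular order ($y = v' * w_n * \dots * w_1 * u' \cdot x$ with the $\alpha$-leg acting first), but the vanishing argument is identical in substance; your treatment of continuity and equivariance via uniqueness is in fact slightly more explicit than the paper's.
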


\begin{proof}
Consider the points $y = [u,v] \cdot x$, and notice that if $a \in \R^k$ satisfies $\alpha(a),\beta(a) < 0$, then $d(a^n\cdot x,a^n \cdot y) \to 0$. In particular, if $\set{a_1,\dots,a_m}$ are elements such that $\Delta^-(\set{a_1,\dots,a_m}) = \set{\alpha,\gamma_1,\dots,\gamma_n,\beta}$ (such a choice of $\set{a_i}$ exists by Lemma \ref{lem:root-comb}), then $y \in W^s_{(a_1,\dots,a_m)}(x)$. Hence, by \ref{ta10} there exist unique elements $u' \in N^\alpha$, $v' \in N^\beta$, $w_i \in N^{\gamma_i}$ such that 

\begin{equation}
\label{eq:comm-eq1}
y =  u' * w_1 * \dots * w_n * v'  {\,  {\cdot} \,} x. 
\end{equation}

 Choose $a \in \ker \alpha$ such that $\beta(a) < 0$. Then since $\gamma_i \in \Sigma(\alpha,\beta)$ for every $i$, $\gamma_i(a) < 0$. {\color{black}In particular, using the automorphisms induced by \ref{ta5} and \eqref{eq:induced-auto}, 
 
 \begin{eqnarray*}
     \lim_{n \to \infty} (a^n)_* (u' * w_1 * \dots * w_n * v') & = & \lim_{n \to \infty} (a^n)_*u' * (a^n)_*w_1 \dots * (a^n)_*v' \\
     & = & u' * e * \dots * e * e \\
     &= & u',
 \end{eqnarray*}
 
 since for $v'$ and $w_i'$, the automorphism $a_*$ has eigenvalues determined by $e^{\beta(a)},e^{\gamma_i(a)} < 1$, and the automorphism $a_*$ on $N^\alpha$ is the identity, since it is a graded homothety, and all eigenvalues are $e^{\alpha(a)} =1$.} Therefore, $\lim_{n \to \infty} d(a^nx,a^ny) > 0$ unless $u' = e$. However, since $(a^n)_*[u,v] = [u,(a^n)_*v] \to [u,e] = e$, $d(a^nx,a^ny) \to 0$. Therefore, $u' = e$. Similarly, $v' = e$ by choosing $a \in \ker \beta$ such that $\alpha(a) < 0$. Since $u',v' =e$, \eqref{eq:comm-eq1} is the desired expression {\color{black}after multiplying by inverses}.
\end{proof}

{\color{black}
We have the following useful special case:
\begin{corollary}
    If $\alpha$ and $\beta$ are non-proportional coarse Lyapunov exponents of a HAPHA and $\Sigma(\alpha,\beta) = \emptyset$, then $N^\alpha$ and $N^\beta$ commute.
\end{corollary}}

\begin{definition}
\label{def:geo-comm}
The functions $\rho^{\alpha\beta} : N^\alpha \times N^\beta \to\displaystyle \prod_{i=1}^{\#\Sigma(\alpha,\beta)} N^{\gamma_i}$ and $\rho^{\alpha,\beta}_{\gamma_i} : N^\alpha \times N^\beta \to N^{\gamma_i}$ are called {\it geometric commutators} (we do not endow the target space with an algebraic structure, it is only a topological space). The element:

\begin{equation}
\label{eq:comm-relation}
 \rho^{\alpha,\beta}(u,v,x) * [u,v]
 \end{equation}
 
\noindent of $\mc P$ is called a {\it commutator relation} at $x$. It is an element of the cycle subgroup at $x$.
\end{definition}

\begin{lemma}
\label{lem:presentation-uniqueness}
Let $\set{\alpha_1,\dots,\alpha_n}$ be a {\color{black} stable} collection of coarse Lyapunov exponents listed in a circular ordering and $u_i,v_i \in N^{\gamma_i}$. If $u_1 * \dots * u_n \cdot x = v_1 * \dots * v_n \cdot x$ for every $x \in X$, then $v_i = u_i$ for $i=1,\dots,n$.
\end{lemma}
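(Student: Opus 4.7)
The plan is to embed the given combinatorial pattern $(\alpha_1,\ldots,\alpha_n)$ into a longer pattern coming from a circular ordering of a full stable set $\Delta^-(\{a_i\})$ of regular elements, insert trivial letters where necessary, and then invoke the injectivity guaranteed by \ref{ta10}. Note that the hypothesis ``for every $x\in X$'' is much stronger than what we will need: the conclusion will follow from equality of the two products evaluated at a single basepoint.

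First, since $\{\alpha_1,\ldots,\alpha_n\}$ is circularly ordered, Definition \ref{def:circular-ordering} forces it to be a stable subset of $\Delta$: the open cone $\{b\in\R^k:\alpha_i(b)<0 \text{ for all } i\}$ is nonempty and hence contains a regular element $a_0$. The given ordering is induced by some generic $2$-plane $V_0\subset\R^k$ with $a_0\in V_0$ and a covector $\chi_0\in V_0^*$ with $\chi_0(a_0)=0$, through the angle function $\angle(\chi_0,\alpha_i|_{V_0})$. By openness of the ``generic'' condition on $2$-planes, an arbitrarily small perturbation of $V_0$ yields a $2$-plane $V$ containing $a_0$ that is generic for the \emph{whole} of $\Delta$, while leaving the strict ordering of the finitely many angles $\angle(\chi_0,\alpha_i|_{V_0})$ unchanged. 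Then $(V,\chi_0)$ (extending $\chi_0$ suitably to $V^*$) provides a circular ordering $(\chi_1,\ldots,\chi_r)$ of the full stable set $\Phi^*:=\Delta^-(a_0)$ whose restriction to $\{\alpha_1,\ldots,\alpha_n\}\subset\Phi^*$ coincides with the given one.

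Next, using the relation $e^{(i)}=e$ in $\mc P$ (see \eqref{eq:freep-rel2}), rewrite both sides of the given equality as words in the longer combinatorial cell $C_{(\chi_1,\ldots,\chi_r)}$. Concretely, define $\tilde u_j=u_i$ when $\chi_j=\alpha_i$ and $\tilde u_j=e\in N_{\chi_j}$ otherwise, and analogously for $\tilde v_j$. Since inserting identity letters does not change the element of $\mc P$ represented, the hypothesis becomes
\[
\tilde u_1*\cdots*\tilde u_r\cdot x \;=\; \tilde v_1*\cdots*\tilde v_r\cdot x
\]
for (every, hence some) $x\in X$. Property \ref{ta10}, applied to the singleton list $\{a_0\}$ of regular elements (so that $\Delta^-(\{a_0\})=\Phi^*$ is circularly ordered by $(\chi_1,\ldots,\chi_r)$), asserts that the evaluation map $C_{(\chi_1,\ldots,\chi_r)}\to X$ at $x$ is injective. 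Hence $(\tilde u_1,\ldots,\tilde u_r)=(\tilde v_1,\ldots,\tilde v_r)$, and reading off the coordinates indexed by the $\alpha_i$ gives $u_i=v_i$ for all $i$.

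The only delicate step is the perturbation argument producing a single pair $(V,\chi)$ that simultaneously induces the prescribed circular ordering on $\{\alpha_1,\ldots,\alpha_n\}$ and a valid circular ordering on $\Phi^*$; this amounts to a standard open-dense transversality observation in the Grassmannian of $2$-planes of $\R^k$, so it is not a serious obstacle. Once this embedding is in place, the rest of the argument is a direct invocation of \ref{ta10}.
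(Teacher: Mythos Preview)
Your argument is correct and takes a genuinely different route from the paper's proof. The paper argues dynamically: it observes that $u_1*\dots*u_n*v_n^{-1}*\dots*v_1^{-1}$ stabilizes every point, then for each step chooses $a\in\ker\alpha_n$ with $\alpha_i(a)<0$ for $i<n$, applies $a_*$ (which preserves the stabilizing property because the hypothesis holds at \emph{every} $x$), and contracts all legs except $u_n*v_n^{-1}$; freeness of the $N_{\alpha_n}$-action then gives $u_n=v_n$, and one iterates. Your proof instead pads the word with identity letters to embed the combinatorial cell $C_{(\alpha_1,\dots,\alpha_n)}$ into a cell $C_{(\chi_1,\dots,\chi_r)}$ for a full stable set $\Delta^-(a_0)$, and then simply quotes the injectivity in \ref{ta10}.

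Two remarks on the comparison. First, as you correctly note, your argument only uses the equality at a single basepoint, whereas the paper's contraction argument genuinely needs the ``for every $x$'' hypothesis to push $a_*$ through; so you are proving a slightly stronger statement. Second, the only point that deserves a bit more care is the opening move: the given circular ordering of $\{\alpha_1,\dots,\alpha_n\}$ comes from some triple $(V_0,a,\chi_0)$ with $a$ not a priori regular and $V_0$ generic only for the $\alpha_i$. What you need is a single triple $(V,a_0,\chi)$ with $a_0$ regular, $V$ generic for all of $\Delta$, $\chi(a_0)=0$, and inducing the same ordering on the $\alpha_i$. This follows because (i) regularity of $a$ and genericity of $V$ for $\Delta$ are open dense conditions, and (ii) the finitely many strict angle inequalities defining the ordering are open in $(V,a,\chi)$; so a small perturbation of all three data simultaneously does the job. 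Once that is said, the rest is an immediate application of \ref{ta10}, and your use of \eqref{eq:freep-rel2} to insert identity legs is exactly right.
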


\begin{proof}
 Suppose that $u_1 * \dots * u_r\cdot x = v_1 * \dots * v_r \cdot x$ for every $x \in X$, where $u_i,v_i \in N^{\alpha_i}$. Then

\[ u_1 * \dots * u_r * {v_r}^{-1} * \dots * {v_1}^{-1} \]

\noindent stabilizes every point of $X$. Picking some $a \in \ker \alpha_r$ such that $\alpha_i(a) < 0$ for all $i = 1,\dots,r-1$ implies that

\[ (a_*u_1) * \dots * (a_*u_{r-1}) * u_r * {v_r}^{-1} * (a_*v_{r-1})^{-1} \dots * (a_*v_1)^{-1} \]

\noindent also stabilizes every point of $X$. Letting $\alpha_i(a) \to \infty$, $i < r$ implies that $u_r{v_r}^{-1}$ stabilizes every point of $X$. Since the action of $N^{\alpha_1}$ is faithful, $u_r= v_r$. Iterating this procedure by choosing $a_i \in \ker \beta_i$ such that $\alpha_j(a_i) < 0$ for $j = 1,\dots, i-1$ inductively shows $u_i = v_i$, $i=1,\dots,r-1$.
\end{proof}

\subsection{Groups generated by opposite coarse weights} In this section, we study the interaction of the groups $N^\alpha$ and $N^{-\alpha}$, where $\alpha \in \Delta$ is a coarse weight such that $-\alpha \in \Delta$. We will show that they fit into a Lie group action, and establish certain structural features.

\begin{lemma}
\label{lem:closure-containment}
If $y \in \overline{\ker \alpha \cdot x}$, then $\overline{\ker \alpha \cdot y} \subset \overline{\ker \alpha \cdot x}$.
\end{lemma}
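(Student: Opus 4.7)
The plan is to prove this via a straightforward diagonal argument using only the continuity of the $\ker\alpha$-action. There is no deep dynamical content here: this is essentially the general fact that orbit closures for a continuous group action define a transitive containment relation, so the ``hard step'' is genuinely routine; the main thing to check is that the compact set $\ker\alpha$ acts continuously, which is part of our standing hypotheses on the $\R^k \times M$-action.

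More concretely, I would argue as follows. Fix $z \in \overline{\ker\alpha \cdot y}$; I must show $z \in \overline{\ker\alpha \cdot x}$. By hypothesis, choose a sequence $a_n \in \ker\alpha$ with $a_n \cdot x \to y$, and since $z$ lies in the $\ker\alpha$-orbit closure of $y$, choose $b_m \in \ker\alpha$ with $b_m \cdot y \to z$. For each fixed $m$, the map $p \mapsto b_m \cdot p$ is continuous on $X$, so
\[ b_m \cdot y \;=\; b_m \cdot \lim_{n\to\infty} a_n \cdot x \;=\; \lim_{n\to\infty} (b_m a_n) \cdot x. \]
Hence for each $m$ I may choose $n(m)$ large enough that $d(b_m a_{n(m)} \cdot x,\, b_m \cdot y) < 1/m$.

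Then by the triangle inequality
\[ d(b_m a_{n(m)} \cdot x,\, z) \;\le\; d(b_m a_{n(m)} \cdot x,\, b_m \cdot y) + d(b_m\cdot y,\, z) \;\longrightarrow\; 0, \]
and since $b_m a_{n(m)} \in \ker\alpha$ (as $\ker\alpha$ is a subgroup), this exhibits $z$ as a limit of points of $\ker\alpha \cdot x$, giving $z \in \overline{\ker\alpha \cdot x}$ as required. Since $z$ was arbitrary, the containment $\overline{\ker\alpha \cdot y} \subset \overline{\ker\alpha \cdot x}$ follows. I do not foresee any obstacle; the only ingredient beyond elementary set-theoretic manipulation is the joint continuity of the $\R^k$-action, which is part of the definition of a (leafwise homogeneous topological) Anosov action.
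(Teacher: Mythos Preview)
Your proof is correct and is essentially the same $\varepsilon$–$\delta$/diagonal argument the paper gives, just phrased with sequences rather than a single $\varepsilon$ step. One minor slip: $\ker\alpha$ is a hyperplane in $\R^k$, not a compact set, but this plays no role in the argument since you only use continuity of each individual map $p\mapsto b_m\cdot p$.
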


\begin{proof}
Suppose that $z \in \overline{\ker \alpha \cdot y}$ and $a \in \ker \alpha$ be such that $d(ay,z) < \ve$. Since $a : X \to X$ is continuous, we may choose $\delta > 0$ such that if $d(y,y') < \delta$, then $d(ay',z) < \ve$. Then choose $b \in \ker \alpha$ such that $d(bx,y) < \delta$. Then by construction, $d(abx,z) < \ve$ and $z \in \overline{\ker \alpha \cdot x}$.
\end{proof}

{  The following lemma gives a topological analogue of an abstract ergodic decomposition. We will apply it in different settings for the action of $\ker \alpha$.

\begin{lemma}
\label{lem:keralpha-foliation}
Fix $\alpha \in \Delta$. There is an {$\R^k$-invariant} residual set of points $x_0 \in X$ such that $(K \times \ker \alpha)\cdot x_0$ is dense and either 

\begin{itemize}
\item[(1)] $\ker \alpha \cdot x_0$ is dense, or 
\item[(2)] $\mc F_\alpha(m) := m \cdot \overline{\ker \alpha \cdot x_0}$, $m \in K$ are a family of closed sets that partition $X$ and each atom is saturated by $W^\beta$-leaves for every $\beta \in \Delta$ (including $\beta = \alpha$). {  The indexing of the partition $\set{\mc F_\alpha(m) : m \in K}$ by $m$ depends on $x_0$, but the partition itself is independent of the choice of $x_0$ from the residual set.}
\end{itemize}

 { In case (2), for every $a \in \R^k$ and $m \in K$, there exists $m' \in K$ such that $a \mc F_\alpha(m) = \mc F_\alpha(m')$}.
\end{lemma}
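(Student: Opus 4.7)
The plan is to split the argument into three stages. \textbf{First}, to produce the residual set, I would fix a countable base $\{U_n\}$ of the topology of $X$ and set $O_n = \{x : (M\times\ker\alpha)\cdot x \cap U_n \neq \emptyset\}$. Each $O_n$ is open and $(M\times\ker\alpha)$-invariant; by \ref{ta2} it is non-empty, and since any non-empty open invariant set must contain the dense orbit provided by \ref{ta2}, each $O_n$ is also dense. The intersection $R_0 := \bigcap_n O_n$ is then the desired residual set of points with dense $(M\times\ker\alpha)$-orbit, and it is $\R^k$-invariant because $\R^k$ commutes with $M\times\ker\alpha$.

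\textbf{Second}, for $x_0 \in R_0$ with $\Lambda_{x_0} := \overline{\ker\alpha\cdot x_0} \neq X$, I would build the partition from a $\ker\alpha$-minimal set. Zorn applied to closed $\ker\alpha$-invariant subsets of $\Lambda_{x_0}$ yields a $\ker\alpha$-minimal set $\mc M_0 \subset \Lambda_{x_0}$; since $M$ commutes with $\ker\alpha$, each $M$-translate $m\mc M_0$ is also $\ker\alpha$-minimal, and distinct $\ker\alpha$-minimal sets are disjoint, so $\{m\mc M_0 : m \in M\}$ is a family of pairwise disjoint closed sets with closed union $M\mc M_0$. To conclude $M\mc M_0 = X$, so that $\Lambda_{x_0} = m^*\mc M_0$ for some $m^*\in M$ and $\mc F_\alpha(m) = m\Lambda_{x_0} = (mm^*)\mc M_0$ recovers the partition $\{m'\mc M_0\}$ up to relabeling, I would further refine $R_0$ to points whose $\ker\alpha$-orbit closure is itself a single minimal set, via an upper semi-continuity argument on the orbit-closure map combined with $M\Lambda_{x_0} = \overline{(M\times\ker\alpha)\cdot x_0} = X$. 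The last assertion, that $a \cdot \mc F_\alpha(m) = \mc F_\alpha(m')$ for every $a \in \R^k$, follows from $a\cdot m\Lambda_{x_0} = m\Lambda_{ax_0}$ together with the $\R^k$-invariance of $R_0$.

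\textbf{Third}, I would verify the $W^\beta$-saturation in two cases. When $\beta$ is not proportional to $\alpha$, choose $a\in\ker\alpha$ with $\beta(a)<0$; for $y \in \mc M_0$ and $z \in W^\beta(y) \subset W^s_a(y)$ (Lemma \ref{lem:coarse-Lyapunov-dynamical}), $d(a^n y,a^n z)\to 0$, and uniform recurrence of $y$ inside the minimal set $\mc M_0$ furnishes $n_k$ with $a^{n_k}y \to y$, hence also $a^{n_k}z \to y$; Lemma \ref{lem:closure-containment} then forces $\mc M_0 \subset \overline{\ker\alpha\cdot z}$, and since the latter is itself a minimal set, it must equal $\mc M_0$, giving $z\in\mc M_0$. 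For $\beta = \alpha$ I would invoke the SRB measure $\mu$ supplied by \ref{ta-srb} at the pair $(\alpha,\alpha)$, which is $\ker\alpha\times M$-invariant with absolutely continuous disintegrations along the $N_\alpha$-foliation; using \ref{ta5} to see that $\ker\alpha$ and $N_\alpha$ commute on $X$ (since $a_*u = u$ for $a\in\ker\alpha$, $u\in N_\alpha$), $N_\alpha$ permutes the partition $\{m\mc M_0\}$, and absolute continuity together with the positivity of $\mu$ on every partition element will force $u\mc M_0\cap\mc M_0$ to have positive $\mu$-measure for $u$ near the identity, so $u\mc M_0 = \mc M_0$ for such $u$, and connectedness of $N_\alpha$ extends this to all of $N_\alpha$. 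The hardest part will be this $\beta = \alpha$ saturation, where absolute continuity of $\mu$ must be combined carefully with the $\ker\alpha$-ergodic decomposition; a secondary obstacle is the refinement of $R_0$ required to guarantee $M\mc M_0 = X$, which rests on a uniform-recurrence property of generic orbits.
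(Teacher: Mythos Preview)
Your minimal-set approach in Stage~2 has a genuine gap: the claimed refinement of $R_0$ to points whose $\ker\alpha$-orbit closure is minimal is not justified. Semi-continuity of orbit closures does not give this, and the set of points with minimal orbit closure is in general not residual. Without minimality of $\Lambda_{x_0}$ you cannot deduce $M\mc M_0 = X$ from $M\Lambda_{x_0} = X$, so the partition falls apart, and your Stage~3 argument for non-proportional $\beta$ (which invokes minimality of $\overline{\ker\alpha\cdot z}$ for an arbitrary $z\in W^\beta(y)$) collapses with it.

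The paper avoids minimality entirely. Its residual condition is that the accessibility class $W^{A_\alpha}(x_0)$ through all $W^\beta$ with $\beta\neq\pm\alpha$ is contained in $\overline{\ker\alpha\cdot x_0}$; this is residual by \cite[Lemma~4.21]{Spatzier-Vinhage} and delivers $W^\beta$-saturation for $\beta\neq\pm\alpha$ for free. Disjoint-or-equal for $\{m\,\overline{\ker\alpha\cdot x_0}\}$ is then proved directly from the local product structure \ref{ta9}: two nearby orbit points in intersecting translates are connected by short $(\R^k\times M)$, $\pm\alpha$, and $W^{A_\alpha}$ legs; the $W^{A_\alpha}$ endpoint already lies in the second orbit closure by the accessibility condition, and since $\ker\alpha$ commutes with $N_{\pm\alpha}$ (the graded homothety in \ref{ta5} is the identity there), one can pull back by $\ker\alpha$ to shrink the remaining legs and invoke Lemma~\ref{lem:closure-containment}.

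Your Stage~3 argument for $\beta=\alpha$ also fails as written: it assumes $\mu(\mc M_0)>0$, but when $M/\mathrm{Stab}$ is infinite every atom has $\mu$-measure zero (and $\beta=-\alpha$ is omitted). The paper handles $\beta=\pm\alpha$ without \ref{ta-srb}: once the partition exists, if $y\in W^{\pm\alpha}(x_0)$ lay in a different atom $m_1\Lambda_{x_0}$, then for $a\in\R^k$ with $\alpha(a)\neq 0$ the iterates $a^nx_0$ and $a^ny$ converge yet always lie in atoms differing by the same nontrivial $m_1$ (since $a$ commutes with $M$ and $\ker\alpha$); compactness of $M$ and disjointness of atoms make this impossible.
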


{\color{black}
\begin{proof}
    We wish to apply Lemma \ref{lem:top-decomp}. We begin by considering the partition $\mc W$ induced by the equivalence relation that $x \sim y$ if and only if there exists $a \in \ker \alpha$ such that $ax$ and $y$ are connected by a path in the foliations $W^\beta$, $\beta\not= \alpha$ with finitely many legs. Our application of Lemma \ref{lem:top-decomp} will rely on whether we are assuming \ref{ta9}(a) or \ref{ta9}(b).

    Under assumption \ref{ta9}(a), $K\mc W(x)= X$ for all $x \in X$. We may then apply Lemma \ref{lem:top-decomp} with $\mc G = \set{e}$ and get that a partition consisting of sets of the form $\overline{\mc W(mx_0)}$. We may without loss of generality assume that $x_0$ is a point which satisfies that $\mc W(x_0) = \overline{(\ker \alpha) \cdot x_0}$, since this condition is generic when the action is totally recurrent (see \ref{ta8}, \cite[Lemma 10.2]{Spatzier-Vinhage} and Lemma \ref{lem:residual-recurrence}). The results follows.

    Under assumption \ref{ta9}(b), we use the same equivalence relation to define the partition $\mc W$, but introduce the group $\mc G$. Let $L \subset \R^k$ be any line transverse to $\ker \alpha$, and let $\mc G = L\ltimes (N^\alpha * N^{-\alpha})$ be the semidirect product of $L$ and the group freely generated by $N^\alpha$ and $N^{-\alpha}$. Note that since geometric commutators of $\alpha$-legs with $\beta$-legs, $\beta\not= \alpha$ only produce new legs $\gamma$ with $\gamma \not= \alpha$, it follows that for any finite path $\rho$  in the foliations $W^\beta$, $\beta \not=\alpha$ which begins at a point $x$ and ends at a point $y$, $hy$ is the endpoint of some path $\rho'$ which begins at $hx$ for all $h \in \mc G$. That is, $h\mc W(x) = \mc W(hx)$ for all $h \in K \ltimes \mc G$. Finally, by assumption $K\mc W(x_0) \supset (K \times \ker \alpha)\cdot x_0$ is dense for some $x_0$. To apply Lemma \ref{lem:top-decomp} it remains to show property (*). This follows by taking the compact subsets of $\mc G$ which consist of a short $L$-leg, a single short $\alpha$-leg and a single short $-\alpha$-leg, listed in that order. Then \ref{ta9}(b), the local product structure assumption, yields (*). The remainder of the proof is as in the previous case.
 \end{proof}
}

Fix a coarse weight $\alpha \in \Delta$ such that $-\alpha \in \Delta$ as well. {\color{black}Our next immediate goal is to fit the actions of $N^\alpha$ and $N^{-\alpha}$ into the action of a single Lie group $G_\alpha$.

\begin{lemma}
\label{lem:group-fibration}
    Let $\set{\mc F_\alpha(m) : m \in K}$ denote the partition of $X$ into $\ker\alpha$-orbit closures from Lemma \ref{lem:keralpha-foliation}. There exists a continuous fiber bundle $\mc G_\alpha$ over $K$ such that

    \begin{itemize}
        \item[(1)] The fiber above $m \in K$ is a simply connected Lie group $G_\alpha(m)$.
        \item[(2)] For every $m \in K$, $N^\alpha$ and $N^{-\alpha}$ embed into $G_\alpha(m)$, and generate $G_\alpha(m)$.
        \item[(3)] For each $m \in K$, $G_\alpha(m)$ acts on $\mc F_\alpha(m) \subset X$.
        \item[(4)] The restrictions of the $G_\alpha(m)$-action to the embeddings of $N^\alpha$ and $N^{-\alpha}$ coincide with the actions from \ref{ta4}.
        \item[(5)] The vector bundle with fibers $\Lie(G_\alpha(m))$ is a continuous vector bundle over $K$, with $\Lie(N^\alpha)$ and $\Lie(N^{-\alpha})$ as continuous subbundles.
    \end{itemize}
\end{lemma}

\begin{proof}
Fix $x \in X$, and consider the stabilizer of a point, $\mc C_\alpha(x)$, under the action of the free product $N^\alpha * N^{-\alpha}$ on $X$. Then $\ker \alpha$ takes any cycle with legs only from $N^\alpha$ and $N^{-\alpha}$ to the same cycle at a new basepoint (since by \ref{ta5}, $\ker \alpha$ acts trivially on every leg), i.e. $\mc C_\alpha(x) = a_*\mc C_\alpha(x) = \mc C_\alpha(a \cdot x)$. Choose $x_0$ as in Lemma \ref{lem:keralpha-foliation} such that $x_0$  has a dense $\R^k$-orbit.
%(a residual property), and let $\mc F_\alpha$ be the corresponding partition. 
Since the atoms of $\mc F_\alpha$ are saturated by coarse Lyapunov leaves, $N^\alpha * N^{-\alpha}$-orbits are contained in a single $\ker \alpha$-orbit closure. {\color{black} Hence, the cycles at any point $x \in \mc F_\alpha(m)$ contain those of $m \cdot x_0$.} Therefore, by  Corollary \ref{cor:lie-from-const} at each $m \in K$ the group $\bar{G}_\alpha(m) = (N^\alpha * N^{-\alpha}) / \mc C_\alpha(m\cdot x_0)$ is Lie, and has a canonical continuous action $\bar{G}_\alpha(m) \curvearrowright \mc F_\alpha(m)$ into which the group actions of $N^\alpha$ and $N^{-\alpha}$ canonically embed. Let $G_\alpha(m)$ denote the universal cover of $\bar{G}_\alpha(m)$, so that $G_\alpha(m)$ also acts on $\mc F_\alpha(m)$, and is determined by its Lie algebra. {\color{black} In particular, we get two key features: that $\Lie(N^\alpha)$ and $\Lie(N^{-\alpha})$ can both be considered subalgebras of $G_\alpha(m)$, and $G_\alpha(m)$ acts locally freely (by definition, the evaluation map $g \mapsto g mx_0$ is a continuous bijection for $\bar{G}_\alpha(m)$).}

Finally, we wish to show that $\mc G_\alpha$ has the structure of a continuous fiber bundle. Indeed, it is a trivial fiber bundle. If $\rho \in N^\alpha * N^{-\alpha}$ and $m \in K$, and $m_* : N^\alpha * N^{-\alpha} \to N^\alpha * N^{-\alpha}$ is the automorphism from \ref{ta5}, the trivialization given by

\[ \phi : G_\alpha(e) \times K \to \mc G_\alpha \qquad \phi(\rho \, \mc C_\alpha(e),m) = (m_*\rho) \mc C_\alpha(m)\]

Since $\mc G_\alpha$ is a continuous fiber bundle, it follows immediately that the Lie algebras of the fibers also vary continuously.
\end{proof}
}

\begin{lemma}
\label{lem:Galpha}
%There exists a continuous factor $\bar{\pi} : \hat{X} \to \mathbb{T}^s$ for some factor $s$ such that

%\begin{enumerate}
%\item the $\R^k$-action descends to a linear action on $\mathbb{T}^s$ with a dense orbit,
%\item for every $z \in \mathbb{T}^s$, the actions of $N_\alpha$ and $N_{-\alpha}$ embed into an action of a Lie group $G_\alpha(z)$ on $\bar{\pi}^{-1}(z)$, 
%\item if $g \in \R^k \times M$, then there is an isomorphsim $g_* : G_\alpha(z) \to G_\alpha(g z)$ such that if $u \in N_{\pm \alpha}$, then $g\cdot (g_*u \cdot x) = u \cdot gx$, and
%\item if the action of $\ker \alpha$ has a dense orbit on $\mathbb{T}^s$, then $G_\alpha(z)$ does not depend on $z$, and there is a global action on $\hat{X}$.

%\end{enumerate}
There exists a {\color{black} simply} connected Lie group $G_\alpha$ and a continuous group action $G_\alpha \curvearrowright X$ such that

\begin{itemize}
\item[(1)] $\Lie(G_\alpha) \cong \Lie(N^\alpha) \oplus \mf g_0 \oplus \Lie(N^{-\alpha})$ {\color{black}for some subalgebra $\mf g_0$},% where $\mf g_0 \subset \R^k \oplus \Lie(M)$,
\item[(2)] the inclusion of $\Lie(N^{\pm \alpha})$ in $\Lie(G_\alpha)$ induces a local isomorphism from $N^{\pm \alpha}$ onto its image,
\item[(3)] the action of $\exp_{G_\alpha}(\Lie(N^{\pm \alpha}))$ coincides with the existing action of $N^{\pm \alpha}$, 
\item[(4)] the subgroups $\exp_{G_\alpha}(\Lie(N^{\alpha}))$ and $\exp_{G_\alpha}(\Lie(N^{-\alpha}))$ generate $G_\alpha$, and
\item[(5)] the action of {\color{black}$G_{0,\alpha} := \exp_{G_\alpha}(\mf g_0)$ commutes with the $\R^k$-action}.%coincides with the existing action as a subgroup of  $\R^k \times M$.
\end{itemize}
\end{lemma}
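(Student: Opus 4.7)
The plan is to realize $G_\alpha$ as a Lie group quotient of the free product $\mc P := N_\alpha * N_{-\alpha}$, using the Gleason-Palais criterion. Equip $\mc P$ with the free product topology of Section \ref{sec:free-prods}; then $\mc P$ acts continuously on $X$, and by \ref{ta5} each $g \in \R^k \times M$ induces an automorphism $g_*$ of $\mc P$ satisfying $g\rho g^{-1} \cdot x = (g_*\rho) \cdot gx$. The eigenvalues of $a_*$ on $\Lie(N_{\pm \alpha})$ are products of the $e^{\pm c_i \alpha(a)}$ by \ref{ta5}.

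First I would select $x_0$ generic in the sense of Lemma \ref{lem:keralpha-foliation}, which exists by \ref{ta2}, and set $\mc C := \Stab_{\mc P}(x_0)$. The key claim is that $\mc C$ stabilizes every point of $X$, so that $\mc C$ is normal and the quotient action on $X$ is well-defined. Since $\mc P$ is generated by $N_{\pm\alpha}$, whose leaves lie in a single atom $\mc F_\alpha(m_0)$ of the partition from Lemma \ref{lem:keralpha-foliation}, the $\mc P$-orbit of $x_0$ is contained in $\mc F_\alpha(m_0)$. Given $\sigma \in \mc C$ and $g \in \ker\alpha \times M$, the equivariance gives $\sigma \cdot gx_0 = g \cdot (g_*^{-1}\sigma) \cdot x_0$, and $g_*^{-1}\sigma$ still lies in $\mc P$ because $g$ normalizes both $N_\alpha$ and $N_{-\alpha}$. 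A renormalization argument, conjugating by elements $a \in \R^k$ with $\alpha(a) \ne 0$ and using continuity and the density of the orbit from \ref{ta2}, forces $g_*^{-1} \sigma \in \mc C$. By continuity this extends to all of $\mc F_\alpha(m_0)$; the $M$-action handles the remaining atoms. Then apply Corollary \ref{cor:lie-from-const}: local path-connectedness of $\mc P/\mc C$ is Corollary \ref{cor:connected}, and the needed local injection into a finite-dimensional space is supplied by combining \ref{ta9} and \ref{ta10}, since short words in $N_\alpha * N_{-\alpha}$ in a fixed circular ordering associated with a pair of regular elements on which both $\pm\alpha$ are negative evaluate injectively near $x_0$.

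Finally, identify $\Lie(G_\alpha)$ via the induced automorphisms $a_*$ for $a \in \R^k$ with $\alpha(a) > 0$. By Proposition \ref{prop:integrality}, $\Lie(G_\alpha)$ splits into generalized eigenspaces of $a_*$ whose eigenvalue moduli are monomials in $e^{\pm c_i\alpha(a)}$; the sum of eigenspaces of modulus $>1$ must equal $d\pi_\alpha(\Lie(N_\alpha))$ and is locally isomorphic to $\Lie(N_\alpha)$ by \ref{ta4}, and symmetrically for modulus $<1$ and $\Lie(N_{-\alpha})$. The remaining eigenspaces, all of modulus $1$, form a subspace $\mf g_0$. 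To show $\mf g_0 \subset \R^k \oplus \Lie(M)$, observe that a one-parameter subgroup generated by $X \in \mf g_0$ produces a flow on $X$ whose orbits neither expand nor contract under $a$ and preserve both $W^s_a$ and $W^u_a$; by Lemma \ref{lem:dynamical-characterizations} the orbits must lie in a single $(\R^k \times M)$-orbit, so local injectivity forces the flow to come from an element of $\R^k \times M$. Statements (2)-(5) then follow by construction, since the canonical inclusions $N_{\pm\alpha} \hookrightarrow \mc P$ descend to local isomorphisms in $G_\alpha$, the actions agree by definition, and generation of $G_\alpha$ by $\exp(\Lie(N_{\pm\alpha}))$ is inherited from the free product.

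The most delicate step is the common-stabilizer claim in the second paragraph: establishing $g_*^{-1}\sigma \in \mc C$ for all $g \in \ker\alpha \times M$ requires genuinely exchanging a limit with continuity of the action and using hyperbolicity to control the renormalization of words of unbounded combinatorial length. The remaining technical hurdle is the exclusion of extra modulus-$1$ eigenspaces from $\mf g_0$ beyond those sitting inside $\R^k \oplus \Lie(M)$, which hinges critically on the genuinely higher-rank hypothesis \ref{ta2} and on local injectivity near $x_0$.
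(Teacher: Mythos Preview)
Your overall architecture matches the paper, but the common-stabilizer step has a genuine gap, and one auxiliary claim is wrong. First the small error: there is no regular element on which both $\alpha$ and $-\alpha$ are negative, so \ref{ta10} does not apply to $\set{\alpha,-\alpha}$; the injective map needed for Corollary \ref{cor:lie-from-const} is simply the orbit map $g \mapsto g \cdot x_0$ into $X$.

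For the main gap: you over-complicate the easy half and under-specify the hard half. For $a \in \ker\alpha$ the graded homothety of \ref{ta5} has scaling factors $e^{c_i\alpha(a)} = 1$, so $a_*$ is the \emph{identity} on $N_\alpha * N_{-\alpha}$ and $\Stab(x_0) = \Stab(ax_0)$ immediately---no renormalization is needed. With semicontinuity on the dense $\ker\alpha$-orbit this already yields a Lie group $G_\alpha(m)$ on each atom of $\mc F_\alpha$. The genuine difficulty is constancy of the cycles \emph{across} atoms, i.e.\ along $\R^k$-orbits for $a \notin \ker\alpha$, where $a_*$ is nontrivial. Your renormalization by such $a$ cannot work: $a_*$ expands the $N_\alpha$-legs while contracting the $N_{-\alpha}$-legs, so iterated on a word it neither converges nor simplifies, and there is no limit to exchange with continuity. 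The paper instead uses the per-atom Lie structure: it embeds each $\Lie(G_\alpha(m))$ into the fixed vector space $\Lie(N_\alpha) \oplus \Lie(\R^k \times M) \oplus \Lie(N_{-\alpha})$ by showing $[E^{c_i\alpha}, E^{-d_j\alpha}]_{G_\alpha(m)} \subset E^{(c_i-d_j)\alpha}$ (via Lemmas \ref{lem:coarse-Lyapunov-dynamical} and \ref{lem:dynamical-characterizations}), then checks by direct computation with the graded-homothety scaling that $[Z,Y]_{G_\alpha(m)} = [Z,Y]_{G_\alpha(m')}$ whenever $a\mc F_\alpha(m) = \mc F_\alpha(m')$. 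This gives constancy along full $\R^k$-orbits, hence on a dense set by \ref{ta1}, hence everywhere by semicontinuity. Your ``the $M$-action handles the remaining atoms'' skips exactly this computation.
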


\begin{proof}

Consider the groups $G_\alpha(m) = (N^\alpha * N^{-\alpha}) / \mc C_\alpha(m)$ from Lemma \ref{lem:group-fibration}. We will show the existence of a normal subgroup $\mc C_0 \subset N^\alpha * N^{-\alpha}$ such that $(N^\alpha * N^{-\alpha})/\mc C_0$ is a Lie group and $\mc C_0 \subset \mc C_\alpha(m)$ for every $m \in K$. This will imply that there is a global group action $G_\alpha \curvearrowright X$ (since in this case, $G_\alpha(m)$ factors through a common quotient of the free product $N^\alpha * N^{-\alpha}$). Recall that if $g \in \R^k \times K$, $g$ normalizes the $N^{\pm \alpha}$-actions. Let $g_*$ denote the induced automorphism of $N^\alpha * N^{-\alpha}$. Any globally-defined map preserving the coarse Lyapunov foliations will take cycles to cycles, so $\mc C_\alpha(g \cdot n) = g_*\mc C_\alpha(n)$. Since $g_*$ is an automorphism of $N^\alpha * N^{-\alpha}$ taking $\mc C_{\alpha}(m)$ to $\mc C_{\alpha}(g \cdot m)$, $g_*$ induces an isomorphism $g_* : G_\alpha(m) \to G_{\alpha}(g \cdot m)$. 

As a remark, note that this determines the isomorphism class of $G_\alpha(m)$, but that this is insufficient for our purposes. To obtain a group action on $X$, we need to know the cycles $\mc C_\alpha(m)$ are constant, which is to say that the generating relations for the groups are the same. %This is equivalent to showing that the maps $\beta_m : \Lie(N_\alpha) \otimes \Lie(N_{-\alpha}) \to \Lie(G_\alpha)$ defined by $\beta_m(Y,Z) = [Y,Z]_{\Lie(G_\alpha(m))} \in \Lie(G_\alpha(m))$ are independent of $m$ in some sense {\color{olive} which we will make precise shortly}. It is possible that the groups $G_\alpha(m)$ have a harnessed family of isomorphisms, even though they do not induce the same actions. In fact, we will show that each $\Lie(G_\alpha(m))$ sits inside a global vector space independent of $m$ and that the maps $\beta_m$ are constant there, proving constancy of relations and hence of $\mc C_\alpha(m)$.  This is made more precise below.
%{\color{teal}so here we use $X$ also as a vector, I guess within the proof of this lemma we can freely replace all these $X$ by $Z$.}

We now consider the Lie algebra structure of $G_\alpha(m)$. Notice that $N^{\pm \alpha}$ has a grading that corresponds to the Oseledets splitting, $\Lie(N^\alpha) = E^{\alpha} \oplus E^{c_2\alpha} \oplus \dots \oplus E^{c_{\ell_1} \alpha}$ and $\Lie(N^{-\alpha}) = E^{-d_1\alpha} \oplus E^{-d_2\alpha} \oplus { \cdots \oplus}E^{-d_{\ell_2}\alpha}$. 
Suppose that ${ Z} \in E^{c_i\alpha}$ and $Y \in E^{-d_j\alpha}$. {\color{black}Then for any $x \in \mc F_\alpha(m)$, $\exp_{G_\alpha(m)}(t[{ Z},Y]_{G_\alpha(m)})x$} is a {\color{black}continuous curve in $X$, but a priori only a continuous curve.} %(recall that by \ref{ta4}, the evaluation maps of the $N_{\pm \alpha}$ are assumed to be locally bi-Lipschitz, but the global actions of $N_{\pm \alpha}$, and hence $G_\alpha(m)$, are only continuous).}
For each $a \in \R^k$, $a_*$ is an isomorphism between $G_\alpha(m)$ and $G_\alpha(m')$, where $m'$ is such that  $a\mc F_\alpha(m) = \mc F_\alpha(m')$. Let us consider the asymptotic behavior \color{black}$\exp_{G_\alpha(m)}(t [{  Z},Y]_{G_\alpha(m)})x$. \color{black} If $d_j > c_i$, then as $\alpha(a) \to \infty$:

\begin{eqnarray*}
d(ax,a\exp_{G_\alpha(m)}(t[{ Z},Y]_{G_\alpha(m)})x) & = & d(\exp_{G_\alpha(m')}(ta_*[{ Z},Y]_{G_\alpha(m)})ax,ax) \\
& = &  d(\exp_{G_\alpha(m')}(te^{(c_i-d_j)\alpha(a)}[{ Z},Y]_{G_\alpha(m')})ax,ax) \\
 & \to &  0.
\end{eqnarray*}

Importantly, note that we do not yet know the precise rate of convergence using the distance on $X$, since the group action itself is only continuous. However, if $d_j > c_i$, then the curve $\exp_{G_\alpha(m)}(t[{ Z},Y]_{G_\alpha(m)})$ is contained in the $N^{-\alpha}$-orbit by Lemma \ref{lem:coarse-Lyapunov-dynamical} (and similarly for the $N^{\alpha}$ -orbit for $c_i > d_j$). 

{\color{black} Since the $G_\alpha(m)$-action has a locally free orbit, we conclude that for all $t \in \R$, $\exp(t[Z,Y]_{G_\alpha(m)}) \in N^{\pm \alpha}$ whenever  $c_i \not= d_j$. That is, $[{ Z},Y]_{G_\alpha(m)} \in \Lie(N^{\pm \alpha})$.} 

Choose any continuous family of norms on $\Lie(G_\alpha(m))$ such that for $g \in K$, the maps $g_* : \Lie(G_\alpha(m)) \to \Lie(G_\alpha(gm))$ induced by \ref{ta5} are isometric (one may average over the closed subgroup of $K$ which fixes $\mc F_\alpha(m)$).

Assume that $c_i > d_j$, so $[{ Z},Y]_{G_\alpha(m)} \in \Lie(N^\alpha)$ (the opposite case is identical). Choose any $b \in \R^k$ such that $\alpha(b) > 0$, and let $m \in K$. Then $b\mc F_\alpha(m) = \mc F_\alpha(gm)$ for some $g \in K$, and $bg^{-1}$ fixes $\mc F_\alpha(m)$. Then $(bg^{-1})_*$ is a harnessed automorphism of $G_\alpha(m)$. Let ${ Z}' := g_*{ Z}$ and $Y' := g_*Y$. Then,   using that the $K$-action preserves the norms: % is isometric:
\color{black}

\begin{eqnarray*}
(bg^{-1})_*[{ Z},Y]_{G_\alpha(m)} & = & [(bg^{-1})_*{ Z},(bg^{-1})_*Y]_{G_\alpha(m)}, \mbox{ so} \\
\norm{(bg^{-1})_*[{ Z},Y]_{G_\alpha(m)}} & = & \norm{[e^{c_i\alpha(b)}{ Z}',e^{-d_j\alpha(b)}Y']_{G_\alpha(m)}} \\
 & = & e^{(c_i-d_j)\alpha(b)}\norm{[{ Z}',Y']_{G_\alpha({ gm})}}\\
 & = &    e^{(c_i-d_j)\alpha(b)}\norm{[{ Z},Y]_{G_\alpha(m)}}. %\text{(using $M$-action is isometric).}
\end{eqnarray*}
%{\color{teal}(Apparently here we mixed $d(bg^{-1}_*)$ with $bg^{-1}_*$.)
{ Moreover since $(bg^{-1})_\ast$ preserves the Oseledets splitting in $\Lie(N^{\pm \alpha})$, by a similar proof we have 
\begin{eqnarray*}
  \norm{(bg^{-1})^n_*[{ Z},Y]_{G_\alpha(m)}} =    e^{n(c_i-d_j)\alpha(b)}\norm{[{ Z},Y]_{G_\alpha(m)}}.  
\end{eqnarray*}}

Since $[{ Z},Y]_{G_\alpha(m)} \in \Lie(N^\alpha)$, this implies that $[{ Z},Y]_{G_\alpha(m)}$ is in the {  direct sum of generalized} eigenspaces of $(bg^{-1})_*$ { with  eigenvalues }whose moduli are $e^{(c_i-d_j)\alpha(b)}$ %{(\color{teal} here it is not eigenspace but more like just Oseledec space or subspace of singular value decomposition, and mapping with correct ratio of the norm does not mean it is in Oseledec space, unless we allow iteration. But the conclusion here is correct since here the equation also holds for $(bg^{-1})^n$.)}
and hence that $[{ Z},Y]_{G_\alpha(m)} \in E^{(c_i-d_j)\alpha}$.

 In the case when $c_i = d_j$,  the flow generated by $[{ Z},Y]_{G_\alpha(m)}$ commutes with the $\R^k$ action, {\color{black}as claimed.} %, and therefore its orbits are contained in the $\R^k \times M$-orbits by Lemma \ref{lem:dynamical-characterizations}. Define $\varphi(x)$ to be the element of $A \cdot M$ such that $\exp([{ Z},Y]) \cdot x = \varphi(x) \cdot x$. Then $\varphi$ is constant on $\ker \alpha$ orbits, and hence $\varphi(x)$ is constant on $\mc F_\alpha(m)$. Therefore, the flow generated by $[{ Z},Y]$ is a one-parameter subgroup of $\R^k { \times}M$ on $\mc F_\alpha(m)$. %Notice that $N_{\pm \alpha}$ each embed as Lie subgroups of $G_\alpha(x)$, and generate the group. Furthermore, $\Lie(N_{\pm \alpha})$ comes equipped with a grading $\Lie(N_\alpha) = \mf n_{c_1\alpha} \oplus \mf n_{c_2\alpha} \oplus \dots \oplus \mf n_{c_p\alpha}$ and $\Lie(N_{-\alpha}) = \mf n_{-d_1\alpha} \oplus \mf n_{-d_2\alpha} \oplus \dots \oplus \mf n_{-d_q\alpha}$, where $0 < c_1 < c_2 < \dots < c_p$ and $0 < d_1 < d_2 < \dots d_q$. Furthermore, since the dynamics of each $a \in \R^k$ acts by automorphism, we know that if $c_i \not= d_j$, then $[\mf n_{c_i\alpha},\mf n_{-d_j\alpha}] \subset \mf n_{(c_i-d_j)\alpha}$. If $c_i = d_j$, then the corresponding one-parameter subgroup generated by the commutator must commute with the $\R^k$-action, and therefore be an element of $\R^k \times M$. 
{\color{black}Let $\mf g_0(m) \subset \Lie(G_\alpha(m))$ denote the vector subspace of the Lie algebra spanned by elements of the form $[Z,Y]_{G_\alpha(m)}$ for some  $Z \in E^{c_i\alpha}$ and $Y \in E^{-c_i\alpha}$. Since $G_\alpha(m)$ is generated by $N^\alpha$ and $N^{-\alpha}$, we wish to establish the vector space decomposition

\begin{equation}
\label{eq:s-u-c decomp}
    \Lie(G_\alpha(m)) = \Lie(N^\alpha) \oplus \mf g_0(m) \oplus \Lie(N^{-\alpha}).
\end{equation} It suffices to check that 
%\color{black}Label bug here\color{black}
\begin{itemize}
%[label=(\roman*)]
    \item [\mylabel{(i)}{(i)}] $[\mf g_0(m),E^{c_i\alpha}] \subset E^{c_i\alpha}$ (similarly for $E^{-d_j\alpha}$), and
    \item [\mylabel{(ii)}{(ii)}]$[\mf g_0(m),\mf g_0(m)] \subset \mf g_0(m)$ (ie, $\mf g_0(m)$ is a subalgebra). 
\end{itemize}

In fact, we show a stronger version of (i), that if $W = [Z,Y] \in \mf g_0(m)$, and $Z' \in E^{c_j\alpha}$, then $[W,Z'] \in E^{c_j\alpha}$. This follows since $a_*[W,Z'] = [a_*W,a_*Z'] = e^{c_j\alpha(a)}[W,Z']$. Hence $[W,Z']$ is in the $E^{c_j\alpha}$-eigenspace of $a_*$, and hence part of $\Lie(N^\alpha)$ by Lemma \ref{lem:coarse-Lyapunov-dynamical} (since dynamically it contracts for any $a$ such that $\alpha(a) < 0$). It follows that it must be in the corresponding eigenspace.

To see (ii), note that if $Z_1 \in E^{c_i\alpha}$, $Y_1 \in E^{-c_i\alpha}$, $Z_2 \in E^{-c_j\alpha}$ and $Y_2 \in E^{-c_j\alpha}$, then by the Jacobi identity

\begin{equation}\label{eq:quad-bracket} [[Z_1,Y_1],[Z_2,Y_2]] = -[[[Z_2,Y_2],Z_1],Y_1] - [[Y_1,[Z_2,Y_2]],Z_1].
\end{equation}

By (i), this is a sum of elements of $\mf g_0(m)$, so $\mf g_0(m)$ is a subalgebra.}
Therefore, $\Lie(G_\alpha(m))$ has a canonical splitting $\Lie(N^\alpha) \oplus {\color{black}\mf g_0(m)}\oplus \Lie(N^{-\alpha})$.

{\color{black}Finally, we wish to show the independence of $\mf g_0(m)$ on $m$. %We will first show that for any $a \in \R^k$, if $ax_0 \in \mc F(m)$, then $G_\alpha(m) \cong G_\alpha(e)$ via an isomorphism restricting to the identity of $N_{\pm \alpha}$. In particular, this implies that $\mc C^c_\alpha(ax_0) = \mc C^c_\alpha(x_0)$. %following to conclude independence of $m$: For any $m,m' \in M$, there exist canonical isomorphisms $\psi_{m,m'} : G_\alpha(m) \to \Lie(G_\alpha(m'))$ such that $\psi_{m,m} = \id$, $\psi_{m,m'}|_{\Lie(N_{\pm \alpha})} = \id$ and $\psi_{m',m''} \of \psi_{m,m'} = \psi_{m,m''}$.
Given $m \in K$, let \[\omega_m : \bigoplus_\alpha \bigoplus_i E^{c_i\alpha} \wedge E^{-c_i\alpha} \to \mf g_0(m) \qquad \mbox{be defined by } \qquad \omega_{m}(Y\wedge Z) = [Y,Z]_{G_\alpha(m)}.\] Notice that $\omega_m$ is onto $\mf g_0(m)$ for every $m$. We claim that $\ker \omega_{m}$ is a constant subspace of $\omega_{m}$. Indeed, let $a \in \R^k$, $\sum Y_\ell \wedge Z_\ell \in \ker \omega_{m}$, $m \in K$ and  $m'$ be such that $a\mc F_\alpha(m) = \mc F_\alpha(m')$. Since $a_* : \mf g_0(m) \to \mf g_0(m')$ is a Lie algebra isomorphism, we get

\begin{multline*} 0 = a_*\omega_{m}\left(\sum Y_\ell\wedge Z_\ell\right) = \sum a_*[Y_\ell,Z_\ell]_{G_\alpha(m)}   = \sum [a_*Y_\ell,a_*Z_\ell]_{G_\alpha(m')} \\ = [e^{c_{i_\ell}\alpha(a)}Y_\ell,e^{-c_{i_\ell}\alpha(a)}Z]_{G_\alpha(m')} = \sum[Y_\ell,Z_\ell]_{G_\alpha(m')} = \omega_{m'}\left(\sum Y_\ell \wedge Z_\ell\right).
\end{multline*}

Thus, $m \mapsto \ker \omega_{m}$ is $\R^k$-invariant. Notice that the dimension is constant since the groups $G_\alpha(m)$ are all isomorphic (and hence $\mf g_0(m)$ must have constant dimension). Since the Lie algebra structures vary continuously, it follows that $\omega_m$ is a continuous map from the bundle with fibers $\Lie(G_\alpha(m))$ over $K$. In particular, we get that $\ker \omega_m$ is independent of $m$.

Define $\psi_{m,m'} : \mf g_0(m) \to \mf g_0(m')$ by 
\[ \psi_{m,m'}(\omega_m(Y \wedge Z)) := \omega_{m'}(Y\wedge Z),\]
whenever $Z \in E^{c_i\alpha}$ and $Y \in E^{-c_i\alpha}$ whenever $E^{c_i\alpha}$ and $E^{-c_i\alpha}$ are both Oseledets spaces. Since the $\omega_m$ are always onto and have a common kernel, this map is well-defined. Note that since the groups $G_\alpha(m)$ are all isomorphic, the dimension of each $\mf g_0(m)$ is always the same, and hence $\psi_{m,m'}$ is always an isomorphism of vector spaces. Hence, the maps $\psi_{m,m'}$ give an identification of the Lie algebras $\mf g_0(m)$ for any $m \in K$.}

{\color{black}Extend $\psi_{m,m'}$ to $\Psi_{m,m'} : \Lie(G_\alpha(m)) \to \Lie(G_\alpha(m'))$ using the decomposition \eqref{eq:s-u-c decomp}, setting $\Psi_{m,m'}|_{N^{\pm \alpha}} = \id$. Then $\Psi_{m,m'}$ is a vector space isomorphism for any $m,m'$, and we wish to show that it is a Lie algebra isomorphism. Notice that it suffices to show that 
\[ [Y,Z]_{G_\alpha(m)} = [Y,Z]_{G_\alpha(m')},  \; [W,Y]_{G_\alpha(m)} = [\psi_{m,m'}W,Y]_{G_\alpha(m')}, \mbox{ and } [W,Z]_{G_\alpha(m)} = [\psi_{m,m'}W,Z]_{G_\alpha(m')}\] for any $m,m' \in K$, ${ Z} \in E^{c_i\alpha}$, $Y \in E^{-d_j\alpha}$ and $W \in \mf g_0(m)$. Indeed, since $\Lie(N^{\pm \alpha})$ are already subalgebras, the only missing brackets are of the form $\psi_{m,m'}[W,W']_{G_\alpha(m)} = [\psi_{m,m'}W,\psi_{m,m'}W']_{G_\alpha(m')}$. But these can be computed from \eqref{eq:quad-bracket}, so it suffices to check the relations above.%, since if $W = [Y',Z'] \in \mf g_0(m)$,

%\[ [W,Y] = [[Y',Z'],Y] = -[[Z',Y],Y']-[[Y,Y'],Z]. \]
} We first %assume that
{prove the claim when} $\mc F_\alpha(m') = a\mc F_\alpha(m)$. We will leverage the fact that $[{ Z},Y]_{G_\alpha(m)} \in E^{(c_i-d_j)\alpha(a)}$ if $c_i \not= d_j$. 
Given $Z \in E^{c_i\alpha}$ and $Y \in E^{-d_j\alpha}$, with $c_i \not= d_j$, we see that since $a_*$ is a Lie algebra isomorphism,

\begin{eqnarray*}
[{ Z},Y]_{G_\alpha(m)} & = & (a^{-1})_*[a_*{ Z},a_*Y]_{G_{\alpha}(m')} \\ 
 & = & e^{(d_j-c_i)\alpha(a)}[e^{c_i\alpha(a)}{ Z},e^{-d_j\alpha(a)}Y]_{G_\alpha(m')} \\
 & = & [{ Z},Y]_{G_\alpha(m')}.
\end{eqnarray*}

Now, if $c_i = d_j$, the $[Z,Y]_{G_\alpha(m)} \in \mf g_0(m)$ and commutes with the $\R^k$-action. Hence if $W =[Y',Z']_{G_\alpha(m)} \in \mf g_0(m)$ is the image of a primitive element of $E^{c_i\alpha} \wedge E^{-c_i\alpha}$ under $\omega_m$, we can write

\begin{eqnarray*}
    [W,Y]_{G_\alpha(m)} & = & [[Y', Z']_{G_\alpha(m)}, Y]_{G_\alpha(m)}\\
    & =& (a^{-1})_*[[a_*Y',a_*Z']_{G_\alpha(m')}, a_*Y]_{G_\alpha(m')} \\
    & = & e^{-c_i\alpha(a)}[\psi_{m,m'}W,e^{c_i\alpha(a)}Y]_{G_\alpha(m')} \\
    &= & [\psi_{m,m'}W,Y]_{G_\alpha(m')}.
\end{eqnarray*}

The last case is identical. Therefore, the map $\Psi_{m,m'}$ is an isomorphism whenever $\mc F_\alpha(m)$ and $\mc F_\alpha(m')$ belong to the same $\R^k$-orbit. Since $G_\alpha(m)$ is simply connected, these isomorphisms lift to isomorphisms from $G_\alpha(m) \to G_\alpha(m')$. %the group $G_\alpha(m)$ is constant on $\R^k$-orbits,
Since they are the identity on $N^{\pm \alpha}$, $\mc C^c_\alpha(x_0)$, the space of (contractible) cycles in $N^\alpha$ and $N^{-\alpha}$ is constant on a dense set by \ref{ta1}. The sets $\mc C_\alpha(x)$ are semi-continuously varying in the following sense: if $\lim_{n \to \infty} x_n = x$, and $\sigma_n \in \mc C_\alpha(x_n)$ is a sequence of cycles converging to a cycle $\sigma$, then $\sigma \in \mc C(x)$. By density of the $\R^k$ orbit and this semicontinuity, $\mc C_\alpha(x_0)$ is contained in $\mc C_\alpha(x)$ for every $x \in X$. Therefore, there is a Lie group $G_\alpha = (N^\alpha * N^{-\alpha}) / \mc C_\alpha(x_0)$ which acts on the total space $X$, as described.
\end{proof}

\section{Polynomial forms of geometric commutators}
\label{sec:fibers}

Recall the geometric commutator  functions $\rho^{\alpha,\beta}: N^\alpha \times N^\beta \times X \to \prod_{i=\#\Sigma(\alpha,\beta)}^1 N^{\gamma_i}$ from Lemma \ref{lem:geo-comm} and Definition \ref{def:geo-comm}.

\begin{theorem} \label{thm:constant pairwise cycle structure}
If $\R^k \times K \curvearrowright X$ is a HAPHA satisfying the assumptions of Theorem \ref{thm:technical}, then the functions $\rho^{\alpha,\beta}(u,v,x)$ are independent of $x$.
\end{theorem}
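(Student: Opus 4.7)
The plan is to exploit three ingredients working in tandem: the equivariance identity \eqref{eq:rho-equivariance}, the graded homothety structure from \ref{ta5}, and the dynamical/measure-theoretic information supplied by \ref{ta1}, \ref{ta-srb}, and \ref{ta2}. Fix $u\in N_\alpha$, $v\in N_\beta$ and consider the continuous map
\[ F \colon X \to \prod_{\gamma_i\in \Sigma(\alpha,\beta)} N_{\gamma_i}, \qquad F(x) := \rho^{\alpha,\beta}(u,v,x). \]
The goal is to show $F$ is constant. Note first that if $g\in \ker\alpha\cap\ker\beta$, then $g_*u=u$, $g_*v=v$, and $\gamma_i(g)=0$ for every $\gamma_i\in \Sigma(\alpha,\beta)$, so $g_*$ acts trivially on $N_{\gamma_i}$; hence \eqref{eq:rho-equivariance} immediately yields $F(gx)=F(x)$. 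This already handles the case when $\ker\alpha\cap\ker\beta$ has a dense orbit, but in general (notably $k=2$) this intersection is trivial and a finer argument is required.

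The main technical step I would carry out is an algebraic derivation of a \emph{cocycle-like relation with polynomial correction}, matching the role of Lemma \ref{eq:cocycle-like-bb} advertised in the outline. Starting from the identity $[u_1u_2,v] = u_2^{-1}[u_1,v]u_2 \cdot [u_2,v]$ inside $\mc P$, I would iteratively apply the commutator formula \eqref{eq:comm-relation} to rewrite the conjugate $u_2^{-1}[u_1,v]u_2$ in terms of nested geometric commutators in the $N_{\gamma_i}$. Uniqueness of presentations (Lemma \ref{lem:presentation-uniqueness}) together with the grading of each $N_{\gamma_i}$ will force the resulting relation to take the form
\[ \rho^{\alpha,\beta}(u_1u_2,v,x) \;=\; \mc Q\!\left(\rho^{\alpha,\beta}(u_1,v,x),\,\rho^{\alpha,\beta}(u_2,v,x),\,u_1,u_2,v\right), \]
where $\mc Q$ is a polynomial expression in the nilpotent Lie algebras involved and (this is the point) has no explicit dependence on the base point $x$. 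Independence of $x$ is the heart of the matter: applying \eqref{eq:rho-equivariance} with a well-chosen renormalizing element $a\in \R^k$ and using the fact that every term in $\mc Q$ must transform by the same graded homothety $a_*$ forces any hypothetical $x$-dependent contribution to scale inconsistently in two different Weyl directions; since $F$ is bounded on the compact $X$, such a contribution must vanish.

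Once this polynomial cocycle property is in hand, I would promote it to constancy of $F$ as follows. The relation shows that the ``error'' $F(hx)F(x)^{-1}$, for $h\in N_\alpha$, satisfies a polynomial functional equation intertwining $F$ with its values under the $N_\alpha$-action. Using \ref{ta-srb}, choose the measure $\mu$ which is $(\ker\beta\times M)$-invariant and absolutely continuous along the $N_\alpha$-foliation. A Livšic-style argument—applying some $a\in \ker\beta$ with $\alpha(a)<0$, which contracts $N_\alpha$ while preserving $\mu$ and fixing $v$—combined with absolute continuity of the disintegrations along $N_\alpha$ shows $F$ is $\mu$-essentially constant along $N_\alpha$-leaves. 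Continuity of $F$ and full support of $\mu$ then give exact constancy on every $N_\alpha$-leaf. A symmetric argument with the roles of $\alpha$ and $\beta$ interchanged yields constancy along $N_\beta$-leaves, and a slight modification (using $a\in\ker\gamma_i$ for intermediate exponents) handles the $N_{\gamma_i}$-leaves. Finally, the accessibility-type statement \ref{ta9}, together with density of $\R^k$-orbits \ref{ta1}, promotes leafwise constancy to constancy on all of $X$.

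The hard part will be establishing the polynomial cocycle identity in step two. The obstacle is that our objects are only H\"older transversally, so Lie-bracket calculus is unavailable and everything must be done algebraically inside the free product $\mc P$; the bookkeeping of nested commutators in the non-abelian nilpotent groups $N_\alpha,N_\beta,N_{\gamma_i}$ is delicate, and it is precisely here that the higher-rank hypothesis is essential, since one needs the freedom to renormalize in two algebraically independent directions to eliminate all $x$-dependent terms of the correction.
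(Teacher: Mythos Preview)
Your overall architecture tracks the paper's: a cocycle-like relation with polynomial correction (this is Lemma~\ref{eq:cocycle-like-bb}), followed by an SRB/Liv\v{s}ic argument to pass to constancy. The genuine gap is your justification that the correction $\mc Q$ has no explicit $x$-dependence. The scaling argument you propose does not work: equivariance \eqref{eq:rho-equivariance} applies coherently to \emph{every} term in the relation, so applying $a_*$ to both sides yields a consistent identity and isolates nothing. In the paper, $x$-independence of the correction is obtained not by scaling but by a \emph{triple induction} you have omitted entirely: on $\#\Sigma(\alpha,\beta)$ (Induction~I), on a partial order on pairs of Oseledets exponents $(c_i^\alpha\alpha,c_j^\beta\beta)$ (Induction~II), and on the level sets $\Omega_l$ (Induction~III). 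When one expands $[e(u_1+u_2),e(v)]$ explicitly, the conjugations produce nested commutators of types $\rho^{\alpha,\gamma_i}$ and $\rho^{c_{i'}^\alpha\alpha,c_j^\beta\beta}$ with $i'>i$ and $\gamma_i\in\Sigma(\alpha,\beta)$; it is only the inductive hypotheses (via Lemma~\ref{lem:polynomial-to-nil} and Corollary~\ref{cor:omega-shear}) that guarantee these are already polynomial and basepoint-free. Without this scaffolding there is no mechanism forcing the correction to be constant in $x$.

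Two further omissions are bound up with this gap. First, the argument must be run on the Oseledets-graded components $\hat\rho^{c_i^\alpha\alpha,c_j^\beta\beta}_{c_m^\gamma\gamma}$ rather than the full $\rho^{\alpha,\beta}$ (Lemma~\ref{lem:oseledets-sufficient}), since $a_*$ acts by a scalar only on individual Oseledets spaces and the inductions are indexed accordingly. Second, your Liv\v{s}ic step applies to $F$ directly, but $F$ is \emph{not} a cocycle: the relation carries a nonzero polynomial correction. The paper instead differentiates $t\mapsto\varphi(tu,x)$; this requires Lipschitz regularity, which is exactly where the integral Lyapunov coefficients hypothesis ($\sigma\ge 1$) enters (Corollary~\ref{cor:phi-polynomial}). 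One then gets a dichotomy: either $\sigma>1$ and boundedness kills the derivative, or $\sigma=1$ and $\varphi$ becomes a genuine cocycle to which the SRB/Hopf argument applies. Finally, constancy on all of $X$ comes not from accessibility but from homogeneity plus density of a single $\R^k$-orbit: once the polynomial is $(\sigma,\tau)$-homogeneous, equivariance makes it $\R^k$-invariant, and \ref{ta1} finishes.
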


We will in fact show in Corollary \ref{cor:phi-polynomial} that the functions $\rho^{\alpha,\beta}(u,v,x)$ are polynomials independent of $x$ using natural coordinates on $N^\alpha$ and $N^\beta$.

%\begin{remark}
%In this section, we require that the action is $C^{1,\theta}$ rather than a topological Cartan action. The only place in which this is used is in an application of Lemma \ref{lem:at-least-one}. Under the assumption that $\chi = u\alpha + v\beta \in [\alpha,\beta]$ implies $u \ge 1$ or $v \ge 1$, one can prove Theorem \ref{thm:constant pairwise cycle structure} for topological Cartan actions, as well.
%\end{remark}

\subsection{Reduction to Oseledets subspaces}

%The trichotomy obtained in Lemma \ref{lem:fiber-types} will play an important role in proving \hyperlink{base-fiber}{(BF)} and \hyperlink{base-base}{(BB)}. While the reductive case will not require establishing coordinates on the coarse Lyapunov subgroups of the fiber, we will need them in the abelian times compact and nilpotent times compact cases. Fortunately, the structure of the groups $V_\alpha^\Omega$ are very simple in both cases: $V_\alpha^\Omega$ is abelian. This is obvious in the abelian case, and in the nilpotent case, we notice the only nontrivial commutators come from negatively proportional weights (since the commutator must generate an element of $\R^k \times M$).

%Now if $V_\alpha^\Omega$ is abelian, we may identify it with its Lie algebra. In particular, by the definition of a topological Anosov action and \ref{ta5}, $V_\alpha^\Omega$ has a grading $V_\alpha^\Omega = E_{c_1\alpha}^\Omega \oplus E_{c_2\alpha}^\Omega \oplus \dots E_{c_{\ell(\alpha)}\alpha}^\Omega$ such that if $v \in E_{c_i\alpha}^\Omega$, then $a_*v = e^{c_i\alpha(a)}v$.

If $\alpha$ is a coarse Lyapunov exponent, recall from the definition of a HAPHA that $\Lie(N^\alpha)$ splits as a direct sum of Oseledets subspaces. That is, $\Lie(N^\alpha) = E^{c_1^\alpha \alpha} \oplus \dots \oplus E^{c_{\ell(\alpha)}^\alpha \alpha}$. Let $e(u) = \exp(u)$ for ease of notation, and $\log$ denote its inverse (which exists because $N^\alpha$ is nilpotent). Then by \ref{ta5}, if $u \in E^{c_i^\alpha\alpha}$, {  $a\in \R^k$}, 

\begin{equation}
\label{eq:intertwining}
a_*e(u) = e(e^{c_i^\alpha \alpha(a)}u).
\end{equation}

\begin{definition}
\label{def:rhohat}
Given Oseledets spaces $E^{c_i^\alpha \alpha}$ and $E^{c_j^\beta \beta}$, let $\hat{\rho}^{c_i^\alpha \alpha,c_j^\beta \beta}_{{ c_m^\gamma}\gamma} : E^{c_i^\alpha \alpha} \times E^{c_j^\beta \beta} \times X \to \Lie(N^\gamma)$ be defined by \[\hat{\rho}^{c_i^\alpha \alpha,c_j^\beta \beta}_{c_m^\gamma \gamma}(u,v,x) = \pi_m^\gamma(\log \rho^{\alpha,\beta}_{\gamma}(e(u),e(v),x)),\]

\noindent where $\pi_m^\gamma : \Lie(N^\gamma) \to E^{c_m^\gamma \gamma}$ is the projection onto the corresponding Oseledets space induced by the Oseledets splitting.

  We now turn to the last technical assumption which appears in Theorem \ref{thm:technical}. If $c_m^\gamma\gamma = \sigma \, c_i^\alpha \alpha + \tau \, c_j^\beta \beta$ for some $\sigma,\tau > 0$, we call $\sigma$ and $\tau$ the {\it Lyapunov coefficients} of $c_m^\gamma \gamma$ with respect to $c_i^\alpha \alpha$ and $c_j^\beta \beta$. We say that an action has {\it integral Lyapunov coefficients} if \color{black} for any non proportional $\alpha,\beta\in \Delta$, any $c_i^\alpha\alpha\in [\alpha], c_j^\beta\beta\in [\beta]$, %(here maybe we use the notation $\Omega$ and $\Omega(\alpha,\beta)$) 
  \color{black} $\hat{\rho}^{c_i^\alpha\alpha,c_j^\beta\beta}_{c_m^\gamma\gamma} \equiv 0$ whenever both Lyapunov coefficients \color{black}(with respect to $c_i^\alpha\alpha$ and  $c_j^\beta\beta$) \color{black} are less than 1.
\end{definition}
\color{black}
\begin{definition}\label{def:  Omega, Omega_al be} We denote by
$$\Omega:=\{c^\alpha_i\alpha, \alpha\in \Delta\}$$the set of all Lyapunov functionals. For $\alpha,\beta \in \Delta$, similar to $\Sigma(\alpha, \beta)$, let $\Omega(\alpha,\beta) \subset \Omega$ be the set of $c^\gamma_k\gamma \in \Omega$ such that $c^\gamma_k\gamma = \sigma c^\alpha_i\alpha + \tau c^\beta_j \beta$ for some $\sigma,\tau > 0$. 
\end{definition}
\color{black}

\begin{lemma}
\label{lem:oseledets-sufficient}
If for every pair of Oseledets subspaces $E^{c_i^\alpha \alpha}, E^{c_j^\beta \beta}$ with $\alpha$ and $\beta$ non-proportional, the functions $\hat{\rho}^{c_i^\alpha \alpha,c_j^\beta \beta}_{ c_m^\gamma \gamma}$% $(u,v) \mapsto \rho^{\alpha,\beta}_\Omega(e(u),e(v),x)$
are independent of $x$, then the functions $\rho^{\alpha,\beta}$ are independent of $x$ for every $\alpha,\beta \in \Delta$.
\end{lemma}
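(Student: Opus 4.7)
The plan is to decode the hypothesis into constancy of $\rho^{\alpha,\beta}$ on single-Oseledets inputs, and then propagate to all of $N_\alpha \times N_\beta$ via the nilpotent factorization and an iterated commutator expansion. First I would observe that the Oseledets splitting $\Lie(N_\gamma) = \bigoplus_m E^{c_m^\gamma\gamma}$ makes the $\hat\rho$'s precisely the graded components of $\log \rho^{\alpha,\beta}_\gamma$. Thus the hypothesis, applied component-wise and summed over $\gamma$ and $m$, is equivalent to the statement that $\rho^{\alpha,\beta}(e(u),e(v),x)$ is independent of $x$ whenever $u \in E^{c_i^\alpha\alpha}$ and $v \in E^{c_j^\beta\beta}$ lie in a single Oseledets subspace of $\Lie(N_\alpha)$ and $\Lie(N_\beta)$ respectively. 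Call this the \emph{atomic constancy}.

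Next I would exploit the grading to factor arbitrary group elements. Each Oseledets subspace $E^{c_i^\alpha\alpha}$ is abelian as a subalgebra, since $[E^{c_i^\alpha\alpha},E^{c_i^\alpha\alpha}] \subseteq E^{2c_i^\alpha\alpha}$ lies in a different Oseledets space; together with simple-connectedness and nilpotence of $N_\alpha$, exponential coordinates of the second kind provide a unique factorization
\[
u = e(u^{(1)}) * e(u^{(2)}) * \cdots * e(u^{(\ell)}), \qquad u^{(i)} \in E^{c_i^\alpha\alpha},
\]
and similarly $v = e(v^{(1)}) * \cdots * e(v^{(\ell')})$ in $N_\beta$. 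Iterating the identity $[ab,c] = b^{-1}[a,c]b \cdot [b,c]$ in both slots rewrites $[u,v]$ as a product of conjugates of atomic commutators $[e(u^{(i)}), e(v^{(j)})]$, where the conjugating elements are drawn from $N_\alpha \cup N_\beta$ and are manifestly independent of $x$.

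For each atomic commutator, atomic constancy says $\rho^{\alpha,\beta}(e(u^{(i)}), e(v^{(j)}), y)$ is a constant $\sigma_{ij} \in \prod_\gamma N_\gamma$. The cycle relation $\sigma_{ij} \cdot [e(u^{(i)}), e(v^{(j)})] \in \mc C(y)$ holds for every $y$; since cycle subgroups are closed under conjugation, any $w$-conjugate $w^{-1}\sigma_{ij}[e(u^{(i)}),e(v^{(j)})]w$ again stabilizes every point. Multiplying the conjugated atomic cycles in the order produced by the commutator expansion yields an element of the form $\Theta \cdot [u,v]$ stabilizing every point, where $\Theta$ is an $x$-independent word in $\mc P_{\alpha,\beta}$. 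Reducing $\Theta$ to the canonical circular order on $\Sigma(\alpha,\beta)$ using only the group operations inside each $N_\gamma$ and the atomic geometric commutator relations — which, by atomic constancy, are themselves $x$-independent — produces an $x$-independent element of $\prod_{\gamma \in \Sigma(\alpha,\beta)} N_\gamma$ in circular form. By Lemma \ref{lem:presentation-uniqueness} this element must coincide with $\rho^{\alpha,\beta}(u,v,x)$, and constancy in $x$ follows.

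The main obstacle will be the circular-reordering bookkeeping in the last step: the atomic cycles emerge in an order dictated by the factorizations of $u$ and $v$, which is generally not the canonical circular order on $\Sigma(\alpha,\beta)$, and the conjugating elements appear interleaved with the atomic cycles. Putting everything into canonical form requires repeated application of the atomic geometric commutator relations at weights in $\Sigma(\alpha,\beta)$, and each such application may generate further terms at still higher cones $\Sigma(\gamma,\gamma')$. The integrality of Lyapunov coefficients ensures that these cascades terminate after finitely many steps, and a clean induction on either the nilpotency class of $\mc P_{\alpha,\beta}/\mc C(x_0)$ or on the combinatorial length of the resulting words should close the loop without ever re-introducing $x$-dependence, since every rewriting rule used is itself constant in $x$ by atomic constancy.
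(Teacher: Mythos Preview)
Your approach is essentially the paper's: factor $u$ and $v$ via the Oseledets grading, use the hypothesis to get constancy of the atomic geometric commutators, expand $[u,v]$ accordingly, reorder into circular form, and invoke Lemma~\ref{lem:presentation-uniqueness}. The paper proceeds slightly more concretely---pushing one $e(-u_i)$ at a time through the $e(v_j)$'s rather than citing the abstract identity $[ab,c]=b^{-1}[a,c]b\cdot[b,c]$---but the content is the same.

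Your termination argument needs correction. Integrality of Lyapunov coefficients plays no role in this lemma, and invoking the nilpotency class of $\mc P_{|\alpha,\beta|}/\mc C(x_0)$ is circular: that quotient is not known to be a Lie group, let alone nilpotent, until \emph{after} constancy is established (this is Lemma~\ref{lem:polynomial-to-nil}, which itself relies on the present lemma). Combinatorial length does not obviously decrease either, since each commutation can spawn many new legs. The correct induction, as in the paper, is on $\#\Sigma(\alpha,\beta)$: when an atomic commutation produces a term at some $\gamma\in\Sigma(\alpha,\beta)$ and this must be pushed past further $e(v_j)$ or $e(u_i)$ legs, the new commutators live in the strictly smaller cones $\Sigma(\gamma,\beta)$ or $\Sigma(\alpha,\gamma)$, so the recursion terminates. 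Finally, your claim that each $E^{c_i^\alpha\alpha}$ is an abelian subalgebra is false in general---the paper flags exactly this near Figure~\ref{fig:inner-induction}---though only the factorization $u=e(u_1)\cdots e(u_\ell)$ is actually needed, and that holds regardless.
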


\begin{proof}
Let $u \in N^\alpha$ and $v \in N^\beta$. Then we may write \[ u = e(u_1)e(u_2)\cdots e(u_{\ell(\alpha)}) \mbox{ and }v = e(v_1)e(v_2) \cdots e(v_{\ell(\beta)}),\] where $u_i \in E^{c_i^\alpha \alpha}$ and $v_j \in E^{c_j^\beta \beta}$. We wish to compute $[u,v]$ using commutator relations coming from commutators from the exponentials of Oseledets subpaces. Using their expression in terms of Oseledets subspaces, we may write

\[ [u,v] = e(u_1)\cdots e(u_{\ell(\alpha)})\cdot e( v_1)\cdots e(v_{\ell(\beta)}) \cdot e(-u_{\ell(\alpha)})\cdots e(-u_1) \cdot e(-v_{\ell(\beta)}) \cdots e(-v_1). \]

 We wish to push the $e(-u_{\ell(\alpha)})$ term past the $e(v_i)$-terms to cancel with $e(u_{\ell(\alpha)})$. Since we have constant commutator relations among the exponentials of Oseledets spaces, we may do so, but accumulate their commutators along the way. We may choose to put them on the left or right whenever we commute, we choose to put them on the left. That is, we write $e(v_i)e(-u_{\ell(\alpha)}) = g e(-u_{\ell(\alpha)}) e(v_i)$, where $g$ is a geometric commutator of $e(-v_i)$ and $e(u_{\ell(\alpha)})$ determined by the $\rho$-function, which is constant. Recall that $g$ is a product of elements from the coarse Lyapunov groups strictly between $\alpha$ and $\beta$, which are constant by the assumption of the lemma.

The element $g$ itself may be decomposed as a product of Oseledets subspaces, which we may push past $e(v_1)\cdots e(v_{i-1})$ by the same method, accumulating new terms on the left in the process between the coarse weights of $g$ and $\beta$. These terms are independent of $x$ since we only commute terms coming from exponentials of Oseledets spaces. Since $g$ will always take values in coarse weights between $\alpha$ and $\beta$, there is a clear induction on $\#\Sigma(\alpha,\beta)$, which terminates since there are only finitely many such \color{black} coarse \color{black} weights. We may express $[u,v]$ as

\[ [u,v] = e(u_1)\cdots e(u_{\ell(\alpha)}) g e(-u_{\ell(\alpha)})  e( v_1)\cdots e(v_{\ell(\beta)}) \cdot e(-u_{\ell(\alpha)-1})\cdots e(-u_1) \cdot e(-v_{\ell(\beta)}) \cdots e(-v_1), \]

\noindent where $g$ is a product of exponentials of Oseledets space from \color{black} $\Omega(\alpha,\beta)$\color{black}, %from coarse Lyapunov exponents that lie strictly between $\alpha$ and $\beta$
which is still independent of $x$. Using the same procedure as above, we may push all $g$-terms to the far left, and cancel the $e(u_{\ell(\alpha)})$-term with its inverse to obtain the following expression, with $g'$ independent of $x$:

\[ [u,v] = g' e(u_1)\cdots e(u_{\ell(\alpha)-1})\cdot  e( v_1)\cdots e(v_{\ell(\beta)}) \cdot e(-u_{\ell(\alpha)-1})\cdots e(-u_1) \cdot e(-v_{\ell(\beta)}) \cdots e(-v_1). \]

We now repeat this process until we have canceled each $e(u_i)$-term which then further allows for the cancelling of all $e(v_j)$-terms, leaving only a product exponentials of Oseledets spaces from \color{black} $\Omega
(\alpha,\beta)$\color{black}, which is independent of $x$.

We have now reduced $[u,v]$ to a product of exponentials of Oseledets spaces of $\gamma$, where $\gamma \in \Sigma(\alpha,\beta)$. To deduce that $\rho^{\alpha,\beta}(u,v,x)$ is independent of $x$, we write \color{black}$\Sigma(\alpha,\beta)$ \color{black}  in a circular ordering. Then push all of the exponential terms from each the Oseledets subspace of the first coarse exponent to the left. Since {\color{black} the commutators $\rho^{\alpha,\beta}(e(u_i),e(v_j))$ are determined by the functions $\hat{\rho}$, which are assumed to be constant}, %we have constant relations among the exponentials of the Oseledets spaces,
this is possible, accumulating their commutators on the right. The corresponding reduction yields an element, written in circular ordering, which is independent of $x$. Uniqueness follows from Lemma \ref{lem:presentation-uniqueness}.
\end{proof}

\subsection{Setting up the inductions}

We prove Theorem \ref{thm:constant pairwise cycle structure} using Lemma \ref{lem:oseledets-sufficient} by showing each $\hat{\rho}^{c_i^\alpha\alpha,c_j^\beta\beta}_{c_m^\gamma\gamma}$ is constant. We use use three inductions. The outermost induction is on $\#\Sigma(\alpha,\beta)$, we call this Induction I. 
 In each step of the induction, we will show for Lyapunov exponents $c_i^\alpha\alpha$ and $c_j^\beta\beta$,

\begin{flalign}
 &  \label{74out-induction1}\mbox{if $\hat{\rho}^{c_i^\alpha\alpha,c_j^\beta\beta}_{c_m^\gamma\gamma} \not\equiv 0$, then $c_m^\gamma\gamma = \sigma \, c_i^\alpha\alpha + \tau \, c_j^\beta\beta$ for some  $\sigma,\tau \in \Z_+$, and} \\
 &  \label{74out-induction2} \mbox{if $c_m^\gamma\gamma = \sigma \, c_i^\alpha\alpha + \tau \, c_j^\beta\beta$, then $\hat{\rho}^{c_i^\alpha\alpha,c_j^\beta\beta}_{ c_m^\gamma\gamma}(u,v,x)$ is a polynomial which is $\sigma$-homogeneous in $u$,} \\ 
 &  \nonumber \hspace{2cm} \mbox{$\tau$-homogeneous in $v$ and independent of $x$.}
\end{flalign}

We first state a key consequence of the induction. Let $\mc P_{|\alpha,\beta|}$ denote the group freely generated by the groups $N^\gamma$, $\gamma  \in \Sigma(\alpha,\beta) \cup\set{\alpha,\beta}$.

%{\color{olive} Move up or state a slightly more general version, with this as a Corollary? We need similar, but slightly broader statements later, with a nearly indentical proof,,,}
\begin{lemma}
\label{lem:polynomial-to-nil}
If \eqref{74out-induction1} and \eqref{74out-induction2} hold for all linearly independent $\alpha,\beta$ such that $\#\Sigma(\alpha,\beta) \le n$, then for any such $\alpha,\beta$,

\begin{itemize}
\item  the action of $\mc P_{|\alpha,\beta|}$ factors through the action of a nilpotent Lie group $N^{|\alpha,\beta|}$,
\item  $\Lie(N^{|\alpha,\beta|}) = \bigoplus_{\gamma \in \Sigma(\alpha,\beta)\cup \set{\alpha,\beta}} \Lie(N^\gamma)$, and
\item the family of automorphisms $a_*$ defined in \ref{ta5} descend to an automorphism of  $N^{|\alpha,\beta|}$.
\end{itemize}
\end{lemma}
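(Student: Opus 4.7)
The plan is to realize $N_{|\alpha,\beta|}$ as the quotient $\mc P_{|\alpha,\beta|}/\mc C$, where $\mc C \trianglelefteq \mc P_{|\alpha,\beta|}$ is the normal subgroup generated by the (now constant) commutator relations $\rho^{\alpha',\beta'}(u,v) * [u,v]$ for non-proportional $\alpha',\beta' \in \Sigma(\alpha,\beta) \cup \set{\alpha,\beta}$. The induction hypothesis \eqref{74out-induction1}-\eqref{74out-induction2}, combined with Lemma \ref{lem:oseledets-sufficient} applied internally (each such pair has $\#\Sigma(\alpha',\beta') \leq n$, so the lemma applies), guarantees that each $\rho^{\alpha',\beta'}$ is independent of $x$ and takes values in $N_\eta$ for $\eta$ strictly interior to the canonical circular arc from $\alpha'$ to $\beta'$.

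First I will observe that $\mc C$ acts trivially on $X$, which is immediate from Lemma \ref{lem:geo-comm} since each defining generator is a cycle at every $x$; the action of $\mc P_{|\alpha,\beta|}$ therefore descends to the quotient. Next I will set up a canonical-form reduction: using the commutator relations iteratively, every word in $\mc P_{|\alpha,\beta|}$ can be rewritten modulo $\mc C$ as a single product of legs in $N_\gamma$, one per $\gamma \in \Sigma(\alpha,\beta) \cup \set{\alpha,\beta}$, arranged in the canonical circular order. Each commutation introduces new legs only in strictly interior arcs, which together with the nilpotency of each $N_\gamma$ drives termination, and uniqueness follows from the argument of Lemma \ref{lem:presentation-uniqueness}. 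This produces a continuous bijection $\prod_{\gamma} N_\gamma \to N_{|\alpha,\beta|}$ which is a local homeomorphism at the identity.

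Since $N_{|\alpha,\beta|}$ is locally path-connected (as a quotient of a locally path-connected space, cf.\ Corollary \ref{cor:connected}) and admits an injective continuous map from a neighborhood of the identity into the finite-dimensional space $\prod_\gamma N_\gamma$, Theorem \ref{lem:gleason-palais} implies $N_{|\alpha,\beta|}$ is a Lie group. Its Lie algebra is identified with $\bigoplus_\gamma \Lie(N_\gamma)$ as a graded vector space by reading off the canonical-form parameterization, and nilpotency follows because iterated brackets push into ever more interior arcs of the finite set $\Sigma(\alpha,\beta) \cup \set{\alpha,\beta}$, forcing the lower central series to terminate. For the third statement, I will use the equivariance \eqref{eq:rho-equivariance} to conclude that each induced automorphism $a_*$ of $\mc P_{|\alpha,\beta|}$ preserves $\mc C$, hence descends to a continuous automorphism of $N_{|\alpha,\beta|}$; being a graded semisimple map on each summand by \ref{ta5}, the descended automorphism is automatically a Lie-group automorphism.

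The principal technical obstacle will be executing the canonical-form reduction rigorously, since one must track how nilpotent commutation inside each $N_\gamma$ interacts with the polynomial inter-$\gamma$ commutators and verify both termination and uniqueness of the normal form. The interior-pushing behavior of the polynomial commutators, together with the finiteness of $\Sigma(\alpha,\beta) \cup \set{\alpha,\beta}$, is what ultimately makes the argument go through.
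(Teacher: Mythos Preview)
Your proposal is correct and follows essentially the same approach as the paper: quotient $\mc P_{|\alpha,\beta|}$ by the (constant) commutator relations, establish a canonical form in circular ordering by pushing legs leftward and accumulating interior commutators, invoke Lemma~\ref{lem:presentation-uniqueness} for uniqueness, and apply Gleason--Palais via the resulting injective continuous map into $\prod_\gamma N_\gamma$. The only notable difference is the nilpotency argument---the paper observes that $N_{|\alpha,\beta|}$ admits a contracting automorphism (any $a$ with $\alpha(a),\beta(a)<0$), while you argue via weight-growth under iterated brackets; both are valid, though the contracting-automorphism route is slightly cleaner and avoids tracking the precise bracket structure.
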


\begin{proof}

Write $\Sigma(\alpha,\beta) \cup \set{\alpha,\beta} = \set{\alpha = \gamma_1,\gamma_2,\dots,\gamma_r = \beta}$ in the induced circular ordering. Let $\mc G$ denote the factor of the group \color{black}$\mc P_{\abs{\alpha,\beta}}$ \color{black} modulo the commutator relations \eqref{eq:comm-relation}. By \eqref{74out-induction2} and Lemma \ref{lem:oseledets-sufficient}, they are independent of $x$. We first claim that every $\rho \in \mc G$ can be written as

\begin{equation}
\label{eq:nil-presentation}u_1 * \dots * u_r,
\end{equation}

\noindent where each $u_i \in N^{\gamma_i}$ is unique. Indeed, any $\rho \in \mc G$ can be written as $\rho = v_1* \dots * v_k$ (where each $v_i \in N^{\beta_{n_i}}$). %By Lemma \ref{lem:oseledets-sufficient}, since we have \eqref{74out-induction1}, we know that the function $\rho^{\alpha,\beta}_\gamma$ are independent of $x$.

 We may begin by pushing all of the terms from the $\beta_1$ component to the left. We do this by looking at the first term to appear with $\beta_1$. Each time we pass it through, we may accumulate some $[u^{\beta_1},v^{\beta_j}]$ which may be rewritten as $\rho(u^{\beta_1},v^{\beta_j})$, having no $\beta_1$ terms, since we have quotiented by the commutator relations \eqref{eq:comm-relation}. So we have shown that in $\mc G$, $\rho$ is equal to $u_1 * \rho'$, where $\rho'$ consists only of terms without $\beta_1$, and $u_1 \in N^{\beta_1}$.

We now proceed inductively. We may in the same way push all $\beta_2$ terms to the left. Notice now that each time we pass through, the ``commutator'' $\rho(u^{\beta_2},v^{\beta_j})$, $j \ge 3$ has no $\beta_1$ or $\beta_2$ terms.  Iterating this process yields the desired presentation of $\rho$.
%Uniqueness will follow from an argument similar to Lemma \ref{lem:extending-charts}. %{\color{olive} and Remark \ref{rem:rho-ambiguity}}.

Thus, every element of $\mc G$ has a unique presentation of the form \eqref{eq:nil-presentation}, where the uniqueness follows from Lemma \ref{lem:presentation-uniqueness}. The map which assigns an element $\rho$ to such a presentation gives a an injective map from $\mc G$ to $\prod N^{\beta_i}$ (but the map may not be a homomorphism of groups). By Lemma \ref{lem:continuity-criterion}, it will be continuous once its lift to $\mc P_{\abs{\alpha,\beta}}$ is continuous. In each combinatorial cell $C_{\overline{\beta}}$, the map is given by composition of the group multiplications in each $N^{\beta_i}$ and the functions $\rho^{\alpha,\beta}(\cdot,\cdot)$ evaluated on cell coordinates, which are continuous. Therefore, the lift is continuous, so the map from $\mc G$ is continuous. 

Therefore, there is an injective continuous map from $\mc G$ to a finite-dimensional space, and $\mc G$ is a Lie group by \color{black} Corollary \color{black} \ref{cor:lie-from-const}. Fix $a$ which contracts every $\beta_i$. The fact that $\mc G$ is nilpotent follows from the fact that it has a contracting automorphism.
\end{proof}

{\color{olive}
%Every weight $\chi \in [\alpha,\beta] \cap \Delta_f$ is of the form $u \alpha + v \beta$, so we may introduce a lexicographical ordering on $[\alpha,\beta] \cap \Delta_f$ by saying $u \alpha + v \beta \prec u' \alpha + v'\beta$ if $u < u'$ or $u = u'$ and $v < v'$. Without loss of generality assume $\Omega = \set{\chi_0,\dots,\chi_m}$ is ordered so that $\chi_0 \prec \dots \prec \chi_m$.

%\begin{corollary}[Base case]
%If $[\alpha,\beta] \cap \Delta_b = \emptyset$, for each $\chi_i \in \Omega$, $\rho^{\alpha,\beta}_{\chi_i}(s,t,x)$ is a cocycle over $\eta^\alpha$ in $s$ when fixing $t$, and is a cocycle over $\eta^\beta$ in $t$ when fixing $s$.
%\end{corollary}

%\begin{proof}
%This is essentially the same as the proof of Lemma \ref{lem:cocycle-property}. Indeed, the $\chi \in \Omega$ cannot be obtained as commutators with either $\alpha$ or $\beta$ and hence  ``slide'' past $\alpha$ or $\beta$ legs unchanged possibly introducing higher level legs however.
%\end{proof}

}

Each step of the outer induction on $\#\Sigma(\alpha,\beta)$ will be proved using two further inductions. We introduce a partial order on $\set{(i,j) : 1 \le i \le \ell(\alpha), 1 \le j \le \ell(\beta)}$ by saying that $(i_1,j_1) \preceq (i_2,j_2)$ if and only if $i_1 \le i_2$ and $j_1 \le j_2$. The second induction will utilize this partial order: we will show \eqref{74out-induction1} and \eqref{74out-induction2} for a pair $c_i^\alpha \alpha$ and $c_j^\beta\beta$ assuming that we have concluded it for all choices of $c_m^\gamma\gamma$ and all $c_{i'}^\alpha\alpha$, $c_{j'}^\beta\beta$ such that $(i,j) \preceq (i',j')$ and $(i,j) \not= (i',j')$. The base of this induction will then be $(\ell(\alpha),\ell(\beta))$, the unique maximal element. It is clear from the structure of the partial order that such an induction will exhaust all choices of $(i,j)$. We call this induction Induction II.

 Given $c_i^\alpha\alpha$ and $c_j^\beta \beta$, let 
\begin{equation}\label{def: [] for expon and Omega l}
 [c_i^\alpha \alpha,c_j^\alpha \beta] = \set{c_m^\gamma \gamma : \hat{\rho}^{c_i^\alpha \alpha,c_j^\beta \beta}_{c_m^\gamma \gamma} \not\equiv 0} \mbox{ and }\Omega_l = \set{  \sigma \, c_i^\alpha \alpha + \tau \, c_j^\beta \beta : \sigma + \tau = l, \; \sigma,\tau > 0} \cap \color{black}\Omega(\alpha,\beta)\color{black}. %\set{c_m^\gamma\gamma : \gamma \in \Delta}.
\end{equation}

 Then there are finitely many values $l_0 < l_1 < \dots < l_m$ such that \[\Omega(\alpha,\beta) = \bigcup_{p=0}^m \Omega_{l_p}.\]

\noindent Given a subset $S$ of Lyapunov exponents, and a weight $c_i^\alpha\alpha \in \color{black}\Omega\color{black}$, let 
\[ [c_i^\alpha\alpha,S] = \set{c_m^\gamma\gamma : \rho^{c_i^\alpha\alpha,c_j^\beta\beta}_{c_m^\gamma\gamma}\not\equiv 0 \mbox{ for some }c_j^\beta\beta \in S}.\]

%\color{black} Should we have a summary of notations in section 11 perhaps? or 12???\color{black}
%$\Delta_0 = \Omega$ and $\Delta_{i+1} = \big((\Delta_i + \alpha) \cup (\Delta_i + \beta)\big) \cap ([\alpha,\Delta_i] \cup [\beta,\Delta_i]) \cap [\alpha,\beta]$.

%\begin{corollary}
%\label{cor:delt-increasing}
%$\bigcup \Delta_i = [\alpha,\beta]$ %, % $\Delta_i \cap \Delta_j = \emptyset$ if $i \not= j$, 
%and if $i \le j$, $[\alpha,\Delta_j] \cap \Delta_i = [\beta,\Delta_j] \cap \Delta_i = \emptyset$. 
%\end{corollary}
%\begin{proof}
%The first claim follow from Lemma \ref{lem:uppertriangular}. {  Second claim?????}
%\end{proof}

Let $\mc P_{(\alpha,\beta)}$, $\mc P_{|\alpha,\beta)}$ and $\mc P_{(\alpha,\beta|}$ denote the groups freely generated by the coarse weights of $\Sigma(\alpha,\beta)$, $\Sigma(\alpha,\beta) \cup \set{\alpha}$ and $\Sigma(\alpha,\beta) \cup \set{\beta}$, respectively.  By the induction hypothesis, the action of each group factors through Lie groups which we denote by $N^{(\alpha,\beta)}$, $N^{|\alpha,\beta)}$ and $N^{(\alpha,\beta|}$, respectively. Let $F_l = \bigoplus_{c_m^\gamma\gamma \in \Omega_l} E^{c_m^\gamma\gamma}$, so that $\Lie(N^{(\alpha,\beta)}) = \bigoplus_{a=0}^m F_{l_a}$.

It now
 suffices to show Claims \eqref{74out-induction1} and \eqref{74out-induction2} for each $\chi \in \Omega_{l_p}$. We will do this using a final induction on $p$, starting from $p=0$. We call this induction Induction III. We summarize each induction below, noting that the proof of \eqref{74out-induction1} and \eqref{74out-induction2} runs in a lexicographical ordering: for each step of Induction I, we do every step of Induction II, and for each of Induction II, we do every step of Induction III:
 
 \begin{itemize}
 \item Induction I: $\#\Sigma(\alpha,\beta)$, base case $\#\Sigma(\alpha,\beta) = 0$.
 \item Induction II: Partial order on $(c_i^\alpha\alpha,c_j^\beta\beta)$, base case maximal element, induction moves downward.
 \item Induction III: $c_m^\gamma\gamma \in \Omega_l$, induction on $l$, base case $l = l_0$ smallest coefficients.
 \end{itemize}
 
 % The base of the induction will be $i = -1$, with $\Omega_{l_{-1}} = \emptyset$. 

\subsection{Proving the inductive steps}
Fix $l$, and given $g \in N^{(\alpha,\beta)}$, write $\log g =\check{g} + \bar{g} + \hat{g}$, where $\check{g}$ is the component of $\log g$ from the weight spaces of $\Omega_{l'}$, $l' < l$, $\bar{g}$ is the component from the weight spaces of $\Omega_l$ and $\hat{g}$ is the component from the weight spaces of $\Omega_{l'}$, $l' > l$.

By Induction I and Lemma \ref{lem:polynomial-to-nil}, there are groups $N^{(\alpha,\beta)}$, $N^{|\alpha,\beta)}$ and $N^{(\alpha,\beta|}$ generated by the coarse weights $\Sigma(\alpha,\beta)$, $\set{\alpha}\cup \Sigma(\alpha,\beta)$ and $\set{\beta} \cup \Sigma(\alpha,\beta)$. The following important lemma uses these group structures to describe the action of ${ N^\alpha}$ on $N^{(\alpha,\beta)}$.

\begin{lemma}
\label{lem:jordan-block-flag}
If $u \in E^{c_i^\alpha\alpha}$, then the conjugation action of $\exp(u)$ on  $N^{|\alpha,\beta)}$ preserves $N^{(\alpha,\beta)}$. Furthermore, $\ad(u)$  a nilpotent automorphism such that $\ad(u)(F_l) \subset F_{l+1}$.
\end{lemma}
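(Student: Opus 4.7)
The plan is to exploit the $\R^k$-weight grading on $\Lie(N_{|\alpha,\beta)})$. By the outer induction hypothesis (Lemma \ref{lem:polynomial-to-nil} applied in its analogous form to $N_{|\alpha,\beta)}$, which is available because every cone $\Sigma(\alpha,\gamma)$ with $\gamma\in\Sigma(\alpha,\beta)$ is strictly smaller than $\Sigma(\alpha,\beta)$), one has the vector-space decomposition $\Lie(N_{|\alpha,\beta)}) = \Lie(N_\alpha) \oplus \Lie(N_{(\alpha,\beta)}) = \bigoplus_i E^{c_i^\alpha\alpha} \oplus \bigoplus_{\gamma\in\Sigma(\alpha,\beta)}\bigoplus_m E^{c_m^\gamma\gamma}$. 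By \ref{ta5}, each $a \in \R^k$ acts on $\Lie(N_{|\alpha,\beta)})$ as a graded homothety, so the derivation $A(a) := \frac{d}{dt}\big|_{t=0}(ta)_*$ acts on the weight space $E^\chi$ as multiplication by the scalar $\chi(a)$. Consequently each $E^\chi$ is a joint eigenspace of the commuting family $\set{A(a) : a \in \R^k}$.

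The core calculation is then a one-line application of the Leibniz rule: brackets of joint eigenvectors are joint eigenvectors whose weight is the sum of the two input weights. Applied to $u \in E^{c_i^\alpha\alpha}$ and a weight vector $v \in E^{c_m^\gamma\gamma}\subset F_l$ with $c_m^\gamma\gamma = \sigma c_i^\alpha\alpha + \tau c_j^\beta\beta$ (where $\sigma,\tau>0$ and $\sigma+\tau=l$), this forces $[u,v]$ to lie in the weight space for $(\sigma+1)c_i^\alpha\alpha + \tau c_j^\beta\beta$. Since $\tau>0$, this new weight is not proportional to $\alpha$, hence either it appears among the weights of $\Lie(N_{(\alpha,\beta)})$—in which case it belongs to $\Omega_{l+1}$ and $[u,v]\in F_{l+1}$—or it is absent from the grading, forcing $[u,v]=0$. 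By linearity in $v$ and the direct sum decomposition of $F_l$ into Oseledets pieces, $\ad(u)(F_l)\subseteq F_{l+1}$ as claimed.

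Nilpotency of $\ad(u)$ on $\Lie(N_{(\alpha,\beta)})$ is immediate, since iterating $\ad(u)$ strictly increases $l$ and only finitely many $l\leq l_m$ contribute, so $\ad(u)^{l_m+1}\equiv 0$ on $\Lie(N_{(\alpha,\beta)})$. The group-level statement then follows by a standard argument: since $\ad(u)$ preserves $\Lie(N_{(\alpha,\beta)})$ and is nilpotent there, the series $\Ad(\exp u) = \exp(\ad u) = \sum_{k\geq 0}\ad(u)^k/k!$ is a finite sum and gives a well-defined automorphism of $\Lie(N_{(\alpha,\beta)})$. Invoking the identity $\exp(u)\exp(w)\exp(-u) = \exp(\Ad(\exp u)\,w)$ together with the fact that $\exp : \Lie(N_{(\alpha,\beta)}) \to N_{(\alpha,\beta)}$ is surjective (as $N_{(\alpha,\beta)}$ is connected nilpotent) yields that conjugation by $\exp(u)$ preserves $N_{(\alpha,\beta)}$.

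The main point requiring genuine input is the initial identification of the weight grading of $\Lie(N_{|\alpha,\beta)})$, i.e., the assertion that the only weights appearing are $c_i^\alpha\alpha$ and the $c_m^\gamma\gamma$ with $\gamma\in\Sigma(\alpha,\beta)$, with the expected dimensions. This is exactly what is provided by Induction I when applied to all strict subcones, giving the Lemma \ref{lem:polynomial-to-nil}-type description for $N_{|\alpha,\beta)}$. Once that grading is in hand, the rest is compact bookkeeping with the derivation property and the fact that $\exp$ is a diffeomorphism in the connected nilpotent setting.
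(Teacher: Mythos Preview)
Your proof is correct and follows essentially the same approach as the paper's: both use the $\R^k$-weight grading on $\Lie(N_{|\alpha,\beta)})$ (available by Induction I) together with the observation that $a_*$ acts as a graded homothety, so brackets of weight vectors land in the weight space for the sum of the weights, yielding $[E^{c_i^\alpha\alpha},E^{c_m^\gamma\gamma}]\subset E^{c_i^\alpha\alpha+c_m^\gamma\gamma}\subset F_{l+1}$. You are slightly more explicit than the paper in two places: you spell out why the new weight cannot be proportional to $\alpha$ (so the bracket lands in $\Lie(N_{(\alpha,\beta)})$ or is zero), and you derive the group-level preservation statement from $\Ad(\exp u)=\exp(\ad u)$, which the paper leaves implicit.
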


\begin{proof}
That $\ad(u)$ is unipotent follows from the fact that $\ad(u)(F_l) \subset F_{l+1}$, which we now show. Assume $c_m^\gamma\gamma \in \Omega_l$. The  Lie group $N^{|\alpha,\beta)}$ also carries an automorphism $a_*$ which expands $E^{c_i^\alpha\alpha}$ by $e^{c_i^\alpha\alpha(a)}$ and $E^{c_m^\gamma\gamma}$ by $e^{c_m^\gamma\gamma(a)}$. Therefore, $[E^{c_m^\gamma\gamma},E^{c_i^\alpha\alpha}]$ consists of vectors which are expanded by { $e^{c_m^\gamma\gamma(a) + c_i^\alpha\alpha(a)}$}. Since $a$ is arbitrary we conclude that \color{black}$[E^{c_m^\gamma\gamma},E^{c_i^\alpha\alpha}] \subset E^{c_m^\gamma\gamma + c_i^\alpha\alpha}$\color{black}. Since $c_m^\gamma\gamma \in \Omega_l$, the result follows.
\end{proof}

A completely symmetric version holds for $u \in E^{c_j^\beta\beta}$. The following are immediate consequences of Lemma \ref{lem:jordan-block-flag}, and the Baker-Campbell-Hausdorff formula. 

%\color{green} and Lemma \ref{lem:polynomial-restrictions}\color{black}.

\begin{corollary}
\label{cor:omega-shear}
If $u \in E^{c_i^\alpha\alpha} \cup E^{c_j^\beta\beta}$, and $g = \exp(\check{g} + \bar{g} + \hat{g}) \in N^{(\alpha,\beta)}$, let $v = \exp(u) * g * \exp(-u)$. Then:

\begin{itemize}
\item[(1)] $v \in N^{(\alpha,\beta)}$,
\item[(2)] $\check{v}$ is a polynomial of $u$ and $\check{g}$,
\item[(3)] $\bar{v}$ takes the following form:
%is the sum of a polynomial in $u$ and $\check{g}$ and a linear function in $\bar{g}$ whose coefficients are polynomials in $u$,
\[ \bar{v} = p(u,\check{g}) + \bar{g} \]

for some polynomial $p $ such that $p(0,\cdot) = p(\cdot, 0) = 0$, and
\item[(4)] $\hat{v}$ is a polynomial in $u$, {\color{black}$\check{g}$, $\bar{g},$ and $\hat{g}$}.
\end{itemize}
\end{corollary}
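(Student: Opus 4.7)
The plan is to read all four conclusions off a single Baker--Campbell--Hausdorff computation inside the Lie group $N_{(\alpha,\beta)}$ (available by Lemma \ref{lem:polynomial-to-nil}), using the level-raising nilpotent structure supplied by Lemma \ref{lem:jordan-block-flag}. Statement (1) is immediate: by Lemma \ref{lem:jordan-block-flag} (or its symmetric version for $u \in E^{c_j^\beta\beta}$, working instead in $N_{(\alpha,\beta|}$), conjugation by $\exp(u)$ preserves $N_{(\alpha,\beta)}$, so $v \in N_{(\alpha,\beta)}$.

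Working then inside $\Lie(N_{(\alpha,\beta)}) = \bigoplus_{p} F_{l_p}$, I would write
\[ \log v \;=\; \Ad(\exp u)\log g \;=\; e^{\ad(u)}\log g \;=\; \sum_{k \geq 0} \frac{1}{k!}\ad(u)^k \log g. \]
The sum is finite because $\ad(u)$ is nilpotent by Lemma \ref{lem:jordan-block-flag}; the crucial additional information from that lemma is that $\ad(u)(F_l) \subset F_{l+1}$, so every term with $k \geq 1$ sits at a strictly higher level than the term from which it was produced.

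Decomposing $\log g = \check g + \bar g + \hat g$ according to levels $<l$, $=l$, $>l$, I would then inspect the three projections of $\log v$ term by term. For $\check v$ (levels $<l$): only $\check g$ (via $k=0$) and those iterated $\ad(u)$-translates of $\check g$ that still lie below level $l$ can contribute, since $\ad(u)^k\bar g \in F_{l+k}$ and $\ad(u)^k\hat g$ stays above level $l$; this exhibits $\check v$ as a polynomial in $u$ and $\check g$. For $\bar v$ (level exactly $l$): the only contributions are $\bar g$ from $k=0$, together with iterated brackets $\ad(u)^k\check g$, $k\ge 1$, that happen to land at level $l$, yielding $\bar v = \bar g + p(u,\check g)$ with $p(0,\cdot) = p(\cdot,0) = 0$ because every term of $p$ carries at least one $\ad(u)$-factor and one factor of $\check g$. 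For $\hat v$: whatever remains is manifestly a polynomial in $u$ and $\log g$.

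The case $u \in E^{c_j^\beta\beta}$ is handled verbatim, using $N_{(\alpha,\beta|}$ and the symmetric version of Lemma \ref{lem:jordan-block-flag}. I do not anticipate a serious obstacle here: once the nilpotent Lie group from Lemma \ref{lem:polynomial-to-nil} and the level-shifting property from Lemma \ref{lem:jordan-block-flag} are in hand, the corollary reduces to a line-by-line inspection of a finite adjoint expansion, with the only delicate point being to track why $\bar g$ and $\hat g$ cannot feed back into the strictly lower levels.
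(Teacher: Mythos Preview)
Your proposal is correct and is exactly the argument the paper has in mind: the paper states that Corollary \ref{cor:omega-shear} is an ``immediate consequence of Lemma \ref{lem:jordan-block-flag}, and the Baker--Campbell--Hausdorff formula'' without giving further details, and your level-by-level inspection of $\log v = e^{\ad(u)}\log g$ using $\ad(u)(F_l)\subset F_{l+1}$ is precisely how one unpacks that sentence.
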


\begin{corollary}
\label{cor:multiplication-polynomial}
If $g_1,g_2 \in N^{(\alpha,\beta)}$ and we write $g_3 = g_1g_2$, then

\[ \check{g}_3 = p_1(\check{g_1},\check{g_2}) \qquad \bar{g}_3 = \bar{g}_1 + \bar{g}_2 + p_2(\check{g}_1,\check{g}_2) \qquad \color{black}\hat{g}_3 =  p_3(\check{g}_1,\bar{g}_1,\hat{g}_1,\check{g}_2,\bar{g}_2,\hat{g}_2),\color{black} \]

\noindent for some polynomials $p_1$, $p_2$ and $p_3$.
\end{corollary}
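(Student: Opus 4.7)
The plan is to derive the corollary directly from the Baker-Campbell-Hausdorff (BCH) formula, using the graded structure of $\Lie(N_{(\alpha,\beta)})$. First I would establish the refinement of Lemma \ref{lem:jordan-block-flag} that $[F_{l_1},F_{l_2}] \subset F_{l_1+l_2}$. This follows from the same argument used in the proof of Lemma \ref{lem:jordan-block-flag}: if $X \in E^{c_m^\gamma\gamma} \subset F_{l_1}$ with $c_m^\gamma\gamma = \sigma_1 c_i^\alpha\alpha + \tau_1 c_j^\beta\beta$ (so $\sigma_1+\tau_1 = l_1$) and $Y \in E^{c_n^\delta\delta} \subset F_{l_2}$ analogously, then for every $a \in \R^k$ the bracket $[X,Y]$ is an $a_*$-eigenvector of eigenvalue $e^{((\sigma_1+\sigma_2)c_i^\alpha\alpha + (\tau_1+\tau_2)c_j^\beta\beta)(a)}$, forcing $[X,Y] \in E^{(\sigma_1+\sigma_2)c_i^\alpha\alpha + (\tau_1+\tau_2)c_j^\beta\beta} \subset F_{l_1+l_2}$.

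Next, since $N_{(\alpha,\beta)}$ is nilpotent (Lemma \ref{lem:polynomial-to-nil}), BCH terminates, so
\[ \log(g_1 g_2) = \log g_1 + \log g_2 + \tfrac{1}{2}[\log g_1,\log g_2] + \cdots \]
is a finite sum of iterated brackets in $\log g_1, \log g_2$. I would then project this identity onto the three graded pieces $\bigoplus_{l' < l} F_{l'}$, $F_l$, and $\bigoplus_{l' > l} F_{l'}$ and read off the result from the grading. For $\check{g}_3$: any iterated bracket involving even one factor from $F_{l'}$ with $l' \geq l$ lands in $F_{l''}$ with $l'' \geq l$, hence contributes nothing to the $l' < l$ part; the only surviving contributions come from brackets of $\check{g}_1$ and $\check{g}_2$ alone, giving $\check{g}_3 = p_1(\check{g}_1,\check{g}_2)$. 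For $\bar{g}_3$: the linear terms of BCH contribute $\bar{g}_1 + \bar{g}_2$; any bracket of length $\geq 2$ involving a factor in $F_{l'}$ with $l' \geq l$ already lies in $F_{l''}$ with $l'' > l$, so again only brackets entirely in $\bigoplus_{l' < l} F_{l'}$ can contribute to $F_l$, producing the correction $p_2(\check{g}_1,\check{g}_2)$. Since these are brackets, they vanish when either argument is zero, giving $p_2(0,\cdot) = p_2(\cdot,0) = 0$. For $\hat{g}_3$: BCH expresses it as a polynomial in $\log g_1,\log g_2$, hence in $g_1,g_2$ via the polynomial exponential/logarithm maps in a nilpotent group.

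There is no real obstacle beyond bookkeeping: the two ingredients are finiteness of BCH (nilpotency) and the weight grading $[F_{l_1},F_{l_2}] \subset F_{l_1+l_2}$. The only subtlety worth double-checking is that the grading on $\Lie(N_{(\alpha,\beta)})$ is genuinely compatible with the ambient Lie bracket inherited from the Lie group structure given by Lemma \ref{lem:polynomial-to-nil}, which in turn was built from the renormalization automorphisms $a_*$ — but this is exactly the content of the weight argument above, so the proof is essentially immediate.
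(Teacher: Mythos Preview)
Your proposal is correct and matches the paper's approach: the paper states that Corollary~\ref{cor:multiplication-polynomial} (together with Corollary~\ref{cor:omega-shear}) is an ``immediate consequence of Lemma~\ref{lem:jordan-block-flag} and the Baker--Campbell--Hausdorff formula,'' and you have spelled out exactly that argument. Your explicit verification of $[F_{l_1},F_{l_2}]\subset F_{l_1+l_2}$ via the weight automorphisms $a_*$ is the natural extension of the Lemma~\ref{lem:jordan-block-flag} argument to brackets internal to $N_{(\alpha,\beta)}$, and is precisely what is needed to make the ``immediate'' claim precise.
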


Notice that $\rho^{\alpha,\beta}(u,v,x)$ is a formal product of elements from the groups $N^\gamma$, $\gamma \in \Sigma(\alpha,\beta)$, written in a circular ordering. Therefore, it represents a unique element of $N^{(\alpha,\beta)}$, which we abusively denote with the same notation. Fix $c_m^\gamma\gamma \in \Omega_l$, and define a function 
 $r^{c_i^\alpha\alpha,c_j^\beta\beta}_{c_m^\gamma\gamma}(u,v,x)$ to be the $E^{c_m^\gamma\gamma}$-component of $\log \rho^{\alpha,\beta}(e(u),e(v),x)$. \color{black}
\begin{corollary}
$r^{c_i^\alpha\alpha,c_j^\beta\beta}_{c_m^\gamma\gamma}(u,v,x) = \hat{\rho}^{c_i^\alpha\alpha,c_j^\beta\beta}_{c_m^\gamma\gamma}(u,v,x) + p(u,v)$ for some polynomial $p : E^{c_i^\alpha\alpha} \oplus E^{c_j^\beta\beta} \to E^{c_m^\gamma\gamma}$ independent of $x$.
\end{corollary}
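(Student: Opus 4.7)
The plan is to carry out the product $\rho^{\alpha,\beta}(e(u),e(v),x) = g_n * \cdots * g_1$ inside the nilpotent Lie group $N_{(\alpha,\beta)}$ provided by Induction I and Lemma \ref{lem:polynomial-to-nil}, where $g_i := \rho^{\alpha,\beta}_{\gamma_i}(e(u),e(v),x) \in N_{\gamma_i}$ and $\gamma_i$ ranges over $\Sigma(\alpha,\beta)$ in circular order. By the very definitions, $r^{c_i^\alpha\alpha,c_j^\beta\beta}_{c_m^\gamma\gamma}$ is obtained by taking the logarithm of this full product in $N_{(\alpha,\beta)}$ and then projecting to $E^{c_m^\gamma\gamma}$, whereas $\hat{\rho}^{c_i^\alpha\alpha,c_j^\beta\beta}_{c_m^\gamma\gamma}$ is obtained by taking the logarithm of the single factor $g$ coming from $N_\gamma$ and then projecting. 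The entire discrepancy is therefore controlled by the Baker--Campbell--Hausdorff corrections picked up while multiplying the $g_i$ in $N_{(\alpha,\beta)}$, and the goal is to show that these corrections assemble into an $x$-independent polynomial in $(u,v)$.

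The first step is to iterate Corollary \ref{cor:multiplication-polynomial} over the factors $g_n, \ldots, g_1$. Each application gives $\bar{(gh)} = \bar{g} + \bar{h} + p_2(\check{g}, \check{h})$ and expresses $\check{(gh)}$ as a polynomial in $(\check{g}, \check{h})$. Induction on the number of factors then yields that the level-$l$ part of $\log(g_n * \cdots * g_1)$ is
\[
\sum_{i=1}^{n}\bar{g}_i \;+\; Q(\check{g}_n, \ldots, \check{g}_1),
\]
where $Q$ is a polynomial whose arguments are the strictly lower-level ($l'<l$) components of the factors.

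Projecting onto $E^{c_m^\gamma\gamma}$, only the summand coming from the $N_\gamma$-factor contributes to $\sum_i \bar{g}_i$, because $E^{c_m^\gamma\gamma}\subset \Lie(N_\gamma)$ and each $\log g_i$ lives in $\Lie(N_{\gamma_i})$; that contribution equals $\hat{\rho}^{c_i^\alpha\alpha,c_j^\beta\beta}_{c_m^\gamma\gamma}(u,v,x)$ by the definition of $\hat{\rho}$. Hence
\[
r^{c_i^\alpha\alpha,c_j^\beta\beta}_{c_m^\gamma\gamma}(u,v,x) \;=\; \hat{\rho}^{c_i^\alpha\alpha,c_j^\beta\beta}_{c_m^\gamma\gamma}(u,v,x) \;+\; \pi_m^{\gamma}\bigl( Q(\check{g}_n,\ldots,\check{g}_1) \bigr),
\]
and it remains only to identify the last term with a polynomial $p(u,v)$ independent of $x$.

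For this final step, invoke the Induction III hypothesis: each $\check{g}_i$ decomposes as the sum, over Oseledets exponents in $\Lie(N_{\gamma_i})$ at levels $l'<l$, of the functions $\hat{\rho}^{c_i^\alpha\alpha,c_j^\beta\beta}_{c_{m'}^{\gamma_i}\gamma_i}(u,v,x)$, and each summand is polynomial in $(u,v)$ and independent of $x$. Substituting into the polynomial $Q$ and projecting via $\pi_m^\gamma$ produces the required polynomial $p$. The only thing to watch is the grading: since the minimum level in $\Sigma(\alpha,\beta)$ is $l_0\ge 2$, any iterated BCH-bracket landing in level $l$ is forced to involve only inputs at levels strictly below $l$, which is exactly why the hypothesis at lower levels suffices. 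This bookkeeping of the gradings is the only real technical point, and it is essentially automatic from Lemma \ref{lem:jordan-block-flag}.
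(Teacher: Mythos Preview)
Your proof is correct and follows essentially the same approach as the paper: both compare the two coordinate systems on $N_{(\alpha,\beta)}$ via iterated applications of Corollary~\ref{cor:multiplication-polynomial}, observe that the level-$l$ discrepancy is a polynomial in the $\check{\cdot}$-parts, and then invoke Induction~III to conclude those lower-level inputs are $x$-independent polynomials in $(u,v)$. One small imprecision: the claim ``$l_0\ge 2$'' is neither stated nor needed---what makes the bookkeeping work is that brackets \emph{add} levels (so any BCH correction landing at level $l$ has all inputs at strictly lower level), which is exactly the grading content behind Lemma~\ref{lem:jordan-block-flag} that you already cite.
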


\begin{proof}
The definitions of $r^{c_i^\alpha\alpha,c_j^\beta\beta}_{c_m^\gamma\gamma}$ and $\hat{\rho}^{c_i^\alpha\alpha,c_j^\beta\beta}_{c_m^\gamma\gamma}$ are quite similar, the only difference being that $\hat{\rho}^{c_i^\alpha\alpha,c_j^\beta\beta}_{c_m^\gamma\gamma}$ uses the $c_m^\gamma\gamma$-component of  $\log \rho^{\alpha,\beta}_\gamma$, while $r^{c_i^\alpha\alpha,c_j^\beta\beta}_{c_m^\gamma\gamma}$ regards $\rho^{\alpha,\beta}$ as an element of $N_{(\alpha,\beta)}$, then takes the $\log$ and the $c_m^\gamma\gamma$-component. Therefore, we wish to compare the standard exponential coordinate system on $N_{(\alpha,\beta)}$ and the coordinate system given by $(v_1,\dots,v_n) \mapsto e(v_1)\dots e(v_n)$, where $v_i \in N^{\gamma_i}$ and the $\gamma_i$ are listed in a circular ordering. By Corollary \ref{cor:multiplication-polynomial}, the $c_m^\gamma\gamma$-components will differ only by polynomials that depend on the terms of the commutator coming from $\Omega_{l'}$, $l' < l$ (ie, the $\check{\cdot}$\,-terms). Since by Induction III, such terms are polynomials, $\hat{\rho}^{c_i^\alpha\alpha,c_j^\beta\beta}_{c_m^\gamma\gamma}$ and $r^{c_i^\alpha\alpha,c_j^\beta\beta}_{c_m^\gamma\gamma}$ differ by a polynomial in $u$ and $v$ which is independent of $x$.
\end{proof}

Figure \ref{fig:inner-induction} gives an example of the structures above. We give a description of the features available for one step of the induction for this particular example. We assume that we are at the stage of the induction to analyze $\hat{\rho}^{c_2^\alpha\alpha,c_2^\beta\beta}_{c_j^{\gamma_m}\gamma_m}$. Then $\Omega_{l_0} = \set{c_1^{\gamma_1}\gamma_1}$, $\Omega_{l_1} = \set{c_1^{\gamma_2}\gamma_2}$ and $\Omega_{l_2} = \set{c_2^{\gamma_1}\gamma_1,c_2^{\gamma_2}\gamma_2}$, since they are the intersections of lines parallel to the one passing through $c_2^\beta\beta$ and $c_2^\alpha\alpha$. At the first stage of the innermost induction, we would analyze only the function $\hat{\rho}^{c_2^\alpha\alpha,c_2^\beta\beta}_{c_1^{\gamma_1}\gamma_1}$. The crucial feature for the base step is that,  by Lemma \ref{lem:jordan-block-flag}, no terms in the Oseledets space  $c_1^{\gamma_1}\gamma_1$ can appear by commuting the $c_i^{\gamma_j}\gamma_j$ with another weight in the figure. The second induction is necessary, due to the fact that $E^{c_2^\alpha\alpha}$ may only be a {\it vector} subspace of $\Lie(N^\alpha)$ and not be a subalgebra, and the algebraic properties of this subspace will be crucial in understanding the dependence of $\hat{\rho}^{c_2^\alpha\alpha,c_2^\beta\beta}_{c_1^{\gamma_1}\gamma_1}(u,v,x)$ on $u$. Luckily, some algebraic features remain. If $u_1,u_2 \in E^{c_2^\alpha\alpha}$, we may write $e(u_1+u_2) = e(u_1)e(u_2)\cdot g$ for some $g \in N^\alpha$. In fact, such a $g$ must lie in $\exp(E^{c_3^\alpha\alpha} \oplus E^{c_4^\alpha\alpha})$. By the second induction on the pairs $c_i^\alpha\alpha$ and $c_j^\beta\beta$, we know that this additional term $g$ will have polynomial relations with $c_2^\beta\beta$ and has polynomial relations with each $c_j^{\gamma_m}\gamma_m$ by the first induction on $\#\Sigma(\alpha,\beta)$. This allows the analysis to go through.

In the next step of the induction on the $\Omega_{l_i}$, $c_1^{\gamma_2}\gamma_2$ terms {\it may} appear when commuting the $c_1^{\gamma_1}\gamma_1$ terms with a multiple of $\alpha$ or $\beta$ (which will be needed in Lemma \ref{eq:cocycle-like-bb}), but for this example, this is the only Lyapunov exponent {\it strictly} between $\alpha$ and $\beta$ with this property (by considering Figure \ref{fig:inner-induction} and Lemma \ref{lem:jordan-block-flag}). This can and will appear when analyzing how the function $\hat{\rho}^{c_2^\alpha\alpha,c_2^\beta,\beta}_{c_1^{\gamma_2}\gamma_2}(u,v,x)$ depends on $u$. Such terms will contribute polynomials by the induction hypothesis, leading to the final polynomial form.

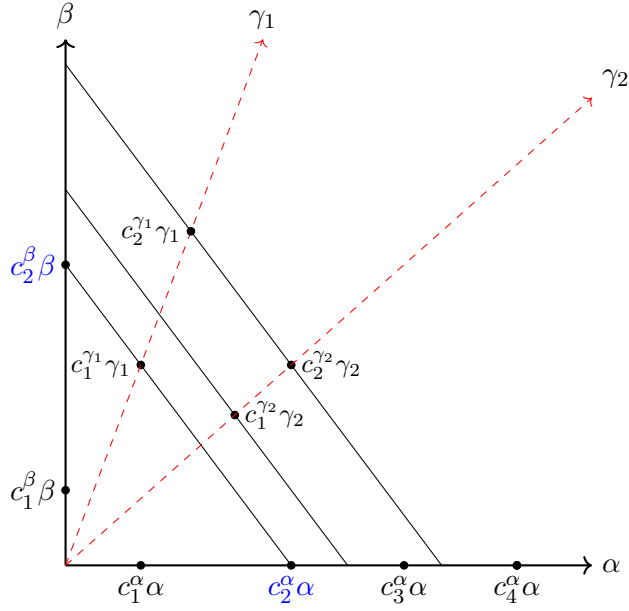
\begin{figure}[!ht]
\begin{tikzpicture}
\draw[thick,->] (0,0) -- (7,0);
\draw[thick,->] (0,0) -- (0,7);
\draw[fill] (1,0) circle [radius=0.05];
\node [below] at (1,0) {$c_1^\alpha\alpha$};
\draw[fill] (3,0) circle [radius=0.05];
\node [below] at (3,0) {\color{black}$c_2^\alpha\alpha$};
\draw[fill] (6,0) circle [radius=0.05];
\draw[fill] (6,0) circle [radius=0.05];
\node [below] at (6,0) {$c_4^\alpha\alpha$};
\draw[fill] (4.5,0) circle [radius=0.05];
\node [below] at (4.5,0) {$c_3^\alpha\alpha$};
\draw[fill] (0,1) circle [radius=0.05];
\node [left] at (0,1) {$c_1^\beta\beta$};
\draw[fill] (0,4) circle [radius=0.05];
\node [left] at (0,4) {\color{blue}$c_2^\beta\beta$};
\draw (0,4) -- (3,0);
\draw (0,5) -- (3.75,0);
\draw (0,20/3) -- (5,0);
\node [left] at (1,8/3) {\small $c_1^{\gamma_1}\gamma_1$};
\draw[fill] (1,8/3) circle [radius=0.05];
\draw [dashed,red,->] (0,0) -- (21/8,7);
\node [left] at (5/3,40/9) {\small $c_2^{\gamma_1}\gamma_1$};
\draw[fill] (5/3,40/9) circle [radius=0.05];
\node [right] at (9/4,2) {\small $c_1^{\gamma_2}\gamma_2$};
\draw[fill] (9/4,2) circle [radius=0.05];
\node [right] at (3,8/3) {\small $c_2^{\gamma_2}\gamma_2$};
\draw[fill] (3,8/3) circle [radius=0.05];
\draw [dashed,red,->] (0,0) -- (7,56/9);
\node [above] at (0,7) {$\beta$};
\node [right] at (7,0) {$\alpha$};
\node [above] at (21/8,7) {$\gamma_1$};
\node [above right] at (7,56/9) {$\gamma_2$};
\end{tikzpicture}
\caption{Lyapunov exponents in $\Sigma(\alpha,\beta)$}
\label{fig:inner-induction}
\end{figure}

We now return to the formal proof, assuming the induction hypotheses. Assume Claims \eqref{74out-induction1} and \eqref{74out-induction2} hold for $c_m^\gamma\gamma \in \Omega_{l_q}$, $q < p$.

Fix $v \in E^{c_j^\beta\beta}$ and let $\varphi(u,x) = \hat{\rho}^{c_i^\alpha\alpha,c_j^\beta\beta}_{c_m^\gamma\gamma}(u,v,x)$. Notice that  when $a \in \ker \beta$ and $c_m^\gamma\gamma = \sigma \, c_i^\alpha\alpha + \tau\, c_j^\beta\beta$, \eqref{eq:rho-equivariance}, \ref{ta5} and the definition of $\hat{\rho}^{c_i^\alpha\alpha,c_j^\beta\beta}_{c_m^\gamma\gamma}$ implies that
\begin{equation}
\label{eq:phi-pullout}
\varphi(u,x) = e^{-\sigma c_i^\alpha 
\alpha(a)}\varphi(e^{c_i^\alpha\alpha(a)}u,a\cdot x).
\end{equation}

\color{black}

We are now ready to establish the key lemma which gives a cocycle-like property to the function $\varphi$. While the proof requires checking some complicated details, the following lemma follows from two simple ideas: splitting a commutator into a sum of two commutators requires a conjugation and reordering, and with careful bookkeeping, the reordering and conjugation can be shown to contribute polynomial terms only. By the Baker-Campbell-Hausdorff formula, for homogeneous systems where the functions $\varphi$ are compositions of multiplication in a nilpotent Lie group, such polynomials will be nonvanishing unless $\sigma=\tau=1$, and the cocycle equation without them will not hold. We assume that the induction hypotheses hold.

\begin{lemma}
\label{eq:cocycle-like-bb}
$\varphi(u_1+u_2,x) = \varphi(u_1, x) + \varphi(u_2,e(u_1) x) + p(u_1,u_2)$  for some polynomial $p : E^{c_i^\alpha\alpha} \times E^{c_i^\alpha\alpha} \to E^{c_m^\gamma\gamma}$ such that $p(0,\cdot) \equiv 0$ and $p(\cdot ,0) \equiv 0$. % Furthermore, $\varphi_{\chi}(s,x) = cs^k$ for some $c \in \R$.
\end{lemma}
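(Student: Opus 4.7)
The plan is to expand $[e(u_1+u_2), e(v)]$ by first writing $e(u_1+u_2) = e(u_2)e(u_1)e(r)$, where $r = r(u_1,u_2)$ is a polynomial correction taking values in $\bigoplus_{c > c_i^\alpha} E^{c\alpha} \subset \Lie(N_\alpha)$ and vanishing when either argument vanishes. Such a decomposition follows from the Baker--Campbell--Hausdorff formula in the nilpotent group $N_\alpha$, combined with Lemma \ref{lem:rich-automorphism}, which guarantees that the bracket of two elements of $E^{c_i^\alpha \alpha}$ lies in strictly higher Oseledets subspaces of $\Lie(N_\alpha)$.

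Next, I would iteratively apply the group commutator identity $[xy, c] = [y, c] \cdot y^{-1}[x, c] y$ (which is easily verified from the convention $[u,v] = v^{-1}u^{-1}vu$) to obtain
\[
[e(u_1+u_2), e(v)] = [e(r), e(v)] \cdot e(-r)[e(u_1),e(v)]e(r) \cdot (e(u_1)e(r))^{-1}[e(u_2),e(v)](e(u_1)e(r)).
\]
Applying this to $x$ and reading right-to-left, each atomic commutator $[e(\cdot),e(v)]$ can be replaced, via the cycle equation $[e(u),e(v)]\cdot y = \rho^{\alpha,\beta}(e(u),e(v),y)^{-1}\cdot y$, by the corresponding $\rho^{\alpha,\beta}$-function at the appropriate base point. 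This realizes the entire expression as an element of the nilpotent Lie group $N_{(\alpha,\beta)}$ supplied by Induction I together with Lemma \ref{lem:polynomial-to-nil}. The natural base points that emerge are $e(u_1)e(r)\cdot x$ for the $u_2$-commutator and a point close to $x$ for the $u_1$-commutator; these will need to be brought to $e(u_1)\cdot x$ and $x$, respectively.

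Projecting the logarithm onto the $E^{c_m^\gamma\gamma}$-component yields $\varphi(u_1+u_2, x)$ on the left and $\varphi(u_1, x) + \varphi(u_2, e(u_1)x)$ on the right, up to several correction terms, which I would then show combine into a single polynomial in $u_1, u_2$ that is independent of $x$. First, the $[e(r),e(v)]$ cycle contributes only polynomials in $r$ (hence in $u_1,u_2$), independent of $x$, by the Induction II hypothesis applied to the strictly higher Oseledets weights in which $r$ lives. Second, the conjugations of $[e(u_1),e(v)]$ and $[e(u_2),e(v)]$ by $e(r)$ and $e(u_1)$ introduce additional cycle elements of $N_{(\alpha,\beta)}$ whose $E^{c_m^\gamma\gamma}$-components, by Corollary \ref{cor:omega-shear}, involve only the $\check{\cdot}$-components lying in $\Omega_{l'}$ for $l' < l$, and these are already polynomial by the Induction III hypothesis. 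Third, the base-point discrepancies are absorbed by a short bootstrap that uses the cycle equation at two nearby base points together with the polynomial forms already established. The vanishing $p(0,\cdot) = p(\cdot, 0) = 0$ follows because $r$, the conjugation terms, and each atomic commutator all vanish when one of $u_1$ or $u_2$ is zero.

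The main obstacle I expect is the careful bookkeeping required to ensure that all of the corrections in the previous paragraph combine into a polynomial in $u_1, u_2$ alone, with no residual dependence on $x$. This requires simultaneous use of all three induction hypotheses, and in particular a precise tracking of which Oseledets components in $\Omega_l$ versus $\Omega_{l'}$ ($l' < l$) each correction inhabits, so that the multiplication polynomials from Corollary \ref{cor:multiplication-polynomial} and the shearing polynomials from Corollary \ref{cor:omega-shear} can be applied consistently.
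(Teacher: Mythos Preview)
Your overall plan is in the right spirit and very close to the paper's: decompose $e(u_1+u_2)$ using a BCH correction in higher Oseledets spaces, expand $[e(u_1+u_2),e(v)]$ into three pieces (the correction commutator, the $u_1$-commutator, the $u_2$-commutator), then use Inductions I, II, III together with Corollaries \ref{cor:omega-shear} and \ref{cor:multiplication-polynomial} to show all discrepancies from $\varphi(u_1,x)+\varphi(u_2,e(u_1)x)$ are polynomial.

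There is, however, one real issue: your handling of the base points. With your ordering $e(u_1+u_2)=e(u_2)e(u_1)e(r)$ and the identity $[xy,c]=[y,c]\,y^{-1}[x,c]y$, the $u_2$-commutator is evaluated at $e(u_1)e(r)\cdot x$, not at $e(u_1)\cdot x$; and the $u_1$-commutator is evaluated at $e(r)\cdot g_2\cdot x$ for some $g_2\in N_{(\alpha,\beta)}$, not at $x$. Your ``short bootstrap'' to correct these is the crux, and it is not short: one would need to know that $\varphi(u_j,\cdot)$ changes only polynomially as the base point moves along $N_\alpha$-orbits and along $N_{(\alpha,\beta)}$-orbits, which is precisely the kind of control this lemma is building toward. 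None of the three induction hypotheses gives this directly.

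The paper sidesteps this by a specific choice of expansion. It writes the correction on the \emph{left}, $e(u_1+u_2)=e(q)\,e(u_2)\,e(u_1)$, and expands $[e(u_1+u_2),e(v)]$ by hand (not via the commutator identity) so that the $u_2$-commutator appears conjugated by $C=e(-v)*e(-u_1)*e(v)$. The point is that $C^{-1}$ applied to $[e(u_1),e(v)]\cdot x$ telescopes exactly back to $e(u_1)\cdot x$; hence the $u_2$-commutator is evaluated at $e(u_1)\cdot x$ and the $u_1$-commutator at $x$, with no residual base-point shift. All remaining corrections (the $q$-commutator and the conjugation by $C$) are then handled as you describe, via Inductions I, II, III and Corollaries \ref{cor:omega-shear}, \ref{cor:multiplication-polynomial}. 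If you rework your expansion to produce this telescoping (or directly follow the paper's hand expansion), your argument goes through.
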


\begin{proof}

We assume that $c_m^\gamma \gamma \in \Omega_{l_p}$. Recall that $\varphi(u,x) = { \hat{\rho}^{c_i^\alpha\alpha,c_j^\beta\beta}_{c_m^\gamma\gamma}}(u,v,x)$ is the $c_m^{ \gamma}\gamma$-component of the unique path $\rho^{\alpha,\beta}_\gamma(u,v,x)$, written in circular ordering of the coarse weights in $\Sigma(\alpha,\beta)$, which connects $[e(u),e(v)] \cdot x$ and $x$. Given $u_1,u_2 \in E^{c_i\alpha}$, there exists $q(u_1,u_2) \in \bigoplus_{i' > i} E^{c_{i'}^\alpha\alpha}$ such that $e(u_1 + u_2) = e(q(u_1,u_2))e(u_2)e(u_1)$. Since $N^\alpha$ is nilpotent, $q$ is a polynomial in $u_1$ and $u_2$. Notice that using only the free product relations, we get that:

\begin{eqnarray*}
[e(u_1+u_2),e(v)] & = & e(-v) * e(-u_1-u_2) * e(v) * e(u_1+u_2) \\
 & = & e(-v) * e(-u_1-u_2) * e(v) *  e(q(u_1,u_2))  * e(u_2) * e(u_1)  \\
 & = & e(-v) * e(-u_1-u_2) * e(v) *  e(q(u_1,u_2))  * e(u_2) * \big( e(-v) * e(u_1) * e(v)\big) \\
  & & \qquad * \; \big(e(-v)  *  e(-u_1) * e(v)) * e(u_1)    \\
 & = &  e(-v)  * e(-u_1) * e(-u_2) *  e(-q(u_1,u_2)) * e(v) *   e(q(u_1,u_2))  * e(u_2)  \\
 & & \qquad  * \; \big(e(-v) * e(u_1) * e(v)\big)*  [e(u_1),e(v)]  \\
 & = & e(-v)  * e(-u_1) * e(-u_2) *  e(-q(u_1,u_2)) * e(v) *   e(q(u_1,u_2)) * e(-v) * e(u_2) * e(v)  \\
  & & \qquad *\;  [e(u_2),e(v)]  * \big(e(-v) * e(u_1) * e(v)\big)*  [e(u_1),e(v)]  \\
  & = & \big(e(-v)  * e(-u_1) * e(-u_2)\big) *[e(-v),e(q(u_1,u_2))] * \big( e(u_2) * e(u_1) * e(v)\big)  \\
  & & \qquad *\; \big(e(-v) * e(-u_1) * e(v)\big) * [e(u_2),e(v)]  * \big(e(-v) * e(u_1) * e(v)\big)*  [e(u_1),e(v)]  \\
    & = & { \color{red} \big(e(-v)  * e(-u_1) * e(-u_2)\big) *[e(-v),e(q(u_1,u_2))] * \big( e(u_2) * e(u_1) * e(v)\big) } \\
  & & \qquad *\; {\color{blue} \big(e(-v) * e(-u_1) * e(v)\big) * [e(u_2),e(v)]  * \big(e(-v) * e(u_1) * e(v)\big)} * {\color{olive} [e(u_1),e(v)] } \\
\end{eqnarray*}

The last equality is simply the second-to-last expression rewritten with color-coding. First, consider the red term. Since $q$ takes values in $\bigoplus_{i' > i} E^{c_{i'}^\alpha\alpha}$, we know the commutators of $q$ with $e(-v)$ are polynomial and independent of their basepoint by Induction II. Therefore we may rewrite the red term as $ \big(e(-v)  * e(-u_1) * e(-u_2)\big) * e(\tau_0(u_1,u_2,v)) *  \big( e(u_2) * e(u_1) * e(v)\big) $, where $\tau_0 \in \Lie(N^{(\alpha,\beta)})$ is independent of $x$, depending polynomially on $u_1$, $u_2$ and $v$. Then by Induction I, we know how each of the conjugating terms act on the term $\tau_0$, which must be polynomially. Therefore, the entire first red term is independent of $x$ and can be replaced by some $e(\tau(u_1,u_2,v))$ for some polynomial $\tau$ taking values in $\Lie(N^{(\alpha,\beta)})$.

We now turn to the blue terms. %Given a coarse Lyapunov exponent $\gamma \in \Sigma(\alpha,\beta)$, there exists a unique index $r(\gamma)$ such that $c_n^\gamma\gamma \in \Omega l$ with $l < l_p$ if and only if $n \le r(\gamma)$. By definition of $\hat{\rho}^{c_i^\alpha\alpha,c_j^\beta\beta}_{c_m^\gamma\gamma}$ and the induction hypotheses, we may rewrite the action of $[e(u_i),e(v)]$ as
%\begin{equation}
%\label{eeq:controlled-uncontrolled}
% [e(u_i),e(v)] \cdot x = \prod_{n = \#\Sigma(\alpha,\beta)}^1 e\left( \sum_{d=1}^{r(\gamma_n)} p_{n,d}(u_i,v) + \sum_{d=r(\gamma_n)+1}^{\ell(\gamma_n)} F_{n,d}(u_i,v,x) \right) \cdot x
% \end{equation}
%\noindent for $i=1,2$, where $p_{n,d}^\gamma$ is a polynomial independent of $x$ taking values in $E^{c_d^{\gamma_n}\gamma_n}$, and $F_{n,d} = \hat{\rho}^{c_i^\alpha\alpha,c_j^\beta\beta}_{c_d^{\gamma_n}\gamma_n}$.
Let $y = e(u_1) \cdot x$, so that  $h_2(y) := \rho^{\alpha,\beta}(e(u_2),e(v),y) \in N^{(\alpha,\beta)}$ acts on $y$ in the same way as $[e(u_2),e(v)]$. By induction, we may apply the conjugation of $h_2(y)$ by $e(-v) * e(u_1) * e(v)$ as 3 independent ones, which have well-understood forms by Corollary \ref{cor:omega-shear}. Indeed, applying parts (2) and (3) of the corollary three times shows that the blue terms act on $y$ in the same way that $g(y) = \exp(\check{g} + \bar{g}(y) + \hat{g}(y)) \in N^{(\alpha,\beta)}$ does, where $\check{g}$ is polynomial in $u_2$ and $v$ (by applying part (2) and Induction I), $\bar{g}(y) = \bar{h}_2(y) + p(u_2,v)$ for some polynomial $p$ (by applying part (3)) and $\hat{g}(y) \in \bigoplus_{l' > l_p} \bigoplus_{c_m^\gamma\gamma \in \Omega_{l}} E^{c_m^\gamma\gamma}$ is, for now, uncontrolled.

The final green term is straightforward, letting $h_1(x) = \rho^{\alpha,\beta}(e(u_1),e(v),x)$ be the element of $N^{(\alpha,\beta)}$ acting on $x$ in the same way as $[e(u_1),e(v)]$. Then $\check{h}_1$ is a polynomial in $u_1$ and $v$ independent of $x$ by Induction III. Putting the conclusions together yields that $[e(u_1+u_2),e(v)]$ acts on $x$ in the same way as:

\[ h(x) = e(\tau(u_1,u_2,v)) * g(y) * h_1(x),\]
so by Corollary \ref{cor:multiplication-polynomial}, $\check{h}$ is a polynomial in $u_1$, $u_2$ and $v$ which is independent of $x$, and $\bar{h}(x) = \bar{h}_1(e(u_1) \cdot x) + \bar{h}_2(x) + p(u_1,u_2,v)$ for some polynomial $p$ in $u_1$, $u_2$ and $v$ (depending only on the group structure of $N^{\alpha,\beta}$ and is hence independent of $x$). Therefore, the $\Omega_{l_p}$ terms have exactly the prescribed form.
\end{proof}

 {
\begin{lemma}
\label{lem:stationary-cocycle-like}
If $V$ and $W$ are vector spaces and $f : V \to W$ is a continuous function such that $f(0) = 0$ and 

\begin{equation}
\label{eq:stationary-cocycle-like}
f(v_1+v_2) = f(v_1) + f(v_2) + p(v_1,v_2)
\end{equation}

\noindent for some polynomial $p$, then $f$ is a polynomial.
\end{lemma}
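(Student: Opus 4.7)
The plan is to reduce the statement to Fr\'echet's classical theorem on continuous polynomial functions by showing that some iterated finite difference of $f$ vanishes identically. First I would introduce the difference operator $\Delta_h f(v) := f(v+h) - f(v)$ and apply the hypothesis \eqref{eq:stationary-cocycle-like} with $v_1 = v$, $v_2 = h$ to get
\begin{equation*}
\Delta_h f(v) = f(h) + p(v, h).
\end{equation*}
Thus, for each fixed $h$, $\Delta_h f$ is a polynomial in $v$ plus a $v$-independent constant.

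Next I would iterate. A second difference in $v$ annihilates the $f(h_2)$ term, giving
\begin{equation*}
\Delta_{h_1} \Delta_{h_2} f(v) = p(v + h_1, h_2) - p(v, h_2),
\end{equation*}
which is the ordinary $h_1$-finite-difference (in $v$) of the polynomial $p(\cdot, h_2)$. Let $d$ denote the degree of $p$ in its first argument. Each further application of a $\Delta_{h_i}$ lowers the degree in $v$ by at least one, so after $N := d+2$ applications we reach the zero function:
\begin{equation*}
\Delta_{h_1} \Delta_{h_2} \cdots \Delta_{h_N} f(v) \equiv 0 \quad \text{for all } v, h_1, \dots, h_N \in V.
\end{equation*}

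Finally, I would invoke Fr\'echet's theorem: a continuous map between (finite-dimensional) vector spaces whose $N$-th iterated finite difference vanishes identically is a polynomial of degree at most $N-1$. The spaces $V$ and $W$ arising in the intended application are finite-dimensional Oseledets subspaces, so this applies and gives that $f$ is a polynomial, with $\deg f \leq d+1$. The main (mild) obstacle is to observe that after the first difference the surviving polynomial $p(\cdot, h_2)$ has degree in $v$ bounded uniformly in $h_2$, so that a single $N$ works simultaneously for all $h_2$; and then to quote Fr\'echet's theorem in the vector-valued form, which is obtained by composing with linear functionals on $W$ and reducing to the scalar case.
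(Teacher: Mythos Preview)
Your argument is correct and takes a genuinely different route from the paper. You observe that $\Delta_h f(v) = f(h) + p(v,h)$ is, for each fixed $h$, a polynomial in $v$ of degree bounded by the degree of $p$ in its first slot; iterating differences then kills this polynomial, and Fr\'echet's theorem finishes. This is clean and conceptual, and the degree bound $\deg f \le d+1$ falls out for free.

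The paper instead argues directly and self-containedly via a dyadic scaling trick. From \eqref{eq:stationary-cocycle-like} one gets $f(v) = 2f(v/2) + q(v/2)$ with $q(v) = p(v,v)$; since $p(0,\cdot) = p(\cdot,0) = 0$ forces every term of $q$ to have degree at least $2$, iterating yields $f(v) = 2^n f(2^{-n}v) + \sum_{i=1}^n 2^{i-1}q(2^{-i}v)$ with the sum converging to a polynomial. The limit $D(v) = \lim 2^n f(2^{-n}v)$ then exists, is additive, and (being $f$ minus a polynomial) continuous, hence linear; so $f = D + (\text{polynomial})$. This approach avoids quoting Fr\'echet and gives an explicit decomposition of $f$ into its linear part plus an explicit polynomial remainder built from $q$. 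Your route is shorter if one is willing to cite the classical result; the paper's route is entirely elementary and constructive.
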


\begin{proof}
First, observe that since $f(0+v) = f(0) + f(v) + p(0,v)$, it follows that $p(0,v) = 0$. Therefore, $p(v,w)$ has no constant terms and $p$ is symmetric by \eqref{eq:stationary-cocycle-like}. Furthermore, every term of $p$ must have degree at least one in {\it both} $v$ and $w$ since it is symmetric. If $q(v) = p(v,v)$, then every term of $q$ has degree at least two. In particular, since each nonzero term of $q$ is multiplied by at least $2^n \cdot 2^{-2n}$, $\sum 2^{n} q(2^{-n} v) $ is summable and the sum converges to a polynomial of the same degree.

From \eqref{eq:stationary-cocycle-like}, it follows that $f(v) = 2f\left(\frac{1}{2}v\right) + q\left(\frac{1}{2}v\right)$. Inductively, it follows that $f(v) = 2^nf\left(2^{-n}v\right) + \sum_{i=1}^n 2^{i-1}q\left(2^{-i}v\right)$. Since the sum on the right hand side converges to a polynomial, it follows that $\lim_{n\to\infty} 2^n f\left(2^{-n}v\right)$ converges to a vector uniformly bounded as $v$ varies in a compact set. Therefore, the map $D: v \mapsto \lim_{n\to\infty} 2^n f\left(2^{-n}v\right)$ is well-defined and satisfies

\begin{multline*} D(v+w) = \lim_{n\to\infty} 2^n f\left(2^{-n}(v+w)\right) = \lim_{n\to\infty} 2^n\left(  f\left(2^{-n}v\right) + f\left(2^{-n}w\right) + p(2^{-n}v,2^{-n}w)\right) \\ = D(v) + D(w) .
\end{multline*}

Furthermore, since $D(v) = f(v) - \sum_{i=1}^\infty 2^{i-1}q\left(2^{-i}v\right)$, it follows that $D$ is continuous and hence linear. Therefore, $f(v) = D(v) +  \sum_{i=1}^\infty 2^{i-1}q\left(2^{-i}v\right)$, and $f$ is a polynomial.
%First, observe that $f(2v) = 2f(v) + p(v,v)$, and if $s \not= 1$, then $f(2v) = 2^sf(v)$, so $f(v) = \frac{1}{2^s-2}p(v,v)$. Therefore, $f$ is a polynomial and $s \in \N$. If $s = 1$, first notice that $p(v,w) = f(v+w) - f(v) - f(w)$ is a symmetric polynomial, and that for every $\lambda \in \R$, $v,w \in V$, we have that $p(\lambda v,\lambda w) = f(\lambda v + \lambda w) - f(\lambda v) - f(\lambda w) = \lambda p(v,w)$. Therefore, the only nonzero terms of the polynomial must be linear functionals in $v$ and $w$, with constant coefficients. But since $p(0,w) = p(v,0) = 0$ for all $v,w \in V$, it follows that $p \equiv 0$. That is, $f$ is linear.
\end{proof}}

% Recall that we are addressing a weight $c_m^\gamma \gamma = sc_i^\alpha\alpha  + tc_j^\beta\beta$ for some $s,t \in \R_+$.

% \begin{lemma}
% \label{lem:s-integrality}
% The coefficient $s$ of $c_i^\alpha\alpha$ satisfies $s \in \Z_+$.
% \end{lemma}

% \begin{proof}
% Consider the group $\R^k \ltimes N_\alpha$, whose semidirect product structure is determined by \ref{ta5}.
% \color{black}Then $\R^k \ltimes N_\alpha$  has a canonical action on $X$. Since the group is solvable, there exists a Borel probability measure $\mu$ on $X$ which is invariant under $\R^k \ltimes N_\alpha$. Define

% \begin{equation}
% \label{eq:integrated-varphi}
% \Phi(u) = \int_X \varphi(u,x) \, d\mu(x).
% \end{equation}

% Then by Lemma \ref{eq:cocycle-like-bb} and invariance of $\mu$ under $N_\alpha$, $\Phi$ satisfes

% \[ \Phi(u+v) = \int_X \varphi(u+v,x) \, d \mu(x) = \int_X \varphi(u,x) + \varphi(v,e(u)\cdot x) + p(u,v) \, d\mu(x) = \Phi(u) + \Phi(v) + p(u,v). \]

% By Lemma \ref{lem:stationary-cocycle-like}, $\Phi$ is a polynomial. Fix $\lambda > 0$ and choose $a \in \ker \beta$ with $\lambda = e^{c_i^\alpha\alpha(a)}$. Then by \eqref{eq:phi-pullout} and invariance of $\mu$ under $\R^k$,

% \[ \Phi(\lambda u) = \int_X \varphi(e^{c_i^\alpha\alpha(a)}u,x) \, d\mu(x) = \int_X \lambda^s \varphi(u,a\cdot x)\, d\mu(x) = \lambda^s\Phi(u).\]

% Since $\Phi$ is a polynomial, it follows that $s \in \Z_+$.
% \end{proof}
% }

{  Recall that if we write $c_m^\gamma \gamma = \sigma \, c_i^\alpha \alpha + \tau \, c_j^\beta \beta$, we say that $\sigma$ and $\tau$ are the Lyapunov coefficients. Since we have assumed integral Lyapunov coefficients (Definition \ref{def:rhohat}), we may assume that either $\sigma \ge 1$ or $\tau \ge 1$. We without loss of generality assume that $\sigma \ge 1$.}

\begin{corollary}
\label{cor:phi-polynomial}
The function $\varphi(u,x)$ is a polynomial in $u$, whose coefficients are functions of $x$ which are constant along each the sets $\mc F_\beta(m)$ defined in Lemma \ref{lem:keralpha-foliation}.
\end{corollary}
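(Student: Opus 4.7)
The plan is to prove both conclusions simultaneously---polynomiality of $\psi_x(u) := \varphi(u,x)$ in $u$ and constancy of its coefficients along each atom $\mc F_\beta(m)$---by combining Lemma \ref{eq:cocycle-like-bb}, Lemma \ref{lem:stationary-cocycle-like}, and the renormalization \eqref{eq:phi-pullout}. The principal obstacle is that the cocycle-like equation of Lemma \ref{eq:cocycle-like-bb} is not stationary in $x$: its right hand side features $\varphi(u_2, e(u_1)x)$ rather than $\psi_x(u_2)$, so Lemma \ref{lem:stationary-cocycle-like} cannot be invoked directly.

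First I will establish that $\sigma\in\Z_+$. Iterating Lemma \ref{eq:cocycle-like-bb} with $u = u/m + \cdots + u/m$ ($m$ equal summands in the vector space $E^{c_i^\alpha\alpha}$) yields
\[
\varphi(u,x) \;=\; \sum_{k=0}^{m-1} \varphi(u/m,\, e(ku/m)\cdot x) \;+\; \tilde p_m(u),
\]
where $\tilde p_m(u) = \sum_{k=1}^{m-1} p(ku/m, u/m)$ is polynomial and independent of $x$. Fix $a \in \ker\beta$ with $\lambda := e^{c_i^\alpha\alpha(a)} \in (0,1)$. Evaluating the same identity at basepoint $a^n x$ with argument $\lambda^n u$, applying \eqref{eq:phi-pullout}, and using the intertwining $e(\lambda^n w)\cdot a^n x = a^n \cdot e(w) x$ from \ref{ta5}, we obtain a second expression for $\varphi(u,x)$. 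Matching with the first forces $\lambda^{-n\sigma}\tilde p_m(\lambda^n u)$ to be independent of $n$; hence $\tilde p_m$ is homogeneous of degree $\sigma$ in $u$, and $\sigma$ is a positive integer.

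For the pointwise polynomial structure, I would invoke the SRB hypothesis \ref{ta-srb}: let $\mu$ be fully supported and $\ker\beta\times M$-invariant with absolutely continuous disintegrations along $N_\alpha$, and set $G(u) := \int\varphi(u,x)\,d\mu(x)$. Integrating \eqref{eq:phi-pullout} against $\mu$ and using its $\ker\beta$-invariance gives $G(\lambda u) = \lambda^\sigma G(u)$ for all $\lambda$ in the dense subgroup $e^{c_i^\alpha\alpha(\ker\beta)} \subset \R_{>0}$, so by continuity $G$ is continuous and homogeneous of degree $\sigma$. Integrating the cocycle-like equation against $\mu$ and controlling $\int \varphi(u_2, e(u_1)x)\,d\mu(x)$ via the absolute continuity of the $N_\alpha$-disintegrations (together with the polynomial corrections supplied by the induction hypotheses on lower Oseledets weights) produces a genuine stationary cocycle equation of the form required by Lemma \ref{lem:stationary-cocycle-like}. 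Thus $G$ is a polynomial.

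To transfer from $G$ to $\psi_x$ pointwise: for $\mu$-a.e.\ $x$, the $\ker\beta$-orbit of $x$ equidistributes in its $\mc F_\beta(m)$-atom, and applying \eqref{eq:phi-pullout} iteratively along $a^n x$ together with the polynomial form of $G$ forces $\psi_x$ itself to be a polynomial of degree $\sigma$; by continuity of $\varphi$ in $x$ and full support of $\mu$, this extends to every $x \in X$. Writing $\psi_x(u) = \sum_\eta c_\eta(x) u^\eta$, equation \eqref{eq:phi-pullout} gives the transformation law $c_\eta(a\cdot x) = e^{(\sigma - |\eta|)c_i^\alpha\alpha(a)}\, c_\eta(x)$ for every $a \in \ker\beta$. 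Continuity of each $c_\eta$ on the compact space $X$ forces $c_\eta \equiv 0$ unless $|\eta| = \sigma$, and the surviving degree-$\sigma$ coefficients are $(\ker\beta\times M)$-invariant (the $M$-invariance coming from averaging). By Lemma \ref{lem:keralpha-foliation} the $(\ker\beta\times M)$-orbit closures are precisely the atoms $\mc F_\beta(m)$, so each $c_\eta$ is constant on each atom. The hardest step will be upgrading the integrated cocycle-like equation to a truly stationary one for $G$: this requires a delicate analysis of how $N_\alpha$-translations interact with $\mu$ through its Radon-Nikodym densities along the $N_\alpha$-foliation.
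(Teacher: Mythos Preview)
Your proposal has a genuine gap at exactly the point you flag as ``the hardest step.'' After integrating the cocycle-like equation of Lemma~\ref{eq:cocycle-like-bb} against the SRB measure $\mu$, you need
\[
\int \varphi(u_2, e(u_1)x)\,d\mu(x) \;=\; G(u_2) + (\text{polynomial correction}),
\]
but $\mu$ is not $e(u_1)$-invariant (only $\ker\beta\times M$-invariant with a.c.\ disintegrations along $N_\alpha$), so the left side equals $\int \varphi(u_2,y)\,d\bigl(e(u_1)_*\mu\bigr)(y)$, which differs from $G(u_2)$ by an integral against the Radon--Nikodym density of the push-forward. There is no mechanism in place that makes this density interact polynomially with $\varphi$, and the induction hypotheses on lower weights concern the geometric commutators, not these densities. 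The subsequent ``transfer from $G$ to $\psi_x$ pointwise via equidistribution'' is also not an argument: knowing that an average is polynomial and that \eqref{eq:phi-pullout} holds does not force each $\psi_x$ to be polynomial. (Your first step, showing $\tilde p_m$ is $\sigma$-homogeneous, is correct but only yields $\sigma\in\Z_+$ when $\tilde p_m\not\equiv 0$; in any case $\sigma\ge 1$ is already part of the integral Lyapunov coefficients hypothesis, so this detour is not needed.)

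The paper proceeds quite differently and avoids integrating the cocycle equation altogether. It first reduces to one-variable restrictions $g(t,x)=\varphi(tu,x)$ via Lemma~\ref{lem:polynomial-restrictions}. Using $\sigma\ge 1$ together with \eqref{eq:phi-pullout} and Lemma~\ref{eq:cocycle-like-bb}, it shows $g$ is locally Lipschitz in $t$, hence a.e.\ differentiable; the derivative $f(x)=\partial_t g(0,x)$ is bounded and satisfies $f(a\cdot x)=e^{(\sigma-1)c_i^\alpha\alpha(a)}f(x)$ for $a\in\ker\beta$, forcing $\sigma=1$ or $f\equiv 0$. In the nontrivial case $\sigma=1$, $g$ is a genuine cocycle over the $e(tu)$-flow, and the SRB measure is used only to conclude that the $\ker\beta$-invariant function $f$ is a.e.\ constant along $W^\alpha$-leaves (via a Hopf argument on ergodic components with a.c.\ $N_\alpha$-disintegrations). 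Integrating $f$ then gives $g(t,x)=v_x\,t$ with $v_x$ depending only on the $\mc F_\beta$-atom. Differentiating Lemma~\ref{eq:cocycle-like-bb} at a general $t_1$ gives $\partial_t g(t_1,x)=v_x+q(t_1)$ with $q$ polynomial, and integrating yields the polynomial form. This pointwise route bypasses the change-of-variables problem your averaging approach runs into.
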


\begin{proof}
By Lemma \ref{lem:polynomial-restrictions}, it suffices to show the following claim:
\vspace{.1cm}

{\noindent \bf Claim 12.12.1.}  For every $u,v \in E^{c_i^\alpha\alpha}$ such that $\norm{u} = 1$, $\varphi(tu+v,x)$ is a polynomial in $t$ whose coefficients are functions of $x$ which are constant along the sets $\mc F_\beta(m)$. 

\vspace{.1cm}

Claim 12.12.1 will follow from the following weaker claim:

\vspace{.1cm}

{\noindent \bf Claim 12.12.2. }
For every $u \in E^{c_i^\alpha\alpha}$ such that $\norm{u} = 1$, $\varphi(tu,x)$ is a polynomial in $t$ whose coefficients are functions of $x$ which are constant along the sets $\mc F_\beta(m)$. 

\vspace{.1cm}

Let us deduce Claim 12.12.1 from Claim 12.12.2. By Lemma \ref{eq:cocycle-like-bb},

\[ \varphi(tu+v,x) = \varphi(tu,x) + \varphi(v,e(tu)\cdot x) + p(tu,v). \]

If we can show Claim 12.12.2, then it follows that $\varphi(tu,x)$ is a polynomial in $t$ whose coefficients are functions of $x$ which are constant along the sets $\mc F_\beta(m)$. By Lemma \ref{lem:keralpha-foliation} the atoms of $\mc F_\beta$ are saturated by the leaves of $W^\alpha$. Since $\varphi(sv,y)$ is a polynomial in $s$ whose coefficients are functions of $y$ which are constant along atoms of $\mc F_\beta$, it follows that $\varphi(v,e(tu)\cdot x)$ is independent of $t$ and is constant as $x$ moves within the sets $\mc F_\beta(m)$. We have shown that $\varphi(tu+v,x)$ is the sum of a polynomial in $t$ whose  coefficients are functions of $x$ which are constant along the sets $\mc F_\beta(m)$, a  function of $x$ which is constant along the sets $\mc F_\beta(m)$ and a polynomial in $t$. Claim 12.12.1 follows.

So we aim to prove Claim 12.12.2. Fix $u \in E^{c_i^\alpha\alpha}$ with $\norm{u} = 1$, and for notational convenience, let $g(t,x) = \varphi(tu,x)$. We claim that for every $x \in X$ and Lebesgue almost every $t_1 \in \R$, $\displaystyle\left.\frac{d}{dt}\right|_{t=t_1} g(t,x)$ exists. To prove the claim, it suffices to show that $g$ is locally Lipschitz. By Lemma \ref{eq:cocycle-like-bb} and \eqref{eq:phi-pullout},

\begin{eqnarray*}
\norm{g(t,x) - g(t_1,x)} & = & \color{black}\norm{g(t-t_1,e(t_1u)\cdot x) + p(t_1u,(t-t_1)u)} \\
 & \le & \norm{t-t_1}^\sigma\norm{g\left(1,a \cdot e(t_1u) \cdot x\right)} + { \norm{p(t_1u,(t-t_1)u)}} \color{black}\\
\end{eqnarray*}

\noindent for a suitable choice of $a \in \ker\beta$. { Recall that $\sigma \ge 1$ by the  integrality assumption and the choice made directly before the statement of Corollary \ref{cor:phi-polynomial}. Hence, since $g(1,\cdot)$ is bounded, $p$ is a polynomial and $\sigma \ge 1$, %Lemma \ref{lem:s-integrality},
$g(t,x)$ is Lipschitz in any compact neighborhood of $t_1$.} Therefore, for almost every $t_1 \in E^{c_i^\alpha\alpha}$, $g$ is differentiable in $t$ at $t_1$. Therefore, since $g$ is differentiable in $t$ at $t = 0$ on a dense set of each orbit of the one-parameter subgroup generated by $u$, $g$ is differentiable in $t$ at $t = 0$ on a dense subset of $X$.

By \eqref{eq:phi-pullout}, the set of points for which $g$ is differentiable in $t$ is invariant under $\R^k$. Therefore, if $f(x)$ denotes the derivative of $g$ in $t$ at $t = 0$, $f$ exists on a dense subset of $X$. We claim that $\norm{f(x)} \le B$ for some $B \in \R$ whenever it exists. Indeed,

\[ \abs{f(x)} = \lim_{\ve \to 0} \frac{1}{\ve}\abs{g(\ve ,x)} = \lim_{\ve \to 0} \frac{1}{\ve} \ve^\sigma \abs{g(1,a_\ve \cdot x)} \le \sup_{y \in X} \abs{g(1,y)} \]

\noindent where $a_\ve \in \ker\beta$ is chosen appropriately (using 
\eqref{eq:phi-pullout}), since $\sigma \ge 1$.
Notice that if $a \in \ker \beta$, then again by \eqref{eq:phi-pullout} and the chain rule,

\[ f(a \cdot x) = e^{(\sigma-1)c_i^\alpha\alpha(a)}f(x). \]

Therefore, either $\sigma = 1$ or $f \equiv 0$, since otherwise one may apply an element $a \in \ker \beta$ with $\alpha(a)$ arbitrarily large to contradict the boundedness of $f$. Since $\sigma = 1$ or $f \equiv 0$, $f$ is constant along $\ker \beta$ orbits. 
%We claim that $f$ is also constant along $W^\alpha$ leaves whenever it exists. Indeed, if $x_2 \in W^\alpha(x_1)$ and $f$ exists at both $x_1$ and $x_2$, { choose $a_k \in \ker \beta$ such that $\alpha(a_k) \to \infty$ and $a_k \cdot x_1 \to z$ for some $z \in X$ (first choose any sequence such that $\alpha(a_k) \to \infty$, then choose a convergent subsequence). %Since $\alpha$ is contracted by $a_k$, $a_k \cdot x_2 \to z$ as well. 
%Then for any $u \in E^{c_i^\alpha\alpha}$:

%\[ f(x_i) = \lim_{\ve \to 0} \frac{1}{\ve} g(\ve ,x_i) = \lim_{k\to \infty} e^{c_i^\alpha\alpha(a_k)} g(e^{-c_i^\alpha\alpha(a_k)},x_i) = \lim_{k\to\infty} g(1,a_k \cdot x_i) = g(1,z). \]

{ 
We claim that $f$ is also constant along $W^\alpha$ leaves whenever it exists. { Assume it is not identically 0, otherwise the claim follows immediately. In this case $\sigma = 1$, and $g$ must be a cocycle over the $e(tu)$-action by comparing Lemma \ref{eq:cocycle-like-bb} and \eqref{eq:phi-pullout}. %Then pick a Weyl chamber $\mc W$ for which $\ker \beta$ bounds $\mc W$ and $\alpha(\mc W) > 0$. 
By \ref{ta-srb}, \color{black} there exists an SRB-like measure $\mu$ which is invariant under the $\ker\beta$-action and has absolutely continuous disintegrations along $N^\alpha$-leaves\color{black}. By standard Hopf argument, it follows that each ergodic component for the ergodic decomposition of $\mu$ with respect to the $\ker \beta$-action is also absolutely continuous along $N^\alpha$-leaves.

%We may then build the SRB measure $\mu_{\mc W}$ as in Section \ref{sec:SRB}, and disintegrate it into ergodic components for the $\ker \beta$ action. By Proposition \ref{prop:SRB}, it follows that 
Hence, for $\mu$-almost every point $x$, $f$ is constant at Lebesgue-almost every point of $W^\alpha$ (since it is a $\ker \beta$-invariant function, and defined at Lebesgue almost every point of {\it every} $W^\alpha$ leaf). Since the $e(tu)$-orbit foliation is a smooth subfoliation of $N^\alpha$, it follows that {\color{black}the disintegration of $\mu$} is absolutely continuous on almost every such orbit. {\color{black}Hence $f$ is constant on almost every $e(tu)$-orbit.} Since $f$ is the derivative of $g$, and $g$ is a cocycle over the $e(tu)$-flow, it follows that

\[ g(t,x) = \int_0^t f(e(ru) \cdot x) \, dr = v_xt\]
where $v_x \in E^{c_m^\gamma \gamma}$ is the common value of $f$ at almost every point of $W^\alpha(x)$. Then $g$ is linear on a dense set of $W^\alpha$ leaves, and has derivative invariant under $\ker \beta$. Since $g$ is continuous, it follows that $g(t,x) = v_xt$, where $v_x$ is a derivative which depends only on the atom of $\mc F_\beta$. This concludes the case when $\sigma =1$.}}

% Since in each leaf, $\varphi$ is a continuous function whose derivative exists almost everywhere with respect to Lebesgue measure on the leaf.
 {  We return to the general case (either $\sigma = 1$ or $f \equiv 0$).} Let $v_x$ denote the common value for $f$ along $W^\alpha(x)$ and $\ker \beta \cdot x$. Fix $t_1 \in \R$ %and define $\tilde{\varphi}(t,x) = \varphi(t_1 + u,x)$. Then if
 such that $f(e(t_1u) \cdot x)$ exists (recall that the collection of such $t_1$ has full Lebesgue measure). Then by Lemma \ref{eq:cocycle-like-bb}

\begin{multline}
\left.\frac{d}{dt}\right|_{t=t_1}g(t,x) = \left. \dfrac{d}{dt}\right|_{t= 0}\varphi((t+t_1)u,x) \\ = \left.\dfrac{d}{dt}\right|_{t= 0} \Big(\varphi(t_1u,x) + \varphi(tu,e(t_1u)\cdot x) + p(t_1u,tu) \Big)
= f(e(t_1u) \cdot x) =  v_x + q(t_1)
\end{multline}

\noindent where $q$ is some polynomial independent of $x$. Since we know the initial condition $g(0,x) = 0$,  one may integrate $v_x + q$ to a get a polynomial form for $g$ at each $x$. Furthermore, since $v_x$ does not vary as $x$ moves along $\ker \beta$, the coefficients of the polynomial are constant along the atoms of $\mc F_\beta$. This shows Claim  12.12.2, and finishes the proof.
\end{proof}

\begin{proof}[Proof of the inductive step  \eqref{74out-induction1} and \eqref{74out-induction2}] 
Corollary \ref{cor:phi-polynomial} implies that for fixed $v$, the function $\hat{\rho}^{c_i^\alpha\alpha,c_j^\beta\beta}_{c_m^\gamma\gamma}(u,v,x)$ is a polynomial in $u$, whose coefficients are functions of the atoms of $\mc F_\beta$. It is not difficult to see that an analogous version of Lemma \ref{eq:cocycle-like-bb} holds when fixing $u$ and varying $v$. Indeed, if we define $\psi(v,x) = \hat{\rho}^{c_i^\alpha\alpha,c_j^\beta\beta}_{c_m^\gamma\gamma}(u,v,x)$, then a nearly identical proof shows that

 \begin{equation}
 \label{eq:psi-cocycle-like} \psi(v_1+v_2,x) = \psi(v_1,x) + \psi(v_2,e(v_1) * e(u) \cdot x) + p(u,v_1,v_2) 
 \end{equation}

\noindent for some polynomial $p$. {Furthermore, since we have already shown that $\hat{\rho}^{c_i^\alpha\alpha,c_j^\beta\beta}_{c_m^\gamma\gamma}(u,v,x)$ is a polynomial in $u$, it follows that the following equation for $\psi$ analogous to \eqref{eq:phi-pullout} for $\varphi$ holds for all $a \in \R^k$, not just $a \in \ker \alpha$:

\begin{equation}
\label{eq:psi-pullout} \psi(v,x) = e^{-\tau c_j^\beta 
\beta(a)}\psi(e^{c_i^\beta\beta(a)}v,a\cdot x).
\end{equation}

Thus, $\psi$ is invariant under $\ker \beta$, and hence constant on each atom of $\mc F_\beta$.
}

Since the leaves of $W^\alpha$ and $W^\beta$ are contained in the atoms of $\mc F_\beta$ by Lemma \ref{lem:keralpha-foliation}, the dependence of $\psi$ on $x$ does not affect \eqref{eq:psi-cocycle-like}. Therefore, on each atom of $\mc F_\beta$, the map $\psi$ satisfies the assumption of Lemma \ref{lem:stationary-cocycle-like} and on each atom of $\mc F_\beta$, the function $\hat{\rho}^{c_i^\alpha\alpha,c_j^\beta\beta}_{c_m^\gamma\gamma}(u,v,x)$ is a polynomial in $u$ and $v$, {  with coefficients depending only on the atom}.

We must show that the polynomials are independent of $x$. Notice that $\hat{\rho}^{c_i^\alpha\alpha,c_j^\beta\beta}_{c_m^\gamma\gamma}(u,v,x)$ is a family of polynomials whose coefficients depend on $x$, so \eqref{eq:rho-equivariance} implies that the polynomial is $\sigma$-homogeneous in $u$ and $\tau$-homogeneous in $v$, since otherwise the coefficients would grow to $\infty$ by applying contracting or expanding elements of $\R^k$ (so, in particular, $\sigma,\tau \in \Z$ and \eqref{74out-induction1} holds). Therefore, the polynomial is unchanged as $x$ moves along its $\R^k$-orbit  by \eqref{eq:rho-equivariance}. Since there is a dense $\R^k$-orbit and $\hat{\rho}^{c_i^\alpha\alpha,c_j^\beta\beta}_{c_m^\gamma\gamma}$ is continuous in all variables, it follows that it is independent of $x$. Hence \eqref{74out-induction2} holds. %It also follows from \eqref{eq:rho-equivariance} that $s,t \in \Z$, since it must be exactly the degree of the polynomial in $u$ and $v$, respectively.
\end{proof}

\section{Partial homogeneity implies homogeneity}
\label{sec:pairwise-sufficient}

The goal of this section is to produce homogeneous structures related to the partial homogeneous structures coming from the $\R^k$- and $N^\alpha$-actions. The arguments and approach expand those of \cite[Section 14]{Spatzier-Vinhage} in several ways. While the overall scheme is similar, several new obstacles appear due to the presence of the compact group $K$ as well the coarse Lyapunov foliations being parameterized by general nilpotent Lie groups (rather than copies of $\R$). In particular, we build group actions to describe homogeneous spaces and use Corollary \ref{cor:lc-lpc-structure} to produce Lie structures on them.

We carefully piece together such homogeneous structures to build one on $X$. In particular, we use specific, computable relations between the flows of negatively proportional weights provided by the classification in Lemma \ref{lem:Galpha}. We will see that such relations (which we call  \emph{symplectic}) yield a canonical presentation of paths, but only for a dense set of paths containing an open neighborhood of the identity in $\hat{\mc P}_{\set{\alpha,-\alpha}}$ \color{black}(recall the definition of $\hat{\mc P}_{\set{\alpha,-\alpha}}$  in Definition \ref{def:P-groups}).\color{black} % This procedure replaces a more complicated K-theoretic argument that has appeared in works on local rigidity \cite{DamjanovicKatok2011,MR2672298,vinhageJMD2015,Vinhage:2015aa}.

Our approach is to find canonical presentations for words in $\hat{\mc P}$ using only commutator relations and symplectic relations which we assume are constant and well-defined. By fixing a regular element, we will be able to use such relations to rearrange the terms in an open set of words to write them using only stable legs, then only unstable legs, then the action (Proposition \ref{stable unstable cycle decomposition}). This will imply that the quotient group of $\hat{\mc P}$ by the commutator relations and symplectic relations is locally compact, which allows us to use the structure of locally compact groups (Corollary \ref{cor:lc-lpc-structure}). The core of the approach is Lemma \ref{open dense commutation}, which gives the ability to commute stable and unstable paths. 

%{ 
%These arguments allow us to give a fairly simple argument for homogeneity if the action satisfies the stronger transitivity assumption that every codimension {\it two} subgroup has a dense orbit  (Section \ref{subsec:special-case}). However, this assumption is not guaranteed by the absence of rank one factors, and furthermore requires rank at least 3.  The general case is much more complicated, and requires the use of the Gleason-Yamabe Lie criterion using a no small subgroups condition in Section \ref{subsec:nss}.  
In the end we will have shown that if an ideal factor has constant $\rho$-functions, it is conjugate to a homogeneous action. In particular, this holds for any maximal factor, which sets up the induction on factoring out by a chain of ideals in Section \ref{sec:fibers}. Recall Definition \ref{def:canonical-order}. 
%}

%Fix a leafwise homogeneous,  topological Anosov action $\R^k \times M \curvearrowright X$, and two ideals $\Omega, \Theta \subset \Delta$ such that if $\Omega = \set{U_\alpha}$ and $\Theta = \set{V_\alpha}$, then $U_\alpha \subset V_\alpha$ for every $\alpha \in \Delta$. Let $X^\Omega$ be the ideal factor of $X$ associated with $\Omega$, and denote points of $X^\Omega$ by $\bar{x} = Y_\Omega(x)$, where $x \in X$. Then let $Y_\Theta^\Omega(\bar{x}) \subset X^\Omega$ be the set of points $\bar{y} = Y_\Omega(x) \in X^\Omega$. Finally, recall that if $\alpha \in \Delta$, $N_\alpha^\Omega = N_\alpha / U_\alpha$ is a group. Let $V_\alpha^\Omega = V_\alpha / U_\alpha$ be the subgroup which preserves the level sets $Y_\Theta^\Omega(\bar{x})$.

\begin{definition}
\label{def:const-pairwise}
 We say that a HAPHA has {\it constant pairwise cycle structure} if 

\begin{itemize}
%[label=(CPCS-\arabic*)]
\item [\mylabel{cpcs1}{(CPCS-1)}] for each pair of nonproportional $\alpha,\beta \in \Delta$, $\gamma \in \Sigma(\alpha,\beta)$ and fixed $u \in N^\alpha$ and $v \in N^\beta$, $\rho^{\alpha,\beta}_{\gamma}(u,v,y)$ (see Definition \ref{def:geo-comm}) is independent of $y \in X$, and 
\item [\mylabel{cpcs2}{(CPCS-2)}] for each $\alpha \in \Delta$ such that $-\alpha \in \Delta$, the action of $\mc P_{\set{\alpha,-\alpha}}$ factors through a Lie group action on $X$.%$\mc C_{\set{\alpha,-c\alpha}}(x)$ is independent of $x$ and the quotient $\mc P_{\set{\alpha,-c\alpha}} / \mc C_{\set{\alpha,-c\alpha}}(x)$ is a Lie group.
\end{itemize}
\end{definition}
\color{black}

In applications, the conditions above are deduced from the genuinely higher-rank \color{black} assumption \ref{ta2} and the SRB measure assumption \ref{ta-srb} \color{black} (indeed, they are consequences of Theorem \ref{thm:constant pairwise cycle structure} and Lemma \ref{lem:Galpha}, respectively). However, we do not need the genuinely higher-rank assumptions to conclude the main goal of this section:

\begin{theorem}
\label{thm:pairwise-to-homo}
If $\R^k \times K \curvearrowright X$ is a HAPHA with constant pairwise cycle structure, then the action is topologically conjugate to a translation action on a homogeneous space $G / \Gamma$, with $\Gamma$ discrete.
\end{theorem}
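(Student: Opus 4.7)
The plan is to construct a finite-dimensional Lie group $H$ acting transitively on $X$ with discrete stabilizer, obtained as a suitable quotient of $\hat{\mc P} = (\R^k \times M) \ltimes \mc P$ where $\mc P = \mc P_\Delta$ is the topological free product of the groups $N_\alpha$ (cf.\ Definition \ref{def:P-groups}). Let $\mc N \trianglelefteq \hat{\mc P}$ be the closed normal subgroup generated by the commutator relations $\rho^{\alpha,\beta}(u,v)\cdot [u,v]$ from \ref{cpcs1} and by the kernels of the quotient maps $N_\alpha * N_{-\alpha} \twoheadrightarrow G_\alpha$ from \ref{cpcs2}. Since both families of relations act trivially on $X$ (by \ref{cpcs1}, $\rho^{\alpha,\beta}$ is independent of the basepoint; by \ref{cpcs2}, the $G_\alpha$-action is well-defined on $X$), the $\hat{\mc P}$-action on $X$ factors through a continuous, transitive action of $\hat H := \hat{\mc P}/\mc N$ on $X$.

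The heart of the argument is to show that $\hat H$ is locally compact; local path-connectedness is free from Corollary \ref{cor:connected}. For this, I would fix a regular $a \in \R^k$ and split $\Delta = \Delta^+(a) \sqcup \Delta^-(a)$, endowed with circular orderings as in Definition \ref{def:canonical-order}. The key claim to establish is a \emph{canonical presentation}: any element of $\hat{\mc P}$ sufficiently close to the identity is equivalent mod $\mc N$ to a unique product
\[
u^-_1 * \cdots * u^-_r \; * \; u^+_1 * \cdots * u^+_s \; * \; g,
\]
with $u^-_i \in N_{\beta_i}$ ($\beta_i \in \Delta^-(a)$ in circular order), $u^+_j \in N_{\gamma_j}$ ($\gamma_j \in \Delta^+(a)$ in circular order), and $g \in \R^k \times M$. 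To produce such a presentation, one proceeds inductively by length of the word, using three operations modulo $\mc N$: (i) the semidirect-product relations in Definition \ref{def:P-groups} to push $\R^k \times M$-factors to the right; (ii) the commutator relations \ref{cpcs1} to reorder two non-proportional legs, which accumulates only legs from $N_\gamma$, $\gamma \in \Sigma(\alpha,\beta)$, preserving the stable/unstable partition and sitting strictly between $\alpha$ and $\beta$ in the circular order; and (iii) the $G_\alpha$-relations \ref{cpcs2} to rewrite an opposite pair $v^- * v^+$ with $v^\pm \in N_{\pm \alpha}$ as a product of a single stable leg, a single unstable leg, and an element of the centralizer. Combined with \ref{ta9}, which guarantees that a neighborhood of the identity in $X$ is covered by an image of a bounded combinatorial cell, this shows that the evaluation map from a product of small neighborhoods to $\hat H$ is surjective onto a neighborhood of the identity. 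Uniqueness of the presentation (using Lemma \ref{lem:presentation-uniqueness} and \ref{ta10}) exhibits $\hat H$ near the identity as a continuous image of a finite-dimensional set, whence $\hat H$ is locally compact Hausdorff.

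With local compactness and local path-connectedness in hand, Corollary \ref{cor:lc-lpc-structure} realizes $\hat H$ as an inverse limit of Lie groups $H_n$ with compact kernels $K_n$ satisfying $\bigcap_n K_n = \{e\}$. Each $K_n$ lies in the center of the identity component and, by the canonical-form analysis, intersects every small neighborhood trivially for $n$ large enough (any nontrivial compact subgroup of $\hat H$ would have to project nontrivially into some $N_\alpha$ or $\R^k \times M$, contradicting local freeness in \ref{ta4}). Hence $K_n = \{e\}$ eventually, and $\hat H$ itself is a Lie group. Let $H$ be the image of $\hat H$ in $\Homeo(X)$ (equivalently, the quotient by any factor acting trivially). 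Fixing a basepoint $x_0 \in X$ and setting $\Gamma = \Stab_H(x_0)$, the orbit map gives a continuous $H$-equivariant bijection $H/\Gamma \to X$ (surjectivity by transitivity, injectivity by definition). Since $\R^k \times M \hookrightarrow H$ acts locally freely on both sides and $\hat H$ has locally Euclidean transversals to the $\R^k \times M$-orbits by the canonical form, invariance of domain upgrades this bijection to a homeomorphism, and $\Gamma$ is discrete. The $\R^k \times M$-action is by construction the restriction of the left-translation action on $H/\Gamma$, completing the proof.

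The main obstacle, and where all of the combinatorial work is concentrated, is the existence and uniqueness of the canonical presentation. The difficulty is compounded by two features absent from the $N_\alpha \cong \R$ setting of \cite{Spatzier-Vinhage}: the $N_\alpha$ are genuinely nilpotent, so one must control the polynomial correction terms that arise when reordering legs within a single coarse Lyapunov group; and the relations \ref{cpcs2} only couple opposite pairs $\{\alpha,-\alpha\}$, so the reordering procedure must be sequenced using the circular order so that each exchange produces only legs that can be re-absorbed without re-triggering an already processed weight. The cone structure $\Sigma(\alpha,\beta)$ from Definition \ref{def:canonical-order}, together with Lemma \ref{lem:polynomial-to-nil}, is exactly what makes this induction terminate.
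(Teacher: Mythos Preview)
Your overall strategy is the paper's: quotient $\hat{\mc P}$ by the commutator and symplectic relations, produce a canonical stable--unstable--neutral presentation near the identity, use local compactness plus Corollary~\ref{cor:lc-lpc-structure} to get a Lie group, and then argue that the stabilizer is discrete. Two steps, however, do not go through as written.

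First, your normal subgroup $\mc N$ is missing a family of relations. When you apply operation~(iii) to an opposite pair $v^-*v^+$, the ``centralizer'' factor you produce is the diagonal piece $\mf g_0\subset G_\alpha$ from Lemma~\ref{lem:Galpha}; in $\hat{\mc P}/\mc N$ this is a word in $N_\alpha$ and $N_{-\alpha}$, not an element of the $\R^k\times M$ factor. Operation~(i) only lets you push elements of the $\R^k\times M$ factor to the right, so you cannot absorb this diagonal piece into your final $g$, and the reordering procedure does not close up to the form you claim. The paper fixes this by explicitly adjoining the relations $f(d)\rho_d^{-1}$ identifying the diagonal of each $G_\alpha$ with the corresponding subgroup of $\R^k\times M$ (this is the passage to $\hat G$ after Lemma~\ref{lem:D-commutators}), and Lemma~\ref{lem:CD-normal} is needed to check these extra relations are indeed constant cycles. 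Without them your $\hat H$ has strictly larger dimension than $X$, the presentation is not unique, and the stabilizer is not discrete.

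Second, your passage from ``inverse limit of Lie groups'' to ``Lie group'' is not justified. The parenthetical that a nontrivial compact subgroup of $\hat H$ would ``project nontrivially into some $N_\alpha$ or $\R^k\times M$'' has no meaning: there are no such projections from $\hat H$, and a compact normal subgroup could sit entirely in the kernel of the action on $X$ (which, because of the first gap, is positive-dimensional). The paper's argument (Corollary~\ref{cor:hatG-Lie}) is different: the canonical form gives a uniform bound $\dim G_n\le\dim G_+ +\dim G_- +\dim(\R^k\times M)$ for every Lie quotient $G_n$ in the inverse system, so the dimensions stabilize, and one then builds a surjection from the common simply connected cover onto $\hat G$. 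Your discreteness argument via dimension count is fine in spirit, but it only works once both of the above are repaired; the paper instead proves discreteness dynamically (Lemma~\ref{lem:S-contains-oseledets}), showing that a positive-dimensional stabilizer would contain an Oseledets direction, contradicting the local freeness of the $N_\alpha$-actions.
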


\subsection{Stable-unstable-neutral presentations}
\label{subsec:sun-presentations}

%We do this by first showing that the geometric brackets are constant, ie, that $\rho^{\alpha,\beta}(s,t,x)$ is independent of $x$, and applying arguments from generators and relations of Lie groups adapted to this dynamical setting to show that this is sufficient. Throughout this section, we assume that we work on the space $X^E$. We will sometimes emphasize this by referring to such factor actions as {\it maximal factor actions}.

We let $\rho^{\alpha,\beta}_{\gamma}(u,v)$ denote the common value of $\rho^{\alpha,\beta}_{\gamma}(u,v,x)$ for $\alpha,\beta \in \Delta$, $\gamma \in \Sigma(\alpha,\beta)$. This is guaranteed to be independent of $x$ since the action has constant pairwise cycle structure.
Let $\mc P \;(= \mc P_{\Delta})$ be the group freely generated by the groups   $N^\alpha$. \color{black} Let $\mc C'$ be the smallest closed normal subgroup containing all cycles of the form

\begin{itemize}
\item $[u,v] * \rho^{\alpha,\beta}(u,v)$ for $u \in N^\alpha$ and $v \in N^\beta$ as described in Definition \ref{def:geo-comm} and 
\item any element of $\mc P_{\set{\alpha,-\alpha}}$ which factors through the identity of the Lie group action provided by \ref{cpcs2} (ie, $\ker(\mc P_{\set{\alpha,-\alpha}} \to G_\alpha)$, where $G_\alpha$ is as in Lemma \ref{lem:Galpha}).% (ie, any path that ends at an element of $H_\alpha$ in the notation of Section \ref{sec:symplectic-ideal-prop}).
\end{itemize}

\noindent Since such cycles are cycles at every point by assumption, $\mc C' \subset \mc C(x) := \Stab_{\mc P}(x)$ and $\mc C'$ is normal. Consider the quotient group $\mc G = \mc P / \mc C'$.

%{\color{olive}
%Our goal will be to show that $\mc G$ is a Lie group, so that $X$ is a homogeneous space of a Lie group. Then since the Cartan action acts by automorphisms on the generating subgroups of $\mc G$, we will conclude that the action is affine. We will show that $\mc G$ is a Lie group by showing it is locally path-connected by showing that the relations in $\mc C'$ allow group elements of $\mc P$ to be put in a normal form. The normal form will give a Lie factor $H$ of $\mc P$ with the same structure in which the normal form is locally unique. Then we will conclude that $H \cong \mc G$, and $\mc G$ is Lie, as desired.

%}

Fix a regular element $a_0 \in \R^k$. The goal of this subsection is to show that any $\rho \in \mc G$ can be reduced (via the relations in $\mc C'$) to some $\rho_+ * \rho_- * \rho_0$, with $\rho_+$ having only terms from $N^\chi$ with $\chi \in \Delta^+(a_0)$, $\rho_-$ having only terms from $N^\chi$  with $\chi \in \Delta^-(a_0)$, and $\rho_0$ being a product of elements of $\R^k \times K$ generated by symplectic pairs (see Lemma \ref{lem:Galpha}). Rather, we will show this for the group obtained by taking the semidirect product of the $\R^k \times K$ with $\mc G$ see Proposition \ref{stable unstable cycle decomposition}. We begin by identifying well-behaved subgroups of $\mc G$. Given a subset $\Xi \subset \Delta$, let $\mc G_\Xi$ denote the subgroup of $\mc G$ generated by the subgroups $N^\chi$, as $\chi$ ranges over $\Xi$. %the subgroups of $\mc G$ corresponding to $\Xi$.
We say that $\Xi$ is {\it stable} if $\Xi \subset \Delta^-(a)$ for some $a \in \R^k$. {  We say that it is {\it closed} if for any $\alpha,\beta \in \Xi$, $\Sigma(\alpha,\beta) \subset \Xi$.

\begin{lemma}
\label{lem:stable-closed}
    If $\Xi$ is a stable, closed collection of coarse weights, then $\mc G_\Xi$ is a nilpotent Lie group.
\end{lemma}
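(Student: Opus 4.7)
The plan is to mirror the proof of Lemma \ref{lem:polynomial-to-nil} almost verbatim, using closure of $\Xi$ under $\Sigma$ to guarantee that commutator relations stay inside $\mc G_\Xi$, and stability of $\Xi$ to provide a circular ordering and a globally contracting automorphism.

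First I would fix $a \in \R^k$ with $\Xi \subset \Delta^-(a)$ and choose a circular ordering $(\chi_1,\dots,\chi_r)$ of $\Xi$ as in Definition \ref{def:circular-ordering}, using $a$ and some auxiliary plane $V\subset \R^k$. I then aim to show that every element of $\mc G_\Xi$ can be written uniquely as $u_1 * u_2 * \cdots * u_r$ with $u_i \in N_{\chi_i}$. For existence, given any word $v_1 * \cdots * v_k$ with $v_j \in N_{\chi_{n_j}}$, I push all $\chi_1$-legs to the left one at a time using the commutator relations from \ref{cpcs1}. Each time a $\chi_1$-leg passes through a $\chi_{n_j}$-leg, it accumulates a product of legs with weights in $\Sigma(\chi_1,\chi_{n_j})\subset \Xi$ (by the closure hypothesis). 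The result is a word of the form $u_1 * \rho'$ with $u_1\in N_{\chi_1}$ and $\rho'$ containing no $\chi_1$-legs. Proceeding inductively on the circular ordering, we obtain the desired presentation.

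Uniqueness follows exactly as in Lemma \ref{lem:presentation-uniqueness}: if two such presentations agree as elements acting on $X$, then pulling back by an element $b \in \ker \chi_r$ with $\chi_i(b)<0$ for $i<r$ and iterating separates the terms one coarse weight at a time. (The element $b$ exists because each hyperplane $\ker \chi_r$ meets every other half-space $\{\chi_i < 0\}$ nontrivially; if this is not immediate I would use a limit argument with elements of $a + \varepsilon$ perturbations in $\ker\chi_r$ as in the original Lemma.) Assigning each $\rho\in \mc G_\Xi$ to its unique tuple $(u_1,\dots,u_r)$ defines an injection $\mc G_\Xi \hookrightarrow \prod_{i=1}^r N_{\chi_i}$, and by Lemma \ref{lem:continuity-criterion} this map is continuous, since its lift to each combinatorial cell $C_{\bar\beta} \subset \mc P_\Xi$ is built from the continuous operations of group multiplication in the $N_{\chi_i}$ and the continuous $\rho^{\alpha,\beta}$ functions evaluated on cell coordinates.

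At this point $\mc G_\Xi$ is a locally path-connected topological group (by Corollary \ref{cor:connected} applied to the quotient) admitting an injective continuous map into a finite-dimensional space, so Theorem \ref{lem:gleason-palais} (equivalently Corollary \ref{cor:lie-from-const}) shows $\mc G_\Xi$ is a Lie group. Finally, $a_* : \mc P \to \mc P$ preserves $\mc C'$ and so descends to a continuous automorphism of $\mc G_\Xi$ (using Proposition \ref{prop:integrality}); on each generating $N_{\chi_i}\subset \mc G_\Xi$ it contracts uniformly, hence it is a contracting automorphism of the full Lie group $\mc G_\Xi$. A connected Lie group admitting a contracting automorphism is nilpotent, which finishes the proof. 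The one subtlety I expect to require care is the continuity of the canonical presentation map, because the ``push everything left'' procedure is not a priori globally well-defined on $\mc P_\Xi$; but checking continuity cellwise in the combinatorial cells of $\mc P_\Xi$ reduces it to continuity of finitely many compositions of the $\rho^{\alpha,\beta}$-functions, which is guaranteed by \ref{cpcs1}.
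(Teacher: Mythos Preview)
Your proposal is correct and follows essentially the same approach as the paper: the paper's proof simply says the argument of Lemma \ref{lem:polynomial-to-nil} works verbatim after restricting to a generic $\R^2 \subset \R^k$ (i.e., choosing a circular ordering), and you have spelled out exactly those details. The only cosmetic difference is that the paper defers to \cite{Spatzier-Vinhage} for some of the routine steps you wrote out explicitly.
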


\begin{proof}
    The lemma follows as in the proof of Lemma \ref{lem:polynomial-to-nil}. Indeed, the proof works verbatim after restricting the coarse weights $\chi \in \Xi$ to a generic $\R^2 \subset \R^k$. Then one uses the circular ordering to find unique presentations of $\mc G_\Xi$. See also \cite{Spatzier-Vinhage}, Section 5.2 and Lemma 17.5.
\end{proof}
}

Let $\chi \in \Delta$ be a coarse weight such that $-\chi \in \Delta$, and $\beta \in \Delta$ be any linearly independent coarse weight. Let $\Xi = \set{ t\beta + s \chi : t \ge 0, s \in \R} \cap \Delta$, and $\Xi' = \set{ t\beta + s\chi : t > 0, s\in \R} \cap \Delta = \Xi \setminus \set{\chi,-\chi}$.%We shall henceforth use the groups $N_\chi^\Omega$, noting that they may be replaced by $V_\chi^\Omega$ for the proof of Theorem \ref{thm:pairwise-to-homo-fiber}.% Notice that even though $\beta$ and $\chi$ appearing here

\begin{proposition}
\label{prop:semi-structure}
If $\Xi$ is as above and $\rho \in \mc G_\Xi$ is any element, then $\rho = \rho_\chi * \rho_{\Xi'}$, where $\rho_\chi \in \mc G_{\set{\chi,-\chi}}$, and $\rho_{\Xi'} \in \mc G_{\Xi'}$. Furthermore, such a decomposition is unique.
\end{proposition}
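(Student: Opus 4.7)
My plan is to establish existence of the decomposition by pushing $N_{\pm\chi}$-letters to the left via commutator relations, and uniqueness by a dynamical expansion argument using a carefully chosen element of $\R^k$.

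For existence, the first step is to observe that $\Xi'$ is closed and stable, so Lemma \ref{lem:stable-closed} applies to make $\mc G_{\Xi'}$ a nilpotent Lie group. Indeed, if $\alpha_1 = t_1\beta + s_1\chi$ and $\alpha_2 = t_2\beta + s_2\chi$ both have $t_i>0$, any positive linear combination inherits a positive $\beta$-coefficient, so $\Sigma(\alpha_1,\alpha_2) \subset \Xi'$; stability follows by choosing $a \in \R^k$ with $\chi(a)=0$ and $\beta(a)<0$. Next, for $\gamma = t\beta+s\chi \in \Xi'$ and $\sigma,\tau > 0$, the weight $\sigma(\pm\chi)+\tau\gamma$ has $\beta$-coefficient $\tau t > 0$, so $\Sigma(\pm\chi,\gamma) \subset \Xi'$. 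By \ref{cpcs1}, the geometric commutator $\rho^{\pm\chi,\gamma}(u,v)$ lies in $\mc G_{\Xi'}$, and the relation $u v u^{-1} = v \cdot \rho^{\pm\chi,\gamma}(u,v)^{-1}$ shows that conjugation by $N_{\pm\chi}$ preserves $\mc G_{\Xi'}$. Since $G_\chi := \mc G_{\set{\chi,-\chi}}$ is generated as a group by $N_\chi \cup N_{-\chi}$, it normalizes $\mc G_{\Xi'}$. Given any word $\rho \in \mc G_\Xi$, I will iteratively push all letters from $N_{\pm\chi}$ to the left using these identities, extracting the factorization $\rho = \rho_\chi \cdot \rho_{\Xi'}$.

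For uniqueness, I will choose $a \in \R^k$ with $\chi(a) = 0$ and $\beta(a) > 0$, which exists by linear independence of $\chi$ and $\beta$. The automorphism $a_* : \mc G_\Xi \to \mc G_\Xi$ then acts trivially on $G_\chi$: it fixes every element of $N_{\pm\chi}$ pointwise since $\chi(a)=0$, and it fixes the $\mf g_0$-piece (which lies inside $\R^k \oplus \Lie(M)$) since $a$ commutes with $\R^k \times M$ as a direct product. On the other hand, $a_*$ acts on $\mc G_{\Xi'}$ as a strictly expanding graded Lie group automorphism because $\gamma(a) = t\beta(a) > 0$ for every $\gamma = t\beta+s\chi \in \Xi'$. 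Given two decompositions $\rho_\chi \rho_{\Xi'} = \rho_\chi' \rho_{\Xi'}'$, the element $z := (\rho_\chi')^{-1}\rho_\chi = \rho_{\Xi'}' (\rho_{\Xi'})^{-1}$ lies in $G_\chi \cap \mc G_{\Xi'}$, and $a_*^n z = z$ for all $n$. Writing $z = \exp(Z)$ inside the nilpotent Lie group $\mc G_{\Xi'}$ and using injectivity of $\exp$, the relation $a_* Z = Z$ together with strict expansion of $a_*$ forces $Z=0$, hence $z = e$.

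The main obstacle I anticipate is a compatibility check: I must verify that the two ways of letting $a_*$ act on $z$ — viewing it in the ambient $\mc G_\Xi$ versus in the Lie subgroup $\mc G_{\Xi'}$ — give the same result. Since $\mc G_{\Xi'}$ is defined as a subgroup of $\mc G_\Xi$ (not an independent quotient) and the ambient conjugation by $a$ restricts to the intrinsic graded automorphism $a_*$ on $\mc G_{\Xi'}$, this compatibility is direct but needs to be spelled out. A secondary point is to confirm that the conjugation action of a generic element of $G_\chi$ on $\mc G_{\Xi'}$ is well-defined and continuous; this reduces to $G_\chi$ being generated by $N_\chi \cup N_{-\chi}$ together with the $\R^k \times M$-actions $g \mapsto g_*$ on each $N_\gamma$ from \ref{ta5}.
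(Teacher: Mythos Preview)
Your proof is correct and your existence argument matches the paper's exactly: push $N_{\pm\chi}$-letters to the left using the commutator relations, which land in $\mc G_{\Xi'}$ because $\Sigma(\pm\chi,\gamma)\subset\Xi'$ for every $\gamma\in\Xi'$.

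For uniqueness, the paper simply reduces to the same intersection $(\rho_\chi')^{-1}\rho_\chi = \rho_{\Xi'}'\rho_{\Xi'}^{-1}\in\mc G_{\set{\chi,-\chi}}\cap\mc G_{\Xi'}$ and then asserts this intersection is $\set{e}$ without further justification. Your dynamical argument---choosing $a\in\ker\chi$ with $\beta(a)>0$ so that $a_*$ fixes $\mc G_{\set{\chi,-\chi}}$ pointwise (since it fixes the generators $N_{\pm\chi}$) while strictly expanding the simply connected nilpotent group $\mc G_{\Xi'}$---actually supplies the content the paper omits. This is a genuine addition, not merely a rephrasing. One small remark: your aside about $a_*$ fixing the $\mf g_0$-piece ``since $a$ commutes with $\R^k\times M$'' is unnecessary and slightly off (the automorphism $a_*$ on $\mc G$ is not conjugation inside $\R^k\times M$); the argument already goes through because $\mc G_{\set{\chi,-\chi}}$ is \emph{generated} by $N_\chi\cup N_{-\chi}$, both fixed pointwise by $a_*$. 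Your ``secondary point'' about continuity of the $G_\chi$-conjugation on $\mc G_{\Xi'}$ is really a concern for Corollary~\ref{cor:semi-stabilizer} rather than the proposition itself.
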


\begin{proof}
The proof technique is the same as that of Lemma \ref{lem:polynomial-to-nil}. Using constancy of commutator relations, we may push any elements of $N^{\pm \chi}$ to the left, accumulating elements of $\mc G_{\Xi'}$ as the commutator on the right.

To see uniquness, suppose that $\rho_\chi * \rho_{\Xi'} = \rho_\chi' * \rho_{\Xi'}'$. Then $(\rho_\chi')^{-1} * \rho_\chi = \rho_{\Xi'}'  * \rho_{\Xi'}^{-1}$. But $\mc G_{\Xi'}$ is a a subgroup of $\mc G_\Xi$ and it is clear that $\mc G_{\Xi'} \cap \mc G_{\set{\chi,-\chi}} = \set{e}$. Therefore, $\rho_\chi' = \rho_\chi$ and $\rho_{\Xi'} = \rho_{\Xi'}'$, and the decomposition is unique.
\end{proof}

\begin{corollary}
\label{cor:semi-stabilizer}
If $\Xi$ is as above, $\mc G_\Xi$ is a Lie group. Furthermore, $\mc G_\Xi$ has the semidirect product structure $\mc G_{\set{\chi,-\chi}} \ltimes \mc G_{\Xi'}$, % with $\mc G_{\set{\chi,-c\chi}}$ taking one of the forms described in Lemma \ref{lem:Galpha-cases}, and
 with $\mc G_{\Xi'}$ a nilpotent group.
\end{corollary}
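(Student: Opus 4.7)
The plan is to leverage Proposition \ref{prop:semi-structure} to endow $\mc G_\Xi$ with the semidirect product structure, with the two factors already known to be Lie. First, I would observe that $\Xi'$ is both \emph{stable} and \emph{closed} in the sense of Lemma \ref{lem:stable-closed}. Stability follows by choosing $a\in\R^k$ with $\beta(a)<0$ and $\chi(a)=0$: then every $\gamma=t\beta+s\chi\in\Xi'$ has $\gamma(a)=t\beta(a)<0$ since $t>0$. Closedness follows because a positive linear combination $\sigma(t_1\beta+s_1\chi)+\tau(t_2\beta+s_2\chi)=(\sigma t_1+\tau t_2)\beta+(\sigma s_1+\tau s_2)\chi$ still has strictly positive $\beta$-coefficient. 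Hence by Lemma \ref{lem:stable-closed}, $\mc G_{\Xi'}$ is a nilpotent Lie group. Meanwhile, $\mc G_{\set{\chi,-\chi}}$ is a Lie group by \ref{cpcs2} (equivalently, Lemma \ref{lem:Galpha}, which produces $G_\chi$).

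Second, I would verify that $\mc G_{\Xi'}$ is normal in $\mc G_\Xi$. It is enough to show that $\mc G_{\set{\chi,-\chi}}$ normalizes $\mc G_{\Xi'}$, since together these two subgroups generate $\mc G_\Xi$ by Proposition \ref{prop:semi-structure}. For $u\in N_{\pm\chi}$ and $v\in N_\gamma$ with $\gamma=t\beta+s\chi\in\Xi'$ (so $t>0$), the commutator relation \ref{cpcs1} allows us to replace $uvu^{-1}$ by $v$ times a constant element of $\prod_{\delta\in\Sigma(\pm\chi,\gamma)}N_\delta$. Every weight in $\Sigma(\pm\chi,\gamma)$ has the form $\sigma(\pm\chi)+\tau\gamma=(\pm\sigma+\tau s)\chi+\tau t\beta$ with $\sigma,\tau>0$, so the $\beta$-coefficient $\tau t$ is strictly positive, placing the result inside $\Xi'$. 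Iterating the relation over the legs of a general element of $\mc G_{\Xi'}$ shows $\rho_\chi\,\mc G_{\Xi'}\,\rho_\chi^{-1}\subset\mc G_{\Xi'}$, establishing normality.

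Third, combining normality with the unique decomposition $\rho=\rho_\chi*\rho_{\Xi'}$ from Proposition \ref{prop:semi-structure} yields a bijection of sets
\[
\mc G_{\set{\chi,-\chi}}\times\mc G_{\Xi'}\longrightarrow\mc G_\Xi,\qquad (\rho_\chi,\rho_{\Xi'})\longmapsto\rho_\chi*\rho_{\Xi'},
\]
which is a group isomorphism onto the external semidirect product $\mc G_{\set{\chi,-\chi}}\ltimes\mc G_{\Xi'}$ determined by the conjugation action just verified. The conjugation action is continuous because it is built from the geometric commutator functions $\rho^{\pm\chi,\gamma}$, which are continuous by Lemma \ref{lem:geo-comm} and constant in the basepoint by \ref{cpcs1}. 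Hence the semidirect product inherits a standard Lie group structure from its two Lie factors.

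The only subtlety in the argument is guaranteeing that the bijection above is a homeomorphism for the quotient topology on $\mc G_\Xi$, rather than merely an algebraic isomorphism; the forward map is continuous by the universal property of the free product topology (Lemma \ref{lem:continuity-criterion}), while continuity of the inverse follows from the explicit algorithm used to prove Proposition \ref{prop:semi-structure}, which expresses $(\rho_\chi,\rho_{\Xi'})$ as a continuous function of the entries of any combinatorial cell representing $\rho$ (by repeatedly applying the continuous commutator maps $\rho^{\alpha,\beta}$). Once this is in hand, the semidirect product structure and nilpotence of $\mc G_{\Xi'}$ follow, completing the proof of the corollary.
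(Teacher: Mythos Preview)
Your proposal is correct and follows essentially the same approach as the paper: both use the unique decomposition from Proposition \ref{prop:semi-structure} to exhibit the semidirect product structure, invoke Lemma \ref{lem:stable-closed} and \ref{cpcs2} to see the two factors are Lie, and appeal to continuity of the commutator maps for continuity of the conjugation action. You have simply filled in more of the details (explicit verification that $\Xi'$ is stable and closed, explicit normality argument, and the topological homeomorphism check) that the paper leaves implicit.
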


\begin{proof}
Notice that in the proof of Proposition \ref{prop:semi-structure}, we get a unique expression by moving the elements of $\mc G_{\set{\chi,-\chi}}$ to the left, and doing so accumulates only the $\mc G_{\Xi'}$ terms. Therefore, the decomposition gives $\mc G_\Xi$ the structure of a semidirect product of $\mc G_{\set{\chi,-\chi}}$ and $\mc G_{\Xi'}$. These groups are Lie by \ref{cpcs2} and Lemma \ref{lem:stable-closed}, respectively. The action of $\mc G_{\set{\chi,-\chi}}$  on $\mc G_{\Xi'}$  is continuous since the action of its generating subgroups corresponding to $\chi$ and $-\chi$ are given by commutators, which are continuous. Therefore, $\mc G_\Xi$ is the semidirect product of the Lie group $\mc G_{\set{\chi,-\chi}}$ with the Lie group $\mc G_{\Xi'}$, with a continuous representation, and is hence a Lie group. %The possibilities for $\mc G_{\set{\chi,-\chi}}$  follow from Lemma \ref{lem:Galpha-cases} and the fact that $\mc G_{\set{\chi,-\chi}}$  is the quotient of $G_\alpha$ (resp. the subgroup of $G_\alpha$ generated by $V_{\pm \alpha}$) by $H_\alpha$, and the cases described in Lemma \ref{lem:Galpha-cases} are closed under taking quotients.
%We show that if $\sigma \in \mc G_\Omega$ fixes $x$, then $\sigma = \sigma_\chi * \sigma_{\Omega'}$, where $\sigma_\chi$ is a cycle in $\mc G_{\set{\chi,-c\chi}}$ and $\sigma_{\Omega'} \in \mc G_{\Omega'}$. By Proposition \ref{prop:semi-structure}, it can always be written as such with $\sigma_\chi$, $\sigma_{\Omega'}$ not necessarily being cycles, instead only paths. Assume for contradiction that $\sigma_\chi$ is not a cycle. Pick $a$ such that $a \in \ker \chi$ and $\beta(a) < 0$, so that $\lambda(a) < 0$ for all $\lambda \in \Omega'$. Then on the one hand, $a^n \sigma$ is a cycle at $a^n x$ for every $n$, but it becomes closer to $\sigma_\chi \cdot (a^n x)$ since $a \in \ker \chi$. By choosing a convergent subsquence we obtain that $\sigma_\chi$ is a cycle at some point and hence also at $x$ (since the cycles in $\set{\chi,-c\chi}$ are independent of $x$. Hence $\sigma_\chi$ is a cycle and so it $\sigma_{\Omega'}$. Therefore, if $\sigma$ fixes $x$, $\sigma = e \in \mc G_\Omega$. Then the map which associates some $\rho \in \mc G_\omega$ an element of the form $\rho_\chi * \rho_{\Omega'}$ shows that $\mc G_\Omega$ is a Lie group by Theorem \ref{lem:gleason-palais}. The semidirect product structure is clear from the proof of Proposition \ref{prop:semi-structure}.
\end{proof}

The crucial tool in showing that $\mc P / \mc C'$ is Lie is to show that it is locally Euclidean. To that end, the crucial result is Lemma \ref{open dense commutation}.
Fix a regular element $a_0 \in \R^k$, then define 
%\[ \Delta^+(a) = \set{\chi \in \Delta \setminus E : \chi(a_0) > 0}, \qquad \Delta^-(a_0) = \set{ \chi \in \Delta \setminus E : \chi(a_0) < 0},\] 
%\noindent and set 
$\mc G_+ = \mc G_{\Delta^+(a_0)}$ and $\mc G_- = \mc G_{\Delta^-(a_0)}$. {  Note that $\mc G_\pm$ are nilpotent Lie groups by Lemma \ref{lem:stable-closed}.}
%Consider a $\mc{G}_{\set{\chi, -\chi}}$ locally isomorphic to $\Heis$ or $SL(2, \R)$. 
{\color{black} Let $\mc{D}_{\chi}$ be the subgroup of $\mc{G}_{\{\chi, -\chi\}}$ which was denoted by $G_{0,\chi}$ in Lemma \ref{lem:Galpha}. %Notice that even in the setting of Theorem \ref{thm:pairwise-to-homo-fiber}, the group $\mc D_\chi$ is constant on all of $X$ since it uses only relations in \ref{cpcs2} to define. %which normalizes both $\eta ^{\chi}$ and 
%$\eta^{-c \chi}$. Thus, $\mc D_\chi$ corresponds to either the diagonal subgroup of $SL(2,\R)$ (together with any possible central elements), or the center of $\Heis$.
Let $\mc{D} \subset \mc{G}$ be the group freely generated by all such $\mc{D}_{\chi} $. Note that the action of each element of $\mc D$ commutes with the $\R^k$ action by construction. Furthermore, the action of $\mc D$ is not obviously faithful, and may fail to be, as is the case for the Weyl chamber flow on $SL(3,\R)$ where there are 3 symplectic pairs of weights, each generating one-parameter subgroups of $\operatorname{Diag} \cong \R^2$.

\begin{lemma}
\label{lem:CD-normal}
Suppose the $\R^k$ orbit of $x_0$ is dense.  Then 
if $d \in \mc{D}$ is a cycle at $x_0$, then $d$ is a cycle everywhere. 
\end {lemma}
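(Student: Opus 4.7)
The plan is to exploit the defining property of $\mc D$: each generator $\mc D_\chi \subset \mc G_{\set{\chi,-\chi}}$ is, by Lemma \ref{lem:Galpha} and the definition just before the statement, exactly the subgroup whose action on $X$ coincides with (a one-parameter subgroup of) the $\R^k\times M$-action. Consequently, multiplying generators in $\mc D$ produces an element which, although possibly nontrivial in $\mc G$, acts on $X$ through an element of $\R^k\times M$. So there is a well-defined map $\Phi:\mc D\to \R^k\times M$ such that $d\cdot x=\Phi(d)\cdot x$ for every $d\in\mc D$ and every $x\in X$. The lemma then reduces to the statement: if $\Phi(d)$ fixes $x_0$, then $\Phi(d)$ fixes every point of $X$.

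Write $g=\Phi(d)\in \R^k\times M$. By hypothesis $g\cdot x_0=x_0$. Since $\R^k\times M$ is abelian (as a group acting on $X$, using that $\R^k$ and $M$ commute with each other and each is abelian or compact connected; recall by \ref{ta9} and Section \ref{extension} we may assume $M$ commutes with $\hat K$ and hence with the $\R^k$-action), $g$ commutes with every $a\in\R^k$. Hence, for every $a\in\R^k$,
\[
g\cdot(a\cdot x_0)=a\cdot(g\cdot x_0)=a\cdot x_0,
\]
so $g$ fixes every point of the $\R^k$-orbit of $x_0$. Continuity of the action of the fixed element $g$ and density of $\R^k\cdot x_0$ in $X$ then force $g\cdot x=x$ for every $x\in X$.

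Therefore $d\cdot x=\Phi(d)\cdot x=g\cdot x=x$ for every $x\in X$, which is precisely the statement that $d$ is a cycle at every point. In particular, $\mc C'$ together with any such $d$ still consists of cycles at every point, so $\mc D\cap\mathrm{Stab}_{\mc G}(x_0)\subset\bigcap_{x\in X}\mathrm{Stab}_{\mc G}(x)$, which is normal in $\mc G$.

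The only subtlety worth flagging is the well-definedness of $\Phi$: since $\mc D$ is the group freely generated by the $\mc D_\chi$, an element $d\in\mc D$ is a word in these generators, and $\Phi(d)$ is the corresponding product in $\R^k\times M$. This product is well-defined because each $\mc D_\chi$ has been identified with a (one-parameter) subgroup of $\R^k\times M$ via its action on $X$, and the identity in Lemma \ref{lem:Galpha}(5) guarantees that the action of this generator matches the action of the corresponding element of $\R^k\times M$. Beyond this bookkeeping point, the argument is essentially a density and continuity argument with no serious obstacle.
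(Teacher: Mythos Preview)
Your proof is correct and follows essentially the same approach as the paper: the action of $\mc D$ factors through $\R^k\times M$, and since $\R^k$ is central in $\R^k\times M$, any element fixing $x_0$ fixes its dense $\R^k$-orbit and hence all of $X$. One small imprecision: you wrote that ``$\R^k\times M$ is abelian,'' which is false in general since $M$ need not be abelian; what you actually need (and use) is only that $\R^k$ is central in the direct product $\R^k\times M$, so any $g=\Phi(d)$ commutes with every $a\in\R^k$. Also, the well-definedness of $\Phi$ as a map is a non-issue for this argument: you only need that for each word representing $d$ there is \emph{some} $g\in\R^k\times M$ acting identically on $X$, not that different words give the same $g$.
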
 

\begin{proof}
We know that the action of $\mc D$ commutes with the $\R^k$-action. Then if $d$ is a cycle at  $x_0$, $d$ is a cycle at any point in $\R ^k \cdot x_0$, hence everywhere as  the $\R^k$ orbit of $x_0$ is dense.  
%Let $g \in \mc{G}_{\{\chi, -c\chi\}}$ and $a \in \R^k$.  Then $a g = \phi _a (g)$ for some automorphism $\phi _a$ of $\mc{G}_{\{\chi, -c\chi\}}$ in the connected component of this automorphism group. Hence $\phi _a$ is an inner automorphism. Note that $\R^k$ and hence  $\phi _a$ normalize $\eta ^{\chi}$, and $\eta ^{-c \chi}$.  
% Hence $\phi _a (g) =d_0gd_0^{-1}$ for some $d_0 \in \mc{D}_{\chi}$.   As $\mc{D}_{\chi}$ is abelian,  $\phi _a (d) =d$ for $d \in \mc{D}_{\chi}$ and in fact for all $d \in \mc{D}$.  Hence  $a d = d a$.  Hence if $d$ is  a cycle at $x_0$, $d$ is a cycle at any point in $\R ^k (x_0)$, hence everywhere as $x_0$ is generic for the $\R ^k$ action.  
 \end{proof}
 
\begin{lemma}
\label{lem:coarse-normalize}
    If $u \in N^\beta \subset \mc G$ and $g \in \mc D \subset \mc G$, then $g ug^{-1} \in N^\beta \subset \mc G$.
\end{lemma}

\begin{proof}
    It suffices to show this when $g \in \mc D_\chi$, since elements of $\mc D$ are always products of such elements. Notice that $\mc D_\chi$ and $N^\beta$ are both subgroups of $G_\Xi$ as in Corollary \ref{cor:semi-stabilizer}. Furthermore, each $a \in \R^k$ induces an automorphism of $G_\Xi = G_{\set{\chi,-\chi}} \ltimes G_{\Xi'}$. If $X$ generates a one-parameter subgroup of $\mc D_\chi$ and $Y$ generates a one-parameter subgroup of $N^\beta$ tangent to a Lyapunov subspace, then $a_*X = X$ and $a_*Y = e^{c_i\beta(a)}Y$ for some $c_i \in \R$. Since $a_*$ is an automorphism $a_*[X,Y] = e^{c_i\beta(a)}[X,Y]$. Therefore, $[X,Y]$ belongs to the same eigenspace as $Y$ and is an element of $\Lie(N^\beta)$. Therefore, $\Lie(\mc D_\chi)$ normalizes $\Lie(N^\beta)$, and hence $D_\chi$ normalizes $N^\beta$, as claimed.
\end{proof}
 }
 Denote by $\mc{C}_D$ the group of cycles in $\mc{D}$ at a point $x_0$ with a dense $\R^k$-orbit (such a point exists by \ref{ta1}),  and set $G= \mc{G}/\mc{C}_D$ and $D= \mc{D}/\mc{C}_D$. Notice that $G$ and $D$ are topological groups since $\mc C_D$ is a closed normal subgroup by Lemma \ref{lem:CD-normal}. Furthermore, let $G_\pm$ denote the projections of the groups $\mc G_\pm$ to $G$. The following lemma is immediate from {\color{black} Lemma \ref{lem:coarse-normalize}:} %the fact that the action of $D$ coincides with that of the $\R^k \times M$ action and Corollary \ref{cor:semi-stabilizer}:
 
% \begin{lemma}
%$D$ is a subgroup of $\R ^k$.
%\end {lemma} 

%\begin{proof}
%For $d \in D$, $d x \in \R ^k (x)$.  
%\end{proof}

 \begin{lemma}
 \label{lem:D-commutators}
 Let $\rho ^{\pm} \in G _{\pm}$ (resp. $G_{\pm}(\bar{x})$) and $\rho ^0 \in D$.  Then 
$$\rho ^0 \rho ^{\pm} ( \rho ^0) ^{-1}\in G_{\pm} \mbox{ (resp. }G_{\pm}(\bar{x})).$$
 \end{lemma}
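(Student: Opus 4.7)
The lemma reduces, by iteration, to showing $dud^{-1} \in G_\pm$ whenever $d \in \mc D_\chi$ is a generator of $D$ and $u \in N_\alpha$ is a generator of $G_\pm$ (so $\alpha \in \Delta^\pm(a_0)$). The general case then follows since a product of elements in $G_\pm$ remains in $G_\pm$, and since conjugation distributes over products.

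Recall that $\mc D_\chi \subset \mc G_{\{\chi,-\chi\}}$ is, by definition, the subgroup whose action on $X$ coincides with a one-parameter family of the $\R^k \times M$-action. In particular, $d$ acts as some $g \in \R^k \times M$, and property \ref{ta5} shows that the element $g_* u \in N_\alpha$ has the same action on $X$ as $dud^{-1}$. Since $\alpha \in \Delta^\pm(a_0)$, the element $g_* u$ lies in $\mc G_\pm$; its image in $G$ is our candidate representative of $\rho^0 \rho^\pm (\rho^0)^{-1}$.

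The equality of the two elements in $G$ can be established via the semidirect product structure from Corollary \ref{cor:semi-stabilizer}. When $\alpha \in \{\chi,-\chi\}$, it reduces to the adjoint action of $\exp_{G_\chi}(\mf g_0) = \mc D_\chi$ on $\Lie(N_{\pm\chi})$ inside the Lie group $G_\chi$ from Lemma \ref{lem:Galpha}, which preserves each of $N_{\pm\chi}$ setwise. When $\alpha \notin \{\chi,-\chi\}$, apply Corollary \ref{cor:semi-stabilizer} with $\Xi = \{t\alpha + s\chi : t \ge 0, \, s \in \R\} \cap \Delta$: one has $\mc G_\Xi = \mc G_{\{\chi,-\chi\}} \ltimes \mc G_{\Xi'}$, so conjugation by $d \in \mc G_{\{\chi,-\chi\}}$ preserves $\mc G_{\Xi'}$. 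Since $u \in N_\alpha \subset \mc G_{\Xi'}$, we conclude $dud^{-1} \in \mc G_{\Xi'}$.

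The principal obstacle is that $\mc G_{\Xi'}$ contains $N_\gamma$ for \emph{every} $\gamma \in \Xi \setminus \{\chi,-\chi\}$, so containment in $\mc G_{\Xi'}$ alone need not place $dud^{-1}$ in $\mc G_\pm$. To overcome this, compare with the candidate $g_* u \in \mc G_\pm$: the two elements act identically on $X$, so their ratio $(g_*u)^{-1} \cdot dud^{-1}$ is a cycle at every point. Using Lemma \ref{lem:CD-normal} together with the density of the $\R^k$-orbit through $x_0$, this cycle can be rearranged -- using only relations already in $\mc C'$ -- to an element of $\mc D$; being action-trivial, it then lies in $\mc C_D$. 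This yields equality of $dud^{-1}$ and $g_* u$ in $G = \mc G/\mc C_D$, completing the proof.
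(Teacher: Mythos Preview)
Your reduction to generators is sound, the candidate $g_*u$ is the right object, and you correctly identify the obstacle: Corollary~\ref{cor:semi-stabilizer} only places $dud^{-1}$ in $\mc G_{\Xi'}$, which can be strictly larger than $N_\alpha$. The gap is in the final paragraph.

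The assertion that the action-trivial element $\sigma=(g_*u)^{-1}\,dud^{-1}$ ``can be rearranged---using only relations already in $\mc C'$---to an element of $\mc D$'' is not justified, and Lemma~\ref{lem:CD-normal} does not help: that lemma treats cycles that are \emph{already} words in the generators of $\mc D$, whereas $\sigma$ lies in $\mc G_{\Xi'}$. The groups $\mc D_{\chi'}$ sit inside the various $\mc G_{\{\chi',-\chi'\}}$, which (apart from the identity) are disjoint from $\mc G_{\Xi'}$ in $\mc G$; there is no mechanism for moving an arbitrary cycle into $\mc D$. So the conclusion $\sigma\in\mc C_D$ is unsupported.

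There are two clean ways to close the gap. The first stays close to your approach: show that $\mc G_{\Xi'}$ acts \emph{faithfully} on $X$. Since $\Xi'$ is stable and closed, every element has a unique circular-ordered presentation (cf.\ the proof of Lemma~\ref{lem:stable-closed}), and Lemma~\ref{lem:presentation-uniqueness} then forces any such word acting trivially at every point to be trivial. Hence $\sigma=e$ already in $\mc G$, and $dud^{-1}=g_*u\in N_\alpha\subset G_\pm$ with no appeal to $\mc C_D$ at all. The second is what the paper has in mind by ``immediate'': the automorphisms $a_*$ of \ref{ta5} descend to the Lie group $\mc G_\Xi$ (the commutator and symplectic relations defining it are $a_*$-equivariant). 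Under $a_*$ the subalgebra $\Lie(\mc D_\chi)=\mf g_0$ is fixed, while each $\Lie(N_\gamma)$ for $\gamma\in\Xi'$ sits in a nonzero weight space, all distinct for generic $a$. Since brackets add weights, $[\mf g_0,\Lie(N_\gamma)]\subset\Lie(N_\gamma)$, so $\mc D_\chi$ normalizes each $N_\gamma$ individually---giving $dud^{-1}\in N_\alpha$ directly, without ever comparing actions on $X$.
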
 
 
 The following is the basic commutation argument. It is an adaptation of Lemma 14.11 of \cite{Spatzier-Vinhage}, with changes to account for the nilpotent groups being multidimensional.
 
  \begin{lemma} \label{open dense commutation}
 For an open set of elements  $\rho ^+ \in G _+$, $\rho ^-  \in G _{-}$ containing $\set{e} \times \set{e}$ there exist 
 $(\rho ^+) ' \in G_+$, $(\rho ^- )' \in G_-$
 and $\rho ^0 \in D$ such that 
 $$ \rho ^+ * \rho ^-  = (\rho ^-) ' * (\rho ^+ )'  * \rho ^0 .$$

\noindent Furthermore, $(\rho^+)'$, $(\rho^-)'$ and $\rho^0$ depend continuously on $\rho^+$ and $\rho^-$.
  \end{lemma}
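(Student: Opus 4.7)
The strategy is to combine two elementary rewriting operations iteratively to convert $\rho^+ * \rho^-$ into the desired normal form $(\rho^-)' * (\rho^+)' * \rho^0$. Both operations follow from the defining properties of constant pairwise cycle structure. \emph{Operation A} (from \ref{cpcs1}): for $u \in N_\alpha$ and $v \in N_\beta$ with $\alpha$ and $\beta$ not negatively proportional, the constancy of $\rho^{\alpha,\beta}$ gives the identity $u * v = v * u * \rho^{\alpha,\beta}(u,v)$ in $G$, where the correction lies in $\mc G_{\Sigma(\alpha,\beta)}$ (a nilpotent Lie group by Lemma \ref{lem:stable-closed}, since $\Sigma(\alpha,\beta)$ is stable and closed). \emph{Operation B} (from \ref{cpcs2}): for $u \in N_\chi$ and $v \in N_{-\chi}$ both in a sufficiently small neighborhood of the identity, the multiplication map $N_{-\chi} \times D_\chi \times N_\chi \to G_\chi$ has differential at the identity equal to the vector-space decomposition $\Lie(G_\chi) = \Lie(N_{-\chi}) \oplus \mf g_0 \oplus \Lie(N_\chi)$ of Lemma \ref{lem:Galpha}, which is an isomorphism. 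Hence by the inverse function theorem there is a unique, continuous local decomposition $u * v = v' * d * u'$ with $v' \in N_{-\chi}$, $d \in D_\chi \subset D$, $u' \in N_\chi$.

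\emph{Iteration scheme.} Given $(\rho^+,\rho^-)$ in a small neighborhood of $(e,e)$, write each factor in canonical circular-ordered form using the nilpotent Lie group structures on $G_\pm$ from Lemma \ref{lem:stable-closed}: $\rho^+ = v_1 * \dots * v_r$ with $v_j \in N_{\beta_j}$, $\beta_j \in \Delta^+(a_0)$, and $\rho^- = u_1 * \dots * u_s$ with $u_i \in N_{\alpha_i}$, $\alpha_i \in \Delta^-(a_0)$. The goal is to push each stable $u_i$ leftward past each unstable $v_j$. For non-symplectic adjacencies ($\alpha_i \ne -\beta_j$) apply Operation A, then split the resulting correction via the canonical form in the nilpotent Lie group $\mc G_{\Sigma(\beta_j,\alpha_i)}$ into its stable and unstable parts and reabsorb them into the evolving $(\rho^-)'$ component on the far left and the updated middle block, respectively. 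For symplectic adjacencies ($\alpha_i = -\beta_j$) apply Operation B, accumulating the $D_{\alpha_i}$-factor into the growing $\rho^0$ on the far right.

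\emph{The main obstacle---termination.} The principal difficulty is that Operation A introduces new generators from $\Sigma(\alpha,\beta)$, which may themselves produce new unstable--stable inversions or even new symplectic pairs; in principle this could recurse indefinitely. To control it, I would use the nilpotency filtration of the finite-dimensional nilpotent Lie algebras $\Lie(G_\pm)$ and $\Lie(\mc G_{\Sigma(\alpha,\beta)})$ provided by Lemma \ref{lem:stable-closed}, assigning each weight the depth given by its position in the central series of the ambient nilpotent Lie group. Each application of Operation A replaces a commutator of two generators of depths $a,b$ with terms of depth at least $a+b$, strictly greater than both $a$ and $b$; because the nilpotency class of all relevant groups is bounded uniformly in terms of the finite structural data of $\Delta$, a lexicographic potential on the multiset of depths in the current word decreases after each swap. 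This forces termination after a uniformly bounded number of steps.

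\emph{Continuity.} Every elementary operation is continuous in its arguments: $\rho^{\alpha,\beta}$ is continuous in $(u,v)$ by Lemma \ref{lem:geo-comm}, the Bruhat decomposition of Operation B is continuous by the inverse function theorem in the Lie group $G_\chi$, and the splittings into stable and unstable parts are smooth inside the nilpotent Lie groups $G_\pm$ and $\mc G_{\Sigma(\alpha,\beta)}$. Composing a uniformly bounded number of such operations starting from an open neighborhood of $(e,e)$ then yields the desired continuous dependence of $(\rho^+)'$, $(\rho^-)'$ and $\rho^0$ on the inputs $(\rho^+,\rho^-)$.
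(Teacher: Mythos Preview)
Your two elementary operations are correct and match the paper's toolkit: Operation~A is the constant commutator relation \ref{cpcs1}, and Operation~B is the local Bruhat decomposition in $G_\chi$ furnished by Lemma~\ref{lem:Galpha}. The continuity bookkeeping is also fine. The genuine gap is in your termination argument.

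Your claim that ``each application of Operation~A replaces a commutator of two generators of depths $a,b$ with terms of depth at least $a+b$'' presupposes a single nilpotent Lie algebra in which all weights simultaneously sit and carry a common lower-central-series depth. No such ambient nilpotent algebra exists: you are commuting a stable generator (living in $G_-$) with an unstable one (living in $G_+$), and the correction terms land in whichever of $G_\pm$ their weight sign dictates. Concretely, in type $C_2$ with simple roots $\alpha_1,\alpha_2$ and $\Delta^+=\{\alpha_1,\alpha_2,\alpha_1+\alpha_2,2\alpha_1+\alpha_2\}$, the lower central series of $G_+$ gives $2\alpha_1+\alpha_2$ depth~$3$, while $\Sigma(2\alpha_1+\alpha_2,\,-\alpha_1)=\{\alpha_1+\alpha_2,\alpha_2\}$, and these corrections have depths $2$ and $1$ in $G_+$. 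So the depth \emph{decreases}, and your lexicographic potential does not go down. A related hand-wave is ``reabsorb the stable part into the evolving $(\rho^-)'$ component on the far left'': those stable corrections appear to the \emph{right} of unstable letters that are still waiting to be processed, so moving them left is itself a sequence of further swaps whose termination is exactly the issue in question.

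The paper's proof avoids this recursion entirely. Instead of swapping letter-by-letter, it processes the unstable weights $\alpha_1,\dots,\alpha_n$ one at a time (a clean induction with exactly $n$ steps). For each $\alpha_k$ it partitions $\Delta$ into quadrants $\Delta_1,\dots,\Delta_4$ relative to the line through $\pm\alpha_k$ and the wall $\ker a_0$, and observes that the half-plane unions $\{\alpha_k\}\cup\Delta_2\cup\Delta_3$ and $\{\alpha_k\}\cup\Delta_1\cup\Delta_4$ are each \emph{stable} for a suitably chosen (different) regular element. By Lemma~\ref{lem:stable-closed} each such union therefore generates a nilpotent Lie group, and the rearrangement needed to push $u_k$ across $(\rho^-)'$ is a single canonical-form computation inside that nilpotent group---no recursive explosion. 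The one symplectic swap $v_k * w = w' * v_k' * g$ with $g\in D$ occurs exactly once per step, and $g$ is moved to the far right in one stroke using Lemma~\ref{lem:D-commutators}. This quadrant decomposition is the missing structural idea; once you have it, termination is automatic and your Operations~A and~B become the atomic ingredients of each step.
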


 \begin{proof}
Order the coarse weights of $\Delta^+(a_0)$ and $\Delta^-(a_0)$ using a fixed circular ordering as $\Delta^+(a_0) = \set{\alpha_1,\dots,\alpha_n}$ and $\Delta^-(a_0) = \set{\beta_1,\dots,\beta_m}$. Since $\Delta^+(a_0)$ is a stable subset, $G_+$ is a nilpotent group by Lemma \ref{lem:stable-closed}. Therefore, we may write $\rho_+ = u_n * \dots * u_1$ for some $u_i \in N^{\alpha_i}$. 
We will inductively show that we may write the product $\rho^+ * \rho^-$ as $u_n * \dots * u_k * (\rho^-)' * v_{k-1} * \dots *v_1 * \rho^0$ for some $v_i \in N^{\alpha_i}$, $(\rho^-)' \in \mc G_-$ and $\rho^0 \in D$ (all of which depend on $k$, the index of the induction). Our given expression is the base case $k = 1$.
 
 Suppose we have this for $k$. If $-\alpha_k \in \Delta$, then it must be in {\color{black}$\Delta^-(a_0)$}. Let $l(k)$  denote the index for which $\beta_{l(k)} = - \alpha_k$  if $-\alpha_k$  is a coarse weight.  Otherwise, since there is no coarse weight negatively proportional to $\alpha_k$, we set $\beta_{l(k)} = -\alpha_k$ with $l(k)$ a half integer 
  so that $-\alpha_k$ appears between $\beta_{l(k) - 1/2}$ and $\beta_{l(k) + 1/2}$. Then decompose $\Delta$ into six (possibly empty) subsets: $\set{\alpha_k}$, $\set{-\alpha_k}$, $\Delta_1 = \set{ \alpha_l : l < k}$, $\Delta_2 = \set{\alpha_l : l > k}$, $\Delta_3 =  \set{ \beta_l : l < l(k)}$ and $\Delta_4 = \set{\beta_l : l > l(k)}$. See Figure \ref{fig:quadrants}.

\begin{figure}[!ht]
\begin{center}
\begin{tikzpicture}[scale=.75]
\draw [thick,<->] (-4,-2) --(4,2);
\draw [ultra thick,blue,dashed] (0,4) --(0,-4);
\node [right] at (4,2) {$\alpha_k$}; 
\node [left] at (-4,-2) {$-\alpha_k$};
\node [above right] at (0,4) {$\set{\chi : \chi(a_0) = 0}$};
\node at (2,-1) {$\Delta_1$};
\node at (2,2) {$\Delta_2$};
\node at (-2,1) {$\Delta_3$};
\node at (-2,-2) {$\Delta_4$};
\end{tikzpicture}
\caption{Decomposing $(\R^k)^*$ into quadrants}
\label{fig:quadrants}
\end{center}
\end{figure}
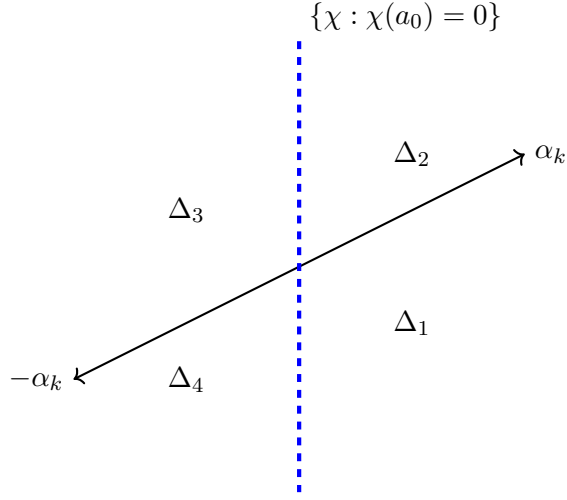

We let $G_{\Delta_i}$ denote the subgroup of $G$ generated by {\color{black}the groups $N^\beta$, $\beta \in \Delta_i$}. Notice that $\Delta_- = \Delta_3 \cup \set{-\alpha_k} \cup \Delta_4$ (with $\set{-\alpha_k}$ omitted if there is no coarse weight of this form) is stable, so again, since $G_-$ is nilpotent, $(\rho^-)' $ may be expressed uniquely as $q_3 * w * q_4$ with $q_3 \in G _{\Delta _3}$ and $q_4 \in G _{\Delta _4}$ and $w \in N^{-\alpha_k}$
    (if $-\alpha_k$ is not a coarse weight, we omit this term). Now, $\set{\alpha_k} \cup \Delta_2 \cup \Delta_3$ is a stable set whose associated group is nilpotent. So \color{black} $u_k * q_3 = q_2 * (q_3)' * v_k'$ for some $q_2 \in G_{\Delta_2}$, $(q_3)' \in G_{\Delta_3}$ and $v_k' \in N^{\alpha_k}$. Notice that by iterating some $a \in \R^k$ for which $\alpha_k(a) = 0$, and $\beta(a) < 0$ for all $\beta \in \Delta_2 \cup \Delta_3$, we actually know that $v_k' = u_k$. \color{black} %The term $s_k'$ is determined by a polynomial in $t_k$ and the lengths of the legs of $q_3$ by Lemma \ref{lem:group-integrality}(3). In particular, properties which hold for an open dense set of the $s_k'$ hold for an open dense set of the $t_k$. 
Thus, we have put our expression in the form:
 
\begin{eqnarray*}
u_n * \dots * u_k * (\rho^-)' * v_{k-1} * \dots * v_1 * \rho^0  & = & u_n * \dots * u_k  * (q_3 * w * q_4) * v_{k-1} * \dots * v_1 * \rho^0\\
 & = & u_n * \dots * (u_k  * q_3) * w * q_4 * v_{k-1} * \dots * v_1 * \rho^0\\
  &=&   u_n * \dots * u_{k+1} * (q_2 * (q_3)' * v_k') * w * q_4  \\
  && \qquad * \,v_{k-1} * \dots * v_1 * \rho^0
\end{eqnarray*}
 
 Now, there are two cases: $- \alpha_k \not\in \Delta$ in which case {\color{black}$w$ does not appear (we may take it to be $e$)}.  If $-\alpha_k \in\Delta$, then as long as $v_k'$ and $w$ are both sufficiently small, $v_k' * w = w' * v_k'' * g$ for some $w' \in N^{-\alpha_k}$, $v_k'' \in N^{\alpha_k}$ and $g \in D$ (since by Lemma \ref{lem:Galpha}, the corresponding subalgebras form a splitting of $\operatorname{Lie}(G_\alpha)$). {\color{black}In the first case when $w = e$, we set $w' = e$.} Furthermore, notice that $q_2$ and the terms appearing before $q_2$ all belong to $\Delta_2$, so we may combine them to reduce the expression to:
 
 \[u_n' * \dots * u_{k+1}' * (q_3)' * w' * v_k'' * g * q_4 * v_{k-1} * \dots * v_1 * \rho^0 \]
  for some collection of $u_i' \in N^{\alpha_i}$, and $g \in D$. But by Lemma \ref{lem:D-commutators}, $g$ may be pushed to the right preserving the form of the expression and being absorbed into $\rho^0$. We abusively do not change these terms and drop $g$ from the expression.
 
 Now, we do the final commutation by commuting $v_k''$ and $q_4$. Notice that $\set{\alpha_k} \cup \Delta_1 \cup \Delta_4$ is a stable subset. Therefore, we may write $v_k'' * q_4$ as $(q_4)' * v_k''' * q_1$ with $q_1 \in G_{\Delta _1}, (q_4)' \in G _{\Delta _4}$ and $v_k ''' \in N^{\alpha_k}$. Inserting this into the previous expression, we see that the $q_1$ term can be absorbed into the remaining product of the $v_i$ terms. This yields the desired form
 
  \[u_n' * \dots * u_{k+1}' * (q_3)' * w' * (q_4)' * v_k''' * (q_1 * v_{k-1} * \dots * v_1) * \rho^0, \]
 
  \noindent with the new $\rho^-$ equal to $(q_3)' * w' * (q_4)'$.
  \end{proof}

{\color{black} 
Recall that $D$ commutes with the $\R^k$-action. We wish to combine the Lie groups $D$ and $\R^k \times K$ to build a single Lie group to parameterize the neutral directions. We again consider the free product of groups $D * (\R^k \times K)$. Observe that $\R^k$ commutes with the elements of this group, and $\R^k$ has a dense orbit. Therefore, the cycles of this group at every point contain a co-finite dimensional normal subgroup (the cycle group at a point with a dense orbit). Let $\hat{D}$ denote the factor of $D * (\R^k \times K)$ by this group, and $\mc C_{\hat{D}}$ the cycles in the group $D *(\R^k \times K)$.
%Recall that the action of $D$ coincides with a subgroup the $\R^k \times M$ action, let $f : D \to \R^k \times M$ be the homomorphism which associates an element of $D$ with the corresponding element of the $\R^k \times M$ action. %If the groups depend on $\bar{x}$, recall the definition of $D^\Theta$ in Definition \ref{def:dalpha}. % $A(\bar{x})  = \set{a \in \R^k \times M : a \cdot Y_\Theta^\Omega(\bar{x})}^\circ$ denote the identity component of the subgroup of $\R^k \times M$ which preserves the fiber.
%Then if $d \in D$, then $d \in D^\Theta$, since $d$ is generated by the actions of $V_{\pm \alpha}^\Omega$, which by definition preserve the sets $Y_\Theta^\Omega(\bar{x})$. 

Let $\hat{G}$ be the quotient of $\hat{\mc P} := (\R^k \times K) \ltimes \mc P$ (see Definition \ref{def:P-groups}) by the {  smallest closed normal subgroup containing all elements from} $\ker(\mc P \to G)$ (ie, the relations already constructed in $G$) and elements of $\mc C_{\hat{D}}$.} %If there is no dependence on $\bar{x}$ (ie, we are in the setting of Theorem \ref{thm:pairwise-to-homo}), we use $\R^k \times M$ in place of $D^\Theta$.

{\color{black}
\begin{remark}
    Even in the case when the action satisfies \ref{ta9}\ref{(HA-7a)}, it is not necessarily true that the action of $\mathcal P$ is transitive without saturating by $K$-orbits. The situation is even worse for abstract Anosov actions satisfying \ref{ta9}\ref{(HA-7b)}, in which one must saturate by both $K$ and $\R^k$-orbits. Thus, to obtain a transitive action of a Lie group (and hence a homogeneous space structure), we must consider the groups $\hat{\mathcal P}$. Note also that including the subgroup $\mathbb{R}^k$ in the homogeneous space structure makes it clear that the $\R^k$ action is by translations in the homogenous coordinates.
\end{remark}
}

Recall that $\mc P$ is the free product of nilpotent Lie groups, and has a canonical CW-complex structure as described in Section \ref{sec:free-prods}. The cell structure can be seen by considering subcomplexes corresponding to sequences of coarse weights $\bar{\chi}= (\chi_1,\dots,\chi_n)$ and letting $C_{\bar{\chi}} = \set{ u_1 * \dots * u_n : u_i \in N^{\chi_i}} \cong N^{\chi_1} \times \dots \times N^{\chi_n}$. Then a neighborhood of the identity is a union of neighborhoods in each cell $C_{\bar{\chi}}$ containing 0.

%Let $D^\perp$ be a subspace of $\R^k$ such that $D^\perp \cap D = \emptyset$ and $D$ and $D^\perp$ generate the entire $\R^k$ action. Notice that since $D^\perp$ also normalizes each of the flows $\eta^\chi$, and also maps cycles to cycles, the group generated by the actions of $G$ and $D^\perp$ has the structure that any element can be written in the form $gd$ with $d \in D^\perp$. The main result of this section is the following  proposition. Let $\hat{G}$ be this group.

Let $\pi : \hat{\mc P} \to \hat{G}$ denote the canonical projection, and note that $\ker \pi$ is exactly the group generated by (conjugates of) commutator cycles, nontrivial symplectic cycles, cycles in $\mc C_{D}$, {\color{black}and cycles in $\mc C_{\hat{D}}$.} {\color{black}Let $\pi' : G_+ \times G_- \times \hat{D} \to \hat{G}$ denote the map $\pi'(g_1,g_2,d) = g_1g_2d$, where $G_+$, $G_-$ and $\hat{D}$ are all identified with their inclusions into $\hat{G}$. }%Furthermore, notice that putting $\Delta^+(a)$ and $\Delta^-(a)$ in a circular ordering makes $G_+ \times G_- \times {\color{olive}\hat{D}}$ a combinatorial cell in $\hat{\mc P}$.

\begin{proposition} \label{stable unstable cycle decomposition}
There exists an open neighborhood $U$ of $e \in \hat{\mc P}$ and a continuous, open map $\Phi : U \to G_+ \times G_- \times {\color{black}\hat{D}}$ such that $\pi \of \Phi = \pi'$.%if $\Phi(u) = \Phi(v)$, then $\pi(u) = \pi(v)$.%$u$ and $v$ represent the same element of $\hat{G}$. % (ie, any cycle $\rho$ with contractible homotopy type),% there exist $\rho _k \rightarrow \rho$ in $G$ such that 
%where $\rho ^{\pm} \in G _{\pm}$ and $ \rho^0 \in \R^k$ are unique.  Moreover,  $\rho  ^{\pm} $ and $\rho ^0 $ vary continuously on the open dense set and extend continuously to any $\rho \in \hat{G}$ such that $\eta (\rho) (x_0) = x_0$ on the universal cover . 
\end{proposition}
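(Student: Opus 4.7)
The plan is to construct $\Phi$ cell-by-cell via a sorting algorithm powered by Lemma \ref{open dense commutation}, then assemble the cell-wise maps into a continuous global map using the universal property of the free product topology. Recall from Section \ref{sec:free-prods} and Lemma \ref{lem:continuity-criterion} that a neighborhood of $e$ in $\hat{\mc P}$ is controlled by its intersection with each combinatorial cell $C = C_{\bar\chi} \times (\R^k \times M)$, where $\bar\chi = (\chi_1,\dots,\chi_n)$. So it suffices to build, for each such cell, a continuous map $\Phi_C$ on a neighborhood of $e$ in $C$ and verify these assemble consistently.

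The sorting algorithm for a word $w = u_1 * \cdots * u_n * g \in C$ with $u_i \in N_{\chi_i}$ and $g \in \R^k \times M$ has four steps: (a) push every $\R^k \times M$-term to the right via the semidirect structure $\hat{\mc P} = (\R^k \times M) \ltimes \mc P$, using $g * u_\chi = (g_*u_\chi) * g$ with $g_* u_\chi \in N_\chi$; (b) perform a bubble sort on the weight legs by identifying each adjacent pair in the wrong stable/unstable order and swapping it with Lemma \ref{open dense commutation} (or its mirror, proved by the same argument), absorbing the resulting element of $D$ through conjugation past later legs and into a growing center factor using Lemma \ref{lem:D-commutators}; (c) within the resulting $G_+$- and $G_-$-blocks, collapse consecutive weights to their unique canonical form in the nilpotent Lie groups $G_\pm$ from Lemma \ref{lem:stable-closed}; and (d) convert the accumulated $D$-factor to an element of $\R^k \times M$ via $f$. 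Each step is continuous: Step (a) uses continuity of $g_*$; Step (b) uses the continuous dependence in Lemma \ref{open dense commutation}; Step (c) uses continuity of multiplication in $G_\pm$; and Step (d) uses continuity of $f$. The identity $\pi \circ \Phi_C = \pi|_{U_C}$ holds because every rewriting step applied a relation in $\ker \pi$ (commutator cycles, symplectic cycles, the identification $D \twoheadrightarrow \R^k \times M$, or the semidirect structure).

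For openness, observe that the inclusion $j : G_+ \times G_- \times (\R^k \times M) \hookrightarrow \hat{\mc P}$ as a distinguished combinatorial cell is continuous, and that $\Phi \circ j$ is the identity on a neighborhood of $e$ because a word already in canonical form is fixed by the algorithm (every adjacent pair is already correctly ordered, and Steps (a), (c), (d) act trivially). Thus $j$ is a continuous local section of $\Phi$, and so $\Phi$ is open onto its image.

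The main obstacle is controlling the domain: Lemma \ref{open dense commutation} requires its inputs to lie in a specific open set, and each swap in the bubble sort may slightly amplify the norms of $(\rho^+)'$ and $(\rho^-)'$ relative to $\rho^+$ and $\rho^-$. Within a fixed cell of length $n$, the algorithm performs at most $\binom{n}{2}$ swaps, so a compactness/continuity estimate on Lemma \ref{open dense commutation} yields a neighborhood $U_C$ of $e$ on which all required swaps succeed. The delicate point is that these $U_C$ must together form a neighborhood of $e$ in the free product topology on $\hat{\mc P}$: since that topology is precisely the quotient of $\bigsqcup_C C$ by $\pi_{\bar\chi}$ (Lemma \ref{lem:continuity-criterion}), specifying an open $U_C \subset C$ for every combinatorial pattern that is saturated under cell-identifications (the two relations \eqref{eq:freep-rel1}, \eqref{eq:freep-rel2}) defines an open $U \subset \hat{\mc P}$, and by construction the $\Phi_C$ glue along such identifications because they rely only on relations in $\ker \pi$. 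This requires careful bookkeeping but is exactly analogous to the corresponding argument in \cite{Spatzier-Vinhage}, Section 14.
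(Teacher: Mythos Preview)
Your proposal is essentially correct and follows the paper's strategy: construct $\Phi$ cell-by-cell via a sorting algorithm driven by Lemma~\ref{open dense commutation}, use the cell description of the free-product topology (Lemma~\ref{lem:continuity-criterion}) to get continuity, and obtain $\pi\circ\Phi=\pi$ because every rewriting step is a relation in $\ker\pi$. The paper sorts slightly differently---it pushes the extremal weight $\alpha_n$ to the left first and iterates, rather than bubble-sorting on stable/unstable type---but either algorithm terminates and either works.

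Two small gaps are worth flagging. First, your gluing argument is misphrased: the $\Phi_C$ must agree on words identified by the \emph{free-product} relations \eqref{eq:freep-rel1}--\eqref{eq:freep-rel2}, which are relations in $\hat{\mc P}$ itself, not in $\ker\pi$. The fact that each $\Phi_C$ uses only $\ker\pi$-relations yields $\pi\circ j\circ\Phi_C(w)=\pi\circ j\circ\Phi_{C'}(w')$, not $\Phi_C(w)=\Phi_{C'}(w')$. You need to check directly (as the paper does) that inserting a trivial leg, or splitting one $N_\chi$-leg into two adjacent ones, does not change the output of the algorithm; this is true for your bubble sort (trivial legs swap trivially and same-type adjacent legs are never a ``wrong-order'' pair, so they just merge in step (c)), but it deserves a sentence.

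Second, a continuous local section $j$ with $\Phi\circ j=\mathrm{id}$ does not by itself make $\Phi$ open; it only shows that $\Phi(U)$ contains a neighborhood of $(e,e,e)$. The paper's own proof does not address openness explicitly either, and in fact the downstream application in Corollary~\ref{cor:hatG-Lie} only needs this weaker surjectivity statement combined with the openness of the quotient map $\pi:\hat{\mc P}\to\hat G$.
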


\begin{proof}
We describe the map $\Phi$, whose domain will become clear from the definition.  Let $\Delta^+(a_0) = \set{\alpha_1,\dots,\alpha_n}$ and $\Delta^-(a_0) = \set{\beta_1,\dots,\beta_m}$ be the coarse weights as described in the proof of Lemma \ref{open dense commutation}. Given a word $\rho = u_1 * \dots * u_n$, $u_i \in N^{\chi_i}$, $\chi_i \in \Delta$ for every $i$, % (resp. in $V_{\chi_i}$), 
we begin by taking all occurrences of $\alpha_n$ in $\rho$ and pushing them to the left, starting with the leftmost term.  When we commute it past another $\alpha_i$, we accumulate only other $\alpha_j$, $i+1 \le j \le n-1$, in $\rho^{\alpha_i,\alpha_n}$, which we may canonically present in increasing order on the right of the commutation. A similar statement holds for the commutation of $\alpha_n$ with $\beta_i$. We iterate this procedure as in the proof of Lemma \ref{open dense commutation} to obtain the desired presentation. Since the commutation operations involved are determined by the combinatorial type, the resulting presentation is continuous from the cell $C_{\bar{\chi}}$. 
{\color{black}
We now show that it descends to $\mc P$, which requires showing it respects the free product relations. If one of the terms  happens to be $e$, the procedure yields the same result whether it is considered there or not. Furthermore, if a coarse weight is repeated, then commuting past each will yield elements contained in a single stable which may be combined afterwards. Thus, it is a well-defined continuous map from a neighborhood of the identity in $\hat{\mc P}$ to $G_+ \times G_- \times {\color{black}\hat{D}}$.} It is continuous from $\hat{\mc P}$ because it is continuous from each $C_{\bar{\chi}}$.

Notice that in the application of Lemma \ref{open dense commutation}, we require that all terms are sufficiently small.  Thus, in each combinatorial pattern, since the algorithm is guaranteed to have a finite number of steps and swaps appearing, and each term appearing will depend continuously on the initial values of the terms, we know that for each $\bar{\chi}$, some neighborhood of 0 will be in the domain of $\Phi$, by the neighborhood structure described above.

Notice that the reduction of a word $u$ to a word of the form $u_+ * u_- * a \in G_+ \times G_- \times \hat{D}$ uses only relations defining $\hat{G}$. Therefore, if after the reductions, the same form is obtained, the original words must represent the same element of $\hat G$. That is, $\pi \of \Phi = \pi'$.
%The existence follows from Lemma \ref{open dense commutation}.  Uniqueness is argued by the dynamics: indeed,   if there was a second presentation with different  $\rho^-$ value, then moving them by $a _0 ^n$,  these would diverge while the other components contract or do not expand.  A similar argument gives uniqueness of $\rho^+$.  Uniqueness of the $\rho^0$ follows. Notice also that the map $(g_+,g_-,a) \mapsto g_+g_-a$ from $G_+ \times G_- \times \R^k$ locally surjective. In particular, by uniqueness, if a elements of the form $\rho^+ * \rho^- * \rho^0$ converge to a cycle, each individual term making up the product converges to a cycle.
 \end{proof}
 \begin{corollary}
\label{cor:hatG-Lie}
The group $\hat{G}$ is a Lie group.
\end{corollary}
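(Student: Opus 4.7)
The plan is to apply the Gleason--Palais criterion (Theorem \ref{lem:gleason-palais}). For this, one must verify that $\hat G$ is locally path-connected and that some neighborhood of $e \in \hat G$ admits an injective continuous map into a finite-dimensional topological space, which will be the finite-dimensional Lie group $G_+ \times G_- \times (\R^k \times M)$.

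Local path-connectedness follows at once: $\mc P$ is locally path-connected by Corollary \ref{cor:connected}, so the same is true of $\hat{\mc P} = (\R^k \times M) \ltimes \mc P$, and the quotient map $\pi \colon \hat{\mc P} \to \hat G$ is a continuous open surjection of topological groups, so local path-connectedness descends to $\hat G$.

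For the injective continuous map I use Proposition \ref{stable unstable cycle decomposition}. Define
\[ \bar\Phi \colon \pi(U) \longrightarrow G_+ \times G_- \times (\R^k \times M), \qquad \bar\Phi(\pi(u)) := \Phi(u). \]
The relation $\pi \of \Phi = \pi$ immediately gives injectivity once well-definedness is established: if $\Phi(u_1) = \Phi(u_2)$ then $\pi(u_1) = \pi(\Phi(u_1)) = \pi(\Phi(u_2)) = \pi(u_2)$. Continuity follows from continuity of $\Phi$ together with openness of $\pi$, since for any open $W$ one has $\bar\Phi^{-1}(W) = \pi(\Phi^{-1}(W))$, which is open.

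The real content is thus well-definedness of $\bar\Phi$: if $\pi(u_1) = \pi(u_2)$, then $\Phi(u_1) = \Phi(u_2)$. First one notes that every generator of $\ker\pi$ acts trivially on $X$: the commutator cycles by \ref{cpcs1}, the symplectic cycles by \ref{cpcs2}, the elements of $\mc C_D$ by Lemma \ref{lem:CD-normal}, and each $f(d)\rho_d^{-1}$ by definition of $f$. So $\pi(u_1)=\pi(u_2)$ forces $u_1$ and $u_2$, and hence $\Phi(u_1)$ and $\Phi(u_2)$, to act identically on $X$. Writing $\Phi(u_i) = (u_+^i, u_-^i, a_i)$, it remains to show that two small canonical forms with identical action on $X$ coincide. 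This is the main obstacle and is a local-freeness statement for the combined action of $G_+ \times G_- \times (\R^k \times M)$. Upon choosing a regular $a_0 \in \R^k$ that expands $G_+$ and contracts $G_-$ and using \ref{ta5} to conjugate the identity $u_+^1 u_-^1 a_1 \cdot x = u_+^2 u_-^2 a_2 \cdot x$ by $a_0^n$, the $G_-$-contributions shrink to $e$ while the $G_+$-contributions renormalize along the unstable foliation, separating distinct $u_+^i$ exponentially. Transversality of the stable, unstable and orbit directions --- provided by \ref{ta10}, \ref{ta9}, and local freeness of the $\R^k \times M$-action --- then forces $a_1 = a_2$, $u_+^1 = u_+^2$ and finally $u_-^1 = u_-^2$. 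With well-definedness in hand, Gleason--Palais applies and produces a Lie structure on $\hat G$.
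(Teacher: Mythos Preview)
Your approach via Gleason--Palais is genuinely different from the paper's. The paper does \emph{not} produce an injective map from a neighborhood of $e\in\hat G$ into a finite-dimensional space. Instead it shows only that $\hat G$ is locally compact: for a compact $K\subset G_+\times G_-\times(\R^k\times M)$, the set $\pi(K)=\pi(\Phi^{-1}(K))$ is a compact neighborhood of $e$ (no injectivity needed). Then Corollary~\ref{cor:lc-lpc-structure} writes $\hat G$ as an inverse limit of Lie groups $G_n$; a dimension count bounds $\dim G_n$, so the dimensions stabilize, and a simply connected cover argument produces a surjection $\tilde G\to\hat G$, forcing $\hat G$ Lie.

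Your route hinges on well-definedness of $\bar\Phi$, which unwinds to: two small elements of $G_+\times G_-\times(\R^k\times M)$ acting identically on $X$ must coincide. This is exactly the step the paper's argument is designed to avoid, and your justification has a gap. Conjugating by $a_0^n$ does send the $G_-$-parameters to $e$, but it sends the $G_+$-parameters to infinity \emph{in the group} $G_+$; the corresponding points $\big((a_0^n)_*u_+^i\big)\cdot z$ need not separate \emph{in $X$}, since unstable leaves are typically dense and \ref{ta4} only gives the evaluation maps locally bi-Lipschitz near $e$. Axiom \ref{ta10} yields injectivity of the evaluation from $G_+$ or $G_-$ \emph{separately}, and \ref{ta9} gives \emph{surjectivity} of the combined sun-map, but neither gives the ``transversality'' (i.e.\ local injectivity of the full product) you invoke. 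Lemma~\ref{lem:presentation-uniqueness} is the available uniqueness statement, but it applies only to weights in a \emph{stable} set, and $\Delta=\Delta^+(a_0)\cup\Delta^-(a_0)$ is not stable. In fact, discreteness of the $\hat G$-stabilizers is established in Section~\ref{sec:pairwise-sufficient} only \emph{after} $\hat G$ is known to be Lie, so invoking it here would be circular.

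In short: your reduction to well-definedness is clean, and the local path-connectedness and injectivity-from-$\pi\of\Phi=\pi$ steps are correct; but the local-freeness/transversality claim for the combined $G_+\times G_-\times(\R^k\times M)$ action on $X$ is not established by the axioms you cite and is the real content. The paper's inverse-limit argument sidesteps it entirely.
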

\begin{proof}
Choose $U$ is as in Proposition \ref{stable unstable cycle decomposition}, and let $K \subset \Phi(U)$ be a compact neighborhood of $(e,e,e) \in G_+ \times G_- \times \hat{D}$. {  Note that such a neighborhood exists since $G_\pm$ and $\R^k \times K$ are all Lie groups using Lemma \ref{lem:stable-closed}.} Then $\Phi^{-1}(K)$ is a neighborhood of $e \in \hat{\mc P}$, and $\pi(\Phi^{-1}(K)) = \pi'(K)$ is a neighborhood of $e \in \hat{G}$. Therfore, $\hat{G}$ is locally compact. Furthermore, since $\hat{G}$ is the factor of the locally path-connected group $\hat{\mc P}$, $\hat{G}$ is locally path-connected. Hence $\hat{G}$ is a projective limit of Lie groups by Corollary \ref{cor:lc-lpc-structure}. Hence $\hat{G}$ has an associated sequence $G_n$ of connected Lie groups, factor maps $q_n : \hat{G} \to G_n$, and projections $p_n : G_n \to G_{n-1}$ such that $\ker p_n$ is compact, $q_n = p_{n+1} \of q_{n+1}$, and $\bigcap_{n=1}^\infty \ker q_n = \set{e}$ (see the diagram in Corollary \ref{cor:lc-lpc-structure}).

We first claim that there exists $N,d \in \N$ such that $\dim(G_n) = d$ for all $n \ge N$. Indeed, note that since each $p_n$ is surjective, $\dim(G_n) \ge \dim (G_{n-1})$, so it suffices to show that $\dim(G_n) \le d$ for all $n \in \N$ and some $d \in \N$. We have that $q_n(G_+)$, $q_n(G_-)$ and $q_n(\hat{D})$ are Lie subgroups of $G_n$, and since $q_n \of \pi \of \Phi : G_+ \times G_- \times \hat{D} \to G_n$ is an open map, we conclude that the map $(g_1,g_2,a) \mapsto g_1 \cdot g_2 \cdot a$ from $q_n(G_+) \times q_n(G_-) \times (\R^k \times K)$ is an open map. It follows that $\dim(G_n) \le \dim(q_n(G_+)) + \dim(q_n(G_-)) + \dim(q_n(\R^k \times K)) \le \dim(G_+) + \dim(G_-) + \dim(\hat{D})$, which is independent of $n$.

Since $\dim(G_n) = \dim(G_{n+1})$ for all $n \ge N$, $p_n$ is a local isomorphism for all $n \ge N+1$. It follows that the algebras $\Lie(G_n)$ are all isomorphic, and there exists a unique {\color{black}connected,} simply connected group $\tilde{G}$ such that $\Lie(\tilde{G}) \cong \Lie(G_n)$ for sufficiently large $n$. We may therefore construct local isomorphisms $f_n : \tilde{G} \to G_n$ inductively by defining $f_{n+1}$ to be the unique Lie group homomorphism with derivative $(dp_n)^{-1} \of df_n$. We therefore obtain the following commutative diagram:

% https://tikzcd.yichuanshen.de/#N4Igdg9gJgpgziAXAbVABwnAlgFyxMJZAJgBoAGAXVJADcBDAGwFcYkQAdDgC3p2ADiAXxBDS6TLnyEUZACzU6TVuy55GsQSLETseAkTLFFDFm0QgBAfUI6QGPdKIBmUsZqmVF68DABaAEZtcXtJfRlkOTcTZXNLK18-YmDdKQMUAOiPWPYfMABqINEQhzSI8iylM1UOAGMoCBwEO1LwogBWSs84rnrG5pKwpxQANi6c7ytyUUUYKABzeCJQADMAJwgAWyQKkBwIJE6qrxAARwSCoppGegAjGEYABSH0kDWsee4cYtWN7cRdvskGRjnFzoRrncHs9HK93p9vnZ1lsdjQgYhXKD2OdElcQDd7k8XjI3h8vj8QMj-oCDogoliLDj-MkKVSkJk9rSQd12CsbKy-uy0bSjjyLHzfIUUpTBYgOejMWLKRdAtK2XLhUh6UqJcy1bKxpzDkjZUd0SCCdDiexGDAVt9stULGgLlKBSjECCFY6Ti7bCF1Zj0dqJvYVUUTR76eiAOw+uIuxIsyPUzWIAAc8exU3d-3ltMzDOV0xTSDjRozWedVgCuaFFd29zAUDLHKVADEZkIgA
\begin{tikzcd}
                 &                              & \hat{G} \arrow[ldd, "q_{n+1}"'] \arrow[dd, "q_n"'] \arrow[rdd, "q_{n-1}"'] \arrow[rrdd, "q_{n-2}"] \arrow[rrrrdd, "q_0"]                                &                              &                              &                         &     \\
                 &                              &                                                                                                                                                         &                              &                              &                         &     \\
\cdots \arrow[r] & G_{n+1} \arrow[r, "p_{n+1}"] & G_n \arrow[r, "p_n"]                                                                                                                                    & G_{n-1} \arrow[r, "p_{n-1}"] & G_{n-2} \arrow[r, "p_{n-2}"] & \cdots \arrow[r, ] & G_0 \\
                 &                              &                                                                                                                                                         &                              &                              &                         &     \\
                 &                              & \tilde{G} \arrow[uu, "f_n"] \arrow[luu, "f_{n+1}"] \arrow[ruu, "f_{n-1}"] \arrow[rruu, "f_{n-2}"] \arrow[rrrruu, "f_0"] &                              &                              &                         &    
\end{tikzcd}

By the universal property of inverse limits, there exists a unique homomorphism $F : \tilde{G} \to \hat{G}$ such that $q_n \of F = f_n$. We claim that the image of $F$ is exactly the path component of $\hat{G}$. Indeed, if $\gamma : [0,1] \to \hat{G}$ is any path such that $\gamma(0) = e$, then $\gamma_n = q_n \of \gamma$ is a path in $G_n$, and $p_n \of \gamma_n = \gamma_{n-1}$. Since $f_n$ is a local isomorphism, there exists a unique $\tilde{\gamma}_n : [0,1] \to \tilde{G}$ such that $f_n \of \tilde{\gamma}_n = \gamma_n$. Since $p_n \of f_n = f_{n-1}$, the maps $\tilde{\gamma}_n$ all coincide, let $\tilde{\gamma} : [0,1] \to \tilde{G}$ denote the corresponding lift. {\color{black}Then by construction, 

\[ q_n \of F \of \tilde{\gamma} = f_n \of \tilde{\gamma} = \gamma_n.\]

Since $\gamma$ is determined by the family of paths $\gamma_n$, it follows that $F \of \tilde{\gamma} = \gamma$,} and the endpoint of $\gamma$ can be reached in the image of $F$.

Finally, since $\hat{G}$ is path connected, the path identity component is exactly $\hat{G}$, so $F$ is onto. Therefore, $\hat{G}$ is the continuous image of a Lie group, and therefore Lie.
%It follows that $\ker p_n$ is discrete and compact, hence finite for $n \ge N+1$. Therefore, $\ker q_n$ is a projective limit of finite groups, and is therefore totally disconnected for sufficiently large $n$. If the maps $p_n$ are not eventually isomorphisms, $\ker q_n$ is uncountable. Since the path component of $\hat{G}$ can only reach countably many elements of $\ker q_n$, it follows that $\ker q_n$ is countable, and $p_n$ is eventually an isomorphism.
%Therefore, $\hat{G} \cong G_{N_0}$ for some $N_0$, and $\hat{G}$ is a Lie group.
\end{proof}

\subsection{Description of the homogeneous spaces}

By Corollary \ref{cor:hatG-Lie}, $X$ is the homogeneous space of a Lie group $\hat{G}$, which is generated by subgroups which are images of  $N^\alpha$,  $\alpha \in \Delta$ and $\R^k \times K$. %Furthermore, by Proposition \ref{stable unstable cycle decomposition}, these subgroups provide coordinates, so $\Lie(\hat{G})$ (resp. $\Lie(\hat{G}(x))$) is the vector space factor of $V = \Lie(\R^k \times M) \oplus \bigoplus_{\alpha \in \Delta} \Lie(N_\alpha^\Omega)$ (replacing $N_\alpha^\Omega$ by $V_\alpha^\Omega$ when working on the fiber). Let $\phi : V \to \Lie(\hat{G})$ (resp. $\phi_x : V \to \Lie(\hat{G}(x))$) denote the linear map which is defined to be the unique extension of the following map on each component of $V$. If $v \in \Lie(N_\alpha^\Omega)$, $\exp(tv)$ is a one-parameter subgroup of $N_\alpha^\Omega$, which has a canonical inclusion in $\hat{G}$. Let $g_t$ denote the corresponding one-parameter subgroup of $\hat{G}$ and $\phi(v) = \frac{d}{dt}|_{t = 0} g_t \in \Lie(\hat{G})$. The definition on $\R^k \times M$ is identical and $\phi_x$ is defined analogously.
%If $a \in \R^k$, define $\psi_a : \Lie(\hat{G}_{x_0}) \to \Lie(\hat{G}_{x_0})$ defined by $\psi_a(v) = e^{\alpha(a)}v$ when $v$ is in the image of the Oseledets space with weight $\alpha$ under $\phi_{x_0}$, and $\psi_a$ on $\phi_{x_0}(\Lie(\R^k \times M))$ is the identity.
Furthermore, the group $\hat{G}$ is a factor of the group $\hat{\mc P} = (\R^k \times K) \ltimes \mc P$, where $\mc P$ is the free product of the groups $N^\alpha$, and let $\mc C$ denote the kernel of $\hat{\mc P} \to \hat{G}$. Therefore, $\mc C$ is the normal closure of the group generated by commutator relations $\rho^{\alpha,\beta}(u,v,x)$ (which do not depend $x$), symplectic relations, and identifications of the diagonal elements of $G_\alpha$ with the $\R^k$-action.

Let $S(x) = \Stab_{\hat{G}}(x)^\circ$, and notice that {$S(x)$} is the closed subgroup of a Lie group and therefore Lie.

%\begin{lemma}
%If $\bar{x}_0$ has a dense $\R^k$-orbit on $X$, $B(\bar{x}_0) \subset B(\bar{x})$ for all $\bar{x} \in X^\Omega$.
%\end{lemma}

%\begin{proof}
%Choose $a_n$ such that $a_nx_0 \to x$. If $\exp(tv)$ fixes $x_0$ for all $t \in \R$, $\exp(tv)$ fixes $a_nx_0$ for all $t \in \R$. Indeed:

%\[ \exp(tv)a_nx_0 =  a_n\exp((a_n)_*tv)x_0 = a_n\exp(tv)x_0 = a_nx_0. \]

%By continuity, it follows that $\exp(tv)x = x$.
%\end{proof}

%\begin{corollary}
%\label{cor:B-contained}
%The group $B(\bar{x}_0)$ does not depend on which point $x_0$ with dense orbit is chosen, and $B(\bar{x}_0)$ is a normal subgroup of $\R^k \times M$ and $\hat{G}$.
%\end{corollary}

%\begin{proof}
% If $g \in B(\bar{x}_0)$, and $h \in \hat{G}$, then $g \in B(h\bar{x}_0)$. Therefore, $h^{-1}gh \cdot x_0 = h^{-1}g \cdot hx_0 = h^{-1} \cdot hx_0 = x_0$, and $h^{-1}gh \in  B(\bar{x}_0)$.
%\end{proof}

%Let $G = \hat{G} / B(\bar{x}_0)$, and $S(\bar{x}) = S_1(\bar{x}) / B(\bar{x}_0)$.

\begin{lemma}
\label{lem:dimS-const}
$\dim(S(x))$ is independent of $x \in X$.
\end{lemma}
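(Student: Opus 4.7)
The plan is to reduce the lemma to the transitivity of the $\hat G$-action on $X$: once transitivity is established, all stabilizers $\Stab_{\hat G}(x)$ are conjugate, and conjugation in a Lie group is a smooth automorphism that carries the identity component to the identity component, so $\dim S(x)$ is constant.

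First I would show that $\hat{\mc P}$ acts transitively on $X$. The action of $\hat{\mc P}$ on $X$ that defines $\hat G$ descends from the free product structure, so the orbits of $\hat G$ coincide with those of $\hat{\mc P}$. By \ref{ta9}, for any fixed ordering $\bar\beta$ of $\Delta$, the evaluation map from an arbitrarily small neighborhood of the trivial path in $C_{\bar\beta}\times(\R^k\times M)$ to $X$ covers a neighborhood of every $x\in X$. Hence every $\hat{\mc P}$-orbit is open. Since distinct orbits are disjoint and $X$ is connected (being the closure of the dense $\R^k$-orbit guaranteed by \ref{ta1}), there is only one orbit, so $\hat{\mc P}$ and therefore $\hat G$ act transitively.

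Given transitivity, for each pair $x,y\in X$ choose $g\in\hat G$ with $g\cdot x=y$; then $\Stab_{\hat G}(y)=g\,\Stab_{\hat G}(x)\,g^{-1}$ as closed subgroups of the Lie group $\hat G$. Because conjugation by $g$ is a Lie group automorphism of $\hat G$, it maps the identity component $S(x)=\Stab_{\hat G}(x)^\circ$ onto $S(y)=\Stab_{\hat G}(y)^\circ$, which gives $\dim S(x)=\dim S(y)$.

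The only real step is establishing transitivity, and the only thing to check there is that the action is locally transitive everywhere (given by \ref{ta9}) and that $X$ is connected (immediate from \ref{ta1}). I do not expect any genuine obstacle in this lemma; the substantive work was done in the construction of $\hat G$ and in verifying the hypotheses \ref{ta1} and \ref{ta9}, which together make the transitivity argument routine.
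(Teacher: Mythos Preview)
Your proof is correct and follows essentially the same approach as the paper's: transitivity of $\hat G$ from \ref{ta9}, then conjugacy of stabilizers. The paper's proof is terser (it simply asserts transitivity from \ref{ta9} without spelling out the open-orbit/connectedness argument), but the content is identical.
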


\begin{proof}
Since $\hat{G}$ acts transitively on $X$ by \ref{ta9}, it follows that $X$  is the homogeneous space of the Lie group $\hat{G}$. The result is now immediate since if $g \in \hat{G}$, $S(x)$ and $S(g \cdot x)$ are subgroups conjugated by $g$.
\end{proof}

Let $a_0 \in \R^k$ be a generic element (ie, an element such that $\alpha(a_0) \not= 0$ for all $\alpha \in \Delta$ and $c_i\alpha(a_0) \not= c_j\beta(a_0)$ for all functionals $c_i\alpha \in [\alpha]$, $c_j\beta \in [\beta]$.

\begin{lemma}
\label{lem:S-contains-oseledets}
If $x$ has a dense $\R^k$-orbit and is $a_0^{\pm 1}$-recurrent, and $S(x) \not= \set{e}$, then $\Lie(S(x))$ contains an element of an Oseledets subspace of $\Lie(N^\alpha)$ for some $\alpha \in \Delta$.
\end{lemma}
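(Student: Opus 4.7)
The plan is to decompose $\Lie(\hat{G})$ into weight spaces of $\Ad(a_0)$ and to extract an extremal weight vector from any nonzero $X \in \Lie(S(x))$ using the recurrence of $x$. Since $a_0$ is generic, each subspace $E^{c_i^\alpha \alpha} \subset \Lie(N_\alpha)$ is a generalized eigenspace for $\Ad(a_0)$ with eigenvalue $e^{c_i^\alpha \alpha(a_0)} \neq 1$, and by the genericity assumption these eigenvalues are pairwise distinct and distinct from $1$. Consequently $\Lie(\hat{G})$ decomposes under $\Ad(a_0)$ as $\Lie(\R^k \times M) \oplus \bigoplus_{\alpha, i} E^{c_i^\alpha \alpha}$, with $\Lie(\R^k \times M)$ being exactly the $0$-weight space (the kernel of $\log\abs{\Ad(a_0)}$).

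Now take any nonzero $X \in \Lie(S(x))$ and write $X = X_0 + \sum_{\alpha, i} X_{\alpha,i}$ according to this decomposition. First I would observe that $X \notin \Lie(\R^k \times M)$: if all weight components except the $0$-weight one vanished, then $\exp(tX)$ would lie in the image of $\R^k \times M$ in $\hat{G}$ and stabilize $x$, contradicting the fact that the $\R^k \times M$ action on $X$ is locally free (Definition \ref{def:top-anosov}). Hence some $X_{\alpha,i}$ is nonzero. Let $\mu_{\max}$ (resp.\ $\mu_{\min}$) denote the largest (resp.\ smallest) value of $c_i^\alpha \alpha(a_0)$ for which $X_{\alpha,i} \neq 0$; at least one of $\mu_{\max}, \mu_{\min}$ is nonzero, and up to switching $a_0$ with $a_0^{-1}$ (using $a_0^{-1}$-recurrence in place of $a_0$-recurrence) I may assume $\mu_{\max} > 0$.

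Next I would use the fact that $S(\cdot)$ transforms equivariantly under $\hat{G}$: since $\exp(tX) \cdot x = x$ for all $t$, one has $\exp(t \Ad(a_0^n) X) \cdot (a_0^n \cdot x) = a_0^n \cdot x$, so $\Ad(a_0^n) X \in \Lie(S(a_0^n \cdot x))$. Rescaling by $e^{-n\mu_{\max}}$, the vector $Y_n := e^{-n\mu_{\max}} \Ad(a_0^n) X$ still satisfies $\exp(tY_n) \cdot (a_0^n \cdot x) = a_0^n \cdot x$ for all $t \in \R$, and as $n \to \infty$ one has $Y_n \to X_{(\alpha,i)}$ where $(\alpha, i)$ is the unique pair with $c_i^\alpha \alpha(a_0) = \mu_{\max}$ (uniqueness coming from genericity of $a_0$). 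Picking a subsequence $n_k$ along which $a_0^{n_k} \cdot x \to x$ (provided by $a_0$-recurrence of $x$) and passing to the limit using continuity of $\exp$ and closedness of $\Stab_{\hat{G}}(x)$ yields $\exp(t X_{(\alpha,i)}) \cdot x = x$ for all $t$, so $X_{(\alpha,i)} \in \Lie(S(x))$ is the desired nonzero element of an Oseledets subspace of some $\Lie(N_\alpha)$.

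The main subtleties I anticipate are two-fold. First, one must be careful that the Oseledets weights of $\Ad(a_0)$ on $\Lie(\hat{G})$ really correspond cleanly to the pieces $E^{c_i^\alpha \alpha}$; this relies on the fact that $\Lie(\hat{G})$ is generated by the $\Lie(N_\alpha)$ and $\Lie(\R^k \times M)$, and that $\Ad(a_0)$ acts on each $\Lie(N_\alpha)$ by the semisimple automorphism prescribed by \ref{ta5}. Second, the limit step requires the stabilizer subgroups to behave well under convergence: this is immediate from the closedness of $\Stab_{\hat{G}}(x)$ as a subset of the Lie group $\hat{G}$, together with Lemma \ref{lem:dimS-const}, which ensures that no dimension is lost in the limit.
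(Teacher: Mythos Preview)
Your proposal is correct and follows essentially the same approach as the paper's proof: decompose $\Lie(\hat{G})$ into $\Ad(a_0)$-weight spaces, rule out $X \in \Lie(\R^k \times M)$ by local freeness, and use $a_0^{\pm 1}$-recurrence together with the conjugation equivariance $\Ad(a_0^n)\Lie(S(x)) = \Lie(S(a_0^n x))$ to extract the extremal weight component as a limit. The paper's proof is much terser---it states the semicontinuity property $x_n \to x,\ v_n \in \Lie(S(x_n)),\ v_n \to v \Rightarrow v \in \Lie(S(x))$ and then simply asserts that $S(x)$ ``contains its fastest Oseledets space (either forward or backward), or is contained in $\R^k \times M$''---while you have written out explicitly the rescaling $Y_n = e^{-n\mu_{\max}}\Ad(a_0^n)X$ and the passage to a recurrent subsequence, which is exactly what underlies that assertion.
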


\begin{proof}
Suppose that $S(x) \not= \set{e}$, and consider $\Lie(S(x)) \subset \Lie(G)$. Notice that $x \mapsto \Lie(S(x))$ is semi-continuous in the following sense: if $x_n \to x$ and $\R v_n \subset \Lie(S(x_n))$, with $\norm{v_n} = 1$ and $v_n \to v$, then $\R v \subset \Lie(S(x))$. 

Now, simply notice that if $S(x) \not= \set{e}$ and ${a_0}^n \cdot x \to x$, then $S(x)$ contains its fastest Oseledets space (either forward or backward), or is contained in $(\R^k \times K)$. $S(x)$ is transverse to $\R^k \times K$ since the action of $\R^k \times K$ is locally free by the definition of a HAPHA. Therefore, $S(x)$ contains an Oseledets space if it is nontrivial.
\end{proof}

%\begin{lemma}
%If $S(\bar{x})$ contains an element of $\Lie(V_\beta^\Omega)$ for some $\beta \in \Delta$, then there exists a neighborhood $\mc U$ of $\bar{x}$ such that $S(\bar{y}) \not= \set{e}$ for all $\bar{y} \in \mc U$.
%\end{lemma}

%\begin{proof}
%It suffices to show that we can saturate $\bar{x}$ with coarse Lyapunov leaves and $\R^k \times M$-orbits and preserve the nontriviality of $S$. Motion along $\R^k \times M$ is clear since $a_*S(\bar{x}) = S(a\bar{x})$. The same is true for any element of $G$ (resp. $G(\bar{x})$) since the group will be conjugated. In the case of Theorem \ref{thm:pairwise-to-homo}, this finishes the proof. When working on the fiber, we also consider motion along the base (elements of $N_\alpha^\Omega$ which are not in $V_\alpha^\Omega$. In this case, if $v \in \exp(S(\bar{x}))$ belongs to some $V_\beta^\Omega$, and $u \in N_\alpha^\Omega$,

%\[u\bar{x} = uvu^{-1}u\bar{x} = \rho^{\alpha,\beta}(u,v,\bar{x}) v^{-1} u\bar{x} \]

%Now $v^{-1}$ and $\rho^{\alpha,\beta}(u,v,\bar{x})$ consist only of elements in $\set{V_\gamma}$, the $v^{-1}$ term cannot be cancelled. Therefore, since every such leg will converge to $e$ as $v \to e$, it follows that $\Stab_{\hat{G}(\bar{x})}(\bar{x})$ is not discrete and therefore $S(u \bar{x}) \not=\set{e}$.
%\end{proof}

%\begin{lemma}
%If  $\bar{x}$ is $a_0$-recurrent, and $S(\bar{x}) \not= \set{e}$, then $S(\bar{x})$ contains an element of $\Lie(V_\beta^\Omega)$ for some $\beta \in \Delta$.
%\end{lemma}

%\begin{proof}

%\end{proof}

%\begin{lemma}
%If $\bar{x}$ has a dense $\R^k$-orbit, then $S(\bar{x})= \set{e}$.
%\end{lemma}

\begin{corollary}
$S(x) = \set{e}$ for all $x \in X$.
\end{corollary}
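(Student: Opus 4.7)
The plan is to argue by contradiction. Suppose $S(x_0) \neq \set{e}$ for some $x_0 \in X$. By Lemma \ref{lem:dimS-const}, $\dim S(x) > 0$ for every $x \in X$. I will produce, for some $y \in X$, a nonzero vector in $\Lie(S(y))$ lying inside a single Oseledets subspace $E^{c\alpha} \subset \Lie(N_\alpha)$; combined with the local bi-Lipschitz property of the $N_\alpha$-action from \ref{ta4}, this contradicts the fact that the corresponding one-parameter subgroup of $N_\alpha$ would fix $y$ for all time.

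First I would record the algebraic structure of $\Lie(\hat{G})$. From the construction of $\hat{G}$ via Proposition \ref{prop:integrality}, Lemma \ref{lem:Galpha}, and Corollary \ref{cor:hatG-Lie}, together with the description of the commutator and symplectic relations defining $\mc C$, the Lie algebra decomposes as
\[ \Lie(\hat{G}) = \Lie(\R^k \times M) \oplus \bigoplus_{\alpha \in \Delta} \Lie(N_\alpha), \]
with each $\Lie(N_\alpha)$ further graded by Oseledets subspaces $E^{c_i^\alpha \alpha}$. By \ref{ta5} (which forbids Jordan blocks), $\Ad(a)$ for $a \in \R^k$ preserves this splitting and acts on $E^{c\alpha}$ by scalar multiplication by $e^{c\alpha(a)}$. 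The local freeness of the $\R^k \times M$-action gives $\Lie(S(x)) \cap \Lie(\R^k \times M) = \set{0}$, so any nonzero $v \in \Lie(S(x))$ has some nonzero Oseledets component when decomposed as $v = v_0 + \sum v_{c_i^\alpha \alpha}$.

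The main step is then the following Oseledets extraction. Fix a generic $a_0 \in \R^k$ (so that the numbers $c_i^\alpha \alpha(a_0)$ are all nonzero and pairwise distinct), and pick any $x \in X$ and nonzero $v \in \Lie(S(x))$. Replacing $a_0$ by $a_0^{-1}$ if necessary, let $c_* \alpha_*$ be the weighted exponent maximizing $c \alpha(a_0)$ among those with $v_{c\alpha} \neq 0$, and set $\lambda := c_* \alpha_*(a_0) > 0$. Then
\[ w_n := e^{-\lambda n} \Ad(a_0^n) v = e^{-\lambda n} v_0 + \sum_{(c,\alpha)} e^{(c\alpha(a_0) - \lambda)n} v_{c\alpha} \longrightarrow v_{c_*\alpha_*} \]
as $n \to \infty$. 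Since $\Stab_{\hat{G}}(a_0^n x) = a_0^n \Stab_{\hat{G}}(x) a_0^{-n}$, we have $w_n \in \Lie(S(a_0^n x))$. By compactness of $X$, pass to a subsequence so that $a_0^{n_k} x \to y$. The map $x \mapsto \Lie(S(x))$ is upper semi-continuous in the sense that if $x_k \to y$, $w_k \in \Lie(S(x_k))$, and $w_k \to w$, then $\exp(tw_k) \cdot x_k = x_k$ passes in the limit to $\exp(tw) \cdot y = y$ for all $t$, giving $w \in \Lie(S(y))$. Applied to our sequence, this yields $v_{c_*\alpha_*} \in \Lie(S(y))$.

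Finally, by Proposition \ref{prop:integrality}, the one-parameter subgroup $\exp_{\hat{G}}(t v_{c_*\alpha_*})$ acts on $X$ via the given $N_{\alpha_*}$-action, and $\exp(tv_{c_*\alpha_*}) \cdot y = y$ for all $t$. The evaluation map of $N_{\alpha_*}$ at $y$ is locally bi-Lipschitz by \ref{ta4}, contradicting $v_{c_*\alpha_*} \neq 0$. The only subtle point I anticipate is verifying that the map $\Lie(N_{\alpha_*}) \hookrightarrow \Lie(\hat{G})$ is injective, so that a nonzero vector upstairs corresponds to a nontrivial one-parameter subgroup of $N_{\alpha_*}$—this follows from the faithfulness of the $N_{\alpha_*}$-action on $X$ (itself a consequence of the locally bi-Lipschitz evaluation in \ref{ta4}) together with the fact that $N_{\alpha_*}$ embeds in $\mc P$ and its image in $\hat{G}$ acts on $X$ through this original action.
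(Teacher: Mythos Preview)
Your proof is correct and follows essentially the same approach as the paper: renormalize a nonzero $v\in\Lie(S(x))$ by $\Ad(a_0^n)$, extract a pure Oseledets component via upper semi-continuity of $x\mapsto\Lie(S(x))$, and then contradict the local bi-Lipschitz property \ref{ta4}. The only difference is that the paper (via Lemma~\ref{lem:S-contains-oseledets}) first restricts to an $a_0$-recurrent point with dense $\R^k$-orbit so that the limit point is $x$ itself, and then propagates the resulting Oseledets vector to every point using density; you instead use compactness of $X$ to land the contradiction directly at some accumulation point $y$, which is slightly more economical.
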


\begin{proof}
If $S(x) \not= \set{e}$ for some point, it is nontrivial at every point by Lemma \ref{lem:dimS-const}. Since $a_0^{\pm 1}$-recurrence and dense $\R^k$-orbit are both residual properties, we may find some $x_1$ such that $x_1$ has a dense $\R^k$-orbit and for which $\Lie(S(x_1))$ contains an element of an Oseledets space by Lemma \ref{lem:S-contains-oseledets}. But since $a_*S(x_1) = S(a\cdot x_1)$ and Oseledets spaces are invariant under $a_*$, it follows that $\Lie(S(x))$ contains that element for all $x$. This contradicts \ref{ta4}, %since locally bi-Lipschitz \color{black}(No Lipchitz property now, should be replaced by 
the locally free property.%\color{black} implies locally free. \marginnote{\color{black}Explain the direction within Oseledets space is fixed by $a_\ast$.\color{black}}
\end{proof}

\begin{proof}[Proof of Theorem \ref{thm:pairwise-to-homo}]
We have just shown that  $S(x) = \set{e}$, $\Stab_G(x)$ is discrete. The group $\hat{G}$ is Lie by Corollary \ref{cor:hatG-Lie} and contains the $\R^k \times K$-action as a subgroup. The result follows.
\end{proof}

\subsection{Proof of Theorem \ref{thm:technical}} By Theorem \ref{thm:constant pairwise cycle structure} and Lemma \ref{lem:Galpha}, any action satisfying the assumptions of Theorem \ref{thm:technical} has pairwise constant cycle structure (recall Definition \ref{def:const-pairwise}). Therefore, Theorem \ref{thm:technical} is a consequence of Theorem \ref{thm:pairwise-to-homo}.

{ 

\section{Smooth Partially Hyperbolic  Actions}
\label{sec:smooth-top}

\color{black} In this section, we verify that for the smooth abelian actions of  $\R^k$  satsifying assumptions as in Theorem \ref{basic abelian} or \ref{abelian}, the principal bundle extensions constructed in Section \ref{fibration} are in fact genuinely higher-rank HAPHA actions with integral Lyapunov exponents and SRB measures. The bulk of the work needed for this is already done in Section \ref{fibration}, in particular  in Theorem \ref{thm:lifted-action}. Checking that these extensions have integral Lyapunov coefficients property (recall the paragraph following Definition \ref{def:rhohat}) is most involved and we do this in Section \ref{ilc}. Then Theorem \ref{thm:technical} gives a topological conjugacy to a homogeneous model. Verifying that the conjugating map is smooth is done in Section \ref{sec:regularity}.
\color{black}

%Given an action as  in Theorems \ref{basic abelian} or \ref{abelian} we first apply the construction in Section \ref{BPsubbundle} to obtain  the extended action on $\hat X$. Moreover, the construction in Section \ref{BPsubbundle} give the actions $N^\lambda$ which act continuously on $\hat X$, and have the same grading as the grading defined by Oseledets splitting for the base action, on the groups $...$. Therefore for the lifted action let $\Delta$ be the set of equivalence classes of linear functionals which are the coarse Lyapunov functionals for the base action.  

\begin{proposition}
\label{prop:smooth-is-top}
If $\R^k \curvearrowright X$ is a $C^\infty$ \color{black} action as in Theorems \ref{basic abelian} or \ref{abelian}\color{black}, then the \color{black}$\R^k\times K$\color{black}-action on $\hat X$ \color{black}(that is, the Brin-Pesin compact extension of the action constructed in Section \ref{BPsubbundle} %\ref{thm:lifted-action} 
where $K$ is the fiber group of the  extension) \color{black}is a genuinely higher-rank \color{black} \color{black} HAPHA with integral Lyapunov coefficients and SRB measures \color{black} satisfying \ref{ta9}(a) or \ref{ta9}(b), respectively. \color{black}
\end{proposition}

\begin{proof}

\color{black}Property \ref{ta1} follows from the construction of the Brin-Pesin bundle (Proposition \ref{prop:brin-pesin-construct}).

For the property \ref{ta4}, the continuity of the $N^\alpha$ actions follows from Theorem \ref{thm:lifted-action}, and the fact that $N^\alpha$ is $\Delta$-harnessed (recall Definition \ref{def:del-harnessed}) follows from Theorem \ref{thm:lifted-action} and Lemma \ref{lem:rich-automorphism}. 
The property \ref{ta5} is proved in  Theorem \ref{thm:lifted-action}.

%actions are   and \ref{ta5} are derived in Theorem \ref{thm:lifted-action} and Lemma \ref{lem:rich-automorphism}.

%The property \ref{ta10} follows from t
%The Property \ref{ta5}  comes from Theorem 10.12

Property \ref{ta8} follows from the total recurrence of the base $\R^k$-action (which is due to volume preservation) and Lemma \ref{lem:fiber-recurrence}. 

%(HA5) on the base is Proposition 5.4, then we need argument for the lifted action. 

%Property \ref{ta10} follows for the base action from Proposition \ref{prop:circ}.  %{\color{purple}(slightly more complicated, add short argument about projecting and lifting parameterizations).}

%The proof of Lemma 4.15 in \cite{Spatzier-Vinhage} is written for $\R^k$-actions, but works verbatim for $\R^k \times M$. Then, as in Lemma 4.17 of that paper, one may deduce density of periodic orbits when there is an invariant measure of full support. 
%Property \ref{ta10} follows from Proposition 

For property \ref{ta10}, since each $N^\alpha$ acts on $\hat{X}$ in a well-defined manner, the free product $\mc{P}$ also acts on $\hat{X}$ in a well-defined way. The injectivity of the restriction of the evaluation maps to $N^{\chi_1} \times \cdots \times N^{\chi_r}$ follows from the corresponding property %which we denote by $\hat W^\lambda$
on the base manifold $X$, as established in Proposition \ref{prop:circ}, together with the transversality of the (lifted) coarse Lyapunov foliations with the fibers. We denote by $\hat W^s_{a_1,\dots, a_m}(x)$  the image stated in \ref{ta10} passing through $x$.

%{\color{purple}(slightly more complicated, add short argument about projecting and lifting parameterizations).}

%USE LIFTING PROPERTY, APENDIX Property \ref{ta10} follows by essentially the same reason as the same property for totally Anosov actions in \cite[Lemma 5.11]{Spatzier-Vinhage}. For completeness we provide a sketch here: (to be completed, Lemma 5.2 in \cite{Spatzier-Vinhage} in our case is guaranteed by FA-1, and the same proof of Lemma 5.11 in  \cite{Spatzier-Vinhage} provide the injective parametrization of corresponding foliation on the base smooth manifold, then we apply the same proof of Theorem 9.15 to lift it to a parametrization of $W^s_{a_1,a_2,\dots, a_m}$. )

%and also about how to lift the stable holonomy. (It might be better to do it in the appendix?)

To verify Property \ref{ta3}, observe that any point $\hat{y} \in \hat W^s_{a_1,\dots,a_m}(\hat{x})$ can be connected to $\hat{x}$ through an $\{N^{\chi_1}, \dots, N^{\chi_r}\}$-path. By the definition of $\chi_i$ and the discussion of lifted foliations in Proposition \ref{prop:holder-holonomies}, each $a_j$ uniformly contracts all $N^{\chi_i}$-paths. Therefore, for any $\hat{y} \in W^s_{a_1,\dots,a_m}(\hat{x})$, we have $d(a_j^t \hat{x}, a_j^t \hat{y}) \to 0$ as $t \to \infty$. 

Conversely, let $\hat{x},\hat{y} \in \hat{X}$ cover $x,y \in X$, and assume that $d(a_j^t \hat{x}, a_j^t \hat{y}) \to 0$ for all $1 \leq j \leq m$. It follows immediately that $d(a_j^tx,a_j^ty) \to 0$, so $y \in W^{cs}(x)$. Since the $\R^k$-action is continuously Oseledets conformal (it satisfies property \ref{FP2} (c) in Section \ref{sec:intermission1}), it is isometric along $E^c$. In particular, $E^c$ is integrable (see \cite{B03}) and there exists a unique $z \in W^c(y) \cap W^s(x)$. But since the dynamics along $W^c$ is isometric and $d(a_j^tx,a_j^ty) \ge d(a_j^tz,a_j^ty) - d(a_j^tx,a_j^tz) \to d(z,y)$, it follows that $y = z$ and $y \in W^s(x)$.
Then by Proposition \ref{prop:holder-holonomies}, $\hat{y}$ lies in the lifted stable foliation of $X$. Since on the base manifold $W^s_{a_1,\dots,a_m}(\hat{x})$ is a product of $W^{\chi_i}$, the transversality of the lifted foliations implies that the lifted foliation of $W^s_{a_1,\dots,a_m}(\hat{x})$ is a product of $N^{\chi_i}$. This completes the verification of Property \ref{ta3}.

 Property \ref{ta9}(a) holds for actions as in Theorem \ref{abelian} as a direct consequence of the super accessibility assumption in Theorem \ref{abelian} and the construction of the compact extension in Section 10. Property \ref{ta9}(b) holds for actions in Theorem \ref{basic abelian} and it is a direct consequence of the local product structure of $s$, $u$, $\R^k\times K$ foliations for Anosov actions.

 \color{black} Property \ref{ta2} follows immediately from \ref{FP1}, which was assumed (resp. established in Section \ref{strong accessibility implies FA12}) for actions in Theorem \ref{basic abelian} (resp. Theorem \ref{abelian}).

 %Property \ref{ta2} for actions in Theorem \ref{abelian} follows from the strong accessibility assumption

For Property \ref{ta-srb}, we start by considering the $\R^k$-action invariant volume on $X$ (for both Theorem \ref{abelian} and \ref{basic abelian}). Absolute continuity of unstable (and stable) foliations for every partially hyperbolic element of our action is a classical result due to Brin and Pesin \cite{brin-pesin}. Then we apply exactly the same argument as in the proof of the \cite[Proposition 4.22]{Spatzier-Vinhage} and argue by induction on dimension, i.e. we show by induction that $W^\alpha$ is absolutely continuous in common unstable manifolds of
increasing dimension contained in $W^u_{a_0}(\supset W^\alpha)$, where $a_0$ is sufficiently close to $\ker \alpha$ such that $\alpha(a_0)>0$.

%For each coarse Lyapunov foliation $W^\alpha$ subfoliates a larger foliation  and it is the fast foliation inside the larger foliation $\mathcal F_1$ This improves on the regularity of $W^\alpha$ within the larger foliation and therefore implies  absolut continuity of $W^\alpha$ within $F_1$. is going to be the fast foliation inside the stable foliation $W_a^s$ of $a$. This improves on the regularity of $W^\alpha$ within $W_a^s$ and therefore implies that $W^\alpha$ is absolutely continuous with respect to the volume. 

\color{black} To construct $\mu$ that satisfies \ref{ta-srb}, for any Borel $A\subset \hat{X}$ take $$\mu(A):=\int_X\int_K \mathbf{1}_A(x,y) d\nu_x(y) d\mathrm{vol}(x),$$
where we denote by  $\nu_x$  the Haar measure on the fiber of the compact extension at $y$. The measure $\mu$ is obviously invariant under the lifted $\R^k$-action (defined in Theorem \ref{thm:lifted-action}) since the principal bundle structure is invariant.

Take an arbitrary measurable partition $\mathcal P_X$ of $\mathrm{vol}$ of $X$ which is subordinate to $W^\alpha$ (without loss of generality we can assume the diameter of each atom of $\mathcal P_X$ is much smaller then the size of local trivialization charts of the fiber bundle). It lifts naturally to a measurable partition $\mathcal P_{\hat X}$ of $\hat{X}$ which is subordinate to the lifted foliation $\hat W^\alpha$. For each atom $P$,  the local holonomy of $\hat{W}^\alpha$ induces a local product structure of $$\pi^{-1}(P)\cong P\times K.$$ We claim that for $\mathrm{vol}$-almost every atom $P$ of $\mathcal P_X$, the conditional measure of $\mu$ on $\pi^{-1}(P)$ has the product form with respect to this local product structure $$\mu|_{\pi^{-1}(P)}\cong \mathrm{vol}|_{P}\times \nu.$$ By the Avila-Viana invariance principle \cite[Corollary 4.3]{AV}, since the lifted $\R^k$-action %on the fiber
is isometric along fibers and therefore has  $0$-Lyapunov exponents everywhere along fibers, we know $\mu$ is $\hat W^\alpha$ holonomy-invariant. This implies the product structure: it is a consequece of a general fact that if a measure $\mu$ on a product space $X\times Y$ has the same conditional measure $d\mu_x(\cdot), \cdot \in Y$ for almost every $x\in X$, then $\mu$ is a product of $\mathrm{Proj}^X(\mu)$ with this invariant conditional measure $\mu_x$. In particular for $\mu|_{\pi^{-1}(P)}$-almost every local leaf $\hat W^\alpha(\hat x)$, 
\begin{equation}\label{eqn: sec 13 prod}
    \pi_\ast(\mu|_{\hat W^\alpha(\hat x)})=\mathrm{vol}|_{W^\alpha(x)} \text{ where } \pi(\hat x)=x.
\end{equation}
By our construction of $\hat{X}$ and $\hat W$ we know $\pi$ interwines the $N^\alpha$ action on each leaf $\hat W^\alpha(\hat x)$ with a $C^1$ $N^\alpha$ action along $W^\alpha(x)$, hence the absolute continuity of $\mathrm{vol}$ along $W^\alpha(x)$ combined with  \eqref{eqn: sec 13 prod}  implies \ref{ta-srb}.

%follows from the existence of SRB measures for Anosov actions and the smoothness of holonomies along fast foliations in an unstable. This is shown in \cite[Proposition 4.22]{Spatzier-Vinhage} for $\R^k$-actions, the proof works verbatim for $\R^k \times K$. (Gibbs?)

Since the proof of integral Lyapunov coefficients is more involved, we prove it in 
Lemma \ref{lem:brown} of the next subsection.
%{\color{olive} Should follow from Theorem \ref{thm:lifted-action} and Lemma 12.5 of \cite{Spatzier-Vinhage}.}
%\color{black}Complete the proof section
\color{black}
\end{proof}

\subsection{Integral Lyapunov coefficients} \label{ilc}
\color{black}

%Lifting and Projection of Actions by Nilpotent Groups

Recall that a {\it common stable foliation} is a foliation $W$ of $X$ whose leaves are given by $\bigcap_{k=1}^n W^s_{a_k}(x)$ for some collection $a_1,\dots,a_k$ of \color{black} partially hyperbolic elements of $\R^k$. Common stable foliations are H\"older foliations with smooth leaves (see \cite[Lemma 4.5]{Spatzier-Vinhage}).

Consider the compact extension $\hat{X}$ over $X$ constructed in Section~\ref{BPsubbundle}. For any common stable foliation $\bigcap_{k=1}^n W^s_{a_k}(x)$ of $X$, 
%the discussion in Appendix~\ref{app:brin-pesin} 
the proof of Proposition \ref{prop:smooth-is-top}
shows that it can be lifted to a topological foliation $\hat{W}^s_{(a_1,\dots,a_n)}$ on $\hat{X}$ satisfying~\ref{ta10} and \ref{ta3}. Moreover, the projection $\pi: \hat{X} \to X$, when restricted to any leaf $\hat{W}^s_{(a_1,\dots,a_n)}$ of $\hat{X}$, is a homeomorphism onto a common stable leaf $\bigcap_{k=1}^n W^s_{a_k}(x)$ of $X$. For any $x \in X$, choosing an arbitrary $\hat{x} \in \hat{X}$ such that $\pi(\hat{x}) = x$, there exists a unique leaf $\hat{W}^s_{(a_1,\dots,a_n)}(\hat{x})$ passing through $\hat{x}$ that projects to $\bigcap_{k=1}^n W^s_{a_k}(x)$ (see also Appendix \ref{app:brin-pesin}). 

By Proposition~\ref{prop:smooth-is-top}, for any $x \in X$ and any lift $\hat{x} \in \hat{X}$ of $x$, the group $N^\alpha$ acts freely on the leaf $\hat{W}^s_{(a_1,\dots,a_n)}(\hat{x})$ provided that $\alpha(a_k) < 0$ for all $k$. This action naturally induces (via $\pi$) a free action of $N^\alpha$ on $\bigcap_{k=1}^n W^s_{a_k}(x)$. However, since the choice of $\hat{x}$ is not unique, there is no canonical way to define an $N^\alpha$ action on $\bigcap_{k=1}^n W^s_{a_k}(x)$. In what follows, unless otherwise specified, we will implicitly choose an arbitrary $\hat{x}$ and define the corresponding $N^\alpha$ action on a common stable foliation (or the action by a subgroup of $N^\alpha$ on a subfoliation of a common stable foliation). Consequently, we can also discuss the geometric commutator of the $N^\alpha$ and $N^\beta$ actions on $\bigcap_{k=1}^n W^s_{a_k}(x)$, provided that $\alpha(a_k), \beta(a_k) < 0$ for all $k$. This commutator is well-defined once we fix an arbitrary lift $\hat{x}$.
\color{black}

\begin{lemma}
    \label{lem:local-transitivity}
    Consider a common stable foliation $ W$, which is the sum of coarse Lyapunov distributions $E_{\alpha_1},\dots,E_{\alpha_n}$. Let $(\beta_1,\dots,\beta_n)$ be an arbitrary ordering of $\{\alpha_1,\dots, \alpha_n\}$. 
    Then the map

    \[ \phi_x : \bigoplus_{i=1}^n E_{\beta_i} \to W(x)\subset X\]
 defined by
    \[ \phi_x(u_1,\dots,u_n) = \exp(u_1)\exp(u_2)\dots\exp(u_n)\cdot x\]
 is surjective from a neighborhood of 0 in $\bigoplus_{i=1}^n E_{\beta_i}$ onto a neighborhood of $x$ in $W(x)$. 
\end{lemma}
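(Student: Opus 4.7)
The plan is to apply the inverse function theorem to $\phi_x$, viewing it as a map into the smooth manifold $\mc W(x)$. First, I would observe that the dimensions match: since $\bigcup_{i=1}^n [\alpha_i] = \{\beta_1,\dots,\beta_N\}$, we have $\bigoplus_{k : \beta_k \in [\alpha_j]} E^{\beta_k} = E_{\alpha_j}$, and summing over $j$ gives $\bigoplus_{k=1}^N E^{\beta_k} = T_x \mc W(x)$ as vector spaces. So $\phi_x$ maps between spaces of equal dimension, and it suffices to show that $\phi_x$ is a local diffeomorphism (or at least locally surjective) near $0$.

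Next, I would compute the derivative $d\phi_x(0)$. For each $k$, the partial derivative in the $u_k$-direction at the origin is
\[
\left.\tfrac{d}{dt}\right|_{t=0}\phi_x(0,\ldots,0,tu_k,0,\ldots,0) = \left.\tfrac{d}{dt}\right|_{t=0}\exp(tu_k)\cdot x = u_k \in E^{\beta_k}(x),
\]
where I use that the $N_{\alpha_j}$-action was constructed in Section~\ref{extension} to have infinitesimal generators spanning the corresponding coarse Lyapunov distribution, with each Oseledets subspace $E^{\beta_k}$ identified with its image in $T_x W^{\alpha_j}(x) \subset T_x \mc W(x)$ (see Theorem~\ref{thm:lifted-action} and Lemma~\ref{lem:rich-automorphism}). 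Since at the base point $(0,\ldots,0)$ all the other factors in $\phi_x$ act trivially, the full derivative at the origin is simply
\[
d\phi_x(0)(u_1,\ldots,u_N) = u_1 + u_2 + \cdots + u_N,
\]
which is a linear isomorphism onto $T_x \mc W(x)$ because the Oseledets subspaces are linearly independent and together span $T_x \mc W(x)$.

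The remaining input is that $\phi_x$ is sufficiently regular to invoke the inverse function theorem on $\mc W(x)$. Within each coarse Lyapunov leaf, the map $u \mapsto \exp(u)\cdot y$ is smooth by Lemma~\ref{lem:coarse-lyap-smooth}, and each coarse Lyapunov foliation $W^{\alpha_j}$ is a smooth subfoliation of the common stable foliation $\mc W$ with smoothly varying tangent distribution along the leaves of $\mc W$. Composing smooth maps along these subfoliations produces a $C^1$ (indeed smooth) map $\phi_x$ into $\mc W(x)$, at which point the inverse function theorem yields a neighborhood of $0$ on which $\phi_x$ is a local diffeomorphism onto a neighborhood of $x$ in $\mc W(x)$, giving the claimed surjectivity.

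The main potential obstacle is justifying the regularity of $\phi_x$ across transitions between distinct coarse Lyapunov foliations: the splitting $TX = \bigoplus_\chi E^\chi$ is only H\"older transverse to the dynamics in $X$ by~\ref{fundamental2}, so a priori the composition could have degraded regularity. However, the whole computation takes place inside the fixed smooth leaf $\mc W(x)$, and within such a leaf the coarse Lyapunov subfoliations have smoothly varying tangent distributions — the transversal H\"older behavior is only felt in directions leaving $\mc W(x)$. Should smoothness of the composition be delicate to verify directly, one can fall back on Brouwer's invariance of domain: $\phi_x$ is continuous, $d\phi_x(0)$ is an isomorphism (which forces local injectivity via the algebraic presentation-uniqueness arguments already available in the paper, cf.\ Lemma~\ref{lem:presentation-uniqueness}), and these two properties together imply that $\phi_x$ is a local homeomorphism onto a neighborhood of $x$.
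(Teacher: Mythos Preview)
Your main argument has a genuine gap at exactly the point you flag as ``the main potential obstacle'': the claim that $\phi_x$ is $C^1$ as a map into $\mc W(x)$ is not justified, and in fact is precisely what fails. The Oseledets and coarse Lyapunov distributions are only H\"older continuous, and this H\"older regularity is not merely ``transverse to $\mc W(x)$''---it persists within the leaf. The smoothness results you cite (Lemma~\ref{lem:coarse-lyap-smooth} and the lemma preceding it) give smoothness of the metric, the Oseledets splitting, and the isometries \emph{along the leaves of $W^\lambda$}, not along the larger leaf $\mc W(x)$. When you compose $\exp(u_{N})\cdot x$ (landing in one coarse Lyapunov leaf) with $\exp(u_{N-1})$ (which now acts from a base point that has moved transversally to $W^{\alpha_{N-1}}$ inside $\mc W(x)$), the dependence on that transversal base point is only H\"older. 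Your computation of $d\phi_x(0)$ only produces directional derivatives along the coordinate axes, which does not establish Fr\'echet differentiability. The paper's proof sketch acknowledges this directly: ``if the actions of $N_\alpha$ were smooth, this would be true from the inverse function theorem,'' and then explains that instead one uses smooth approximations of the one-parameter subgroups together with topological degree arguments (citing \cite{Schmidt-thesis} and \cite{kryszewsi-plaskacz}) to pass surjectivity to the limit.

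Your fallback via invariance of domain also does not close the gap. Lemma~\ref{lem:presentation-uniqueness} concerns coarse Lyapunov exponents listed in a \emph{circular ordering}, not an arbitrary listing of Oseledets functionals, and requires the equality $u_1*\cdots*u_n\cdot x = v_1*\cdots*v_n\cdot x$ to hold for \emph{every} $x\in X$, not just the fixed base point. Neither hypothesis is available here, so local injectivity is not established. The paper's route---degree-theoretic stability of surjectivity under approximation---is designed exactly to bypass both the differentiability and the injectivity issues.
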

\color{black}Here, we identify $E_{\beta_i}$ with $\text{Lie}(N^{\beta_i})$, and $\exp$ denotes the exponential map from $\text{Lie}(N^{\beta_i})$ to $N^{\beta_i}$. Although the actions of the nilpotent groups on $W(x)$ depend on the choice of the lifting $\hat{x}$ of $x$, and the identification between $E_{\beta_i}$ and $\text{Lie}(N^{\beta_i})$ is also non-canonical, Lemma~\ref{lem:local-transitivity} holds for any such identification and any lifting. This result follows directly from \cite[Lemma 3.2]{Schmidt-thesis}, which builds on ideas from \cite{kryszewsi-plaskacz}, particularly Corollary 4.5 in \cite{kryszewsi-plaskacz}.
%Here we identify $E_{\beta_i}$ as $Lie(N^{\beta_i})$ $\exp$ is just the exponential map from $Lie(N^{\beta_i})$ to $N^{\beta_i}$. Although the action on $W(x)$ by $N^\alpha$ depends on the choice of the lifting $\hat x$ of $x$, and the identification between $E_{\beta_i}$ and $Lie(N^{\beta_i})$ also no canonical choice, but for any identification and any lifting Lemma \ref{lem:local-transitivity} always holds and follows from \cite[Lemma 3.2]{Schmidt-thesis}, which uses ideas from \cite{kryszewsi-plaskacz}, particularly Corollary 4.5. 
\color{black} We do not offer a complete proof, but summarize the strategy: note that if the actions of the nilpotent groups were smooth, this would be true from the inverse function theorem. One may then use smooth approximations and show that in the limit the original action is recovered, and the onto property persists by topological degree arguments.

\color{black}

% Denote by $\Omega$ the set of Lyapunov functionals (notice that $\Omega$ is not $\Delta$ here). Given $\alpha,\beta \in \Delta$, $\Sigma(\alpha,\beta)$ is the set of coarse Lyapunov exponents which can be written as positive linear combinations of $\alpha$ and $\beta$. Let $\Omega(\alpha,\beta) = \cup_{\gamma \in \Sigma(\alpha,\beta)} [\gamma]$. That is, $\Omega(\alpha,\beta)$ is a set of Lyapunov functionals between $\alpha$ and $\beta$. 
\begin{lemma}\label{lem: translation of vanishing comm to integrability}
    Let $\R^k \curvearrowright X$ be as in Theorem \ref{basic abelian} or \ref{abelian} and  \color{black}$\R^k\times K\curvearrowright \hat X$ be the action constructed in Section \ref{BPsubbundle} on the compact extension $\hat X$.
    %Let   $\Delta$ denote the set of coarse Lyapunov functionals. 
    If for any non-proportional $\alpha, \beta\in \Delta$, any $c_i^\alpha \alpha, c_j^\beta \beta\in \Omega$, we have that $$E_{|c_i^\alpha\alpha, c_j^\beta\beta|}:=\bigoplus_{c_k^\alpha\geq c_i^\alpha} E^{c_k^\alpha\alpha} \oplus \bigoplus_{c_l^\beta\geq c_j^\beta}E^{c_l^\beta\beta} \oplus \bigoplus_{\gamma\in \Sigma(\alpha,\beta)}\left(\bigoplus_{\substack{c_m^\gamma\gamma=\sigma c^\alpha_i\alpha+\tau c^\beta_j\beta \\ \sigma\geq  1\text{ or }\tau\geq 1}} E^{c_m^\gamma\gamma}\right)$$ on $TX$ is topologically integrable, then the action $\R^k\times K\curvearrowright \hat X$ has integral Lyapunov coefficients.
    
    %$\alpha,\beta \in \Delta$, and $\Sigma(\alpha,\beta) = \set{\gamma_1,\dots,\gamma_n}$, then the function $\hat{\rho}^{c_i^\alpha\alpha,c_j^\beta\beta}_{c_{k_0}^{\gamma_0}\gamma_0}$ of Definition \ref{def:rhohat} defined for $\R^k\times K\curvearrowright \hat X$ is vanishing if and only if the distribution \color{black}(Here, the notation $\ominus$ indicates that $E^{c_{k_0} \gamma_0}$ is excluded from the direct sum in the formulation of the distribution. WE NEED TO REPLACE $E_\alpha$ BY THE SUM OF DISTRIBUTION FASTER THAN $E^{c_{i-1}^\alpha\alpha}$)\color{black}
\end{lemma}
\begin{proof} For $\gamma\in \Sigma(\alpha,\beta)$ denote $A'_\gamma =\{c_m^\gamma\gamma : c_m^\gamma\gamma=\sigma c^\alpha_i\alpha+\tau c^\beta_j\beta \mbox{ with } \sigma\geq  1\text{ or }\tau\geq 1\}$. 

Recall that for any $x\in X, u\in E^{c_i^\alpha \alpha}(x), v\in E^{c_j^\beta\beta}(x)$ we could define the geometric commutator of $u,v$ within the common stable leaf $W^{|\alpha,\beta|}(x):=\cap_{\alpha(a),\beta(a)<0} W^s_{a}(x)$ in Definition \ref{def:geo-comm}, which may depend on the choice of the lift $\hat x$ of $x$. By the integrability of $E_{|c_i^\alpha\alpha, c_j^\beta\beta|}$ and Lemma \ref{lem:residual-recurrence}, its integral foliation $W^{|c_i^\alpha\alpha, c_j^\beta\beta|}$ can be lifted to a topological foliation $\hat W^{|c_i^\alpha\alpha, c_j^\beta\beta|}$ which is closed under the action by 
$$\hat N^{c_i^\alpha\alpha}:= \exp\left(\bigoplus_{c_k^\alpha\geq c_i^\alpha}E^{c_k^\alpha\alpha}\right),\quad  \hat N^{c_j^\beta\beta}:=\exp\left(\bigoplus_{c_l^\beta\geq c_j^\beta}E^{c_l^\beta\beta}\right)$$
and
$$\hat N^{\gamma, c_i^\alpha\alpha, c_i^\beta\beta }:=\exp\left(\bigoplus_{A'_\gamma} E^{c_m^\gamma\gamma}\right)$$
for $\gamma\in \Sigma(\alpha,\beta)$. Here we identify these distributions as subalgebras of $N^\alpha, N^\beta$ and $N^{(\alpha,\beta)}$ respectively. 

{\color{black}We claim that if $\Sigma(\alpha,\beta) = \set{\gamma_1,\dots,\gamma_\ell}$ is a circular ordering, it suffices to show that the map 

\[\phi_{\hat{x}} : \hat{N}^{c_j\beta} \times \prod_{i=\#\Sigma(\alpha,\beta)}^\ell \hat{N}^{\gamma_i,c_i\alpha,c_j\beta} \times \hat{N}^{c_i\alpha} \to W^{|c_i^\alpha\alpha,c_j^\beta\beta|}(\hat{x})\]

defined by $\phi(u,v_1,\dots,v_\ell,w) = \exp(u)\exp(v_1)\dots \exp(v_\ell)\exp(w)\hat{x}$ is a local homeomorphism at 0 for every $\hat{x}$. Indeed, if this is the case, then following Definitions \ref{def:geo-comm} and \ref{def:rhohat} for commutators between $\hat{N}^{c_i^\alpha\alpha} \subset N^\alpha$ and $\hat{N}^{c_j^\beta\beta} \subset N^\beta$, we observe that the terms $\rho^\gamma_{\alpha,\beta}(\exp(u),\exp(w),\hat{x})$ must take values in the groups $\hat{N}^{\gamma,c_i\alpha,c_j\beta}\subset N^\gamma$, since the geometric commutators are unique. That is, the action must have integral Lyapunov coefficients.

So we must show that $\phi$ is a (local) homeomorphism onto its image. This follows the same proof scheme as Proposition \ref{prop:circ}, where the arguments take place within $W^{|c_i^\alpha\alpha,c_j^\beta\beta|}(x)$ on the base manifold, and lifting the uniqueness of the path to the Brin-Pesin fibration.}\end{proof}

%Notice that the integrability of the distribution $$E_{(c_i^\alpha\alpha, c_j^\beta\beta)}:=\bigoplus_{\gamma\in \Sigma(\alpha,\beta)}\left(\bigoplus_{A'} E^{c_m^\gamma\gamma}\right)$$follows the integrability of $E_{|c_i^\alpha\alpha, c_j^\beta\beta|}$, since it is the intersection of two integrable distributions $E_{|c_i^\alpha\alpha, c_j^\beta\beta|}$ and $E_{(\alpha,\beta)}$. We denote by $W^{|c_i^\alpha\alpha, c_j^\beta\beta|}$ the integral foliation of $E_{|c_i^\alpha\alpha, c_j^\beta\beta|}$. As before, $W^{|c_i^\alpha\alpha, c_j^\beta\beta|}$ lifts to a topological foliation of $\hat W^{|c_i^\alpha\alpha, c_j^\beta\beta|}$ which is closed under the action by $\hat N^{\gamma, c_i^\alpha\alpha, c_j^\beta\beta }$.

%Therefore for any $u\in E^{c_i^\alpha\alpha}, v\in E^{c_j^\beta\beta},$ %\rho^{\alpha,\beta}_\gamma(u,v)\in\hat$, since $$\exp(u)\in \hat N^{c_i^\alpha\alpha} \mbox{ and } \exp (v)\in \hat N^{c_j^\beta\beta}, $$ by the definition of the function $\rho^{\alpha,\beta}_\gamma$ we have $\rho^{\alpha,\beta}_\gamma(u,v)$ is in $\hat N^{\gamma, c_i^\alpha\alpha, c_j^\beta\beta }$. Here we use the integrability of $\hat W^{|c_i^\alpha\alpha, c_j^\beta\beta|}$ and the property that it is closed under the action by $\hat N^{\gamma, c_i^\alpha\alpha, c_j^\beta\beta }$. Then the claim of the lemma
%Lemma \ref{lem: translation of vanishing comm to integrability} follows from the definition of the integral Lyapunov coefficients condition.

\begin{lemma}
\label{lem:brown}
    If \color{black} $\R^k \curvearrowright X$  is a $C^2$ action as in Theorem \ref{basic abelian} or Theorem \ref{abelian}, %then the compact extension  
    then \color{black}$\R^k\times K$-action on $\hat X$ constructed in Section \ref{BPsubbundle} has integral Lyapunov coefficients. \color{black}
\end{lemma}

\color{black}

\begin{proof}%Before proceeding with the proof, we recall notations developed in Section \ref{sec:top-anosov}. Each coarse Lyapunov exponent is an equivalence class of functionals $[\alpha]$ defined up to positive scalar multiple, and we choose $\alpha$ as the smallest representative.
\color{black}
%Recall from Definition \ref{def:rhohat} that if $c_m^\gamma\gamma = \sigma  c_i^\alpha \alpha + \tau c_j^\beta \beta$ for some $\sigma,\tau > 0$, we call $\sigma$ and $\tau$ the {\it Lyapunov coefficients} of $c_m^\gamma \gamma$ with respect to $c_i^\alpha \alpha$ and $c_j^\beta \beta$. An action has \textit {integral Lyapunov coefficients} if $\hat{\rho}^{c_i^\alpha\alpha,c_j^\beta\beta}_{c_m^\gamma\gamma} \equiv 0$ whenever both Lyapunov coefficients are less than 1. 
By Lemma \ref{lem: translation of vanishing comm to integrability} it suffices to show on $X$ that for any non-proportional  $\alpha, \beta\in \Delta$, and any $c_i^\alpha \alpha$ in the class of $\alpha$, any $c_j^\beta \beta$ in the class of $\beta$, 
the distrbution $E_{|c_i^\alpha\alpha, c_j^\beta\beta|}$ defined in Lemma \ref{lem: translation of vanishing comm to integrability} of $TX$ is topologically integrable.  

%If the compact extention (constructed in Section \ref{BPsubbundle}) of an action as in Theorem \ref{basic abelian} or Theorem \ref{abelian}, has a non-integral Lyapunov coefficient, this means that for some Lyapunov coefficients $\hat{\rho}^{c_i^\alpha\alpha,c_j^\beta\beta}_{c_m^\gamma\gamma}$ is not identically $0$. So there exists a non-trivial element of $N^\gamma$ which projects to the non-trivial element on the Oseledets space for the induced action on $\hat X$. This produces a non-integral Lyapunov coefficient for the base action. 
%\color{black}(This paragraph we may need to rewrite according to Lemma 14.3.)

%In the rest of the proof we show that the base action, i.e. an action as in Theorem \ref{basic abelian} or Theorem \ref{abelian} must have integral Lyapunov coefficients. 

%(this paragraph can be moved to some other section)
\color{black}

 Fix Lyapunov functionals $c_i^\alpha\alpha \in [\alpha]$ and $c_j^\beta\beta \in [\beta]$. Recall the definition \ref{def:  Omega, Omega_al be} of $\Omega(\alpha,\beta)$. Then we divide \color{black}$$%\Omega|\alpha,\beta|:=
 \Omega(\alpha,\beta)\cup \{c_k^\alpha\alpha, k=1,\dots\}\cup \{c_{\ell}^\beta\beta,\ell=1,\dots\} = A \cup B,$$\color{black} where $A = \set{ \sigma \,c_i^\alpha\alpha + \tau \, c_j^\beta \beta : \sigma \ge 1} \cap \Omega$ and $B = \set{ \sigma \, c_i^\alpha \alpha + \tau\, c_j^\beta\beta : \sigma < 1} \cap \Omega$. Let \[B' = \set{ \sigma\, c_i^\alpha\alpha + \tau \, c_j^\beta\beta :  \sigma< 1,\tau \ge 1} \cap \Omega \subset B.\] Give the coarse Lyapunov exponents in \color{black}$\Sigma(\alpha,\beta)\cup \{\alpha,\beta\}$ \color{black} a circular ordering  $\set{\alpha = \chi_1,\dots,\chi_n = \beta}$. We will show that if $c_p^{\chi_k}\chi_k,c_q^{\chi_\ell}\chi_\ell \in A \cup B'$, then $[c_p^{\chi_k}\chi_k,c_q^{\chi_\ell}\chi_\ell] \subset A \cup B'$ by induction on $\abs{k-\ell}$. Then the result follows since $\alpha = \chi_1$ and $\beta = \chi_n$.

The base case is trivial: if $\abs{k-\ell} = 1$, then $\chi_k$ and $\chi_\ell$ are adjacent in the circular ordering, and therefore $N^{\chi_k}$ and $N^{\chi_\ell}$ commute. We now try to commute $c_p^{\chi_k}\chi_k,c_q^{\chi_\ell}\chi_\ell \in A \cup B'$, $\abs{k-\ell} > 1$. We break into cases based on whether each weight lies in $A$ or $B'$. The easiest occurs  when both belong to $A$ or both belong to $B'$, so we cover these first.

Recall that $W= W^{\abs{\alpha,\beta}}$ is the foliation whose leaves are tangent to $\bigoplus_{k=1}^n E_{\chi_k}$, which is a common stable manifold. Notice that choosing $a \in \ker \beta$, and perturbing by a very small amount will yield an element $a'$ close to $a$ for which $\left(\bigoplus_{\chi \in A} E^\chi \right) \oplus \left(\bigoplus_{\chi \in B} E^\chi \right)$ is  a dominated splitting of $TW$. %(cf. slow foliations, Section \ref{sec:holonomy-action}).
Therefore, $\bigoplus_{c_p^{\chi_k}\chi_k \in A} E^{c_p^{\chi_k}\chi_k}$ is tangent to a foliation $W^A$. 

Assuming we have fixed $c_i^\alpha\alpha$ and $c_j^\beta \beta$, if $\chi \in \Sigma(\alpha,\beta)$, let $\hat{N}^\chi$ be the nilpotent group tangent to the subalgebra 

\[\hat{\mf n}_\chi = \bigoplus_{c_p^\chi\chi \in A} E^{c_p^\chi\chi}. \]

This is a (possibly trivial) subalgebra since each $a \in \R^k$ acts by an automorphism of $N^\chi$, so by standard Lie theory, $[E^{c_p^\chi\chi},E^{c_q^\chi\chi}] \subset E^{(c_p^\chi + c_q^\chi)\chi}$.
Notice that each leaf $ W^A$ has local $C^0$ surjections given by the maps from $\hat{N}^{\chi_1} \times \hat{N}^{\chi_2} \times \cdots\times \hat{N}^{\chi_n} \to  W^A(x)$ by

\begin{equation}
\label{eq:local-coords-A}(u_1,\dots,u_n) \mapsto u_1 \cdot u_2 \cdot \dots \cdot u_n\cdot x.
\end{equation}

The proof of this claim uses Lemma \ref{lem:local-transitivity}. % and \ref{lem:presentation-uniqueness}. Lemma \ref{lem:local-transitivity} shows surjectivity and the arguments of Lemma \ref{lem:presentation-uniqueness} injectivity of this map.
See also Lemma 5.11 of \cite{Spatzier-Vinhage}.  Essentially it follows from transversality of the distributions, the difficulty being that they are only H\"{o}lder.  Indeed, these show that \eqref{eq:local-coords-A} gives coordinates in a neighborhood near 0 and $x$, then use the expanding dynamics to show they are global coordinates.  In particular, if both $c_p^{\chi_k}\chi_k$ and $c_q^{\chi_\ell}\chi_\ell \in A$, $[c_p^{\chi_k}\chi_k,c_q^{\chi_\ell}\chi_\ell] \subset A$. %{\color{teal} a little bit confused, why we need injectivity for $[A,A]\subset A$?} 
Similarly, if both $c_p^{\chi_k}\chi_k$ and $c_q^{\chi_\ell}\chi_\ell$ belong to $B'$, then they both belong to $C = \set{u\alpha + v\beta : v \ge 1} \cap \Omega$. By identical arguments, choosing a perturbation of $b \in \ker \alpha$, gives $[B',B'] \subset [C,C] \subset C \subset A \cup B'$.

We now consider the case when $c_p^{\chi_k}\chi_k \in B'$ and $c_q^{\chi_\ell}\chi_\ell \in A$ with $\abs{k-\ell} > 1$  (the case when $c_p^{\chi_k}\chi_k \in A$ and $c_q^{\chi_\ell}\chi_\ell \in B'$ follows from a symmetric argument). Let $x \in X$, $u \in E^{c_p^{\chi_k}\chi_k} \subset \Lie(N^{\chi_k})$ and $v \in E^{c_q^{\chi_\ell}\chi_\ell} \subset \Lie(N^{\chi_\ell})$. We construct points related to a geometric commutator %(\color{black}FIXXX, say it also well-defined within stable manifolds of base manifolds\color{black}) 
in the following way

\[ \begin{array}{ccc}
y = e(u) \cdot x & \qquad & x' = e(v) \cdot x \\
y' = e(v) \cdot y & & w = e(u) \cdot x'
\end{array}\]

%$y = t^{(\chi_i)} \cdot x$, $x' = s^{(\chi_j)} \cdot x$, $y' = s^{(\chi_j)} \cdot y$ and $w = t^{(\chi_i)} \cdot x'$. 
Notice that $y' = [e(-v),e(-u)] \cdot w$ (see Figure \ref{fig:sub-res}).

Let $\Sigma_{k\ell} := \set{\chi_{k+1},\dots,\chi_{\ell-1}}$ be the set of coarse functionals strictly between $\chi_k$ and $\chi_\ell$. Each coarse functional of $\Sigma_{k\ell}$ splits into Lyapunov functional, let $\Omega_{k\ell}$ denote the set of Lyapunov functionals proportional to a coarse exponent of $\Sigma_{k\ell}$. Now write $\Omega_{k\ell}$ as 

\[ \Omega_{k\ell} = \set{\gamma_1,\dots,\gamma_{m_1},\delta_1,\dots,\delta_{m_2},\epsilon_1,\dots,\epsilon_{m_3}},\]
where $\set{\gamma_\bullet}$, $\set{\delta_\bullet}$ and $\set{\epsilon_\bullet}$ are the exponents of $A\cap \Omega_{k\ell}$, $(B \setminus B') \cap \Omega_{k\ell}$ and $B'\cap \Omega_{k\ell}$, respectively, with each subset listed in an order to be clarified later.%according to the following lexicographical order:

%\[ \delta_r \prec \delta_s \mbox{ if and only if } \delta_s = c \delta_r, c > 1 \mbox{ or } [\delta_r] \mbox{ precedes } [\delta_s] \mbox{ in the circular ordering on $\Sigma(\alpha,\beta)$.}\]
 We assume $y$ and $w$ are sufficiently close, to be determined later (if we show it for sufficiently small $u,v$, we may use the dynamics of $\R^k\times K \curvearrowright \hat X$ projects to $\R^k\curvearrowright X$ and Theorem \ref{thm:lifted-action} to conclude it for arbitrary $u,v$). %(\color{black}This sentence is not quite clear to readers.\color{black})

Notice that the distribution $\bigoplus_{s=k+1}^{\ell-1} E_{\chi_s}$ is uniquely integrable to a foliation $W^{k\ell}$ with $C^2$ leaves since it is the intersection of stable manifolds for the action. Since $y' = [e(-v),e(-u)] \cdot w$, $y' \in W^{k\ell}(w)$ by Lemma \ref{lem:geo-comm}. Therefore, by Lemma \ref{lem:local-transitivity} applied to the splitting $T\mc W^{k\ell}$ into the bundles $E^{\gamma_s}$, $E^{\delta_s}$ and $E^{\epsilon_s}$, there exists a path moving from $w$ to $y'$ which first moves along curves of the form $\exp(w)$, where \color{black}$w \in E^{\epsilon_s}$\color{black}, to arrive at a point $p$. Then similarly along exponential images of $w \in E^{\delta_s}$ to arrive at a point $q$ from $p$. Finally, we move along the exponential images of vectors in \color{black}$E^{\gamma_s}$\color{black}\,  to connect $q$ to $y'$. \color{black} In this way, $p$ is obtained from $w$ after moving along curves tangent to $B'$, and $q$ is obtained from $p$ by moving along curves tangent to $B \setminus B'$. Then $q$ is also connected to $y'$ via curves tangent to $A$. See Figure \ref{fig:sub-res}.

{\color{black}
\begin{figure}[!ht]
\begin{center}
\begin{tikzpicture}[scale=.75]
\node [below left] at (0,0) {$x$};
\draw [thick] (0,0) -- (12,0);
\node [below] at (6,0) {\tiny $e(u)$};
\node [below right] at (12,0) {$y$};
\draw [thick] (0,0) -- (0,8);
\node [left] at (0,4) {\tiny $e(v)$};
\node [above left] at (0,8) {$x'$};
\draw (12,0) to [out=90,in=240] (14,7);
\node [right] at (12.5,4) {\tiny $e(v)$};
\draw (0,8) to [out=0, in=220] (11,10);
\node [above] at (5.5,8) {\tiny $e(u)$};
\node [above left] at (11,10) {$w$}; 
\draw [red] (11,10) -- (13,10);
\node [above right] at (13,10) {$p$};
\node [below] at (12,10) {\tiny $B'$};
\draw [blue] (13,10) -- (14,9);
\node [right] at (14,9) {$q$};
\node [below right] at (14,7) {$y'$};
\draw [green] (14,9) -- (14,7);
\node [rotate=-45,below] at (13.5,9.5) {\tiny $B \setminus B'$};
\node [left] at (14,8) {\tiny $A$};
\draw[fill] (0,0) circle [radius=0.075];
\draw[fill] (12,0) circle [radius=0.075];
\draw[fill] (0,8) circle [radius=0.075];
\draw[fill] (14,9) circle [radius=0.075];
\draw[fill] (14,7) circle [radius=0.075];
\draw[fill] (11,10) circle [radius=0.075];
\draw[fill] (13,10) circle [radius=0.075];
\end{tikzpicture}
\caption{A geometric commutator}
\label{fig:sub-res}
\end{center}
\end{figure}
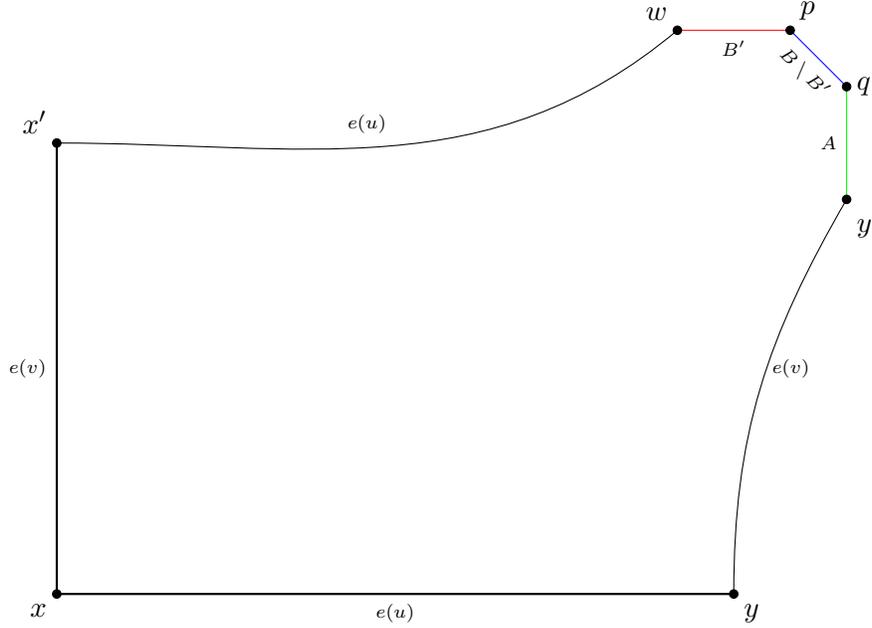
}

%Recall that $\abs{\alpha,\beta}$ denotes the set of coarse exponents between $\alpha$ and $\beta$, including $\alpha$ and $\beta$, and let \color{black} $ W^{\abs{\alpha,\beta}}$\color{black} \, denote the foliation whose leaves are tangent to the sum of these distributions (
Since $W^{\abs{\alpha,\beta}}$  is a common stable manifold, it is a H\"older foliation with $C^2$ leaves. Choose any pair of $C^{2}$ discs $D_1 \ni x,y$, $D_2 \ni x',q$ \color{black} along $W^B$\color{black}, of dimension $\sum_{\omega \in B} \dim(E^\omega)$ transverse to $W^A$ inside of $ W^{\abs{\alpha,\beta}}(x)$. This is possible since $x$ and $y$ are connected via $c_p^{\chi_k}\chi_k \in B'$ and $x'$ and $q$ are connected via only curves tangent to coarse Lyapunov foliations corresponding to exponents in $B$. \color{black} The next part of the argument crucially uses the uniform transversality of $W^B$ and $W^A$ within $W^{\abs{\alpha,\beta}}(x)$. \color{black} Therefore, $x'$ and $q$ are the images of $x$ and $y$ under the $W^A$ holonomy from $D_1$ to $D_2$. \color{black} In  \cite[Section 8.3, Lemma 8.3.1] {ledrappier_young1985} (similar results were obtained in \cite[Theorem 2.2]{Brown} and \cite[Appendix]{BPS}) under a uniform transversality condition on $D_1$ and $D_2$, %\color{black} ADD REMARK ABOUT $C^{1+}$ vs $C^2$....\color{black} 
there exist bi-Lipschitz coordinates for which the leaves of the foliation $\mc W^A$ are parallel Euclidean hyperspaces. In particular, the holonomy along $W^A$ is uniformly Lipschitz independent of the choice of $D_1$ and $D_2$ (due to the uniform transversality conditions),\color{black}\,  so $d(x,y)/d(x',q)$ is bounded above and below by a constant. \color{black}

We claim that $p = q$ (ie, that no weight of $(B \setminus B') \cap \Omega_{kl}$ appear). Roughly, the reason is that such weights contract too slowly. Indeed, pick an element $a'\in \ker \alpha$ such that $\beta(a') = -1$. We may perturb $a'$ to an element $a$ which is regular and such that $\alpha(a) < 0$, and such that if $\delta \in B\setminus B'$, $ \chi_k(a) < \delta(a) < 0$. This is possible because if $\delta = \sigma \, \alpha + \tau \, \beta \in B \setminus B'$, then $\tau< 1$, so $\delta(a') = -\tau > -1 = \beta(a') \ge c_p^{\chi_k}\chi_k(a')$, since when $c_p^{\chi_k}\chi_k \in B'$, the $\beta$ coefficient is at least 1. This is clearly an open condition for each $\delta$, so we may choose $a$ as indicated. We may also assume, {  by rescaling $a$ as necessary}, that $\beta(a) = -1$.

Since $y = \exp(u)x$, we can estimate distance between iterates of $x$ and $y$ using the intertwining property \ref{ta5}.
\color{black}
%the H\"older metric along the coarse Lyapunov leaf $W^{\chi_i}(x)$. 
Recall that since $c_p^{\chi_k}\chi_k \in B'$, $c_p^{\chi_k}\chi_k(a) < \beta(a) = -1$. Therefore $d_{W^{\chi_k}}(a^t\cdot x, a^t \cdot y) = e^{tc_p^{\chi_k}\chi_k(a)}d_{W^{\chi_k}}(x,y) < e^{-t}d_{W^{\chi_k}}(x,y)$ using the partially H\"older 
metric along the leaves of $W^{\chi_k}$. Now, suppose $p \not= q$. Recall that $p$ and $q$ are connected by legs in $B \setminus B' = \set{\delta_1,\dots,\delta_m}$, so that there exist $p = x_0,x_1,\dots,x_m = q$ such that $x_s$ $x_{s-1}$ are connected by a short curve everywhere tangent to $E^{\delta_s}$. Since the distributions $E^{\delta_s}$ are transverse, if $p \not= q$, there exists some $s$ for which $x_s \not= x_{s-1}$. Without loss of generality, we assume that $\set{\delta_s}$ are ordered such that $0 > \delta_1(a) > \delta_2(a) > \dots > \delta_m(a) > \chi_i(a)$. Then let $s_0$ be the minimal $s$ for which $x_s \not= x_{s-1}$, and $c_1 = \delta_{s_0}(a)$, $c_2 = \delta_{s_0+1}(a)$. Notice that $0 > c_1  > c_2 > -1$. By minimality, we get $x_{s_0-1} = p$.

Let $d$ denote the Riemannian distance on the manifold. Since for any $\chi \in \Delta$, the distance along each $W^{\chi}$ leaf is locally Lipschitz equivalent to the distance on the manifold, there exists $L > 0$ such that for all $\chi \in \Delta$ and sufficiently close points $z \in W^{\chi}(z')$, we have $L^{-1}d_{W^{\chi}}(z,z') \le d(z,z') \le Ld_{W^{\chi}}(z,z')$. Then after applying the triangle inequality, for sufficiently large $t$, we get:
\begin{eqnarray*} d(a^t \cdot x',a^t \cdot q) & \ge & d(a^t \cdot x_{s_0},a^t \cdot p) - d(a^t \cdot x',a^t \cdot p) - d(a^t\cdot x_{s_0},a^t q)  \\
 & \ge & L^{-1}d_{W^{[\delta_{s_0}]}}(a^t \cdot x_{s_0},a^t \cdot p) - d(a^t \cdot x',a^t \cdot p) - d(a^t\cdot x_{s_0},a^t q) \\
& \ge & L^{-2}e^{c_1t}d(x_{s_0},p) - L^2Ce^{c_2t} \ge C'e^{c_1t}
\end{eqnarray*}

\noindent since by construction, apply the triangle inequality to all legs connecting $a^t\cdot x_{s_0}$ and $a^t \cdot q$ and $a^t \cdot x'$ and $a^t \cdot p$, which contract faster than $e^{c_2t}$ and $e^{-t}$, respectively, since $c_1 > c_2 > -1$. % Note also that $a^t \cdot x'$ and $a^t \cdot 1$ contract at rate at least $e^{-t}$ since they are Lipschitz equivalent to $x$ and $y$, which are related by $c_p^{\chi_k}\chi_k \in A$.
We may construct new disks $D_{1,t}$ and $D_{2,t}$ \color{black} are tangent to $W^B$ with  the same uniform transversality conditions mentioned above \color{black} to $W^A$ connecting $a^t\cdot x$ and $a^t \cdot y$, and $a^t \cdot x'$ and $a^t \cdot q$ (note that we may not simply iterate the disks $D_1$ and $D_2$ forward, {  since $W^A$ is not the fast foliation for $a$}, and the transversality may degenerate). Therefore, since each of the foliations along weights of $B$ are uniformly transverse to those of $A$, and the holonomies are Lipschitz with a uniform Lipschitz constant on \color{black} uniformly \color{black} transverse discs, we arrive at a contradiction to the fact that $d(a^t\cdot x,a^t \cdot y) < C''e^{-t}$, so $p = q$.
\color{black}

Therefore, the connection between $w$ and $y'$ only involves weights of $A \cup B'$. By the induction hypothesis, the commutator of two weights in $  (A \cup B') \cap \Omega_{k\ell}$ produces only weights in $ (A \cup B') \cap \Omega_{k\ell}$.   Hence, for any $x$, 

\[ e(v) * e(u) \cdot x = \rho_1 * e(u) * e(v) \cdot x = e(u) * e(v) * \rho_2 \cdot x\] for some paths $\rho_1$ and $\rho_2$ which only involve $e(w)$ for some $w$ in the weight spaces $A \cup B'$, but may depend on $x$, $u$ and $v$. Using such relations, for any word in the weights $A \cup B'$, we may put it in a desired circular ordering without weights in $B \setminus B'$. This proves the inductive step, and hence the lemma, since $B \setminus B' = \set{\sigma \, \alpha + \tau \,\beta : \sigma,\tau < 1} \cap \Omega$. 
% Notice that choosing $a \in \ker \beta$, and perturbing to give a contracting element  gives that $\bigoplus_{\chi \in A} T\mc W^\chi$ is tangent to a foliation $\mc W^A$ (since $A$ will dominate $B$). Furthermore, by \cite{brown??}, the holonomy along $\mc W^A$ is $C^{1,\theta}$.
%Fix $x \in X$, and consider $y = (t^{(\alpha)} * s^{(\beta)}) \cdot x$, and $z = t^{(\alpha)} \cdot x$. By Lemma \ref{lem:local-transitivity}, there exists  $y' \in \mc W^A(y)$ and $z_i \in \mc W^{\chi_i}(z_{i-1})$, with $\chi_i \in B$ such that $z_n = y$, for $n = \abs{B}$.
%We show the claim if $u < 1$, the argument that $v < 1$ is completely symmetric. Choose any $a_0 \in \ker \alpha$. Then $\chi(a_0) < 0$ for all $\chi \in D(\alpha,\beta) \setminus \set{\alpha}$, so perturbing $a$ to a regular element $a$ such that $\alpha(a),\beta(a) < 0$ and $\abs{\alpha(a)} = \min\set{\abs{\chi(a)} : \chi \in D(\alpha,\beta)}$. Consider the foliation $\mc W$ with tangent distribution $\bigoplus_{\chi \in D(\alpha,\beta)} T\mc W^\chi$, which is H\"older with smooth leaves because it can be written as an intersection of stable manifolds. This contains the foliation $\mc W'$ with tangent distribution $\bigoplus_{\chi \in D(\alpha,\beta) \setminus\set{\alpha}} T\mc W^\chi$, which is invariant under $a$. Furthermore, the splitting into $T\mc W^\alpha \oplus T\mc W'$ is a dominated splitting, therefore, by \cite{brown????}, the holonomy along $\mc W'$ is $C^{1,\theta}$.
\end{proof}

\subsection{Regularity of conjugacies and proof of Theorems \ref{basic abelian}  and \ref{abelian}\color{black}}
\label{sec:regularity}

%In this section, we work in two regularities: $C^{2}$ and $C^\infty$. 
\color{black}
\begin{theorem}\label{regularity of conjugacies}
a) If \, $\R^k \curvearrowright X$  is a totally Anosov $C^\infty$ (resp. $C^2$) action 
satisfying assumptions of Theorem \ref{basic abelian}, then
%then there exists a bi-homogeneous space $K \backslash H / \Gamma$ such that
a finite cover of the action is $C^{\infty}$ (resp. $C^{1+}$) conjugate to a bi-homogeneous action on some bi-homogeneous space $K \backslash H / \Gamma$.
%diffeomomorphism 
%from a finite cover of $X$ to $K \backslash H / \Gamma$
%$h : X \to K \backslash H / \Gamma$ such that, setting $\tilde{a} = h \of a \of h^{-1}$ for any $a\in \color{black}\R^k\color{black}$, defines a bi-homogeneous action, where $r' = \infty$ or $(1,\theta)$ for \color{black} some \color{black}  $\theta \in (0,1)$, respectively. 
%Moreover, any $C^0$ conjugacy of the \color{black}$\R^k$\color{black}-action is $C^{r'}$. 

 b) If   $\R^k \curvearrowright X$ is a $C^\infty$ totally partially hyperbolic action  satisfying assumptions as in Theorem \ref{abelian}  then a finite cover of the action is $C^{\infty}$ conjugate to a bi-homogeneous action on some bi-homogeneous space $K \backslash H / \Gamma$.

%there exists a bi-homogeneous space $K \backslash H / \Gamma$ and a $C^{\infty}$ diffeomomorphism $h : X \to K \backslash H / \Gamma$ such that, setting $\tilde{a} = h \of a \of h^{-1}$ for any  $a\in \R^k$, defines a bi-homogeneous action. \color{black} 
%Moreover, any $C^0$ conjugacy of the \color{black} $\R^k$ \color{black}-action is $C^{r'}$. 
\end{theorem}

\begin{proof}%[Proof in $C^\infty$ case] 
By Section \ref{strong accessibility implies FA12} both actions in a) and b) satisfy \ref{FP1} and \ref{FP2}. As before we assume that leaves of  coarse Lyapunov foliations are orientable (otherwise if we pass to a finite cover) so that the constructions in Section \ref{part:build-bundle} apply.
\color{black}
By Proposition \ref{prop:smooth-is-top} and Theorem \ref{thm:technical}, it follows that there is a $C^0$-conjugacy between the canonical lift $\hat{X}$ produced in Theorem \ref{thm:lifted-action} and a translation action on a homogeneous space $H / \Gamma$.  The $C^0$ conjugacy between $\tilde{X}$ and $H/\Gamma$ induces a $C^0$ conjugacy $h$ between $X$ and $K\backslash H/\Gamma$. So it suffices to show that any $C^0$ conjugacy between $X$ and $K\backslash H/\Gamma$ is $C^{\infty}$ (resp. $C^{1+}$).

The coarse Lyapunov manifolds are parameterized by nilpotent group actions, which are subgroups of the full group $H$ in the homogeneous model (and hence act smoothly on the space $H / \Gamma$). They are conjugated to the actions $N^\alpha \curvearrowright X$ produced in Theorem \ref{thm:lifted-action}, which act by $C^{\infty}$ (resp. $C^{1+}$) diffeomorphisms on their  orbits by  Lemma \ref{lem:coarse-lyap-smooth} or \ref{lem:coarse-lyap-reglow}.

By standard Lie theoretic arguments (see, e.g., \cite{Montgomery-Zippin}, Section 5.1), since each element is a diffeomorphism, the group action is $C^1$ in both cases. In the $C^\infty$ setting, it also follows immediately that the group action is $C^\infty$, and therefore provides $C^\infty$-coordinates. Therefore, the conjugacy is $C^\infty$ restricted to each leaf of a coarse Lyapunov foliations.

\color{black}
In the $C^2$-case, we still have that $h$ is $C^1$ along leaves of $W^\alpha$. Notice that if $\norm{\cdot}_\alpha$ is the partial H\"older metric on $TW^\alpha$ constructed using \ref{FP2} in Section \ref{extension}, then it is invariant under the isometries in Proposition \ref{prop:isometry-construction}.  Then $h_*\norm{\cdot}_\alpha$ is a norm which is invariant under right translation on each leaf of $W^\alpha$ in $H / \Gamma$ (notice that while right translation is not defined on all of $H/ \Gamma$, it is well-defined on each $W^\alpha$-leaf). In particular, $h_*\norm{\cdot}_\alpha$ is  $C^\infty$ on each $W^\alpha$-leaf of $H/\Gamma$. Therefore, $h : W^\alpha(x) \to W^\alpha(h(x))$ is an isometry between a H\"older metric on the leaf in $\hat{X}$ and $C^\infty$ metric on the leaf in $H/ \Gamma$, and is therefore $C^{1+}$ by Theorem \ref{thm:taylor}.

% by standard Lie theoretic arguments (see, e.g., [Montgomery-Zippin, Section 5.1]).

Next we show that if $h$ is $C^{r}$ ($r=\infty$ or $r=1+$) along all the coarse Lyapunov foliations $W^\alpha$, then $h$ is $C^{r}$ along the foliations $W^s_a$ and $W^u_a$.  We show it for $W^s_a$, the proof for $W^u_a$ follows by considering $-a$. List the coarse exponents $\Delta^-(a) = \set{\alpha_1,\dots,\alpha_n}$ is a circular ordering, so that there are foliations $ W_i$ such that $TW_i = \bigoplus_{j=1}^i TW^{\alpha_j}$. 
%These foliations are obtained as intersections of stable manifolds from different Weyl chambers. 
We claim that $h$ is $C^{r}$ along $ W_i$ by induction on $i$.

We have already established the base case of $i = 1$. Assume that $h$ is $C^{r}$ along $W_i$. Then by construction, $ W_{i+1}$ is a foliation whose leaves have two transverse subfoliations: $ W_i$ and $W^{\alpha_{i+1}}$. Since we know that $h$ is $C^{r}$ along each by induction and since $W^{\alpha_{i+1}}$ is a coarse Lyapunov foliation, it follows that $h$ is $C^{r}$ along $W_{i+1}$ by Journ\' e's Theorem \ref{thm:journe}). This proves the inductive step. Since $W^s_a = W_n$, it follows that $h$ is $C^{r}$ along $W^s_a$. 

%and therefore, $h$ is $C^{r}$.

Now that we have that conjugacy has desired regularity along stable and unstable foliations, we split to two different arguments for Anosov and for partially hyperbolic case. 

{\it Proof of a).} 
The conjugacy $h$ is clearly $C^{\infty}$ (resp. $C^{1+}$) along the $\R^k$ \color{black}%\st{$\times K \times M$}\color{black}
-orbits. The smoothness of $h$ will now follow from iterated applications of Journ\'e's Theorem \ref{thm:journe}. Fix an $E^c$-partially hyperbolic element $a$.  We know $h$ is $C^{\infty}$ (resp. $C^{1+}$)  along the foliations $W^s_a$, $W^u_a$ and $\R^k$ \color{black}%\st{ $\times M \times K$}
\color{black}. Then by Theorem \ref{thm:journe}, since $\R^k$ %\st{ $\times M \times K$} 
 \color{black} and $W^s_a$ sub-foliate the center-stable manifold as transverse subfoliations, it follows that $h$ is $C^{\infty}$ (resp. $C^{1+}$) along the center-stable foliation. Then again, since the conjugacy is $C^{\infty}$ (resp. $C^{1+}$) along the center-stable foliation and the unstable foliation, another application Theorem \ref{thm:journe} shows that $h$ is $C^{\infty}$ (resp. $C^{1+}$). %{\color{teal}So here we are still talking about regularity of $h$ on $\tilde{X}$? (Since $\R^k\times K\times M$ only acting on $\tilde{X}$), but $\tilde X$ has no smooth structure so far, we may need to project it down to $X$ then apply Journ\'e.}

b) The inverse of the conjugacy 
 $h^{-1}$ is  uniformly $C^\infty$ along {\it algebraic} foliations $h(W^s_a), h( W^u_a)$ by the same argument as that we had above for $h$. 
Since the tangent bundles of $ h(W^s_a), h( W^u_a)$ with their Lie brackets  generate the whole tangent bundle of $K \backslash H / \Gamma$, by Theorem 2.1 \cite{KS-subelliptic estimates} which using subelliptic estimates gives global regularity of functions which are regular along smooth generating distributions,  we derive that $h^{-1}$ is $C^\infty$ on $K \backslash H / \Gamma$. Then  it suffices to show that $Dh^{-1}$ is non-degenerate everywhere. 

 Since $h$ is a conjugacy, by taking Jacobian (with respect to) invariant volumes) of the conjugacy equation   $$Jac( Dh^{-1}|_{a(x)})\cdot Jac(Da|_x)=Jac(Dh^{-1}|_x)\cdot Jac(Da|_{h^{-1}(x)}),$$
where we use the same notation $a$ to denote the action of $a$ on the algebraic and on the non-algebraic model, keeping in mind that $x$ is in the algebraic model and $h^{-1}(x)$ is in the non-algebraic one.  

Now by using that $\R^k\curvearrowright X$ is a volume preserving action, for any $a$ we have   $Jac(Dh^{-1}|_{a(x)})=Jac (Dh^{-1}|_x)$, hence $Jac (Dh^{-1})$ is $a$-invariant. Because $a$ is ergodic, $Jac (Dh^{-1})$ is constant on a full volume set.
%so for any  element $a\in \R^k$ and any $x$ such that $Dh^{-1}(x)$ is non-degenerate.
Since $h^{-1}$ is a homeomorphism,  $h^{-1}$ must have non-zero Jacobian at some positive volume set. This means $Jac (Dh^{-1})$ is a constant, non-zero function everywhere.
\color{black} 
\end{proof}
\color{teal}
%By Remark \ref{} we know any $\R^k$ action satisfying the assumptions in Theorem \ref{} is super accessible 

%\begin{theorem}\label{regularity of conjugacies: ph case}If $\R^k\curvearrowright X$ is a totally partially hyperbolic super-accessible $C^r$-action on a $C^\infty$-manifold $X$ for $r = \infty$ or $2$ satisfying assumptions \ref{FP1} and \ref{FP2}, then there exists a (bi)homogeneous space $K \backslash H / \Gamma$ and a $C^{r'}$ diffeomomorphism $h : X \to K \backslash H / \Gamma$ such that, setting $\tilde{a} = h \of a \of h^{-1}$ for any $a\in \R^k\times M$, defines a bi-homogeneous action, where $r' = \infty$ or $(1,\theta)$ for any  $\theta \in (0,1)$, respectively. Moreover, any $C^0$ conjugacy of the $\R^k$-action is $C^{r'}$. \end{theorem}
\color{black}

%\begin{proof}[{\it Proof of Theorem \ref{basic abelian}}]
 %By Theorem \ref{thm:lifted-action}, the action lifts to a continuous action on some cover. Then by Proposition \ref{prop:smooth-is-top}, this action is a leafwise homogeneous topological Anosov action, and by \ref{FP1}, it is genuinely higher rank. Hence, by Theorem \ref{thm}
 %\end{proof}

%\color{black}
%Recall that in the beginning of Section \ref{part:build-bundle} we assumed that we lifted the given abelian action to a finite cover to achieve orientability of leaves of coarse foliations. Then clearly, Theorem \ref{regularity of conjugacies} implies Theorem \ref{abelian}.

\color{black}

\part{Appendices}

\appendix

\section{Normal forms for contracting foliations}\label{app: nrml form}
% Move to appendix or just refer to another paper in Lemma 7.16.

We recall some aspects of normal forms theory, following \cite{KalSad16, Kal2020} which contains optimal results in the
 uniformly contracting setting. If $f : X \to X$ is a $C^\infty$ diffeomorphism of a Riemannian manifold with norm $\norm{\cdot}$ preserving a continuous foliation $\mc W$ with $C^\infty$ leaves, and $\chi  = (\chi_1,\dots,\chi_\ell)$ is an $\ell$-tuple of negative numbers such that $\chi_1 < \dots < \chi_\ell < 0$, we say that $f$ has $(\chi,\ve)$-spectrum on $\mc W$ if there is a splitting $T\mc W = \mc E^1 \oplus \dots \oplus \mc E^\ell$ into invariant subbundles such that for every $v \in \mc E^i$,
\[e^{\chi_i - \ve} \le \norm{df(v)} / \norm{v} \le e^{\chi_i + \ve}. \] 
By Remark 4.2 of \cite{Kal2020}, this is sufficient to obtain the usual narrow band condition on the Mather spectrum if $\ve$ is sufficiently small. Write a vector $v \in T_x\mc W$ in coordinates as  $v = (v_1,\dots,v_\ell)$, where $v_i \in \mc E^i_x$. A polynomial $q : \mc E_x \to \mc E_y$ is said to be $(s_1,\dots,s_\ell)$-homogeneous if 
\[q(\lambda_1v_1,\dots,\lambda_\ell v_\ell) = \lambda_1^{s_1}\dots\lambda_\ell^{s_\ell}q(v_1,\dots,v_\ell)).\] 
Then with a fixed $\ell$-tuple $(\chi_1,\dots,\chi_\ell)$, say that a polynomial $p : \mc E_x \to \mc E_y$ is of {\it subresonance type (with respect to $\chi$)} if 
\[ p(v_1,\dots,v_\ell) = (p_1(v_1,\dots,v_\ell), \dots ,p_\ell(v_1,\dots,v_\ell)), \]
and for $i = 1,\dots,\ell$, $p_i$ is a a sum of $(s_1,\dots,s_\ell)$-homogeneous polynomials such that $  \chi_i \le \sum s_j\chi_j$. It is of {\it resonance type (with respect to $\chi$)} if each $p_i$ is a sum of $(s_1,\dots,s_\ell)$-homogeneous polynomials such that  $\sum s_j \chi_j = \chi_i$.
The following is a consequence of Theorem 4.6 of \cite{Kal2020}. %where here we have much stronger assumptions, namely $C^\infty$. We also do not give precise estimates on how narrow the spectrum must be. 
\begin{theorem}
\label{thm:normal-forms}
Let $f : X \to X$ be a $C^\infty$ diffeomorphism of a $C^\infty$ manifold $X$ preserving a continuous foliation $\mc W$ with $C^\infty$ leaves. If $\chi = (\chi_1,\dots,\chi_\ell)$ is an $\ell$-tuple as above, then there exists a constant $\ve = \ve(\chi) > 0$ with the following property: if there exists a smooth Riemannian metric for which $df|_{T\mc W}$ has $(\chi,\ve)$-spectrum, then there exists a family of $C^\infty$ diffeomorphisms $\mc H_x : T_x\mc W \to \mc W(x)$ such that
\begin{itemize}
%[label={\rm (NF-\arabic*)}]
\item [\mylabel{nf:polynomial}{NF-1}] for every $x \in X$, $\mc H_{f(x)}^{-1} \of f \of \mc H_x$ is a subresonance polynomial,
\item [\mylabel{id}{NF-2}] for every $x \in X$, $d\mc H_x = \id$,
\item [\mylabel{subr}{NF-3}] if $\mc G_x$ is any other such family, then $\mc G_x = \mc H_x \of p_x$, for some family of subresonance polynomials $p_x$,
\item [\mylabel{nf:transfer}{NF-4}] if $y \in \mc W(x)$, then $\mc H_y = \mc H_x \of q_{x,y}$ for a composition of a translation with some subresonance polynomial $q_{x,y}$,
\item [\mylabel{nf:centralizer}{NF-5}] if $g : X \to X$ is a $C^\infty$ diffeomorphism which commutes with $f$, then $\mc H_{g(x)}^{-1} \of g \of \mc H_x$ is a subresonance polynomial.
\end{itemize}
\end{theorem}

\section{Polynomial functions along transverse foliations }

{%\color{cyan}
The following was shown by Margulis 
(\cite[Lemma 4]{MR0492072} or \cite[Lemma 17]{MR739627}), who proved it for rational functions in the measurable setting assuming Lebesgue almost everywhere properties (cf. also \cite[Theorem 3.4.4]{Z} and its proof.)} We provide a proof for polynomials in the continuous case which is much simpler and more straightforward.

\begin{lemma}
\label{lem:polynomial-restrictions}
Let $V$ be a vector space, and $f : V \to \R$ be a continuous function such that $t \mapsto f(v + tw)$ is a polynomial in $t$ for every $v,w \in V$. Then $f$ is a polynomial.
\end{lemma}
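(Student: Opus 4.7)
The plan is to reduce to finite dimensions and induct on $\dim V$, with Baire category supplying uniform degree bounds and Lagrange interpolation extracting coefficient functions. Since polynomiality of $f$ can be checked on finite-dimensional subspaces, we may assume $V = \R^n$, with $n=1$ vacuous.

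First I would establish the case $n=2$ directly. Write $f = f(v,t)$ and set $d(v) = \deg_t f(v,t)$; then $A_N = \{v : d(v) \le N\}$ is closed, since the condition $\Delta_h^{N+1} f(v, t) = 0$ for all $h,t \in \R$ is an intersection of closed conditions by continuity of $f$. Baire's theorem yields some $N$ and an open interval $B$ on which $d \le N$, and Lagrange interpolation at $N+1$ fixed $t$-values expresses $f(v,t) = \sum_{k=0}^N a_k(v) t^k$ on $B \times \R$ with continuous coefficients $a_k$. A symmetric Baire argument in the $t$-variable gives an open interval $B'$ on which $\deg_v f(v,t) \le M$; restricting to $B \times B'$ and equating the two expansions, Lagrange interpolation in $v$ forces each $a_k|_B$ to be a polynomial in $v$ of degree $\le M$. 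Hence $f$ agrees with a polynomial $P$ on $B \times \R$, and because $v \mapsto f(v,t_0) - P(v,t_0)$ is (by hypothesis) a polynomial on $\R$ vanishing on the interval $B$, it vanishes everywhere. Thus $f = P$ on $\R^2$.

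For the inductive step $n \to n+1$, write $\R^{n+1} = \R^n \times \R$ and again let $d(v) = \deg_t f(v,t)$, with $d \le N$ on an open set $B \subset \R^n$ by Baire. The crucial point is to globalize this bound: for any $v_0 \in \R^n$ and any affine line $\ell \subset \R^n$ through $v_0$ meeting $B$, restrict $f$ to the 2-plane $\ell \times \R$. This restriction is continuous and polynomial on every line in the 2-plane, so the already-established case $n=2$ shows that $f|_{\ell \times \R}$ is a polynomial in $(s,t)$. Writing it as $\sum_k b_k(s) t^k$, each $b_k$ is a polynomial in $s$ that vanishes on the open subinterval $\ell \cap B$ for $k > N$, hence vanishes on all of $\ell$. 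Therefore $d \le N$ on all of $\R^n$. Lagrange interpolation now yields $f(v,t) = \sum_{k=0}^N a_k(v) t^k$ with $a_k : \R^n \to \R$ continuous, and each $a_k$ is polynomial on every line in $\R^n$ (again via the $n=2$ case applied to the appropriate 2-plane in $\R^{n+1}$). The inductive hypothesis then identifies each $a_k$ as a polynomial, completing the induction.

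The main obstacle is the global extension of the Baire degree bound from an open set to all of $V$. This cannot be carried out by continuity alone and requires the two-dimensional case as an independent lever, which is why I would treat $n=2$ separately before beginning the higher-dimensional induction rather than folding it into a single uniform argument.
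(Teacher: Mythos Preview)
Your proof is correct and follows the same skeleton as the paper: induction on dimension, with Lagrange/Vandermonde interpolation at fixed nodes to extract the coefficient functions $a_k$ from the values $f(v,t_j)$, and then the inductive hypothesis applied to the $a_k$. The paper's version is shorter mainly because it asserts the uniform degree bound in one sentence (``the degree of the polynomials must be uniformly bounded by some constant $N$ since the map $f$ is continuous''), whereas you supply an actual argument for it via Baire category and then globalize using the two-dimensional case as a lever.

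Two minor simplifications are available. In your $n=2$ step the second Baire argument is unnecessary: once you have $f(v,t)=\sum_{k\le N} a_k(v)t^k$ on $B\times\R$, each $a_k(v)$ is already a fixed linear combination of the functions $v\mapsto f(v,t_j)$, which are polynomials in $v$ by the line hypothesis itself; no symmetric degree bound in $v$ is needed. Likewise, in the inductive step you need not route the polynomiality of $a_k$ on lines through the $n=2$ case: the same Lagrange formula exhibits $a_k$ directly as a linear combination of the $f(\cdot,t_j)$, each of which satisfies the inductive hypothesis on $\R^n$. With these shortcuts your argument collapses to essentially the paper's, except that you have justified the degree bound rather than asserted it.
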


\begin{proof}
We prove that for any collection of linearly independent elements $v_1,\dots,v_n$, the map \[(t_1,\dots,t_n) \mapsto f(w+\sum t_iv_i)\]

\noindent is a polynomial in $(t_1,\dots,t_n)$ by induction on $n$. Notice that the base case of $n = 1$ is the assumption of the lemma, and the case of $n  = \dim(V)$ proves the conclusion. 

The degree of the polynomials must be uniformly bounded by some constant $N$ since the map $f$ is continuous. Assume we have shown it for subspaces of dimension $n - 1$, and consider a collection $v_1,\dots,v_n$. Then by assumption, fixing $t_2,\dots,t_n$ and letting $t_1$ vary yields a continuous family of polynomials in $t_1$, so
\begin{equation}
\label{eq:polynomial-fibration}
 f(t_1v_1 + \dots + t_nv_n) = a_0(t_2,\dots,t_n) + a_1(t_2,\dots,t_n)t_1 + \dots + a_N(t_2,\dots,t_n){t_1}^N 
\end{equation}
\noindent for some collection of continuous functions $a_i : \R^{n-1} \to \R$. By induction, the functions \[p_k(t_2,\dots,t_n) := f(kv_1 + t_2v_2  +\dots +t_nv_n)\] are polynomials. Applying \eqref{eq:polynomial-fibration} to the definition of the polynomials $p_k$  yields the following system 
\[ \begin{pmatrix}
1  & 1 & 1 & \dots & 1 \\
2 & 4 & 8 & \dots & 2^N \\
3 & 9 & 27 & \dots & 3^N \\
\vdots & & & \ddots & \\
N+1 & (N+1)^2 & (N+1)^3 & \dots & (N+1)^{N+1}
\end{pmatrix} \begin{pmatrix}
a_0 \\ a_2 \\ a_3 \\ \vdots \\ a_N
\end{pmatrix} = \begin{pmatrix}
p_1 \\ p_2 \\ p_3 \\ \vdots \\ p_{N+1}
\end{pmatrix} \]
The above is a Vandermonde matrix which is invertible, so each function $a_i$ is actually a linear combination of the polynomials $p_1,\dots, p_N$. Therefore, \eqref{eq:polynomial-fibration} shows that the function is a polynomial.
\end{proof}

\section{Generation of simple Lie groups and their Weyl groups by detected subgroups\color{black}}

\color{black}

Let $G$ be a simple real Lie group, $A \subset G$ be an $\R$-split Cartan subgroup, and $a \in A$ be a non-identity element. Furthermore, let $\Delta_G$ denote the set of restricted roots of $G$ relative to the Cartan subgroup.

\begin{definition}
    We call a functional $\lambda \in A^*$ {\it detected by $a$} if $\lambda(a) \not= 0$, {\it positive} for $a$ if $\lambda(a)>0$ and {\it negative} for $a$ if $\lambda(a) <0$. We let the sets $\Delta_a$, $\Delta_a^+$ and $\Delta_a^-$ denote the set of detected, positive and negative restricted roots for $a$ respectively.
\end{definition}

By definition, $\Delta_a = \Delta_a^+ \cup \Delta_a^-$.

\begin{lemma}
    For any simple real Lie group $G$ and non-identity element $a \in A$, $\mf u_a^\pm := \bigoplus_{\lambda\in\Delta_a^\pm} \mf g_\lambda$ are Lie subalgebras of $\Lie(G)$, which correspond to subgroups $U_a^\pm$. $G$ is generated by $U_a^+$ and $U_a^-$.
\end{lemma}

Recall that the {\it Weyl group} of $G$ with respect to $A$ is defined to be $W_G = N_G(A)/Z_G(A)$. The Weyl group is always finite and carries the information about the group $G$. The following comes from the standard theory of real Lie groups and restricted roots (for $\C$ groups, see \cite[Lemma 10.4B]{humphreys};  $\R$-groups are reduced to the $\C$-case via, e.g., \cite[Proposition 1.1.3.1]{Warner1}):

\begin{proposition}
\label{prop:weyl-standard}
    $W_G$ is generated by reflections of the form

    \begin{equation}
    \label{eq:reflection-formula}
        w_\beta(\lambda) = \lambda - \dfrac{\langle\beta,\lambda\rangle}{\langle \beta,\beta\rangle} \beta
    \end{equation}

    for some inner product $\langle\, , \, \rangle$ on $A^*$. Furthermore, for every $\beta, \lambda \in\Delta_G$,

    \begin{equation}
    \label{eq:weyl-conjugacy}
        w_\beta w_\lambda w_\beta^{-1} = w_{w_\beta(\lambda)}
    \end{equation}
    
{\color{black}    $G$ is simple if and only if $W_G$ leaves no proper subspace of $A^*$ invariant.}
\end{proposition}

Given some non-identity element $a \in A$, let $W_a \subset W_G$ denote the subgroup of $W_G$ generated by the reflections $w_\beta$, $\beta \in \Delta_a$, and $W_0 \subset W_G$ denote the subgroup of $W_G$ generated by reflections $w_\beta$, $\beta \not\in \Delta_a$ (undetected roots).

\begin{lemma}
\label{lem:undet-det}
    If $w \in W_G$ is an element, then we may write $w = w_1w_2$, where $w_1$ is a product of reflections across undetected roots, and $w_2$ is a product of reflections across detected roots.
\end{lemma}

\begin{proof}
    This follows from the normalization property $w_\beta w_\alpha w_\beta = w_{w_\beta(\alpha)}$, and the fact that if $\alpha$ is detected and $\beta$ is undetected, then $w_\beta(\alpha) = \alpha - c\beta$ for some $c \in \R$ is detected. Hence, we may use this commutation property to push the appearance of all undetected weights to the left.
\end{proof}

\begin{lemma}
\label{lem:undetected-flip}
    Let $G$ be simple, and $a$ be as above. For every undetected root $\beta$, there exists a detected root $\alpha$ such that $w_\alpha(\beta) \not= \beta$.
\end{lemma}

\begin{proof}
    Assume otherwise. Then there exists an undetected root $\beta$ such that for every detected root $\alpha$, $w_\alpha(\beta)= \beta$. Furthermore, for every undetected root $\gamma$, $w_\gamma(\beta)(a) = \beta(a)+c \gamma(a) = 0$. It follows by Lemma \ref{lem:undet-det} that for any $w \in W_G$, $w(\beta)$ is undetected (the detected roots of the decomposition fix $\beta$, and the undetected ones permute among themselves). Let $V \subset A^*$ denote the span of $\set{w(\beta) : w \in W_G}$. Then $V$ is invariant under $W_G$, and contained in the span of the undetected roots. This is a contradiction to the last claim of Proposition \ref{prop:weyl-standard}, so the lemma holds.
\end{proof}

\begin{corollary}
\label{cor:weyl-generation}
    If $G$ is simple, $W_a = W_G$ for every non-identity element $a$.
\end{corollary}

\begin{proof}
    It suffices to show that each reflection $w_\beta$ is generated by reflections in $W_a$. If $\beta$ is undetected, by Lemma \ref{lem:undetected-flip}, there exists a detected root $\alpha$ such that $w_\alpha(\beta) \not= \beta$. But since $\alpha$ is detected and $w_\alpha(\beta) = \beta - c\alpha$, $c\not= 0$, it follows that $w_\alpha(\beta)$ is detected. Therefore,
    \[w_\beta = w_{w_\alpha(w_\alpha(\beta))} = w_\alpha w_{w_\alpha(\beta)} w_\alpha\]
    is generated by reflections across detected roots.
\end{proof}

\begin{corollary}
\label{cor:detected-weyl}
    If $G$ is a simple Lie group and $a \in A$ is a non-identity element, then for every $\theta \in A^*$, there exists some $w \in W_a$ such that $w(\theta)$ is not proportional to $\theta$.
\end{corollary}

\begin{proof}
Note that if $w(\theta)$ is proportional to $\theta$ for every $w \in W_a$, then $\R\theta$ is an invariant subspace of $W_a$. The result follows immediately from the last statement of Proposition \ref{prop:weyl-standard} and Corollary \ref{cor:weyl-generation} and the fact that $\dim(A^*) \ge 2$.
 %   First, note that since the restricted roots of $G$ contain a dual basis to $A$, for any $a$, $W_a \not= \set{e}$. To see that $W_a = W_G$, we will show that it is normal. Indeed, if $\lambda$ and $\beta$ are both detected by $a$, it is clear that $w_\beta w_\lambda w_\beta^{-1} \in W_a$, since $W_a$ is the group {\it generated} by such reflections. So it suffices to show that if $\lambda \in \Delta_a$ but $\beta\not\in \Delta_a$, then $w_\beta w_\lambda w_\beta^{-1} \in \Delta_a$. But, by \eqref{eq:weyl-conjugacy} and \eqref{eq:reflection-formula}, we get $w_\beta w_\lambda w_\beta^{-1} = w_{w_\beta(\lambda)}$, and

%    \[ w_\beta(\lambda)(a) = \lambda(a) -\dfrac{\langle\beta,\lambda\rangle}{\langle \beta,\beta\rangle} \beta(a) = \lambda(a) \not= 0,\]

%    since $\lambda \in \Delta_a$ and $\beta\not\in \Delta_a$. Hence $W_a$ is normal.

%    The claim about hyperplanes follows immediately by contradiction, the fact that $W_a = W_G$, and the fact that no hyperplane is invariant under reflections across the kernels of a dual basis.
\end{proof}
\color{black}

\section{Centralizers of ergodic homogeneous actions}
\label{sec:centralizer-zhegib}

{\color{black}We prove some results related to the regularity of centralizers. For homogeneous flows, the centralizers have the best regularity: affine. It is used in Section \ref{sec:main-Gproof}.}

\begin{theorem}
\label{thm:affine-centralizer}
Let $G$ be a simply connected Lie group, $\phi_t : G / \Gamma \to  G / \Gamma$ be an ergodic homogeneous flow generated by an $\R$-semisimple element. Let $Z_{\Lip}$ denote the set of Lipschitz transformations commuting with $\phi_t$. Then $Z_{\Lip} = Z_\Aff$, the group of affine transformations commuting with $\phi_t$.
\end{theorem}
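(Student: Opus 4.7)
The inclusion $Z_{\Aff} \subset Z_{\Lip}$ is immediate, so the content is the reverse inclusion. My plan is to leverage the hyperbolic structure of $\phi_t$ to rigidify $h \in Z_{\Lip}$ leaf by leaf along three transverse foliations, and then glue. Let $X \in \Lie(\G)$ generate $\phi_t$. Since $X$ is $\R$-semisimple, $\Ad(X)$ has real eigenvalues, and $\Lie(\G)$ decomposes as $\mf g^- \oplus \mf g^0 \oplus \mf g^+$, where the summands are the sums of generalized eigenspaces of $\ad(X)$ with negative, zero, and positive eigenvalues respectively. Each $\mf g^\pm$ is a nilpotent Lie subalgebra, and the corresponding simply connected subgroups $G^\pm$ act on $\G / \Gamma$ by right translation with orbit foliations $\mc F^\pm$. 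These coincide with the stable and unstable foliations of $\phi_t$ because $\Ad(\phi_t)$ contracts (resp. expands) $\mf g^-$ (resp. $\mf g^+$) uniformly. The centralizer $Z_\G(X)$ has Lie algebra $\mf g^0$, whose orbits give the neutral foliation $\mc F^0$. Since these foliations have a purely dynamical description, $y \in \mc F^-(x)$ iff $d(\phi_t x, \phi_t y) \to 0$ as $t \to +\infty$, and similarly for $\mc F^+, \mc F^0$, and these conditions are preserved by the Lipschitz map $h$ commuting with $\phi_t$, we conclude that $h$ permutes the leaves of each of $\mc F^\pm$ and $\mc F^0$.

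The next step is to show that $h$ restricts to an affine map along each leaf of $\mc F^\pm$. Fix a leaf of $\mc F^-$: it is identified with (a quotient of) the simply connected nilpotent group $G^-$, and the restriction of $\phi_1$ to the leaf is, in these coordinates, a contracting automorphism $\Phi = \Ad(\phi_1)|_{G^-}$. The restriction $h|_{\mc F^-(x)}$ is a Lipschitz map between leaves that conjugates $\Phi$ on one leaf to $\Phi$ on the other. By the normal forms theory of Theorem \ref{thm:normal-forms}, there exist $C^\infty$ coordinates on each leaf in which $\Phi$ becomes a sub-resonance polynomial; a map commuting with $\Phi$ in such coordinates is therefore itself a sub-resonance polynomial up to composition with the normal form, by the uniqueness clause \ref{nf:centralizer}. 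A sub-resonance polynomial that is globally Lipschitz must have degree one, i.e., must be affine (in the nilpotent sense: translation composed with a graded Lie algebra automorphism). The identical argument applies to $\mc F^+$ by replacing $\phi_t$ with $\phi_{-t}$.

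The remaining and subtlest step is to upgrade this to affineness along $\mc F^0$ and then globally. On $\mc F^0$ the flow $\phi_t$ acts by isometries in any right-invariant metric (because $\ad(X)$ has zero eigenvalues on $\mf g^0$ and is semisimple, hence $\Ad(\phi_t)|_{\mf g^0}$ lies in a compact group), so no hyperbolic rigidity is directly available. Here the plan is to use ergodicity of $\phi_t$ on $\G / \Gamma$: the closure $\overline{\langle h, \phi_t \rangle}$ in the group of bi-Lipschitz maps with the compact-open topology is a locally compact group acting equicontinuously on each $\mc F^0$-leaf, and one argues that its connected identity component acts by right translations, identifying $h|_{\mc F^0(x)}$ with right multiplication by an element of $Z_\G(X)$, modulo an automorphism already detected on the stable/unstable sides. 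This is the main obstacle and is where the $\R$-semisimple hypothesis is essential: it is what excludes polynomial-growth center behavior that would obstruct such compactification. Having established that $h$ is affine along each of $\mc F^-, \mc F^0, \mc F^+$, whose tangent distributions span $T(\G/\Gamma)$, iterated applications of Journ\'{e}'s lemma (Theorem \ref{thm:journe}) first to the pair $(\mc F^-, \mc F^0)$ producing smoothness along the center-stable foliation, then to the pair (center-stable, $\mc F^+$) yield that $h$ is globally smooth. A diffeomorphism whose restriction to each of three transverse foliations is affine in the above sense has a derivative that is globally a Lie algebra automorphism followed by translation, so $h \in Z_\Aff$, completing the proof.
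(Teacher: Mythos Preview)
Your approach is genuinely different from the paper's, and it has real gaps at the two places you yourself flag as delicate.

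The first gap is in the leafwise step along $\mc F^\pm$. You invoke the normal forms centralizer clause \ref{nf:centralizer}, but that statement requires the commuting map to be $C^\infty$; here $h$ is only Lipschitz, so the clause does not apply. Worse, the underlying claim is false in this generality: a Lipschitz map of $\R$ commuting with $x \mapsto x/2$ can be written as $x\,g(\log_2|x|)$ for any bounded $1$-periodic $g$ with bounded derivative, and need not be linear. So leafwise commutation with a contraction does not by itself force affineness. Something global (ergodicity, or the full two-sided dynamics) must enter already at this step, and your argument does not supply it.

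The second gap is the center direction. You acknowledge this is the main obstacle, but the sketch via ``closure of $\langle h,\phi_t\rangle$ acting equicontinuously on $\mc F^0$-leaves'' is not a proof: you would need to show this closure is a Lie group acting by translations and that $h$ lies in it, and nothing in the setup gives that. The final gluing step is also incomplete: Journ\'e gives smoothness, but ``affine along three transverse foliations'' does not obviously imply globally affine; the leafwise derivatives could a priori vary from leaf to leaf in a way consistent with each foliation separately.

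The paper bypasses all of this with a single stroke: it considers the graph $N = \{(x,f(x))\} \subset (G\times G)/(\Gamma\times\Gamma)$, which is a Lipschitz (hence rectifiable) $(\phi_t\times\phi_t)$-invariant set, and applies Zeghib's theorem (Theorem \ref{thm:zeghib}) to conclude $N$ is a single closed $H$-orbit for some closed subgroup $H \subset G\times G$. Since $N$ is a graph, $H$ is itself the graph of a Lie group homomorphism $F:G\to G$, and $f(g\Gamma)=hF(g)\Gamma$ is affine. This avoids leafwise analysis, normal forms, the center direction, and Journ\'e entirely.
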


The principle tool in proving Theorem \ref{thm:affine-centralizer} is a result of Zeghib, which we summarize here. If $X$ is a $C^\infty$ manifold, a subset $N \subset X$ is called {\it rectifiable} if it is the Lipschitz image of a bounded subset of $\R^n$ for some $n$. If $N$ is a rectifiable subset, it has a well-defined Hausdorff measure, which we denote by $\mu_N$.

\begin{theorem}[Th\'{e}or\`{e}m A, \cite{Zeghib}]
\label{thm:zeghib}
Let $G$ be a simply connected Lie group, $\phi_t :  G / \Gamma \to  G / \Gamma$ be an ergodic homogeneous flow, generated by an $\R$-semisimple element. If $N \subset X$ is a rectifiable, $\phi_t$ invariant set, then $\mu_N$ is $\phi_t$-invariant, and the ergodic components of $\mu_N$ are the Haar measures on closed $H$-orbits in $G / \Gamma$, where $H$ is some fixed closed subgroup $H \subset G$. 
\end{theorem}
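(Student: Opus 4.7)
Proof proposal for Theorem \ref{thm:zeghib}:

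The theorem asserts two things: $\phi_t$-invariance of the Hausdorff measure $\mu_N$, and an algebraic-homogeneous structure for its ergodic components (Haar on closed $H$-orbits for a single fixed $H$). The plan is to extract a measurable tangent subspace field on $N$ from rectifiability, use the $\R$-semisimple nature of $\phi_t$ to rigidify this field into a Lie subalgebra, and then integrate to the orbit picture.

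First I would exploit rectifiability of $N$ (in the sense of Federer) to produce approximate tangent spaces $T_xN \subset T_x(G/\Gamma)$ at $\mu_N$-a.e. $x$. Using left-trivialization of $T(G/\Gamma)$, this gives a measurable field $x \mapsto \mf h(x) \in \mathrm{Gr}_n(\mf g)$ satisfying the cocycle relation $\Ad(\exp tX)\mf h(x) = \mf h(\phi_t x)$, where $X \in \mf g$ is the $\R$-semisimple generator. The change-of-variables formula for Hausdorff measure on rectifiable sets gives $\phi_{t*}\mu_N = J\phi_t \cdot \mu_N$ where $J\phi_t(x) = |\det d\phi_t|_{\mf h(x)}|$. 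Since $\mu_N(\phi_t N) = \mu_N(N)$ is finite and equal to $\int_N J\phi_t \, d\mu_N$, and since $t \mapsto \log J\phi_t$ is a continuous additive cocycle, a Poincar\'e-recurrence/Birkhoff argument combined with the diagonalizability of $\Ad(\exp tX)$ forces $J\phi_t \equiv 1$ on the support of $\mu_N$. This establishes $\phi_t$-invariance of $\mu_N$.

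Next I would rigidify the field $\mf h(\cdot)$. The key observation is that $\Ad(\exp tX) = e^{t\,\ad X}$ is $\R$-diagonalizable on $\mf g$, so its action on the Grassmannian $\mathrm{Gr}_n(\mf g)$ has fixed-point set equal to a finite union of algebraic subvarieties (selections of generalized eigenspaces of $\ad X$ of total dimension $n$). The Jacobian constraint $\det(\Ad(\exp tX)|_{\mf h(x)}) = 1$ further restricts $\mf h(x)$ to subspaces on which the weighted sum of $\ad X$-eigenvalues vanishes, a finite algebraic condition. Passing to the ergodic decomposition of $\mu_N$ under $\phi_t$ and combining the cocycle equation with Poincar\'e recurrence shows that on each ergodic component the field $\mf h(\cdot)$ is essentially constant, equal to some $\mf h \subset \mf g$ in this algebraic locus. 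That $\mf h$ is a Lie subalgebra follows from the rectifiable tangent-plane structure: if $[u,v] \notin \mf h$ for $u,v \in \mf h$, then short two-legged paths along $\exp(\mf h)$-directions would exit any $\mf h$-tangent local graph over $N$, contradicting the defining approximation property of rectifiable sets on a positive $\mu_N$-measure set.

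Let $H = \langle \exp \mf h\rangle$. On each ergodic component the support of $\mu_N$ is then locally a single $H$-orbit (integrating the a.e.-defined distribution $\mf h$ via its local graph structure over the rectifiable set), and finiteness of $\mu_N$ together with the $\phi_t$-action forces this $H$-orbit to be closed (a non-closed $H$-orbit would accumulate on itself, creating infinite $\mu_N$-mass or destroying rectifiability). On a closed $H$-orbit, the Hausdorff measure coincides (up to normalization) with $H$-Haar. Independence of $H$ from the ergodic component is a consequence of the ergodicity of $\phi_t$ on $G/\Gamma$: the map sending an ergodic component to its stabilizer subalgebra is $\phi_t$-invariant as a measurable function on $N$, hence constant since the projection of $\mu_N$ to $G/\Gamma$ has support invariant under the ergodic flow.

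The main obstacle is the rigidification of $\mf h(x)$ to the finite algebraic locus. The $\R$-semisimple hypothesis is essential: without it the $\Ad(\exp tX)$-action on $\mathrm{Gr}_n(\mf g)$ would have nontrivial unipotent/rotational dynamics and nontrivial fixed-point varieties, and the cocycle $\mf h(\phi_t x) = \Ad(\exp tX)\mf h(x)$ would not trivialize so cleanly. The delicate point throughout is to bridge the gap between merely measurable (or Lipschitz) regularity of the tangent field coming from rectifiability and the smooth algebraic dynamics on the Grassmannian bundle, avoiding any appeal to smoothness of $N$ beyond what Federer's theory provides.
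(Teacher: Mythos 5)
First, note that the paper does not prove this statement at all: it is quoted verbatim as Th\'eor\`eme A of Zeghib's paper \cite{Zeghib} and used as a black box in the proof of Theorem \ref{thm:affine-centralizer}. So there is no internal proof to compare against; what you have written is an attempted reconstruction of Zeghib's argument, and it has to be judged on its own.

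Your sketch identifies the right circle of ideas (approximate tangent planes, the equivariance $\Ad(\exp tX)\mf h(x)=\mf h(\phi_t x)$, recurrence pushing $\mf h(x)$ into the fixed locus of the Grassmannian action, and the Jacobian argument for invariance of $\mu_N$ — that part is essentially sound once the order of quantifiers is fixed, since $\int_N J\phi_t\,d\mu_N=\mu_N(N)<\infty$ for all $t$ kills any nonzero exponent). But there are genuine gaps at the two places where the theorem is actually hard. The first is the passage from ``$\mf h(x)\equiv\mf h$ a.e.'' to ``$N$ is locally an $H$-orbit.'' A rectifiable set whose approximate tangent plane is a.e. equal to a fixed subspace is \emph{not} thereby a union of integral leaves of that distribution, and your ``two-legged paths along $\exp(\mf h)$-directions would exit the graph'' argument does not go through: approximate tangency is an a.e., density-point notion, and there is no a priori way to flow inside $N$ along $\exp(\mf h)$. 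The actual mechanism in Zeghib's proof is dynamical, not infinitesimal: if $\mf h$ contains a root space $\mf g_\lambda$ with $\lambda\neq 0$, pushing forward by $\phi_t$ expands that direction, and a Lebesgue-density argument shows $N$ must contain entire local unstable (resp. stable) leaves in that direction; the neutral directions are handled separately. Without this saturation step the subalgebra claim and the orbit structure are both unsupported.

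The second gap is the uniformity of $H$ over the ergodic components. Your argument — that the assignment of a stabilizer subalgebra to an ergodic component is a $\phi_t$-invariant function and hence constant ``since the projection of $\mu_N$ to $G/\Gamma$ has support invariant under the ergodic flow'' — is not valid. Ergodicity of $\phi_t$ is with respect to Haar measure on $G/\Gamma$, and $N$ is typically Haar-null; each ergodic component of $\mu_N$ is itself $\phi_t$-invariant, so there is no dynamics mixing the components that could force the function to be constant. The constancy of $H$ is a separate structural statement in Zeghib's theorem and requires its own argument (a finiteness/maximality analysis of the admissible subalgebras built from root spaces). As written, your proposal would only yield that each ergodic component is Haar on a closed orbit of \emph{some} subgroup depending on the component.
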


\begin{remark}
    In fact, Zeghib claims more, by working with bi-homogeneous flows, but we will only use the homogeneous flow version here.
\end{remark}

\begin{proof}[Proof of Theorem \ref{thm:affine-centralizer}]
Let $f \in Z_{\Lip}$, and consider the graph 

\[ N = \set{ (x,f(x)) : x \in  G / \Gamma} \subset (G \times G) / (\Gamma \times \Gamma). \]

By construction, since $f$ commutes with $\phi_t$, $N$ is $\phi_t \times \phi_t$-invariant. Furthermore, since $f$ is Lipschitz, $N$ is rectifiable. Therefore, by Theorem \ref{thm:zeghib}, $\mu_N$, the Hausdorff measure on $N$, is $\phi_t$-invariant. $\phi_t$ is ergodic on $ G / \Gamma$, $\phi_t \times \phi_t$ must be ergodic on $N$ as well.

Therefore, there exists a unique subgroup $H \subset G \times G$ such that $N = (g,h)\cdot H / (\Gamma \times \Gamma)$ for some $(g,h) \in G \times G$. Since $(e\Gamma, f(e\Gamma)) \in N$, we may without loss of generality assume $g = e$.

Finally, consider the Lipschitz transformation $\pi : N \to  G / \Gamma$ defined by $\pi(x,y) = x$. Since $N$ is the graph of a Lipschitz transformation, $\pi$ is a Lipschitz homeomorphism. This further implies that $\pi$ is a diffeomorphism, since $N$ is a coset of $H$. Hence, $\dim(H) = \dim(G)$, and for each $X \in \Lie(G)$, there exists a unique $\bar{F}(X) \in \Lie(G)$ such that $(X,\bar{F}(X)) \in \Lie(H)$. It follows immediately that $\bar{F}$ is a Lie algebra homomorphism since $H$ is a subgroup. Let $F : G \to G$ denote the lift of $\bar{F}$ to $G$.

It is immediate that $H = \set{(g,F(g)): g \in G}$. Therefore, from the definition of $N$, we get that $f(g\Gamma) = hF(g)\Gamma$, and $f$ is affine, as claimed.
\end{proof}

\section{Lifting $\G$-actions}

{\color{black} In the setting of $G$-actions, our main conclusion is to produce a bi-homogeneous action. The main technical step is to show that the action of an $\R$-split Cartan subgroup lifts to an action by translations on a fibration. {\color{black} Recall Definition \ref{def:gen-HR} and let $B \subset A \subset G$ be a genuinely higher-rank subgroup of $G$. We divide $B$ into $B$-Weyl chambers by considering the connected components of $B \setminus \bigcup_{\chi \in \Delta_G} \ker \chi|_B$, where $\Delta_G$ are the roots of $G$ for the Cartan subgroup $A$.}

To recover the action of $G$ on the fibration, we use the following in Section \ref{sec:main-Gproof}:}

%\color{black} CHECK this section; need it PH actions\color{black}
%Let $G$ is a semisimple Lie group, $A \subset G$ be a split Cartan subgroup of $G$ and $\set{ U_\beta : \beta \in \Delta_G}$ denote the set of (coarse) root subgroups of $G$ corresponding to $A$.
\begin{theorem}\label{lifting} Let $r = \infty$ or { $2$}, $G$ be a real semisimple group such that every simple factor has real rank at least 2, and $A$ be an $\R$-split Cartan subgroup of $G$. 
Let $G \curvearrowright X$ be a $C^r$ action  which is {\color{black}$B$-totally partially hyperbolic for some genuinely higher-rank subgroup $B \subset A$} with common central distribution $E^c$.

Assume that 

\begin{itemize}
\item $\pi : Y \to X$ is a $C^r$ fiber bundle over $X$,
\item the action of $B$ lifts to a $C^r$ action on $Y$,
\item for each $B$-Weyl chamber $\mathfrak C$ of $B \subset G$ there exists some distinguished $a \in \mathfrak C$ such that
 the lifted action of $a$ on $Y$ is partially hyperbolic with respect to $E_y = d\pi^{-1}(E^c(\pi(y)))$, the saturation of the common central bundle by the tangent bundle.
\end{itemize}

Then there exists some continuous action of $\tilde{G}$, the universal cover of $G$, on $Y$ which is a lift of the $G$-action in the sense the if $p : \tilde{G} \to G$ is the canonical projection, $\pi(g \cdot x) = p(g) \cdot \pi(x)$. % whose restriction to $A$ is the given lift. 
Furthermore, if there exists a continuous metric on $Y$ for which $db|_{T\pi^{-1}(\pi(y))}$ is isometric for every $b \in B$ and $y \in Y$, the $\tilde{G}$-action is $C^r$.
\end{theorem}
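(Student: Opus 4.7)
The plan is to produce the $\tilde G$-action by individually lifting each root subgroup $U_\alpha \subset G$ (relative to $A$) to $Y$ using the partial hyperbolicity, and then assembling these lifts via the Chevalley--Steinberg relations. First I would fix a Weyl chamber $\mathfrak{C}$ with distinguished element $a$. For every root $\alpha$ with $\alpha(a) < 0$, the orbits of $U_\alpha$ on $X$ are contained in $W^s_X(\,\cdot\,; a)$ since $a$ contracts $U_\alpha$ by conjugation. The partially hyperbolic diffeomorphism on $Y$ has a stable foliation $\hat W^s_Y(\,\cdot\,;a)$ whose leaves project locally homeomorphically onto $W^s_X(\,\cdot\,;a)$ (because fibers and $C_G(A)$-orbits are in the center). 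I then define the lift by setting $\tilde u(y)$ to be the unique point in $\hat W^s_Y(y;a)$ projecting to $u \cdot \pi(y)$, first locally and then globally via the one-parameter-subgroup structure of $U_\alpha$. Independence of the choice of $a \in \mathfrak{C}$ contracting $\alpha$ follows because the coarse Lyapunov leaf through $y$ tangent to the $U_\alpha$-direction is the intersection of the stable foliations of all such $a$.

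Next I would check that $u \mapsto \tilde u$ is a continuous homomorphism from $U_\alpha$ to $\mathrm{Homeo}(Y)$, which is essentially forced by unique integrability and continuity of $\hat W^s_Y(\,\cdot\,;a)$. The relations $a\tilde u a^{-1} = \widetilde{aua^{-1}}$ and, for $c \in C_G(A)$, $c\tilde u c^{-1} = \widetilde{cuc^{-1}}$ follow from the $a$- and $C_G(A)$-equivariance of $\hat W^s_Y(\,\cdot\,;a)$ (noting that $C_G(A)$ commutes with $a$, so it preserves $\hat W^s_Y$). For roots $\alpha,\beta$ not both contracted by a single $a$, I would pass to a second Weyl chamber and a second distinguished element $a'$, produce the lift $\tilde v$ of $v\in U_\beta$ using $\hat W^s_Y(\,\cdot\,;a')$, and verify commutator relations $[\tilde u,\tilde v]=\prod \widetilde{w_{ij}}$ with $w_{ij}\in U_{i\alpha+j\beta}$ by comparing both sides: they agree after projecting to $X$ by hypothesis, and both preserve the fiber over every point of $X$, hence they must coincide because any map covering the identity of $X$ and moving a point within a common stable leaf of every regular element is trivial. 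Since $\tilde G$ is generated by $C_G(A)$ together with $\{U_\alpha\}$ modulo precisely these relations, this yields a continuous action of $\tilde G$ on $Y$ lifting the $G$-action on $X$.

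For the $C^r$-statement under the additional isometric-metric hypothesis, I would show that each $\tilde u$ is $C^r$ along every coarse Lyapunov leaf and along $C_G(A)$-orbits, then assemble via Journ\'e's theorem (Theorem \ref{thm:journe}). The metric assumption makes the stable and unstable holonomies of $a$ restrict to isometries between Hausdorff leaves, and then the fiberwise isometric action of $A$ together with Theorem \ref{thm:taylor} (in the spirit of Theorem \ref{regularity of conjugacies}) upgrades these holonomies to $C^r$-smooth maps leafwise. Iterated application of Theorem \ref{thm:journe} over the transverse foliations exhausting $Y$ promotes leafwise regularity to global $C^r$-regularity. Finally Theorem \ref{MZ} of Chernoff--Marsden converts the pointwise $C^r$-character of the elements of $\tilde G$ into $C^r$-smoothness of the action map.

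The main obstacle will be the Chevalley--Steinberg verification step: the lifts $\tilde u,\tilde v$ associated to roots $\alpha,\beta$ are constructed dynamically using a priori different partially hyperbolic elements, and checking that the relation holds on $Y$ (not only after projection to $X$) requires choosing a regular element simultaneously controlling all the relevant weights $i\alpha+j\beta$ and exploiting the rigidity of lifts along common stable/unstable leaves. The real-rank-at-least-two assumption on each simple factor is what guarantees the necessary supply of such elements and the density of accessibility relations that make the argument go through.
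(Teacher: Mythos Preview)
Your construction of the lifts $\tilde u$ of the root subgroups $U_\alpha$ via stable holonomies is exactly what the paper does, and the observation that for \emph{linearly independent} roots $\alpha,\beta$ one can always find a single regular $a$ with $\alpha(a),\beta(a)<0$ (so that the entire commutator relation lives in one stable manifold and lifts automatically) is the correct mechanism. But the sentence ``Since $\tilde G$ is generated by $C_G(A)$ together with $\{U_\alpha\}$ modulo precisely these relations'' is the gap. The commutator relations together with the $C_G(A)$-normalization relations do \emph{not} present $\tilde G$: modding out the free product $\mc P$ of the $U_\chi$'s by the commutator relations $\mc C_S$ yields a Steinberg-type group, which is a (typically nontrivial) central extension of $\tilde G$. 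The missing relations are exactly those coming from pairs $U_\alpha,U_{-\alpha}$ (the $SL_2$-type or ``symplectic'' relations), and for these there is no Weyl chamber contracting both, so your holonomy argument cannot verify them directly. The paper handles this via Lemma~\ref{lem:stable-dense} (quoted from \cite{vinhageJMD2015}): the quotient $\mc C/\mc C_S$ is abelian, and any continuous action of it on a finite-dimensional space is trivial. This is a nontrivial input about Schur multipliers of higher-rank groups and is precisely where the real-rank-$\ge 2$ hypothesis enters; without invoking it (or reproving it), your argument stops at an action of $\mc P/\mc C_S$, not of $\tilde G$.

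Your regularity argument is also off target. The lift $\tilde u$ is \emph{by definition} a stable holonomy of the partially hyperbolic element $a$ on $Y$; the isometric fiber metric gives arbitrarily strong center bunching, and then the Pugh--Shub--Wilkinson $C^r$-section/holonomy theorem \cite{PSW} yields $C^r$ holonomies directly. There is no need for Journ\'e or Taylor here, and your claim that ``stable and unstable holonomies restrict to isometries between Hausdorff leaves'' is not correct as stated (holonomies map between center leaves, and need not be isometries). Once each $\tilde u$ is a $C^r$ diffeomorphism, Chernoff--Marsden (Theorem~\ref{MZ}) finishes as you say.
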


The proof of this theorem follows the scheme introduced by the first author and Katok in \cite{DamKatII}, extended by Zhenqi Wang in \cite{Wang1,Wang2} and made fully general by the third author in \cite{vinhageJMD2015}. We summarize some important definitions before proceeding with the proof.

Let $\Delta_G$ denote the set of roots $\chi$ of $G$ such that $\chi/2$ is not a root, and for each $\chi \in \Delta_G$, let $U_\chi$ denote corresponding coarse Lyapunov subgroup. {\color{black}We call $\chi \in \Delta_G$ {\it detected} if $\chi|_B\not\equiv 0$, and let $\Delta_{G,B}$ denote the set of detected roots.} As described in Section \ref{sec:free-prods}, we consider the free product $\mc P_B$ of the groups $U_\chi$, {\color{black}where $\chi$ ranges over $\Delta_{G,B}$} and $\mc P = \mc P_A$. Note that since the groups $U_\chi$ generate $G$, there exists a projection $\pi_B : \mc P_B \to \tilde{G}$ (with $\pi = \pi_A$) such that the kernel $\mc C_B$ (with $\mc C = \mc C_A$) is exactly the expressions in $\mc P_B$ which yield contractible cycles on $G$.

Since $G$ acts on $X$, any relations among the $U_\chi$ on $G$ hold on $X$ as well. Importantly, the commutator relations hold: if $\chi_1,\chi_2 \in \Delta_G$ are linearly independent, $u \in U_{\chi_1}$ and $v \in U_{\chi_2}$, then $[u,v] \in \displaystyle{\prod_{\substack{\chi = s\chi_1 + t\chi_2 \\ s,t \in \Z_+}}} U_\chi$. Hence, the commutator relations hold on $X$. The following lemma becomes of particular interest. It is based on Lemma 4.7 of \cite{vinhageJMD2015}:

\begin{lemma}
\label{lem:stable-dense}
Let $G$ be a semisimple Lie group such that every simple factor has real rank at least 2, $B \subset A$ be a genuinely higher-rank subgroup of an $\R$-split Cartan subgroup $A$, $\mc P_B$ denote the free product of the detected coarse Lyapunov subgroups of $G$, and $\mc C_{S,B} \subset \mc P$ denote the normal closure (in $\mc P_B$) of the group generated by the relations $v^{-1} * u^{-1} * v * u * \rho^{-1} \in \mc P_B$, where $u \in U_{\chi_1}$, $v \in U_{\chi_2}$ with $\chi_1,\chi_2 \in \Delta_{G.B}$ satisfying $\chi_1|_B \not\propto \chi_2|_B$, and $\rho$ is any presentation of $[u,v]$ in the group $\displaystyle \prod_{\substack{\chi = s\chi_1 + t\chi_2 \\ s,t \in \Z_+}} U_\chi$. Then $\mc C_{S,B}$ is a co-abelian subgroup of $\mc C_B$, and every continuous action of $\mc C_B / \mc C_{S,B}$ on a space of finite topological dimension is trivial.
\end{lemma}

{\color{black}
\begin{proof}
    We first consider the case when $B = A$, so that every root is detected. By \cite[Theorem 1.9]{Deodhar78}, the following short exact sequence is a perfect central extension of $\tilde{G}$, the universal cover of $G$ (in fact, it is the {\it universal central extension}):

    \[ 1 \to \mc C / \mc C_S \to \mc P / \mc C_S \to \tilde {G} \to 1.\]

    In particular, since $\mc C / \mc C_S$ is central in $\mc P / \mc C_S$, it is abelian. Furthermore, assume that $\mc C / \mc C_S \curvearrowright X$ is an action by homeomorphisms of a space of finite topological dimension. Since the group $\mc C/ \mc C_S$ is abelian, if $x \in X$, $H_x = \Stab_{\mc C / \mc C_S}(x)$ is a normal subgroup in $\mc C / \mc C_s$. In fact, since it is central in $\mc P / \mc C_S$, it is a closed normal subgroup there as well. It follows that we have a short exact sequence of topological groups

    \[ 1 \to (\mc C / \mc C_S) / H_x \to (\mc P / \mc C_S)/ H_x \to \tilde{G} \to 1\]

    We claim that $(\mc C / \mc C_S) / H_x$ is a Lie group. Indeed, it is locally path connected (since $\mc C$ is locally path connected; in every combinatorial cell, $\mc C$ is an algebraic variety), and the evaluation map $\sigma \mapsto \sigma \cdot x$ is an injective continuous map into $X$. By Theorem \ref{lem:gleason-palais}, $(\mc C / \mc C_S) / H_x$ is a Lie group. But since $\tilde{G}$ is a simply connected semisimple Lie group, it has no perfect Lie central extensions. Hence, $H_x = \mc C / \mc C_S$, and any such action is trivial.

    When not every root is detected, the result follows from \cite[Proposition 7.9]{vinhage-wang}, which shows that when $B$ is genuniely higher-rank, there is an isomorphism between $\mc P_B / \mc P_{S,B}$ and $\mc P / \mc P_S$ covering $\id$ on $\tilde{G}$.
\end{proof}
}

\begin{proof}[Proof of Theorem \ref{lifting}]
 Let $a \in B$ be one of the distinguished elements of the Weyl chambers which lift to partially hyperbolic maps. Consider $W^s_{a,X}$ and $W^s_{a,Y}$, the stable and unstable foliations of $a$ as it acts on $X$ and $Y$, respectively. Since the fibers of $\pi$ are contained in the center foliation, it follows that $\pi|_{W^s_{a,Y}(y)}$ is a $C^r$ diffeomorphism. For every $\chi$, we may therefore build an action $U_\chi \curvearrowright Y$ by choosing some $a \in \R^k$ such that $\chi(a) < 0$ and letting $u \cdot y$ denote the unique element of $W^s_{a,Y}(y)$ which projects to $u \cdot x \in W^s_{a,X}(\pi(y))$. Note that the action does not depend on the choice of $a$ or Weyl chamber which $a$ belongs to. By the universal property of (topological) free products, we may construct an action of $\mc P_B$, the free product of the groups $U_\chi$ with $\chi \in \Delta_{G,B}$, on $Y$ as well \color{black}(see Proposition \ref{sec:free-prods}). \color{black}
 %{\color{black} Should we explain this more???}

 Fix any pair of linearly independent roots $\chi_1,\chi_2 \in \Delta_{G,B}$. Then, by linear independence, there exists $a \in B$ such that $\chi_1(a),\chi_2(a) < 0$. If $u \in U_{\chi_1}$ and $v \in U_{\chi_2}$, then by Lemma \ref{lem:stable-dense}, there exists a $w$ written as a product of elements from the groups $U_\chi$, $\chi = s\chi_1 + t\chi_2$, $s,t \in \Z_+$ such that $[u,v]w^{-1} = e$, as elements in $G$. Since the relations hold on $G$, they act trivially on $X$. Furthermore, since the orbits of $U_{\chi_1}$, $U_{\chi_2}$ and every group appearing in the presentation of $w$ are contained in $W^s_{a,X}$, the relation holds when the groups act on $Y$ as well.

 Therefore, the restriction of the $\mc P$-action on $Y$ to $\mc C_{S,B}$ is trivial. Hence there exists a well-defined continuous action of $\mc C_B / \mc C_{S,B}$ on $Y$. This action must be trivial by Lemma \ref{lem:stable-dense}. Finally, since the $\mc C_B$ action is trivial, the action of $\mc P_B$ induces an action of $\tilde{G} = \mc P_B / \mc C_B$ on $Y$, which by construction must cover the action on $X$.

 To prove the regularity of the action, notice that the lift of the actions of the groups $U_\chi$ to $Y$ is exactly determined by stable holonomies. Since, restricted to $T\pi^{-1}(\pi(y))$, the action is isometric, the stable holonomies will have arbitrarily good pinching properties. By now-standard arguments in partial hyperbolicity (see, eg, \cite[Theorem 6.1]{PSW}), it follows that the holonomies, and hence action of each of the groups $U_\chi$ on $Y$ is $C^r$. Finally, since the $U_\chi$ generate $\tilde{G}$, every element of $\tilde{G}$ is a $C^r$ transformation of $Y$. By Theorem \ref{MZ}, the action $\tilde{G} \curvearrowright Y$ is a $C^r$ group action.
\end{proof}

\begin{remark}
The smoothness of the lift can be obtained from a weaker assumption, namely center bunching. To obtain a $C^\infty$ lift, one requires a more restrictive form of center bunching than that laid out in Section \ref{sec:PH-prelims}, which allows for no exponents on the fiber whatsoever (i.e., something imitating unipotent behavior). Since our application is to a compact group extension, and the action of a semisimple Lie group will always be conformal, we use this simpler version of the statement.
\end{remark}

%\begin{theorem}
%Assume that $X$ is a $C^r$ manifold, and that there exists a totally Anosov $C^r$ action of $A$ on $X$ such that 

%\begin{enumerate}
%\item for every $\beta \in \Delta_G$, there is an associated action of of $U_\beta$ on $X$, and
%\item for any Anosov $a \in A$, there is an action of the subgroup $V^s_a := \set{ g\in G : a^nga^{-n} \to e}$ whose restriction to the action of each $U_\beta$, $\beta \in \Delta_G$ coincides with the given action of $U_\beta$
%s $\set{ U_\beta : \beta \in \Delta_G, \beta(a) < 0}$ generate the action of a Lie group.
%\end{enumerate}

%Then there exists a $C^r$ action of the universal cover of $G$ on $X$ whose restrictions are the actions of $A$ and $U_\beta$ for every $\beta \in \Delta_G$.
%\end{theorem}

\section{Brin-Pesin theory in low regularity\color{black}}
\label{app:brin-pesin}

\color{black}

In this appendix, we develop analogs of theorems about principal bundles with compact structure groups in low regularity. Many of the statements and arguments are very similar to previous works of Brin and Pesin \cite{brin-pesin}, Wilkinson \cite{W} and Avila, Santamaria and Viana \cite{ASV}, but require special attention due to the presence of H\"older continuity only along the stable and unstable leaves.  

\subsection{Principal bundles in low regularity\color{black}}

We start by defining precisely regularity along a foliation. 

\begin{definition}
    \label{def:reg-along-fol}
    Assume that $\mc P$ is a continuous principal $K$-bundle over a $C^2$ manifold $X$, with a compact structure group $K$, and let $\mc W$ be a continuous $n$-dimensional foliation on $X$ with $C^{1}$ leaves. %If $\mc R$ is a regularity stronger than continuity but not stronger than $C^r$,
    We say that $\mc P$ is $\mc W$-H\"older  %along %{\it $\mc R$-regular along $\mc F$} 
    if 
    \begin{itemize}
        \item[(1)]  there exist continuous bundle charts $\tau_p : \R^n \times \R^{d-n} \times K \to \mc P$ such that the collection $\pi \of \tau_p|_{\R^n \times \R^{d-n} \times \set{e}}$ form a collection of continuous foliation chart of $\mc F$ which are $C^{1}$ along $\R^n \times \set{0} \times \set{e}$, and $\tau_p|_{\R^n \times \set{0} \times K}$ is uniformly H\"older continuous. We call such a chart $\tau_p$ a {\it foliation-bundle chart}. 
        \item[(2)] there is a family of identifications $\psi_{x,y}$ between fibers $\mc P_x$ and $\mc P_y$,  defined for all $x$ close to $y$ such that
\begin{itemize}
    \item $\psi_{x,y}$ is continuous in 
    $x,y$,
    \item for all $k\in K$, $k\psi_{x,y}(p) = \psi_{x,y}(kp)$,
    \item  $\psi_{x,y}$ varies uniformly H\"older continuously as $y \in \mc W(x)$ varies along the $\mc W$-leaf, and $\psi_{x,x} = \id $ for all $x\in X$. In particular, $d(\psi_{x,y}(p),p) < Cd(x,y)^\theta$ for some $C >0$ and $\theta > 0$ and all $y\in \mc W_{loc}(x)$ .
\end{itemize}
\end{itemize}
   
\end{definition}
\color{black}
\begin{remark} Using charts of foliation $\mc W$,  (1) is just equivalent to the existence of local continuous section of $\mc P$ which is H\"older along $\mc W$. We believe (2) can be induced from (1), however for simplicity we choose to state (2) separately. As Lemma \ref{lem: loc idfy} shows, (2) is not very restrictive.
\end{remark}

%The following lemma motivates the discussion of this section.

\color{black}
%\begin{remark}
%    In our application, $\mc R$ will often be H\"older continuity. In this case, we mean that there exists a common $C > 0$ and $\theta > 0$ for which the bundle-foliation charts of Definition \ref{def:reg-along-fol} are $(C,\theta)$-H\"older along the leaves of the foliation.
%\end{remark}

% \begin{lemma}
%     \label{lem:abstract-bundle}
%     Fix a $C^\infty$ manifold $X$ and Lie group $N$. Assume that $\Lie(N)$ admits a vector space decomposition $\Lie(N) = E_1 \oplus \dots \oplus E_n$. Let $H$ denote the set of Lie group automorphisms of $N$ preserving each subspace $E_i$.  Assume that for every $x \in X$, there exists a map $\tau : N \times X \to X$ such that $\tau(n_1n_2,x) = \tau(\varphi_{\tau(n_2,x),x}(n_1),\tau(n_2,x))$ for some family $\varphi_{e(n_2,x),x} \in H$ where $\varphi_{x,y}$ is defined when $d(x,y)$ is sufficiently small, and $\varphi_{x,y} \to \id$ as $x \to y$. We use the notation $\tau_x(n) = \tau(n,x)$ to denote the evaluation maps of the twisted action.
    
%     Then there exists a H\"older principle bundle with structure group $H$ consisting of pairs $(x,(\mf f_i)_{i=1}^n)$, where $\mf f_i$ is a framing of $(\tau_x)_*(E_i)$, and given any two $n$-tuples of framings $(\mf f_i)$ and $(\mf f_i')$ (possibly at different points $x$ and $x'$), then there exist some $h \in H$ such that $(\tau_x^{-1})_*(\mf f_i)=h((\tau_{x'}^{-1})_*(\mf f_i))$.
% \end{lemma}

% \begin{corollary}
%     With structures as in Lemma \ref{lem:abstract-bundle}, assume further that there is a metric on $\Lie(N)$ such that the spaces $E_i$ are orthogonal.
% \end{corollary}

\subsection{Stable holonomies\color{black}}\label{Appendix-holonomies}
Let $X, \mc P, \mc W, K$ as in Definition \ref{def:reg-along-fol}. We further assume that 
%Let $X$ be a smooth manifold, $\mc W$ be a H\"older foliation with $C^r$ leaves, $P$ be a continuous principal $K$-bundle which is H\"older along $\mc W$. 
$\R^k \curvearrowright \mc P$ be an action by continuous bundle automorphisms of $\mc P$ covering a $C^1$ $\R^k$-action on $X$ which uniformly exponentially contracts $\mc W$. 
 \color{black}Assume the $\R^k$-extension on $\mc P$ is $\mc W$-H\"older 
 %which means at a foliation-bundle chart defined in (1). of  Definition \ref{def:reg-along-fol}, the bundle automorphism is uniformly H\"older continuous along $\R^k\times \{0\}\times K$. 
 We denote both actions on $\mc P$ and $X$ by $a(\cdot)$ for $a\in \mathbb R^k$, since from the context it will be clear which one we refer to. %\color{cyan} If $x \in X$, let $F_x$ denote the fiber above $x$. \color{black}For our purpose we assume that 
 %This assumption (PUT THIS ASSUMPTION IN THE DEFINITION E.2.) is not very restrictive, it holds at least for $\tilde X^\lambda$ of Lemma \ref{lem:big-bundle} as well as the product bundle $\tilde X$.

%MAY NEED TO DEFINE $a$  ACTION AT HERE. AND WHAT IS THE REGULARITY OF $\R^k$ ACTION? IN WHICH PROPOSITION WE NEED TO ASSUME $\R^k$-BUNDLE AUTOMORHPISM ARE H\"OLDER ALONG COARSE?

\color{black}
If $y, x \in \mc W(x)$ are sufficiently close, $p$ covers $x$ and $q$ covers $y$, let $\kappa(p,q)$ denote the unique element of $K$ such that $\psi_{x,y}(p) = \kappa(p,q)q$. Note that $\kappa(k_1p,k_2q) = k_1\kappa(p,q)k_2^{-1}$.

\begin{lemma}
\label{lem:constructing-holonomy}
    If $K$ is a Lie group with a bi-invariant metric $d$, then for every $\ve > 0$, there exists a $T \ge 0$ such that if $t,s \ge T$, \color{black} $y \in \mc W_{loc}(x)$, $p$ covers $x$, $q$ covers $y$, \color{black} then $d(\kappa(a^tp,a^tq),\kappa(a^sp,a^sq)) < \ve$.
\end{lemma}

\begin{proof}
    For ease of notation, let $\kappa_t = \kappa(a^tp,a^tq)$, so $\psi_{a^tx,a^ty}(a^tp)  =  \kappa_ta^tq$. If $s > t$, \color{black}since $a^{s-t}$ is a principal bundle automorphism, we have \color{black} 
\begin{equation}
    \label{eq:kappat-pushed}
        a^{s-t}\psi_{a^tx,a^ty}(a^tp)  =  \kappa_ta^sq.
    \end{equation}
On the other hand, $\kappa_sa^sq = \psi_{a^sx,a^sy}(a^sp)$ and, so \begin{equation}\label{eq:comparing-kappas}\kappa_ta^sq = \kappa_t\kappa_s^{-1}\kappa_sa^sq = \kappa_t\kappa_s^{-1}\psi_{a^sx,a^sy}(a^sp), \end{equation} and combining \eqref{eq:kappat-pushed} and \eqref{eq:comparing-kappas}, we get
     \begin{equation}
        \label{eq:comparing-kappas2}
        \kappa_t\kappa_s^{-1}\psi_{a^sx,a^sy}(a^sp) = a^{s-t}\psi_{a^tx,a^ty}(a^tp).
    \end{equation}
Finally, observe that $d(\psi_{a^sx,a^sy}(a^sp),a^sp) < Cd(a^sx,a^sy)^\theta$, and $d(\psi_{a^tx,a^ty}(a^tp),a^tp) < Cd(a^tx,a^ty)^\theta$. Since $p$ and $q$ cover points of the same stable manifold and $s > t$, $a^{s-t}$ is nonexpanding on preimages of leaves of $\mc W$ in the bundle, and exponentially contracting on the base manifold. It follows that $d(\psi_{a^sx,a^sy}(a^sp),a^sp) < Cd(x,y)^\theta e^{-\theta\lambda s}$, and 
    \[ d(a^{s-t}\psi_{a^tx,a^ty}(a^tp),a^sp) \le d(\psi_{a^tx,a^ty}(a^tp),a^tp) < Cd(x,y)^\theta e^{-\theta \lambda t}.\] By the triangle inequality, $d(\psi_{a^sx,a^sy}(a^sp),a^{s-t}\psi_{a^tx,a^ty}(a^tp)) < 2Cd(x,y)^\theta e^{-\theta \lambda t}$, and by \eqref{eq:comparing-kappas2}, $\kappa_t\kappa_s^{-1}$ is exponentially small in $t$. The Cauchy property follows.
\end{proof}
Now we state the main result of Section \ref{Appendix-holonomies}.
\begin{proposition}
\label{prop:holder-holonomies}
Let $X,\mc P, \mc W, K $ be as in Definition \ref{def:reg-along-fol}. 
%a group with a bi-invariant metric, $X$ be a $C^\infty$ manifold, $\mc W$ be a continuous foliation with $C^r$-leaves, and $\mc P$ be a continuous principal $K$-bundle which is $\mc W$-H\"older continuous. Fix an action $\R^k \curvearrowright \mc P$ by a \color{black} continuous \color{cyan} bundle automorphisms such that $a \in \R^k$ is uniformly exponentially contracting along $\mc W$ of $X$ \color{black} and uniformly H\"older along $\mc W$. Assume that there exists a family of identification $\psi_{x,y}$ satisfies the starting assumption of Section \ref{Appendix-holonomies} \color{cyan}. Then 
There exists a family of maps $H_{x,y} : P_x \to P_y$ for $y\in \mc W(x)$ such that

    \begin{itemize}
        \item[(1)] $H_{x,x} = \id$ and $H_{y,z} \of H_{x,y} = H_{x,z}$ whenever $y,z \in \mc W(x)$,
        \item[(2)] $H_{x,y}(kp) = kH_{x,y}(p)$, 
        \item[(3)] $H_{bx,by} b = b H_{x,y}$ for all $b \in \R^k$, and
        \item[(4)] for all $p \in P_x$, $d(a^tp, a^tH_{x,y}(p))$  
        decays %exponentially 
        in $t$.
    \end{itemize}

    Furthermore, the maps $H_{x,y}$ are uniquely determined by these properties, and vary continuously in $x$ and $y$. The topological submanifolds $\widehat{\mc W}(p) := \set{ H_{x,y}(p) : y \in \mc W(x)}$ form an $\R^k$-invariant continuous foliation of $\mc P$ which covers $\mc W$ and is uniformly %exponentially 
    contracted by $a$.\color{black}
\end{proposition}

\begin{proof}
    We first show uniqueness. In fact, we show that property (4) determines the maps $H_{x,y}$. Suppose that $q$ and $q'$ are both points of the fiber above $y$ such that $d(a^tp,a^tq)$ and $d(a^tp,a^tq')$ decay. %exponentially. 
    Then $q' = kq$ for some $k \in K$, and $a^tq' = a^tkq= ka^tq$. Therefore, $ka^tq$ and $a^tq$ are %exponentially 
    close as $t \to \infty$. Since the group $K$ has a bi-invariant metric, it follows that $k = e$, and $q = q'$.
    
%{\color{black}    Note that property (3) implies property (4) by the H\"older regularity of the holonomies $H_{x,y}$, so it suffices to establish (1)-(3) and show that they determine the maps $H_{x,y}$ uniquely. We first establish uniqueness. Suppose that $H_{x,y}'$ is another family of maps satisfying (1)-(4) which vary H\"older continuously in $x$ and $y$. Since the fiber above $y$ is a single $K$-orbit, there exists a $\kappa_{x,y} : F_x \to K$ such that $H_{x,y}(p) = \kappa(p)H_{x,y}'(p)$. Using property (2), we see that $\kappa_{x,y}$ is actually a constant depending only on $x$ and $y$. Finally, using property (3), it follows that $\kappa_{x,y} = \kappa_{a^tx,a^ty}$. Finally, by property (1), $H_{x,x} = H_{x,x}' = \id$ for all $x \in X$, so $\kappa_{x,x} = e \in K$ for all $x \in X$. By H\"older continuous variation of the maps $H_{x,y}$ and $H_{x,y}'$, it follows that since $d(a^tx,a^ty) \to 0$, $\kappa_{x,y} = \lim \kappa_{a^tx,a^ty} = e \in K$. That is, $H_{x,y}(p) = H_{x,y}'(p)$ for all $p \in F_x$. 
%}
We now prove that such a family exists. We claim that it suffices to show %H\"older 
continuous variation in $x$ and $y$, the first half of condition (1), condition (2) and condition (3). That is, we can deduce condition (4) and the second half of (1) from these. To see (4), note that by (3),
\[ a^tH_{x,y}(p) = H_{a^tx,a^ty}(a^tp)\]
and by continuous variation of $H_{x,y}$ in $x$ and $y$ and the first half of (1), $d(H_{a^tx,a^ty}(a^tp),a^tp)$ decays. %exponentially fast. 
Thus (4) is satisfied. Finally, from (4) and the triangle inequality, it follows that both $a^t H_{y,z} \of H_{x,y}(p)$ and $a^t H_{x,z}(p)$ get  %exponentially 
close to $a^tp$. Since we showed that condition (4) determines $H_{x,y}$, it follows that $H_{y,z} \of H_{x,y} = H_{x,z}$.

We conclude the proof by exhibiting a family of functions $H_{x,y}$ which satisfy the first half of (1), condition (2) and condition (3).
    We use Lemma \ref{lem:constructing-holonomy}. Given $p$ and $q$ in the fibers above $x$ and $y$, respectively, let $\kappa_\infty(p,q) = \lim_{t\to\infty} \kappa(a^tp,a^tq)$, and note that $\kappa_\infty(k_1p,k_2q) = k_1\kappa_\infty(p,q)k_2^{-1}$. Therefore, given $p$, $x$ and $y$ as above the point
\[ H_{x,y}(p) := \kappa_\infty(p,q)q\]
is a well-defined map from $P_x$ to $P_y$. \color{black}Moreover by Lemma  \ref{lem:constructing-holonomy} we know $H_{x,y}$ is a uniform limit of a family of a continuous function $\kappa_t$ hence is continuous (in $x,y$ for $y\in \mathcal W_{loc}(x)$) as well. \color{black}

It is clear that $H_{x,x} = \id$. Furthermore, (2) is easily satisfied from the fact that $\kappa_\infty(kp,q) = k\kappa_\infty(p,q)$. So we need to verify condition (3). %, and that $H_{x,y}$ varies \color{black} H\"older continuously along coarse\color{cyan}. 
We verify all properties when the points $x$, $y$ and $z$ are sufficiently close, and extend to the global leaf $\mc W$ using (3) after showing them for close points. To see (3), we invoke the uniquness property established at the start of this proof, that condition (4) determines the image of $H_{bx,by}$. Then note that $q := H_{x,y}(p)$ is the point in the fiber of $y$ which converges  %exponentially 
toward $p$ under $a^t$. Then by commutativity, $bq$ converges %exponentially
 towards $bp$ under $a^t$. Hence, $bH_{x,y}(p) = H_{bx,by}(bp)$ for all $b \in \R^k$. %Note that since $\psi_{y,z} \of \psi_{x,y} = \psi_{x,z}$, it follows that $\kappa(q,r)\kappa(p,q) = \kappa(p,r)$. This extends to the versions with $\kappa_\infty$, and the second part of (1) follows.

\color{black}

To show that we get the foliation $\hat{\mc W}$, we build foliation charts explicitly. Choose a foliation chart for $\mc W$ on $X$, so that the horizontals of the chart are the leaves of $\mc W$, $\varphi : \R^\ell \times \R^{d-\ell} \to U$ locally defined and centered at $(0,0)$, $U \subset \mc P$. Let $\sigma : U \to \mc P$ be a local \color{black}  continuous section of the bundle. \color{black}  Then build the locally defined foliation chart

\[ \hat{\varphi} : \R^\ell \times \R^{d-\ell} \times K \to \mc P \qquad \hat{\varphi}(x,y,k) = H_{\varphi(0,y),\varphi(x,y)}(k\sigma(\varphi(0,y))) \]

It follows from (2) and the definition of a principal bundle that since $\varphi$ is locally a  homeomorphism onto its image, %\color{black} and H\"older along $\set{(x,\ast) : x \in \R^\ell}$ \color{black}
 so is $\hat{\varphi}$. Furthermore, by the construction the image of the horizontals $\set{(x,y_0,k_0) : x \in \R^\ell}$ are exactly the leaves of $\hat{\mc W}$. Finally, using properties (1) and (2), it follows that transition maps between any two such charts respect horizontals. That is, we have a foliation which is %\color{black} continuous and H\"older along coarses \color{black}. The foliation is 
 $\R^k$-invariant by (3) and uniformly contracting under $a$ by (4).
\end{proof}

 %such that $\lim_{t \to \infty} d(a^tp,a^tH_{x,y}(p)) =0$, such a point is unique, and $H_{x,y}$ varies H\"older continuously in $x$ and $y$.

 \subsection{Brin-Pesin constructions\color{black}}

Before proving the existence of a transitive subbundle for compact group extensions, we first prove some preliminary lemmas about partitioning spaces and recurrence.

{\color{black}

\begin{definition}
    If $X$ is a compact metric space and $\mc W = \set{\mc W(x) : x \in X}$ is a partition of $X$ (into not necessarily closed sets), we say that $\mc W$ is {\it continuously varying} if whenever $y \in \mc W(x)$ and $\ve > 0$, there exists some $\delta = \delta(\ve,x,y) > 0$ such that if $d(x,x') <\delta$, then there exists some $y' \in \mc W(x')$ such that $d(y,y') < \ve$.
\end{definition}

It is clear that if $\mc W$ is the partition into the leaves of a continuous foliation, then $\mc W$ is continuously varying. Furthermore, if one forms an accessibility equivalence relation out of a family of foliations $\mc F_1,\dots,\mc F_n$, namely that $y \in \mc W(x)$ if and only if there is a path with finitely many legs in the foliations $\mc F_i$ connecting $x$ and $y$, then $\mc W$ is also continuously varying.

\begin{lemma}
\label{lem:top-decomp}
    Let $X$ be a compact metric space, $K$ be a compact group, $\mc G$ be a Hausdorff topological group.  Assume that $K$ acts on $\mc G$ by automorphisms to form the semidirect product $K \ltimes \mc G$, and that there is a jointly continuous action $K \ltimes \mc G \curvearrowright X$. Furthermore, assume that $\mc W$ is a continuously varying partition of $X$ such that $h\mc W(x) = \mc W(hx)$ for all $h \in K \ltimes \mc G$.
    
    Assume that for some $x_0 \in X$, $K\mc W(x_0)$ is dense and the following property holds:
    
    \begin{itemize}
        \item[(*)] \label{topdecomp-star}for every $\ve > 0$, there exists a $\delta > 0$ and an $K$-invariant compact subset $B \subset \mc G$ such that $\displaystyle\sup_{x \in X}\operatorname{diam}(B\cdot x) < \ve$ and that for all $x,y \in X$ such that $d(x,y) < \delta$, there exists $y' \in \mc W(x)$, $g \in B$ and $k \in K$ such that $gky' = y$ and $d_K(e,k) < \ve$.
    \end{itemize}   %the restricted action of $M \ltimes \mc G$ has the following property: 
    
%\begin{quote}
%    There exists $x_0 \in X$ and $M$-invariant compact subsets $K_n \subset \mc H$ such that for every $x \in X$ and $h \in K_n$, $h \mc G\cdot x = \mc G h\cdot x$, $\diam(K_n\cdot x)$ converges to 0 uniformly in $n$, and for every $n$, $M \ltimes (K_n\mc G) \cdot x_0 = X$ (in particular, $x_0$ has a dense $M \ltimes \mc G$ orbit).
%\end{quote}    

    Then there is a closed subgroup $K' \subset K$ and closed sets $\mc F(\theta) \subset X$ indexed by $\theta \in K / K'$ such that

    \begin{itemize}
        \item[(1)] each $\mc F(\theta)$ is the closure of $\mc W(x)$ for some $x \in X$,
        \item[(2)] for every $\theta \in K/K'$, and $k \in K$, $k \mc F(\theta) = \mc F(k\theta)$, and
        \item[(3)] $X = \bigsqcup_{\theta \in K / K'} \mc F(\theta)$.
    \end{itemize}
\end{lemma}

\begin{proof}
    Throughout we assume that the metric on $X$ is $K$-invariant. This is not a loss of generality since given any metric, the metric $d_k(x,y) = d(kx,ky)$ is equivalent (by continuity of the $K$ action), and we may average to get $\tilde{d}(x,y) = \int_K d_k(x,y) \, d\mu(k)$, where $\mu$ is the Haar measure on $K$. Choose a point $x_0 \in X$ such that $K\mc W(x_0)$ is dense in $X$, and set $\mc F_0 = \overline{\mc W(x_0)}$. Define $K' := \set{k \in K : k\cdot x_0 \in \mc F_0}$.

    We claim that $K' \subset K$ is a compact subgroup. Indeed, it is clearly closed since $\mc F_0$ is closed, so it suffices to show that it is a semigroup since $K$ is compact. To see the semigroup property, assume that $K_1,k_2 \in K$. Then there exist sequences $x_n, y_\ell \in \mc W(x_0)$ such that $x_n \to k_1 \cdot x_0$ and $y_\ell \to k_2 \cdot x_0$. Since the $K$-action is continuous and intertwines the sets $\mc W(x)$, it follows that $x_n' := k_1^{-1}x_n \in \mc W(k_1^{-1}x_0)$ converges to $x_0$.

    Fix $\ve > 0$ and choose $\ell_0$ large enough so that $d(y_{\ell_0},m_2\cdot x_0) < \ve/2$. Since $\ell_0$ is fixed and $\mc W$ is continuously varying, there exists $\delta > 0$ such that if $d(x,x_0) < \delta$, then there exists $y_{\ell_0}'(x) \in \mc W(x)$ such that $d(y_{\ell_0}'(x),y_{\ell_0}) < \ve/2$. Finally, choose $n_0$ large enough so that $d(x_{n_0}',x_0) < \delta$. Then since $x_{n_0}' \in \mc W(k_1^{-1}x_0)$, and $y_{\ell_0}'(x_{n_0}') \in \mc W(x_{n_0}')$, $y_{\ell_0}' := y_{\ell_0}'(x_{n_0}') \in \mc W(k_1^{-1}(x_0))$, and

    \begin{equation} \label{eq:convergence}
    d(y_{\ell_0}',k_2x_0) \le d(y_{\ell_0}',y_{\ell_0}) + d(y_{\ell_0},k_2x_0) < \ve/2 + \ve/2 = \ve. \end{equation}

    Using $K$-invariance of the distance, and the intertwining property again, we get that $d(k_1y_{\ell_0}',k_1k_2x_0) < \ve$, and that $k_1y_{\ell_0}' \in \mc W(x_0)$. Since $\ve$ is arbitrary we get that \[k_1k_2\cdot x_0 \in \overline{\mc W(x_0)} = \mc F_0,\] so $K'$ is a subgroup.

    We claim that the set $\mc F_0$ is $K'$-invariant. Indeed, assume that $y \in \mc F_0$ and $k \in K'$. Then there exist sequences $x_n,y_\ell \in \mc W(x)$ such that $x_n \to kx_0$ and $y_\ell \to y$. As above we fix $ \ve >0$, and choose some $\ell_0$ such that $d(y_{\ell_0},y) < \ve/2$. Since $\ell_0$ is now fixed, we may choose $\delta = \delta(x_0,y_{\ell_0},\ve/2) > 0$ such that if $d(x,x_0) < \delta$, then there exists some $y_{\ell_0}'(x) \in \mc W(x)$ such that $d(y_{\ell_0}'(x),y_{\ell_0}) < \ve/2$.
    
    Finally, let $x_n' := k^{-1}x_n \in \mc W(k^{-1}x_0)$. Then leveraging that $K$ acts by isometries and intertwines $\mc \mc W$, we may choose $n_0$ such that  $d(x_{n_0}',x_0) < \delta$. A similar string of inequalities to \eqref{eq:convergence} shows that $d(y_{\ell_0}'(x_{n_0}'),y) < \ve$. Reapplying $k$ yields that $k\cdot y \in \mc F_0$ and $\mc F_0$ is $K'$-invariant.

    For $\theta \in K / K'$, we may now define $\mc F(\theta) = \theta \mc F_0$. Observe that this is well-defined since $\mc F_0$ is $K'$-invariant. The $K$-equivariance is true by definition. To see that the sets $\mc F(\theta)$ cover $X$, note that for any $y \in X$, since $K\mc W(x_0)$ is dense in $X$, there exists $x_n \in \mc \mc W(x_0)$ and $k_n \in K$ such that $k_nx_n \cdot x_0 \to y$. Since $K$ is compact, by passing to a subsequence, we may assume that the sequence $k_n$ converges to some $k_0 \in K$. But then

    \[ d(k_0x_n,y) < d(k_0x_n,k_nx_n) + d(k_nx_n,y).\]

    The first term goes to zero since the maps induced by $k_n$ converge to $k_0$ in the topology of uniform convergence (since the action is jointly continuous and $X$ is compact). The second goes to zero by assumption. Hence $k_0^{-1}y \in \mc F_0$, so $y \in k_0\mc F_0 = \mc F(k_0K')$.

    So it remains only to show that the sets $\mc F(\theta)$ are disjoint for different $\theta$-values. Given that $\mc F(\theta_1) \cap \mc F(\theta_2)\not= \emptyset$, we will show that $\theta_1 = \theta_2$. 

    Assume that $y \in \mc F(\theta_1) \cap \mc F(\theta_2)$, and $\theta_1=k_1K'$, $\theta_2 = k_2K'$. Then there exists some sequences $x_n,x_n' \in \mc W(x_0)$ such that $k_1x_n,k_2x_n' \to y$. When $n$ is large enough, by (*) we may choose some $h_n \in \mc G$, $x_n'' \in \mc W(x_n)$ and $k_n \in K$ such that $k_n \to e$, $d(x_n'',x_n) \to 0$, $h_n\cdot x \to x$ for all $x \in X$, and

    \[ h_nk_nk_1x_n'' = k_2x_n'.\]

    We will find a sequence of points $y_n \in \mc W(x_0)$ converging to $k_1^{-1}k_2x_0$, implying by definition that $k_1K' = k_2K'$. Indeed, observe that $x_n'' = k_1^{-1}h_n^{-1}k_n^{-1}k_2 x_n'$, and that since $K$ normalizes both itself and $\mc G$, $x_n'' = \hat{h}_n^{-1}\hat{k}_n^{-1}k_1^{-1}m_2x_n'$ for some $\hat{h}_n = k_1^{-1}h_nk_1 \in \mc G$ and $\hat{k}_n = k_1^{-1}k_nk_1 \in K$. Let $y_n = \hat{h}_n^{-1}\hat{k}_n^{-1}k_1^{-1}k_2x_0 = k_1^{-1}h_n^{-1}k_n^{-1}k_2x_0$, so that by the assumptions on $h_n$ and $k_n$, $y_n \to k_1^{-1}k_2x_0$. Furthermore, since the $K \ltimes \mc G$-action intertwines the partition $\mc W$, $y_n \in \mc W(x_n'')$. Finally, by assumption, $x_n'' \in \mc W(x_n)$ and $x_n \in \mc W(x_0)$, so $y_n \in \mc W(x_0)$. Hence the sets $\mc F(\theta)$ partition $X$.   \end{proof}
    
%    Then $m_1^{-1}m_2x_n' = \hat{h}_n\hat{m}_nx_n''$, where $\hat{h}_n$ and $\hat{m}_n$ are the conjugates of $h_n$ and $m_n$ by $m_1$. Notice that $x_n'' \in \mc W(x_n) = \mc W(x_0)$, and that $\hat{h}_n\hat{m}_nx_n'' \to m_1^{-1}m_2x_n'$. Fix $\ve > 0$, and choose $n$ such that $d(\hat{h}_n\hat{m_n}x_n'',m_1^{-1}m_2x_n') < \ve/2$. With such an $n$ fixed, since $x_n' \in \mc W(x_0)$, we may choose a $\delta > 0$ such that if $d(x,x_n') < \delta$, there exists $y \in \mc W(x)$ such that $d(y,x_0) < \ve/2$.  

%    Then $m_2^{-1}m_nm_1x_{n''} = \hat{h}_n^{-1}x_n'$, where $\hat{h}_n$ is the conjugation of $h_n$ by $m_2$. Since the $\mc G$-action intertwines the $\mc W$-sets and $x_n' \in \mc W(x_0)$, it follows that $m_2^{-1}m_nm_1x_n'' \in \mc W(\hat{h}_n^{-1}x_0)$. Fix $\ve > 0$ and choose $n_0$ large enough so that if $n \ge n_0$,  $d(\hat{h}_n^{-1}x_0,x_0) < \ve /3$ and $d(x_n'',x_n) < \ve/3$. 

%     In particular, this implies that $m_1^{-1}m_2 x_0 \in \mc F_0$, or that $m_1^{-1}m_2 \in M'$. Thus, if the sets intersect, the corresponding values of $\theta \in M / M'$ must coincide.

}

\begin{lemma}
\label{lem:fiber-recurrence}
    Let $\mc P$ be a {\color{black} continuous }%an $\mc R$-regular
    principal bundle with %$\mc R$  and 
    compact structure group $K$ over a base space $X$ with projection map $\pi$, and $F: \mc P \to \mc P$ be a {\color{black} continuous }%
    bundle automorphism covering a homeomorphism of the base, $f : X \to X$. Then $p \in \mc P$ is $F$-recurrent if and only if $\pi(p)$ is $f$-recurrent.
\end{lemma}

\begin{proof}
    First, note that if $F^{n_k}(p) \to p$, then by continuity of the projection, $f^{n_k}(x) \to x$, where $x = \pi(p)$. Hence, $F$-recurrence of $p$ implies $f$-recurrence for $\pi(p)$ immediately.

    Now assume that $x$ is recurrent, so that $f^{m_n}(x) \to x$ on $X$, and note that $F^{m_n} : \pi^{-1}(x) \to \pi^{-1}(f^{m_n})$ is a map intertwining the $K$-actions on the fibers. The family $F^{m_n}|_{\pi^{-1}(x)}$ is hence equicontinuous and sub-converges to a translation from $\pi^{-1}(x)$ to itself. Let $K'$ denote the set of translations obtained as $\lim_{n \to \infty} F^{m_n}|_{\pi^{-1}(x)}$ for a subsequence $m_n$ such that $m_n \to \infty$. We claim $K'$ is a closed sub-semigroup of $K$ (and hence a closed subgroup of $K$ \cite[6.15]{BQ}). Indeed, assume that $k_1,k_2 \in K$, so that $F^{m_n}(p) \to k_1 \cdot p$ and $F^{m_n'}(p) \to k_2\cdot p$. We will build a subsequence $F^{\ell_n}$ such that $F^{\ell_n}(p) \to k_1k_2 \cdot p$.

     Fix $\ve >0$, and choose $m_{n_0}$ such that $d(F^{m_{n_0}}(p),k_2\cdot p) < \ve/2$. For a fixed $m_{n_0}$, the transformation $F^{m_{n_0}}$ is uniformly continuous, so there exists a $\delta >0$ such that if $d(p,q) < \delta$, then $d(F^{m_{n_0}}(q),k_2\cdot p) < \ve$. Finally, choose $n_1$ such that $d(F^{m_{n_1}'} (p), k_1\cdot p) < \delta$. Then $d({k_1}^{-1}F^{m_{n_1}'}(p), p) < \delta$ and by choice of $\delta$, $d(F^{m_{n_1}'}{k_1}^{-1}F^{m_{n_0}}(p)),k_2\cdot p) < \ve$. Using the fact that $F$ commutes with $K$ and $K$ preserves the distance $d$ again, it follows that $d(F^{m_{n_0} + m_{n_1}'}p,k_1k_2 \cdot p) < \ve$. It follows that there is a subsequence in the $F$-orbit of $p$ which also converges to $k_1k_2 \cdot p$.

     Note that since $K'$ is a subgroup of $K$, it contains $e \in K$. This immediately implies that if $x$ is recurrent for $f$ on $X$, every point of the fiber $\pi^{-1}(x)$ is recurrent for $F$ on $\mc P$.
\end{proof}

\begin{lemma}
\label{lem:residual-recurrence}
     Fix an $\R^k$ action %$\alpha$ 
     by continuous automorphisms of a continuous principal subbundle $\mc P$ with compact structure group $K$ covering a partially hyperbolic,  totally recurrent $\R^k$-action %$\bar{\alpha}$ (\color{black}THE BAR NOTATION!\color{cyan}) 
     on a manifold $X$. Further assume that %$\mc P$ is $\mc W^s_a$- and $\mc W^u_a$-H\"older continuous 
     for every partially hyperbolic $a \in \R^k$ %\color{black} and that 
     the foliations $\mc W^s_a$ and $\mc W^u_a$ on $X$ lift to %partially H\"older 
     continuous foliations $\hat{\mc W}^s_a, \hat{\mc W}^u_a$ respectively on $\mc P$. \color{black} Then for any partially hyperbolic element $a\in \R^k$, there is a residual set of points $p \in \mc P$ such that $\overline{\R^k \cdot p}$ is saturated by $\hat{\mc W}^s_a, \hat{\mc W}^u_a$.
     %the stable and unstable manifolds of the action of  on $\mc P$.
\end{lemma}

\begin{proof}
    To show this, we follow the strategy as in Lemma \cite[Lemma 10.2]{Spatzier-Vinhage}. Fix a partially hyperbolic element $a \in \R^k$. % with a dense orbit on $X$. Such a point must exist because the action is cone transitive by \cite[Lemma 4.17]{spatzier-vinhage}. 
    %Note that by Proposition \ref{prop:holder-holonomies}, the stable and unstable foliations of $a$ on $X$ lift to \color{black} partially H\"older \color{cyan} topological foliations on $\mc P$.
    
     %By \cite[Lemma 10.2]{spatzier-vinhage},
     %We claim that there is a residual set of points $p_0 \in P$ such that $\overline{\R^k \cdot p_0}$ is saturated by the lifted stable and unstable foliations. 
     
     By Lemma \ref{lem:fiber-recurrence}, the set of $a$-recurrent points is a dense-$G_\delta$ subset of $\mc P$.  Hence the set of $a$-recurrent points is a dense $G_\delta$ saturated by $K$-orbits. Finally, we build a set of points such that $\overline{\R^k \cdot p}$ is saturated by stable and unstable foliations of the form $\hat{\mc W}^s_a$ and $\hat{\mc W}^u_a$.

     We recall a fact about topological foliations, the Kuratowski-Ulam theorem: if $R$ is a residual set and $\hat{\mc W}$ is a topological foliation, then there is a residual set $R'$ such that form every $p \in R'$, $R \cap \hat{\mc W}(p)$ is residual in the leaf topology of $\hat{\mc W}(p)$. Therefore, we may choose a residual set $R_0$ such that for every $p \in R_0$, the set of $a$- and $(-a)$-recurrent points in $\mc W^s_a(p)$ and $\mc W^u_a(p)$ are both dense in their respective leaf topologies. It now follows from standard arguments (the ``topological Hopf argument'') that the $\R^k$ orbit closure of any point in $R_0$ is saturated by $\hat{\mc W}^s_a(p)$ and $\hat{\mc W}^u_a(p)$. %the stable and unstable manifolds of $a$. 
     See, e.g, \cite[Lemma 10.2]{Spatzier-Vinhage}. \end{proof}

%     Finally, observe that this implies $\overline{\R^k \cdot p}$ is saturated by each {\color{black}coarse Lyapunov foliation}. Since points in the stable and unstable manifolds of any partially hyperbolic element can be reached by following paths in the coarse Lyapunov foliations, it follows that the orbit is closure is saturated by {\it every} stable and unstable manifold.

As a consequence of Lemma \ref{lem:residual-recurrence}, since there are only countably (in fact finitely) many choices of $a$ with distinct $\mc W^s_a$ and $\mc W^u_a$, by Baire Category theorem we have 
\begin{corollary}\label{coro of lem rec}Under assumptions of Lemma \ref{lem:residual-recurrence}, there is a residual set of points $p\in \mc P$ such that $\overline{\R^k\cdot p}$ is saturated by $\hat{\mc W}^s_a, \hat{\mc W}^u_a$ for any partially hyperbolic element $a\in \R^k$.
\end{corollary}
 \begin{proposition}
 \label{prop:brin-pesin-construct}
      Fix an $\R^k$ action by continuous automorphism of %$\alpha$ 
     a continuous principal subbundle $\mc P$ with compact structure group $K$ covering a partially hyperbolic, totally recurrent $\R^k$-action %$\bar{\alpha}$ 
      on a manifold $X$ such that for any $x,y \in X$, there exists $a \in \R^k$ such that $ax$ and $y$ are connected by a path along the $\mc W^s_{b}$
      %coarse Lyapunov foliations 
      with finitely many legs (for each leg $b$ could be different). Further assume for every partially hyperbolic $a \in \R^k$ %\color{black} and that 
     the foliations $\mc W^s_a$ and $\mc W^u_a$ lift to %partially H\"older 
     continuous foliations $\hat{\mc W}^s_a, \hat {\mc W}^u_a$ respectively on $\mc P$. 
      %Assume further that $\mc P$ is $W^s_a$-H\"older and $W^u_a$-H\"older for every $a \in \R^k$. \color{black}Assume that there exists a family of fiber identification $\psi_{x,y}$ (NOT $\psi$, just existence of holonomies.) satisfies the assumption of Proposition \ref{prop:holder-holonomies}. \color{cyan} 
      Then there exists a continuous subbundle $\mc P' \subset \mc P$ with structure group $K' \subset K$ such that 
     \begin{itemize}
         \item $\mc P'$ is $\R^k$-invariant.
         \item For any partially hyperbolic element $a$, $\mc P'$ is saturated by $\hat{\mc W}^s_a, \hat {\mc W}^u_a$ (as well as their finest intersections, the lift of coarse Lyapunov foliations).
         %the stable and unstable manifolds of the action 
         %of any partially hyperbolicelement $a$. %on $\mc P$
         %$\mc P'$ is invariant under the stable holonomies $H_{x,y}$ of Proposition \ref{prop:holder-holonomies}
         \item $\R^k$ has a dense orbit on $\mc P'$.
         \item If $\mc P''$ is another bundle satisfying these conditions with associated structure group $K''$, then there exists $k \in K$ such that $\mc P'' = k\mc P'$ and $K'' =kK'k^{-1}$.
         %\item {\color{black}There exists an anti-homomorphism $\beta : M \to N_K(K')/K'$ such that for all $m \in M$, $m\mc P' = \beta(m)\mc P'$. In particular, $\ker \beta$ leaves $\mc P'$ invariant.}
     \end{itemize}

 \end{proposition}

%{\color{black}
%\begin{corollary}
%    Fix an action $\alpha$ as in Proposition \ref{prop:brin-pesin-construct}. Then the map $\hat{\alpha} : (\R^k \times M) \times \mc P' \to \mc P'$ defined by

%    \[ \hat{\alpha}((a,m),p) = \alpha(a,m)\beta(m)^{-1}p\]

%    is a well-defined action on $\mc P'$. {INTERNAL NOTE: NOT BY BUNDLE AUTOMORPHISMS}
%\end{corollary}
%}
 We fix a choice of $\mc P'$ satisfying the conclusions of Proposition \ref{prop:brin-pesin-construct}, and call it the {\it ($\R^k$-)Brin-Pesin subbundle} (for the action). %$\alpha$. 
 \color{black}In particular for $\tilde X$ defined Section \ref{sec: Harn ort frm}, by Proposition \ref{prop: partial Hol prin bd} and Proposition \ref{prop:holder-holonomies}, 
% Lemma \ref{lem: loc idfy},
 its ($\R^k$-)Brin-Pesin subbundle is well-defined and we denote it by $\hat X$.\color{black}

\begin{proof}
    Define an equivalence relation $\mc W$ on $\mc P$ by $y \in \mc W(x)$ if and only if there is some $a \in \R^k$ and a path with finitely many legs along the some $\hat{\mc W}^s_{b}$ foliations ($b$ of each leg could be different) %of %$\alpha$ 
    connecting $ax$ and $y$, so $\mc W$ is actually the $\R^k$-saturated $\{\hat{\mc W}^s_{b_i}\}$-
    accessibility classes. Then let $\mc G = \set{e}$ and $K$ act trivially on $\mc G$, so that $K \ltimes \mc G = K$ acts on $\mc P$. Then all of the assumptions of Lemma \ref{lem:top-decomp} are satisfied, since $K$ preserves %the $\R^k$-saturated accessibility classes 
    the partition by $\mc W$, i.e. \begin{itemize}
        \item[(1)] for any $x,y\in \mc P$ such that $y\in \R^k\cdot x$ and any $k\in K$, $ky\in \R^k\cdot kx$ since $\R^k$ acting on $\mc P$ by automorphisms.
        \item[(2)] for any $x,y$ such that $y\in \hat {\mc W}^s_b(x)$ and any $k\in K$, $ky\in \hat {\mc W}^s_b(kx)$ since $y$ converges towards to $x$ under $b^t$ if and only if $ky$ converges towards to $kx$ under $b^t$.
    \end{itemize}
    
    %by Proposition \ref{prop:holder-holonomies}. 
    Furthermore, for any $x_0\in \mc P$, by assumption $K\mc W(x_0)$ is actually equal to all of $\mc P$ and property (*) holds since $\mc W(x)$ always covers $X$. Without loss of generality we could choose $x_0$ that as in Corollary \ref{coro of lem rec}. 
    Hence by Lemma \ref{lem:top-decomp} there exists a closed subgroup $K' \subset K$ %and a point $x_0 \in \mc P$ 
    such that the sets $\mc F(kK') = \overline{k\mc W(x_0)}$ partition $\mc P$. 

    Set $\mc P' = \mc F(K')$. Since $\mc P'=\overline{\mc W(x_0)}$,  it is clearly saturated by $\R^k$-orbits and saturated by $\hat{\mc W}^s_b, \hat {\mc W}^u_b$, for any partially hyperbolic element $b$. %$\mc P'$ is saturated by invariant under stable/unstable holonomies. 
    By the choice of $x_0$, we further know that $\overline{\R^k\cdot x_0} = \overline{\mc W(x_0)}$, so $\mc P'$ contains a dense $\R^k$-orbit.

    Now we show the uniqueness property. Let $\mc P'$ and $\mc P''$ be two such bundles with associated groups $K'$ and $K''$. Fix points $p'$ and $p''$ such that $\overline{\R^k \cdot p'} =  \mc P'$ and $\overline{\R^k \cdot p''} = \mc P''$. Since $\mc P'$ and $\mc P''$ both cover $X$, there exists some $q'' \in \mc P''$ such that $q'' = k_1p'$ for some $k_1 \in K$. Then $\overline{\R^k \cdot q''} \subset \mc P''$ and hence $k_1\mc P' \subset \mc P''$. It follows that $k_1K'k_1 ^{-1} \subset K''$. By a symmetric argument, there exists some $k_2 \in K$ such that $k_2 \mc P'' \subset \mc P'$, and hence $k_2K''k_2^{-1} \subset K'$. It follows that $\Ad(k_1)$ must take $\Lie(K')$ into $\Lie(K'')$ injectively, and $\Ad(k_2)$ takes $\Lie(K'')$ into $\Lie(K')$ injectively. Hence they have the same dimension and are isomorphic. Furthermore, the conjugations induce a map on the connected components of $K'$ and $K''$, which are finite sets, so we conclude that conjugation by $k_1$ is an isomorphism between $K'$ and $K''$. The result follows.%Then $k_2k_1K'(k_2k_1)^{-1}K' \subset K'$, so since $K'$ is a compact group, $k_2k_1$ normalizes $K'$  
\end{proof}

\color{black}

%\bibliography{bdsvx.bib}{}
%\bibliographystyle{alpha}

\end{document}